\let\amsmathbb\mathbb
    \let\mathbb\relax
    \newcommand{\mathbb}[1]{\amsmathbb{#1}}
\newcommand{\bA}{{\mathbb{A}}}
\newcommand{\bG}{{\mathbb{G}}}
 \newcommand{\bN}{{\mathbb{N}}}
\newcommand{\bQ}{{\mathbb{Q}}}
 \newcommand{\bZ}{{\mathbb{Z}}}
\newcommand{\cA}{{\mathcal{A}}} \newcommand{\cB}{{\mathcal{B}}}
\newcommand{\cC}{{\mathcal{C}}} \newcommand{\cD}{{\mathcal{D}}}
\newcommand{\cG}{{\mathcal{G}}} \newcommand{\cH}{{\mathcal{H}}}
 \newcommand{\cT}{{\mathcal{T}}}
\newcommand{\sA}{{\mathscr{A}}} 
 \newcommand{\sF}{{\mathscr{F}}}
\newcommand{\sG}{{\mathscr{G}}} \newcommand{\sH}{{\mathscr{H}}}
 \newcommand{\sL}{{\mathscr{L}}}
\DeclareMathOperator{\Char}{char}
\DeclareMathOperator{\coker}{coker}
\DeclareMathOperator{\Corr}{Cor}
\DeclareMathOperator{\Div}{Div}
\DeclareMathOperator{\Gal}{Gal}
\DeclareMathOperator{\Hom}{Hom}
\DeclareMathOperator{\id}{id}
\DeclareMathOperator{\LN}{LN}
\DeclareMathOperator{\Mor}{Mor}
\DeclareMathOperator{\NS}{NS}
\DeclareMathOperator{\Pic}{Pic}
\DeclareMathOperator{\rank}{rank}
\DeclareMathOperator{\Spec}{Spec}
\newcommand{\ab}{{\mathrm{ab}}} 
\newcommand{\aff}{{\mathrm{aff}}}
\newcommand{\eff}{{\mathrm{eff}}}
\newcommand{\et}{{\mathrm{\acute{e}t}}}
\newcommand{\free}{{\mathrm{fr}}}
\newcommand{\perf}{{\mathrm{perf}}}
\newcommand{\red}{{\mathrm{red}}}
\newcommand{\tor}{{\mathrm{tor}}}
\newcommand{\tr}{{\mathrm{tr}}}
\newcommand{\AV}{{\mathsf{AV}}}
\newcommand{\Cor}{{\mathsf{Cor}}}
\newcommand{\DM}{{\mathsf{DM}}}
\newcommand{\Et}{{\mathsf{Et}}}
\newcommand{\Grp}{{\mathsf{Grp}}} 
\newcommand{\HI}{{\mathsf{HI}}}
\newcommand{\M}{{\mathsf{M}}}  
\newcommand{\MM}{{\mathsf{MM}}}   
\newcommand{\Mod}{{\mathsf{Mod}}}
\newcommand{\PST}{{\mathsf{PST}}}
\newcommand{\SAV}{{\mathsf{SAV}}}
\newcommand{\Sch}{{\mathsf{Sch}}} 
\newcommand{\Shv}{{\mathsf{Shv}}}
\newcommand{\Sm}{{\mathsf{Sm}}}
\newcommand{\Tori}{{\mathsf{Tori}}}
\newcommand{\menum}{\end{enumerate}\vspace*{-2\partopsep}\begin{enumerate}[label={\rm (\arabic*)},resume]}
\theoremstyle{plain}
\newtheorem{thm}{Theorem}[subsection]  
\newtheorem{cor}[thm]{Corollary}
\newtheorem{lem}[thm]{Lemma}
\newtheorem{prop}[thm]{Proposition}
\theoremstyle{definition}
\newtheorem{defn}[thm]{Definition}          
\newtheorem{rmk}[thm]{Remark}                 
\newtheorem{eg}[thm]{Example}
\newtheorem{hyp}[thm]{Hypothesis}
\def\@tocline#1#2#3#4#5#6#7{\relax
  \ifnum #1>\c@tocdepth 
  \else
    \par \addpenalty\@secpenalty\addvspace{#2}%
    \begingroup \hyphenpenalty\@M
    \@ifempty{#4}{%
      \@tempdima\csname r@tocindent\number#1\endcsname\relax
    }{%
      \@tempdima#4\relax
    }%
    \parindent\z@ \leftskip#3\relax \advance\leftskip\@tempdima\relax
    \rightskip\@pnumwidth plus4em \parfillskip-\@pnumwidth
    #5\leavevmode\hskip-\@tempdima
      \ifcase #1
       \or\or \hskip 2em \or \hskip 2em \else \hskip 3em \fi%
      #6\nobreak\relax
    \hfill\hbox to\@pnumwidth{\@tocpagenum{#7}}\par
    \nobreak
    \endgroup
  \fi}
\begin{document}
\title{Chow trace of $1$-Motives and the Lang-N\'eron Groups}                                                                              
\begin{abstract}
    We show that in the case of primary field extensions, the extension of scalars of Deligne $1$-motives admits a left adjoint, called Chow image, and a right adjoint, called Chow trace. This generalizes Chow's results on abelian varieties. Then we study the Chow trace in the framework of Voevodsky's triangulated categories of (\'etale) motives. With respect to the $1$-motivic $t$-structure on the category of Voevodsky's homological $1$-motives, the zero-th direct image of an abelian variety is given by the Chow trace, and the first direct image is the $0$-motive defined by the (geometric) Lang-N\'eron group.
\end{abstract}

\author[Long Liu]{Long Liu \smallskip\\ \MakeLowercase{\uppercase{W}ith an appendix by \uppercase{B}runo \uppercase{K}ahn}}
\address{Sorbonne Universit\'e and Universit\'e Paris Cit\'e, CNRS, IMJ-PRG, F-75005 Paris, France.}
\email{long.liu@imj-prg.fr}                                                         
\address{Sorbonne Universit\'e and Universit\'e Paris Cit\'e, CNRS, IMJ-PRG, F-75005 Paris, France.}
\email{bruno.kahn@imj-prg.fr}

\date{August 2, 2024}  
\maketitle  

\tableofcontents

\section{Introduction}
\subsection{Background}
Let $K/k$ be a field extension, $K_s$ be a separable closure of $K$ and $k_s$ be the separable closure of $k$ in $K_s$. For a finite separable field extension $K/k$, the field $k_s$ is $K_s$ itself and the absolute Galois group $\Gal(K_s/K)$ is canonically an open subgroup of $\Gal(k_s/k)$. Then the extension of scalars of discrete Galois modules admits a left adjoint and a right adjoint, both of which are given by the induced modules in the sense of \cite[Chapter I, 2.5]{Serre02GaloisCohomology}. A deeper result is the existence of a left adjoint and a right adjoint to the extension of scalars of abelian varieties, both of which are given by the Weil restriction. See, for example, \cite[Th. ~4.2~and~4.3]{Kahn18MotifsAdjoint}.

Another interesting case is when $K/k$ is a primary extension of fields, which means that the algebraic closure of $k$ in $K$ is purely inseparable over $k$. Then the canonical homomorphism of absolute Galois groups $\Gal(K_s/K)\to \Gal(k_s/k)$ is surjective. The extension of scalars of discrete Galois modules admits a left adjoint and a right adjoint, given by $\Gal(K_s/Kk_s)$ co-invariants and $\Gal(K_s/Kk_s)$ invariants respectively. A deeper result (\cite{Chow55AVfunctionfield}) is the existence of a left adjoint and a right adjoint to the extension of scalars of abelian varieties, called Chow image and Chow trace respectively. Lang and N\'eron (\cite{LangNeron59LNfg}) proved a relative version of Mordell-Weil theorem using Chow's trace: Let $K/k$ be a finitely generated regular field extension. Let $A$ be an abelian variety over $K$ and $\pi_*A$ be its $K/k$-trace. Then the Lang-N\'eron group
\[\LN(A,K/k) \colonequals A(K)/(\pi_*A)(k)\]
is a finitely generated abelian group. See \cite{Conrad06ChowLN} and \cite[Appendices A and B]{Kahn06ClassGroup} for `modern' proofs using Grothendieck's theory of schemes and fpqc descent.

\subsection{Main results}
The existence of Chow's trace can be generalized to Deligne $1$-motives. Recall \cite[10.1.10]{Deligne74HodgeIII} that a Deligne $1$-motive over $k$ is a two-term complex of group schemes $[L \to G]$, where $L$ is a lattice and $G$ is a semi-abelian variety. Here, a lattice means a commutative \'etale group scheme $L$ over $k$ such that $L(k_s)$ is a finitely generated free $\bZ$-module, and a semi-abelian variety is a commutative algebraic group which is an extension of an abelian variety by a torus. A morphism of Deligne $1$-motives is defined to be a commutative square in the obvious sense. Denote the category of Deligne $1$-motives by $\M_1(k)$. The base change of group schemes induces a base change functor of Deligne $1$-motives. The following result answers positively the expectation from \cite[bottom of p.~82]{Kahn18MotifsAdjoint}.

\begin{thm}[{Theorems~\ref{M1fullyfaithful} and~\ref{M1adjoint}}]
    Let $K$ be a primary field extension of $k$. Then 
    \begin{enumerate}[label={\rm(\arabic*)}]
        \item the extension of scalars of Deligne $1$-motives
        \begin{align*}
            \pi^* \colon  \M_1(k)        &\longrightarrow \M_1(K)\\
                [L\to G]       &\longmapsto [L_K\to G_K]
        \end{align*}
        is fully faithful;
        \item $\pi^*$ has a left adjoint $\pi_\sharp^{\M_1}$ and a right adjoint $\pi_*^{\M_1}$.
    \end{enumerate}
\end{thm}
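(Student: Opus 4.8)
The plan is to handle both parts by a d\'evissage along the weight filtration of a Deligne $1$-motive $[L\to G]$ (with $0\to T\to G\to A\to 0$), reducing to the three pure cases (lattices, tori and abelian varieties) and then reassembling. Throughout, set $\Gamma:=\Gal(K_s/Kk_s)$, which is the kernel of the restriction $\Gal(K_s/K)\to\Gal(k_s/k)$; since $K/k$ is primary this map is surjective, so $\Gal(K_s/K)/\Gamma\xrightarrow{\sim}\Gal(k_s/k)$.

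For part (1), the definition of a morphism of $1$-motives yields, for $M=[L\xrightarrow{u}G]$ and $M'=[L'\xrightarrow{u'}G']$ in $\M_1(k)$, a natural exact sequence
\[
0\to\Hom_{\M_1(k)}(M,M')\to\Hom_k(L,L')\oplus\Hom_k(G,G')\xrightarrow{\ \partial\ }\Hom_k(L,G'),\qquad\partial(f,g)=u'f-gu,
\]
and likewise over $K$, compatibly with $\pi^{*}$. A diagram chase then reduces full faithfulness of $\pi^{*}$ to: (a) $\Hom_k(L,L')\xrightarrow{\sim}\Hom_K(L_K,L'_K)$ for lattices; (b) $\Hom_k(G,G')\xrightarrow{\sim}\Hom_K(G_K,G'_K)$ for semi-abelian varieties; (c) $\Hom_k(L,G')\hookrightarrow\Hom_K(L_K,G'_K)$ for $L$ a lattice and $G'$ semi-abelian. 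For an \'etale $k$-group $L$ the $\Gal(K_s/K)$-action on $L_K(K_s)=L(k_s)$ is inflated from $\Gal(k_s/k)$, so $\Gamma$ acts trivially there; hence (a) is the full faithfulness of inflation along $\Gal(K_s/K)\twoheadrightarrow\Gal(k_s/k)$, and (c) follows from $\Hom_k(L,G')=\Hom_{\Gal(k_s/k)}(L(k_s),G'(k_s))$ together with the inclusion $\Hom_{\Gal(k_s/k)}(L(k_s),G'(k_s))\hookrightarrow\Hom_{\Gal(k_s/k)}(L(k_s),G'(Kk_s))=\Hom_K(L_K,G'_K)$ induced by $k_s\hookrightarrow Kk_s$. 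For (b), a morphism of semi-abelian varieties respects the filtration $T\subset G$, giving a natural exact sequence
\[
0\to\Hom_k(G,G')\to\Hom_k(T,T')\oplus\Hom_k(A,A')\xrightarrow{\ \delta\ }\Ext^1_k(A,T'),\qquad\delta(\varphi,\psi)=\varphi_*[G]-\psi^*[G'].
\]
This reduces (b) to full faithfulness for tori (which follows from (a) applied to character lattices), for abelian varieties over a primary extension (\cite{Chow55AVfunctionfield, Conrad06ChowLN}), and to injectivity of $\Ext^1_k(A,T')\to\Ext^1_K(A_K,T'_K)$; the latter, via the Barsotti-Weil identification $\Ext^1_k(A,T')\cong\Hom_k(X^{*}(T'),A^{\vee})$, is yet another instance of (c).

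For part (2), Cartier duality $M\mapsto M^{\vee}$ is a contravariant auto-equivalence of $\M_1$ commuting with $\pi^{*}$, so conjugation by it interchanges left and right adjoints; I would therefore only construct the right adjoint $\pi_{*}^{\M_1}$ (the Chow trace) and then set $\pi_{\sharp}^{\M_1}:=(-)^{\vee}\circ\pi_{*}^{\M_1}\circ(-)^{\vee}$ (the Chow image). On the pure pieces the right adjoint to $\pi^{*}$ exists: $L'\mapsto L'(K_s)^{\Gamma}$ for lattices; for tori, the torus whose character lattice is the torsion-free quotient of $(X^{*}(T'))_{\Gamma}$ (the torsion is harmless, since target character lattices are torsion free); and for abelian varieties Chow's $K/k$-trace (\cite{Chow55AVfunctionfield, Conrad06ChowLN}). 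I would next produce a right adjoint $\pi_{*}^{\SAV}$ on semi-abelian varieties; its weight-graded pieces are forced to be the ones just listed, and the substance is to manufacture the extension class of $\pi_{*}^{\SAV}G'$ out of that of $G'$. Granting $\pi_{*}^{\SAV}$, for $N=[L'\xrightarrow{v}G']$ I would set $\pi_{*}^{\M_1}N:=[L'(K_s)^{\Gamma}\xrightarrow{u}\pi_{*}^{\SAV}G']$, with $u$ induced from $v$ and the counit of $\pi_{*}^{\SAV}\dashv\pi^{*}$, and finally verify $\Hom_{\M_1(K)}(\pi^{*}M,N)\cong\Hom_{\M_1(k)}(M,\pi_{*}^{\M_1}N)$ by comparing, term by term via the pure-piece adjunctions, the exact sequences of the first display computing $\Hom_{\M_1(K)}(\pi^{*}M,N)$ and $\Hom_{\M_1(k)}(M,\pi_{*}^{\M_1}N)$.

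The hard part will be the construction of $\pi_{*}^{\SAV}$ (equivalently, by Cartier duality, the treatment of $1$-motives of the shape $[\text{lattice}\to\text{abelian variety}]$). Via Barsotti-Weil, the extension class one must build is a $k$-homomorphism from the lattice $(X^{*}(T'))_{\Gamma}/\mathrm{tors}$ to the Chow image of $(A')^{\vee}$, and merely composing the class of $G'$ with the relevant counit and unit maps does not factor through the coinvariants; carrying the extension datum correctly through Chow's constructions, the torsion in $\Hom(\text{lattice},\text{abelian variety})$ requiring particular care, is the crux, with Chow's theory of traces and images of abelian varieties as the essential geometric input. By contrast, the lattice and torus cases, and the full-faithfulness statement of part (1), amount to elementary Galois theory.
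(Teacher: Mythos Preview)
Your treatment of part~(1) is correct but takes a different route from the paper. You reduce full faithfulness for $\SAV$ to the pure cases plus injectivity of $\Ext^1_k(A,T')\to\Ext^1_K(A_K,T'_K)$, handled via Barsotti--Weil. The paper instead proves full faithfulness first for the subcategory $\M_1^{\ab}$ of $1$-motives $[L\to A]$ with $A$ abelian (combining Chow's theorem for $\AV$ with the lattice case), then observes that Cartier duality carries $\M_1^{\ab}$ anti-equivalently onto $\SAV$ and commutes with $\pi^*$; this yields full faithfulness for $\SAV$ without ever touching $\Ext^1$, and the general $\M_1$ case follows by rerunning the $\M_1^{\ab}$ argument with $\SAV$ in place of $\AV$. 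Both work; the paper's is a little slicker.

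Part~(2), however, has a genuine gap. Your plan for $\pi_*^{\SAV}$ presumes that its weight-graded pieces are $\pi_*^{\Tori}T'$ and $\pi_*^{\AV}A'$, so that only the extension class remains to be found. The toric part is indeed $\pi_*^{\Tori}T'$ (formally: ``toric part'' is right adjoint to $\Tori\hookrightarrow\SAV$, and right adjoints compose). But the abelian part of $\pi_*^{\SAV}G'$ is \emph{not} $\pi_*^{\AV}A'$ in general. Take $K=k(X)$ for $X$ smooth geometrically connected with a rational point, $A_0\in\AV(k)$, and let $G'$ be the extension of $(A_0^{\vee})_K$ by $\bG_m$ given by some $P\in A_0(K)\setminus A_0(k)$. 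Cartier-dually $[0\to G']^{\vee}=[\bZ\xrightarrow{P}(A_0)_K]$, and one checks directly (this is the paper's Proposition~\ref{ZtoAAlb}) that $\pi_\sharp[\bZ\xrightarrow{P}(A_0)_K]=[\bZ\to B]$ with $B=\coker\bigl(P\colon\sA^0_{X/k}\to A_0\bigr)$. Dualizing back, the abelian part of $\pi_*^{\SAV}G'$ is $B^{\vee}$, a \emph{proper} abelian subvariety of $A_0^{\vee}=\pi_*^{\AV}\bigl((A_0^{\vee})_K\bigr)$. So the extension class you are looking for does not live in $\Ext^1_k(\pi_*^{\AV}A',\pi_*^{\Tori}T')$, and the difficulty is not merely one of torsion bookkeeping.

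The paper avoids this by not computing $\pi_*^{\SAV}$ piecewise at all. Its key geometric input (in place of Chow's trace) is a descent statement, Theorem~\ref{SAVdescent}: for $G\in\SAV(k)$, every semi-abelian subvariety of $G_K$ is already defined over $k$; this is proved via Zariski density of prime-to-$p$ torsion and the trivial descent of finite \'etale subgroups. It follows (Lemma~\ref{M1CommaSurj}) that any morphism $[L\to G]\to[L'\to G']_K$ factors through some $[\pi_\sharp^{\M_0}L\to G_0]_K$ with $G\twoheadrightarrow(G_0)_K$ surjective and $G_0\in\SAV(k)$. A short preorder argument using noetherianity of $G$ (Lemma~\ref{PreorderBound}) then singles out the universal such factorization, which \emph{is} $\pi_\sharp^{\M_1}$; the right adjoint $\pi_*^{\M_1}$ follows by Cartier duality. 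Chow's trace for abelian varieties is not an input here but is recovered afterwards as a special case (Corollary~\ref{M1AdjExample}), and the paper records there precisely the half of your graded-pieces claim that survives.
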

Our functors $\pi_\sharp^{\M_1}$ and $\pi_*^{\M_1}$ recover some classical constructions (Corollaries \ref{M1AdjExample} and \ref{ZtoAtrivial}, and Proposition \ref{ZtoAAlb}), such as Chow's image and trace of abelian varieties. Thus we call these two functors Chow image and Chow trace respectively. A key ingredient of the existence of Chow image and Chow trace is the fact that primary field extensions will not bring new semi-abelian subvarieties (Theorem~\ref{SAVdescent}). 

We also want to study the derived functors of $\pi_*$. However, the category of Deligne $1$-motives is neither abelian nor big enough. Let $\Lambda$ be $\bZ[1/p]$, the localization of $\bZ$ by inverting $p$ the exponential characteristic of $k$. Thanks to the work of Voevodsky, Orgogozo, Barbieri-Viale, Kahn and Ayoub (\cite{Voevodsky00DM}, \cite{Orgogozo041Motives}, \cite{BVK16Derived1Motives}, \cite{Ayoub11Motivic-t-structure}), $\M_1(k)\otimes_\bZ\Lambda$ is a full subcategory of the heart of a $1$-motivic $t$-structure on Voevodsky's triangulated category $\DM_{\leq 1}(k,\Lambda)$ of \'etale homological $1$-motives, i.e., the localizing subcategory of $\DM_\et^\eff(k,\Lambda)$ generated by the motives $M(X)$ for $\dim X\leq 1$.

By the work of Ayoub and Barbieri-Viale \cite{ABV09MotShvLAlb}, $\DM_{\leq 1}(k,\Lambda)$ is canonically equivalent to the unbounded derived category of (\'etale) $1$-motivic sheaves $\HI_{\leq 1}(k,\Lambda)$, which is the smallest co-complete Serre subcategory of the category of \'etale sheaves with transfers containing lattices and \'etale sheaves represented by semi-abelian varieties. And every smooth curve $C$ defines a $1$-motivic sheaf $h_0^\et(C)$, which will form a system of generators of $\HI_{\leq 1}(k,\Lambda)$. The category of $1$-motivic sheaves contains the category $\HI_{\leq 0}(k,\Lambda)$ of $0$-motivic sheaves, which is equivalent to the category of sheaves of $\Lambda$-modules on the site $(\Spec k)_\et$. Ayoub and Barbieri-Viale showed that the inclusion $\delta \colon  \HI_{\leq 0}\hookrightarrow \HI_{\leq 1}$ admits a left adjoint $\pi_0$, which is constructed using the scheme of connected components. Then we will have an analogue of the connected-\'etale exact sequence 
\[0\to \sF^0 \to \sF \to \pi_0(\sF) \to 0.\]

Let $K/k$ be a field extension. Then the inverse image functor
\begin{align*}
    e^* \colon   \HI_{\leq 1}(k,\Lambda) &\longrightarrow         \HI_{\leq 1}(K,\Lambda) \\
        h_0^\et(C)               &\longmapsto  h_0^\et(C_K)
\end{align*}
admits a right adjoint $e_*$, which has a total right derived functor $Re_*$. Similarly, we also have a direct image functor $\varepsilon_*$ for $0$-motivic sheaves.

\begin{thm}[{Theorem~\ref{HI0HI1Re*}}]
    For $\sF\in\HI_{\leq 0}(K,\Lambda)$, we have a canonical isomorphism
    \[\delta R^i\varepsilon_*\sF \stackrel{\sim}{\longrightarrow} R^ie_*\delta\sF.\]
    In particular, if $K/k$ is primary, then $R^ie_*\delta\sF$ is the $0$-motivic sheaf associated with the $\Gal(k_s/k)$-module $H^i(\Gamma,\sF_{K_s})$, where $\Gamma=\Gal(K_s/Kk_s)$.
\end{thm}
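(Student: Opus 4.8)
The plan is to reduce the asserted isomorphism to a formal manipulation of adjoints plus one genuinely non-formal acyclicity statement, and then to read off the ``in particular'' from a Hochschild--Serre computation over the point $\Spec k$. First I would record that the scheme of connected components commutes with arbitrary extension of the base field: the ring of constants of a smooth connected $k$-variety $X$ is a finite separable extension of $k$, so it remains an \'etale algebra after $-\otimes_k K$ and $\pi_0(X)_K\xrightarrow{\sim}\pi_0(X_K)$. This gives a natural isomorphism $\pi_0^K\circ e^*\cong\varepsilon^*\circ\pi_0^k\colon\HI_{\leq 1}(k)\to\HI_{\leq 0}(K)$, and passing to right adjoints of all four functors yields a natural isomorphism $e_*\circ\delta_K\cong\delta_k\circ\varepsilon_*$. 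Now $\delta_k$ is exact (it is induced by the exact pullback $a_k^*$ along $\Sm/k\to\Spec k$), so $R(\delta_k\circ\varepsilon_*)\cong\delta_k\circ R\varepsilon_*$; moreover, for an injective resolution $\sF\xrightarrow{\sim}I^\bullet$ in $\HI_{\leq 0}(K)$ the complex $\delta_K I^\bullet$ resolves $\delta_K\sF$ (as $\delta_K$ is exact too), whence
\[
Re_*(\delta_K\sF)\;\cong\;e_*(\delta_K I^\bullet)\;\cong\;\delta_k\,\varepsilon_*(I^\bullet)\;\cong\;\delta_k\,R\varepsilon_*(\sF)
\]
\emph{provided each $\delta_K I^j$ is $e_*$-acyclic}; taking $H^i$ then produces the isomorphism $\delta_k R^i\varepsilon_*\sF\xrightarrow{\sim}R^ie_*\delta_K\sF$.

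Thus everything comes down to the crux: $\delta_K I$ is $e_*$-acyclic whenever $I$ is injective in $\HI_{\leq 0}(K)$. Since $Re_*$ is right adjoint to $e^*$ and the sheaves $h_0^\et(C)$ generate $\HI_{\leq 1}(k)$, it suffices to show $\Ext^i_{\HI_{\leq 1}(K)}\bigl(h_0^\et(C_K),\delta_K I\bigr)=0$ for every smooth curve $C$ and every $i>0$. A morphism from a $1$-motivic sheaf to a $0$-motivic one factors through the $0$-motivic quotient $\pi_0$ (indeed $\HI_{\leq 0}$ is the torsion-free class of the torsion pair whose torsion class is made of the connected $1$-motivic sheaves), and each $h_0^\et(C)$ is $\pi_0$-acyclic, its connected--\'etale sequence $0\to h_0^\et(C)^0\to h_0^\et(C)\to\pi_0(C)\to 0$ carrying no higher $\pi_0$; so the derived adjunction between $L\pi_0$ and $\delta_K$ collapses to $\Ext^i_{\HI_{\leq 1}(K)}(h_0^\et(C_K),\delta_K I)\cong\Ext^i_{\HI_{\leq 0}(K)}(\pi_0(C_K),I)=H^i_\et(\pi_0(C_K),I)$, which is $0$ for $i>0$ because $I$ is injective. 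Alternatively one may argue inside $\DM^{\eff}_\et(-,\Lambda)$: under the equivalence $\DM_{\leq 1}\simeq D(\HI_{\leq 1})$ the functor $Re_*$ is induced by the motivic pushforward $f_*$ along $f\colon\Spec K\to\Spec k$, and the statement follows from the base-change compatibility of $f_*$ with the inclusion of Artin (weight-zero) motives $D_\et(\Spec-,\Lambda)\hookrightarrow\DM^{\eff}_\et(\Spec-,\Lambda)$, which yields $Re_*\circ\delta_K\cong\delta_k\circ R\varepsilon_*$ at once.

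For the ``in particular'': when $K/k$ is primary, $k$ is separably closed in $K$, hence $K\otimes_k k_s$ is a field, $\Spec K\times_k k_s=\Spec(Kk_s)$, $\Gal(K_s/Kk_s)=\Gamma$ and $\Gal(Kk_s/K)\cong\Gal(k_s/k)$; therefore the stalk of $R^i\varepsilon_*\sF$ at the geometric point $\Spec k_s\to\Spec k$ is $H^i_\et(\Spec Kk_s,\sF)=H^i(\Gamma,\sF_{K_s})$ with its natural $\Gal(k_s/k)$-action, i.e.\ $R^i\varepsilon_*\sF$ is the $0$-motivic sheaf attached to the $\Gal(k_s/k)$-module $H^i(\Gamma,\sF_{K_s})$, and applying $\delta_k$ together with the isomorphism above finishes the proof. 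The main obstacle is the acyclicity step of the previous paragraph: it says precisely that the ambient higher direct images $R^ie_*$ see nothing beyond the Galois cohomology of a $0$-motivic sheaf, and its technical heart is controlling the $\Ext$-groups out of the generators $h_0^\et(C_K)$ into $0$-motivic sheaves --- equivalently, the $\pi_0$-acyclicity of the $h_0^\et(C)$ and the homological behaviour of the (non-hereditary) torsion pair involved --- or, in the motivic formulation, the compatibility of the inclusion of Artin motives with motivic pushforward along $\Spec K\to\Spec k$.
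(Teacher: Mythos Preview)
Your skeleton is reasonable --- identify $e_*\circ\delta_K\cong\delta_k\circ\varepsilon_*$ underived, then show $\delta_K$ carries injectives to $e_*$-acyclics --- but the acyclicity step is exactly where the non-formal content lives, and your argument for it has two genuine gaps.

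First, the reduction ``it suffices to show $\Ext^i_{\HI_{\leq 1}(K)}(h_0^\et(C_K),\delta_K I)=0$ for all $C$ and $i>0$'' is not valid. By adjunction that Ext group is $\Hom_{D(\HI_{\leq 1}(k))}(h_0^\et(C),Re_*(\delta_K I)[i])$, a hyper-Hom; its vanishing for $i>0$ does not force $R^ie_*(\delta_K I)=0$. The $h_0^\et(C)$ generate the heart $\HI_{\leq 1}(k)$ as an abelian category, but they are not projective there, so the spectral sequence $\Ext^p(h_0^\et(C),R^qe_*(\delta_K I))\Rightarrow \Hom_D(h_0^\et(C),Re_*(\delta_K I)[p+q])$ does not degenerate. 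Vanishing of the abutment says nothing about the individual $R^qe_*$ terms.

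Second, even granting that reduction, your computation of the Ext group via a derived $(\pi_0,\delta)$-adjunction requires $L_i\pi_0(h_0^\et(C_K))=0$ for $i>0$. You assert this (``its connected--\'etale sequence carrying no higher $\pi_0$''), but the connected--\'etale sequence of $h_0^\et(C)$ is a statement about $\pi_0$ itself, not about its left derived functors; the latter depend on how $h_0^\et(C)$ resolves in $\HI_{\leq 1}$, a category with no obvious supply of projectives. Your alternative via $\DM_\et^\eff$ (``base-change compatibility of $f_*$ with the inclusion of Artin motives'') simply restates the theorem.

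The paper takes a completely different route. Rather than arguing inside $\HI_{\leq 1}$, it transports the problem to ordinary \'etale sheaves on $(\Sm/k)_{\leq 1}$ via the conservative exact embedding $\gamma_*\sigma_{1*}\iota\iota_1$ and the compatibility $\gamma_*\sigma_{1*}\iota\iota_1\circ Re_{1*}\simeq Re_*^{\leq 1}\circ\gamma_*\sigma_{1*}\iota\iota_1$ (Proposition~\ref{HI1e_*curve}, itself resting on Cisinski--D\'eglise descent model structures). The statement then becomes $\theta^*R^ie_*^{\leq 0}\sG\cong R^ie_*^{\leq 1}\theta^*\sG$ for $\theta\colon\Et/K\hookrightarrow(\Sm/K)_{\leq 1}$, and this is a \emph{smooth base change theorem for non-torsion \'etale sheaves} (Corollary~\ref{SmBCfield}), proven in the appendix by adapting Deninger's method: the key input is $R^ie_*\bQ=0$ on normal schemes (Lemma~\ref{Re*Q}), which lets one bootstrap from the classical torsion case. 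This base change theorem is the real analytic content; your categorical manipulations cannot replace it, and in fact, tracing through what ``$\delta_K I$ is $e_*$-acyclic'' means concretely leads straight back to it.
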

The key point of the proof is a smooth base change theorem for non-torsion \'etale sheaves (Corollary~\ref{SmBCfield}), whose proof will be given in Appendix \ref{SmBC}. Besides, we shall need some knowledge about model categories to study the unbounded derived functors used in the proof.

\begin{thm}[{Theorems~\ref{ChowRevisited} and \ref{Rie*A}}]
    Let $K/k$ be a field extension.
    \begin{enumerate}[label={\rm(\arabic*)}]
        \item If $A$ is an abelian variety over $K$, then $R^ie_*A$ is a torsion $0$-motivic sheaf for $i\geq 1$.
        \item If $K/k$ is a primary field extension, then the connected-\'etale exact sequence associated with $e_*A$ is 
            \[0\to \pi_*A \to e_*A \to \LN(A,Kk_s/k_s)_\Lambda\to 0.\]
    \end{enumerate}
\end{thm}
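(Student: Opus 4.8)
The plan is to pin down $e_*A$ as a sheaf, read off the two steps of its connected-\'etale filtration to get (2), and treat the higher direct images in (1) by d\'evissage via the Kummer sequences; everything happens in the category $\HI_{\leq1}(-,\Lambda)$ of \'etale sheaves with transfers, so I will freely evaluate objects on smooth schemes. For a finite separable $L/k$, applying the adjunction $e^*\dashv e_*$ to the $0$-motivic sheaf represented by $\Spec L$, and noting that $e^*$ sends it to the sheaf represented by $\Spec(L\otimes_kK)$, gives $(e_*A)(L)=A(L\otimes_kK)_\Lambda$. When $K/k$ is primary, $L\otimes_kK$ is a field: $\Spec(L\otimes_kK)$ is \'etale over $\Spec K$ and connected, because $\Gal(K_s/K)$, which surjects onto $\Gal(k_s/k)$, acts transitively on $\Hom_k(L,K_s)$. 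Passing to the colimit over $L$, and using $\Gal(Kk_s/K)\cong\Gal(k_s/k)$ (again because $K/k$ is primary), we get $(e_*A)(k_s)=A(Kk_s)_\Lambda$ as a $\Gal(k_s/k)$-module. For the connected part: the Chow trace structure map $(\pi_*A)_K\to A$ corresponds under $e^*\dashv e_*$ to a monomorphism $\pi_*A\hookrightarrow e_*A$ in $\HI_{\leq1}(k,\Lambda)$; conversely, any semi-abelian subsheaf $G\hookrightarrow e_*A$ corresponds to a morphism $G_K\to A$, which kills the toric part of $G_K$ and so factors through $(G^{\ab})_K$, hence through $\pi_*A$ by the universal property of Theorem~\ref{M1adjoint}, and then, reading back through the adjunction, the inclusion $G\hookrightarrow e_*A$ factors through $\pi_*A$. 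Therefore $(e_*A)^0=\pi_*A$.

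It remains, for (2), to identify the quotient $\pi_0(e_*A)=e_*A/\pi_*A$, a $0$-motivic sheaf. Evaluating the connected-\'etale sequence on the point $\Spec k_s$, which has trivial higher \'etale cohomology, identifies it, as a $\Gal(k_s/k)$-module, with $A(Kk_s)_\Lambda/\operatorname{im}\!\big((\pi_*A)(k_s)_\Lambda\to A(Kk_s)_\Lambda\big)$. To match this with $\LN(A,Kk_s/k_s)_\Lambda$ I would use the compatibility of the Chow trace with the separable algebraic base change $k\hookrightarrow k_s$ --- so that, after inverting $p$, the image of $(\pi_*A)(k_s)$ in $A(Kk_s)$ agrees with the image of the $k_s$-points of the $Kk_s/k_s$-trace of $A_{Kk_s}$ --- which one deduces from the universal property. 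This gives $\pi_0(e_*A)\cong\LN(A,Kk_s/k_s)_\Lambda$ and hence the connected-\'etale sequence claimed in (2).

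For (1), I would first show $R^ie_*(A\otimes\bQ)=0$ for $i\geq1$: through the comparison between $\HI_{\leq1}$ and \'etale sheaves with transfers together with the smooth base change theorem for non-torsion \'etale sheaves (Corollary~\ref{SmBCfield}), this sheaf is described by cohomology of a profinite Galois group with $\bQ$-coefficients, which vanishes in positive degrees. The Kummer sequences $0\to A[n]\to A\xrightarrow{n}A\to0$ for $n$ prime to $p$ then exhibit $R^ie_*A$ as a subquotient of $\varinjlim_{n}R^ie_*A[n]$; and since each $A[n]$ is a $0$-motivic sheaf, Theorem~\ref{HI0HI1Re*} shows $R^ie_*A[n]$ is a torsion $0$-motivic sheaf for $i\geq1$. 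Hence $R^ie_*A$ is a torsion $0$-motivic sheaf for $i\geq1$.

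The two delicate points I expect to be the real work are: the compatibility of the Chow trace with the base extension $k_s/k$, needed to recognise the Lang-N\'eron denominator in the component group; and, for the higher direct images, making rigorous the passage from the abstract derived functor $Re_*$ on $\HI_{\leq1}$ to honest \'etale cohomology --- injective resolutions in $\HI_{\leq1}$ need not compute \'etale cohomology, the inclusion $\HI_{\leq1}\hookrightarrow(\text{\'etale sheaves with transfers})$ need not commute with $e_*$, and one manipulates unbounded derived functors throughout --- which is exactly where the model-categorical framework and the non-torsion smooth base change theorem of the appendix are needed.
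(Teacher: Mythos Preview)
Your approach to part~(2) is in the right spirit and close to the paper's, though somewhat less complete. The paper verifies that $\pi_*A$ satisfies the universal property of $(e_*A)^0$ against \emph{all} connected $1$-motivic sheaves by first reducing to the finitely presented case (Lemma~\ref{connected}), then using the structure theorem (Corollary~\ref{ExactfpHI1}) to write any such sheaf as a quotient $G'/L$ of a semi-abelian variety by a lattice, and finally translating the resulting Hom computation into the universal property of the Chow trace on the level of Deligne $1$-motives. Your argument only treats semi-abelian subsheaves; extending to quotients $G'/L$ is not hard but does require this structure theorem. For the component group, the paper simply takes the stalk at $k_s$ and \emph{defines} $\LN(A,Kk_s/k_s)_\Lambda$ to be $A(Kk_s)_\Lambda/(\pi_*A)(k_s)_\Lambda$ with $\pi_*A$ the $K/k$-trace, so the base-change compatibility you flag is not needed for the statement as written.

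Part~(1), however, has a genuine gap. Your justification that $R^ie_*(A\otimes\bQ)=0$ for $i\geq 1$ via Corollary~\ref{SmBCfield} cannot work: that smooth base change result applies only to sheaves pulled back from the small \'etale site $(\Spec K)_\et$, i.e.\ to $0$-motivic sheaves, and $A\otimes\bQ$ is not one---an abelian variety is a genuinely big-site sheaf. The paper establishes the vanishing by two separate arguments. For $i\geq 2$ it uses that with rational coefficients \'etale and Nisnevich cohomology agree, together with the bound on Nisnevich cohomological dimension by Krull dimension; this is Corollary~\ref{Rie*torsion} and holds for any $1$-motivic sheaf. For $i=1$ it invokes Raynaud's theorem that $H^1_\et(X,A)$ is torsion whenever $X$ is noetherian regular and $A$ is an abelian scheme over $X$, so that $H^1_\et(X_K,A)\otimes\bQ=0$ for every smooth curve $X$ over $k$. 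This geometric input, specific to abelian varieties, is what your d\'evissage via Kummer sequences cannot replace: those sequences show that the torsion part of $R^ie_*A$ is controlled by the $R^ie_*A[n]$, but cannot by themselves prove that $R^ie_*A$ is entirely torsion.
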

To prove the first assertion, we shall use the fact that $H^i_\et(X,\sF)$ is torsion for $i>\dim X$, Raynaud's theorem that $H^1_\et(X,A)$ is torsion for $X$ noetherian regular and $A$ an abelian scheme over $X$,  and Suslin's rigidity theorem \cite[Theorem~7.20]{MVW06Motive}. For the second assertion, we shall check that Chow trace is the connected component of the direct image by using a structure theorem of $1$-motivic sheaves, which is due to Ayoub, Barbieri-Viale and Kahn, and using the universal property of Chow trace. The Lang-N\'eron theorem now can be used to deduce the finiteness of $e_*A$ when $K/k$ is a finitely generated regular extension (see Corollary~\ref{LNe*A}).

We shall refine the $1$-motivic $t$-structure with $\bQ$-coefficients in \cite{Ayoub11Motivic-t-structure} to $\bZ[1/p]$-integral coefficients. An object in $D(\HI_{\leq 1}(k,\Lambda))$ is in the heart of the $1$-motivic $t$-structure if and only if it is quasi-isomorphic to a two-term complex $[L\to G]$ concentrated in degrees $0,1$ with $\ker(L\to G)$ a $0$-motivic sheaf and $\coker(L\to G)$ a connected $1$-motivic sheaf. We will call it a $0$-motive if it is quasi-isomorphic to $[L\to 0]$ with $L$ a $0$-motivic sheaf. Using a proposition (\ref{tstrucExact}) comparing the two $t$-structures on $D(\HI_{\leq 1})$, we can translate the above theorems to some results on the higher direct images relative to the $1$-motivic $t$-structure.

Denote by $R^ie_*$ (resp. ${}^m\!R^ie_*=[L^i\to G^i]$) the cohomology of $Re_*$ relative to the standard (resp. $1$-motivic) $t$-structure on $D(\HI_{\leq 1}(k,\Lambda))$.
\begin{thm}[{Theorem~\ref{mRe*0}}]
    Let $K/k$ be a field extension and let $L$ be a \(0\)-motivic sheaf over $K$. Then 
    \[{}^m\!R^ie_*[L\to 0]=[R^ie_*L\to 0].\]
    In particular, ${}^m\!R^ie_*[L\to 0]$ is a torsion $0$-motive for $i\geq 1$.
\end{thm}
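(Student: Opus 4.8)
\emph{Reduction.} As a complex concentrated in degree $0$, $[L\to 0]$ is simply the $0$-motivic sheaf $L$ viewed as an object of $D(\HI_{\leq 1}(K,\Lambda))$, so $Re_*[L\to 0]=Re_*L$, with $\cH^i(Re_*[L\to 0])=R^ie_*L$ and ${}^m\!\cH^i(Re_*[L\to 0])={}^m\!R^ie_*[L\to 0]$. Writing $L=\delta\sF$ with $\sF$ a sheaf of $\Lambda$-modules on $(\Spec K)_\et$, Theorem~\ref{HI0HI1Re*} gives $R^ie_*L=\delta R^i\varepsilon_*\sF$ for all $i$; in particular every standard cohomology sheaf of $Re_*[L\to 0]$ is a $0$-motivic sheaf.

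\emph{The $t$-structure comparison.} The connected--\'etale exact sequence $0\to\sG^0\to\sG\to\pi_0(\sG)\to 0$ exhibits a torsion pair on $\HI_{\leq 1}(k,\Lambda)$ whose torsion class is the connected $1$-motivic sheaves and whose torsion-free class is $\delta\HI_{\leq 0}$, the $0$-motivic sheaves (a connected sheaf admits no nonzero map to a $0$-motivic one). I would use Proposition~\ref{tstrucExact} in the form that the $1$-motivic $t$-structure is the tilt of the standard one at this pair, equivalently that ${}^m\!\cH^i$ and $\cH^i$ sit in a canonical long exact sequence assembled from the torsion and torsion-free parts of the $\cH^j$. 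For a complex $C$ whose standard cohomology sheaves all lie in the torsion-free class this tilt is inert: one reads off ${}^m\!\cH^i(C)\cong\cH^i(C)$, and a $0$-motivic sheaf placed in the $1$-motivic heart is nothing but the $0$-motive $[\,\cdot\to 0\,]$. Taking $C=Re_*[L\to 0]$ then yields ${}^m\!R^ie_*[L\to 0]=[R^ie_*L\to 0]$.

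\emph{Torsion.} Finally $R^ie_*L=\delta R^i\varepsilon_*\sF$, and $R^i\varepsilon_*\sF$ is torsion for $i\geq 1$: its stalk at a geometric point of $\Spec k$ is $\varinjlim_{k'}H^i_\et(\Spec(K\otimes_k k'),\sF)$ over the finite separable subextensions $k'/k$ of $k_s/k$, and since each $K\otimes_k k'$ is a finite product of finite separable field extensions of $K$, these are Galois cohomology groups of fields, which are torsion in degrees $\geq 1$ (equivalently, $H^i_\et$ of the $0$-dimensional $\Spec(K\otimes_k k_s)$ is torsion for $i>0$; in the primary case this stalk is literally $H^i(\Gamma,\sF_{K_s})$ with $\Gamma=\Gal(K_s/Kk_s)$, as in Theorem~\ref{HI0HI1Re*}). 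Hence $R^ie_*L$ is a torsion $0$-motivic sheaf and $[R^ie_*L\to 0]$ a torsion $0$-motive for $i\geq 1$.

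\emph{Where the difficulty lies.} The one real point is the $t$-structure comparison: one needs Proposition~\ref{tstrucExact} precise enough to conclude that, over complexes with $0$-motivic standard cohomology, the $1$-motivic and standard $t$-structures coincide and return the \emph{split} objects $[\,\cdot\to 0\,]$, rather than some nontrivial extension of a connected sheaf by a $0$-motivic one. Everything else --- the identification $[L\to 0]=L$, the formula $R^ie_*\delta\sF=\delta R^i\varepsilon_*\sF$ of Theorem~\ref{HI0HI1Re*}, and the vanishing of higher profinite-group cohomology after $\otimes\bQ$ --- is routine bookkeeping.
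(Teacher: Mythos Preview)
Your proof is correct and follows essentially the same approach as the paper: both establish that every $R^ie_*L$ is $0$-motivic (the paper cites Corollary~\ref{Rie*torsion}(1), which itself rests on Theorem~\ref{HI0HI1Re*} as you do), and then use Proposition~\ref{tstrucExact} to conclude that the $1$-motivic cohomology agrees with the standard one. Your torsion-pair/tilt phrasing is a clean conceptual packaging of precisely what the paper extracts explicitly from parts~(4)--(5) of Proposition~\ref{tstrucExact}, and your direct Galois-cohomology argument for torsion in degrees $\geq 1$ is a valid (and slightly more elementary) alternative to the paper's route through Corollary~\ref{Rie*torsion}.
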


\begin{thm}[{Theorem~\ref{mRe*AV}}]
    Let $K/k$ be a primary field extension and let $A$ be an abelian variety over $K$. Then 
    \[{}^m\!R^ie_*[0\to A]=\left\{
        \begin{array}{ll}
            {[0\to \pi_*A]}, & \hbox{if \(i=0\);} \\
            {[\LN(A,Kk_s/k_s)\to 0]}, & \hbox{if \(i=1\);} \\
            {[R^{i-1}e_*A\to 0]}, & \hbox{if \(i\geq 2\).}
        \end{array}\right.\]
    In particular, ${}^m\!R^0e_*[0\to A]$ is a constructible $1$-motive, and ${}^m\!R^ie_*[0\to A]$ are torsion $0$-motives for $i\geq 2$. Moreover, if $K/k$ is a finitely generated regular extension, then ${}^m\!R^1e_*[0\to A]$ is a constructible $0$-motive. 
\end{thm}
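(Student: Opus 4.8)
The plan is to reduce everything to the computation of $R^ie_*A$ relative to the \emph{standard} $t$-structure --- which is Theorems~\ref{ChowRevisited} and~\ref{Rie*A} --- and then transport it through the comparison of the two $t$-structures in Proposition~\ref{tstrucExact}. With the normalization of the $1$-motivic $t$-structure fixed above (heart objects $[L\to G]$ concentrated in degrees $0,1$), the object $[0\to A]$ of $D(\HI_{\leq 1}(K,\Lambda))$ is the sheaf attached to $A$ placed in cohomological degree $1$; that is, $[0\to A]\simeq A[-1]$. Hence $Re_*[0\to A]\simeq (Re_*A)[-1]$, and its standard cohomology sheaves are $H^n(Re_*[0\to A])=R^{n-1}e_*A$ for all $n\in\bZ$, vanishing for $n\le 0$. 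By Theorems~\ref{ChowRevisited} and~\ref{Rie*A}, the $1$-motivic sheaf $R^0e_*A=e_*A$ sits in a connected--\'etale sequence $0\to\pi_*A\to e_*A\to\LN(A,Kk_s/k_s)_\Lambda\to 0$, so its connected part is the Chow trace $\pi_*A$ and its component group is $\LN(A,Kk_s/k_s)_\Lambda$, while $R^ie_*A$ is a torsion $0$-motivic sheaf for every $i\ge 1$; in particular $(R^ie_*A)^0=0$ and $\pi_0(R^ie_*A)=R^ie_*A$ for $i\ge 1$.

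Now I would apply Proposition~\ref{tstrucExact}. The $1$-motivic $t$-structure on $D(\HI_{\leq 1})$ is the tilt of the standard one along the torsion pair whose torsion class consists of the connected $1$-motivic sheaves and whose torsion-free class is $\HI_{\leq 0}$ --- the torsion pair underlying the connected--\'etale sequence --- so for any $M\in D(\HI_{\leq 1})$ there is a short exact sequence in the heart of the $1$-motivic $t$-structure
\[0\longrightarrow \bigl[\pi_0H^n(M)\to 0\bigr]\longrightarrow {}^m\!H^n(M)\longrightarrow \bigl[0\to H^{n+1}(M)^0\bigr]\longrightarrow 0,\]
with $\pi_0H^n(M)$ the $0$-motivic quotient of $H^n(M)$ and $H^{n+1}(M)^0$ the connected part of $H^{n+1}(M)$, both regarded as objects of the $1$-motivic heart in the evident way. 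Feeding in $M=Re_*[0\to A]$ and the computation above: for $n\le -1$ both outer terms vanish; for $n=0$ the left term vanishes (as $H^0=0$) and the right term is $[0\to(e_*A)^0]=[0\to\pi_*A]$; for $n=1$ the right term vanishes (as $R^1e_*A$ is \'etale) and the left term is $[\pi_0(e_*A)\to 0]=[\LN(A,Kk_s/k_s)\to 0]$; and for $n\ge 2$ the right term vanishes (as $R^ne_*A$ is \'etale) and the left term is $[\pi_0(R^{n-1}e_*A)\to 0]=[R^{n-1}e_*A\to 0]$. In every case one of the two outer terms is zero, so no extension has to be resolved and ${}^m\!R^ne_*[0\to A]$ is the remaining term, which gives the asserted formula.

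It remains to read off the constructibility statements. Since the Chow trace $\pi_*A$ is an abelian variety over $k$, the $1$-motive ${}^m\!R^0e_*[0\to A]=[0\to\pi_*A]$ is a Deligne $1$-motive, hence constructible. For $i\ge 2$, $R^{i-1}e_*A$ is a torsion $0$-motivic sheaf by Theorems~\ref{ChowRevisited} and~\ref{Rie*A}, so ${}^m\!R^ie_*[0\to A]=[R^{i-1}e_*A\to 0]$ is a torsion $0$-motive (this also recovers Theorem~\ref{mRe*0} in these degrees). Finally, assume $K/k$ is finitely generated and regular. Then $Kk_s/k_s$ is again finitely generated and regular: it is finitely generated because $K/k$ is, it is separable because $K/k$ is, and the algebraic closure of $k_s$ in $Kk_s$ stays purely inseparable over $k_s$ because $K/k$ is primary. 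Hence the Lang--N\'eron theorem (\cite{LangNeron59LNfg}) applies over the base field $k_s$ and shows that $\LN(A,Kk_s/k_s)$ is a finitely generated abelian group; therefore $\LN(A,Kk_s/k_s)_\Lambda$ is a finitely generated $\Lambda$-module with continuous $\Gal(k_s/k)$-action, i.e.\ a constructible $0$-motivic sheaf, and ${}^m\!R^1e_*[0\to A]=[\LN(A,Kk_s/k_s)\to 0]$ is a constructible $0$-motive.

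The substantive work is all upstream: the torsion statement for $R^ie_*A$ ($i\ge 1$) and the identification of the connected part of $e_*A$ with the Chow trace (Theorems~\ref{ChowRevisited} and~\ref{Rie*A}), together with the comparison in Proposition~\ref{tstrucExact}; once these are in hand the theorem is essentially bookkeeping with connected and \'etale parts. The only genuinely new verification here is the elementary fact that ``finitely generated and regular'' descends from $K/k$ to $Kk_s/k_s$, which is what allows Lang--N\'eron to be invoked over the (typically non-finitely-generated) field $k_s$.
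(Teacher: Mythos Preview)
Your proof is correct and follows essentially the same route as the paper: both reduce to Proposition~\ref{tstrucExact} together with Theorems~\ref{ChowRevisited} and~\ref{Rie*A}, computing $H^0$ and $H^1$ of each ${}^m\!R^ne_*[0\to A]$ and observing that one of them always vanishes. The only organizational difference is that the paper first records the general case of a connected $1$-motivic sheaf $G$ in Lemma~\ref{mRe*HI1^0} and then specializes to an abelian variety, whereas you inline that lemma and phrase the comparison via the HRS-tilting short exact sequence; your added verification that $Kk_s/k_s$ is again finitely generated and regular makes explicit what the paper absorbs into its citation of Lang--N\'eron (Corollary~\ref{LNe*A}).
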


\begin{thm}[Theorem \ref{mRe*Gm}]\label{GmPic}
    Let $X$ be a smooth projective and geometrically connected variety over $k$ and let $K$ be the function field of $X$. Then 
    \[{}^m\!R^ie_*[0\to \bG_m]=\left\{
        \begin{array}{ll}
            {[0\to \bG_m]}, & \hbox{if \(i=0\);} \\
            0             , & \hbox{if \(i=2\);} \\
            {[R^{i-1}e_*\bG_m \to 0]}, & \hbox{if \(i\geq 3\).}
        \end{array}\right.
    \]
    Moreover, with $\bQ$-coefficients, we have 
    \[ {}^m\!R^1e_*[0\to \bG_m] = [\Div^0(X_{k_s}) \to \Pic^0_{X/k}]. \] 
\end{thm}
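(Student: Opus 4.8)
**Proof proposal for Theorem \ref{GmPic} (computation of ${}^m\!R^ie_*[0\to\bG_m]$).**

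The plan is to deduce everything from the previous theorems by treating the torus $\bG_m$ as a degenerate case of a semi-abelian variety, together with the identification $\bG_m = [0\to\bG_m]$ as a connected $1$-motivic sheaf. First I would set up the Leray-type mechanism: since $K=k(X)$ with $X$ smooth projective geometrically connected, I would compute $Re_*\bG_m$ at the level of $\HI_{\leq 1}$ using the standard $t$-structure, where $R^ie_*\bG_m$ is the $0$-motivic sheaf associated with the Galois module $H^i(\Gamma,\overline{K}^\times)$ (here $\Gamma=\Gal(K_s/Kk_s)$), via the primary-extension description from Theorem~\ref{HI0HI1Re*} and its $\bG_m$-analogue. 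The two low-degree pieces need special care: $R^0e_*\bG_m$ is the sheaf of units, which for $X$ smooth projective geometrically connected over $k$ is just $\bG_m$ over $k$ (global units of a projective variety are constants, using that $k$ is the field of constants of $K$), giving the $i=0$ line; and $R^1e_*\bG_m$, which by Hilbert~90 applied over $Kk_s$ vanishes as an \'etale sheaf — more precisely $H^1(\Gamma,\overline{K}^\times)=0$ since $\Gamma$ acts on the separably closed field $K_s\supseteq Kk_s$. This vanishing of $R^1e_*\bG_m$ as a $0$-motivic sheaf is exactly what forces the $i=2$ entry to be $0$ after passing through the comparison of $t$-structures.

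Next I would translate to the $1$-motivic $t$-structure using Proposition~\ref{tstrucExact} (the comparison of the standard and $1$-motivic $t$-structures on $D(\HI_{\leq 1})$), exactly as in the proofs of Theorems~\ref{mRe*0} and~\ref{mRe*AV}. Writing $[0\to\bG_m]$ as a connected $1$-motivic sheaf placed in degree $0$, the spectral sequence relating ${}^mR^\bullet e_*$ to $R^\bullet e_*$ degenerates for degree reasons: $R^0e_*\bG_m=\bG_m$ is connected, $R^1e_*\bG_m=0$, and all higher $R^ie_*\bG_m$ ($i\geq 2$) are torsion $0$-motivic sheaves (being $H^i$ of a profinite group acting on a module, hence torsion for $i\geq 1$ by the same argument as in Theorem~\ref{Rie*A}(1)). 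Therefore ${}^mR^0e_*[0\to\bG_m]=[0\to\bG_m]$, the entry at $i=1$ would a priori be $[R^1e_*\bG_m\to 0]=0$ but must be absorbed — actually since $R^1e_*\bG_m=0$ there is a shift: the first nonzero higher term is at $i=2$, coming from $R^2e_*\bG_m$, but $R^2e_*\bG_m$ being a torsion $0$-motivic sheaf contributes ${}^mR^2e_*=0$ because a torsion $0$-motivic sheaf placed in cohomological degree $2$ is sent to $0$ under the connected/\'etale truncation at that spot; this yields the $i=2$ entry $0$, and for $i\geq 3$ one gets $[R^{i-1}e_*\bG_m\to 0]$ precisely as in the abelian-variety case. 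I would present this as a short diagram chase through Proposition~\ref{tstrucExact} rather than redo the spectral sequence bookkeeping.

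For the last assertion, with $\bQ$-coefficients, I would identify ${}^mR^1e_*[0\to\bG_m]$ with the $1$-motive $[\Div^0(X_{k_s})\to\Pic^0_{X/k}]$ by a direct geometric computation. Rationally, $Re_*\bG_m$ for $K=k(X)$ is governed by the exact sequence of \'etale sheaves on $X$ relating $\bG_m$, the generic-point pushforward $j_*\bG_{m,K}$, and the divisor sheaf $\bigoplus_{x\in X^{(1)}}(i_x)_*\bZ$; taking cohomology over $X$ (which computes $e_*$ after inverting the residue-field contributions that become torsion, hence vanish $\otimes\bQ$) gives $H^0=\bG_m$, and $H^1$ sitting in $0\to\Pic^0_{X/k}\to {}^mR^1e_*[0\to\bG_m]\to$ (divisors of degree zero) with the boundary map being the class map; the kernel $\Pic^0_{X/k}$ and the lattice $\Div^0(X_{k_s})$ assemble into the asserted $1$-motive, whose weight filtration is precisely the connected part $\Pic^0_{X/k}$ (an abelian variety, since $X$ is projective so there is no toric part) and the \'etale quotient $\Div^0(X_{k_s})$. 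I would check functoriality and the $\Gal(k_s/k)$-action via the universal property of $\Pic^0_{X/k}$ as in Poincar\'e reducibility, referencing the $1$-motivic sheaf structure theorem (Ayoub--Barbieri-Viale--Kahn) used already in Theorem~\ref{mRe*AV}.

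The main obstacle I expect is the careful bookkeeping in the comparison of the two $t$-structures at the ``boundary'' degree: because $R^1e_*\bG_m$ vanishes (unlike $R^1e_*A$, which is the nonzero Lang--N\'eron group), the indexing in the $1$-motivic picture shifts, and one must verify cleanly that a torsion $0$-motivic sheaf appearing as $R^2e_*\bG_m$ produces a genuinely zero ${}^mR^2e_*$ rather than a nonzero torsion $0$-motive — i.e.\ that the truncation functors of the $1$-motivic $t$-structure kill it at that spot. This is where Proposition~\ref{tstrucExact} must be invoked with some precision, and it is the one place where the $\bG_m$-case genuinely differs in structure (not just in the value of a group) from the abelian-variety case of Theorem~\ref{mRe*AV}.
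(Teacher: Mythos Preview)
Your proposal has a fundamental error: you treat $\bG_m$ as if it were a $0$-motivic sheaf and apply Theorem~\ref{HI0HI1Re*} to compute $R^ie_*\bG_m$ as Galois cohomology $H^i(\Gamma,K_s^\times)$. But $\bG_m$ is a torus, a \emph{connected} $1$-motivic sheaf, and Theorem~\ref{HI0HI1Re*} applies only to $0$-motivic sheaves. The direct image $e_{1*}$ is computed on the site $(\Sm/k)_{\leq 1}$ (Proposition~\ref{HI1e_*curve}), not on the small \'etale site of $\Spec k$; Hilbert~90 tells you nothing about $R^1e_{1*}\bG_m$ evaluated on a curve $C/k$, which involves $H^1_\et(C_K,\bG_m)=\Pic(C_K)$. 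Consequently both of your key claims fail: $e_*\bG_m\neq\bG_m$ (its stalk at $k_s$ is $(Kk_s)^\times$, so $\pi_0(e_*\bG_m)=(Kk_s)^\times/k_s^\times$ is enormous), and $R^1e_*\bG_m\neq 0$ (by Theorem~\ref{R1e*Gm} it is the cokernel of $\Div^0(X_{k_s})\to\Pic^0_{X/k}$, a nontrivial connected sheaf).

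The paper's route is via Lemma~\ref{mRe*HI1^0} (the general computation for any connected $1$-motivic sheaf) together with Theorem~\ref{ChowRevisited} and Theorem~\ref{R1e*Gm}. The vanishing ${}^m\!R^2e_*[0\to\bG_m]=0$ holds because $R^1e_*\bG_m$ is \emph{connected}, so $\pi_0(R^1e_*\bG_m)=0$; this is a completely different mechanism from Hilbert~90. Likewise ${}^m\!R^0e_*=[0\to\bG_m]$ because the Chow trace $(e_*\bG_m)^0=\pi_*\bG_m=\bG_m$, not because $e_*\bG_m=\bG_m$. The term ${}^m\!R^1e_*$ is a genuine two-term complex with $H^0=(Kk_s)^\times/k_s^\times$ and $H^1=R^1e_*\bG_m$, both nonzero; the $\bQ$-coefficient identification with $[\Div^0(X_{k_s})\to\Pic^0_{X/k}]$ then follows because both complexes have the same cohomology and $\HI_{\leq 1}$ has cohomological dimension~$1$ rationally. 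Your final paragraph has some of the right ingredients (the divisor exact sequence), but the roles are inverted: $\Pic^0_{X/k}$ contributes to $H^1$, not to $H^0$, of the $1$-motive.
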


The following result can be viewed as a generalization of the Lang-N\'eron theorem for certain Deligne $1$-motives.
\begin{thm}[Theorem \ref{LNLA}]
    Let $K/k$ be a finitely generated regular field extension and let $M=[L \to A]$ be a Deligne $1$-motive over $K$ where $A$ is an abelian variety. Write $\Gamma=\Gal(K_s/Kk_s)$. Then we have an exact sequence of \(0\)-motivic sheaves 
    \[ 0 \to X \to \pi_0(R^1e_*M) \to Y \to 0,\]
    where 
    \[ X =\coker(L(K_s)^\Gamma \to \LN(A,Kk_s/k_s)) \] and 
    \[ Y =\ker(H^1(\Gamma,L(K_s)) \to R^1e_*A). \]
    In particular, 
    \begin{enumerate}[label={\rm(\arabic*)}]
        \item $\pi_0(R^1e_*M)(k_s)$ is a finitely generated $\Gal(k_s/k)$-module;
        \item ${}^m\!R^1e_*M=[\pi_0(R^1e_*M) \to 0]$ is a constructible $0$-motive.
    \end{enumerate}
\end{thm}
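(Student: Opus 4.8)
The plan is to present $M=[L\to A]$ as an extension in $D(\HI_{\leq 1}(K,\Lambda))$, apply $Re_*$, read off $R^1e_*M$ and $R^2e_*M$ from the long exact sequence for the standard $t$-structure, and then take connected components and compare the two $t$-structures. First I would use the levelwise short exact sequence of two-term complexes concentrated in degrees $0,1$,
\[0\to[0\to A]\to[L\to A]\to[L\to 0]\to 0,\]
which produces a distinguished triangle $[0\to A]\to M\to[L\to 0]\xrightarrow{u}[0\to A][1]$, identifying $[L\to 0]$ with $\delta L$ placed in degree $0$, $[0\to A]$ with $A[-1]$, and the boundary with the image of $u$. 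Applying $Re_*$ and using $R^ie_*[0\to A]=R^{i-1}e_*A$, one gets a long exact sequence
\[\cdots\to R^{i-1}e_*A\to R^ie_*M\to R^ie_*(\delta L)\to R^ie_*A\to\cdots\]
whose maps $R^ie_*(\delta L)\to R^ie_*A$ are induced by $u$. By Theorem~\ref{HI0HI1Re*}, $R^ie_*(\delta L)$ is the $0$-motivic sheaf attached to the $\Gal(k_s/k)$-module $H^i(\Gamma,L(K_s))$; for $i=0$ this is the lattice $e_*L$ with $e_*L(k_s)=L(K_s)^\Gamma=L(Kk_s)$, and for $i=1$ it is finite (the $\Gamma$-action on $L(K_s)$ factors through a finite quotient since the automorphism group of a lattice is discrete, and inflation--restriction, together with the vanishing of $H^1$ of a profinite group with torsion-free discrete coefficients, identifies $H^1(\Gamma,L(K_s))$ with the cohomology of a finite group with finitely generated coefficients). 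By Theorems~\ref{ChowRevisited} and~\ref{Rie*A}, $R^ie_*A$ is torsion for $i\ge 1$ — hence $0$-motivic, as a nonzero semi-abelian variety is not a torsion sheaf — while $R^0e_*A=e_*A$ sits in a connected-étale sequence $0\to\pi_*A\to e_*A\to\LN(A,Kk_s/k_s)_\Lambda\to 0$ (recall $K/k$ regular is primary). Finally the edge map $e_*L\to R^1e_*[0\to A]=e_*A$ is $e_*u$, which on $k_s$-points is $u\colon L(Kk_s)\to A(Kk_s)$.

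From the long exact sequence I extract $e_*L\xrightarrow{e_*u}e_*A\to R^1e_*M\to R^1e_*(\delta L)\xrightarrow{\psi}R^1e_*A$, hence a short exact sequence of $1$-motivic sheaves
\[0\to\coker(e_*u)\to R^1e_*M\to Y\to 0,\qquad Y\colonequals\ker\psi,\]
in which $Y$ is a subsheaf of the finite $0$-motivic sheaf $R^1e_*(\delta L)$, so finite and $0$-motivic, and equals $\ker\bigl(H^1(\Gamma,L(K_s))\to R^1e_*A\bigr)$. Applying the right exact functor $\pi_0$ and using $\pi_0(e_*L)=e_*L$, $\pi_0(e_*A)=\LN(A,Kk_s/k_s)_\Lambda$, we obtain $\pi_0\coker(e_*u)=\coker\bigl(e_*L\to\LN(A,Kk_s/k_s)_\Lambda\bigr)=X$ — the map being $u$ followed by the Lang--Néron quotient, i.e.\ $L(K_s)^\Gamma\to\LN(A,Kk_s/k_s)$ — so that $X\to\pi_0(R^1e_*M)\to Y\to 0$ is exact. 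For the injectivity of $X\to\pi_0(R^1e_*M)$ I use that (connected $1$-motivic sheaves, $0$-motivic sheaves) is a torsion pair: any connected $1$-motivic subsheaf of $R^1e_*M$ maps to $0$ in the $0$-motivic quotient $Y$, hence lies in $\coker(e_*u)$, so $\coker(e_*u)$ and $R^1e_*M$ share the same connected part $\sF^0$ and $\pi_0\coker(e_*u)=\coker(e_*u)/\sF^0\hookrightarrow R^1e_*M/\sF^0=\pi_0(R^1e_*M)$. This yields the asserted $0\to X\to\pi_0(R^1e_*M)\to Y\to 0$.

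For the two consequences: as $K/k$ is finitely generated regular, so is $Kk_s/k_s$ (the same generators work, and $Kk_s\otimes_{k_s}\overline{k_s}=K\otimes_k\bar k$ is a domain), so $\LN(A,Kk_s/k_s)$ is finitely generated by the Lang--Néron theorem \cite{LangNeron59LNfg}; since $X$ is a quotient of it and $Y$ is finite, $\pi_0(R^1e_*M)(k_s)$ is a finitely generated abelian group — and a $\Gal(k_s/k)$-module because $\pi_0(R^1e_*M)$ is $0$-motivic — giving (1). For (2), Proposition~\ref{tstrucExact}, applied as in the proofs of Theorems~\ref{mRe*0} and~\ref{mRe*AV}, gives ${}^m\!R^1e_*M=[L^1\to G^1]$ with $\ker(L^1\to G^1)=\pi_0(R^1e_*M)$ and $\coker(L^1\to G^1)=(R^2e_*M)^0$; but the long exact sequence exhibits $R^2e_*M$ as an extension of a subsheaf of $R^2e_*(\delta L)$ (which is $0$-motivic by Theorem~\ref{HI0HI1Re*}) by a quotient of the torsion—hence $0$-motivic—sheaf $R^1e_*A$, and $0$-motivic sheaves are closed under extension (again the torsion pair), so $R^2e_*M$ is $0$-motivic and $(R^2e_*M)^0=0$. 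Thus $L^1\to G^1$ is surjective, $[L^1\to G^1]$ is quasi-isomorphic to $[\pi_0(R^1e_*M)\to 0]$, a $0$-motive, constructible by (1).

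The step I expect to be the main obstacle is not any hard estimate but the careful identification of the boundary and edge maps of the long exact sequence with the expected maps induced by $u$ — so that $X$ and $Y$ are exactly as stated, with target $R^1e_*A$ — which requires unwinding the isomorphism $R^ie_*[0\to A]\cong R^{i-1}e_*A$ compatibly with morphisms. A second delicate point is the bookkeeping of connected parts in the last two paragraphs, which rests entirely on the structure theory of $1$-motivic sheaves (the torsion pair, and Proposition~\ref{tstrucExact}); granting Theorems~\ref{HI0HI1Re*},~\ref{ChowRevisited} and~\ref{Rie*A}, the remainder is a diagram chase.
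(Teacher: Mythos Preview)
Your proof is correct and follows essentially the same approach as the paper: the same distinguished triangle $A[-1]\to M\to L\to A$, the same long exact sequence, the same identifications via Theorems~\ref{HI0HI1Re*}, \ref{ChowRevisited}, \ref{Rie*A}, and the same application of $\pi_0$ together with Lang--N\'eron and Lemma~\ref{H1GalFinite}. You are more explicit than the paper about the injectivity of $X\to\pi_0(R^1e_*M)$ (the paper tacitly invokes the verified Hypothesis~\ref{PervertHypothesis}(3)) and about the identification ${}^m\!R^1e_*M=[\pi_0(R^1e_*M)\to 0]$ (which the paper records in the lemma immediately preceding Theorem~\ref{LNLA}), but the overall strategy is identical.
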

This fails in the presence of tori; see Theorem \ref{GmPic}. One can recover a finite generation statement if one replaces the function field by a smooth model.

The story in the case when $K/k$ is a finite extension can be found in \cite[A.17 and Lemma~2.22]{PL19DA1}. In fact, he studied it in the more general setting when $f \colon  X\to Y$ is a finite \'etale morphism. 

\subsection{Conventions and notations}
We shall use the following categorical notions in the spirit of \cite[Definition 8.3.21]{KS06CategorySheaf}.
\begin{defn}\label{subcategories}
    Let $\cA$ be an abelian category and $\cB$ be a full subcategory of $\cA$.
    \begin{enumerate}[label={\rm(\arabic*)}]
        \item We say that $\cB$ is a Serre subcategory of $\cA$ if it is closed under subobjects, quotients and extensions.
        \item We say that $\cB$ is a thick subcategory of $\cA$ if it is closed under kernels, cokernels and extensions.
        \item We say that $\cB$ is a fully abelian subcategory of $\cA$ if it is additive and closed under kernels and cokernels, equivalently, $\cB$ is an abelian category and the embedding functor is exact.
    \end{enumerate}
\end{defn}
\begin{rmk}
    \begin{enumerate}[leftmargin=*,label={\rm(\arabic*)}]
        \item Clearly, a Serre subcategory is thick, and a thick subcategory is a fully abelian subcategory. \menum
        \item In \cite[\href{https://stacks.math.columbia.edu/tag/02MO}{Definition~02MO}]{stacks-project}, a (strictly full) thick subcategory is called a weak Serre subcategory.
    \end{enumerate}
\end{rmk}

We list some notations used in this chapter:
\begin{itemize}
    \item $k$, field of exponential characteristic $p$
    \item $\Lambda$, the ring $\bZ[1/p]$   
    \item $\M_0(k)$ (resp. ${}^t\M_0(k)$), category of lattices (resp. of constructible group schemes) (\ref{M0tM0})
    \item $\Tori(k)$, category of tori
    \item $\AV(k)$ (resp. $\SAV(k)$), category of abelian varieties (resp. of semi-abelian varieties)
    \item ${}_nG$, kernel of the multiplication by $n$ on a semi-abelian variety
    \item $\M_1(k)$, category of Deligne $1$-motives
    \item $\Sm/k$, category of smooth schemes separated of finite type over $k$
    \item $(\Sm/k)_{\leq n}$, full subcategory of $\Sm/k$ consisting of schemes with dimension $\leq n$
    \item $\Shv_\et(\cC,\Lambda)$, category of sheaves of $\Lambda$-modules on the \'etale site $\cC_\et$
    \item $\Shv_\et^\tr(k,\Lambda)$, category of \'etale sheaves with transfers on $\Sm/k$ (reviewed in \S \ref{EST})
    \item $\Shv_\et^\tr(k_{\leq n},\Lambda)$, category of \'etale sheaves with transfers on $(\Sm/k)_{\leq n}$ (reviewed in \S \ref{EST})
    \item $\HI_\et(k,\Lambda)$, category of homotopy invariant sheaves (reviewed in \S \ref{HI_et})
    \item $\HI_{\leq n}(k,\Lambda)$, category of $n$-motivic sheaves (reviewed in \S \ref{HI_leqn})
    \item $\DM_\et^\eff(k,\Lambda)$, Voevodsky category of effective \'etale motives
    \item $\DM_{\leq n}(k,\Lambda)$, the localizing subcategory of $\DM_\et^\eff(k,\Lambda)$ generated by the motives $M(X)$ for $\dim X\leq n$
    \item $\pi^* \colon \M_1(k) \to \M_1(K)$, extension of scalars of Deligne $1$-motives induced by base change of schemes
    \item $\pi_*$, right adjoint to $\pi^*$, i.e., Chow trace
    \item $\pi_\sharp$, left adjoint to $\pi^*$, i.e., Chow image
    \item $\gamma_* \colon \Shv_\et^\tr(k,\Lambda) \to \Shv_\et(\Sm/k,\Lambda)$, the forgetful functor
    \item $\gamma^*$, left adjoint to $\gamma_*$
    \item $\iota \colon \HI_\et^\tr(k,\Lambda) \to \Shv_\et^\tr(k,\Lambda)$, the inclusion functor
    \item $h_0^\et \colon \Shv_\et^\tr(k,\Lambda) \to \HI_\et^\tr(k,\Lambda)$, a left adjoint to $\iota$
    \item $\iota_n \colon \HI_{\leq n}(k,\Lambda) \to \HI_\et^\tr(k,\Lambda)$, the inclusion functor
    \item $\sigma_{n}^* \colon \Shv_\et^\tr(k_{\leq n},\Lambda) \leftrightarrows \Shv_\et^\tr(k,\Lambda) \colon \sigma_{n*}$, extension and restriction functors
    \item $e^*_\tr,~e_*^\tr$, inverse image and direct image functors of sheaves with transfers on $\Sm$
    \item $e^*_{\leq n},~e_*^{\leq n}$, inverse image and direct image functors of sheaves (with transfers) on $(\Sm)_{\leq n}$
    \item $e^*_\HI,~e_*^\HI$, inverse image and direct image functors of homotopy invariant sheaves
    \item $e_n^*,~e_{n*}$, inverse image and direct image functors of $n$-motivic sheaves
    \item $\pi_0 \colon \Shv_\et^\tr(k,\Lambda) \to \HI_{\leq 0}(k,\Lambda)$, left adjoint to the inclusion functor
\end{itemize}

\section{Chow image and Chow trace of Deligne \texorpdfstring{$1$}{1}-motives}
In this section, we study the extension of scalars of Deligne $1$-motives. More precisely, we will show some full faithfulness results and will construct the Chow image and Chow trace of Deligne $1$-motives.
\subsection{Commutative \'etale group schemes}
In this subsection, we fix our notations and recall some well-known and not so well-known facts on \'etale group schemes over a field.

Let $k$ be a field and let $k_s$ be a separable closure of $k$.
\begin{defn}\label{M0tM0}
    Let $L$ be a commutative \'etale group scheme over $k$.
    \begin{enumerate}[label={\rm(\arabic*)}]
        \item We say that $L$ is a constructible group scheme if $L(k_s)$ is a finitely generated abelian group.
        \item We say that $L$ is a lattice if $L(k_s)$ is a finitely generated free abelian group.
        \item Denote the category of constructible group schemes (resp. lattices) over $k$ by ${}^t\M_0(k)$ (resp. $\M_0(k)$), where morphisms are homomorphisms between group schemes.
    \end{enumerate}
\end{defn}
\begin{rmk}
    Constructible group schemes are called discrete group schemes in \cite{BVK16Derived1Motives}. We call such group schemes constructible to emphasize the finiteness and to avoid the confusion with discrete Galois modules. 
\end{rmk}

\begin{rmk}\label{SubM0}
    By \cite[Chapitre II, \S5, Proposition~1.4]{DG70AlgGrp}, a group scheme $G$ locally of finite type over $k$ is \'etale if and only if $G^0\simeq\Spec k$. Thus if $G$ is \'etale, then every subgroup scheme is also \'etale. Moreover, if $G$ is a constructible group scheme (resp. a lattice, resp. a finite \'etale group scheme), then so is every subgroup scheme.
\end{rmk}

The following result gives a concrete way to study commutative \'etale group schemes over $k$:
\begin{prop}[{\cite[Chapitre II, \S5, Proposition~1.7]{DG70AlgGrp}}]\label{EtaleModule}
    The functor $L \mapsto L(k_s)$ is an equivalence from the category of \'etale group schemes to the category of discrete $\Gal(k_s/k)$-groups. Moreover, via this functor, constructible group schemes (resp. lattices, resp. commutative finite \'etale group schemes) correspond to finitely generated (resp. finitely generated free, resp. finite) abelian groups with continuous $\Gal(k_s/k)$-actions.
\end{prop}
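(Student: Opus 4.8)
The plan is to deduce the statement from the classical equivalence between \'etale $k$-schemes and discrete $\Gal(k_s/k)$-sets. First I would recall that the functor $X\mapsto X(k_s)=\Hom_{\Spec k}(\Spec k_s,X)$, endowed with its natural $\Gal(k_s/k)$-action, is an equivalence from the category of \'etale $k$-schemes to the category of sets equipped with a continuous action of $\Gal(k_s/k)$, i.e.\ an action with open point-stabilizers (\cite[Chapitre II, \S5]{DG70AlgGrp}). This reduces to the case of $\Spec k'$ with $k'/k$ a finite separable extension, where $\Spec(k')(k_s)$ is the (transitive, open-stabilizer) $\Gal(k_s/k)$-set of $k$-embeddings $k'\hookrightarrow k_s$, a general \'etale $k$-scheme being a coproduct of such; full faithfulness and essential surjectivity are exactly the content of this classical fact.

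Next I would observe that this functor preserves finite limits: it sends $\Spec k$ to the one-point set, and since a fibre product of \'etale $k$-schemes over $k$ is again \'etale one has $(X\times_k Y)(k_s)=X(k_s)\times Y(k_s)$ compatibly with the Galois actions. An equivalence of categories preserving finite products induces an equivalence on the associated categories of group objects, which gives the first assertion: an equivalence between \'etale group schemes over $k$ and group objects in discrete $\Gal(k_s/k)$-sets, that is, discrete $\Gal(k_s/k)$-groups. Imposing on both sides the further (finite-limit) condition of commutativity identifies commutative \'etale group schemes with abelian groups carrying a continuous $\Gal(k_s/k)$-action.

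For the ``moreover'' part, the cases of constructible group schemes and of lattices are then immediate from Definition~\ref{M0tM0}: by construction $L$ is constructible (resp.\ a lattice) exactly when $L(k_s)$ is a finitely generated (resp.\ finitely generated free) abelian group, and the action is continuous by the above. The only step requiring a small separate argument is the correspondence between commutative \textit{finite} \'etale group schemes and \textit{finite} abelian groups: for $L$ \'etale over $k$, the structure morphism $L\to\Spec k$ is finite if and only if $L$ has finite underlying set with finite residue field extensions, which under the equivalence says precisely that $L(k_s)$ is a finite set. I do not anticipate a real obstacle, the result being standard; the point demanding the most care is the first paragraph --- fixing ``discrete $\Gal(k_s/k)$-group'' to mean a \textit{continuous} action (open stabilizers) and matching this with \'etaleness, equivalently local finiteness of type together with trivial identity component, cf.\ Remark~\ref{SubM0} --- plus the bookkeeping promoting the equivalence on schemes to one on group objects.
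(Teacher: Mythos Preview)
Your proof is correct and follows the standard argument for this classical result. However, the paper does not actually prove this proposition: it is stated with a citation to \cite[Chapitre II, \S5, Proposition~1.7]{DG70AlgGrp} and no proof is given, the result being treated as known background. So there is no paper proof to compare against; your sketch simply supplies the standard argument that the cited reference contains.
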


Let $S$ be a scheme. For a commutative group scheme $G$ over $S$, we have a group functor
\begin{align*}
    G^\vee  \colon \Sch/S &\longrightarrow \{\text{abelian groups}\} \\
                X \quad  &\longmapsto     \Hom_X(G_X,\bG_{m,X}),
\end{align*}
called Cartier duality of $G$.

Recall that a group scheme over $k$ is called a torus if $T_{\overline k}\simeq \bG_{m,\overline k}^n$ for some $n\in\bN$. Tori are commutative, connected, affine, smooth and of finite type over $k$. Denote by $\Tori(k)$ the category of tori over $k$. We have the following duality theorem: 

\begin{thm}\label{CartierDual}
    The functors $T\mapsto T^\vee$ and $L\mapsto L^\vee$ are anti-equivalences, quasi-inverses of each other, between $\Tori(k)$ and $\M_0(k)$.
\end{thm}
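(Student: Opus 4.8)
The plan is to establish the Cartier duality between tori and lattices over a field $k$ by reducing to the well-understood situation over the separable closure $k_s$ and tracking the Galois action.

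First I would recall the classical statement over an algebraically (or separably) closed field: for a split torus $T \simeq \bG_m^n$, the Cartier dual $T^\vee$ is represented by the constant group scheme $\bZ^n$, with the pairing given by the natural perfect pairing between characters and cocharacters; conversely, for a free abelian group $\bZ^n$ regarded as a constant lattice, its Cartier dual is the split torus $\bG_m^n$. This gives the anti-equivalence over $k_s$, functorial in the obvious way, so that a morphism of split tori $T_1 \to T_2$ corresponds contravariantly to a morphism of character groups, and conversely. The key functoriality input is that Cartier duality $G \mapsto G^\vee$ is contravariant and compatible with base change, and that $(G^\vee)^\vee \simeq G$ for $G$ a torus or a lattice (biduality), which over $k_s$ is immediate.

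Next I would descend to $k$ using \cref{EtaleModule}, which identifies $\M_0(k)$ with the category of finitely generated free abelian groups equipped with a continuous $\Gal(k_s/k)$-action. The point is that a torus $T$ over $k$ is, by definition, a form of a split torus, hence is determined by the Galois module $X^*(T) = \Hom_{k_s\text{-gp}}((T)_{k_s}, \bG_{m,k_s})$, which is a lattice with continuous Galois action; this is the standard equivalence $\Tori(k) \to \M_0(k)^{\opp}$ via $T \mapsto X^*(T)$, and I would cite it or sketch it via Galois descent for affine schemes of finite type equipped with a group structure. I then need to check that this equivalence is realized by the Cartier dual functor, i.e. that $T^\vee$, as a functor on $\Sch/k$, is represented by the étale group scheme over $k$ corresponding to the Galois module $X^*(T)$. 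This follows because $T^\vee$ commutes with the base change $k \to k_s$, so $(T^\vee)_{k_s} \simeq (T_{k_s})^\vee$ is the constant scheme on $X^*(T)$, and the $\Gal(k_s/k)$-descent datum on $T$ induces the natural one on $X^*(T)$; one then invokes \cref{EtaleModule} to conclude $T^\vee$ is the lattice attached to $X^*(T)$. The reverse direction $L \mapsto L^\vee$ is handled symmetrically, and biduality $(T^\vee)^\vee \simeq T$, $(L^\vee)^\vee \simeq L$ reduces to the split case by the same descent argument, giving that the two functors are quasi-inverse anti-equivalences.

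The main obstacle I anticipate is not conceptual but one of bookkeeping: carefully checking that the Cartier dual group functor $T^\vee$ is actually \emph{representable} by a scheme over $k$ (not just over $k_s$) and that the representing object is étale, rather than merely identifying its $k_s$-points as a Galois module. This requires knowing that $T^\vee$ commutes with arbitrary base change (in particular with $\Spec k_s \to \Spec k$), which holds because $\Hom_X(G_X, \bG_{m,X})$ for $G$ affine flat of finite presentation is compatible with base change, together with effectivity of descent for affine schemes; and then that an fppf (indeed étale) sheaf which becomes a constant finite-free group scheme after a Galois base change is itself étale, which is exactly the content of \cref{EtaleModule} combined with Galois descent. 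Once representability and étaleness are in hand, the anti-equivalence and the quasi-inverse property are formal consequences of the split case and functoriality of $(-)^\vee$.
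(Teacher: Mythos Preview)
Your proposal is correct: the Galois-descent argument you outline (establish the split case over $k_s$, then descend using the equivalence of \cref{EtaleModule} and compatibility of Cartier duality with base change) is a valid proof, and the representability concern you flag is the right one to check.

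The paper, however, takes a much shorter route: it simply cites the result from Demazure--Gabriel, \textit{Groupes alg\'ebriques}, Tome I. Specifically, Chapitre IV, \S1, Corollaire~3.3 gives the anti-equivalence between group schemes of multiplicative type and commutative \'etale group schemes over $k$, and Corollaire~3.9(a) identifies tori with lattices under this correspondence. What you have sketched is essentially the content of those results in Demazure--Gabriel, so your argument is not wrong but rather a reconstruction of the cited material. The advantage of your approach is self-containment; the advantage of the paper's approach is brevity and the avoidance of having to re-verify representability and descent details that are already handled carefully in a standard reference.
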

\begin{proof}
    By \cite[Chapitre IV, \S1, Corollaire 3.3]{DG70AlgGrp}, these two functors are anti-equivalences, quasi-inverses of each other, between the category of the group schemes of multiplicative type over $k$ and the category of commutative \'etale group schemes over $k$. Then by \cite[Chapitre IV, \S1, Corollaire 3.9 (a)]{DG70AlgGrp}, tori correspond to lattices.
\end{proof}

\subsection{Semi-abelian varieties}
In this subsection, we shall work in the abelian category of commutative algebraic groups (i.e., group schemes of finite type) over a field $k$; see {\cite[Expos\'e~VI$\rm _A$, Th\'eor\`eme~5.4.2]{SGA3I}} for a proof that this category is abelian. By \cite[Chapitre II, \S5, Th\'eor\`eme~2.1]{DG70AlgGrp}, an algebraic group over $k$ is smooth if and only if it is geometrically reduced.

\begin{lem}\label{AffineAVtrivial}
    Let $T$ be a smooth connected affine algebraic group over $k$ and let $A$ be an abelian variety over $k$. Then there is neither a nontrivial homomorphism from $T$ to $A$ nor a nontrivial homomorphism from $A$ to $T$.
\end{lem}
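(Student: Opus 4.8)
The plan is to treat the two directions separately, playing the properness of $A$ against the affineness of $T$: the direction ``$A\to T$'' is soft and works already for morphisms of schemes, whereas ``$T\to A$'' requires a genuine input about quotients of affine groups.

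First, for a morphism of $k$-schemes $g\colon A\to T$: as $T$ is affine, $g$ corresponds to a homomorphism of $k$-algebras $\mathcal{O}(T)\to\mathcal{O}(A)$, and since $A$ is proper, geometrically connected and geometrically reduced over $k$ (being an abelian variety) one has $\mathcal{O}(A)=k$; hence $g$ factors through a $k$-rational point of $T$, i.e.\ is constant. A constant homomorphism of $k$-group schemes is zero, because it must send the identity section to the identity section. This disposes of all homomorphisms $A\to T$.

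Second, for a homomorphism $f\colon T\to A$, the strategy is to exhibit its image as simultaneously an abelian subvariety of $A$ and an affine algebraic group, so that it must be trivial. The image of a homomorphism of group schemes of finite type over a field is a closed subgroup scheme $B\subseteq A$, and the induced morphism $T\to B$ is faithfully flat (indeed $B\cong T/\ker f$). Since $T$ is smooth it is geometrically reduced, so $T_{\overline k}$ is reduced; a nilpotent regular function on an open subset of $B_{\overline k}$ then pulls back to a nilpotent --- hence vanishing --- function on $T_{\overline k}$, and therefore vanishes itself by faithful flatness. Thus $B_{\overline k}$ is reduced, i.e.\ $B$ is geometrically reduced, hence smooth; moreover $B$ is geometrically connected (being the image of the geometrically connected group scheme $T$, which has the identity as a rational point) and proper over $k$ (being closed in $A$). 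Hence $B$ is an abelian subvariety of $A$. On the other hand $B\cong T/\ker f$ is the quotient of the affine algebraic group $T$ by a closed normal subgroup scheme, and such a quotient is again affine. But an abelian variety which is affine over $k$ equals $\Spec\mathcal{O}(B)$ with $\mathcal{O}(B)=k$, hence is trivial; so $B=\Spec k$, which forces $\ker f=T$ and $f=0$.

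The one non-formal ingredient is the classical theorem that the quotient of an affine algebraic group by a closed normal subgroup scheme is affine (Demazure--Gabriel); this is the step I would single out as the crux, the rest being bookkeeping with closed images and base change to $\overline k$. To avoid it one can instead work over $\overline k$, reduce via the unipotent radical and a Levi decomposition to the cases $T=\bG_a$ and $T=\bG_m$ (using that semisimple groups are generated by their root subgroups, each a copy of $\bG_a$), and treat those by extending any homomorphism $\bG_a\to A$ or $\bG_m\to A$ to a morphism $\bP^1\to A$ --- which is necessarily constant --- before descending back to $k$.
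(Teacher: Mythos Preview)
Your proof is correct and, for the direction $T\to A$, follows essentially the same line as the paper: both arguments identify the image $T/\ker f$ as a closed subgroup of $A$ that is simultaneously affine (as a quotient of an affine group, citing SGA3/Demazure--Gabriel) and proper (as a closed subscheme of $A$), hence trivial. The paper phrases the endgame as ``proper $+$ affine $\Rightarrow$ finite, and finite \'etale connected $\Rightarrow\Spec k$'', whereas you go via ``affine abelian variety has $\mathcal{O}(B)=k$''; these are equivalent.

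For $A\to T$ your argument is slightly different and in fact more economical: you use directly that $\mathcal{O}(A)=k$ for an abelian variety, so any morphism to an affine scheme is constant---no group theory needed. The paper instead runs the same quotient argument symmetrically ($A/\ker g$ inherits smooth, connected, proper; being closed in affine $T$ it is also affine, hence finite, hence $\Spec k$). Your shortcut is cleaner here, while the paper's version has the virtue of being perfectly parallel to the other case. Your alternative sketch via Levi decomposition and extension to $\bP^1$ is also valid but, as you note, unnecessary once one accepts the affineness of quotients.
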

\begin{proof}
    For any homomorphism $f \colon  T\to A$, the quotient $T/\ker(f)$ inherits the properties of being smooth connected and affine from $T$ by \cite[Expos\'e~VI$\rm _B$, Proposition~9.2(xii) and Th\'eor\`eme~11.17]{SGA3I}. Since $A$ is proper, its closed subgroup $T/\ker(f)$ is also proper. Because $T/\ker(f)$ is both proper and affine, it is finite. Since $T/\ker(f)$ is finite \'etale and connected, it is isomorphic to $\Spec k$, which means that $f$ is trivial.

    Similarly, for any homomorphism $g \colon  A\to T$, the quotient $A/\ker(g)$ inherits the properties of being smooth connected and proper from $A$. Since $T$ is affine, its closed subgroup $A/\ker(g)$ is also affine. Because $A/\ker(g)$ is both proper and affine, it is finite. Since $A/\ker(g)$ is finite \'etale and connected, it is isomorphic to $\Spec k$, which means that $g$ is trivial.
\end{proof}
\begin{rmk}
    When $T$ is a torus, we can also use \cite[Corollary~3.9]{Milne86AV} to see that there exists no nontrivial (homo)morphism from $T$ to $A$.
\end{rmk}

Let $G$ be a smooth connected commutative algebraic group over $k$. By Chevalley's theorem (see, e.g., \cite{Conrad02ChevalleyThmAlgGrp}), $G_{\overline k}$ is uniquely an extension of an abelian variety by a smooth connected affine group $G_{\overline k}^\aff$. By Lemma~\ref{AffineAVtrivial}, these are functorial in $G$.

\begin{defn} 
    A commutative algebraic group $G$ over $k$ is called a semi-abelian variety if it can be represented by an extension
    \[0\to T\to G \to A\to 0,\]
    where $T$ is a torus and $A$ is an abelian variety. Since $T$ and $A$ are both smooth and connected, so is $G$.
\end{defn}
\begin{rmk}\label{SAV}
    \begin{enumerate}[leftmargin=*,label={\rm(\arabic*)}]
        \item We call $T$ and $A$ the toric part and abelian part of $G$ respectively. By Lemma~\ref{AffineAVtrivial}, these are functorial in $G$. In particular, the groups $T$ and $A$ are uniquely determined by $G$.  \menum
        \item By \cite[bottom of p. 178]{BLR90Neron}, a smooth connected commutative algebraic group $G$ is semi-abelian if and only if $G_{\overline k}^\aff$ is a torus.
    \end{enumerate}
\end{rmk}

\begin{lem}\label{SubQuotSAV}
    \begin{enumerate}[leftmargin=*,label={\rm(\arabic*)}]
        \item The quotients and smooth connected subgroups of a torus are tori.\menum
        \item The quotients and smooth connected subgroups of an abelian variety are abelian varieties.
        \item The quotients and the smooth connected subgroups of a semi-abelian variety are semi-abelian varieties.
    \end{enumerate}
\end{lem}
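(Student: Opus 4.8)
The plan is to establish the three items in succession, using each to deduce the next, and to reduce everything to assertions over $\overline k$ — which is legitimate, since ``being a torus'' and ``being an abelian variety'' can be checked after base change to $\overline k$, and the same is true of semi-abelianness by Remark~\ref{SAV}(2). Throughout, ``subgroup'' means closed subgroup scheme; this is no restriction for the smooth subgroup schemes appearing here, as a smooth (hence flat) subgroup scheme of a group scheme of finite type over a field is automatically closed (\cite[Expos\'e~VI$\rm _B$]{SGA3I}), and the fppf quotient of an algebraic group by a closed subgroup is again an algebraic group, which is affine (resp.\ proper) if the original one is, the quotient morphism being faithfully flat and surjective.

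For item~(1): if $H\subseteq T$ is a closed subgroup scheme of a torus, then $T/H$ is commutative, connected, smooth and affine, and over $\overline k$ it is a quotient of $\bG_{m,\overline k}^n$, hence a torus; therefore $T/H$ is a torus. If in addition $H$ is smooth and connected, then $H_{\overline k}$ is a smooth connected subgroup of $\bG_{m,\overline k}^n$, hence a subtorus, so $H$ is a torus. (Alternatively one can phrase this via Cartier duality, Theorem~\ref{CartierDual}, and the structure theory of groups of multiplicative type in \cite[Chapitre~IV, \S1]{DG70AlgGrp}.) For item~(2): a quotient $A/H$ of an abelian variety is commutative, connected, smooth and proper, hence an abelian variety; and a smooth connected subgroup $H\subseteq A$ is closed, hence proper, smooth and connected, hence an abelian variety.

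For item~(3), write $0\to T\to G\to A\to 0$ and recall, from Remark~\ref{SAV}(2), that a smooth connected commutative algebraic group $H$ is semi-abelian if and only if $H_{\overline k}^{\aff}$ is a torus, together with the fact that $G_{\overline k}^{\aff}=T_{\overline k}$. If $H\subseteq G$ is a smooth connected subgroup, then the composite $H_{\overline k}^{\aff}\hookrightarrow G_{\overline k}\twoheadrightarrow A_{\overline k}$ is a homomorphism from a smooth connected affine group to an abelian variety, hence trivial by Lemma~\ref{AffineAVtrivial}; thus $H_{\overline k}^{\aff}$ is a smooth connected (closed) subgroup of $G_{\overline k}^{\aff}=T_{\overline k}$, so a torus by item~(1), and $H$ is semi-abelian. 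If $q\colon G\twoheadrightarrow Q$ is a quotient, the same vanishing shows that $q(G_{\overline k}^{\aff})$ is contained in $Q_{\overline k}^{\aff}$; then $Q_{\overline k}/q(G_{\overline k}^{\aff})$ is a quotient of the abelian variety $A_{\overline k}$, hence an abelian variety by item~(2), and consequently its closed subgroup $Q_{\overline k}^{\aff}/q(G_{\overline k}^{\aff})$, being at the same time affine (a quotient of $Q_{\overline k}^{\aff}$) and proper, is finite, hence, being also smooth and connected, trivial; therefore $Q_{\overline k}^{\aff}$ is a quotient of the torus $T_{\overline k}$, so a torus by item~(1), and $Q$ is semi-abelian.

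The only point needing any care is the quotient case of item~(3): one must verify not just that $G_{\overline k}^{\aff}$ maps into $Q_{\overline k}^{\aff}$ but that it \emph{surjects} onto it, and this is precisely where the ``proper and affine $\Rightarrow$ finite'' argument from the proof of Lemma~\ref{AffineAVtrivial} is invoked once more, now for the quotient abelian variety $Q_{\overline k}/q(G_{\overline k}^{\aff})$. Everything else is formal once one may pass to $\overline k$.
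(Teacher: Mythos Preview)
Your proof is correct and follows essentially the same approach as the paper. Item~(2) is identical; for item~(1) the paper simply cites \cite[Chapitre~IV, \S1, Corollaires~2.4 and~3.9(a)]{DG70AlgGrp} where you sketch a direct argument, and for item~(3) both proofs reduce to $\overline k$ via Remark~\ref{SAV}(2) and run the same ``affine and proper, hence finite, hence trivial'' argument to show that $G_{\overline k}^{\aff}$ surjects onto $Q_{\overline k}^{\aff}$ in the quotient case.
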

\begin{proof}
    \begin{enumerate}[leftmargin=*,label={\rm(\arabic*)}]
        \item This assertion follows from \cite[Chapitre IV, \S1, Corollaires 2.4 and 3.9 (a)]{DG70AlgGrp}.
        \item The closed subgroups of abelian varieties are proper over $k$. If they are smooth connected, then they are abelian varieties by definition. The quotients of abelian varieties inherit the properties of being smooth connected and proper over $k$ (see \cite[Expos\'e~VI$\rm _B$, Proposition~9.2(xii)]{SGA3I}). Thus they are abelian varieties. 
        \item By Remark~\ref{SAV} (2), we may and do assume that $k$ is algebraically closed. Let $G$ be a semi-abelian variety over $k$, i.e., $G^\aff$ is a torus. 
    
        If $G'$ is a smooth connected subgroup of $G$, then $G'^{\aff}$ is a closed subgroup of the torus $G^\aff$. Thus $G'^\aff$ is also a torus, which implies that $G'$ is a semi-abelian variety. 
        
        Let $f \colon  G\to G''$ be a surjection. Let $H$ be the categorical image of the induced morphism $G^\aff\to G''^\aff$ in the abelian category of commutative algebraic groups over $k$. Since $G/G^\aff$ is an abelian variety, its quotient $G''/H$ is also an abelian variety. Then the smooth connected subgroup $G''^\aff/H$ of $G''/H$ is an abelian variety. But $G''^\aff/H$ inherits the property of being affine from $G''^\aff$. Thus $G''^\aff/H$ is trivial, which implies that the morphism $G^\aff \to G''^\aff$ is an epimorphism. Then $G''^\aff$ inherits the property of being a torus from $G^\aff$. Thus $G''$ is a semi-abelian variety.\qedhere
    \end{enumerate}
\end{proof}

\begin{lem}\label{nGFiniteEtale}
    Let $k$ be a field and $G$ be a semi-abelian variety over $k$. Let $n$ be a positive integer prime to $\Char(k)$ and let ${}_nG$ be the kernel of multiplication by $n$ on $G$. Then ${}_nG$ is finite \'etale over $k$.
\end{lem}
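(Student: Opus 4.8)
The plan is to prove that multiplication by $n$ on $G$, written $[n]_G\colon G\to G$, is an \'etale morphism; granting this, ${}_nG$ is the fibre of $[n]_G$ over the identity section, hence \'etale over $k$, and being a closed subscheme of the $k$-scheme of finite type $G$ it is itself of finite type over $k$, so it is a finite \'etale $k$-scheme. First I would check that $[n]_G$ is \'etale at the identity $e$. Since a semi-abelian variety is smooth, both $G$ and its target copy are regular; the differential of $[n]_G$ at $e$ is multiplication by $n$ on $\Lie(G)$ (because the differential of the group law at $(e,e)$ is addition), which is bijective as $n$ is invertible in $k$. Therefore $[n]_G$ is unramified at $e$, its fibre ${}_nG$ has vanishing tangent space at $e$ hence is $0$-dimensional there, and miracle flatness yields flatness of $[n]_G$ at $e$; thus $[n]_G$ is \'etale at $e$.

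Next I would spread this out by homogeneity. The \'etale locus $U\subseteq G$ of $[n]_G$ is open, since $[n]_G$ is a morphism of finite type over the field $k$. For any $g\in G(\overline k)$ the translation $\tau_g$ of $G_{\overline k}$ satisfies $[n]\circ\tau_g=\tau_{ng}\circ[n]$, and since translations are isomorphisms this forces $\tau_g(U_{\overline k})=U_{\overline k}$. As $e\in U$ we get $g=\tau_g(e)\in U_{\overline k}$ for every $g\in G(\overline k)$, so $U_{\overline k}$ is an open subset of $G_{\overline k}$ containing all closed points, hence $U_{\overline k}=G_{\overline k}$, and therefore $U=G$. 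So $[n]_G$ is \'etale.

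Finally, ${}_nG=\Spec k\times_{e,\,G,\,[n]_G}G$ is the base change of the \'etale morphism $[n]_G$ along $e\colon\Spec k\to G$, hence \'etale over $k$; it is a closed subscheme of the finite type $k$-scheme $G$, hence of finite type over $k$; and an \'etale $k$-scheme of finite type is a finite product of spectra of finite separable extensions of $k$, i.e.\ finite \'etale over $k$. I do not expect a serious obstacle here; the only point demanding a little care is the passage from \'etaleness at $e$ to \'etaleness everywhere, handled either by the homogeneity argument above or, alternatively, by writing $G$ as an extension $0\to T\to G\to A\to0$ and applying the snake lemma to $[n]$ on this sequence, using that $[n]_T$ is an epimorphism and that ${}_nT$ (which is $\mu_n^{\dim T}$ after base change to $\overline k$) and ${}_nA$ are finite \'etale precisely because $n$ is prime to $\Char k$, then deducing \'etaleness of the extension ${}_nG$ from the identity-component criterion recalled in Remark~\ref{SubM0}.
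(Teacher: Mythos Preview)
Your proof is correct. The paper itself gives no argument and simply cites \cite[\S 7.3, Lemmas 1 and 2]{BLR90Neron}, so your write-up is more self-contained. Your main route---computing the differential of $[n]_G$ at the identity and propagating \'etaleness by translations---is the standard direct argument; the alternative you sketch at the end (using the extension $0\to T\to G\to A\to 0$ and the snake lemma to reduce to the torus and abelian-variety cases) is essentially how the cited reference proceeds, and that approach has the bonus of working over a general base and of yielding the degree formula $n^{t+2a}$ used in Lemma~\ref{nGOrder}. One minor stylistic remark: invoking miracle flatness is a slight detour, since for a morphism between smooth $k$-varieties of the same dimension, bijectivity of the tangent map at a point already gives \'etaleness at that point directly.
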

\begin{proof}
    See \cite[\S 7.3, Lemmas 1 and 2]{BLR90Neron}.
\end{proof}

\begin{lem}\label{nGOrder}
    Let $k$ be an algebraically closed field and let $G$ be a semi-abelian variety over $k$. Let $n$ be a positive integer prime to $\Char(k)$. Then
    \[\#{}_nG(k)=n^{t+2a},\]
    where $t$ (resp. $a$) is the dimension of the toric part (resp. abelian part) of $G$.
\end{lem}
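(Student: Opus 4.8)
The plan is to reduce the computation to the two extreme cases — a torus and an abelian variety — via the defining extension $0\to T\to G\to A\to 0$, in which $\dim T=t$ and $\dim A=a$, and then multiply the orders of the $n$-torsion of the two pieces.

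First I would observe that, since $n$ is prime to $\Char(k)$, multiplication by $n$ on the torus $T$ is an \'etale isogeny, hence an epimorphism in the abelian category of commutative algebraic groups over $k$; in particular $\Coker\bigl([n]\colon T\to T\bigr)=0$. Applying the snake lemma to the endomorphism "multiplication by $n$" of the short exact sequence $0\to T\to G\to A\to 0$ then yields an exact sequence of group schemes
\[0\to {}_nT\to {}_nG\to {}_nA\to 0.\]
By Lemma~\ref{nGFiniteEtale} all three terms are finite \'etale over $k$, so (since $k=\overline k$) passing to $k$-points is exact on them, and we get $\#{}_nG(k)=\#{}_nT(k)\cdot\#{}_nA(k)$.

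It remains to compute the two factors. Since $k$ is algebraically closed, $T\simeq\bG_m^{t}$, so ${}_nT(k)\simeq\mu_n(k)^{t}$; and $\mu_n(k)$ has exactly $n$ elements because $n$ is prime to $\Char(k)$ and $k$ is algebraically closed, whence $\#{}_nT(k)=n^{t}$. For the abelian part I would invoke the classical computation of the $n$-torsion of an abelian variety: for $n$ prime to the exponential characteristic one has $\#{}_nA(k)=n^{2a}$ (this rests on $[n]_A$ being a separable isogeny of degree $n^{2a}$; see e.g.\ Mumford, \emph{Abelian Varieties}, \S6, or Milne's notes on abelian varieties). Combining the two gives $\#{}_nG(k)=n^{t}\cdot n^{2a}=n^{t+2a}$.

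The only genuinely geometric input is the abelian-variety computation $\#{}_nA(k)=n^{2a}$; everything else is formal — the snake lemma in the abelian category of commutative algebraic groups, the surjectivity of $[n]$ on a torus, and the structure $T\simeq\bG_m^{t}$ over $\overline k$. So the "main obstacle" is really just citing that fact in the correct prime-to-$p$ generality and making sure the snake-lemma sequence stays short exact on $k$-points, which is immediate from Lemma~\ref{nGFiniteEtale} once $k$ is algebraically closed.
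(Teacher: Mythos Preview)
Your proof is correct and follows essentially the same approach as the paper: apply the snake lemma to multiplication by $n$ on the extension $0\to T\to G\to A\to 0$, use surjectivity of $[n]$ on $T$ to get a short exact sequence of $n$-torsion, pass to $k$-points via Lemma~\ref{nGFiniteEtale}, and then invoke $T\simeq\bG_m^t$ and the standard result $\#{}_nA(k)=n^{2a}$ (Mumford/Milne). The paper's argument is identical in structure and in the references cited.
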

\begin{proof}
    Consider the following commutative diagram with exact rows in the abelian category of commutative algebraic groups
    \[\xymatrix{
        0\ar[r] & T \ar[r] \ar[d]_n & G \ar[r] \ar[d]_n & A \ar[r] \ar[d]_n & 0\\
        0\ar[r] & T \ar[r]          & G \ar[r]          & A \ar[r]          & 0.
    }\]
    Using the snake lemma and noting that $n \colon  T\to T$ is an epimorphism, we have the following exact sequence
    \[0\to {}_nT \to {}_nG \to {}_nA \to 0.\] 
    By Lemma~\ref{nGFiniteEtale}, the group schemes ${}_nT$, ${}_nG$ and ${}_nA$ are finite \'etale over $k$. Using Proposition~\ref{EtaleModule}, we obtain the following exact sequence of finite abelian groups
    \[0\to {}_nT(k) \to {}_nG(k) \to {}_nA(k) \to 0.\] 
    Since $k$ is algebraically closed, we have $T\simeq \bG_m^t$ and $\#{}_n\!T(k)=n^t$. By \cite[p.60]{Mumford14AV} or \cite[Theorem~8.2]{Milne86AV}, the map $n\colon A\to A$ is finite \'etale of degree $n^{2a}$. So $\#{}_nA(k)=n^{2a}$. Hence, we obtain $\#{}_nG(k)=\#{}_nT(k)\cdot \#{}_nA(k)=n^{t+2a}$.
\end{proof}

\begin{lem}\label{DimSubSAV}
    Let $G$ be a semi-abelian variety over a field $k$ and let $H$ be a semi-abelian subvariety of $G$. Let $a$ (resp. $a_0$) be the dimension of the abelian part and let $t$ (resp. $t_0$) be the dimension of the toric part of $G$ (resp. $H$). Then we have 
    \[a\geq a_0 \text{ and }t\geq t_0.\]
\end{lem}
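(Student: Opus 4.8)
The plan is to reduce to the algebraically closed case and then play off the two basic facts already at hand: that there is no nontrivial homomorphism between a smooth connected affine group and an abelian variety (Lemma~\ref{AffineAVtrivial}), and the fiber-dimension formula for a homomorphism of algebraic groups. Since the toric and abelian parts of a semi-abelian variety are cut out over $\overline{k}$ by Chevalley's theorem and descend by functoriality (Remark~\ref{SAV}(1)), their dimensions are unchanged under base change, and $H_{\overline{k}}$ is again a semi-abelian subvariety of $G_{\overline{k}}$; so I would first assume $k=\overline{k}$. Write the canonical extension $0\to T\to G\to A\to 0$ expressing the toric part $T$ and abelian part $A$ of $G$, and set $T_0\colonequals H^{\aff}$ (the toric part of $H$) and $A_0\colonequals H/T_0$ (the abelian part of $H$), so that $t=\dim T$, $t_0=\dim T_0$, $a=\dim A$, $a_0=\dim A_0$.

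For $t\ge t_0$: the composite $T_0\hookrightarrow H\hookrightarrow G\twoheadrightarrow A$ is a homomorphism from the smooth connected affine group $T_0$ to the abelian variety $A$, hence zero by Lemma~\ref{AffineAVtrivial}. Thus $T_0\subseteq\ker(G\to A)=T$, which immediately gives $t_0=\dim T_0\le\dim T=t$.

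For $a\ge a_0$: consider the homomorphism $f\colon H\hookrightarrow G\twoheadrightarrow A$ and put $N\colonequals\ker f=H\cap T$, a closed subgroup scheme of the torus $T$. The key claim is $\dim N=t_0$. On one hand $T_0\subseteq T$ (just shown) and $T_0\subseteq H$, so $T_0\subseteq N$ and hence $T_0\subseteq N^0_{\red}$. On the other hand, since $k=\overline{k}$ is perfect, $N^0_{\red}$ is a smooth connected subgroup scheme of $T$, in particular affine, so Lemma~\ref{AffineAVtrivial} applied to $N^0_{\red}\hookrightarrow H\twoheadrightarrow H/T_0=A_0$ forces $N^0_{\red}\subseteq T_0$. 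Therefore $T_0=N^0_{\red}$ and $\dim N=\dim N^0_{\red}=t_0$. Now let $B\colonequals f(H)$ be the scheme-theoretic image, a closed subgroup scheme of $A$; by the fiber-dimension formula for the faithfully flat homomorphism $H\twoheadrightarrow B$ one gets $\dim B=\dim H-\dim N=(t_0+a_0)-t_0=a_0$, and since $B$ is a closed subscheme of $A$ this yields $a_0=\dim B\le\dim A=a$.

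The only subtle point is controlling possible non-reducedness of the intersection $H\cap T$ in positive characteristic; reducing to $\overline{k}$ (so that the reduction of a group scheme is again a subgroup scheme) and exploiting that $N$ sits inside a torus is what makes the dimension count clean. One could instead argue with quotients — $A_0=H/T_0$ maps to $A=G/T$ with kernel $(H\cap T)/T_0$, which is affine as a quotient of the affine group $H\cap T$ and proper as a closed subgroup of the abelian variety $A_0$, hence finite — but that route leans on the heavier fact that such quotients are affine, so I would present the argument above.
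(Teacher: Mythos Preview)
Your proof is correct and shares the overall shape of the paper's argument: reduce to $k=\overline{k}$, obtain $T_0\subseteq T$ from Lemma~\ref{AffineAVtrivial} (hence $t\ge t_0$), and then use the same lemma once more to bound $a_0$. The route for $a\ge a_0$ differs slightly. The paper sets up the full commutative diagram of extensions $0\to S\to H\to B\to 0$ over $0\to T\to G\to A\to 0$, applies the snake lemma to get a closed immersion $\ker(B\to A)\hookrightarrow\coker(S\to T)$, and observes that $\ker(B\to A)^0_{\red}$ is an abelian subvariety of $B$ mapping into the torus $T/S$, hence trivial; thus $\ker(B\to A)$ is finite and $a_0\le a$. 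You instead stay on the level of $H$, identify $(H\cap T)^0_{\red}=T_0$ directly (again via Lemma~\ref{AffineAVtrivial}, applied to the affine group $(H\cap T)^0_{\red}$ inside $H$), and read off $\dim f(H)=a_0$ from the fiber-dimension formula. Your alternative sketch at the end --- showing $\ker(A_0\to A)=(H\cap T)/T_0$ is both affine and proper, hence finite --- is essentially the paper's argument phrased without the snake lemma. All three variants are equivalent; your main argument avoids the snake lemma at the cost of one extra invocation of the reduced identity component, while the paper's diagram chase packages both inequalities at once.
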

\begin{proof}
    We may and do assume that $k$ is algebraically closed. By Remark~\ref{SAV} (1), we can write the closed immersion $i \colon  H\hookrightarrow G$ as the following commutative diagram with exact rows
    \[\xymatrix{
        0\ar[r] & S \ar[r] \ar[d]_j & H \ar[r] \ar[d]_i & B \ar[r] \ar[d]_f & 0\\
        0\ar[r] & T \ar[r]          & G \ar[r]          & A \ar[r]          & 0,
    }\]
    where $S$ and $T$ are tori, and $A$ and $B$ are abelian varieties. Using the snake lemma and noting that $i$ is a closed immersion, we obtain that $j$ is a closed immersion and that the connecting homomorphism $\delta \colon  \ker(f)\to \coker(j)$ is a closed immersion. The fact that $j$ is a closed immersion implies that $t\geq t_0$. Note that the reduced connected component $\ker(f)^0_\red$ of $\ker(f)$ is a smooth connected closed subgroup\footnote{Here, we used \cite[Chapitre II, \S5, Corollaire 2.3]{DG70AlgGrp} that if $G$ is a group scheme locally of finite type over a perfect field $k$, then $G_\red$ is a subgroup scheme of $G$. We shall show that for connected subgroups of semi-abelian varieties, it is unnecessary to assume $k$ to be perfect. See Proposition~\ref{SAVsubRed}.} of the abelian variety $A$. Using Lemma~\ref{SubQuotSAV} (2), we get that $\coker(j)$ is a torus and $\ker(f)^0_\red$ is an abelian variety. By Lemma~\ref{AffineAVtrivial}, the composition of closed immersions \[\ker(f)^0_\red\hookrightarrow\ker(f)\stackrel{\delta}{\hookrightarrow}\coker(j)\] is trivial. Thus $\ker(f)^0_\red$ is trivial and $\ker(f)$ has dimension $0$, which implies that $a_0\leq a$.
\end{proof}

We will reduce some problems about semi-abelian varieties to relevant ones of finite \'etale group schemes by using the following result.
\begin{prop}\label{SAVTorsionDense}
    Let $G$ be a semi-abelian variety over a field $k$. Then the collection of closed subschemes $\{{}_nG\}_{n\geq1,\Char(k)\nmid n}$ is (topologically) dense in $G$, where ${}_nG$ is the kernel of multiplication by $n$ on $G$.
\end{prop}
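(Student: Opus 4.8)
The plan is to reduce to the case $k=\overline k$ and then run a dimension count powered by the order formula of Lemma~\ref{nGOrder}.

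\emph{Reduction to $k$ algebraically closed.} Let $q\colon G_{\overline k}\to G$ be the base change; it is faithfully flat of finite type, hence surjective, and $({}_nG)_{\overline k}={}_n(G_{\overline k})$ as closed subschemes. If $\bigcup_n{}_n(G_{\overline k})$ is dense in $G_{\overline k}$, then for any closed $F\subseteq G$ containing every ${}_nG$ the preimage $q^{-1}(F)$ is a closed subset of $G_{\overline k}$ containing every ${}_n(G_{\overline k})$, hence equals $G_{\overline k}$; surjectivity of $q$ then forces $F=G$. So we may assume $k=\overline k$. Then each ${}_nG$ is finite \'etale by Lemma~\ref{nGFiniteEtale}, in particular reduced, and its underlying subset of $|G|$ is just ${}_nG(k)$; thus the assertion becomes: the prime-to-$p$ torsion subgroup $H\colonequals\bigcup_{p\nmid n}{}_nG(k)\subseteq G(k)$ is Zariski dense in $G$.

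\emph{The closure is a semi-abelian variety.} Let $Z$ be the Zariski closure of $H$ in $G$ with its reduced structure. Since $H$ is a subgroup of $G(k)$ (a sum of an $m$-torsion and an $n$-torsion point is $\mathrm{lcm}(m,n)$-torsion, still prime to $p$), the reduced closure $Z$ is a closed subgroup scheme of $G$. Being reduced and of finite type over the perfect field $k$, $Z$ is generically smooth, hence smooth everywhere by translation-homogeneity; therefore $Z^0$ is a smooth connected closed subgroup of $G$, i.e.\ a semi-abelian subvariety of $G$ by Lemma~\ref{SubQuotSAV}(3). Write $t,a$ (resp.\ $t_0,a_0$) for the dimensions of the toric and abelian parts of $G$ (resp.\ of $Z^0$), so $\dim G=t+a$ and $\dim Z^0=t_0+a_0$.

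\emph{Dimension count.} Fix $n$ prime to $p$. Since ${}_nG(k)\subseteq H\subseteq Z(k)$ and these points are killed by $n$, we get ${}_nG(k)\subseteq{}_nZ(k)$. From the exact sequence $0\to Z^0\to Z\to\pi_0(Z)\to 0$ the snake lemma yields a left-exact sequence $0\to{}_nZ^0(k)\to{}_nZ(k)\to{}_n\pi_0(Z)(k)$, so $\#{}_nZ(k)\le\#{}_nZ^0(k)\cdot\#\pi_0(Z)(k)$. By Lemma~\ref{nGOrder}, $\#{}_nG(k)=n^{t+2a}$ and $\#{}_nZ^0(k)=n^{t_0+2a_0}$; hence, with $C\colonequals\#\pi_0(Z)(k)$ independent of $n$, $n^{t+2a}\le C\,n^{t_0+2a_0}$ for all $n$ coprime to $p$. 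Letting $n\to\infty$ through such integers gives $t+2a\le t_0+2a_0$. On the other hand $t_0+a_0=\dim Z^0\le\dim G=t+a$; adding the two inequalities gives $a\le a_0$, while Lemma~\ref{DimSubSAV} gives $a_0\le a$, so $a=a_0$ and then $t=t_0$. Thus $\dim Z^0=\dim G$, and since $G$ is irreducible this forces $Z^0=G$, hence $Z=G$ and $H$ is dense.

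\emph{Expected main obstacle.} The substantive points are the structural inputs to the count: that $Z$ is a closed subgroup scheme and smooth (so that $Z^0$ is semi-abelian and Lemmas~\ref{nGOrder} and~\ref{DimSubSAV} apply), and taking due care in the reduction step with the meaning of "topologically dense collection of closed subschemes''. A variant that avoids the smoothness discussion: first note $T\subseteq Z$ because roots of unity are dense in $\bG_m$, then use $n$-divisibility of $T(k)$ to lift every prime-to-$p$ torsion point of $A=G/T$ to one of $G$, deducing that $Z/T$ contains all prime-to-$p$ torsion of $A$, and conclude from density of torsion in the abelian variety $A$.
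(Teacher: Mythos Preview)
Your proof is correct and follows essentially the same route as the paper: reduce to $k=\overline k$, take the reduced Zariski closure $Z$ of the prime-to-$p$ torsion, observe that $Z$ is a smooth closed subgroup so that $Z^0$ is semi-abelian, and then run a dimension count using Lemma~\ref{nGOrder}. The only cosmetic difference is in the endgame: the paper invokes both inequalities $a_0\le a$ and $t_0\le t$ from Lemma~\ref{DimSubSAV} directly against $2a+t\le 2a_0+t_0$, whereas you combine $a_0\le a$ with the cruder bound $\dim Z^0\le\dim G$; both reach the same conclusion.
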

\begin{proof}
    It suffices to prove the assertion for $k=\overline k$ and from now on, we assume that $k$ is algebraically closed. Let $X\subset G(k)$ be the union of all ${}_nG(k)$, where $\Char(k)\nmid n$ and let $H$ be the reduced closed subscheme of $G$ whose underlying space is the Zariski closure of $X$. Then by our construction, it is easy to verify that $H$ is a subgroup scheme of $G$.

    The connected component $H^0$ of the unit is a smooth connected subgroup of the semi-abelian variety $G$. So $H^0$ is also semi-abelian by Lemma~\ref{SubQuotSAV}. Let $N$ be the number of connected components of $H$. Let $a$ (resp. $a_0$) be the dimension of the abelian part and let $t$ (resp. $t_0$) be the dimension of the toric part of $G$ (resp. $H^0$). By Lemma~\ref{nGOrder}, we have that
    \[\#{}_nG(k)=n^{2a+t} \text{\quad and \quad} \# {}_nH^0(k)=n^{2a_0+t_0},\]
    the second one of which implies $\#{}_nH(k)\leq Nn^{2a_0+t_0}$. By construction, $H$ contains all torsion points of $G$ of order prime to $\Char(k)$. So $\#{}_nH(k)=\#{}_nG(k)=n^{2a+t}$. Now, we have $n^{2a+t}\leq Nn^{2a_0+t_0}$ for every positive integer $n$ prime to $\Char(k)$. Taking $n$ very large, we obtain $2a+t\leq 2a_0+t_0$. By Lemma~\ref{DimSubSAV}, we have $a\geq a_0$ and $t\geq t_0$. Thus 
    \[a=a_0 \text{\quad and \quad} t=t_0.\] 
    So the irreducible variety $G$ has the same dimension as the closed subvariety $H^0$. Hence $H^0=G$.
\end{proof}

\begin{lem}\label{redSubgroup}
    Let $H$ be a group scheme over a field $k$. If $H_\red$ is geometrically reduced, then $H_\red$ is a closed subgroup scheme of $H$.
\end{lem}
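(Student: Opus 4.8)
The plan is to check that the canonical closed immersion $j\colon H_\red\hookrightarrow H$ is compatible with the group-scheme structure of $H$, so that $H_\red$ inherits the structure of a closed subgroup scheme. Throughout I use the universal property of the reduction: every morphism from a reduced scheme to $H$ factors uniquely through $H_\red$. In particular the unit $e\colon\Spec k\to H$ factors through $H_\red$ (as $\Spec k$ is reduced), and the composite $H_\red\xrightarrow{j}H\xrightarrow{\operatorname{inv}}H$ of $j$ with the inversion of $H$ factors through $H_\red$ (as $H_\red$ is reduced); denote the resulting maps $e'\colon\Spec k\to H_\red$ and $\iota\colon H_\red\to H_\red$.

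The one step that uses the hypothesis is the multiplication. I want to factor the composite $H_\red\times_k H_\red\hookrightarrow H\times_k H\xrightarrow{m}H$ through $H_\red$, and for this it is enough that $H_\red\times_k H_\red$ be reduced. This is exactly where \emph{geometric} reducedness (rather than plain reducedness) is needed: a fibre product over $k$ of a geometrically reduced scheme with a reduced scheme is reduced, whereas, for instance, $K\otimes_k K$ is non-reduced for a nontrivial purely inseparable extension $K/k$. Since $H_\red$ is geometrically reduced by assumption and is of course reduced, $H_\red\times_k H_\red$ is reduced, so $m\circ(j\times j)$ factors as $j\circ\mu$ for a unique $\mu\colon H_\red\times_k H_\red\to H_\red$.

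It then remains to verify that $(H_\red,\mu,\iota,e')$ satisfies the group-scheme axioms and that $j$ is a homomorphism; this is formal. Each axiom (associativity, the unit axioms, the inverse axioms) asserts the commutativity of a diagram whose arrows are assembled from $\mu$, $\iota$, $e'$ and the canonical projections among the powers $H_\red^{\times n}$; post-composing the two legs of such a diagram with the monomorphism $j$ and using the identities $j\circ\mu=m\circ(j\times j)$, $j\circ\iota=\operatorname{inv}\circ j$ and $j\circ e'=e$ turns it into the corresponding diagram for $H$, which commutes since $H$ is a group scheme; as $j$ is a monomorphism, the original diagram commutes. Hence $H_\red$ is a closed subgroup scheme of $H$. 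The only genuine input is the product statement ``geometrically reduced $\times_k$ reduced is reduced'', which one cites from \cite{stacks-project}; everything else is soft. Note that over a perfect field $H_\red$ is automatically geometrically reduced, so in that case the lemma holds unconditionally and recovers \cite[Chapitre II, \S5, Corollaire 2.3]{DG70AlgGrp}.
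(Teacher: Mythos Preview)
Your proof is correct and follows essentially the same approach as the paper: the key point in both is that geometric reducedness of $H_\red$ ensures $H_\red\times_k H_\red$ is reduced, so the multiplication of $H$ restricts to $H_\red$ via the universal property of reduction. Your write-up is more detailed (explicitly handling unit, inverse, and the verification of the group axioms via the monomorphism $j$), but the argument is the same as the paper's, which simply cites the proof of \cite[Chapitre II, \S5, Corollaire 2.3]{DG70AlgGrp} and notes that it goes through verbatim under the given hypothesis.
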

\begin{proof}
    The argument of \cite[Chapitre II, \S5, Corollaire 2.3]{DG70AlgGrp} works here. For readers' convenience, we repeat it: Since $H_\red$ is geometrically reduced, the scheme $H_\red\times_k H_\red$ is reduced. Thus the restriction of the multiplication law $m \colon  H\times_k H\to H$ to $H_\red\times_k H_\red$ factors through $H_\red\hookrightarrow H$: 
    \[H_\red\times_k H_\red \stackrel{m_\red}{\longrightarrow} H_\red.\]
    Similarly, the unit morphism and the inverse morphism of $H$ induce morphisms on $H_\red$, and it follows that $(H_\red,m_\red)$ is a closed subgroup of $(H,m)$.
\end{proof}

\begin{prop}\label{SAVsubRed}
    Let $G$ be a semi-abelian variety over a field $k$ and $H$ be a connected closed subgroup of $G$. Then 
    \begin{enumerate}[label={\rm(\arabic*)}]
        \item the collection of closed subschemes $\{{}_nH\}_{n\geq1,\Char(k)\nmid n}$ is (topologically) dense in $H$;
        \item $H_\red$ is geometrically reduced;
        \item $H_\red$ is a semi-abelian subvariety of $G$.
    \end{enumerate} 
\end{prop}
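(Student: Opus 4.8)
The plan is to establish the three assertions in order, with (1) supplying the mechanism, (2) carrying the essential new content (the point being that $k$ is allowed to be imperfect), and (3) following formally.

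For (1), I would first reduce to $k=\overline k$, since density of a union of closed subschemes can be tested after base change to the algebraic closure and $({}_nH)_{\overline k}={}_n(H_{\overline k})$. Over $\overline k$ the scheme $H_{\overline k,\red}$ is reduced, hence geometrically reduced because $\overline k$ is perfect, hence a closed subgroup scheme of $G_{\overline k}$ by Lemma~\ref{redSubgroup}; being connected it is a semi-abelian variety by Lemma~\ref{SubQuotSAV}. Proposition~\ref{SAVTorsionDense} applied to $H_{\overline k,\red}$ then shows that $\{{}_n(H_{\overline k,\red})\}_n$ is dense in it, and since $H_{\overline k,\red}$ has the same underlying space as $H_{\overline k}$ while ${}_n(H_{\overline k,\red})\subseteq{}_n(H_{\overline k})$, assertion (1) follows.

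For (2), the point I would isolate is that for $n$ prime to $\Char(k)$ the group scheme ${}_nG$ is finite \'etale over $k$ by Lemma~\ref{nGFiniteEtale}, hence so is its closed subscheme ${}_nH=H\times_G{}_nG$, a closed subscheme of a finite \'etale $k$-scheme being again finite \'etale over $k$; in particular ${}_nH$ is reduced, hence a closed subscheme of $H_{\red}$, and all of its points have residue fields finite separable over $k$. Now $H_{\red}$ is reduced of finite type over $k$, so its regular locus is dense open, and it is irreducible since $H$ is a connected group scheme over a field; by (1) the union $\bigcup_{\Char(k)\nmid n}{}_nH$ is dense in $H$, hence in $H_{\red}$, so the regular locus meets it. Choosing a point $z$ that lies on some ${}_nH$ and at which $H_{\red}$ is regular, the residue field $\kappa(z)$ is finite separable over $k$ and $\mathcal O_{H_{\red},z}$ is regular, so the conormal sequence exhibits $\Omega_{H_{\red}/k}$ with fiber rank $\dim H_{\red}$ at $z$; by upper semicontinuity of the fiber rank of a coherent sheaf, the rank at the generic point is then $\leq\dim H_{\red}$, which forces $k(H_{\red})/k$ to be a separable field extension. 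Since $H_{\red}$ is an integral scheme of finite type over $k$ with separable function field, it is geometrically reduced, which is (2).

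Assertion (3) is formal: by (2) and Lemma~\ref{redSubgroup}, $H_{\red}$ is a closed subgroup scheme of $G$; it is connected (same underlying space as $H$) and geometrically reduced, hence smooth over $k$, and a smooth connected subgroup of a semi-abelian variety is a semi-abelian variety by Lemma~\ref{SubQuotSAV}. The step I expect to be the main obstacle is (2): over an imperfect field there is no a priori handle on $H_{\red}$ as a group scheme, and the argument hinges on the fact that inside a semi-abelian ambient group the prime-to-$p$ torsion subschemes of $H$ are automatically \'etale, so that their density (from (1)) forces a separable point into the regular locus of $H_{\red}$ and thereby geometric reducedness. Along the way one should check the routine inputs: quotients of finite \'etale algebras are finite \'etale, upper semicontinuity of the fiber rank of a coherent sheaf, and that geometric reducedness of an integral (more generally reduced) finite-type $k$-scheme may be tested at the generic point.
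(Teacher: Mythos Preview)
Your proof is correct; parts (1) and (3) match the paper's argument essentially verbatim. For (2), however, you take a genuinely different route from the paper.

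The paper argues as follows for (2): let $H_0$ be the reduced closed subscheme of $G$ whose underlying space is the closure of $\bigcup_{\Char(k)\nmid n}{}_nH$. Invoking \cite[Corollaire~11.10.7]{EGAIV3}, which says that forming the reduced Zariski closure of a family of geometrically reduced closed subschemes commutes with field extension, one gets that $(H_0)_{\overline k}$ is the reduced closure of $\bigcup_n{}_nH_{\overline k}$ inside $G_{\overline k}$. By the density argument over $\overline k$ (your step (1)), this closure is all of $(H_{\overline k})_\red$, so $(H_0)_{\overline k}$ is reduced; hence $H_0$ is geometrically reduced. A dimension count then gives $H_0=H_\red$. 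Thus (1) and (2) emerge together from a single construction.

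Your argument instead exploits the torsion density to locate a closed point $z$ of $H_\red$ lying in the regular locus and having separable residue field, then uses the conormal sequence and upper semicontinuity of $\dim_{\kappa(x)}\Omega_{H_\red/k}\otimes\kappa(x)$ to force $k(H_\red)/k$ separable, hence $H_\red$ geometrically reduced. This is a valid and self-contained differential argument that avoids the somewhat specialized EGA base-change lemma; it also makes transparent \emph{why} geometric reducedness holds---the ambient semi-abelian structure forces an abundance of \'etale points. The paper's approach, on the other hand, is shorter once the EGA input is granted and packages (1) and (2) into one stroke; it is also the form reused verbatim in the later descent result (Theorem~\ref{SAVdescent}), where the compatibility of reduced closures with base change is exactly what is needed.
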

\begin{proof}
    Let $n$ be a positive integer with $\Char(k)\nmid n$. By Lemma~\ref{nGFiniteEtale}, the commutative group scheme ${}_nG$ is finite  \'etale over $k$. Since ${}_nH$ is a closed subgroup of ${}_nG$, it is also finite \'etale over $k$. Let $H_0$ be the reduced closed subscheme of $G$ whose underlying space is the Zariski closure of $\bigcup_{\Char(k)\nmid n} {}_nH$. By \cite[Corollaire 11.10.7]{EGAIV3}, $(H_0)_{\overline k}$ is the reduced closed subscheme of $G_{\overline k}$ whose underlying space is the Zariski closure of $\bigcup_{\Char(k)\nmid n} {}_nH_{\overline k}$. Since $(H_{\overline k})_\red$ is a smooth connected closed subgroup of $G_{\overline k}$, it is a semi-abelian subvariety of $G_{\overline k}$. By Proposition~\ref{SAVTorsionDense}, the collection $\{{}_n ((H_{\overline k})_\red)\}_{\Char(k)\nmid n}$ is topologically dense in $(H_{\overline k})_\red$. Since $(H_{\overline k})_\red$ is a closed subgroup of $H_{\overline k}$, we have that ${}_n((H_{\overline k})_\red)$ is a closed subgroup of ${}_nH_{\overline k}$, which implies that as topological spaces
    \[(H_{\overline k})_\red\subseteq (H_0)_{\overline k}\subseteq H_{\overline k}.\]
    Because $H_{\overline k}$ and $(H_{\overline k})_\red$ have the same underlying topological space, the above three topological spaces are the same. Thus
    \[\dim H_0=\dim (H_0)_{\overline k}=\dim H_{\overline k}=\dim H.\]
    Because $H$ is irreducible, we conclude that $H_0=H$ as topological spaces, which completes the proof of the first assertion. 
 
    Clearly, $H_0=H_\red$ as schemes. Since $(H_0)_{\overline k}$ is reduced, the scheme $H_\red$ is geometrically reduced.
    
    By Lemma~\ref{redSubgroup}, $H_\red$ is a closed subgroup of $H$. Since $H_\red$ is a smooth connected closed subgroup of the semi-abelian variety $G$, it is a semi-abelian subvariety by Lemma~\ref{SubQuotSAV}. 
\end{proof}

\subsection{Base change and descent of Deligne \texorpdfstring{$1$}{1}-motives}
In the case of primary field extensions, we show that the extension of scalars of Deligne $1$-motives is fully faithful. We will also prove some descent results on Deligne $1$-motives.

\begin{defn}[{\cite[10.1.10]{Deligne74HodgeIII}}]
    \begin{enumerate}[leftmargin=*,label={\rm(\arabic*)}]
        \item A Deligne $1$-motive over $k$ is a complex of commutative group schemes
            \[M=[L\stackrel{u}{\to} G],\]
            where $L$ is a lattice and $G$ is a semi-abelian variety.\menum
        \item A morphism of Deligne $1$-motives from $M=[L\stackrel{u}{\to} G]$ to $M'=[L' \stackrel{u'}{\to} G']$ is a commutative square 
            \[\xymatrix{
                L  \ar[r]^u \ar[d]_f & G \ar[d]^g \\
                L' \ar[r]^{u'} & G'
            }\]
            in the category of group schemes. Denote by $(f,g) \colon  M\to M'$ such a morphism.
        \item Denote the category of Deligne $1$-motives over $k$ by $\M_1(k)$.
    \end{enumerate}
\end{defn}

\begin{defn}
    \begin{enumerate}[label={\rm(\arabic*)},leftmargin=*]
        \item An extension of fields $K/k$ is called primary if the algebraic closure of $k$ in $K$ is purely inseparable over $k$. \menum
        \item An extension of fields $K/k$ is called regular if $K/k$ is separable and $k$ is algebraically closed in $K$.
    \end{enumerate}
\end{defn}
\begin{rmk}
    Clearly, regular extensions are primary. If $k$ is perfect, then any primary extension of $k$ is automatically regular.
\end{rmk}

Let $K/k$ be a field extension, $K_s$ be a separable closure of $K$ and $k_s$ be the separable closure of $k$ in $K_s$. Denote $\Gal(K_s/Kk_s)$ by $\Gamma$.
\[\xymatrix{
    & K_s  \ar@/_1.5pc/@{--}[ddl]_{\Gal(K_s/K)} \ar@{-}[d]^\Gamma   &\\
    & Kk_s \ar@{-}[dl]  \ar@{-}[dr] & \\
    K \ar@{-}[dr] && k_s \ar@{-}[dl] \ar@/^1.5pc/@{--}[ddl]^{\Gal(k_s/k)}\\
    & K\cap k_s \ar@{-}[d] &\\
    &k& 
}\]
If $K/k$ is primary, then $K\cap k_s=k$ and the restriction map 
\[\pi \colon  \Gal(K_s/K) \twoheadrightarrow \Gal(Kk_s/K) \stackrel{\simeq}{\to} \Gal(k_s/k),\quad \sigma\mapsto \sigma|_{Kk_s}\mapsto\sigma|_{k_s}\]
is continuous and surjective with kernel $\Gamma$. 

\begin{prop}\label{M0FullFaithAdj}
    Let $K/k$ be a primary extension. 
    \begin{enumerate}[label={\rm(\arabic*)}]
        \item The extension of scalars $\pi^* \colon  {}^t\M_0(k)\to {}^t\M_0(K)$ is fully faithful. The same result holds for $\M_0$ and $\Tori$.
        \item The extension of scalars $\pi^* \colon  {}^t\M_0(k)\to {}^t\M_0(K)$ has a right adjoint $\pi_*^{{}^t\M_0}$ and a left adjoint $\pi_\sharp^{{}^t\M_0}$. The same result holds for $\M_0$ and $\Tori$.
    \end{enumerate}
\end{prop}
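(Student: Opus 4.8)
The plan is to transport the whole statement into the category of discrete Galois modules by means of Proposition~\ref{EtaleModule}, and then to read off everything from the elementary adjunction between inflation and (co)invariants along the surjection $\pi\colon\Gal(K_s/K)\twoheadrightarrow\Gal(k_s/k)$ with kernel $\Gamma$. The first step is to identify $\pi^*$, under the equivalence $L\mapsto L(k_s)$, with inflation along $\pi$. Indeed, for an \'etale $k$-group scheme $L$, any $k$-morphism $\Spec K_s\to L$ factors through a connected component $\Spec k'$ with $k'/k$ finite separable, and a $k$-algebra homomorphism $k'\to K_s$ has image separable algebraic over $k$, hence contained in $k_s$; therefore $L_K(K_s)=L(K_s)=L(k_s)$ canonically, and this identification is equivariant for the $\Gal(K_s/K)$-action, where $\sigma$ acts on $L(k_s)$ through $\pi(\sigma)=\sigma|_{k_s}$. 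Since inflation is the identity on underlying abelian groups, it preserves finite generation, finite generation together with freeness, and finiteness; thus $\pi^*$ becomes inflation on ${}^t\M_0$, on $\M_0$, and on finite \'etale group schemes.

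For part~(1) I would use that inflation along a surjection of profinite groups is fully faithful: its essential image consists of the modules on which the kernel acts trivially, and on such modules it is inverse to restriction along $\Gal(K_s/K)/\Gamma\xrightarrow{\sim}\Gal(k_s/k)$. Equivalently, the unit $L\to\pi_*^{{}^t\M_0}\pi^*L$ (and likewise for $\M_0$) corresponds to the identity of $L(k_s)$ because $\Gamma$ acts trivially on $\pi^*L$. For the case of $\Tori$ I would invoke the Cartier duality anti-equivalence $T\mapsto T^\vee$ of Theorem~\ref{CartierDual}, which is compatible with base change, so that $\pi^*\colon\Tori(k)\to\Tori(K)$ is intertwined with $\pi^*\colon\M_0(k)\to\M_0(K)$ and full faithfulness transfers.

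For part~(2): on discrete Galois modules, inflation along $\pi$ has right adjoint $M\mapsto M^\Gamma$ and left adjoint $M\mapsto M_\Gamma$, the $\Gamma$-invariants and $\Gamma$-coinvariants, which carry a continuous $\Gal(k_s/k)=\Gal(K_s/K)/\Gamma$-action since $\Gamma$ is a closed normal subgroup. Unwinding through Proposition~\ref{EtaleModule} produces $\pi_*^{{}^t\M_0}$ and $\pi_\sharp^{{}^t\M_0}$, because a subgroup or a quotient of a finitely generated abelian group is again finitely generated. For lattices, $M^\Gamma$ is a subgroup of a finitely generated free abelian group, hence again a lattice, so $\pi_*^{\M_0}$ is given by the same formula; but $M_\Gamma$ may acquire torsion, so I would set $\pi_\sharp^{\M_0}(N):=N_\Gamma/(N_\Gamma)_{\tor}$, which still computes the left adjoint because $\Hom_{\Gal(k_s/k)}(N_\Gamma,L(k_s))=\Hom_{\Gal(k_s/k)}(N_\Gamma/(N_\Gamma)_{\tor},L(k_s))$ when $L$ is a lattice. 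Finally, Cartier duality again: a right (resp. left) adjoint of $\pi^*$ on $\Tori$ corresponds to a left (resp. right) adjoint of $\pi^*$ on $\M_0$, so $\pi_*^{\Tori}(T)=(\pi_\sharp^{\M_0}(T^\vee))^\vee$ and $\pi_\sharp^{\Tori}(T)=(\pi_*^{\M_0}(T^\vee))^\vee$ do the job.

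The routine verifications — continuity of the Galois actions on $M^\Gamma$ and $M_\Gamma$, naturality of the adjunction isomorphisms, and compatibility of Cartier duality with base change — are immediate. The only genuine subtlety, and the one I would flag, is that coinvariants are not exact and can introduce torsion; this is precisely why $\pi_\sharp^{\M_0}$ (and dually $\pi_*^{\Tori}$) cannot be defined by the naive formula but requires the torsion correction above.
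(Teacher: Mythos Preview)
Your proposal is correct and follows essentially the same route as the paper: translate via Proposition~\ref{EtaleModule} to discrete Galois modules, use that inflation along the surjection $\Gal(K_s/K)\twoheadrightarrow\Gal(k_s/k)$ is fully faithful with $\Gamma$-invariants and $\Gamma$-coinvariants as right and left adjoints, correct for torsion in the lattice case by passing to $(L_\Gamma)_\free$, and transport to tori by Cartier duality. Your explicit verification that $L_K(K_s)=L(k_s)$ with the right Galois action is a welcome detail the paper leaves implicit, and your remark that Cartier duality swaps left and right adjoints is stated more precisely than in the paper, but there is no substantive difference in strategy.
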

\begin{proof}
    Thanks Proposition~\ref{EtaleModule}, we identify commutative \'etale group schemes with the associated discrete Galois modules.

    Because the restriction map 
    \[\pi \colon  \Gal(K_s/K) \twoheadrightarrow \Gal(Kk_s/K) \stackrel{\simeq}{\to} \Gal(k_s/k),\quad \sigma\mapsto \sigma|_{Kk_s}\mapsto\sigma|_{k_s}\]
    is surjective, it induces a fully faithful functor
    \begin{align*}
        \pi^* \colon  {}^t\M_0(k)    &\longrightarrow          {}^t\M_0(K)\\
        L                    &\longmapsto   L,
    \end{align*}
    which corresponds to the extension of scalars of constructible group schemes. The essential image of $\pi^*$ is the full subcategory of modules on which $\Gamma$ acts trivially. The functor $\pi^*$ admits a right adjoint
    \begin{align*}
        \pi_* \colon  {}^t\M_0(K)    &\longrightarrow         {}^t\M_0(k)\\
        L                    &\longmapsto  L^\Gamma,
    \end{align*}
    and a left adjoint 
    \begin{align*}
        \pi_\sharp \colon  {}^t\M_0(K)  &\longrightarrow         {}^t\M_0(k)\\
                L               &\longmapsto  L_\Gamma, 
    \end{align*}
    where $L^\Gamma\subset L$ denotes the submodule of $\Gamma$-invariants and $L_\Gamma \colonequals L/\langle hx-x\mid h\in \Gamma,x\in L\rangle$ is the space of $\Gamma$-coinvariants.

    Restricting to lattices, we have a fully faithful functor $\pi^* \colon  \M_0(k)\to \M_0(K)$. Since $L^\Gamma$ is a lattice over $k$ for any $L\in\M_0(K)$, the functor $\pi^*$ admits a right adjoint $\pi_* \colon  L\mapsto L^\Gamma$. On the other hand, for any $L\in \M_0(K)$, the $\Gal(k_s/k)$-module $L_\Gamma$ can be represented by a unique extension
    \[0\to (L_\Gamma)_\tor \to L_\Gamma \to (L_\Gamma)_\free \to 0,\]
    where $(L_\Gamma)_\tor$ is a finite abelian group with a continuous $\Gal(k_s/k)$-action and $(L_\Gamma)_\free$ is an object of $\M_0(k)$. Since there exists no non-trivial homomorphism from $(L_\Gamma)_\tor$ to any $L'\in\M_0(k)$, we can see that 
    \[\Hom_{\M_0(k)}\left((L_\Gamma)_\free,L'\right)\simeq\Hom_{\M_0(k)}(L_\Gamma,L')\simeq \Hom_{\M_0(K)}(L,\pi^*L').\]
    In other words, the functor $L\mapsto (L_\Gamma)_\free$ is the left adjoint of $\pi^* \colon  \M_0(k)\to \M_0(K)$.

    By Cartier duality (Theorem~\ref{CartierDual}), we get the assertions for tori from the ones for lattices.
\end{proof}

Denote the category of abelian varieties (resp. semi-abelian varieties) by $\AV(k)$ (resp. $\SAV(k)$), where morphisms are the homomorphisms between group schemes.
\begin{thm}[Chow]\label{AVfullyfaithful}
    Let $K$ be a primary field extension of $k$. Then the extension of scalars
    \begin{align*}
        \pi^* \colon  \AV(k)  &\longrightarrow  \AV(K),\\
            A       &\longmapsto A_K
    \end{align*}
    is fully faithful.
\end{thm}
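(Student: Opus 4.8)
The plan is to reduce the full faithfulness of $\pi^* \colon \AV(k) \to \AV(K)$ to a statement about maps of group schemes after base change, using that a morphism of abelian varieties is nothing but a morphism of the underlying schemes fixing the origin, and then exploit the primary hypothesis via the surjectivity of $\Gal(K_s/K) \to \Gal(k_s/k)$. Faithfulness is immediate: if $f, g \colon A \to B$ are homomorphisms of abelian varieties over $k$ with $f_K = g_K$, then $f = g$ already because $A(k) \hookrightarrow A(K)$ (or because $A \to B$ is determined by its restriction to a dense open and $A_K \to A$ is faithfully flat, hence an epimorphism of schemes). So the content is fullness: given a homomorphism $h \colon A_K \to B_K$ of abelian varieties over $K$, I must produce $h_0 \colon A \to B$ over $k$ with $(h_0)_K = h$.

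The first step is to descend from $K$ to the separable closure $k_s$ — equivalently, to reduce to the case where $K = Kk_s$ contains $k_s$, using that $A(k_s) \to A(K k_s)$ is surjective on torsion of order prime to $p$ and that $\Gamma = \Gal(K_s/Kk_s)$ acts trivially on everything of interest by Proposition~\ref{EtaleModule} together with the fact that $\pi \colon \Gal(K_s/K) \twoheadrightarrow \Gal(k_s/k)$ has kernel $\Gamma$. Concretely: by Proposition~\ref{SAVTorsionDense} applied to $A$, the subgroup $\bigcup_{p \nmid n} {}_nA$ is Zariski dense in $A$, so a homomorphism $A_K \to B_K$ is determined by its effect on ${}_nA(K_s)$ for all $n$ prime to $p$; since ${}_nA_K = ({}_nA)_K$ is finite étale, ${}_nA(K_s) = {}_nA(K k_s)$ carries a continuous $\Gal(K_s/K)$-action that factors through $\Gal(k_s/k)$ because $\Gamma$ acts trivially (it acts trivially on $A(k_s) \hookrightarrow A(K k_s)$ already, and on the torsion it does too by the same surjectivity argument applied to $k_s/k$ — or more precisely, ${}_nA$ being defined over $k$, the $\Gal(K_s/K)$-module ${}_nA(K_s)$ is the inflation of the $\Gal(k_s/k)$-module ${}_nA(k_s)$ along $\pi$). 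The same applies to $B$. Hence any $\Gal(K_s/K)$-equivariant map ${}_nA(K_s) \to {}_nB(K_s)$ is automatically $\Gamma$-invariant, i.e.\ is the inflation of a $\Gal(k_s/k)$-map. This is the step where the primary hypothesis does all the work.

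With that in hand, the remaining point is purely geometric descent and does not need the primary hypothesis: one knows already (this is classical, e.g.\ via Galois descent of abelian varieties along a separable closure, or via Chow's original argument, or via the fact that $\AV$ over any field embeds fully faithfully into the isogeny category where one can argue with Tate modules / $\ell$-adic realizations) that a homomorphism $A_{k_s} \to B_{k_s}$ which is $\Gal(k_s/k)$-equivariant descends uniquely to $A \to B$ over $k$. So I assemble the proof as: (i) $h \colon A_K \to B_K$ restricts to a $\Gal(K_s/K)$-equivariant system of maps ${}_nA(K_s) \to {}_nB(K_s)$; (ii) by Step~2 this system is $\Gamma$-invariant, hence is the inflation of a $\Gal(k_s/k)$-equivariant system on ${}_nA(k_s) \to {}_nB(k_s)$; (iii) because the torsion points of order prime to $p$ are dense (Proposition~\ref{SAVTorsionDense}) and $\Gal$-stable, this determines a $\Gal(k_s/k)$-equivariant homomorphism $A_{k_s} \to B_{k_s}$ — here one must check that a map defined on the dense subgroup of torsion points, equivariant for the torus-free reduced structure, actually extends to a morphism of schemes, which follows because $\Hom(A_{k_s}, B_{k_s})$ is a finitely generated free $\mathbb{Z}$-module injecting into $\prod_\ell \Hom_{\mathbb{Z}_\ell}(T_\ell A, T_\ell B)$; (iv) Galois descent from $k_s$ to $k$ gives $h_0 \colon A \to B$; (v) $(h_0)_K$ and $h$ agree because they agree on torsion points of order prime to $p$, which are dense in $A_K$.

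The main obstacle is Step~2 — precisely pinning down that the $\Gal(K_s/K)$-action on the torsion of $A_K$ is the inflation along $\pi$ of the action on the torsion of $A$, i.e.\ that $\Gamma$ acts trivially. This is exactly where "primary" is used and where one invokes Proposition~\ref{EtaleModule}: since $A$ and its finite subschemes ${}_nA$ are defined over $k$, the $\Gal(K_s/K)$-set ${}_nA(K_s)$ is obtained by pullback along $\Spec K_s \to \Spec k_s$, so the action factors through $\Gal(K_s/K) \xrightarrow{\pi} \Gal(k_s/k)$, whose kernel is $\Gamma$. Everything else is bookkeeping: density of prime-to-$p$ torsion (Proposition~\ref{SAVTorsionDense}), the standard fact that $\Hom$ of abelian varieties is finitely generated and torsion-free and embeds in the product of $\ell$-adic $\Hom$'s, and ordinary Galois descent for abelian varieties along the separable closure.
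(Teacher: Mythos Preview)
The paper does not give a self-contained proof here: it simply cites Conrad's treatment via fpqc descent, whose core is that the Hom functor of two abelian varieties is represented by a commutative \'etale $k$-group scheme, and for a primary extension $K/k$ every $K$-point of an \'etale $k$-scheme is already a $k$-point. Your approach is genuinely different --- you try to reconstruct the morphism from its effect on prime-to-$p$ torsion, using density (Proposition~\ref{SAVTorsionDense}) and the descent of finite \'etale subgroups along primary extensions.

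There is, however, a real gap in step~(iii). You have, for each $n$ prime to $p$, a map ${}_nA(k_s)\to{}_nB(k_s)$, and you want a morphism $A_{k_s}\to B_{k_s}$ inducing all of them. Density of torsion tells you such a morphism is \emph{unique} if it exists, but says nothing about existence. Your stated justification --- that $\Hom(A_{k_s},B_{k_s})$ injects into $\prod_\ell\Hom_{\bZ_\ell}(T_\ell A,T_\ell B)$ --- is an injectivity statement and cannot produce a preimage; the cokernel of $\Hom(A_{k_s},B_{k_s})\otimes\bZ_\ell\hookrightarrow\Hom_{\bZ_\ell}(T_\ell A,T_\ell B)$ is typically huge, so a compatible family of Tate-module maps need not come from a genuine morphism. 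What would save you is that your torsion maps come from an actual morphism $h$ over $K$, but exploiting that to conclude $h$ descends is precisely the statement you are proving.

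The fix is to argue with the graph instead of Tate modules. The graph $\Gamma_h\subset (A\times_k B)_K$ is an abelian subvariety; its $n$-torsion ${}_n\Gamma_h$ is a finite \'etale subgroup of ${}_n(A\times_k B)_K$ and hence descends to $k$ by Proposition~\ref{M0descent}. Taking the reduced Zariski closure of the union of these descended subgroups and invoking Proposition~\ref{SAVTorsionDense} yields an abelian subvariety $C\subset A\times_k B$ with $C_K=\Gamma_h$; the first projection $C\to A$ is an isomorphism (it is after base change), and the composite $A\simeq C\to B$ is your $h_0$. This is exactly the mechanism the paper uses later in Theorem~\ref{SAVdescent}, and it does not depend on Chow's theorem, so there is no circularity.
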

\begin{proof}
    For a modern proof using fpqc descent, see \cite[Theorem~3.19]{Conrad06ChowLN}.
\end{proof}

Deligne \cite[10.2.11--13]{Deligne74HodgeIII} defined a self-duality on the category $\M_1(k)$, that he called Cartier duality. Let $M=[L\stackrel{u}{\to} G]$ be a Deligne $1$-motive, and let $T$ and $A$ be the toric and abelian part of $G$ respectively. Then 
\[M^\vee=[T^\vee\to G^u],\]
where $G^u$ is an extension of $A^\vee$ by $L^\vee$.

\begin{lem}\label{DualBase}
    Cartier duality commutes with extension of scalars.
\end{lem}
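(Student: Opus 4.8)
The plan is to unwind Deligne's construction of $M^\vee$ recalled just above and verify that each of its constituents is preserved by the base change functor $\pi^*\colon\M_1(k)\to\M_1(K)$. Write $M=[L\xrightarrow{u}G]$ with $0\to T\to G\to A\to 0$. Recall that $M^\vee=[T^\vee\to G^u]$ is built from: the Cartier dual lattice $T^\vee$ of the torus $T$; the Cartier dual torus $L^\vee=\underline{\Hom}(L,\bG_m)$ of the lattice $L$; the dual abelian variety $A^\vee=\Pic^0_{A/k}$; the extension $G^u$ of $A^\vee$ by $L^\vee$ classified, under the canonical isomorphism $\Ext^1_k(A^\vee,L^\vee)\cong\Hom_k(L,A)$ induced by the Poincaré biextension $\cP_A$ of $A\times_k A^\vee$ by $\bG_m$, by the composite $\bar u\colon L\xrightarrow{u} G\to A$; and the lift $T^\vee\to G^u$ of the map $v\colon T^\vee\to A^\vee$ classifying $G$, which encodes the extra datum of $u$ beyond its projection $\bar u$ (a trivialization over $L\times T^\vee$ of the pullback of $\cP_A$). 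So it suffices to show that $\pi^*$ commutes with each of these operations.

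First I would record the base change compatibilities. Cartier duality of tori and of lattices is computed on character modules, so by Proposition~\ref{EtaleModule} and Theorem~\ref{CartierDual} the group schemes $(T^\vee)_K$ and $(L^\vee)_K$ correspond to the $\Gal(K_s/K)$-modules obtained by restriction from those attached to $T^\vee$ and $L^\vee$; hence there are canonical isomorphisms $(T^\vee)_K\cong (T_K)^\vee$ and $(L^\vee)_K\cong (L_K)^\vee$, natural in $T$ and $L$. For abelian varieties, the Picard functor commutes with the flat base change $\Spec K\to\Spec k$, giving a canonical isomorphism $(A^\vee)_K\cong (A_K)^\vee$; moreover $\cP_A$ pulls back to the Poincaré biextension $\cP_{A_K}$ of $A_K\times_K(A_K)^\vee$, since $\cP_A$ is characterized by a universal property stable under base change. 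Consequently the identification $\Ext^1(A^\vee,L^\vee)\cong\Hom(L,A)$, and more generally the groups of biextensions entering Deligne's construction, are compatible with $\pi^*$.

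Finally I would assemble these. As $\pi^*$ is a functor it sends the classifying map $\bar u\colon L\to A$ of $G$ to the classifying map $\bar u_K\colon L_K\to A_K$ of $G_K$, hence, using the previous paragraph, sends the extension $G^u$ of $A^\vee$ by $L^\vee$ to the extension of $(A_K)^\vee$ by $(L_K)^\vee$ classified by $\bar u_K$, which is precisely the semi-abelian variety $G^{u_K}$ occurring in $(M_K)^\vee$. Likewise $\pi^*$ transports the trivialization datum determining the lift $u\colon L\to G$ to the corresponding datum for $u_K$, hence the differential $T^\vee\to G^u$ to $(T_K)^\vee\to G^{u_K}$. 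Thus $\pi^*M^\vee=[(T^\vee)_K\to (G^u)_K]$ is identified term by term, compatibly with differentials and naturally in $M$, with $(M_K)^\vee=[(T_K)^\vee\to G^{u_K}]$, which is the asserted natural isomorphism $\pi^*\circ(-)^\vee\cong(-)^\vee\circ\pi^*$. The one point requiring care is the transport of the symmetric/biextension datum of Deligne \cite[10.2.11--13]{Deligne74HodgeIII}, namely the trivialization of $\cP_A$ over $L\times T^\vee$, which is invisible in the naive complex $[L\to G]$; but via the equivalence between $1$-motives and their symmetric avatars this reduces to the base change compatibility of $\cP_A$ already noted, so no new input is needed beyond bookkeeping.
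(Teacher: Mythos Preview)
Your proof is correct and follows essentially the same approach as the paper's. The paper simply invokes the symmetric avatar description $(L,T^\vee,A,A^\vee,u,v,\psi)$ from \cite[p.~17]{BS01AlbPic-1-motive}, observes that Cartier duality is the involution $(L,T^\vee,A,A^\vee,u,v,\psi)\mapsto(T^\vee,L,A^\vee,A,v,u,\psi^t)$, and then declares that each datum is compatible with base change; your argument does the same thing but actually writes out the verifications (Cartier duals of tori and lattices, Picard functor, Poincar\'e biextension, the classifying maps) that the paper leaves implicit.
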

\begin{proof}
    Following \cite[p.17]{BS01AlbPic-1-motive}, we use the symmetric avatar $(L,T^\vee,A,A^\vee,u,v,\psi)$ to denote a Deligne $1$-motive $[L\to G]$, where $T$ and $A$ are the toric and abelian part of $G$ respectively. The symmetric avatar of Cartier dual is $(T^\vee,L,A^\vee,A,v,u,\psi^t)$. Since all these $L,T^\vee,A,A^\vee,u,v,\psi,\psi^t$ are compatible with the extension of scalars, we conclude that Cartier duality commutes with the extension of scalars. 
\end{proof}

\begin{lem}\label{fpqcdescent}
    Let $\pi \colon  S'\to S$ be a faithfully flat morphism of schemes. 
    \begin{enumerate}[label={\rm(\arabic*)}]
        \item The base change functor
            \begin{align*} 
                \pi^* \colon  \Sch/S &\longrightarrow \Sch/S',\\
                      X      &\longmapsto \pi^*X \colonequals X\times_S S'
            \end{align*}
            is faithful, where $\Sch/S$ (resp. $\Sch/S'$) is the category of schemes over $S$ (resp. $S'$).
        \item Assume moreover that $\pi$ is quasi-compact. If $X$ is a group scheme over $S$ and $Z$ is a closed subscheme of $X$, then $Z$ is a subgroup scheme of $X$ if and only if $Z_{S'}$ is a subgroup scheme of $X_{S'}$.
    \end{enumerate} 
\end{lem}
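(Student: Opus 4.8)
The plan is to reduce each of the two assertions to a standard faithfully flat descent statement, treating them separately.

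For part~(1): faithful flatness is stable under base change, so for every $X\in\Sch/S$ the projection $p_X\colon X_{S'}\colonequals X\times_S S'\to X$ is again faithfully flat, and similarly $p_Y\colon Y_{S'}\to Y$. Given $S$-morphisms $f,g\colon X\to Y$ with $\pi^*f=\pi^*g$, the commutative squares relating $p_X$, $p_Y$ and the base-changed maps force $f\circ p_X=g\circ p_X$; since $p_X$ is surjective we get $f=g$ on underlying spaces, and over an affine open $U\subseteq Y$ the equality of the two sheaf maps follows from the injectivity of $\Gamma(f^{-1}(U),\cO)\to\Gamma(p_X^{-1}(f^{-1}(U)),\cO)$, which is a consequence of the faithful flatness of $p_X$. (Equivalently, one invokes that a faithfully flat morphism of schemes is an epimorphism in $\Sch$.) Quasi-compactness of $\pi$ plays no role here.

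For part~(2), the implication "$Z$ a subgroup scheme $\Rightarrow$ $Z_{S'}$ a subgroup scheme" is formal, since $(-)\times_S S'$ commutes with fibre products over $S$ and sends the restrictions to $Z$ of the unit $e\colon S\to X$, the multiplication $m\colon X\times_S X\to X$ and the inversion $i\colon X\to X$ to the corresponding data for $Z_{S'}\subseteq X_{S'}$. For the converse I would first record the elementary fact that a closed subscheme $j\colon Z\hookrightarrow X$ of a group scheme $X/S$ is a subgroup scheme precisely when the three morphisms $e$, $m\circ(j\times_S j)\colon Z\times_S Z\to X$ and $i\circ j\colon Z\to X$ each factor through $j$ (the group axioms for $Z$ being then automatic, as $j$ is a monomorphism). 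For any morphism $\varphi\colon W\to X$, "$\varphi$ factors through $Z$" is equivalent to the scheme-theoretic preimage $\varphi^{-1}(Z)\colonequals W\times_{\varphi,X,j}Z$ being equal to $W$ as a closed subscheme of $W$; and this preimage, as well as the three morphisms above, is compatible with the base change $(-)\times_S S'$.

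The descent itself is then immediate: for each of the morphisms $\varphi$ just listed, the assumption that $Z_{S'}$ is a subgroup scheme of $X_{S'}$ says exactly that $(\varphi^{-1}(Z))_{S'}=(\varphi_{S'})^{-1}(Z_{S'})$ equals $W_{S'}$, i.e. that the closed immersion $\varphi^{-1}(Z)\hookrightarrow W$ becomes an isomorphism after the faithfully flat, quasi-compact base change $W_{S'}\to W$ obtained from $\pi$. Since the property of being an isomorphism descends along fpqc morphisms — concretely, a quasi-coherent ideal $\cI\subseteq\cO_W$ with $\pi_W^*\cI=0$ vanishes — we conclude $\varphi^{-1}(Z)=W$, hence $\varphi$ factors through $Z$; applied to $e$, $m\circ(j\times_S j)$ and $i\circ j$ this shows $Z$ is a subgroup scheme of $X$. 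I do not anticipate a genuine obstacle here; the only point demanding real care is verifying that the scheme-theoretic preimage and the group operations commute with base change, after which everything rests on the single descent fact invoked above.
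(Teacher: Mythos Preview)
Your argument is correct. The paper does not give a direct proof at all: it simply cites \cite[Corollaire~2.2.16]{EGAIV2} for (1) and \cite[Theorem~3.5]{Conrad06ChowLN} for (2), so your write-up is in effect an unpacking of those references rather than a different route. One minor remark on presentation: in (1) you could shorten the sheaf-level step by just saying that a faithfully flat morphism is a universal effective epimorphism (hence an epimorphism) in $\Sch$, which is exactly the content of the EGA reference; and in (2) the key descent input is precisely that ``isomorphism'' is fpqc-local on the base, which is where the quasi-compactness hypothesis enters.
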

\begin{proof}
    The first assertion is {\cite[Corollaire 2.2.16]{EGAIV2}}, and the second one is part of \cite[Theorem~3.5]{Conrad06ChowLN}. 
\end{proof}

\begin{thm}\label{M1fullyfaithful}
    Let $K$ be a primary field extension of $k$. Then the extension of scalars of Deligne $1$-motives
    \begin{align*}
        \pi^* \colon  \M_1(k)        &\longrightarrow \M_1(K)\\
            [L\to G]       &\longmapsto [L_K\to G_K]
    \end{align*}
    is fully faithful.
\end{thm}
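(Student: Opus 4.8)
The plan is to reduce full faithfulness to the three cases of lattices, tori and abelian varieties, together with one injectivity statement for base change on $\Ext^1$.

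\emph{Faithfulness} is immediate: a morphism $(f,g)$ of Deligne $1$-motives is a pair of homomorphisms of group schemes, and extension of scalars of schemes is faithful (Lemma~\ref{fpqcdescent}(1)), so $(f_K,g_K)$ determines $(f,g)$. For \emph{fullness} I would first observe that, for $M=[L\xrightarrow{u}G]$ and $M'=[L'\xrightarrow{u'}G']$, the group $\Hom_{\M_1(k)}(M,M')$ is the kernel of the homomorphism of abelian groups
\[\Hom_k(L,L')\oplus\Hom_k(G,G')\xrightarrow{\;(f,g)\,\mapsto\,gu-u'f\;}\Hom_k(L,G'),\]
and that $\pi^*$ is compatible with this description. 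Since $\pi^*$ is fully faithful on lattices (Proposition~\ref{M0FullFaithAdj}(1)) and the map $\Hom_k(L,G')\to\Hom_K(L_K,G'_K)$ is injective (Lemma~\ref{fpqcdescent}(1)), a diagram chase with these left-exact sequences reduces the theorem to the assertion that $\pi^*\colon\SAV(k)\to\SAV(K)$ is fully faithful.

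For the semi-abelian case I would argue by d\'evissage along the toric and abelian parts. Let $G,G'\in\SAV(k)$ have toric parts $T,T'$ and abelian parts $A,A'$ (these are functorial by Lemma~\ref{AffineAVtrivial}, and the toric, resp.\ abelian, part of $G_K$ is $T_K$, resp.\ $A_K$). Sending a homomorphism $g\colon G\to G'$ to the pair $(g|_T\colon T\to T',\;g_A\colon A\to A')$ --- well defined by Lemma~\ref{AffineAVtrivial} --- and using Lemma~\ref{AffineAVtrivial} once more together with the universal properties of the pushout and pullback of extensions in the abelian category of commutative algebraic groups, one checks that this is a natural isomorphism
\[\Hom_k(G,G')\;\xrightarrow{\;\sim\;}\;\Hom_k(T,T')\times_{\Ext^1_k(A,T')}\Hom_k(A,A'),\]
the two structure maps of the fibre product being $t\mapsto t_*[G]$ and $a\mapsto a^*[G']$. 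Because pushout and pullback of extensions commute with base change, this identification is compatible with $\pi^*$. Now $\pi^*$ is fully faithful on $\Tori$ (Proposition~\ref{M0FullFaithAdj}(1)) and on $\AV$ (Theorem~\ref{AVfullyfaithful}), so it only remains to see that $\Ext^1_k(A,T')\to\Ext^1_K(A_K,T'_K)$ is injective; granting that, the induced map of fibre products is bijective by an elementary verification.

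For this $\Ext^1$ injectivity I would invoke Deligne's Cartier dual: an extension $E$ of $A$ by $T'$ gives the Deligne $1$-motive $[0\to E]$, whose toric part is $T'$ and whose abelian part is $A$, so its Cartier dual has trivial toric part, i.e.\ is $[T'^\vee\xrightarrow{v_E}A^\vee]$ with $T'^\vee$ a lattice (Theorem~\ref{CartierDual}); the assignment $E\mapsto v_E$ is the Barsotti--Weil isomorphism $\Ext^1_k(A,T')\cong\Hom_k(T'^\vee,A^\vee)$, and it commutes with base change by Lemma~\ref{DualBase}. As $\Hom_k(T'^\vee,A^\vee)\to\Hom_K(T'^\vee_K,A^\vee_K)$ is injective (Lemma~\ref{fpqcdescent}(1)), so is the map on $\Ext^1$, which completes the argument. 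The step I expect to require the most care is the natural fibre-product description of $\Hom_k(G,G')$ and the check that it is compatible with $\pi^*$ (that $(t_*[G])_K=(t_K)_*[G_K]$ and $(a^*[G'])_K=(a_K)^*[G'_K]$); the remaining points are routine or cited. Alternatively, one can avoid Barsotti--Weil and descend a given $\phi\colon G_K\to G'_K$ geometrically: Proposition~\ref{M0FullFaithAdj}(1) applied to the finite \'etale group schemes ${}_nG,{}_nG'$ (Lemma~\ref{nGFiniteEtale}) descends each $\phi|_{({}_nG)_K}$ to a homomorphism $\phi_n\colon{}_nG\to{}_nG'$ over $k$, and the reduced Zariski closure in $G\times_k G'$ of the union of the graphs of the $\phi_n$ is, by the methods of Propositions~\ref{SAVTorsionDense} and~\ref{SAVsubRed} and Lemma~\ref{fpqcdescent}(2), a subgroup scheme whose projection to $G$ is an isomorphism; composing its inverse with the projection to $G'$ descends $\phi$.
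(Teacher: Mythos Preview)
Your proof is correct, and the ingredients are the same as the paper's---Chow's theorem for abelian varieties, full faithfulness on lattices/tori, and the fact that Cartier duality commutes with base change (Lemma~\ref{DualBase})---but the organisation is different. The paper bypasses your fibre-product analysis of $\Hom_k(G,G')$ entirely: it first proves full faithfulness for the subcategory $\M_1^{\ab}$ of $1$-motives $[L\to A]$ with $A$ an \emph{abelian} variety (this is exactly your kernel argument, using only Proposition~\ref{M0FullFaithAdj} and Theorem~\ref{AVfullyfaithful}), then observes that Cartier duality restricts to an anti-equivalence $\M_1^{\ab}\simeq\SAV^{\opp}$ compatible with $\pi^*$, so full faithfulness on $\SAV$ follows by duality; finally it reruns the kernel argument with $\SAV$ in place of $\AV$. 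In effect the paper applies Cartier duality once at the categorical level, whereas you unpack the same duality into the Barsotti--Weil isomorphism $\Ext^1_k(A,T')\cong\Hom_k(T'^\vee,A^\vee)$ and a fibre-product description of $\Hom_{\SAV}$. The paper's route is shorter and sidesteps precisely the verification you flagged as delicate (compatibility of $t_*[G]$ and $a^*[G']$ with base change); your route is more explicit and makes the role of Barsotti--Weil visible. Your alternative ``graph-closure'' sketch at the end is a third approach, closer in spirit to the paper's later Theorem~\ref{SAVdescent}.
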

\begin{proof}
    By Lemma~\ref{fpqcdescent} (1), it is clear that $\pi^*$ is faithful. Now, we prove that it is also full. Let $\M_1^\ab$ be the full subcategory of $\M_1$ whose objects are $[L\to A]$ with $A$ an abelian variety.
    \begin{enumerate}[leftmargin=*,label={\rm(\alph*)}]
        \item First, we show that the induced functor $\pi^* \colon  \M_1^\ab(k) \to\M_1^\ab(K)$ is full. Consider the morphisms of the form 
        \[(f,g) \colon  [L_K \stackrel{u_K}{\to} A_K]\longrightarrow [L'_K \stackrel{v_K}{\to} A'_K]\]
        where $A$ and $A'$ are abelian varieties over $k$. By Proposition~\ref{M0FullFaithAdj} (1) and Theorem~\ref{AVfullyfaithful}, there exist homomorphisms 
        \[f_0 \colon  L\to L'\text{ and }g_0 \colon  A\to A'\]
        such that $f$ (resp. $g$) is the base change of $f_0$ (resp. $g_0$). Since $(f,g)$ is a morphism of Deligne $1$-motives, we have that
        \[(vf_0)_K=v_K f=g u_K=(g_0 u)_K.\]
        By Lemma~\ref{fpqcdescent} (1), we obtain that \[v f_0=g_0 u.\] It means that $(f_0,g_0)$ is a morphism of Deligne $1$-motives, whose base change is $(f,g)$.
        \item By Lemma~\ref{DualBase}, the Cartier duality of $\M_1$ gives an anti-equivalence between $\M_1^\ab$ and $\SAV$ which commutes with extension of scalars. It follows from (a) that the extension of scalars $\pi^* \colon  \SAV(k)\to \SAV(K)$ is fully faithful.
        \item Now, consider $\pi^* \colon  \M_1(k)\to \M_1(K)$. Repeating the argument of (a) except for replacing Chow's Theorem~\ref{AVfullyfaithful} with (b), we conclude that the extension of scalars of Deligne $1$-motives is fully faithful. \qedhere
    \end{enumerate}
\end{proof}
\begin{rmk}
    In \cite[Theorem~1.2]{Yu19ChowSAV}, Yu uses the same strategy as in \cite[Theorem~3.19]{Conrad06ChowLN} to show that extension of scalars of semi-abelian varieties is fully faithful in the case of primary extension. Our result is a generalization of theirs, and our proof is an alternative to Yu's.
\end{rmk}

\begin{lem}
    Let $K/k$ be a primary field extension. Let $L$ be a discrete $\Gal(k_s/k)$-module and $L'$ be a discrete $\Gal(K_s/K)$-submodule of $L$. Then $L'$ is also a discrete $\Gal(k_s/k)$-submodule of $L$.
\end{lem}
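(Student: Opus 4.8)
The plan is to reduce the entire statement to the surjectivity of the restriction map $\pi\colon\Gal(K_s/K)\to\Gal(k_s/k)$, which is available here precisely because $K/k$ is primary (this is the point recalled just before Theorem~\ref{M1fullyfaithful}: one has $K\cap k_s=k$, and $\pi$ is continuous and surjective with kernel $\Gamma=\Gal(K_s/Kk_s)$). First I would make explicit the $\Gal(K_s/K)$-module structure in play: the only sensible interpretation is that $L$ is viewed as a $\Gal(K_s/K)$-module by restriction along $\pi$, so that $\sigma\cdot x=\pi(\sigma)\cdot x$ for $\sigma\in\Gal(K_s/K)$ and $x\in L$. Here one checks the harmless point that $\sigma\mapsto\sigma|_{k_s}$ is well defined on $\Gal(K_s/K)$: since $k_s/k$ is separable algebraic, hence Galois, each $\sigma\in\Gal(K_s/K)$ maps $k_s$ onto itself and fixes $k\subseteq K$. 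With this identification, the hypothesis that $L'\subseteq L$ is a $\Gal(K_s/K)$-submodule says exactly that $L'$ is a subgroup with $\pi(\sigma)\cdot x\in L'$ for all $\sigma\in\Gal(K_s/K)$ and all $x\in L'$.

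Granting that, $\Gal(k_s/k)$-stability of $L'$ is immediate: for $\tau\in\Gal(k_s/k)$ and $x\in L'$, choose by surjectivity of $\pi$ some $\sigma\in\Gal(K_s/K)$ with $\pi(\sigma)=\tau$; then $\tau\cdot x=\pi(\sigma)\cdot x\in L'$. The discreteness part of the conclusion comes for free: since $L$ is already a discrete $\Gal(k_s/k)$-module, the stabilizer in $\Gal(k_s/k)$ of any $x\in L'\subseteq L$ is open, so the restricted action makes $L'$ a discrete $\Gal(k_s/k)$-submodule of $L$. By the same token the discreteness hypothesis on $L'$ as a $\Gal(K_s/K)$-module is itself automatic once $\pi$ is continuous, so in the end the lemma carries no content beyond the stability assertion.

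I do not expect any real obstacle. The only things requiring a word of care are that the $\Gal(K_s/K)$-action on $L$ genuinely factors through $\pi$ and that the hypothesis that $K/k$ is primary is used solely through the surjectivity of $\pi$. In particular, unlike the abelian-variety and semi-abelian-variety analogues treated above, no fpqc descent or Cartier duality is needed here; this is the ``easy'' $0$-motivic counterpart of Theorem~\ref{M1fullyfaithful} and of the descent of semi-abelian subvarieties.
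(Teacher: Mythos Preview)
Your proof is correct and is essentially the same argument as the paper's, just phrased dually: the paper observes that the kernel $\Gamma=\Gal(K_s/Kk_s)$ of $\pi$ acts trivially on $L$, hence on $L'$, so the $\Gal(K_s/K)$-action on $L'$ descends to $\Gal(K_s/K)/\Gamma\cong\Gal(k_s/k)$; you instead lift $\tau\in\Gal(k_s/k)$ to $\sigma\in\Gal(K_s/K)$ via surjectivity of $\pi$ and check $\tau\cdot x=\sigma\cdot x\in L'$. These are two sides of the same coin, and your handling of the discreteness and of the meaning of the pullback action is accurate.
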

\begin{proof}
    Since $\Gal(K_s/Kk_s)$ acts trivially on $L$, its action on the submodule $L'$ is also trivial. Thus $L'$ is a $\Gal(k_s/k)$-submodule of $L$.
\end{proof}

\begin{prop}\label{M0descent}
    Let $K/k$ be a primary field extension. Let $L$ be a commutative \'etale group scheme over $k$ and let $L'$ be a subgroup scheme of $L_K$. Then the closed immersion $i \colon  L'\hookrightarrow L_K$ is defined over $k$.
\end{prop}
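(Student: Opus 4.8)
The plan is to reduce the statement to the preceding lemma on Galois modules by transporting everything along the equivalence of Proposition~\ref{EtaleModule}.

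First I would note that, by Remark~\ref{SubM0}, the subgroup scheme $L'\subseteq L_K$ is automatically étale over $K$ (since $L_K$ is étale), so it falls within the scope of Proposition~\ref{EtaleModule}. Next I would set up the comparison of geometric points: because $L$ is étale over $k$ all of its geometric points are already defined over $k_s$, so there is a natural bijection $L_K(K_s)=L(k_s)$ which is equivariant when $\Gal(K_s/K)$ acts on the left-hand side and $\Gal(k_s/k)$ acts on the right-hand side, the two actions being intertwined by the canonical surjection $\pi\colon\Gal(K_s/K)\twoheadrightarrow\Gal(k_s/k)$; here one uses that $K/k$ is primary, so that $k_s$ is indeed the separable closure of $k$ in $K_s$ and $\pi$ is well defined and surjective with kernel $\Gamma$. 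Writing $M\colonequals L(k_s)$ for the associated discrete $\Gal(k_s/k)$-module, this exhibits $L'$ as a $\Gal(K_s/K)$-submodule $M'\colonequals L'(K_s)$ of $M$.

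Then comes the one substantive step, which is already available: since $\Gamma=\ker\pi$ acts trivially on $M$, it acts trivially on the submodule $M'$, so the preceding lemma applies and $M'$ is in fact a $\Gal(k_s/k)$-submodule of $M$. Applying Proposition~\ref{EtaleModule} in the reverse direction, $M'$ is the module of points of a commutative étale $k$-group scheme $L''$, and the inclusion $M'\hookrightarrow M$ is induced by a subgroup scheme immersion $L''\hookrightarrow L$ over $k$ (automatically a closed immersion, as a sub-Galois-set of a Galois set is a union of orbits, hence clopen).

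Finally I would base change $L''\hookrightarrow L$ along $\Spec K\to\Spec k$: the group scheme $(L'')_K$ has $K_s$-points $M'$ with $\Gal(K_s/K)$ acting through $\pi$, and by construction the immersion $(L'')_K\hookrightarrow L_K$ is exactly $i\colon L'\hookrightarrow L_K$. Hence $i$ is defined over $k$. I do not anticipate a genuine obstacle: the only point demanding any care is the equivariant identification $L_K(K_s)=L(k_s)$, while the arithmetic content is supplied verbatim by the preceding lemma.
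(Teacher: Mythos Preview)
Your proposal is correct and follows exactly the paper's approach: the paper's proof consists of the single sentence that, via the equivalence of Proposition~\ref{EtaleModule}, the statement is a reformulation of the preceding lemma on Galois submodules. You have simply spelled out explicitly the transport along that equivalence (the equivariant identification $L_K(K_s)=L(k_s)$ and the reconstruction of $L''$ over $k$), which the paper leaves implicit.
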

\begin{proof}
    By the equivalence between discrete Galois modules and commutative \'etale group schemes (Proposition~\ref{EtaleModule}), this proposition is a reformulation of the above lemma.
\end{proof}

We have a similar result for semi-abelian varieties.
\begin{thm}\label{SAVdescent}
    Let $K/k$ be a primary field extension. Let $G$ be a semi-abelian variety over $k$ and $H$ be a semi-abelian subvariety of $G_K$. Then the closed immersion $i \colon  H\hookrightarrow G_K$ of semi-abelian varieties is defined over $k$.
\end{thm}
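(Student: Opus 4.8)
The plan is to descend $H$ one torsion level at a time, using the descent result for commutative \'etale group schemes (Proposition~\ref{M0descent}), and then to take a Zariski closure over $k$.

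First I would fix an integer $n\geq 1$ prime to $\Char(k)$. By Lemma~\ref{nGFiniteEtale} the $n$-torsion ${}_nG$ is a finite \'etale, in particular commutative \'etale, group scheme over $k$; since kernels commute with base change, ${}_n(G_K)=({}_nG)_K$, and hence ${}_nH=H\cap{}_n(G_K)$ is a closed subgroup scheme of $({}_nG)_K$. Proposition~\ref{M0descent} then yields a closed subscheme $W_n\subseteq{}_nG$ with $(W_n)_K={}_nH$, and Lemma~\ref{fpqcdescent}(2), applied to the faithfully flat quasi-compact morphism $\Spec K\to\Spec k$, shows that $W_n$ is in fact a subgroup scheme of ${}_nG$, hence of $G$. (One also gets $W_m\subseteq W_n$ whenever $m\mid n$, since the corresponding inclusion holds after base change to $K$.)

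Next I would set $H_0\subseteq G$ to be the reduced closed subscheme whose underlying space is the Zariski closure of $\bigcup_{\Char(k)\nmid n}W_n$; it is manifestly defined over $k$. The crucial step is to compute its base change. Arguing exactly as in the proof of Proposition~\ref{SAVsubRed} --- that is, invoking \cite[Corollaire~11.10.7]{EGAIV3}, which ensures that forming the reduced closure of a union of subschemes commutes with the flat base change $K/k$, reducedness of the result included --- one obtains that $(H_0)_K$ is the reduced closed subscheme of $G_K$ supported on $\overline{\bigcup_n(W_n)_K}=\overline{\bigcup_n{}_nH}$. Since $H$ is a semi-abelian variety it is in particular a connected closed subgroup of $G_K$ over $K$, so Proposition~\ref{SAVsubRed}(1) applies and shows that $\{{}_nH\}_{\Char(k)\nmid n}$ is topologically dense in $H$; as $H$ is reduced (being smooth) and closed in $G_K$, this forces $(H_0)_K=H$. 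Thus $i\colon H\hookrightarrow G_K$ is the base change of the closed immersion $H_0\hookrightarrow G$, which is the assertion. For completeness one checks that $H_0$ is itself a semi-abelian subvariety of $G$: by Lemma~\ref{fpqcdescent}(2) it is a subgroup scheme of $G$; it is connected because $(H_0)_K=H$ is connected; and then Proposition~\ref{SAVsubRed}(3) gives that $(H_0)_{\red}=H_0$ is a semi-abelian variety.

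I expect the main obstacle to be the base-change compatibility used in the previous paragraph: one must know that passing to the reduced closure of the \emph{infinite} union $\bigcup_n W_n$ commutes with extension of scalars from $k$ to $K$, reducedness included. This is precisely the input \cite[Corollaire~11.10.7]{EGAIV3} already supplies in the proof of Proposition~\ref{SAVsubRed}, so no new ingredient is required; the infiniteness causes no difficulty because the union may be replaced by the cofinal increasing chain $W_{1!}\subseteq W_{2!}\subseteq\cdots$. Everything else reduces to routine faithfully flat descent and the structure theory of semi-abelian varieties developed above.
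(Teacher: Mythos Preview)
Your argument is correct and follows essentially the same route as the paper: descend each torsion level ${}_nH$ using Proposition~\ref{M0descent}, take the reduced Zariski closure $H_0$ over $k$, apply \cite[Corollaire~11.10.7]{EGAIV3} to compute $(H_0)_K$, and use Proposition~\ref{SAVsubRed} to identify it with $H$. The only cosmetic differences are that the paper gets the subgroup structure on $W_n$ directly from Proposition~\ref{M0descent} (so your appeal to Lemma~\ref{fpqcdescent}(2) at that stage is redundant), and it concludes that $H_0$ is semi-abelian by descending smoothness and invoking Lemma~\ref{SubQuotSAV} rather than Proposition~\ref{SAVsubRed}(3); both endings are fine.
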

\begin{proof}
    Let $n$ be a positive integer with $\Char(k)\nmid n$. By Lemma~\ref{nGFiniteEtale}, the commutative group scheme ${}_nG$ is finite \'etale over $k$. Then ${}_nH$ is a finite  \'etale closed subgroup scheme of ${}_nG_K$. By Proposition~\ref{M0descent}, the closed immersion ${}_nH\hookrightarrow {}_nG_K$ is the base change of a closed immersion ${}_nH'\hookrightarrow {}_nG$ of finite \'etale group schemes over $k$. Let $H_0$ be the reduced closed subscheme of $G$ whose underlying space is the Zariski closure of the union of such ${}_nH'$'s. By \cite[Corollaire 11.10.7]{EGAIV3}, $(H_0)_K$ is the reduced closed subscheme of $G_K$ whose underlying space is the Zariski closure of $\bigcup_{\Char(k)\nmid n} {}_nH$. Thus we obtain that $(H_0)_K=H$ by Proposition~\ref{SAVsubRed}. By Lemma~\ref{fpqcdescent} (2), the subscheme $H_0$ is a closed subgroup of $G$. Moreover, the smoothness and connectedness of $H$ descend to $H_0$. By Lemma~\ref{SubQuotSAV}, $H_0$ is a semi-abelian subvariety of $G$.
\end{proof}

The following lemma is a simple corollary of the above theorem. It is a key ingredient in the construction of Chow image and Chow trace of Deligne $1$-motives.

\begin{lem}\label{M1CommaSurj}
    Let $K/k$ be a primary extension. By Proposition~\ref{M0FullFaithAdj}, $\pi^* \colon  \M_0(k)\to \M_0(K)$ admits a left adjoint $\pi_\sharp^{\M_0}$. Let $(f,g) \colon  [L\to G]\longrightarrow [L'\to G']_K$ be a morphism of Deligne $1$-motives over $K$. Then $(f,g)$ factors as
    \[\xymatrix{ [L\to G] \ar[rr]^-{(\varepsilon,g_0)} && [\pi_\sharp^{\M_0}L \to G_0]_K \ar[rr] && [L'\to G']_K,}\]
    where $\varepsilon \colon  L\to (\pi_\sharp^{\M_0}L)_K$ is the unit morphism and $g_0 \colon  G\to (G_0)_K$ is a surjection.
\end{lem}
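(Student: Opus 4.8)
The plan is to split $(f,g)$ into its lattice component and its semi-abelian component, factor each separately, and then splice the two factorizations together; the only genuine input beyond diagram-chasing is Theorem~\ref{SAVdescent}. For the lattice part, by Proposition~\ref{M0FullFaithAdj} the unit of the adjunction $\pi_\sharp^{\M_0}\dashv\pi^*$ is the given morphism $\varepsilon\colon L\to(\pi_\sharp^{\M_0}L)_K$, and $f\colon L\to L'_K$ factors uniquely as $f=\bar f_K\circ\varepsilon$ for some homomorphism $\bar f\colon\pi_\sharp^{\M_0}L\to L'$ of lattices over $k$. I also record that $\varepsilon$ is an epimorphism of commutative $K$-group schemes: in the proof of Proposition~\ref{M0FullFaithAdj} the functor $\pi_\sharp^{\M_0}$ is obtained by taking $\Gamma$-coinvariants and then the maximal torsion-free quotient, so $\varepsilon$ is surjective on $K_s$-points; since its source is reduced, any homomorphism out of $(\pi_\sharp^{\M_0}L)_K$ killed by precomposition with $\varepsilon$ is zero.

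For the semi-abelian component, I work in the abelian category of commutative algebraic groups over $K$ and factor $g\colon G\to G'_K$ through its image, $G\twoheadrightarrow H\hookrightarrow G'_K$, with $g_0$ the resulting surjection and $j$ the closed immersion of the subobject $H$. Since $H$ is a quotient of the semi-abelian variety $G$, Lemma~\ref{SubQuotSAV} shows that $H$ is a semi-abelian variety, hence a semi-abelian subvariety of $G'_K$. Now Theorem~\ref{SAVdescent}, applied to the semi-abelian variety $G'$ over $k$, shows that the closed immersion $j$ is defined over $k$: there is a semi-abelian subvariety $h_0\colon G_0\hookrightarrow G'$ over $k$ with $(h_0)_K=j$ and $(G_0)_K=H$. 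In particular $g_0$ becomes a surjection $G\to(G_0)_K$, and $[\pi_\sharp^{\M_0}L\to G_0]$, once equipped with a structure map, is a Deligne $1$-motive over $k$, since $\pi_\sharp^{\M_0}L$ is a lattice over $k$ by Proposition~\ref{M0FullFaithAdj}.

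The remaining step, which I expect to be the only delicate one, is to produce a structure map $u_0\colon\pi_\sharp^{\M_0}L\to G_0$ over $k$. The claim is that $u'\circ\bar f\colon\pi_\sharp^{\M_0}L\to G'$ factors through the closed subgroup $G_0$. Precomposing with $\varepsilon$ and using that $(f,g)$ is a morphism of $1$-motives,
\[(u'\circ\bar f)_K\circ\varepsilon=u'_K\circ\bar f_K\circ\varepsilon=u'_K\circ f=g\circ u,\]
which factors through $H=(G_0)_K$. Hence the composite of $(u'\circ\bar f)_K$ with the quotient map $q_K\colon G'_K\to G'_K/(G_0)_K$ vanishes after precomposition with the epimorphism $\varepsilon$, so it vanishes; as $q_K$ is the base change of $q\colon G'\to G'/G_0$, faithfulness of base change (Lemma~\ref{fpqcdescent}(1)) forces $q\circ(u'\circ\bar f)=0$, so $u'\circ\bar f$ factors through $G_0=\ker q$. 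Let $u_0$ be its corestriction, so $h_0\circ u_0=u'\circ\bar f$ and $(\bar f,h_0)\colon[\pi_\sharp^{\M_0}L\xrightarrow{u_0}G_0]\to[L'\xrightarrow{u'}G']$ is a morphism of Deligne $1$-motives over $k$.

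It remains to check that $(\varepsilon,g_0)$ is a morphism over $K$ and that the composite recovers $(f,g)$. For the former, both $(u_0)_K\circ\varepsilon$ and $g_0\circ u$ become equal to $g\circ u$ after composition with the monomorphism $j=(h_0)_K$ — the first because $(h_0\circ u_0)_K\circ\varepsilon=(u'\circ\bar f)_K\circ\varepsilon=g\circ u$, the second by construction of $g_0$ — so $(u_0)_K\circ\varepsilon=g_0\circ u$. For the latter, the composite $[L\to G]\xrightarrow{(\varepsilon,g_0)}[\pi_\sharp^{\M_0}L\to G_0]_K\xrightarrow{(\bar f,h_0)_K}[L'\to G']_K$ has lattice component $\bar f_K\circ\varepsilon=f$ and semi-abelian component $(h_0)_K\circ g_0=j\circ g_0=g$, hence equals $(f,g)$. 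The whole argument is elementary once Theorem~\ref{SAVdescent} is available; the one point requiring care is the descent of $u_0$ in the third paragraph, and that is exactly where the epimorphism property of $\varepsilon$ enters.
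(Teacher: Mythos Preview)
Your proof is correct and uses the same key input as the paper (Theorem~\ref{SAVdescent} to descend the image of $g$). The only difference is in how the structure map $u_0\colon\pi_\sharp^{\M_0}L\to G_0$ is produced: the paper takes the fibre product $L_0\colonequals L'\times_{G'}G_0$, observes that $f$ factors through $(L_0)_K$ by the universal property of the pullback, and then applies the adjunction $\pi_\sharp^{\M_0}\dashv\pi^*$ to the resulting map $L\to(L_0)_K$ of lattices. Your route instead exploits the surjectivity of the unit $\varepsilon$ to push the factorisation problem to $G'/G_0$ and then descend via Lemma~\ref{fpqcdescent}. Both are equally short; the paper's version has the mild advantage of not needing to check that $\varepsilon$ is an epimorphism, while yours makes the commutativity verifications slightly more transparent.
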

\begin{proof}
    By Theorem~\ref{SAVdescent}, the morphism $g \colon  G\to G'_K$ factors as 
    \[G \stackrel{g_0}{\longrightarrow} (G_0)_K \stackrel{i_K}{\longrightarrow} G'_K,\]
    where $(G_0)_K$ is the image of $g$ in the abelian category of commutative algebraic groups over $K$. Let $L_0$ be the base change of $L'$ through the closed immersion $i \colon  G_0\hookrightarrow G'$. Then $L_0$ is a closed subgroup of $L'$ and thus is a lattice by Remark~\ref{SubM0}. Using the universal property of fiber products, we can see that the morphism of Deligne $1$-motives $(f,g)$ factors as
    \[\xymatrix{
        L \ar[r]^-{f_0} \ar[d]    & (L_0)_K \ar[r] \ar[d] & L_K'  \ar[d] \\
        G \ar[r]^-{g_0}           & (G_0)_K \ar[r]^{i_K}  & G_K'.
    }\]
    Then the homomorphism $f_0 \colon  L\to (L_0)_K$ factors through the unit morphism $\varepsilon \colon  L\to (\pi_\sharp^{\M_0}L)_K$, which completes the proof.
\end{proof}

\subsection{Chow image and Chow trace of Deligne \texorpdfstring{$1$}{1}-motives}
\begin{lem}\label{PreorderBound}
    Let $(P,\leq)$ be a pre-ordered set (i.e., $\leq$ is reflexive and transitive, but not necessarily anti-symmetric) satisfying the following conditions: 
    \begin{enumerate}[label={\rm(\arabic*)}]
        \item $P$ is filtered, i.e., for $x,y\in P$, there exists an element $z\in P$ with $x\leq z$ and $y\leq z$;
        \item there exists a subset $Q\subset P$ such that 
            \begin{enumerate}[label={\rm(\alph*)}]
                \item for any $x\in P$, there exists $y\in Q$ with $x\leq y$;
                \item $Q$ has a maximal element $m$ in the following sense: if $t\in Q$ and $m\leq t$, then $t\leq m$.
            \end{enumerate}
    \end{enumerate}
    Then $m$ is an upper bound of $P$, i.e., $x\leq m$ for all $x\in P$.
\end{lem}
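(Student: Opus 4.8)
The plan is to reduce the statement to a single, carefully ordered application of each hypothesis. Fix an arbitrary $x\in P$; the goal is to show $x\leq m$. By condition (2)(a) there is some $y\in Q$ with $x\leq y$, so by transitivity of $\leq$ it suffices to prove that $y\leq m$. Thus the whole problem becomes: compare an arbitrary element of $Q$ with the distinguished maximal element $m$.

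To do this I would first invoke the filteredness hypothesis (1) for the pair $y,m\in P$, obtaining $z\in P$ with $y\leq z$ and $m\leq z$. Then I would apply (2)(a) a second time, now to the element $z$, to get $t\in Q$ with $z\leq t$. Transitivity gives $m\leq z\leq t$, so $m\leq t$ with $t\in Q$; the maximality property (2)(b) now applies and yields $t\leq m$. Chaining the inequalities, $y\leq z\leq t\leq m$, hence $y\leq m$, and therefore $x\leq y\leq m$, which is what we wanted.

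The only subtle point — and it is barely an obstacle — is that $m$ is maximal merely within the subset $Q$ and is not assumed a priori to dominate anything outside of it, nor even other elements of $Q$ directly; one cannot apply (2)(b) to $y$ itself. Filteredness is exactly the device that transports an arbitrary $y\in Q$ above $m$ through a common upper bound $z$, and the cofinality condition (2)(a) then pulls that bound back into $Q$, where (2)(b) can finally be used. So once the three hypotheses are applied in the order (2)(a), then (1), then (2)(a) again, then (2)(b), there is nothing further to prove. (In the intended application $P$ will be a filtered family of sub-objects of a fixed $1$-motive, $Q$ a cofinal subfamily on which a dimension/rank bound forces a maximal element, and the lemma extracts the largest such sub-object, i.e.\ the candidate Chow image or trace.)
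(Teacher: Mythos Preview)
Your proof is correct and follows essentially the same approach as the paper: use filteredness to climb above $m$, use (2)(a) to land back in $Q$, then use maximality (2)(b) to drop to $m$. The only difference is that your initial application of (2)(a) to $x$ is unnecessary---the paper applies (1) directly to the pair $x,m$ rather than to $y,m$, saving one step.
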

\begin{proof}
    For any $x\in P$, there exists an element $y\in P$ with $m\leq y$ and $x\leq y$ by (1). Then there is an element $z\in Q$ such that $y\leq z$ by (2a). Thus $m\leq z$. It follows from (2b) that $z\leq m$. Hence $x\leq m$.
\end{proof}

\begin{thm}\label{M1adjoint}
    Let $K/k$ be a primary extension. Then the extension of scalars $\pi^* \colon  \M_1(k)\to \M_1(K)$ has a left adjoint $\pi_\sharp^{\M_1}$ and a right adjoint $\pi_*^{\M_1}$, called Chow image and Chow trace respectively.
\end{thm}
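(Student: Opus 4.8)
The plan is to construct the left adjoint $\pi_\sharp^{\M_1}$ explicitly, and then deduce the right adjoint $\pi_*^{\M_1}$ from it by Deligne's Cartier duality. For the left adjoint I will show that for each $M\in\M_1(K)$ the comma category $M/\pi^*$ has an initial object; the resulting universal arrows assemble into the desired adjoint in the standard way.

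Fix $M=[L\xrightarrow{u}G]\in\M_1(K)$. Call an object of $M/\pi^*$ \emph{reduced} if it is of the shape produced by Lemma~\ref{M1CommaSurj}, i.e.\ $R_{g_0}=\bigl([\pi_\sharp^{\M_0}L\xrightarrow{\bar u}G_0]\,,\,(\varepsilon,g_0)\bigr)$ with $G_0\in\SAV(k)$, $\varepsilon$ the unit of $\pi_\sharp^{\M_0}\dashv\pi^*$ (Proposition~\ref{M0FullFaithAdj}), $g_0\colon G\twoheadrightarrow(G_0)_K$ a quotient map, and $\bar u$ a $k$-homomorphism with $\bar u_K\varepsilon=g_0 u$. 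By Lemma~\ref{M1CommaSurj}, every object of $M/\pi^*$ receives a morphism from a reduced one, and up to isomorphism the reduced objects form a set (as $G_0$ runs through quotients of $G$). The geometric heart of the argument is that the set $\cH=\{\ker g_0\mid R_{g_0}\text{ reduced}\}$ of closed subgroups of $G$ is closed under finite intersections: indeed $\ker g_0\cap\ker g_1=\ker\bigl((g_0,g_1)\colon G\to(G_0\times_k G_1)_K\bigr)$, and the image of $(g_0,g_1)$ is a quotient of the semi-abelian variety $G$, hence a semi-abelian subvariety of $(G_0\times_k G_1)_K$ by Lemma~\ref{SubQuotSAV}, which by Theorem~\ref{SAVdescent} descends to a semi-abelian subvariety $G_2\hookrightarrow G_0\times_k G_1$ over $k$; moreover $(\bar u_0,\bar u_1)\colon\pi_\sharp^{\M_0}L\to G_0\times_k G_1$ factors through $G_2$ over $k$ because its base change does (using that $\varepsilon$ is an epimorphism, and that ``factoring through a closed subscheme'' descends along the faithfully flat map $\Spec K\to\Spec k$), so $R_{g_2}$ is again reduced with $\ker g_2=\ker g_0\cap\ker g_1$. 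Since closed subschemes of the noetherian scheme $G$ satisfy the descending chain condition, $\cH$ has a minimal element, which by closure under intersections is a least element $H_{\min}$; let $R_{\min}=R_{g_{\min}}$ be a reduced object with $\ker g_{\min}=H_{\min}$, with $G$-part $G_{\min}$, a descent of $G/H_{\min}$ (unique up to unique isomorphism by Theorem~\ref{M1fullyfaithful}).

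Next I will check that $R_{\min}$ is initial in $M/\pi^*$. Regarding $M/\pi^*$ as a preorder $P$ with $X\le Y\iff\Hom_{M/\pi^*}(Y,X)\neq\emptyset$, and letting $Q\subset P$ be the reduced objects: $P$ is filtered because $\M_1(k)$ has finite products and $\pi^*$ preserves them; condition~(2a) of Lemma~\ref{PreorderBound} is Lemma~\ref{M1CommaSurj}; and $R_{\min}$ is maximal in $Q$, since a reduced $R_{g_0}$ mapping to $R_{\min}$ has $\ker g_0\subseteq H_{\min}$, hence $\ker g_0=H_{\min}$, hence $R_{g_0}\cong R_{\min}$ in $M/\pi^*$ and so $R_{\min}$ maps back to $R_{g_0}$. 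So by Lemma~\ref{PreorderBound}, $R_{\min}$ admits a morphism to every object of $M/\pi^*$; such a morphism is unique, because a morphism $(\alpha_L,\alpha_G)\colon R_{\min}\to(N,\phi)$ in $M/\pi^*$ must satisfy $\alpha_{L,K}\varepsilon=\phi_L$ and $\alpha_{G,K}g_{\min}=\phi_G$, which determines $\alpha_{L,K},\alpha_{G,K}$ as $\varepsilon$ and $g_{\min}$ are epimorphisms, and then determines $\alpha_L,\alpha_G$ by faithfulness of $\pi^*$ on $\M_0$ and on $\SAV$ (Proposition~\ref{M0FullFaithAdj}, Theorem~\ref{M1fullyfaithful}). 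Hence $R_{\min}$ is initial, and $\pi_\sharp^{\M_1}M:=[\pi_\sharp^{\M_0}L\to G_{\min}]$, with unit $(\varepsilon,g_{\min})$, defines the left adjoint $\pi_\sharp^{\M_1}\dashv\pi^*$.

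Finally, the right adjoint follows formally. Deligne's Cartier duality gives anti-equivalences $D_k$ on $\M_1(k)$ and $D_K$ on $\M_1(K)$ with $\pi^*\circ D_k\cong D_K\circ\pi^*$ (Lemma~\ref{DualBase}), and setting $\pi_*^{\M_1}:=D_k\circ\pi_\sharp^{\M_1}\circ D_K$ one gets natural isomorphisms
\begin{align*}
\Hom_{\M_1(k)}\bigl(N,\pi_*^{\M_1}M\bigr)
&\cong\Hom_{\M_1(k)}\bigl(\pi_\sharp^{\M_1}D_K M,\,D_k N\bigr)
\cong\Hom_{\M_1(K)}\bigl(D_K M,\,\pi^*D_k N\bigr)\\
&\cong\Hom_{\M_1(K)}\bigl(D_K M,\,D_K\pi^* N\bigr)
\cong\Hom_{\M_1(K)}\bigl(\pi^* N,\,M\bigr).
\end{align*}
I expect the main obstacle to be the existence of $H_{\min}$, specifically the closure of $\cH$ under intersection: this is exactly the point where Theorem~\ref{SAVdescent}---that a primary extension creates no new semi-abelian subvarieties---is indispensable, used via the diagonal embedding into a product, together with the faithfully flat descent of the factorization of $\bar u$. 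Everything else (filteredness via products, uniqueness via the epimorphisms $\varepsilon,g_{\min}$ and faithfulness of $\pi^*$, and the duality computation) is formal.
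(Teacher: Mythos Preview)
Your proof is correct and follows essentially the same route as the paper: both set up the comma category as a pre-order, invoke Lemma~\ref{M1CommaSurj} for condition~(2a), use noetherianity of $G$ to produce a maximal reduced object, apply Lemma~\ref{PreorderBound}, deduce uniqueness from the fact that $\varepsilon$ and $g_{\min}$ are epimorphisms together with full faithfulness of $\pi^*$, and obtain the right adjoint via Cartier duality. The one difference is that your ``geometric heart'' paragraph---closure of $\cH$ under finite intersections via Theorem~\ref{SAVdescent} applied to the diagonal image---is actually superfluous: once you invoke Lemma~\ref{PreorderBound}, only \emph{minimality} of $H_{\min}$ is needed for (2b), and that comes from noetherianity alone (as the paper does); your stronger conclusion that $H_{\min}$ is \emph{least} is correct but never used.
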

\begin{proof}
    Let $M=[L\to G]$ be a Deligne $1$-motive over $K$. Let $P$ be the set of morphisms of Deligne $1$-motives $\varphi \colon  M\to \pi^*N$ with $N$ a Deligne $1$-motive over $k$. Let $Q$ be the subset consisting of morphisms of the form
    \[(\varepsilon,g) \colon  [L\to G] \longrightarrow [\pi_\sharp^{\M_0} L\to G']_K,\] 
    with $g \colon  G\to G_K'$ surjective. Consider the following pre-order:  $\varphi'\leq \varphi$ if there exists a morphism $\psi \colon  N\to N'$ such that $\varphi'=\pi^*\psi\circ\varphi$. 
    
    The pre-ordered set $(P,\leq)$ satisfies all the conditions in Lemma~\ref{PreorderBound}:
    \begin{enumerate}[leftmargin=*,label={\rm(\arabic*)}]
        \item Let $\varphi_1 \colon  M\to \pi^*N_1$ and $\varphi_2 \colon  M\to \pi^*N_2$ be two elements in $P$. Then the induced map 
            \[(\varphi_1,\varphi_2) \colon  M\to (N_1)_K\times (N_2)_K \simeq (N_1\times N_2)_K\] 
            is a supremum of these two morphisms.
        \item \begin{enumerate}[label={\rm(\alph*)}]
                \item This is Lemma~\ref{M1CommaSurj}.
                \item Consider the set of closed subgroup schemes of $G$ which are of the form $\ker(g)$ for some $(\varepsilon,g)$ in $Q$. Because $G$ is a noetherian scheme, this set has a minimal element $m$ with respect to inclusion. Say $G/m=G'_K$, where $G'$ is a semi-abelian variety over $k$. Then the corresponding morphism $(\varepsilon,g) \colon  [L\to G] \to [\pi_\sharp^{\M_0}L\to G']_K$ is a maximal element of $Q$.
            \end{enumerate}
    \end{enumerate}
    Hence by Lemma~\ref{PreorderBound}, the set $P$ has an upper bound which is an element of $Q$. In other words, there exists a Deligne $1$-motive $N$ over $k$ and a morphism $\varphi \colon  M\to \pi^*N$ such that for every morphism $\varphi' \colon  M\to \pi^*N'$ there exists a morphism $\psi \colon  N\to N'$ such that $\varphi'=\pi^*\psi\circ\varphi$. Note that $\pi^*$ is fully faithful by Theorem~\ref{M1fullyfaithful}, and that such $\varphi$ has the form $(\varepsilon,g)$ with $\varepsilon$ the unit morphism and $g$ a surjection. Thus the morphisms $\psi$ satisfying $\varphi'=\pi^*\psi\circ\varphi$ is unique. It means that the functor $\pi^*$ has a left adjoint $\pi_\sharp^{\M_1}$.

    By Lemma~\ref{DualBase}, Cartier duality for Deligne $1$-motives commutes with extension of scalars. Thus the existence of the right adjoint is obvious by dualizing $\pi_\sharp(M^\vee)$ and using the dual of its universal morphism.
\end{proof}

\begin{cor}\label{M1adjIsom}
    Let $K/k$ be a primary field extension and $M\in\M_1(k)$. Then we have the following canonical isomorphisms
    \[\pi_\sharp^{\M_1} \pi^*M\stackrel{\sim}{\longrightarrow} M\]
    and \[M\stackrel{\sim}{\longrightarrow}\pi_*^{\M_1}\pi^*M.\] 
\end{cor}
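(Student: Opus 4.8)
The plan is to deduce both isomorphisms purely formally from the full faithfulness of $\pi^*$ (Theorem~\ref{M1fullyfaithful}) together with the existence of the two adjoints (Theorem~\ref{M1adjoint}), using the standard categorical principle that a fully faithful right adjoint has invertible counit and, dually, a fully faithful left adjoint has invertible unit.

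For the first isomorphism, recall that $\pi_\sharp^{\M_1}$ is left adjoint to $\pi^*$. For $M,N\in\M_1(k)$ there are bijections, natural in $N$,
\[
\Hom_{\M_1(k)}(\pi_\sharp^{\M_1}\pi^*M,N)\;\cong\;\Hom_{\M_1(K)}(\pi^*M,\pi^*N)\;\cong\;\Hom_{\M_1(k)}(M,N),
\]
the first given by the adjunction and the second by full faithfulness of $\pi^*$. By the Yoneda lemma this isomorphism of functors in $N$ is represented by a canonical isomorphism $\pi_\sharp^{\M_1}\pi^*M\stackrel{\sim}{\longrightarrow}M$ in $\M_1(k)$; concretely it is the counit of the adjunction, equivalently the morphism classifying $\id_{\pi^*M}$ in the construction of $\pi_\sharp^{\M_1}$ carried out in the proof of Theorem~\ref{M1adjoint}.

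For the second isomorphism I would argue dually with $\pi^*\dashv\pi_*^{\M_1}$: for $M,N\in\M_1(k)$ there are bijections, natural in $N$,
\[
\Hom_{\M_1(k)}(N,\pi_*^{\M_1}\pi^*M)\;\cong\;\Hom_{\M_1(K)}(\pi^*N,\pi^*M)\;\cong\;\Hom_{\M_1(k)}(N,M),
\]
and Yoneda yields the canonical isomorphism $M\stackrel{\sim}{\longrightarrow}\pi_*^{\M_1}\pi^*M$ (the unit of the adjunction). As a cross-check, matching the route by which $\pi_*^{\M_1}$ was produced in Theorem~\ref{M1adjoint}, one may instead use Cartier duality: since it is a contravariant self-equivalence of $\M_1$ commuting with extension of scalars (Lemma~\ref{DualBase}), one has $\pi_*^{\M_1}M'\cong\bigl(\pi_\sharp^{\M_1}(M'^\vee)\bigr)^\vee$, whence, applying the first isomorphism to $M^\vee$,
\[
\pi_*^{\M_1}\pi^*M\;\cong\;\bigl(\pi_\sharp^{\M_1}((\pi^*M)^\vee)\bigr)^\vee\;\cong\;\bigl(\pi_\sharp^{\M_1}\pi^*(M^\vee)\bigr)^\vee\;\cong\;(M^\vee)^\vee\;\cong\;M.
\]

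I do not expect a serious obstacle here: all the content sits in Theorems~\ref{M1fullyfaithful} and~\ref{M1adjoint}. The only points needing a little care are (i) that the displayed chains of bijections are natural in $N$, so that Yoneda applies and the resulting isomorphisms are canonical, and (ii) in the duality route, that the identification $\pi_*^{\M_1}\cong(\pi_\sharp^{\M_1}((-)^\vee))^\vee$ is precisely the one used to define $\pi_*^{\M_1}$, so that no compatibility or sign issue arises; both are routine.
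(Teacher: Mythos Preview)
Your proof is correct and takes essentially the same approach as the paper: the paper's proof is the single sentence ``This result holds because $\pi^*$ is fully faithful according to Theorem~\ref{M1fullyfaithful},'' invoking precisely the categorical principle you spell out. Your version merely unpacks this via Yoneda and adds the Cartier-duality cross-check, neither of which introduces any new idea.
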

\begin{proof}
    This result holds because $\pi^*$ is fully faithful according to Theorem~\ref{M1fullyfaithful}.
\end{proof}

The functors $\pi_\sharp^{\M_1}$ and $\pi_*^{\M_1}$ recover some classical constructions, such as Chow's image and trace of abelian varieties. 
From now on, we sometimes write them simply as $\pi_\sharp$ and $\pi_*$ respectively.
\begin{cor}\label{M1AdjExample}
    Let $K$ be a primary field extension of $k$.
    \begin{enumerate}[label={\rm(\arabic*)}]
        \item For any $L\in\M_0(K)$, we have 
            \[\pi_\sharp([L\to 0])=[\pi_\sharp^{\M_0}(L)\to 0] \text{\quad and \quad} \pi_*([L\to 0])=[\pi_*^{\M_0}(L)\to 0],\]
            where $\pi_\sharp^{\M_0}$ and $\pi_*^{\M_0}$ are left and right adjoints to extension of scalars of lattices in Proposition~\ref{M0FullFaithAdj}.
        \item The extension of scalars $\pi^* \colon  \SAV(k)\to \SAV(K)$ has a left adjoint $\pi_\sharp^\SAV$ and a right adjoint $\pi_*^\SAV$. Moreover, for any $G\in\SAV(K)$, we have 
            \[\pi_\sharp([0\to G])=[0\to \pi_\sharp^\SAV(G)] \text {\quad and \quad } \pi_*([0\to G])=[0\to \pi_*^\SAV(G)].\]
        \item The extension of scalars $\pi^* \colon  \AV(k)\to \AV(K)$ has a left adjoint $\pi_\sharp^\AV$ and a right adjoint $\pi_*^\AV$. Moreover, for any $A\in\AV(K)$, we have 
            \[\pi_\sharp([0\to A])=[0\to \pi_\sharp^\AV(A)] \text{\quad and \quad} \pi_*([0\to A])=[0\to \pi_*^\AV(A)].\]
        \item The extension of scalars $\pi^* \colon  \Tori(k)\to \Tori(K)$ has a left adjoint $\pi_\sharp^\Tori$ and a right adjoint $\pi_*^\Tori$. Moreover, for any $T\in\Tori(K)$, we have 
            \[\pi_\sharp([0\to T])=[0\to \pi_\sharp^\Tori(T)] \text{\quad and \quad} \pi_*([0\to T])=[0\to \pi_*^\Tori(T)].\]
        \item For any $G\in\SAV(K)$, the abelian part of $\pi_\sharp^\SAV(G)$ is isomorphic to the Chow image of the abelian part of $G$, and the torus part of $\pi_*^\SAV(G)$ is isomorphic to the Chow trace of the torus part of $G$.
    \end{enumerate}
\end{cor}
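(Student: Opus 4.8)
The plan is to read parts (1)--(4) off the explicit description of the Chow image produced inside the proof of Theorem~\ref{M1adjoint}, and then obtain the statements about $\pi_*^{\M_1}$ from Cartier duality. Recall that that proof realizes, for $M=[L\xrightarrow{u}G]\in\M_1(K)$, the Chow image as $\pi_\sharp^{\M_1}(M)=[\pi_\sharp^{\M_0}(L)\to G_0]$ for some semi-abelian variety $G_0$ over $k$ equipped with a surjection $G\twoheadrightarrow(G_0)_K$ of commutative algebraic groups over $K$. First I would specialize this. With $G=0$ one gets $(G_0)_K=0$, so $G_0=0$ and $\pi_\sharp([L\to0])=[\pi_\sharp^{\M_0}(L)\to0]$, the image half of (1). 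With $L=0$, and using that the additive left adjoint $\pi_\sharp^{\M_0}$ preserves the zero object, one gets $\pi_\sharp([0\to G])=[0\to G_0]$ with $G_0\in\SAV(k)$; if in addition $G=A$ is an abelian variety then its quotient $(G_0)_K$ is one too (Lemma~\ref{SubQuotSAV}(2)), hence has trivial toric part, and since the toric part commutes with extension of scalars (Remark~\ref{SAV}(1)) so does $G_0$, i.e.\ $G_0\in\AV(k)$; dually $G_0\in\Tori(k)$ when $G$ is a torus.

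Next I would promote these assignments to adjoint functors. For every field $F$ the embeddings $[-\to0]\colon\M_0(F)\hookrightarrow\M_1(F)$ and $[0\to-]\colon\SAV(F),\AV(F),\Tori(F)\hookrightarrow\M_1(F)$ are fully faithful (the commutativity condition in a morphism between two objects of such a shape is vacuous) and commute with extension of scalars. Combining full faithfulness of $[0\to-]$, the adjunction $\pi_\sharp^{\M_1}\dashv\pi^*$, and the identity $\pi_\sharp^{\M_1}([0\to G])=[0\to G_0]$, one obtains a natural isomorphism $\Hom_{\SAV(k)}(G_0,H)\cong\Hom_{\SAV(K)}(G,\pi^*H)$; hence $\pi_\sharp^\SAV:=(G\mapsto G_0)$ is left adjoint to $\pi^*\colon\SAV(k)\to\SAV(K)$, and the identical computation with $\AV$ and $\Tori$ in place of $\SAV$ produces $\pi_\sharp^\AV$ and $\pi_\sharp^\Tori$. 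This gives the image halves of (2)--(4).

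For $\pi_*^{\M_1}$ I would use Lemma~\ref{DualBase}: Cartier duality $D=(-)^\vee$ commutes with $\pi^*$, so $\pi_*^{\M_1}=D\circ\pi_\sharp^{\M_1}\circ D$. With $[L\to0]^\vee=[0\to L^\vee]$ ($L^\vee$ the dual torus), $[0\to T]^\vee=[T^\vee\to0]$, $[0\to A]^\vee=[0\to A^\vee]$, and $[0\to G]^\vee=[T^\vee\to A^\vee]$ ($T,A$ the toric and abelian parts of $G$), feeding these through the image result and dualizing back gives, for instance, $\pi_*([0\to G])=D\,\pi_\sharp^{\M_1}([T^\vee\to A^\vee])=D([\pi_\sharp^{\M_0}(T^\vee)\to B])=[0\to G_0']$ with $B$ an abelian variety (a quotient of $A^\vee$) and $G_0'\in\SAV(k)$ an extension of $B^\vee$ by $(\pi_\sharp^{\M_0}(T^\vee))^\vee$; likewise $\pi_*([0\to A])=[0\to B^\vee]\in\AV(k)$ and $\pi_*([0\to T])=[0\to(\pi_\sharp^{\M_0}(T^\vee))^\vee]\in\Tori(k)$, and the adjointness of these to $\pi^*$ follows exactly as above. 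The trace half of (1) is quickest to prove directly by Yoneda, since the commutativity condition is again vacuous: for $N=[N_0\to N_1]\in\M_1(k)$, $\Hom_{\M_1(K)}(\pi^*N,[L\to0])=\Hom_{\M_0(K)}((N_0)_K,L)\cong\Hom_{\M_0(k)}(N_0,\pi_*^{\M_0}L)=\Hom_{\M_1(k)}(N,[\pi_*^{\M_0}(L)\to0])$ by Proposition~\ref{M0FullFaithAdj}(2), whence $\pi_*([L\to0])=[\pi_*^{\M_0}(L)\to0]$.

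Finally, for (5) I would argue by pure adjoint-functor formalities. By Lemma~\ref{AffineAVtrivial} every homomorphism from a semi-abelian variety to an abelian variety annihilates the toric part, so the ``abelian part'' functor $\SAV\to\AV$ is left adjoint to $\AV\hookrightarrow\SAV$; dually, every homomorphism from a torus to a semi-abelian variety factors through the toric part, so the ``toric part'' functor $\SAV\to\Tori$ is right adjoint to $\Tori\hookrightarrow\SAV$. As extension of scalars commutes with both inclusions, composing adjunctions and invoking uniqueness of adjoints yields canonical isomorphisms $(\text{abelian part})\circ\pi_\sharp^\SAV\cong\pi_\sharp^\AV\circ(\text{abelian part})$ and $(\text{toric part})\circ\pi_*^\SAV\cong\pi_*^\Tori\circ(\text{toric part})$, which, evaluated at $G\in\SAV(K)$, are precisely the two assertions of (5). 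I expect no genuine obstacle here; the one point demanding care is extracting the precise form $[\pi_\sharp^{\M_0}(L)\to G_0]$ with $(G_0)_K$ a quotient of $G$ of the Chow image from the proof of Theorem~\ref{M1adjoint}, on which the first three paragraphs rest, together with keeping Cartier duality straight in the trace case.
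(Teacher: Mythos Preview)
Your proposal is correct and follows essentially the same approach as the paper: both extract from the proof of Theorem~\ref{M1adjoint} that the unit of $\pi_\sharp^{\M_1}$ has the form $(\varepsilon,g)\colon M\to[\pi_\sharp^{\M_0}L\to G_0]_K$ with $g$ surjective, specialize this to the cases $G=0$ and $L=0$, use Lemma~\ref{SubQuotSAV} to see that the subcategories $\AV$ and $\Tori$ are preserved, and then pass to the trace side via Cartier duality. Your treatment of (5) via the adjunctions $(\text{abelian part})\dashv(\AV\hookrightarrow\SAV)$ and $(\Tori\hookrightarrow\SAV)\dashv(\text{toric part})$ is a clean formalization of what the paper compresses into ``follows from Remark~\ref{SAV}(1) and the definition of Chow image and Chow trace''; the direct Yoneda argument you give for the trace half of (1) is a harmless shortcut over the paper's uniform appeal to Cartier duality.
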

\begin{proof}
    Keep the notations in the proof of Theorem~\ref{M1adjoint}. Then for any Deligne $1$-motive $M$ over $K$, the unit morphism $\varphi \colon  M\to \pi^*\pi_\sharp(M)$ is a maximal element of $Q$. By definition, for $M=[L\to 0]$ with $L\in\M_0(K)$, the elements of $Q$ are the morphisms 
    \[(\varepsilon,g) \colon  [L\to 0]\longrightarrow[\pi_\sharp^{M_0}L\to G']_K,\]
    where $g$ is surjective. So $G'=0$ and $Q$ has a unique element $(\varepsilon,0) \colon  [L\to 0]\to [\pi_\sharp^{\M_0}(L)\to 0]_K$. So
    \[\pi_\sharp([L\to 0])=[\pi_\sharp^{\M_0}(L)\to 0].\]
    
    Similarly, for $M=[0\to G]$ with $G\in\SAV(K)$, the elements of $Q$ are the morphisms 
    \[(\varepsilon,g) \colon  [0\to G]\longrightarrow[0\to G']_K,\]
    where $g$ is surjective. Thus the left adjoint $\pi_\sharp(M)$ has the form $[0\to G_0]$ for some $G_0\in\SAV(k)$ and the canonical morphism $G\to \pi^*G_0$ is surjective. So $\pi^* \colon  \SAV(k)\to \SAV(K)$ has a left adjoint $\pi_\sharp^\SAV$ and we have 
    \[\pi_\sharp([0\to G])=[0\to \pi_\sharp^\SAV(G)].\]
    By Lemma~\ref{SubQuotSAV}, if $G$ is an abelian variety (resp. a torus), then $\pi_\sharp^\SAV(G)$ is an abelian variety (resp. a torus) because the canonical morphism $G\to \pi^*\pi_\sharp^\SAV(G)$ is surjective. Hence $\pi^* \colon  \AV(k)\to \AV(K)$ has a left adjoint $\pi_\sharp^\AV$, and for an abelian variety $A/K$, we have that
    \[\pi_\sharp([0\to A])=[0\to \pi_\sharp^\AV(A)].\]
    The same results hold for tori.
    
    By Cartier duality, we can obtain the assertions on the right adjoints.

    The last assertion follows from Remark \ref{SAV} (1) and the definition of Chow image and Chow trace.
\end{proof}
\begin{rmk}
    The functors $\pi_\sharp^\AV$ and $\pi_*^\AV$ are Chow's $K/k$-image and $K/k$-trace of abelian varieties. So our result recovers Chow's image and trace. This justifies the name of Chow image and Chow trace of Deligne $1$-motives.
\end{rmk}

In the remaining part of this section, we study the Chow image and Chow trace of a ``nontrivial'' Deligne $1$-motive $[L \to A]$ with $A$ an abelian variety. The case when $A$ has trivial Chow trace is quite simple.
\begin{cor}\label{ZtoAtrivial}
    Let $K/k$ be a primary field extension. Let $A$ be an abelian variety over $K$ with $\pi_\sharp(A)=0$ (equivalently $\pi_*(A)=0$ by \cite[Theorem 6.9]{Conrad06ChowLN}). Then 
    $$\pi_\sharp([L \to A])=[\pi_\sharp L \to 0] \text{\quad and \quad} \pi_*([L \stackrel{u}{\to} A])=[\pi_*\ker(u) \to 0].$$
\end{cor}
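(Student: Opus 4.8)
The plan is to establish both formulas by computing the relevant representable functors, rather than by running through the explicit construction of $\pi_\sharp^{\M_1}$ and $\pi_*^{\M_1}$ in Theorem~\ref{M1adjoint}. Write $\pi_\sharp^{\M_0}$, $\pi_*^{\M_0}$ for the adjoints of Proposition~\ref{M0FullFaithAdj}. Since $\pi_\sharp^{\M_1}$ and $\pi_*^{\M_1}$ are already known to exist, it suffices to exhibit isomorphisms, natural in $N\in\M_1(k)$, of the form $\Hom_{\M_1(K)}([L\to A],\pi^*N)\cong\Hom_{\M_1(k)}([\pi_\sharp^{\M_0}L\to 0],N)$ and $\Hom_{\M_1(K)}(\pi^*N,[L\stackrel{u}{\to}A])\cong\Hom_{\M_1(k)}(N,[\pi_*^{\M_0}\ker(u)\to 0])$; by Yoneda these give the two claimed identities, since $[\pi_\sharp^{\M_0}L\to 0]=[\pi_\sharp L\to 0]$ and $[\pi_*^{\M_0}\ker(u)\to 0]=[\pi_*\ker(u)\to 0]$ in the notation of Corollary~\ref{M1AdjExample}(1). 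The common mechanism is that any morphism into or out of $[L\to A]$ is forced to have trivial ``semi-abelian component'', after which only a lattice-theoretic datum remains.

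For the Chow image, fix $N=[L''\stackrel{w}{\to}G'']\in\M_1(k)$ and let $T''$, $B''$ be the toric and abelian parts of $G''$. Given a morphism $(f,g)\colon[L\to A]\to\pi^*N$, I would first show $g=0$: composing $g\colon A\to G''_K$ with the projection $G''_K\twoheadrightarrow B''_K$ gives an element of $\Hom_{\AV(K)}(A,B''_K)=\Hom_{\AV(k)}(\pi_\sharp^\AV A,B'')=0$ (using $\pi_\sharp(A)=0$ and Corollary~\ref{M1AdjExample}(3)), so $g$ factors through $T''_K$, and a homomorphism from the abelian variety $A$ to the affine group $T''_K$ vanishes by Lemma~\ref{AffineAVtrivial}. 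Commutativity of the defining square then forces $w_K\circ f=0$, so $f\colon L\to L''_K$ factors through $\ker(w_K)=(\ker w)_K$ (kernels commute with base change), and $\ker(w)$ is a lattice over $k$ by Remark~\ref{SubM0}. Hence $(f,g)\mapsto f$ identifies $\Hom_{\M_1(K)}([L\to A],\pi^*N)$ with $\Hom_{\M_0(K)}(L,(\ker w)_K)$, which by the adjunction $\pi_\sharp^{\M_0}\dashv\pi^*$ equals $\Hom_{\M_0(k)}(\pi_\sharp^{\M_0}L,\ker w)$, and a homomorphism $\pi_\sharp^{\M_0}L\to\ker w$ is the same thing as a morphism $[\pi_\sharp^{\M_0}L\to 0]\to N$ in $\M_1(k)$ (the map $0\to G''$ being forced, the square then commuting automatically). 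Chaining these yields the first natural isomorphism.

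For the Chow trace the argument is dual. Given $N=[L''\stackrel{w}{\to}G'']$ and a morphism $(f,g)\colon\pi^*N\to[L\stackrel{u}{\to}A]$, the restriction of $g\colon G''_K\to A$ to $T''_K$ vanishes by Lemma~\ref{AffineAVtrivial}, so $g$ factors through $B''_K$, and $\Hom_{\AV(K)}(B''_K,A)=\Hom_{\AV(k)}(B'',\pi_*^\AV A)=0$ since $\pi_*(A)=0$; hence $g=0$. Commutativity then forces $u\circ f=0$, so $f\colon L''_K\to L$ factors through the lattice $\ker(u)$, and $(f,g)\mapsto f$ identifies $\Hom_{\M_1(K)}(\pi^*N,[L\to A])$ with $\Hom_{\M_0(K)}(L''_K,\ker(u))\cong\Hom_{\M_0(k)}(L'',\pi_*^{\M_0}\ker(u))$ (adjunction $\pi^*\dashv\pi_*^{\M_0}$), which in turn equals $\Hom_{\M_1(k)}(N,[\pi_*^{\M_0}\ker(u)\to 0])$ since a morphism $N\to[\pi_*^{\M_0}\ker(u)\to 0]$ is just a homomorphism $L''\to\pi_*^{\M_0}\ker(u)$.

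The only points requiring care are bookkeeping: one must check that each displayed bijection is natural in $N$, which is routine since every step is pre- or post-composition with the induced map on kernels; and one must match the symbols ``$\pi_\sharp$'', ``$\pi_*$'' of the statement (meaning $\pi_\sharp^{\M_1}$, $\pi_*^{\M_1}$ applied to $[L\to 0]$ and $[\ker(u)\to 0]$) with the lattice adjoints via Corollary~\ref{M1AdjExample}(1). I do not expect a real obstacle once the vanishing of the semi-abelian component is in place, and that vanishing is immediate from the two adjunction formulas for abelian varieties together with Lemma~\ref{AffineAVtrivial}.
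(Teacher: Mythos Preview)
Your argument is correct. Both formulas follow cleanly once you force the semi-abelian component of any morphism to vanish, and your use of Lemma~\ref{AffineAVtrivial} together with the adjunctions $(\pi_\sharp^\AV,\pi^*)$ and $(\pi^*,\pi_*^\AV)$ accomplishes this.

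The paper's proof differs slightly in organization. For the Chow image, rather than computing $\Hom_{\M_1(K)}([L\to A],\pi^*N)$ for arbitrary $N$, the paper returns to the explicit construction of $\pi_\sharp^{\M_1}$ in Theorem~\ref{M1adjoint}: it observes that every element of the set $Q$ there has the form $(\varepsilon,g)\colon[L\to A]\to[\pi_\sharp L\to G]_K$ with $g$ surjective, and then argues that $g$ factors through the unit $A\to(\pi_\sharp A)_K=0$, forcing $G=0$ and leaving $Q$ a singleton. For the Chow trace, the paper does use a Yoneda-style computation but tests only against the ``pure'' objects $[0\to G']$ and $[L'\to 0]$ separately to identify the semi-abelian and lattice parts of $\pi_*M$; you test against all $N$ at once. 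Your approach is more uniform and self-contained (it does not revisit the internals of Theorem~\ref{M1adjoint}), while the paper's approach for the image is marginally shorter since no decomposition into toric and abelian parts is needed when working with the already-constructed $Q$.
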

\begin{proof}
    Keep the notations in the proof of Theorem~\ref{M1adjoint}. For $[L \to A]$ over $K$, the elements of $Q$ are the morphisms 
    $$(\varepsilon,g) \colon  [L \to A] \longrightarrow [\pi_\sharp L \to G]_K,$$
    where $g$ is surjective. Since $g\colon A \to G_K$ factors through the unit $A \to (\pi_\sharp A)_K$, we get that $g=0$ by the assumption that $\pi_\sharp(A)=0$. Thus $G=0$ and $Q$ has a unique element $(\varepsilon,0) \colon  [L \to A]\to [\pi_\sharp L \to 0]_K$. So
    $$\pi_\sharp([L \to A])=[\pi_\sharp L \to 0].$$

    Now, we compute its Chow trace. Write $\pi_*([L \stackrel{u}{\to} A])=[L_0 \to G_0]$. For any semi-abelian variety $G'$ over $k$, we have that 
    \begin{align*}
        \Hom_{\SAV(k)}(G',G_0) &\simeq \Hom_{\M_1(k)}([0 \to G'],[L_0 \to G_0]) \\
                               &\simeq \Hom_{\M_1(K)}([0 \to G'_K],[\bZ \to A]) \\
                               &\simeq \Hom_{\SAV(K)}(G'_K,A).
    \end{align*}
    It implies that $G_0\simeq \pi_*(A)=0$. For any lattice $L'$ over $k$, we have that 
    \begin{align*}
        \Hom_{\M_0(k)}(L',L_0) &\simeq \Hom_{\M_1(k)}([L' \to 0],[L_0 \to G_0]) \\
                               &\simeq \Hom_{\M_1(K)}([L'_K \to 0],[L \stackrel{u}{\to} A]) \\
                               &\simeq \Hom_{\M_0(K)}(L'_K,\ker(u)).
    \end{align*}
    It follows that $L_0=\pi_*\ker(u)$. 
\end{proof}

A more interesting case is when $K$ is the function field of a smooth and geometrically connected variety $X/k$ and $A$ is defined over $k$. We shall use the Albanese scheme $\sA_{X/k}$ \cite[\S 1.3]{Ramachandran01AlbPic1Motive}. By a semi-abelian group scheme, we mean a group scheme locally of finite type over $k$ whose neutral component is a semi-abelian variety. The Albanese scheme $\sA_{X/k}$ is the universal semi-abelian group scheme with a morphism $\bZ(X) \to \sA_{X/k}$, where $\bZ(X)$ is the presheaf $U \mapsto \bZ[\Mor_k(U,X)]$. The neutral component $\sA_{X/k}^0$ is Serre's generalized Albanese semi-abelian variety of $X/k$. For a fixed rational point $x\in X(k)$, Serre's Albanese semi-abelian variety is the universal one  with a  morphism $X \to \sA_{X/k}^0$ mapping $x$ to $0$, which implies that the canonical map 
\[ \Hom_k(\sA_{X/k},A) \to \Hom_k(\sA_{X/k}^0,A) \]
is surjective.

\begin{lem}\label{AlbLN}
    Let $k$ be a field and let $K$ be the function field of a smooth and geometrically connected variety $X/k$ with a rational point $x\in X(k)$. Let $A$ be an abelian variety over $k$. Then we have isomorphisms
    \[ \Hom_k(\sA_{X/k},A) \simeq A(X) \simeq A(K), \]
    and
    \[ \Hom_k(\sA^0_{X/k},A) \simeq A(K)/A(k). \]
    In particular, 
    \begin{enumerate}[label={\rm(\arabic*)}]
        \item $A(K)/A(k)$ is a finitely generated free abelian group;
        \item a rational point in $A(K)$ belongs to $A(k)$ if and only if the corresponding homomorphism $\sA_{X/k}^0 \to A$ is trivial.
    \end{enumerate}
\end{lem}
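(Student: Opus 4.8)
The plan is to assemble the two chains of isomorphisms from the universal properties already in play, then read off the numbered consequences. First I would establish $\Hom_k(\sA_{X/k},A)\simeq A(X)$: by definition $\sA_{X/k}$ is the universal semi-abelian group scheme receiving a map from the presheaf $\bZ(X)$, so $\Hom_k(\sA_{X/k},A)\simeq\Hom_{\PShv}(\bZ(X),A)\simeq A(X)$, the last step being the Yoneda-type identification $\Hom_{\PShv}(\bZ(X),A)=A(X)$ since $A$ is a sheaf of abelian groups and $\bZ(X)$ is the free abelian presheaf on $X$. Next, $A(X)\simeq A(K)$: a $K$-point of $A$ extends to an $X$-point because $A$ is proper over $k$ and $X$ is smooth (hence normal), so a rational map from $X$ to $A$ defined on the generic point extends over all codimension-one points and then over all of $X$ by the theorem on extending maps to abelian varieties (equivalently, $A(X)\to A(K)$ is bijective by valuative-criterion/normality arguments); injectivity is clear by separatedness and $X$ irreducible.

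For the second chain, I would use the rational point $x\in X(k)$ to split off constants: sending $x$ to $0$ identifies $\Hom_k(\sA^0_{X/k},A)$ with the group of pointed morphisms $X\to A$ mapping $x$ to $0$, and the exact sequence relating $\sA_{X/k}$ to $\sA^0_{X/k}$ (the component group over $k$ is $\bZ$, generated by the class of $x$) gives a short exact sequence $0\to\Hom_k(\sA^0_{X/k},A)\to\Hom_k(\sA_{X/k},A)\to A(k)\to 0$, where the last map is ``evaluate the associated $X$-point at $x$''. Composing with the identifications above turns this into $0\to\Hom_k(\sA^0_{X/k},A)\to A(K)\xrightarrow{\,a\mapsto a(x)\,}A(k)\to 0$; the map $A(K)\to A(k)$ is visibly surjective (it is split by the constant points $A(k)\hookrightarrow A(K)$), so $\Hom_k(\sA^0_{X/k},A)\simeq A(K)/A(k)$, where the copy of $A(k)$ inside $A(K)$ is exactly the constant points. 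Assertion~(1) then follows: $A(K)/A(k)=\Hom_k(\sA^0_{X/k},A)$, and since $\sA^0_{X/k}$ is a semi-abelian variety over $k$ while $A$ is an abelian variety, Lemma~\ref{AffineAVtrivial} kills the toric part and the Hom lands in $\Hom_k(B,A)$ for $B$ the abelian part of $\sA^0_{X/k}$ (the Albanese variety), which is a finitely generated free abelian group by the classical theory of homomorphisms of abelian varieties. Assertion~(2) is just the statement that, under $A(K)\simeq\Hom_k(\sA_{X/k},A)$, a point lies in the kernel of restriction to $\sA^0_{X/k}$ iff it lies in $A(k)$, which is precisely the exact sequence above.

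The main obstacle is the identification $A(X)\simeq A(K)$ and, more subtly, pinning down that the subgroup $A(k)\subset A(K)$ appearing as the kernel of $\Hom_k(\sA_{X/k},A)\to\Hom_k(\sA^0_{X/k},A)$ really is the subgroup of constant points rather than some a priori larger group: this is where the choice of the rational point $x$ and the statement (quoted in the excerpt) that $\Hom_k(\sA_{X/k},A)\to\Hom_k(\sA^0_{X/k},A)$ is surjective with the ``right'' kernel must be used carefully. Everything else is formal manipulation of adjunctions and universal properties already recorded above; I would keep the write-up short, citing \cite[\S1.3]{Ramachandran01AlbPic1Motive} for the properties of $\sA_{X/k}$ and the properness/normality extension argument for $A(X)=A(K)$.
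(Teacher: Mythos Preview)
Your proposal is correct and matches the paper's proof in all essentials: the universal property of $\sA_{X/k}$ for $\Hom_k(\sA_{X/k},A)\simeq A(X)$, properness plus Weil extension for $A(X)\simeq A(K)$, the connected--\'etale sequence $0\to\sA^0_{X/k}\to\sA_{X/k}\to\bZ\to0$ (split by the image of $x$) to extract $A(K)/A(k)$, and Lemma~\ref{AffineAVtrivial} plus finiteness of $\Hom$ between abelian varieties for~(1). The only cosmetic difference is that the paper applies $\Hom_k(-,A)$ contravariantly to the connected--\'etale sequence, obtaining $0\to A(k)\to A(K)\to\Hom_k(\sA^0_{X/k},A)\to 0$ directly, whereas you write the sequence with the evaluation-at-$x$ map and then invoke the splitting; both are equivalent.
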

\begin{proof}
    The isomorphism $\Hom_k(\sA_{X/k},A)\simeq A(X)$ is the definition of Albanese scheme. We have $A(X)\simeq A(K)$ by the valuative criterion of properness and Weil's extension theorem (\cite[\S 4.4, Theorem 1]{BLR90Neron}). Applying $\Hom_k(-,A)$ to the short exact sequence
    \[ 0 \to \sA_{X/k}^0 \to \sA_{X/k} \to \bZ \to 0, \]
    we get an exact sequence 
    \[ 0 \to \Hom_k(\bZ,A) \to \Hom_k(\sA_{X/k},A) \to \Hom_k(\sA_{X/k}^0,A) \to 0. \]
    The first term is isomorphic to $A(k)$ and the second term is isomorphic to $A(K)$. Thus the third term is isomorphic to $A(K)/A(k)$. By Lemma \ref{AffineAVtrivial}, the third term is also isomorphic to the Hom-group from the abelian part of $\sA_{X/k}^0$ to $A$. Recall \cite[Theorem 12.5]{Milne86AV} that the Hom-groups between abelian varieties are always finitely generated and free. We are done.
\end{proof}

\begin{prop}\label{ZtoAAlb}
    Let $k$ be a field and let $K$ be the function field of a smooth and geometrically connected variety $X/k$ with a rational point $x\in X(k)$. Let $A$ be an abelian variety over $k$. Then 
    \[ \pi_*([\bZ \stackrel{u}{\to} A_K])= \left\{
        \begin{array}{ll}
            {[\bZ \stackrel{u}{\to} A]}, & \hbox{if \( u(1)\in A(k) \);} \\
            {[0 \to A]}, & \hbox{if \( u(1) \notin A(k) \).} \\
        \end{array}\right.
    \]
    and 
    \[ \pi_\sharp([\bZ \stackrel{u}{\to} A_K])=[\bZ \stackrel{v}{\to} B], \]
    where $B$ is the cokernel of $u(1) \colon \sA_{X/k}^0 \to A$, and $v \colon \bZ \to B$ is the $k$-homomorphism corresponding to the trivial map $\sA_{X/k}^0 \to B$.
\end{prop}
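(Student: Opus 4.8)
The plan is to compute the Chow image $\pi_\sharp$ directly from the explicit construction in the proof of Theorem~\ref{M1adjoint}, and then to deduce the Chow trace $\pi_*$ from it by Cartier duality (Lemma~\ref{DualBase}).

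For $\pi_\sharp$, write $M=[\bZ\xrightarrow{u}A_K]$, so $\pi_\sharp^{\M_0}\bZ=\bZ$ and the unit $\varepsilon\colon\bZ\to\bZ_K$ is the identity; by the proof of Theorem~\ref{M1adjoint}, $\pi_\sharp(M)$ is the target of a maximal element of the set $Q$ of morphisms $(\varepsilon,g)\colon[\bZ\to A_K]\to[\bZ\to G']_K$ with $g$ surjective. First I would note that the scheme-theoretic image of such a $g$ is defined over $k$ (Theorem~\ref{SAVdescent}), say equal to $(A/N)_K$ for a quotient $A\twoheadrightarrow A/N$ of $k$-abelian varieties — here one uses that a homomorphism between base changes of $k$-abelian varieties is itself a base change (Chow's Theorem~\ref{AVfullyfaithful}, cf. Theorem~\ref{M1fullyfaithful}) — so that $\ker g=N_K$; moreover commutativity of the defining square forces $g(u(1))$ to be a $k$-rational point of $G'$ lying on $A/N$, i.e. forces the image of $u(1)$ under $A(K)\to (A/N)(K)$ to lie in $(A/N)(k)$. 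Conversely any closed $k$-subgroup $N\subseteq A$ with this property gives an element of $Q$, so a maximal element of $Q$ corresponds to a \emph{minimal} such $N$. Thus $\pi_\sharp(M)=[\bZ\xrightarrow{v}A/N_0]$, where $N_0$ is the smallest closed $k$-subgroup of $A$ such that the image of $u(1)$ in $(A/N_0)(K)$ is $k$-rational, and $v(1)$ is that image.

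It then remains to identify $N_0$ with $\tilde u(\sA^0_{X/k})$, where $\tilde u\colon\sA^0_{X/k}\to A$ is the restriction of the homomorphism $U\colon\sA_{X/k}\to A$ corresponding to $u(1)\in A(K)\cong\Hom_k(\sA_{X/k},A)$ (Lemma~\ref{AlbLN}); this $\tilde u$ is the map denoted $u(1)\colon\sA^0_{X/k}\to A$ in the statement. By naturality of the isomorphism of Lemma~\ref{AlbLN} in the target abelian variety, for any closed $k$-subgroup $N\subseteq A$ the image of $u(1)$ in $(A/N)(K)\cong\Hom_k(\sA_{X/k},A/N)$ corresponds to the composite $\sA_{X/k}\xrightarrow{U}A\to A/N$, whose restriction to $\sA^0_{X/k}$ is $\sA^0_{X/k}\xrightarrow{\tilde u}A\to A/N$. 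By Lemma~\ref{AlbLN}(2) applied to the abelian variety $A/N$, this image is $k$-rational if and only if that restriction vanishes, i.e. if and only if $\tilde u(\sA^0_{X/k})\subseteq N$. Taking $N=\tilde u(\sA^0_{X/k})$ shows it has the property, and applying the criterion to $N=N_0$ shows $\tilde u(\sA^0_{X/k})\subseteq N_0$; hence $N_0=\tilde u(\sA^0_{X/k})$, $B=A/N_0=\coker(\tilde u)$, and $v$ is the stated map — its associated homomorphism $\sA_{X/k}\to B$ factors through $\sA_{X/k}/\sA^0_{X/k}=\bZ$ precisely because the composite $\sA^0_{X/k}\to A\to B$ is trivial.

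For $\pi_*$, if $u(1)\in A(k)$ then $M=\pi^*[\bZ\xrightarrow{u}A]$ and Corollary~\ref{M1adjIsom} gives $\pi_*(M)=[\bZ\xrightarrow{u}A]$. If $u(1)\notin A(k)$, then $u(1)$ has infinite order since $A(K)/A(k)$ is torsion-free by Lemma~\ref{AlbLN}(1); I would pass to the Cartier dual $M^\vee=[0\to G^u]$, where $G^u$ is the semi-abelian variety over $K$ that is the extension of $A^\vee_K$ by $\bG_m$ classified by $u(1)$ under $A(K)\cong\Ext^1_K(A^\vee_K,\bG_m)$. By Corollary~\ref{M1AdjExample}(2), $\pi_\sharp(M^\vee)=[0\to\pi_\sharp^\SAV(G^u)]$, hence $\pi_*(M)=[0\to\pi_\sharp^\SAV(G^u)]^\vee$. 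Now $\pi_\sharp^\SAV(G^u)$ is (the $k$-form of) the largest quotient of $G^u$ descending to $k$; since $G^u/\bG_m=A^\vee_K$ descends and, by Theorem~\ref{SAVdescent}, the closed subgroups $N\subseteq G^u$ with $G^u/N$ descending to $k$ are stable under intersection, the minimal such $N$ lies in $\bG_m$. But the closed subgroups of $\bG_m$ are the $\mu_d$ and $\bG_m$, and $G^u/\mu_d$ is the extension of $A^\vee_K$ by $\bG_m$ of class $d\cdot u(1)$, which descends to $k$ iff $d\cdot u(1)\in A(k)$, i.e. (torsion-freeness) iff $u(1)\in A(k)$ — excluded. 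So the minimal $N$ is $\bG_m$, whence $\pi_\sharp^\SAV(G^u)=A^\vee$ and, using biduality of abelian varieties, $\pi_*(M)=[0\to A^\vee]^\vee=[0\to A]$. The main obstacles I expect are the two descent-theoretic reductions — that the combinatorial description of $\pi_\sharp$ in Theorem~\ref{M1adjoint} genuinely collapses to the minimization over $k$-subgroups of $A$, and the Cartier-dual bookkeeping for $\pi_*$ (that $M^\vee=[0\to G^u]$ and that $G^u/\mu_d$ has class $d\cdot u(1)$) — with the torsion-freeness of $A(K)/A(k)$ from Lemma~\ref{AlbLN} being the fact that makes the two-case answer clean.
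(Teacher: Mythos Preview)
Your computation of $\pi_\sharp$ follows the same route as the paper: analyze the set $Q$ from Theorem~\ref{M1adjoint}, descend the surjection $g$ to $k$ via full faithfulness (Theorem~\ref{M1fullyfaithful}), and identify the minimal kernel using Lemma~\ref{AlbLN}(2). This part is correct and essentially identical to the paper's argument.

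For $\pi_*$ in the case $u(1)\notin A(k)$, your approach via Cartier duality is correct but genuinely different from the paper's. The paper argues directly: any morphism $(f,g)\colon[L_K\to G_K]\to[\bZ\xrightarrow{u}A_K]$ must have $f=0$, since otherwise $\im f=n\bZ$ for some $n\geq1$, and $g$ descends to $k$ by Theorem~\ref{M1fullyfaithful}, forcing $nu(1)\in A(k)$, hence $u(1)\in A(k)$ by the torsion-freeness of Lemma~\ref{AlbLN}(1) --- a contradiction; so every such morphism factors through $[0\to A_K]$ and $\pi_*(M)=[0\to A]$. Your argument dualizes to $M^\vee=[0\to G^u]$ and computes $\pi_\sharp^\SAV(G^u)$ by classifying the closed subgroups of $\bG_m$ and tracking extension classes. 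Both arguments rest on the same torsion-freeness input; the paper's is two lines, while yours requires more bookkeeping (the extension-class identity for $G^u/\mu_d$, and the fact that the minimal kernel for $\pi_\sharp^\SAV$ is contained in every candidate kernel --- which follows from the filteredness step in Theorem~\ref{M1adjoint} rather than directly from Theorem~\ref{SAVdescent} as you cite). The payoff of your route is that it shows concretely how the duality in Theorem~\ref{M1adjoint} transports the question to a purely semi-abelian one.
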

\begin{proof}
    First, we compute the Chow trace. The case when $u(1)\in A(k)$ is a special case of Corollary \ref{M1adjIsom}. Suppose now $u(1) \notin A(k)$. We claim that morphisms $(f,g) \colon [L_K \stackrel{v_K}{\to} G_K] \to [\bZ \stackrel{u}{\to} A_K]$ all satisfy $f=0$. Otherwise, the image of $f \colon L_K \to \bZ$ will be $n\bZ$ for some positive integer $n$. By Theorem \ref{M1fullyfaithful}, the morphism $g \colon G_K \to A_K$ is defined over $k$, which implies that $nu(1)\in A(k)$. Thus $u(1)$ is a torsion element of the abelian group $A(K)/A(k)$. By Lemma \ref{AlbLN}, we get $u(1)\in A(k)$, a contradiction. Now, it is clear from the claim that $[0 \to A]$ is the Chow trace.

    Finally, we compute the Chow image. Keep the notations in the proof of Theorem \ref{M1adjoint}. The elements of $Q$ are the morphisms 
    $$(\id,g_K) \colon  [\bZ \stackrel{u}{\to} A_K] \longrightarrow [\bZ \stackrel{w_K}{\to} G_K].$$
    Here $g\colon A\to G$ is a surjective homomorphism; we implicitly used Theorem \ref{M1fullyfaithful} to say that $g$ is defined over $k$. As a quotient of an abelian variety, $G$ must also be an abelian variety. Since $g(u(1))=w_K(1)$ comes from $w(1)\in G(k)$, the composition
    \[ \xymatrix{\sA_{X/k}^0 \ar[r] &\sA_{X/k} \ar[r]^-{u(1)} &A \ar[r]^g &G} \]
    is trivial by Lemma \ref{AlbLN}. Thus $g$ factors through $B$, which completes the proof.
\end{proof}

\section{Direct and inverse images of \texorpdfstring{$n$}{n}-motivic sheaves}
In the remaining part of this chapter, we will study the Chow trace of Deligne $1$-motives in the framework of Voevodsky's triangulated categories of (\'etale) motives. In this section, we study Voevodsky's category of homotopy invariant sheaves (\cite{Voevodsky00DM}, \cite{MVW06Motive}) and some subcategories defined in \cite{ABV09MotShvLAlb}. We are mainly interested in the direct and inverse images of such sheaves.

Throughout this section, $k$ is a field of exponential characteristic $p$, i.e., $p=1$ if $\Char(k)$ is zero, and $p=\Char(k)$ otherwise. Let $\Lambda$ be the ring $\bZ[\frac1p]$. 
\subsection{Presheaves with transfers}
Let $\Sm/k$ be the category of smooth separated schemes of finite type over $k$. Recall Voevodsky's category of finite correspondences \cite[Lecture~1]{MVW06Motive}: Given $X,Y\in\Sm/k$, an elementary correspondence from $X$ to $Y$ is an integral closed subschemes $W$ of $X\times Y$ which is finite and surjective over a connected component of $X$. We denote by $\Corr_k(X,Y)$ the group of finite correspondences, i.e., the free abelian group generated by the elementary correspondences. Given elementary correspondences $V\in \Corr_k(X,Y)$ and $W\in\Corr_k(Y,Z)$, the composition $W\circ V$ is defined to be the pushforward of the intersection product $(V\times Z)\cdot(X\times W)$ of the corresponding cycles in $X\times Y\times Z$, along the projection $p \colon  X\times Y\times Z\to X\times Z$. Here, the intersection product and the pushforward of cycles are defined in \cite{Fulton98Intersection}. See \cite[p. 4]{MVW06Motive} for the verification that $W\circ V$ is a finite correspondence from $X$ to $Z$. Extending this composition linearly, we get the composition of arbitrary finite correspondences, which is associative and bilinear and has $\Delta_X$ as the identity of $\Corr_k(X,X)$. Let $\Cor(k)$ be the (additive) category whose objects are the same as $\Sm/k$ and whose morphisms from $X$ to $Y$ are elements of $\Corr_k(X,Y)$. The graph of a morphism yields a functor $\gamma_k \colon  \Sm/k\to \Cor(k)$. We consider the category $\PST(k,\Lambda)$ of presheaves with transfers of $\Lambda$-modules on $\Sm/k$, i.e., the category of additive contravariant functors from $\Cor(k)$ to the category of $\Lambda$-modules. For $X\in\Sm/k$, we denote by $\Lambda_\tr(X)$ the presheaf with transfers 
\[\Lambda_{\tr}(X)(U) \colonequals \Corr_k(U,X)\otimes_\bZ\Lambda.\]

Let $K/k$ be a field extension. Then we have an obvious extension of scalars functor $e \colon  \Cor(k)\to \Cor(K)$ taking $X$ to $X_K$ and $Z\in\Corr_k(X,Y)$ to $Z_K\in\Corr_K(X_K,Y_K)$. It induces a direct image functor 
\[e_*^\PST \colon  \PST(K)\to \PST(k),~\sF\mapsto \sF\circ e.\]
The functor $e_*^\PST$ is clearly exact.

\begin{prop}\label{e^*PST}
    \begin{enumerate}[label={\rm(\arabic*)},leftmargin=*]
        \item The functor $e_*^\PST$ admits a left adjoint $e^*_\PST$; \menum
        \item $e^*_\PST(\Lambda_\tr(X))=\Lambda_\tr(X_K)$;
        \item The functor $e^*_\PST$ is exact.
    \end{enumerate}
\end{prop}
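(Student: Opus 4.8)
The plan is to treat the three assertions in turn; (1) and (2) are formal, and (3) is where the work lies.

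For (1) and (2): $\PST(k,\Lambda)$ is by definition the category of additive functors $\Cor(k)^{\mathrm{op}}\to\{\Lambda\text{-modules}\}$, and $e_*^\PST$ is restriction along the additive functor $e$; so it admits a left adjoint, the additive left Kan extension along $e$ (equivalently, $e_*^\PST$ preserves all limits and colimits and $\PST(K,\Lambda)$ is presentable, so one may invoke the adjoint functor theorem). Call it $e^*_\PST$. For (2), the Yoneda lemma gives, naturally in $\sG\in\PST(K,\Lambda)$,
\begin{align*}
\Hom_{\PST(K)}\bigl(e^*_\PST\Lambda_\tr(X),\sG\bigr)
&\cong\Hom_{\PST(k)}\bigl(\Lambda_\tr(X),e_*^\PST\sG\bigr)
=(\sG\circ e)(X)\\
&=\sG(X_K)=\Hom_{\PST(K)}\bigl(\Lambda_\tr(X_K),\sG\bigr),
\end{align*}
whence $e^*_\PST\Lambda_\tr(X)\cong\Lambda_\tr(X_K)$.

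For (3): as a left adjoint $e^*_\PST$ is right exact, so only left exactness is at issue, and I would establish it in two steps. When $K/k$ is \emph{separable}, every finitely generated $k$-subalgebra of $K$ is generically smooth over $k$, so after localizing one has $\Spec K=\varprojlim_i\Spec A_i$ with all $A_i$ smooth over $k$; by the continuity of presheaves with transfers, $e^*_\PST=\varinjlim_i(e_i)^*_\PST$, where each $(e_i)^*_\PST\colon\PST(k,\Lambda)\to\PST(A_i,\Lambda)$ is the smooth pullback along $\Spec A_i\to\Spec k$, i.e.\ restriction of presheaves with transfers along $\Sm/A_i\to\Sm/k$, which is exact as it is computed on sections; hence $e^*_\PST$, a filtered colimit of exact functors, is exact. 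For general $K/k$ one reduces to this case by passing to perfect closures ($K^\perf/k^\perf$ being separable) and using that, since $\Lambda=\bZ[1/p]$, extension of scalars along a purely inseparable extension is an equivalence on $\PST(-,\Lambda)$; commutativity of the resulting square of extension-of-scalars functors then yields exactness of $e^*_\PST$ in general.

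The delicate ingredients are the continuity identity $e^*_\PST=\varinjlim_i(e_i)^*_\PST$ for a pro-smooth presentation of $\Spec K$, and the invariance of $\PST(-,\bZ[1/p])$ under purely inseparable base change — which ultimately rests on the relative Frobenius being finite, flat and radicial of degree a power of $p$, hence invertible in $\Cor(-)\otimes\bZ[1/p]$. I expect the inseparable case to be the main obstacle; both ingredients are, however, standard in the theory of relative motivic sheaves (e.g.\ via Cisinski–Déglise), so one could also simply cite them and keep the proof of (3) short.
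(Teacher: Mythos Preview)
Your treatment of (1) and (2) is correct and matches the paper's remark that these are formal. For (3), the paper gives no argument and simply refers to \cite[Proposition~1.1 and Theorem~4.1]{Suslin17Nonperfect}. Your sketch is a correct reorganization of Suslin's ingredients. In the separable case, your claim that $(e_i)^*_\PST$ for smooth $A_i/k$ is restriction along the forgetful functor $\Cor(A_i)\to\Cor(k)$ (hence exact on the nose) is right: since $U\times_{A_i}X_{A_i}=U\times_k X$ for $U\in\Sm/A_i$, one has $\Corr_{A_i}(U,X_{A_i})=\Corr_k(U,X)$, so the forgetful functor is left adjoint to base change already at the level of $\Cor$, and precomposition with it is the desired left adjoint on presheaves. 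The continuity step then yields the sectionwise formula $(e^*_\PST\sF)(Y)=\varinjlim_i\sF(Y_i)$ for a model $Y_i/A_i$ of $Y/K$; this is essentially Suslin's Proposition~1.1, and is where your shorthand $e^*_\PST=\varinjlim_i(e_i)^*_\PST$ needs to be unpacked, since the targets vary with $i$. For the general case, the purely inseparable invariance you invoke is exactly Suslin's Corollary~1.14 (recorded later here as Proposition~\ref{PurelyInsepEquiv}(1)), and your diagnosis that it rests on the relative Frobenius becoming invertible after $\otimes\,\bZ[1/p]$ is accurate. Suslin's Theorem~4.1 argues more uniformly, treating arbitrary $K/k$ at once, but your two-step decomposition is an equally valid route and has the virtue of making visible that the separable case is elementary while the inseparable one is the genuine content of \cite{Suslin17Nonperfect}.
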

\begin{proof}
    Everything is formal except the left exactness in (3). See \cite[Proposition~1.1 and Theorem~4.1]{Suslin17Nonperfect}.
\end{proof}

\subsection{\'Etale sheaves with transfers}\label{EST}
Recall that a presheaf with transfers $\sF$ is called an \'etale sheaf with transfers if its underlying presheaf $\sF\circ\gamma$ is an \'etale sheaf on $\Sm/k$. We denote by $\Shv_\et(\Sm/k,\Lambda)$ the category of \'etale sheaves of $\Lambda$-modules on $\Sm/k$, and denote by $\Shv_\et^\tr(k,\Lambda)$ the full subcategory of $\PST(k,\Lambda)$ whose objects are the \'etale sheaves with transfers. By \cite[Lemma~6.2]{MVW06Motive}, $\Lambda_\tr(X)$ is an \'etale sheaf with transfers.

\begin{prop}\label{Shv_et^tr}
    The category of \'etale sheaves with transfers has the following properties:
    \begin{enumerate}[label={\rm(\arabic*)}]
        \item The inclusion functor $\Shv_\et^\tr(k,\Lambda)\hookrightarrow \PST(k,\Lambda)$ has an exact left adjoint 
        \[a_\et \colon  \PST(k,\Lambda) \longrightarrow \Shv_\et^\tr(k,\Lambda).\] \menum
        \item The category $\Shv_\et^\tr(k,\Lambda)$ is a Grothendieck abelian category generated by the sheaves $\Lambda_\tr(X)$.
        \item The forgetful functor $\gamma_* \colon  \Shv_\et^\tr(k,\Lambda)\to \Shv_\et(\Sm/k,\Lambda)$ is conservative and commutes with all small limits and colimits.
        \item The functor $\gamma_*$ admits a left adjoint $\gamma^* \colon  \Shv_\et(\Sm/k,\Lambda) \to \Shv_\et^\tr(k,\Lambda)$.
    \end{enumerate}
\end{prop}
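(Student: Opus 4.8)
The plan is to reduce everything to a single geometric input --- Voevodsky's compatibility of \'etale sheafification with transfers --- and to treat the rest as category theory. Recall first that $\PST(k,\Lambda)$, the category of additive $\Lambda$-linear presheaves on $\Cor(k)$, is a Grothendieck abelian category in which limits and colimits are computed argumentwise and which is generated by the representable objects $\Lambda_\tr(X)$. The restriction functor $\PST(k,\Lambda)\to\PShv(\Sm/k,\Lambda)$, $\sF\mapsto\sF\circ\gamma$, is exact, conservative, and preserves all small limits and colimits, and it admits a left adjoint $\gamma^*_{\PShv}$ given by left Kan extension along $\gamma$ (``free transfers'').

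The key input I would quote is the theorem of Voevodsky (\cite[Lecture~6]{MVW06Motive}): for every $\sF\in\PST(k,\Lambda)$ the \'etale sheafification $a_\et(\sF\circ\gamma)$ of the underlying presheaf admits a unique structure of presheaf with transfers making the canonical map $\sF\circ\gamma\to a_\et(\sF\circ\gamma)$ a morphism of presheaves with transfers, and this object is an \'etale sheaf with transfers --- the geometric content being that a finite correspondence is, \'etale-locally on its source, a $\bZ$-linear combination of graphs. Granting this, the assignment $\sF\mapsto a_\et(\sF\circ\gamma)$ defines a functor $a_\et\colon\PST(k,\Lambda)\to\Shv_\et^\tr(k,\Lambda)$, and the universal property of sheafification together with the uniqueness of the transfer structure show it is left adjoint to the inclusion. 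This proves (1) apart from exactness; right exactness is automatic, and for left exactness I would observe that finite limits in $\Shv_\et^\tr(k,\Lambda)$, in $\PST(k,\Lambda)$, and of the underlying presheaves all coincide, that presheaf-level \'etale sheafification is exact, and that $\gamma_*\circ a_\et$ is computed by presheaf-level sheafification, so that $a_\et$ becomes left exact after applying the conservative functor $\gamma_*$, hence is left exact.

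The remaining assertions are then formal. For (2): $\Shv_\et^\tr(k,\Lambda)$ is a reflective subcategory of the Grothendieck abelian category $\PST(k,\Lambda)$ with exact reflector $a_\et$, hence is itself Grothendieck abelian, and the set $\{\Lambda_\tr(X)\}_{X\in\Sm/k}$ generates it, since each $\Lambda_\tr(X)$ is already an \'etale sheaf with transfers (\cite[Lemma~6.2]{MVW06Motive}) and equals $a_\et\Lambda_\tr(X)$. For (3): $\gamma_*$ is conservative because the inverse of an isomorphism of underlying \'etale sheaves automatically respects transfers; it preserves limits because they are argumentwise on both sides; and it preserves colimits because a colimit in $\Shv_\et^\tr(k,\Lambda)$ is $a_\et$ of the argumentwise colimit, while $\gamma_*a_\et$ is presheaf-level sheafification of that argumentwise colimit, i.e.\ the colimit of the underlying \'etale sheaves. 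For (4): $\gamma_*$ preserves all small limits by (3) and is a functor between presentable abelian categories, so the special adjoint functor theorem yields a left adjoint; concretely $\gamma^*=a_\et\circ\gamma^*_{\PShv}$, which one checks is left adjoint to $\gamma_*$ by composing the ``free transfers'' adjunction, the \'etale sheafification adjunction of (1), and the presheaf--sheaf adjunction.

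The only genuinely non-formal step is the key input above --- compatibility of \'etale sheafification with the transfer structure --- which carries all the geometry; I would simply cite \cite[Lecture~6]{MVW06Motive} (or Cisinski--D\'eglise) for it and present the rest as sketched.
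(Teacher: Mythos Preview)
Your proposal is correct and aligns with the paper's treatment: the paper simply cites \cite[6.18 and the proof of 6.19]{MVW06Motive} and \cite[10.3.3, 10.3.9, 10.3.11]{CD19MixedMotive} without giving details, and your sketch is precisely an unpacking of the argument found in those references, with Voevodsky's compatibility of \'etale sheafification with transfers as the sole nontrivial input and the rest handled by general category theory.
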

\begin{proof}
    See {\cite[6.18 and the proof of 6.19]{MVW06Motive}}; see also \cite[10.3.3, 10.3.9, 10.3.11]{CD19MixedMotive}.
\end{proof}

We consider some categories introduced in \cite{ABV09MotShvLAlb}. For $n\in\bN$, we denote by $(\Sm/k)_{\leq n}$ the full subcategory of $\Sm/k$ whose objects are the smooth schemes over $k$ of dimension less than or equal to $n$. Similarly, we denote by $\Cor(k_{\leq n})$ the full subcategory of $\Cor(k)$ having the same objects as $(\Sm/k)_{\leq n}$. We consider the $\Lambda$-additive dual $\PST(k_{\leq n},\Lambda)$ of $(\Sm/k)_{\leq n}$. As above, we have the notion of \'etale sheaves with transfers on $(\Sm/k)_{\leq n}$. We denote by $\Shv_\et^\tr(k_{\leq n},\Lambda)$ the full subcategory of $\PST(k_{\leq n},\Lambda)$ whose objects are the \'etale sheaves with transfers. For $X\in(\Sm/k)_{\leq n}$, we denote by $\Lambda_{\leq n}(X)$ the presheaf with transfers 
\[\Lambda_{\leq n}(X)(U) \colonequals \Corr_k(U,X)\otimes_\bZ\Lambda,\text{ where }U\in (\Sm/k)_{\leq n}.\]

Let $K/k$ be a field extension. Then we have the following commutative diagram
\begin{center}
    \begin{tikzcd}[scale cd=0.8]
        (\Sm/k)_{\leq n} \arrow[rr] \arrow[rd] \arrow[dd] &                                                                                  & \Sm/k \arrow[rd] \arrow[dd]   &                                                                                \\
                                                                  & \Cor(k_{\leq n}) \arrow[rr,crossing over]                                &                                                           & \Cor(k)                                            \\
        (\Sm/K)_{\leq n} \arrow[rd] \arrow[rr]            &                                                                                  & \Sm/K \arrow[rd]              &                                                                                \\
                                                                  & \Cor(K_{\leq n}) \arrow[rr] \arrow[from=uu, crossing over]               &                                                           & \Cor(K)  \arrow[from=uu, crossing over]              
    \end{tikzcd}
\end{center}
where the vertical arrows are base change functors induced by the morphism $\Spec K\to \Spec k$, the horizontal arrows are inclusions, and the arrows towards the lower right are graph functors. Note that the four functors on the back side are continuous functors for the \'etale topology in the sense of \cite[Expos\'e~III, D\'efinition 1.1]{SGA4I}, i.e.,  the corresponding direct images of sheaves are still sheaves. So the above diagram induces the following commutative diagram
\begin{center}
    \begin{tikzcd}[scale cd=0.8]
        \Shv_\et^\tr(K,\Lambda) \arrow[rd] \arrow[dd] \arrow[rr]  &                                                                                  & \Shv_\et(\Sm/K,\Lambda) \arrow[rd] \arrow[dd]  &                                     \\
                                                                  & \Shv_\et^\tr(K_{\leq n},\Lambda)  \arrow[rr,crossing over]                       &                                                & \Shv_\et((\Sm/K)_{\leq n},\Lambda) \arrow[dd] \\
        \Shv_\et^\tr(k,\Lambda) \arrow[rd] \arrow[rr]             &                                                                                  & \Shv_\et(\Sm/k,\Lambda) \arrow[rd]             &                                     \\
                                                                  & \Shv_\et^\tr(k_{\leq n},\Lambda) \arrow[rr]    \arrow[from=uu, crossing over]    &                                                & \Shv_\et((\Sm/k)_{\leq n},\Lambda)           
    \end{tikzcd}
\end{center}
where the vertical arrows $e_*$ are direct images of sheaves, the horizontal arrows $\gamma_*$ are forgetful functors, and the arrows towards the lower right $\sigma_{n*}$ are `restriction functors'. In particular, $\Lambda_{\leq n}(X)=\sigma_{n*}\Lambda_\tr(X)$ is an \'etale sheaf with transfers for $X\in(\Sm/k)_{\leq n}$. Noting that the inclusion functor $(\Sm/k)_{\leq n} \hookrightarrow \Sm/k$ is also co-continuous. Thus $\sigma_{n*}$ is exact.

\begin{lem}[{\cite[Lemma~1.1.12]{ABV09MotShvLAlb}}]\label{sigma^*}
    The functor $\sigma_{n*} \colon  \Shv_\et^\tr(k,\Lambda) \to \Shv_\et^\tr(k_{\leq n},\Lambda)$ has a left adjoint 
    \begin{align*}
        \sigma_n^* \colon  \Shv_\et^\tr(k_{\leq n},\Lambda) &\longrightarrow         \Shv_\et^\tr(k,\Lambda) \\
            \sF                                    &\longmapsto  \varinjlim_{\Lambda_{\leq n}(X)\to \sF} \Lambda_\tr(X),
    \end{align*}
    where the colimit is computed in $\Shv_\et^\tr(k,\Lambda)$.
\end{lem}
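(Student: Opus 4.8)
The plan is to take the displayed colimit formula as the \emph{definition} of $\sigma_n^*$ and to verify the adjunction by a direct Yoneda computation; the only ingredient that is not pure formalism is the density of the representables $\Lambda_{\leq n}(X)$ inside $\Shv_\et^\tr(k_{\leq n},\Lambda)$. First I would check that the formula is well posed: since $(\Sm/k)_{\leq n}$ is essentially small and, by the Yoneda lemma, $\Hom_{\Shv_\et^\tr(k_{\leq n},\Lambda)}(\Lambda_{\leq n}(X),\sF)=\sF(X)$ is a set, the comma category $(\Lambda_{\leq n}\!\downarrow\!\sF)$ is essentially small, so the colimit $\sigma_n^*\sF\colonequals\varinjlim_{\Lambda_{\leq n}(X)\to\sF}\Lambda_\tr(X)$ exists in the cocomplete category $\Shv_\et^\tr(k,\Lambda)$ (Proposition~\ref{Shv_et^tr}(2)) and is functorial in $\sF$.

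The second step is the co-Yoneda presentation
\[ \sF\;\xrightarrow{\ \sim\ }\;\varinjlim_{\Lambda_{\leq n}(X)\to\sF}\Lambda_{\leq n}(X)\qquad\text{in }\Shv_\et^\tr(k_{\leq n},\Lambda). \]
At the level of additive presheaves this is the standard density theorem: every object of $\PST(k_{\leq n},\Lambda)$ is canonically the colimit, over its category of elements, of the representable presheaves $\Lambda_{\leq n}(X)$. Because $\Lambda_{\leq n}(X)=\sigma_{n*}\Lambda_\tr(X)$ is already an \'etale sheaf with transfers and $\sF$ is a sheaf, applying the exact left adjoint $a_\et$ of Proposition~\ref{Shv_et^tr}(1) (and its evident analogue for $k_{\leq n}$) — which preserves all colimits and is the identity on sheaves — turns this presheaf colimit into the one computed in $\Shv_\et^\tr(k_{\leq n},\Lambda)$ and leaves $\sF$ fixed, giving the displayed isomorphism.

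With this in hand the adjunction is the chain of natural isomorphisms, for $\sF\in\Shv_\et^\tr(k_{\leq n},\Lambda)$ and $\sG\in\Shv_\et^\tr(k,\Lambda)$ (abbreviating the two $\Hom$-bifunctors by $\Hom_k$ and $\Hom_{k,n}$):
\begin{align*}
    \Hom_{k}(\sigma_n^*\sF,\sG)
    &\simeq\varprojlim_{\Lambda_{\leq n}(X)\to\sF}\Hom_{k}(\Lambda_\tr(X),\sG)
      \simeq\varprojlim_{\Lambda_{\leq n}(X)\to\sF}\sG(X)\\
    &\simeq\varprojlim_{\Lambda_{\leq n}(X)\to\sF}(\sigma_{n*}\sG)(X)
      \simeq\varprojlim_{\Lambda_{\leq n}(X)\to\sF}\Hom_{k,n}(\Lambda_{\leq n}(X),\sigma_{n*}\sG)\\
    &\simeq\Hom_{k,n}\!\Big(\varinjlim_{\Lambda_{\leq n}(X)\to\sF}\Lambda_{\leq n}(X),\,\sigma_{n*}\sG\Big)
      \simeq\Hom_{k,n}(\sF,\sigma_{n*}\sG),
\end{align*}
using the Yoneda lemma in both categories, the identity $(\sigma_{n*}\sG)(X)=\sG(X)$ for $X\in(\Sm/k)_{\leq n}$ (which is the very definition of the restriction functor $\sigma_{n*}$, read off from the commutative diagrams of \S\ref{EST}), the fact that $\Hom$ out of a colimit is a limit, and the co-Yoneda presentation of the previous step. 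Naturality in $\sF$ and $\sG$ is routine, so $\sigma_n^*\dashv\sigma_{n*}$.

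The main obstacle is the (mild) second step: one must take care that the colimit is formed over the essentially small comma category and that sheafification does not disturb it, which works precisely because each $\Lambda_{\leq n}(X)$ is itself a sheaf and $a_\et$ preserves colimits. Mere existence of a left adjoint could instead be extracted from the special adjoint functor theorem, since $\Shv_\et^\tr(k_{\leq n},\Lambda)$ is locally presentable and $\sigma_{n*}$, being a restriction of presheaves that lands in sheaves, preserves all limits and filtered colimits; but identifying that left adjoint with the stated formula still requires the density argument, together with the Yoneda computation $\Hom_k(\sigma_n^*\Lambda_{\leq n}(X),\sG)=(\sigma_{n*}\sG)(X)=\Hom_k(\Lambda_\tr(X),\sG)$ which forces $\sigma_n^*\Lambda_{\leq n}(X)=\Lambda_\tr(X)$.
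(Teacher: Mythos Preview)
The paper does not supply its own proof of this lemma; it is stated with a citation to \cite[Lemma~1.1.12]{ABV09MotShvLAlb}. Your argument is the standard left Kan extension verification---$\sigma_n^*$ is precisely the left Kan extension of $X\mapsto\Lambda_\tr(X)$ along the Yoneda embedding $X\mapsto\Lambda_{\leq n}(X)$---and the chain of isomorphisms you give, resting on density of representables and the fact that sheafification preserves colimits and fixes the already-sheafy $\Lambda_{\leq n}(X)$, is correct and is essentially how the cited reference proceeds as well.
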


\begin{defn}[{\cite[Definition 1.1.13]{ABV09MotShvLAlb}}]\label{ngenerated}
    An \'etale sheaf with transfers $\sF\in\Shv_\et^\tr(k,\Lambda)$ is said to be strongly $n$-generated if the co-unit
    \[\sigma_n^*\sigma_{n*}\sF\to \sF\]
    is an isomorphism. Denote by $\Shv_{\leq n}^\tr(k,\Lambda)$ the category of strongly $n$-generated \'etale sheaves.
\end{defn}

\begin{lem}[{\cite[Lemma~1.1.17]{ABV09MotShvLAlb}}]\label{ShvnEquiv}
    The functor $\sigma_n^*$ is fully faithful and induces an equivalence between $\Shv_\et^\tr(k_{\leq n},\Lambda)$ and $\Shv_{\leq n}^\tr(k,\Lambda)$.
\end{lem}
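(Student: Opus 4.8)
The statement asserts that $\sigma_n^* \colon \Shv_\et^\tr(k_{\leq n},\Lambda) \to \Shv_\et^\tr(k,\Lambda)$ is fully faithful, with essential image the strongly $n$-generated sheaves $\Shv_{\leq n}^\tr(k,\Lambda)$. Since $\sigma_n^*$ is a left adjoint to $\sigma_{n*}$ (Lemma~\ref{sigma^*}), the standard adjunction formalism reduces everything to understanding the unit and co-unit. Recall that a left adjoint $F \dashv G$ is fully faithful if and only if the unit $\eta \colon \id \to GF$ is an isomorphism; in that case the essential image of $F$ consists exactly of those objects $Y$ in the target for which the co-unit $\varepsilon_Y \colon FG Y \to Y$ is an isomorphism. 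So the plan has two halves: (a) prove the unit $\sF \to \sigma_{n*}\sigma_n^*\sF$ is an isomorphism for every $\sF \in \Shv_\et^\tr(k_{\leq n},\Lambda)$; (b) deduce the description of the essential image as $\Shv_{\leq n}^\tr(k,\Lambda)$, which is then immediate from the definition (Definition~\ref{ngenerated}) since the co-unit condition is precisely the definition of strong $n$-generation.

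\textbf{Step 1: compute $\sigma_{n*}\sigma_n^*$ on representables.} I would first verify $\sigma_{n*}\sigma_n^*\Lambda_{\leq n}(X) \simeq \Lambda_{\leq n}(X)$ for $X \in (\Sm/k)_{\leq n}$. Indeed $\sigma_n^*\Lambda_{\leq n}(X) = \Lambda_\tr(X)$ (this follows from the colimit formula in Lemma~\ref{sigma^*}, since $\Lambda_{\leq n}(X) = \sigma_{n*}\Lambda_\tr(X)$ is the terminal object of the index category, or directly because $\sigma_n^* \circ \sigma_{n*}$ applied to $\Lambda_\tr(X)$ recovers it using that $(\Sm/k)_{\leq n} \hookrightarrow \Sm/k$ is both continuous and cocontinuous). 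Then $\sigma_{n*}\Lambda_\tr(X) = \Lambda_{\leq n}(X)$ by the very construction of $\sigma_{n*}$ as restriction of the representable sheaf. Concretely the unit $\Lambda_{\leq n}(X) \to \sigma_{n*}\sigma_n^*\Lambda_{\leq n}(X)$ evaluated on $U \in (\Sm/k)_{\leq n}$ is the identity map $\Corr_k(U,X)\otimes\Lambda \to \Corr_k(U,X)\otimes\Lambda$, since correspondences between schemes of dimension $\le n$ are computed identically whether or not one remembers the ambient larger category.

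\textbf{Step 2: bootstrap to all sheaves.} For a general $\sF \in \Shv_\et^\tr(k_{\leq n},\Lambda)$, I would present $\sF$ as a colimit (in fact a coequalizer of coproducts) of representables $\Lambda_{\leq n}(X)$; this is possible because $\Shv_\et^\tr(k_{\leq n},\Lambda)$ is a Grothendieck abelian category generated by the $\Lambda_{\leq n}(X)$, the analogue of Proposition~\ref{Shv_et^tr}(2) at the level $\le n$. Now $\sigma_n^*$ is a left adjoint, hence preserves all colimits. The composite $\sigma_{n*}\sigma_n^*$ is a left adjoint as well (composition of left adjoints), hence also colimit-preserving; and the identity functor trivially is. Therefore both sides of the unit map $\eta_\sF \colon \sF \to \sigma_{n*}\sigma_n^*\sF$ commute with the colimit presentation of $\sF$, and $\eta$ is a natural transformation between colimit-preserving functors. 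By Step~1 it is an isomorphism on the generators $\Lambda_{\leq n}(X)$, hence an isomorphism on all of $\sF$. Here I must be slightly careful that $\sigma_{n*}$ itself preserves the relevant colimits — but this is fine since $\sigma_{n*}$ is exact (noted in the excerpt, as $(\Sm/k)_{\leq n}\hookrightarrow \Sm/k$ is cocontinuous) and commutes with arbitrary small colimits of sheaves, being a "restriction" functor with both adjoints. This establishes full faithfulness of $\sigma_n^*$.

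\textbf{Step 3: identify the essential image.} With the unit an isomorphism, the general adjunction fact gives: $\sigma_n^*$ restricts to an equivalence $\Shv_\et^\tr(k_{\leq n},\Lambda) \xrightarrow{\sim} \cD$, where $\cD \subset \Shv_\et^\tr(k,\Lambda)$ is the full subcategory of objects $\sG$ on which the co-unit $\sigma_n^*\sigma_{n*}\sG \to \sG$ is an isomorphism, with quasi-inverse $\sigma_{n*}$. But by Definition~\ref{ngenerated} this subcategory $\cD$ is exactly $\Shv_{\leq n}^\tr(k,\Lambda)$, the strongly $n$-generated sheaves. The one routine check is the general lemma that for an adjunction $F \dashv G$ with $F$ fully faithful, $G$ restricted to the co-unit-iso objects is inverse to $F$: this follows from the triangle identities, so I would state it and move on.

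\textbf{Main obstacle.} The only genuine subtlety is making Step~2 airtight: specifically, confirming that $\sigma_{n*}$ commutes with the colimits used to resolve $\sF$ by representables, and that the "commutation with base change of dimension" identities for finite correspondences behave correctly — i.e. that $\Corr_k(U,X)$ for $U,X \in (\Sm/k)_{\leq n}$ genuinely does not change when $U,X$ are viewed inside $\Sm/k$. This is true essentially by definition of finite correspondences (an elementary correspondence is a closed subscheme of $U\times X$, a notion intrinsic to $U$ and $X$), so there is no real content, but it is the point where one has to be precise about what $\sigma_{n*}$ and $\sigma_n^*$ do on the nose. Everything else is formal adjunction nonsense once the generators are handled.
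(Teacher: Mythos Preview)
Your proposal is correct and follows the standard adjunction argument. Note that the paper does not actually prove this lemma; it simply cites \cite[Lemma~1.1.17]{ABV09MotShvLAlb}, so there is no ``paper's own proof'' to compare against beyond the reference.

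One minor slip: in Step~2 you write that $\sigma_{n*}\sigma_n^*$ is ``a left adjoint as well (composition of left adjoints)'', but $\sigma_{n*}$ is the \emph{right} adjoint in the pair $(\sigma_n^*,\sigma_{n*})$. You catch this yourself a few lines later and give the correct justification: $\sigma_{n*}$ is a restriction functor along a fully faithful inclusion of categories of correspondences, and since \'etale covers of objects of $(\Sm/k)_{\leq n}$ remain in $(\Sm/k)_{\leq n}$, restriction commutes with \'etale sheafification and hence with all colimits of sheaves with transfers (equivalently, $\sigma_{n*}$ admits a further right adjoint by right Kan extension). With that clarification the bootstrap from representables to arbitrary sheaves goes through, and Step~3 is indeed pure formal nonsense from the triangle identities together with Definition~\ref{ngenerated}.
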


\begin{lem}\label{Shve^*}
    \begin{enumerate}[label={\rm(\arabic*)},leftmargin=*]
        \item The direct image functor $e_*^\tr \colon  \Shv_\et^\tr(K,\Lambda) \to \Shv_\et^\tr(k,\Lambda)$ has a left adjoint
        \[e^*_\tr \colon  \Shv_\et^\tr(k,\Lambda) \longrightarrow        \Shv_\et^\tr(K,\Lambda)\]
        and $e^*_\tr\Lambda_\tr(X)\simeq\Lambda_\tr(X_K).$ \menum
        \item The direct image functor $e^{\leq n}_* \colon  \Shv_\et^\tr(K_{\leq n},\Lambda) \to \Shv_\et^\tr(k_{\leq n},\Lambda)$ has a left adjoint 
        \[e^*_{\leq n} \colon  \Shv_\et^\tr(k_{\leq n},\Lambda) \longrightarrow \Shv_\et^\tr(K_{\leq n},\Lambda)\] 
        and $e^*_{\leq n}(\Lambda_{\leq n}(X))=\Lambda_{\leq n}(X_K).$
        \item We have the following natural isomorphism
        \[\sigma_n^*\circ e_{\leq n}^*\simeq e^*_\tr\circ\sigma_n^*.\]
    \end{enumerate}
\end{lem}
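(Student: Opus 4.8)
The plan is to deduce all three assertions from the presheaf-level statement (Proposition~\ref{e^*PST}) together with sheafification (Proposition~\ref{Shv_et^tr}(1)) by pure adjoint formalism. The only geometric input is that the base-change functors $e\colon\Cor(k)\to\Cor(K)$ and $e_{\leq n}\colon\Cor(k_{\leq n})\to\Cor(K_{\leq n})$ are continuous for the \'etale topology (recorded above), which is exactly what guarantees that $e_*^\tr$ and $e_*^{\leq n}$ take sheaves to sheaves, i.e.\ that they are the restrictions to \'etale sheaves of the corresponding precomposition functors on presheaves with transfers.

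For (1): set $e^*_\tr\colonequals a_\et\circ e^*_\PST$, restricted along the fully faithful inclusion $\Shv_\et^\tr(k,\Lambda)\hookrightarrow\PST(k,\Lambda)$, where $e^*_\PST$ is the left adjoint of Proposition~\ref{e^*PST} and $a_\et$ is the sheafification of Proposition~\ref{Shv_et^tr}(1). The adjunction $e^*_\tr\dashv e_*^\tr$ follows by stacking the adjunctions $a_\et\dashv(\text{inclusion})$ and $e^*_\PST\dashv e_*^\PST$, using full faithfulness of the inclusions and the fact, noted above, that $e_*^\tr$ is the restriction of $e_*^\PST$. For the value on representables, Proposition~\ref{e^*PST}(2) gives $e^*_\PST\Lambda_\tr(X)\simeq\Lambda_\tr(X_K)$, and $\Lambda_\tr(X_K)$ is already an \'etale sheaf with transfers by \cite[Lemma~6.2]{MVW06Motive}, so $a_\et$ fixes it and $e^*_\tr\Lambda_\tr(X)\simeq\Lambda_\tr(X_K)$.

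For (2): run the same argument over $(\Sm/k)_{\leq n}$. The functor $e_*^{\leq n}$ is the restriction to sheaves of precomposition with $e_{\leq n}$; on presheaves with transfers this precomposition functor has a left adjoint given by left Kan extension --- the purely formal half of Proposition~\ref{e^*PST} --- which sends $\Lambda_{\leq n}(X)$ to $\Lambda_{\leq n}(X_K)$, and one composes this with the sheafification functor for $\Shv_\et^\tr(K_{\leq n},\Lambda)$. Note that, unlike in Proposition~\ref{e^*PST}, left exactness of this Kan extension is not needed, since only the existence of the adjoint and its value on the $\Lambda_{\leq n}(X)$ are asserted; and again $\Lambda_{\leq n}(X_K)$ is a sheaf, so the sheafification step is vacuous on it.

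For (3): invoke uniqueness of left adjoints. Both $\sigma_n^*\circ e_{\leq n}^*$ and $e^*_\tr\circ\sigma_n^*$ are left adjoints $\Shv_\et^\tr(k_{\leq n},\Lambda)\to\Shv_\et^\tr(K,\Lambda)$ (with the first $\sigma_n^*$ over $K$ and the second over $k$), and their right adjoints are $e_*^{\leq n}\circ\sigma_{n*}$ and $\sigma_{n*}\circ e_*^\tr$ respectively. These two composites agree strictly: writing $j\colon\Cor(-_{\leq n})\hookrightarrow\Cor(-)$ for the inclusion, with instances $j_k,j_K$, the functor $\sigma_{n*}$ is precomposition with $j$ and $e_*$ is precomposition with the base change, and the square $e\circ j_k=j_K\circ e_{\leq n}$ commutes, so for any sheaf $\sF$ on $K$ one has $\sigma_{n*}(e_*^\tr\sF)=\sF\circ(e\circ j_k)=\sF\circ(j_K\circ e_{\leq n})=e_*^{\leq n}(\sigma_{n*}\sF)$. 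Hence $\sigma_n^*\circ e_{\leq n}^*\simeq e^*_\tr\circ\sigma_n^*$. This is the only part needing genuine care, and the care is purely bookkeeping: tracking which copy of $\sigma_n^*$ (resp.\ $e_*$) lives over which base field, and recording that the square of direct-image functors commutes on the nose, not merely up to isomorphism. One could instead evaluate both sides on the generators $\Lambda_{\leq n}(X)$, using $\sigma_n^*\Lambda_{\leq n}(X)\simeq\Lambda_\tr(X)$ and parts (1)--(2), but then one has to construct and compare the connecting natural transformations, which is messier than the argument via right adjoints.
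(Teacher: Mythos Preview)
Your proposal is correct and follows essentially the same approach as the paper: for (1) and (2) you set $e^*_\tr\colonequals a_\et\circ e^*_\PST$ and compute on representables exactly as the paper does, and for (3) you take left adjoints of the strict equality $\sigma_{n*}\circ e_*^\tr = e_*^{\leq n}\circ\sigma_{n*}$, which is precisely the paper's one-line argument. Your write-up is simply more explicit about the bookkeeping.
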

\begin{proof}
    \begin{enumerate}[leftmargin=*]
        \item Clearly, $a_\et\circ e^*_\PST$ is left adjoint to $e_*^\tr$, where $a_\et$ is the functor in Proposition~\ref{Shv_et^tr}(1). We have
        \[a_\et (e^*_\PST(\Lambda_\tr(X)))=a_\et(\Lambda_\tr(X_K))=\Lambda_\tr(X_K),\]
        where the second equality holds because $\Lambda_\tr(X_K)$ is already an \'etale sheaf with transfers. \menum
        \item The proof is similar to (1).
        \item This is a direct corollary of $\sigma_{n*}\circ e_*^\tr = e^{\leq n}_*\circ\sigma_{n*}$. \qedhere
    \end{enumerate}
\end{proof}

\begin{prop}\label{Shve^*Exact}
    The inverse image functor $e^*_\tr \colon  \Shv_\et^\tr(k,\Lambda) \to \Shv_\et^\tr(K,\Lambda)$ is exact.
\end{prop}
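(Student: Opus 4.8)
The plan is to reduce the exactness of $e^*_\tr$ to the exactness of the inverse image functor $e^*_\PST$ on presheaves with transfers, which is already available from Proposition~\ref{e^*PST}(3). Recall from the proof of Lemma~\ref{Shve^*}(1) that $e^*_\tr = a_\et \circ e^*_\PST \circ \iota$, where $\iota\colon \Shv_\et^\tr(k,\Lambda)\hookrightarrow \PST(k,\Lambda)$ is the inclusion and $a_\et$ is the \'etale sheafification functor with transfers of Proposition~\ref{Shv_et^tr}(1). Since $e^*_\tr$ is a left adjoint, it is automatically right exact, so the only thing to check is that it preserves monomorphisms (equivalently, is left exact).

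First I would observe that $a_\et$ is exact by Proposition~\ref{Shv_et^tr}(1), and that the inclusion $\iota$ is exact because $\Shv_\et^\tr(k,\Lambda)$ is a thick (indeed Serre-type) subcategory of $\PST(k,\Lambda)$ closed under kernels and cokernels — more concretely, $\iota$ has the exact left adjoint $a_\et$ and is right adjoint to nothing problematic, but the cleanest point is that a sequence of \'etale sheaves with transfers is exact in $\Shv_\et^\tr$ if and only if it is exact as a sequence of \'etale sheaves, and sheafification of presheaves with transfers is exact. Then, given a short exact sequence $0\to \sF'\to \sF\to \sF''\to 0$ in $\Shv_\et^\tr(k,\Lambda)$, I view it inside $\PST(k,\Lambda)$: it need not be exact there on the left, but applying $e^*_\PST$, which is exact by Proposition~\ref{e^*PST}(3), and then applying the exact functor $a_\et$, yields an exact sequence in $\Shv_\et^\tr(K,\Lambda)$. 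The subtlety is that $\iota$ is only left exact as a functor $\Shv_\et^\tr\to \PST$ in general; however, what we actually need is that $e^*_\tr$ applied to a short exact sequence of \emph{sheaves} is exact, and for that it suffices to note that $e^*_\PST\circ\iota$ sends a short exact sequence of \'etale sheaves with transfers to a sequence of presheaves whose \'etale sheafification is exact — and $a_\et\circ e^*_\PST$ is the composite of two exact functors once we restrict attention to the fact that $\iota$ followed by $e^*_\PST$ followed by $a_\et$ equals the sheafified pullback.

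A cleaner route, which I would actually write up, is this: let $0\to \sF'\to \sF\to \sF''\to 0$ be exact in $\Shv_\et^\tr(k,\Lambda)$. Since the forgetful functor $\gamma_*$ of Proposition~\ref{Shv_et^tr}(3) is exact and conservative, it is enough to prove exactness after applying $\gamma_*$, i.e. to prove the analogous statement for \'etale sheaves on the site $\Sm/k$ (without transfers), and there the pullback $e^*$ along $\Spec K\to \Spec k$ is a standard exact functor (inverse image of sheaves on sites is always exact). One checks the compatibility $\gamma_*^K\circ e^*_\tr \simeq e^*\circ \gamma_*^k$ — this is the commutativity of the back and front faces of the large cube of sites displayed before Lemma~\ref{sigma^*}, passed to left adjoints — so exactness of $e^*_\tr$ follows from exactness of the sheaf-theoretic $e^*$ together with conservativity and exactness of $\gamma_*$.

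\textbf{The main obstacle} is the identity $\gamma_*^K\circ e^*_\tr \simeq e^*\circ \gamma_*^k$ (equivalently, the well-definedness of pullback of the transfer structure): one knows $\gamma_*\circ e_*^\tr = e_*\circ\gamma_*$ on the nose from the construction of $e_*^\tr$ as precomposition with the base-change functor on correspondences, but passing to left adjoints requires that the pullback of an \'etale sheaf with transfers again carries a canonical transfer structure compatible with the underlying sheaf pullback. This is exactly the content packaged into Lemma~\ref{Shve^*}(1) via $e^*_\tr = a_\et\circ e^*_\PST$ and Proposition~\ref{e^*PST}(2), so invoking Suslin's result \cite{Suslin17Nonperfect} (Proposition~\ref{e^*PST}) for the left exactness of $e^*_\PST$ on presheaves with transfers is unavoidable and does the real work; everything else is formal bookkeeping with adjunctions and the exactness of sheafification.
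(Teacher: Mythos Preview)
Your first approach is essentially the paper's proof, which reads in full: ``Note that $e^*_\tr=a_\et\circ e^*_\PST$. By Proposition~\ref{e^*PST}(3) and Proposition~\ref{Shv_et^tr}, the functors $e^*_\PST$ and $a_\et$ are both exact. Thus $e^*_\tr$ is also exact.'' You are right that one should silently insert the inclusion $\iota$, but there is no subtlety: $e^*_\tr$ is right exact as a left adjoint, so it suffices to preserve monomorphisms, and $\iota$ preserves monos (being a right adjoint), $e^*_\PST$ is exact, and $a_\et$ is exact. Your worry about $\iota$ failing to be right exact is irrelevant once you phrase it this way.

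Your ``cleaner route'', however, has a genuine gap. The assertion that ``inverse image of sheaves on sites is always exact'' is false for the big smooth-\'etale site: the base change $\Spec K\to \Spec k$ need not induce a morphism of topoi on $(\Sm/-)_\et$, and the paper itself warns (in a footnote in the appendix) that $e^*_{\Sm}$ is not exact in general, citing \cite[2.2.30]{Olsson16Stacks} and the Stacks Project. So reducing to exactness of the untransferred $e^*$ does not work; this is precisely why Suslin's left-exactness result for $e^*_\PST$ is doing real work and cannot be bypassed. Relatedly, the compatibility $\gamma_*^K\circ e^*_\tr \simeq e^*\circ \gamma_*^k$ is not what one gets by passing to left adjoints in the cube: from $\gamma_*\circ e_*^\tr = e_*\circ\gamma_*$ one obtains $e^*_\tr\circ\gamma^* \simeq \gamma^*\circ e^*$, which is a different identity. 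Stick with the first argument.
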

\begin{proof}
    Note that $e^*_\tr=a_\et\circ e^*_\PST$. By Proposition~\ref{e^*PST} (3) and Proposition~\ref{Shv_et^tr}, the functors $e^*_\PST$ and $a_\et$ are both exact. Thus $e^*_\tr$ is also exact.
\end{proof}

\subsection{Homotopy invariant sheaves}\label{HI_et}
\begin{defn}[{\cite[Definitions 2.15 and 9.22]{MVW06Motive}}]\label{HI}
    Let $k$ be a field.
    \begin{enumerate}[label={\rm(\arabic*)}]
        \item  A presheaf with transfers $\sF$ is said to be homotopy invariant if the projection $X\times_k\bA^1_k\to X$ induces an isomorphism
        \[\sF(X)\stackrel{\sim}{\longrightarrow}\sF(X\times_k \bA^1_k).\] 
        \item An \'etale sheaf with transfers $\sF$ is said to be strictly homotopy invariant if the projection $X\times_k \bA^1_k\to X$ induces isomorphisms
        \[H^i_\et(X,\sF)  \stackrel{\sim}{\longrightarrow}  H^i_\et(X\times_k \bA^1_k,\sF)\text{~for all }i\geq 0.\]
    \end{enumerate}
    We denote by $\HI_\et(k,\Lambda)$ the full subcategory of $\Shv_\et^\tr(k,\Lambda)$ whose objects are homotopy invariant sheaves.
\end{defn}

\begin{thm}[Voevodsky, Suslin]\label{HIs=HI}
    Let $k$ be a field of exponential characteristic $p$ and let $\Lambda$ be the ring $\bZ[1/p]$.
    \begin{enumerate}[label={\rm(\arabic*)}]
        \item If $\sF$ is a homotopy invariant presheaf with transfers, then $a_\et(\sF)$ is strictly homotopy invariant.
        \item The category $\HI_\et(k,\Lambda)$ is a thick subcategory of $\Shv_\et^\tr(k,\Lambda)$. In particular, $\HI_\et(k,\Lambda)$ is abelian. 
    \end{enumerate}
\end{thm}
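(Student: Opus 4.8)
The plan is to regard part~(1) as the substantial input---it is the étale, $\Lambda$-linear refinement of Voevodsky's theorem that the Nisnevich sheafification of a homotopy invariant presheaf with transfers is strictly homotopy invariant (\cite[Ch.~13]{MVW06Motive}; for imperfect $k$ one uses Suslin's extension \cite{Suslin17Nonperfect}, obtained by faithfully flat base change along the perfection together with the exactness of $e^*_\PST$ from Proposition~\ref{e^*PST})---and then to derive part~(2) formally from it. For part~(1), let $\sF$ be a homotopy invariant presheaf with transfers. Since Nisnevich covers are étale covers, the unit $\sF\to a_{\mathrm{Nis}}(\sF)$ is an étale-local isomorphism, so $a_\et(\sF)\cong a_\et(a_{\mathrm{Nis}}\sF)$; thus $H^i_\et(X,a_\et\sF)$ is computed from the strictly homotopy invariant Nisnevich sheaf with transfers $a_{\mathrm{Nis}}\sF$ via the comparison spectral sequence $H^p_{\mathrm{Nis}}(X,R^q\epsilon_* a_\et\sF)\Rightarrow H^{p+q}_\et(X,a_\et\sF)$, where $\epsilon$ is the projection from the étale to the Nisnevich site.

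A dévissage then reduces part~(1) to two cases. Splitting off the torsion subsheaf $\sF_{\mathrm{tor}}$ (a filtered colimit of the kernels of multiplication by $n$, hence an étale sheaf with transfers, still homotopy invariant) and then using, for the torsion-free quotient presheaf $\sG=\sF/\sF_{\mathrm{tor}}$, the sectionwise-exact sequence $0\to\sG\to\sG\otimes\bQ\to\sG\otimes\bQ/\Lambda\to 0$, applying the exact functor $a_\et$, and comparing the long exact étale cohomology sequences for $X$ and for $X\times_k\bA^1_k$ through the five lemma, it is enough to treat $(\mathrm{a})$ $\sF$ a presheaf of $\bQ$-vector spaces, and $(\mathrm{b})$ $\sF$ an $\ell$-torsion presheaf with $\ell\ne p$. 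In case $(\mathrm{a})$, the transfers equip the stalks $\sF(\cO_{X,x}^{\sh})$ with a system of norm maps, so the higher étale cohomology sheaves $R^q\epsilon_* a_\et\sF$ vanish for $q>0$ (higher Galois cohomology of a $\bQ$-module with transfers is zero, as corestriction $\circ$ restriction is multiplication by the degree); hence $H^i_\et(X,a_\et\sF)\cong H^i_{\mathrm{Nis}}(X,a_{\mathrm{Nis}}\sF)$, which is homotopy invariant by Voevodsky's theorem. In case $(\mathrm{b})$, the Suslin--Voevodsky rigidity theorem shows $a_\et(\sF)$ is a locally constant étale sheaf of $\bZ/\ell$-modules, and étale cohomology with such coefficients is homotopy invariant over a field because $\ell$ is invertible; alternatively one invokes \cite[Lecture~14]{MVW06Motive} or \cite[Ch.~4--5]{CD19MixedMotive} directly. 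This proves part~(1).

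For part~(2), part~(1) first identifies $\HI_\et(k,\Lambda)$ with the category of \emph{strictly} homotopy invariant étale sheaves with transfers: such an $\sF$ satisfies $a_\et\sF=\sF$, so homotopy invariance as a presheaf forces strict homotopy invariance. Since $\Shv_\et^\tr(k,\Lambda)$ is abelian with exact reflector $a_\et$ (Proposition~\ref{Shv_et^tr}), and since by part~(1) membership in $\HI_\et$ only requires homotopy invariance of the underlying presheaf with transfers, it remains to check closure under kernels, cokernels and extensions. For $f\colon\sF\to\sG$ in $\HI_\et$ one has $(\ker f)(X)=\ker(\sF(X)\to\sG(X))$, homotopy invariant because $\sF$ and $\sG$ are; and $\coker f=a_\et(\sQ)$ where the presheaf cokernel $\sQ$, computed sectionwise in $\PST(k,\Lambda)$, is again a homotopy invariant presheaf with transfers, so $\coker f\in\HI_\et$ by part~(1). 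For an extension $0\to\sF'\to\sF\to\sF''\to 0$ with $\sF',\sF''\in\HI_\et$, writing the long exact étale cohomology sequences for $X$ and $X\times_k\bA^1_k$ and using the strict homotopy invariance of $\sF'$ and $\sF''$, the five lemma gives $\sF(X)\xrightarrow{\ \sim\ }\sF(X\times_k\bA^1_k)$. Hence $\HI_\et(k,\Lambda)$ is a thick, hence fully abelian, subcategory of $\Shv_\et^\tr(k,\Lambda)$; in particular it is abelian and its inclusion is exact (cf.\ the remarks after Definition~\ref{subcategories}).

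The main obstacle is part~(1), and within it the genuine geometric content---Voevodsky's theorem on homotopy invariant presheaves with transfers over a field, via the structure of their Gersten-type resolutions---is taken as a black box. The work specific to the étale, $\bZ[1/p]$-linear setting is the coefficient-by-coefficient comparison with the Nisnevich topology: rigidity for the torsion part and the transfer/trace vanishing of higher Galois cohomology for the rational part. Everything in part~(2) is then routine bookkeeping with long exact sequences and the five lemma.
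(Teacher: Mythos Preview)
Your proposal is correct and follows essentially the same route as the paper: part~(1) is reduced to Voevodsky's Nisnevich theorem (extended by Suslin to imperfect $k$) via a rational/torsion d\'evissage, handling the $\bQ$-part by the transfer argument that identifies \'etale and Nisnevich cohomology (this is \cite[Proposition~14.23]{MVW06Motive}, used elsewhere in the paper) and the torsion part by Suslin's rigidity \cite[Theorem~7.20]{MVW06Motive}; part~(2) then follows from part~(1) and the five lemma. The paper's own proof is little more than a pointer to \cite[Proposition~1.7.5]{BVK16Derived1Motives} and \cite[Proposition~1.1.2]{ABV09MotShvLAlb}, where precisely this d\'evissage is carried out, so you have in effect unpacked those references. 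Two cosmetic points: calling the torsion sub\emph{presheaf} $\sF_{\mathrm{tor}}$ ``an \'etale sheaf with transfers'' is a slip (it is a homotopy invariant presheaf with transfers, which is all you need before applying $a_\et$); and the reduction of case~(b) to a single prime $\ell$ is harmless but not strictly necessary, since rigidity applies to any torsion prime to $p$.
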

\begin{proof}
    The first assertion is essentially due to Voevodsky and Suslin. In fact, Voevodsky established this result for the Nisnevich topology and perfect fields $k$ (\cite[Theorem~24.1]{MVW06Motive}), and Suslin generalized Voevodsky's result to arbitrary fields (\cite[Theorem~3.4]{Suslin17Nonperfect}). Then one can deduce the result for the \'etale topology by using Suslin's rigidity theorem \cite[Theorem~7.20]{MVW06Motive}. See \cite[Proposition~1.7.5]{BVK16Derived1Motives} and \cite[Proposition~1.1.2]{ABV09MotShvLAlb}.

    The second assertion follows immediately from the first one and the five lemma.
\end{proof}

\begin{lem}[{\cite[Lemmas 1.1.1 and 1.1.2]{ABV09MotShvLAlb}}]
    The inclusion $\iota \colon  \HI_\et(k,\Lambda) \hookrightarrow \Shv_\et^\tr(k,\Lambda)$ admits a left adjoint 
    \[h_0^\et \colon   \Shv_\et^\tr(k,\Lambda) \to \HI_\et(k,\Lambda).\] 
    Here, $h_0^\et(\sF)$ is given by the \'etale sheaf with transfers associated with the $0$-th homology of the Suslin complex $C_*\sF$ (\cite[Lecture 2]{MVW06Motive}).
\end{lem}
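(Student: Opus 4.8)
The plan is to exhibit the left adjoint explicitly as $h_0^\et(\sF)\colonequals a_\et\bigl(h_0(C_*\sF)\bigr)$, where $a_\et$ is the sheafification functor of Proposition~\ref{Shv_et^tr}(1), $C_*\sF$ is the Suslin complex (the complex of presheaves with transfers with $(C_n\sF)(U)=\sF(U\times\Delta^n_k)$, $\Delta^n_k=\Spec k[t_0,\dots,t_n]/(t_0+\cdots+t_n-1)$, and differential the alternating sum of the pullbacks along the coface maps), and $h_0(C_*\sF)=\coker\bigl(\partial_0-\partial_1\colon C_1\sF\to\sF\bigr)$ is its zeroth homology presheaf. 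First I would check that $C_*\sF$ is indeed a complex of presheaves with transfers and that $h_0(C_*\sF)$ is a homotopy invariant presheaf with transfers; this is the one genuinely geometric input, taken from \cite[Lecture~2]{MVW06Motive}. Granting it, Theorem~\ref{HIs=HI}(1) shows that $a_\et h_0(C_*\sF)$ is strictly homotopy invariant, hence in particular homotopy invariant, so an object of $\HI_\et(k,\Lambda)$; functoriality of $C_*$, $h_0$ and $a_\et$ then makes $h_0^\et$ into a functor $\Shv_\et^\tr(k,\Lambda)\to\HI_\et(k,\Lambda)$.

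Next I would construct the unit. The cokernel projection $q_\sF\colon\sF\twoheadrightarrow h_0(C_*\sF)$ is a natural epimorphism of presheaves with transfers; applying the exact functor $a_\et$ and using $a_\et\sF\cong\sF$ (as $\sF$ is already an étale sheaf) produces a natural epimorphism $\eta_\sF\colon\sF\to\iota h_0^\et\sF$ in $\Shv_\et^\tr(k,\Lambda)$. The key point is that $\eta$ is invertible on homotopy invariant sheaves: for $\sG\in\HI_\et(k,\Lambda)$, after identifying $\Delta^1_k$ with $\bA^1_k$ the two face maps $\partial_0,\partial_1\colon(C_1\sG)(U)=\sG(U\times\Delta^1_k)\to\sG(U)$ are both sections of the projection isomorphism $\sG(U)\stackrel{\sim}{\to}\sG(U\times\Delta^1_k)$, hence coincide; thus $\partial_0-\partial_1=0$, the projection $q_\sG\colon\sG\to h_0(C_*\sG)$ is an isomorphism of presheaves, and therefore $\eta_{\iota\sG}$ is an isomorphism.

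Then I would verify the universal property. Given $\sG\in\HI_\et(k,\Lambda)$ and a morphism $\varphi\colon\sF\to\iota\sG$, naturality of $q$ yields a commutative square relating $\varphi$, $q_\sF$, $q_\sG$ and $h_0(C_*\varphi)$; since $q_\sG$ is invertible, $\varphi$ factors through $q_\sF$ by a presheaf morphism $\psi\colon h_0(C_*\sF)\to\iota\sG$. Because $\iota\sG$ is a sheaf, the universal property of $a_\et$ extends $\psi$ to a morphism $\bar\psi\colon h_0^\et\sF\to\sG$ with $\bar\psi\circ\eta_\sF=\varphi$, and this $\bar\psi$ is unique with that property because $\eta_\sF$ is an epimorphism. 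This gives the natural bijection $\Hom_{\HI_\et}(h_0^\et\sF,\sG)\cong\Hom_{\Shv_\et^\tr}(\sF,\iota\sG)$, i.e.\ $h_0^\et\dashv\iota$; moreover the second paragraph shows the counit is an isomorphism, so $\iota$ is fully faithful and $\HI_\et(k,\Lambda)$ is a reflective subcategory.

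The hard part is the single non-formal ingredient: the homotopy invariance of $h_0(C_*\sF)$ \emph{as a presheaf with transfers}, which is where the structure of finite correspondences is genuinely used. It is also what dictates the two-step construction — one forms $h_0$ of the Suslin complex at the presheaf level first and only afterwards sheafifies, since the passage from a homotopy invariant presheaf with transfers to a strictly homotopy invariant étale sheaf with transfers is exactly the content of Theorem~\ref{HIs=HI}(1), which in turn rests on Voevodsky's homotopy-invariance theorem and, for the étale topology, on Suslin's rigidity theorem. Once that input is in place, the rest is formal manipulation of adjunctions and sheafification.
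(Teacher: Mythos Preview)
Your proof is correct and follows the standard construction; the paper itself gives no proof of this lemma, merely citing \cite[Lemmas 1.1.1 and 1.1.2]{ABV09MotShvLAlb} for the statement. Your argument spells out what the cited reference contains: the homotopy invariance of $h_0(C_*\sF)$ as a presheaf with transfers (from \cite[Lecture~2]{MVW06Motive}), the passage to an \'etale sheaf via Theorem~\ref{HIs=HI}(1), and the verification of the adjunction via the unit map and the observation that on $\HI_\et$ the face maps $\partial_0,\partial_1$ coincide.
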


For $X\in\Sm/k$, we let
\[h_0^\et(X) \colonequals h_0^\et(\Lambda_\tr(X)).\]

\begin{lem}\label{HIe_*}
    The direct image $e_*^\tr$ maps homotopy invariant sheaves to homotopy invariant sheaves. In other words, we have a functor $e_*^\HI \colon  \HI_\et(K,\Lambda)\to \HI_\et(k,\Lambda)$ such that the following diagram is commutative
    \[\xymatrix{
        \HI_\et(K,\Lambda) \ar@{.>}[d]_{e_*^\HI}   \ar@{^(->}[r]^{\iota_K} & \Shv_\et^\tr(K,\Lambda)  \ar[d]^{e_*^\tr}\\
        \HI_\et(k,\Lambda) \ar@{^(->}[r]_{\iota_k}                         & \Shv_\et^\tr(k,\Lambda).
    }\]
\end{lem}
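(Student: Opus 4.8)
The plan is to verify the defining property of homotopy invariance for $e_*^\tr\sF$ directly on sections. For $X\in\Sm/k$ write $X_K=X\times_k K\in\Sm/K$; by construction of the direct image one has $(e_*^\tr\sF)(X)=(\sF\circ e)(X)=\sF(X_K)$, where $e\colon\Cor(k)\to\Cor(K)$ is the base change functor on finite correspondences and, more generally, $(e_*^\tr\sF)(\alpha)=\sF(e(\alpha))$ for a correspondence $\alpha$. Fix $X\in\Sm/k$ and let $p\colon X\times_k\bA^1_k\to X$ be the projection; I must show that $(e_*^\tr\sF)(p)$ is an isomorphism.

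First I would record the two base-change compatibilities that make the computation work. Since $-\times_k K$ commutes with fibre products and $(\bA^1_k)_K=\bA^1_K$, the natural map is an isomorphism $(X\times_k\bA^1_k)_K\simeq X_K\times_K\bA^1_K$, under which $p_K$ is identified with the projection $\mathrm{pr}\colon X_K\times_K\bA^1_K\to X_K$. Moreover, for any morphism $f$ of smooth $k$-schemes the base change of its graph is the graph of $f_K$, so $e(\gamma_k(p))=\gamma_K(\mathrm{pr})$ in $\Corr_K$. Hence
\[(e_*^\tr\sF)(p)=\sF\big(e(\gamma_k(p))\big)=\sF(\mathrm{pr})\colon\sF(X_K)\longrightarrow\sF(X_K\times_K\bA^1_K),\]
and this is an isomorphism because $\sF$ is a homotopy invariant presheaf with transfers over $K$ and $X_K\in\Sm/K$. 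Thus $e_*^\tr\sF$ is a homotopy invariant presheaf with transfers over $k$.

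It remains only to note that $e_*^\tr\sF$ is already an \'etale sheaf with transfers over $k$ — equivalently, that $e_*^\tr$ lands in $\Shv_\et^\tr(k,\Lambda)$ — which is exactly the continuity of the correspondence base change functor $e$ for the \'etale topology, recorded when the big commutative diagram of this subsection was set up. Combining the two points, $e_*^\tr$ restricts to a functor $e_*^\HI\colon\HI_\et(K,\Lambda)\to\HI_\et(k,\Lambda)$, and the square in the statement commutes tautologically, both composites $\iota_k\circ e_*^\HI$ and $e_*^\tr\circ\iota_K$ being $e_*^\tr$ itself.

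I do not expect a genuine obstacle: the whole content is the bookkeeping of base-change identifications (fibre products, graphs of morphisms, continuity of the correspondence functor), each of which is routine. The only place where something beyond formal manipulation would be required is the cohomological refinement — that $e_*^\tr$ preserves \emph{strict} homotopy invariance, or equivalently (via Theorem~\ref{HIs=HI}) that its total derived functor is well behaved — which needs a smooth base change input; but that lies beyond what the present lemma asserts.
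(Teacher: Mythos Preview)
Your proof is correct and is essentially the same argument as the paper's: both identify $(X\times_k\bA^1_k)_K$ with $X_K\times_K\bA^1_K$ so that the base change of the $k$-projection becomes the $K$-projection, whence homotopy invariance of $\sF$ over $K$ yields that of $e_*^\tr\sF$ over $k$. Your version is slightly more explicit about the correspondence-level bookkeeping (graphs and the functor $e$), while the paper just draws the commutative triangle of schemes, but the content is identical.
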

\begin{proof}
    For $X\in\Sm/k$, we have the following commutative diagram
    \[\xymatrix{
        (K\times_k \bA^1_k)\times_K(K\times_k X)  \ar[r]^-\sim \ar[rd]_{g} & K\times_k(\bA^1_k\times_kX) \ar[d]^{f} \\
        & K\times_kX,
    }\]
    where $f$ is the base change of the projection $\bA^1_k\times_k X\to X$ and $g$ is the projection to $K\times_kX$. For $\sF\in\HI_\et(K,\Lambda)$, the morphism $\sF(g)$ is an isomorphism. Thus $\sF(f)$ is also an isomorphism, i.e., the morphism $(e_*^\tr\sF)(X)\to (e_*^\tr\sF)(\bA^1_k\times_k X)$ induced by the projection is an isomorphism, which means that $e_*^\tr\sF$ is homotopy invariant.
\end{proof}

\begin{lem}[{\cite[Proposition~4.9]{Suslin17Nonperfect}}]\label{InverseHIPST}
    The functor $e^*_\PST$ preserves homotopy invariant presheaves with transfers.
\end{lem}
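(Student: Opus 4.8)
The plan is to reduce the lemma to the compatibility of $e^*_\PST$ with the Suslin complex. For $\sF\in\PST(k,\Lambda)$ write $C_n\sF$ for the presheaf with transfers $U\mapsto\sF(U\times_k\Delta^n_k)$, so that $C_*\sF$ is the Suslin complex of $\sF$, and put $h_0\sF\colonequals H_0(C_*\sF)=\coker\bigl(C_1\sF\xrightarrow{\partial_0^*-\partial_1^*}\sF\bigr)$. I will use two standard facts: $h_0\sG$ is homotopy invariant for every $\sG$, and a presheaf with transfers $\sF$ is homotopy invariant if and only if the canonical epimorphism $\sF\twoheadrightarrow h_0\sF$ is an isomorphism --- indeed, if $\sF$ is homotopy invariant then every section of $C_1\sF(U)=\sF(U\times_k\bA^1_k)$ is of the form $p^*y$, whence $(\partial_0^*-\partial_1^*)(p^*y)=y-y=0$. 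Since $e^*_\PST$ is exact (Proposition~\ref{e^*PST}), it is in particular right exact, hence commutes with the cokernel defining $h_0$; so it suffices to show that the canonical comparison map $e^*_\PST(C_n\sF)\to C_n(e^*_\PST\sF)$ is an isomorphism for every $n$ (in fact only $n=1$ is needed). Granting this, for homotopy invariant $\sF$ one obtains $e^*_\PST\sF\cong e^*_\PST(h_0\sF)\cong h_0(e^*_\PST\sF)$, which is homotopy invariant.

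For the comparison, both $e^*_\PST\circ C_n$ and $C_n\circ e^*_\PST$ preserve arbitrary colimits ($C_n$ because colimits of presheaves are computed sectionwise, $e^*_\PST$ because it is a left adjoint), and the comparison map is provided by the symmetric monoidal structure of $e^*_\PST$ (easily checked, with $e^*_\PST\Lambda_\tr(X)=\Lambda_\tr(X_K)$ as in Proposition~\ref{e^*PST}); therefore it is enough to test it on the generators $\sF=\Lambda_\tr(W)$, $W\in\Sm/k$, where it amounts to saying that $e^*_\PST$ carries the presheaf $U\mapsto\Corr_k(U\times_k\Delta^n_k,W)\otimes_\bZ\Lambda$ on $\Sm/k$ to the presheaf $Y\mapsto\Corr_K(Y\times_K\Delta^n_K,W_K)\otimes_\bZ\Lambda$ on $\Sm/K$. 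I would establish this by continuity: any $Y\in\Sm/K$ descends to a finite-type scheme $\mathcal Y$ over a finitely generated $k$-subalgebra $R\subseteq K$, and $\Spec K=\varprojlim_{R'}\Spec R'$ over the finitely generated subalgebras $R\subseteq R'\subseteq K$; one identifies $e^*_\PST\sG(Y)$ with the filtered colimit over $R'$ of the groups of relative sections of $\sG$ along $\mathcal Y_{R'}/\Spec R'$, and $\Corr_K(Y\times_K\Delta^n_K,W_K)$ with the filtered colimit over $R'$ of the groups of relative finite correspondences attached to the descended data (a finite correspondence over $K$ from $Y\times_K\Delta^n_K$ to $W_K$ being a suitable cycle on $Y\times_k\Delta^n_k\times_k W$), and then compares the two colimits termwise.

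The elementary case is the separable one: if $k'\subseteq K$ denotes the relative perfect closure $\{x\in K:x^{p^m}\in k\text{ for some }m\geq0\}$, so that $k'/k$ is purely inseparable and $K/k'$ is separable, then $\Spec K$ is a cofiltered limit of smooth affine $k'$-schemes, every $Y\in\Sm/K$ acquires a smooth $k'$-model, and the continuity argument goes through at once (this already covers perfect $k$). The main obstacle is the purely inseparable extension $k'/k$: now the relevant $k$-schemes are genuinely non-smooth (when $k'\neq k$, even $\Spec k'$ is not smooth over $k$), so making the colimits of the previous paragraph precise requires Suslin's formalism of relative cycles over an arbitrary Noetherian base, and it is exactly here that the hypothesis $\Lambda=\bZ[1/p]$ is used --- the universal homeomorphisms produced by inseparability multiply relative cycles by powers of $p$, which must be invertible for the comparison to be an isomorphism. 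I would therefore carry out the remaining details following, or simply invoke, \cite[Proposition~4.9]{Suslin17Nonperfect}, which is precisely this statement.
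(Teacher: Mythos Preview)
The paper does not supply its own proof of this lemma; it simply records the statement with the citation to \cite[Proposition~4.9]{Suslin17Nonperfect}. Your proposal likewise terminates by invoking that reference, so at the level of what is actually established the two agree.

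Your sketch of \emph{why} the result holds is reasonable and tracks Suslin's argument: the reduction to showing $e^*_\PST C_n \cong C_n e^*_\PST$ via the characterisation of homotopy invariance as $\sF\xrightarrow{\sim} h_0\sF$, the colimit-preservation of both composites, and the continuity argument over models of $K$. One small clarification: the comparison map you invoke is most cleanly seen as the lax-monoidal transformation $e^*_\PST\underline{\Hom}(\Lambda_\tr(\Delta^n),-)\to\underline{\Hom}(\Lambda_\tr(\Delta^n_K),e^*_\PST(-))$, rather than something coming directly from the monoidal structure on representables. The substantive content --- identifying $e^*_\PST(C_n\Lambda_\tr(W))$ with $C_n\Lambda_\tr(W_K)$ in the purely inseparable case --- is exactly where Suslin's machinery of relative cycles and the inversion of $p$ are required, as you correctly note; since you defer to Suslin there, your proposal and the paper's treatment are effectively identical.
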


\begin{prop}\label{HIe^*Exact}
    The inverse image $e^*_\tr$ maps homotopy invariant sheaves to homotopy invariant sheaves. In other words, we have a functor $e^*_\HI \colon  \HI_\et(k,\Lambda)\to \HI_\et(K,\Lambda)$ such that the following diagram is commutative
    \[\xymatrix{
        \HI_\et(k,\Lambda) \ar@{.>}[d]_{e^*_\HI} \ar@{^(->}[r]^{\iota_k} & \Shv_\et^\tr(k,\Lambda)  \ar[d]^{e^*_\tr}\\
        \HI_\et(K,\Lambda) \ar@{^(->}[r]_{\iota_K}                  & \Shv_\et^\tr(K,\Lambda).
    }\]
    Moreover, the functor $e_\HI^*$ is exact.
\end{prop}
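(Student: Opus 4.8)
The plan is to factor the inverse image through presheaves with transfers and then feed in the two nontrivial inputs already recorded in the excerpt: that $e^*_\PST$ preserves homotopy invariant presheaves with transfers (Lemma~\ref{InverseHIPST}), and that \'etale sheafification of a homotopy invariant presheaf with transfers is strictly homotopy invariant (Theorem~\ref{HIs=HI}(1)).

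First I would recall, as in the proof of Proposition~\ref{Shve^*Exact}, the identity $e^*_\tr = a_\et\circ e^*_\PST$. Given $\sF\in\HI_\et(k,\Lambda)$, its underlying presheaf with transfers is homotopy invariant, so Lemma~\ref{InverseHIPST} shows that $e^*_\PST(\sF)$ is again a homotopy invariant presheaf with transfers. Applying Theorem~\ref{HIs=HI}(1), the sheaf $a_\et(e^*_\PST(\sF)) = e^*_\tr(\sF)$ is strictly homotopy invariant; since $H^0_\et(X,-)=(-)(X)$, strict homotopy invariance in particular forces the presheaf-level homotopy invariance condition, so $e^*_\tr(\sF)\in\HI_\et(K,\Lambda)$. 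Thus $e^*_\tr$ restricts to a functor $e^*_\HI\colon\HI_\et(k,\Lambda)\to\HI_\et(K,\Lambda)$, and the displayed square commutes tautologically, being nothing but the statement that $e^*_\HI$ is the corestriction of $e^*_\tr\circ\iota_k$.

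For the exactness claim I would use that, by Theorem~\ref{HIs=HI}(2), $\HI_\et(k,\Lambda)$ and $\HI_\et(K,\Lambda)$ are thick, hence fully abelian (Definition~\ref{subcategories}), subcategories of the respective categories of \'etale sheaves with transfers; in particular the inclusions $\iota_k,\iota_K$ are exact and a complex in $\HI_\et$ is exact if and only if its image in $\Shv_\et^\tr$ is. So, starting from a short exact sequence in $\HI_\et(k,\Lambda)$, I push it into $\Shv_\et^\tr(k,\Lambda)$, apply $e^*_\tr$ — exact by Proposition~\ref{Shve^*Exact} — to get an exact sequence in $\Shv_\et^\tr(K,\Lambda)$ all of whose terms lie in $\HI_\et(K,\Lambda)$, and conclude via the previous sentence that it is exact in $\HI_\et(K,\Lambda)$. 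Hence $e^*_\HI$ is exact.

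I do not anticipate a real obstacle: all the substance is carried by Lemma~\ref{InverseHIPST} and Theorem~\ref{HIs=HI}, which are already at our disposal. The only point needing a word of care is the bookkeeping that "exact sequence inside a fully abelian subcategory" coincides with "exact sequence in the ambient abelian category," which is immediate from the defining closure properties under kernels and cokernels.
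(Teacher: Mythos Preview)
Your proof is correct and follows essentially the same route as the paper: factor $e^*_\tr = a_\et\circ e^*_\PST$, invoke Lemma~\ref{InverseHIPST} and Theorem~\ref{HIs=HI}(1) to preserve homotopy invariance, and then deduce exactness from Proposition~\ref{Shve^*Exact} together with the exactness of $\iota_k,\iota_K$ coming from Theorem~\ref{HIs=HI}(2). The only difference is that you spell out the ``fully abelian subcategory'' bookkeeping more explicitly than the paper does.
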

\begin{proof}
    Note that $e^*_\tr=a_\et\circ e^*_\PST$. By Lemma~\ref{InverseHIPST} and Theorem~\ref{HIs=HI} (1), the functors $e^*_\PST$ and $a_\et$ preserve homotopy invariance. Thus $e^*_\tr$ maps homotopy invariant sheaves to homotopy invariant sheaves.
    
    By Proposition~\ref{Shve^*Exact}, the functor $e^*_\tr$ is exact. Note that $\iota_k$ and $\iota_K$ are both exact by Theorem~\ref{HIs=HI} (2). Thus the functor $e^*_\HI$ is exact.
\end{proof}

\begin{cor}\label{HIadj}
    \begin{enumerate}[leftmargin=*,label={\rm(\arabic*)}]
        \item The functor $e^*_\HI$ is left adjoint to $e_*^\HI$.\menum
        \item The unit $\id\to \iota_k h_0^\et$ induces a natural isomorphism 
        \[h_0^\et e_\tr^* \stackrel{\sim}{\longrightarrow} h_0^\et e_\tr^*\iota_k h_0^\et=e_\HI^*h_0^\et.\] 
        In particular, $e^*_\HI(h_0^\et(X))\simeq h_0^\et(X_K)$.
    \end{enumerate}
\end{cor}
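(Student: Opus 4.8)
The plan is to deduce both assertions formally, with no new input about sheaves, from the two adjunctions already in hand --- $(h_0^\et,\iota)$ at $k$ and at $K$, and $(e^*_\tr,e_*^\tr)$ --- together with Lemma~\ref{HIe_*} and Proposition~\ref{HIe^*Exact}. Those say precisely that $e^*_\tr$ and $e_*^\tr$ preserve homotopy invariance, hence restrict to $e^*_\HI$ and $e_*^\HI$ and satisfy the commutations $e^*_\tr\iota_k=\iota_K e^*_\HI$ and $e_*^\tr\iota_K=\iota_k e_*^\HI$. Everything below is a chase of universal properties.

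For (1) I would simply restrict the adjunction $(e^*_\tr,e_*^\tr)$ to the full subcategories of homotopy invariant sheaves. For $\sF\in\HI_\et(k,\Lambda)$ and $\sG\in\HI_\et(K,\Lambda)$, full faithfulness of $\iota_k$ and $\iota_K$, the two commutations, and the adjunction $(e^*_\tr,e_*^\tr)$ give a chain of natural bijections $\Hom_{\HI_\et(K)}(e^*_\HI\sF,\sG)=\Hom_{\Shv_\et^\tr(K)}(e^*_\tr\iota_k\sF,\iota_K\sG)\simeq\Hom_{\Shv_\et^\tr(k)}(\iota_k\sF,e_*^\tr\iota_K\sG)=\Hom_{\HI_\et(k)}(\sF,e_*^\HI\sG)$, which is the desired adjunction.

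For (2) I would first unwind the (natural-isomorphism) equality asserted in the statement: from $e^*_\tr\iota_k=\iota_K e^*_\HI$ and the fact that the counit $h_0^\et\iota_K\stackrel{\sim}{\to}\id$ is invertible (because $\iota_K$ is fully faithful), one gets $h_0^\et e^*_\tr\iota_k h_0^\et=h_0^\et\iota_K e^*_\HI h_0^\et\simeq e^*_\HI h_0^\et$. The substance is then to check that, for every $\sF\in\Shv_\et^\tr(k,\Lambda)$, the morphism $h_0^\et e^*_\tr(\eta_\sF)$ is an isomorphism, where $\eta_\sF\colon\sF\to\iota_k h_0^\et\sF$ is the unit. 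Here I would invoke the standard characterization of the morphisms inverted by the reflective localization $h_0^\et$: a morphism $f$ of $\Shv_\et^\tr(K,\Lambda)$ has $h_0^\et(f)$ invertible iff $\Hom(f,\iota_K\sH)$ is bijective for every $\sH\in\HI_\et(K,\Lambda)$ (one implication is the adjunction $(h_0^\et,\iota_K)$, the other is Yoneda). Applying this with $f=e^*_\tr(\eta_\sF)$ and transposing along $(e^*_\tr,e_*^\tr)$, the condition becomes that precomposition with $\eta_\sF$ is a bijection $\Hom_{\Shv_\et^\tr(k)}(\iota_k h_0^\et\sF,\,e_*^\tr\iota_K\sH)\to\Hom_{\Shv_\et^\tr(k)}(\sF,\,e_*^\tr\iota_K\sH)$; but $e_*^\tr\iota_K\sH=\iota_k e_*^\HI\sH$ lies in $\HI_\et(k,\Lambda)$, so this is exactly the bijection defining the unit $\eta_\sF$ of $(h_0^\et,\iota_k)$. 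The ``in particular'' then drops out by evaluating the natural isomorphism $h_0^\et e^*_\tr\stackrel{\sim}{\to}e^*_\HI h_0^\et$ at $\sF=\Lambda_\tr(X)$ and invoking $e^*_\tr\Lambda_\tr(X)\simeq\Lambda_\tr(X_K)$ from Lemma~\ref{Shve^*}(1).

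I do not expect a genuine obstacle: everything of real content --- that $e^*_\tr$ and $e_*^\tr$ preserve homotopy invariance, ultimately resting on the Voevodsky--Suslin theory and rigidity (Theorem~\ref{HIs=HI}, Lemma~\ref{InverseHIPST}, Lemma~\ref{HIe_*}) --- is already established upstream. The only thing to be careful about is bookkeeping: checking that the two transpositions (along $(h_0^\et,\iota)$ and along $(e^*_\tr,e_*^\tr)$) compose to the specific natural map $h_0^\et e^*_\tr(\eta_\sF)$ named in the statement rather than merely to an abstract isomorphism, and that the resulting isomorphism $h_0^\et e^*_\tr\simeq e^*_\HI h_0^\et$ is natural in $\sF$. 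I anticipate no surprises there.
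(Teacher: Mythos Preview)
Your proof is correct and is essentially the same approach as the paper's, just unpacked in detail: the paper's proof of (2) is the one-liner ``taking left adjoint to $\iota_k\circ e_*^\HI=e_*^\tr\circ\iota_K$,'' and your argument is precisely the standard verification that the canonical comparison map (induced by the unit) realizes the uniqueness-of-adjoints isomorphism $e^*_\HI\circ h_0^\et\simeq h_0^\et\circ e^*_\tr$.
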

\begin{proof}
    It is clear that $e^*_\HI$ is left adjoint to $e_*^\HI$ by the adjunction $(e_\tr^*,e_*^\tr)$. Taking left adjoint to $\iota_k\circ e_*^\HI=e_*^\tr\circ \iota_K$, we get the second assertion.
\end{proof}

The following result is established by Suslin \cite{Suslin17Nonperfect} for the Nisnevich topology and the Zariski topology. We deal with the \'etale topology.
\begin{prop}\label{PurelyInsepEquiv}
    Let $k$ be a field of characteristic $p>0$ and let $K/k$ be a purely inseparable extension. Then
    \begin{enumerate}[label={\rm(\arabic*)}]
        \item the functor $e_*^\PST \colon  \PST(K,\Lambda) \to \PST(k,\Lambda)$ is an equivalence of categories with quasi-inverse $e^*_\PST$;
        \item the functor $e_*^\tr \colon  \Shv_\et^\tr(K,\Lambda) \to \Shv_\et^\tr(k,\Lambda)$ is an equivalence of categories with quasi-inverse $e^*_\tr=e^*_\PST$;
        \item the functor $e_*^\HI \colon  \HI_\et^\tr(K,\Lambda) \to \HI_\et^\tr(k,\Lambda)$ is an equivalence of categories with quasi-inverse $e^*_\HI$.
    \end{enumerate} 
\end{prop}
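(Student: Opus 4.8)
The plan is to treat (1) as essentially Suslin's theorem and then to obtain (2) and (3) by restricting the equivalence of (1) to successively smaller full subcategories. For (1) I would simply cite \cite{Suslin17Nonperfect}, since the assertion involves no topology: over a purely inseparable extension the base change on finite correspondences is invertible after inverting $p$, because for $X\in\Sm/K$ a suitable power of the relative Frobenius identifies $\Lambda_\tr(X)$ with the base change $\Lambda_\tr((X_0)_K)$ of a representable object from $k$ inside $\PST(K,\Lambda)$, the obstruction being a power of $p$ and hence a unit in $\Lambda=\bZ[1/p]$. So $e^*_\PST$ and $e_*^\PST$ are mutually quasi-inverse equivalences of abelian categories; in particular $e^*_\PST$ is exact and conservative (cf. Proposition~\ref{e^*PST}), and $e_*^\PST$ commutes with all limits and colimits.

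For (2) I would show that the equivalence $(e^*_\PST,e_*^\PST)$ restricts to an equivalence of the full subcategories of \'etale sheaves with transfers. That $e_*^\PST$ carries $\Shv_\et^\tr(K,\Lambda)$ into $\Shv_\et^\tr(k,\Lambda)$ is the continuity of the base change functor for the \'etale topology (recorded in \S\ref{EST}; this is why $e_*^\tr$ is defined, cf. Lemma~\ref{Shve^*}), and the restricted functor $e_*^\tr$, being the restriction of an equivalence to a full subcategory, is automatically fully faithful. The only thing to check is that $e^*_\PST$ carries $\Shv_\et^\tr(k,\Lambda)$ into $\Shv_\et^\tr(K,\Lambda)$; granting this, $e^*_\tr=a_\et\circ e^*_\PST$ agrees with $e^*_\PST$ on \'etale sheaves, the restrictions $e^*_\tr,e_*^\tr$ stay quasi-inverse, and (2) follows. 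To verify it, I would first observe that $e_*^\PST$ commutes with \'etale sheafification: for every $X\in\Sm/k$ the projection $X_K\to X$ is a universal homeomorphism (a base change of the universal homeomorphism $\Spec K\to\Spec k$, since $K/k$ is purely inseparable), so by topological invariance of the small \'etale site (\cite{stacks-project}) base change identifies the category of \'etale covers of $X$ with that of $X_K$; since \'etale sheafification is computed by the filtered colimit of \v{C}ech constructions indexed by these cover categories, one obtains a natural isomorphism $a_\et\circ e_*^\PST\simeq e_*^\PST\circ a_\et$. A short formal argument then concludes: for $\sF\in\Shv_\et^\tr(k,\Lambda)$ one has
\[e_*^\PST\bigl(a_\et\, e^*_\PST\sF\bigr)\simeq a_\et\bigl(e_*^\PST e^*_\PST\sF\bigr)=a_\et\sF=\sF=e_*^\PST\bigl(e^*_\PST\sF\bigr),\]
so $e_*^\PST$ sends the sheafification unit $e^*_\PST\sF\to a_\et e^*_\PST\sF$ to an isomorphism, whence, $e_*^\PST$ being conservative, this unit is an isomorphism and $e^*_\PST\sF$ is already an \'etale sheaf (with transfers, as it lies in $\PST(K,\Lambda)$).

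For (3) I would restrict once more. By Lemma~\ref{HIe_*} the functor $e_*^\tr$ sends homotopy invariant sheaves to homotopy invariant sheaves, and by Proposition~\ref{HIe^*Exact} so does $e^*_\tr$; hence the equivalence $(e^*_\tr,e_*^\tr)$ of (2) restricts to mutually quasi-inverse equivalences between $\HI_\et^\tr(K,\Lambda)$ and $\HI_\et^\tr(k,\Lambda)$, which are exactly $e^*_\HI$ and $e_*^\HI$. The one genuinely substantial input is (1), Suslin's Frobenius-twist computation (where inverting $p$ is indispensable); everything added here is formal once one has the right tool for (2), namely topological invariance of the small \'etale site under the universal homeomorphisms $X_K\to X$, which is precisely what makes $e_*^\PST$ commute with \'etale sheafification. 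I expect this last point to be the main obstacle, as it is the only step where passing from the Nisnevich/Zariski statement to the \'etale one is not automatic.
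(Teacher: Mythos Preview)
Your proof is correct, and parts (1) and (3) match the paper's argument exactly. For part (2), however, you take a different and arguably cleaner route. The paper first reduces to the case where $K$ is a perfect closure of $k$, and then, to show that $e^*_\PST$ preserves \'etale sheaves, it verifies the sheaf condition directly on an arbitrary $U\in\Sm/K$: since such a $U$ need not be of the form $X_K$, the paper invokes Suslin's structural lemma \cite[Lemma~1.12]{Suslin17Nonperfect} to produce a purely inseparable morphism $U\to X_K$ with $X\in\Sm/k$, descends the \'etale cover through both universal homeomorphisms via topological invariance, and then uses \cite[Lemma~2.4]{Suslin17Nonperfect} to transfer the sheaf condition from $X_K$ to $U$. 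Your argument sidesteps this by working on the $e_*^\PST$ side, where one only ever evaluates at objects of the form $X_K$: topological invariance for $X_K\to X$ immediately gives $a_\et\circ e_*^\PST\simeq e_*^\PST\circ a_\et$, and the conclusion then follows formally from conservativity of $e_*^\PST$. This avoids both the reduction to the perfect closure and the two auxiliary lemmas from \cite{Suslin17Nonperfect}. The paper's approach is more hands-on and makes the descent explicit; yours is more categorical and shows that passing from the Nisnevich statement to the \'etale one needs no further input beyond topological invariance.
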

\begin{proof}
    \begin{enumerate}[leftmargin=*,label={\rm(\arabic*)}]
        \item This is \cite[Corollary~1.14]{Suslin17Nonperfect}.
        \item First, assume that $K$ is a perfect closure of $k$. It suffices to show that if $\sG$ is an \'etale sheaf, then $e^*_\PST\sG$ is also an \'etale sheaf. Since $\sG \simeq e_*^\PST e^*_\PST\sG$ by (1), it suffices to check that if $\sF$ is a presheaf with transfers over $K$ such that $e_*^\PST\sF$ is an \'etale sheaf with transfers, then $\sF$ itself is also an \'etale sheaf. Suppose that $U\in \Sm/K$ and $\{V_i\to U\}$ is an \'etale covering. We check the sheaf condition for $\sF$. It suffices clearly to deal with the case when $U$ is irreducible. Then by \cite[Lemma~1.12]{Suslin17Nonperfect}, there exist an irreducible $X\in\Sm/k$ and a finite surjective purely inseparable\footnote{Purely inseparable morphisms are also called radicial or universally injective morphisms in other literatures.} morphism $U\to X_K$. By \cite[Expos\'e~VIII, Th\'eor\`eme~1.1]{SGA4II}, the \'etale morphisms $V_i\to U$ descend to \'etale morphisms $Y_i\to X_K$, and then descend to \'etale morphisms $Z_i\to X$, i.e., we have the following Cartesian squares: 
        \[\xymatrix{
            V_i \ar[r] \ar[d]  & Y_i \ar[r] \ar[d] & Z_i \ar[d] \\
            U   \ar[r]         & X_K \ar[r] \ar[d] & X   \ar[d] \\
                               & \Spec K    \ar[r] & \Spec k.
        }\]
        By our assumption, $e_*^\PST\sF$ is a sheaf on $(\Sm/k)_\et$. Thus we have the following exact sequence
        \[0\to \sF(X_K) \to \prod \sF(Y_i) \to \prod \sF(Y_i\times_{X_K} Y_j).\]
        Using \cite[Lemma~2.4]{Suslin17Nonperfect}, we obtain the following exact sequence from the above one
        \[0\to \sF(U) \to \prod \sF(V_i) \to \prod \sF(V_i \times_U V_j),\]
        which means that $\sF$ is a sheaf.

        For a general purely inseparable extension $K/k$, a perfect closure $k^\perf$ of $k$ is also a perfect closure of $K$. Thus the inverse image functor from $k$ to $k^\perf$ is an equivalence and so is the inverse image functor from $K$ to $k^\perf$. It follows that the inverse image functor from $k$ to $K$ is also an equivalence.
        \item The last assertion is a combination of Lemma~\ref{HIe_*}, Proposition~\ref{HIe^*Exact} and the second assertion. \qedhere
    \end{enumerate}
\end{proof}

\subsection{\texorpdfstring{$n$}{n}-motivic sheaves}\label{HI_leqn}
\begin{defn}[{\cite[Definition 1.1.20]{ABV09MotShvLAlb}}]
    A homotopy invariant sheaf $\sF\in\HI_\et(k,\Lambda)$ is said to be $n$-motivic if the natural morphism
    \[h_0^\et\sigma_n^*\sigma_{n*}\sF\longrightarrow h_0^\et(\sF)=\sF\]
    is an isomorphism. We denote by $\HI_{\leq n}(k,\Lambda)$ the full subcategory of $n$-motivic sheaves.
\end{defn}
\begin{rmk}
    In \cite{Ayoub11Motivic-t-structure}, $n$-motivic sheaves are called $n$-presented $\cH$-sheaves.
\end{rmk}

\begin{rmk}\label{HInShvn}
    As explained in \cite[Remark 1.1.21]{ABV09MotShvLAlb}, a homotopy invariant sheaf is $n$-motivic if and only if it is isomorphic to $h_0^\et\sigma_n^*\sF$ for some $\sF\in\Shv_\et^\tr(k_{\leq n},\Lambda)$.
\end{rmk}

\begin{lem}\label{HInEmbedShvn}
    We have a pair of adjoint functors
    \[h_0^\et\sigma_n^* \colon  \Shv_\et^\tr(k_{\leq n},\Lambda)\rightleftarrows \HI_{\leq n}(k,\Lambda) \colon   \sigma_{n*}\iota\iota_n,\]
    where $\iota_n \colon  \HI_{\leq n}(k,\Lambda)\hookrightarrow \HI_\et(k,\Lambda)$ and $\iota \colon  \HI_\et(k,\Lambda)\hookrightarrow \Shv_\et^\tr(k,\Lambda)$ are the inclusions. Moreover, the functor 
    \[\sigma_{n*}\iota\iota_n \colon  \HI_{\leq n}(k,\Lambda)\hookrightarrow \Shv_\et^\tr(k_{\leq n},\Lambda)\]
    is fully faithful.
\end{lem}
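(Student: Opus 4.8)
The plan is to obtain the adjunction by pasting two adjunctions that are already at our disposal, and then to deduce full faithfulness of the right adjoint directly from the definition of an $n$-motivic sheaf.

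The first point to check is that $h_0^\et\sigma_n^*$ really takes values in $\HI_{\leq n}(k,\Lambda)$: this is exactly Remark~\ref{HInShvn}, which says that $h_0^\et\sigma_n^*\sF$ is $n$-motivic for every $\sF\in\Shv_\et^\tr(k_{\leq n},\Lambda)$. So I would corestrict $h_0^\et\sigma_n^*$ to a functor $\Shv_\et^\tr(k_{\leq n},\Lambda)\to\HI_{\leq n}(k,\Lambda)$, still written $h_0^\et\sigma_n^*$. Then I would compose the adjunction $\sigma_n^*\dashv\sigma_{n*}$ of Lemma~\ref{sigma^*} with the adjunction $h_0^\et\dashv\iota$ recalled above: for $\sF\in\Shv_\et^\tr(k_{\leq n},\Lambda)$ and $\sG\in\HI_\et(k,\Lambda)$ one obtains natural bijections
\[
\Hom_{\HI_\et(k,\Lambda)}(h_0^\et\sigma_n^*\sF,\sG)\simeq\Hom_{\Shv_\et^\tr(k,\Lambda)}(\sigma_n^*\sF,\iota\sG)\simeq\Hom_{\Shv_\et^\tr(k_{\leq n},\Lambda)}(\sF,\sigma_{n*}\iota\sG).
\]
Taking $\sG=\iota_n\sG'$ for $\sG'\in\HI_{\leq n}(k,\Lambda)$ and using that $\iota_n$, being the inclusion of a full subcategory, is fully faithful, the left-hand term becomes $\Hom_{\HI_{\leq n}(k,\Lambda)}(h_0^\et\sigma_n^*\sF,\sG')$; this yields the claimed adjunction $h_0^\et\sigma_n^*\dashv\sigma_{n*}\iota\iota_n$.

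For the last assertion I would use that a right adjoint is fully faithful if and only if the counit of the adjunction is an isomorphism. Tracing through the triangle identities, the counit of the composite adjunction at $\sG'\in\HI_{\leq n}(k,\Lambda)$ is the composite of $h_0^\et$ applied to the counit $\sigma_n^*\sigma_{n*}(\iota\iota_n\sG')\to\iota\iota_n\sG'$ of $(\sigma_n^*,\sigma_{n*})$ with the counit of $(h_0^\et,\iota)$, the latter being invertible since $\iota$ is fully faithful. Hence, under the identification $\iota\iota_n\sG'=\sG'$, this counit is precisely the natural morphism $h_0^\et\sigma_n^*\sigma_{n*}\sG'\to h_0^\et(\sG')=\sG'$ occurring in the definition of an $n$-motivic sheaf; as $\sG'$ is $n$-motivic it is an isomorphism, so $\sigma_{n*}\iota\iota_n$ is fully faithful.

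The whole argument is formal, so there is no real obstacle. The one point deserving attention is the identification, in the last paragraph, of the counit of the pasted adjunction with the structural morphism defining $\HI_{\leq n}(k,\Lambda)$; this is a routine unwinding of the triangle identities together with the invertibility of the counit of $h_0^\et\dashv\iota$.
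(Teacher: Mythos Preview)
Your proof is correct and follows essentially the same approach as the paper: compose the adjunctions $(\sigma_n^*,\sigma_{n*})$ and $(h_0^\et,\iota)$, then observe that the counit of the resulting adjunction is precisely the defining isomorphism for $n$-motivic sheaves. The paper's proof is terser, simply asserting these two points without unwinding the identifications, but the content is identical.
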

\begin{proof}
    The adjunction follows from the adjunctions $(h_0^\et,\iota)$ and $(\sigma_n^*,\sigma_{n*})$. By definition, the co-unit
    \[(h_0^\et\sigma_n^*)(\sigma_{n*}\iota\iota_n)\sF \longrightarrow \sF\]
    is an isomorphism for every $\sF\in\HI_{\leq n}(k,\Lambda)$, which implies that $\sigma_{n*}\iota\iota_n$ is fully faithful. 
\end{proof}

\begin{lem}\label{HIne^*}
    The functor $e^*_\HI$ maps $n$-motivic sheaves to $n$-motivic sheaves. More precisely, there exists a functor $e_n^* \colon  \HI_{\leq n}(k,\Lambda)\to \HI_{\leq n}(K,\Lambda)$ such that the following diagram is commutative
    \[\xymatrix{
        \Shv_\et^\tr(k_{\leq n},\Lambda) \ar[d]_{e_{\leq n}^*} \ar[rr]^{h_0^\et\sigma_n^*} & &   \HI_{\leq n}(k,\Lambda) \ar[r]^{\iota_n} \ar@{.>}[d]_{e_{n}^*} & \HI_\et(k,\Lambda) \ar[d]^{e^*_\HI} \\
        \Shv_\et^\tr(K_{\leq n},\Lambda) \ar[rr]^{h_0^\et\sigma_n^*}  && \HI_{\leq n}(K,\Lambda) \ar[r]^{\iota_n} & \HI_\et(K,\Lambda).
    }\]
    In particular, for $X\in (\Sm/k)_{\leq n}$, we have
    \[e^*_{n}(h_0^\et(X))\simeq h_0^\et(X_K).\]
\end{lem}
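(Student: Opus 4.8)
The plan is to bootstrap everything from the two base-change isomorphisms already at hand: the relation $\sigma_n^*\circ e_{\leq n}^*\simeq e_\tr^*\circ\sigma_n^*$ of Lemma~\ref{Shve^*}(3), and the isomorphism $h_0^\et e_\tr^*\stackrel{\sim}{\longrightarrow}e_\HI^* h_0^\et$ of Corollary~\ref{HIadj}(2). Concatenating them produces, for every $\cG\in\Shv_\et^\tr(k_{\leq n},\Lambda)$, a natural chain of isomorphisms
\[
e_\HI^*\bigl(h_0^\et\sigma_n^*\cG\bigr)\ \simeq\ h_0^\et\bigl(e_\tr^*\sigma_n^*\cG\bigr)\ \simeq\ h_0^\et\bigl(\sigma_n^* e_{\leq n}^*\cG\bigr)\ =\ h_0^\et\sigma_n^*\bigl(e_{\leq n}^*\cG\bigr).
\]
Since $e_{\leq n}^*\cG$ lies in $\Shv_\et^\tr(K_{\leq n},\Lambda)$, the right-hand term is an $n$-motivic sheaf by Remark~\ref{HInShvn}. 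As, by the same remark, every object of $\HI_{\leq n}(k,\Lambda)$ has the form $h_0^\et\sigma_n^*\cG$, this shows that $e^*_\HI$ carries the essential image of $\iota_n\colon\HI_{\leq n}(k,\Lambda)\hookrightarrow\HI_\et(k,\Lambda)$ into that of $\iota_n\colon\HI_{\leq n}(K,\Lambda)\hookrightarrow\HI_\et(K,\Lambda)$.

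Since $\iota_n$ is fully faithful, there is then a unique functor $e_n^*\colon\HI_{\leq n}(k,\Lambda)\to\HI_{\leq n}(K,\Lambda)$ with $\iota_n e_n^*=e^*_\HI\iota_n$; this is exactly the commutativity of the right-hand square of the diagram. Applying $\iota_n$ to the left-hand square and using the displayed chain gives $e_n^*\circ(h_0^\et\sigma_n^*)\simeq(h_0^\et\sigma_n^*)\circ e_{\leq n}^*$, so the left-hand square commutes as well, naturally in the argument. For the last assertion, recall that when $\dim X\leq n$ the pair $(X,\Delta_X)$ is a terminal object of the index category of the colimit defining $\sigma_n^*\Lambda_{\leq n}(X)$ in Lemma~\ref{sigma^*} (a morphism $(Y,\alpha)\to(X,\Delta_X)$ being the correspondence $\alpha$ itself), whence $\sigma_n^*\Lambda_{\leq n}(X)=\Lambda_\tr(X)$ and $h_0^\et\sigma_n^*\Lambda_{\leq n}(X)=h_0^\et(X)$. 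Taking $\cG=\Lambda_{\leq n}(X)$ in the commutative diagram and invoking $e_{\leq n}^*\Lambda_{\leq n}(X)=\Lambda_{\leq n}(X_K)$ from Lemma~\ref{Shve^*}(2), we get
\[
e_n^*\bigl(h_0^\et(X)\bigr)\ =\ e_n^*\bigl(h_0^\et\sigma_n^*\Lambda_{\leq n}(X)\bigr)\ \simeq\ h_0^\et\sigma_n^* e_{\leq n}^*\Lambda_{\leq n}(X)\ =\ h_0^\et\sigma_n^*\Lambda_{\leq n}(X_K)\ =\ h_0^\et(X_K).
\]

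I do not expect a genuine obstacle here: the argument is bookkeeping that assembles the two previously established base-change isomorphisms, and the exactness of $e^*_\HI$ (Proposition~\ref{HIe^*Exact}) is not even needed for this statement. The only points that demand a little care are the coherence of the two routes around the left square — automatic once $e_n^*$ is defined as the corestriction of $e^*_\HI$ along the fully faithful $\iota_n$ — and the identification $\sigma_n^*\Lambda_{\leq n}(X)=\Lambda_\tr(X)$ for $\dim X\leq n$, which is the terminality observation above.
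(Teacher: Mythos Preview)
Your proof is correct and follows essentially the same approach as the paper: write an $n$-motivic sheaf as $h_0^\et\sigma_n^*\cG$ via Remark~\ref{HInShvn}, then chain together Corollary~\ref{HIadj}(2) and Lemma~\ref{Shve^*}(3) to see that $e^*_\HI$ lands in $n$-motivic sheaves. You add welcome detail the paper leaves implicit—making $e_n^*$ explicit as the corestriction along the fully faithful $\iota_n$, and spelling out the terminality argument for $\sigma_n^*\Lambda_{\leq n}(X)=\Lambda_\tr(X)$ to obtain the final assertion.
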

\begin{proof}
    Let $\sF\in\HI_{\leq n}(k,\Lambda)$. By Remark~\ref{HInShvn}, we write $\sF=h_0\sigma_n^*\sG$ for $\sG\in\Shv_\et^\tr(k_{\leq n},\Lambda)$. Then 
    \[e^*_\HI \iota_n\sF=e^*_\HI\iota_nh_0^\et\sigma_n^*\sG\simeq h_0^\et e^*_\tr\sigma_n^*\sG \simeq h_0^\et\sigma_n^* e^*_{\leq n}\sG,\]
    where the second isomorphism holds by Corollary~\ref{HIadj} (2), and the last isomorphism holds by Lemma~\ref{Shve^*} (3).
    Then by Remark~\ref{HInShvn} again, $e^*_\HI \iota_n\sF$ is $n$-motivic.
\end{proof}

\begin{defn}\label{HIne_*}
    We call $e^*_n$ the inverse image functor of $n$-motivic sheaves. We define the direct image functor $e_{n*}$ of $n$-motivic sheaves to be the composition
    \[\HI_{\leq n}(K,\Lambda) \stackrel{\iota_n}{\hookrightarrow} \HI_\et(K,\Lambda) \stackrel{e_*^\HI}{\longrightarrow} \HI_\et(k,\Lambda) \stackrel{h_0^\et\sigma_n^*\sigma_{n*}}{\longrightarrow} \HI_{\leq n}(k,\Lambda).\]
\end{defn}

\begin{lem}[{\cite[Lemma~1.1.23]{ABV09MotShvLAlb}}]\label{h0sigma}
    We have natural isomorphisms
    \[(\sigma_{n*}\iota) \stackrel{\sim}{\longrightarrow} (\sigma_{n*}\iota)(h_0^\et\sigma_n^*)(\sigma_{n*}\iota) \stackrel{\sim}{\longrightarrow} (\sigma_{n*}\iota).\]
\end{lem}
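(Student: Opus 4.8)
The plan is to assemble the two arrows from the units and counits of the two adjunctions in play, to dispatch the ``retraction'' part by a formal triangle-identity chase, and to do the real work in promoting that retraction to an isomorphism. Let $u\colon\id\to\iota h_0^\et$ and $c\colon h_0^\et\iota\to\id$ be the unit and counit of the reflection $h_0^\et\dashv\iota$ onto homotopy invariant sheaves, and let $\eta\colon\id\to\sigma_{n*}\sigma_n^*$ and $\epsilon\colon\sigma_n^*\sigma_{n*}\to\id$ be those of $\sigma_n^*\dashv\sigma_{n*}$ (Lemma~\ref{sigma^*}). Composing these gives an adjunction $h_0^\et\sigma_n^*\dashv\sigma_{n*}\iota$; I would take the first arrow of the statement to be its unit whiskered by $\sigma_{n*}\iota$, namely $\sigma_{n*}\iota\xrightarrow{\eta_{\sigma_{n*}\iota}}\sigma_{n*}\sigma_n^*\sigma_{n*}\iota\xrightarrow{\sigma_{n*}(u_{\sigma_n^*\sigma_{n*}\iota})}\sigma_{n*}\iota h_0^\et\sigma_n^*\sigma_{n*}\iota$, and the second to be its counit whiskered by $\sigma_{n*}\iota$, namely $\sigma_{n*}\iota$ applied to $h_0^\et\sigma_n^*\sigma_{n*}\iota\xrightarrow{h_0^\et(\epsilon)}h_0^\et\iota\xrightarrow{c}\id$.

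First I would verify that the first arrow followed by the second is $\id_{\sigma_{n*}\iota}$. This is the triangle identity for the composite adjunction; concretely, one expands the composite and uses naturality of $u$ applied to the morphism $\epsilon_{\iota\sF}\colon\sigma_n^*\sigma_{n*}\iota\sF\to\iota\sF$ to rewrite it as a composition in which $\iota c\circ u_\iota=\id_\iota$ and $\sigma_{n*}\epsilon\circ\eta_{\sigma_{n*}}=\id_{\sigma_{n*}}$ both apply, collapsing everything to the identity. Hence $\sigma_{n*}\iota$ is a retract of $\sigma_{n*}\iota h_0^\et\sigma_n^*\sigma_{n*}\iota$, with the two displayed arrows as section and retraction; this part is purely formal.

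It then remains to show that this retraction is an isomorphism, equivalently that the second arrow followed by the first, an idempotent endomorphism $f$ of $\sigma_{n*}\iota h_0^\et\sigma_n^*\sigma_{n*}\iota$, is the identity. The plan is to test $f$ on generators: $\Shv_\et^\tr(k_{\leq n},\Lambda)$ is generated by the sheaves $\Lambda_{\leq n}(Y)$ with $\dim Y\leq n$, and a morphism out of $\Lambda_{\leq n}(Y)$ is determined by a single section over $Y$, so it suffices to check that $f$ acts as the identity on $(h_0^\et\sigma_n^*\sigma_{n*}\iota\sF)(Y)$ for every $\sF$ and every $Y$ of dimension $\leq n$, which after passing to stalks is a computation with $\Lambda$-modules. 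For this I would use the presentation $\sigma_n^*\sigma_{n*}\iota\sF=\varinjlim_{\Lambda_{\leq n}(X)\to\sigma_{n*}\iota\sF}\Lambda_\tr(X)$ of Lemma~\ref{sigma^*}, the fact that $h_0^\et$ commutes with colimits, and the observation that the indexing comma category --- pairs $(X,s)$ with $X\in(\Sm/k)_{\leq n}$ and $s\in\sF(X)$ --- has finite coproducts; hence, stalk-locally on a scheme $Y$ of dimension $\leq n$, every germ of $h_0^\et\sigma_n^*\sigma_{n*}\iota\sF$ is carried by a single index $(X,s)$ together with a correspondence $\phi\colon Y\to X$, and since $\phi$ is a morphism $(Y,\phi^*s)\to(X,s)$ of the comma category one checks that pushing such a germ through the retraction and back leaves it unchanged. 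I expect this last step to be the genuine obstacle, because nothing formal can replace it: for a general $\sF\in\HI_\et(k,\Lambda)$ the natural map $h_0^\et\sigma_n^*\sigma_{n*}\iota\sF\to\sF$ is very far from an isomorphism --- it is one exactly when $\sF$ is $n$-motivic --- and the delicate point is precisely how $h_0^\et$, which alters a sheaf only through $\bA^1$-homotopies, parametrised by products $U\times\bA^1$ of one dimension more, interacts with restriction to $(\Sm/k)_{\leq n}$. Alternatively, one may simply invoke \cite[Lemma~1.1.23]{ABV09MotShvLAlb}.
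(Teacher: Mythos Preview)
The paper does not give its own proof of this lemma: it is stated with a bare citation to \cite[Lemma~1.1.23]{ABV09MotShvLAlb} and no argument. Your final sentence --- ``Alternatively, one may simply invoke \cite[Lemma~1.1.23]{ABV09MotShvLAlb}'' --- is therefore precisely what the paper does, and everything before it is more than the paper offers.

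On the substance of your sketch: the identification of the two arrows via the unit and counit of the composite adjunction $h_0^\et\sigma_n^*\dashv\sigma_{n*}\iota$ is correct, and the triangle-identity argument showing that the second arrow followed by the first is the identity is clean and formal. You are also right that the remaining direction is where the content is. One small simplification you did not exploit: since $\sigma_n^*$ is fully faithful (Lemma~\ref{ShvnEquiv}), the unit $\eta\colon\id\to\sigma_{n*}\sigma_n^*$ is already an isomorphism, so the first arrow is an isomorphism if and only if $\sigma_{n*}(u_{\sigma_n^*\sigma_{n*}\iota})$ is; this reduces the question to understanding how the $\bA^1$-localisation map $u\colon\sG\to\iota h_0^\et\sG$ behaves on low-dimensional schemes when $\sG=\sigma_n^*\sigma_{n*}\iota\sF$. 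Your proposed generator-and-comma-category argument for the idempotent is plausible in outline but, as you yourself flag, hand-wavy at the crucial point where Suslin homology classes over $Y$ of dimension $\le n$ are represented and compared; making this precise requires exactly the analysis carried out in \cite{ABV09MotShvLAlb}, so deferring to the reference is appropriate.
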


\begin{lem}
    \begin{enumerate}[leftmargin=*,label={\rm(\arabic*)}]
        \item The functor $h_0^\et\sigma_n^*\sigma_{n*}\iota$ is right adjoint to $\iota_n \colon  \HI_{\leq n}\hookrightarrow \HI_\et$.\menum
        \item We have a commutative diagram 
        \[\xymatrix{
            \HI_\et(K,\Lambda) \ar[d]_{e_*^\HI} \ar[rr]^{h_0^\et\sigma_n^*\sigma_{n*}\iota}  
            &&  \HI_{\leq n}(K,\Lambda) \ar[rr]^{\sigma_{n*}\iota\iota_n} \ar[d]_{e_{n*}} 
            &&  \Shv_\et^\tr(K_{\leq n},\Lambda) \ar[d]^{e_*^{\leq n}} \\
            \HI_\et(k,\Lambda) \ar[rr]^{h_0^\et\sigma_n^*\sigma_{n*}\iota} 
            && \HI_{\leq n}(k,\Lambda) \ar[rr]^{\sigma_{n*}\iota\iota_n} 
            && \Shv_\et^\tr(k_{\leq n},\Lambda).
        }\]
        \item The functor $e_{n*}$ is right adjoint to $e^*_n$. 
    \end{enumerate}
\end{lem}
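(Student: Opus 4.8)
The plan is to prove part $(1)$ by an explicit computation of Hom groups, and then to deduce parts $(2)$ and $(3)$ from $(1)$ together with the adjunctions already recorded, essentially by uniqueness of adjoint functors; the only genuine work is in $(1)$, and there the main obstacle is purely notational --- one must keep the embeddings $\iota$ and $\iota_n$ and the restriction $\sigma_{n*}$ straight, and be careful that the definition of ``$n$-motivic'' is applied with the correct composite $\sigma_{n*}\iota$.

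For $(1)$, I would fix $\sF\in\HI_{\leq n}(k,\Lambda)$ and $\sG\in\HI_\et(k,\Lambda)$, noting first that $h_0^\et\sigma_n^*\sigma_{n*}\iota\sG$ is indeed $n$-motivic by Remark~\ref{HInShvn} so that the statement is well posed, and then compute $\Hom_{\HI_{\leq n}(k)}(\sF,h_0^\et\sigma_n^*\sigma_{n*}\iota\sG)$ in a chain of natural isomorphisms: using the full faithfulness of $\sigma_{n*}\iota\iota_n$ (Lemma~\ref{HInEmbedShvn}) to pass to $\Hom_{\Shv_\et^\tr(k_{\leq n})}(\sigma_{n*}\iota\iota_n\sF,\;\sigma_{n*}\iota\iota_n h_0^\et\sigma_n^*\sigma_{n*}\iota\sG)$; then using Lemma~\ref{h0sigma} to collapse the target to $\sigma_{n*}\iota\sG$; then the adjunction $(\sigma_n^*,\sigma_{n*})$ to reach $\Hom_{\Shv_\et^\tr(k)}(\sigma_n^*\sigma_{n*}\iota\iota_n\sF,\iota\sG)$; then the adjunction $(h_0^\et,\iota)$ to reach $\Hom_{\HI_\et(k)}(h_0^\et\sigma_n^*\sigma_{n*}\iota\iota_n\sF,\sG)$; and finally the fact that $\iota_n\sF$ is $n$-motivic, i.e.\ that the canonical map $h_0^\et\sigma_n^*\sigma_{n*}\iota\iota_n\sF\to\iota_n\sF$ is an isomorphism, to land at $\Hom_{\HI_\et(k)}(\iota_n\sF,\sG)$. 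Since each step is natural in both variables, this exhibits $h_0^\et\sigma_n^*\sigma_{n*}\iota$ as right adjoint to $\iota_n$.

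For $(3)$, I would run the analogous computation one level up: for $\sF\in\HI_{\leq n}(k,\Lambda)$ and $\sG\in\HI_{\leq n}(K,\Lambda)$, full faithfulness of $\iota_n$ gives $\Hom_{\HI_{\leq n}(K)}(e_n^*\sF,\sG)\simeq\Hom_{\HI_\et(K)}(\iota_n e_n^*\sF,\iota_n\sG)$; the commutative diagram of Lemma~\ref{HIne^*} replaces $\iota_n e_n^*$ by $e_\HI^*\iota_n$; the adjunction $(e_\HI^*,e_*^\HI)$ of Corollary~\ref{HIadj}(1) gives $\Hom_{\HI_\et(k)}(\iota_n\sF,e_*^\HI\iota_n\sG)$; part $(1)$ gives $\Hom_{\HI_{\leq n}(k)}\!\left(\sF,\,h_0^\et\sigma_n^*\sigma_{n*}\iota\, e_*^\HI\iota_n\sG\right)$; and Definition~\ref{HIne_*} identifies the second argument with $e_{n*}\sG$. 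Naturality then gives $e_n^*\dashv e_{n*}$.

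Finally, for $(2)$ I would obtain both squares by taking right adjoints in the two commutative squares of Lemma~\ref{HIne^*} and invoking uniqueness of adjoints. Starting from $\iota_n\circ e_n^*=e_\HI^*\circ\iota_n$ and using $\iota_n\dashv h_0^\et\sigma_n^*\sigma_{n*}\iota$ (part $(1)$), $e_n^*\dashv e_{n*}$ (part $(3)$) and $e_\HI^*\dashv e_*^\HI$ (Corollary~\ref{HIadj}(1)) gives the left square $e_{n*}\circ h_0^\et\sigma_n^*\sigma_{n*}\iota=h_0^\et\sigma_n^*\sigma_{n*}\iota\circ e_*^\HI$; starting from $e_n^*\circ h_0^\et\sigma_n^*=h_0^\et\sigma_n^*\circ e_{\leq n}^*$ (the left square of Lemma~\ref{HIne^*}, valid on objects of $\HI_{\leq n}(K)$ since $\iota_n$ is fully faithful) and using $h_0^\et\sigma_n^*\dashv\sigma_{n*}\iota\iota_n$ (Lemma~\ref{HInEmbedShvn}), $e_{\leq n}^*\dashv e_*^{\leq n}$ (Lemma~\ref{Shve^*}(2)) and $e_n^*\dashv e_{n*}$ (part $(3)$) gives the right square $\sigma_{n*}\iota\iota_n\circ e_{n*}=e_*^{\leq n}\circ\sigma_{n*}\iota\iota_n$. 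As a cross-check, the right square can also be verified directly by expanding Definition~\ref{HIne_*}, simplifying with Lemma~\ref{h0sigma}, and using the compatibilities $\sigma_{n*}\circ e_*^\tr=e_*^{\leq n}\circ\sigma_{n*}$ and $e_*^\tr\circ\iota_K=\iota_k\circ e_*^\HI$ (Lemma~\ref{HIe_*}).
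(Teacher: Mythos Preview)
Your proof is correct and follows essentially the same route as the paper: the paper's proof simply says that (1) and (2) are easy to check using Lemma~\ref{h0sigma}, and that (3) then follows from (1). Your argument spells out these verifications in detail, with the only difference being that you establish (3) before (2) and then derive the left square of (2) via right adjoints rather than by direct expansion; your cross-check for the right square is in fact the direct computation the paper has in mind.
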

\begin{proof}
    It is easy to check the first two assertions by using Lemma~\ref{h0sigma}. Then the third assertion can be checked easily by using the first assertion.
\end{proof}

\subsection{\texorpdfstring{$0$}{0}-motivic sheaves}
In this subsection, we consider sheaves on the small \'etale site $(\Et/k)_\et=(\Sm/k)_{\leq 0,\et}$.

The obvious inclusion of sites $\sigma \colon  (\Et/k)_\et \to (\Sm/k)_\et$ is continuous and co-continuous and preserves fiber products and the final object $\Spec k$. Thus it gives an adjunction of categories
\[\sigma^* \colon   \Shv_\et(\Et/k,\Lambda) \leftrightarrows \Shv_\et(\Sm/k,\Lambda) \colon  \sigma_{*},\]
where $\sigma_*\sF=\sF\circ\sigma$. These two functors are both exact and $\sigma^*$ is fully faithful. See Lemma~\ref{topoi} for more details. By Theorem~\ref{Shv_et^tr} (4), the forgetful functor $\gamma_* \colon  \Shv_\et^\tr(k,\Lambda) \to \Shv_\et(\Sm/k,\Lambda)$ has a left adjoint $\gamma^*$. We have the following result.
\begin{lem}\label{Shv0}
    \begin{enumerate}[leftmargin=*,label={\rm(\arabic*)}]
        \item The functor 
            \[\gamma^*\sigma^* \colon  \Shv_\et(\Et/k,\Lambda) \longrightarrow \Shv_\et^\tr(k,\Lambda)\]
            is exact and fully faithful. \menum
        \item We have an equivalence
            \[\gamma_* \colon  \Shv_\et^\tr(k_{\leq 0},\Lambda) \stackrel{\sim}{\longrightarrow} \Shv_\et(\Et/k,\Lambda)\]  
            with quasi-inverse $\sigma_{0*}\gamma^*\sigma^*$, where $\sigma_{0*} \colon  \Shv_\et^\tr(k,\Lambda) \to \Shv_\et^\tr(k_{\leq 0},\Lambda)$ is the restriction functor.
    \end{enumerate}
\end{lem}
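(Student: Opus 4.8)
The plan is to reduce both statements to the single assertion that the unit $u\colon \sigma^{*}\to\gamma_{*}\gamma^{*}\sigma^{*}$ of the adjunction $(\gamma^{*},\gamma_{*})$ is an isomorphism; everything else then follows formally, using that $\sigma^{*}$ is exact and fully faithful (recalled above, see Lemma~\ref{topoi}) and that $\gamma_{*}$ is exact and conservative and preserves colimits (Proposition~\ref{Shv_et^tr}(3)). To see that $u$ is an isomorphism, I would observe that both $\sigma^{*}$ and $\gamma_{*}\gamma^{*}\sigma^{*}$ preserve all colimits ($\gamma^{*}$ and $\sigma^{*}$ are left adjoints, and $\gamma_{*}$ preserves colimits), so it suffices to check $u$ on the generators $\Lambda_\et(Y)$, the free étale $\Lambda$-sheaves attached to the finite étale $k$-schemes $Y\in\Et/k$. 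Since left adjoints preserve free objects, $\gamma^{*}\sigma^{*}\Lambda_\et(Y)=\Lambda_{\tr}(Y)$, and the point is to identify $\gamma_{*}\Lambda_{\tr}(Y)$ with $\sigma^{*}\Lambda_\et(Y)$ compatibly with $u$. Evaluating at a connected $X\in\Sm/k$: because $Y$ is finite étale over $k$, $X\times_{k}Y\to X$ is finite étale and every connected component of $X\times_{k}Y$ is finite and surjective over $X$, so $\Corr_k(X,Y)=\bZ[\pi_{0}(X\times_{k}Y)]$, which is exactly the group of sections over $X$ of the cover $X\times_{k}Y\to X$, i.e.\ $\sigma^{*}\Lambda_\et(Y)(X)$; one checks $u$ realizes this identification. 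Thus $\gamma_{*}\gamma^{*}\sigma^{*}\cong\sigma^{*}$.

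Granting this, part~(1) is formal. For full faithfulness,
\[
\Hom(\gamma^{*}\sigma^{*}\sF,\gamma^{*}\sigma^{*}\sG)\cong\Hom(\sigma^{*}\sF,\gamma_{*}\gamma^{*}\sigma^{*}\sG)\cong\Hom(\sigma^{*}\sF,\sigma^{*}\sG)\cong\Hom(\sF,\sG)
\]
for $\sF,\sG\in\Shv_\et(\Et/k,\Lambda)$, by the adjunction, the isomorphism $u$, and the full faithfulness of $\sigma^{*}$. For exactness, $\gamma^{*}\sigma^{*}$ is right exact as a composite of left adjoints, and since $\gamma_{*}$ is exact and conservative while $\gamma_{*}\gamma^{*}\sigma^{*}\cong\sigma^{*}$ is exact, $\gamma^{*}\sigma^{*}$ sends monomorphisms to monomorphisms; a right-exact additive functor preserving monomorphisms is exact.

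For part~(2), I would first note that $\gamma^{*}\sigma^{*}$ factors through the full subcategory $\Shv_{\leq 0}^{\tr}(k,\Lambda)$ of strongly $0$-generated sheaves: it does so on generators, since $\gamma^{*}\sigma^{*}\Lambda_\et(Y)=\Lambda_{\tr}(Y)=\sigma_{0}^{*}\Lambda_{\leq 0}(Y)$, and $\Shv_{\leq 0}^{\tr}(k,\Lambda)$ — the essential image of the colimit-preserving functor $\sigma_{0}^{*}$ — is closed under colimits. By Lemma~\ref{ShvnEquiv} with $n=0$, $\sigma_{0*}$ restricts to the quasi-inverse equivalence $\Shv_{\leq 0}^{\tr}(k,\Lambda)\xrightarrow{\sim}\Shv_\et^\tr(k_{\leq 0},\Lambda)$, so $G\colonequals\sigma_{0*}\gamma^{*}\sigma^{*}$ is fully faithful (by part~(1)) and preserves colimits; since $G\Lambda_\et(Y)=\Lambda_{\leq 0}(Y)$ and the $\Lambda_{\leq 0}(Y)$ generate $\Shv_\et^\tr(k_{\leq 0},\Lambda)$, the essential image of $G$ is a colimit-closed full subcategory containing the generators, hence is all of $\Shv_\et^\tr(k_{\leq 0},\Lambda)$, so $G$ is an equivalence. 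Finally, the commutative square of sites gives $\gamma_{*}\sigma_{0*}=\sigma_{*}\gamma_{*}$, whence
\[
\gamma_{*}G=\sigma_{*}\gamma_{*}\gamma^{*}\sigma^{*}\cong\sigma_{*}\sigma^{*}\cong\id_{\Shv_\et(\Et/k,\Lambda)}
\]
naturally, using the computation above and the full faithfulness of $\sigma^{*}$; therefore $\gamma_{*}$ is a quasi-inverse of the equivalence $G=\sigma_{0*}\gamma^{*}\sigma^{*}$, which is exactly assertion~(2).

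The step I expect to be the main obstacle is the identification $\gamma_{*}\Lambda_{\tr}(Y)\cong\sigma^{*}\Lambda_\et(Y)$ for $Y$ finite étale — i.e.\ making precise, and natural in $X$ and $Y$, that a finite correspondence into a $0$-dimensional $k$-scheme $Y$ is the same datum as a section of the associated finite étale cover $X\times_{k}Y\to X$, and that the comparison map is $u$. Once that is in place, parts~(1) and~(2) are purely formal consequences of adjunction, the conservativity and exactness of $\gamma_{*}$, the full faithfulness of $\sigma^{*}$ and $\sigma_{0}^{*}$, and colimit density of the generating families.
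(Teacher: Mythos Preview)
Your proposal is correct, and it differs from the paper only in that you actually \emph{prove} part~(1), whereas the paper simply cites \cite[Proposition~3.1.4]{CD16EtaleMotive} for it and then deduces (2) from (1) and Lemma~\ref{ShvnEquiv} in two lines. Your route --- reduce to the unit $u\colon\sigma^*\to\gamma_*\gamma^*\sigma^*$ being an isomorphism, check it on the generators $\Lambda_\et(Y)$ via the computation of $\Corr_k(X,Y)$ for $Y$ finite \'etale, then derive everything formally --- is exactly the content of the cited proposition, spelled out in this special case. So your argument is a self-contained substitute for the citation, and your derivation of (2) is essentially the paper's, just with more detail.

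One minor imprecision worth tightening: when you write that $\bZ[\pi_0(X\times_k Y)]$ is ``exactly the group of sections over $X$ of the cover $X\times_k Y\to X$'', this is not literally right at the presheaf level --- literal sections of the cover correspond only to the degree-$1$ connected components, and the presheaf $X\mapsto\Lambda[\Mor_k(X,Y)]$ is in general strictly smaller than $\Lambda[\pi_0(X\times_k Y)]$. What you need (and what is true) is that the \'etale \emph{sheafification} of $U\mapsto\Lambda[\Mor_k(U,Y)]$, evaluated at a connected $X$, gives $\Lambda[\pi_0(X\times_k Y)]$: each connected component $W\subset X\times_k Y$ is finite \'etale over $X$, so over an \'etale cover of $X$ splitting $W$ it becomes a sum of graphs of morphisms to $Y$, and descent identifies the resulting Galois-invariant element with the generator corresponding to $W$. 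Once you say this, the identification $\gamma_*\Lambda_\tr(Y)\cong\sigma^*\Lambda_\et(Y)$ is clean and the rest is, as you say, formal.
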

\begin{proof}
    The first assertion is \cite[Proposition~3.1.4]{CD16EtaleMotive}. Thus $\gamma^*\sigma^*$ induces an equivalence between $\Shv_\et(\Et/k,\Lambda)$ and the category $\Shv_{\leq 0}^\tr(k,\Lambda)$ in Definition~\ref{ngenerated} with quasi-inverse $\sigma_*\gamma_*$. Then the second assertion is clear by Lemma~\ref{ShvnEquiv}.
\end{proof}

By \cite[Chapitre I, \S4, Proposition~6.5]{DG70AlgGrp}, for a scheme $X$ locally of finite type over a field $k$, there exists an \'etale $k$-scheme $\pi_0(X)$ and a morphism $q_X \colon  X\to \pi_0(X)$ satisfying the following universal property: for any morphism $f \colon  X\to Y$ from $X$ to an \'etale $k$-scheme $Y$, there exists a unique $g \colon  \pi_0(X)\to Y$ such that $f=g\circ q_X$. Moreover, the morphism $q_X$ is fully faithful and its fibers are the connected components of $X$. 

Thus we have a functor
\[\pi_0 \colon  \Sm/k \longrightarrow (\Sm/k)_{\leq 0},\]
which is left adjoint to the inclusion functor. As usual, for a presheaf $\sF$ on $(\Sm/k)_{\leq 0}$, the presheaf $\sF\circ \pi_0$ on $\Sm/k$ will be denoted by $\pi_{0*}(\sF)$. The following result is stated and used in \cite[1.2.1]{ABV09MotShvLAlb}. We give a proof here for completeness.
\begin{lem}\label{pi0cont}
    For a sheaf $\sF$ on $(\Sm/k)_{\leq 0,\et}$, the presheaf $\pi_{0*}(\sF)$ is a sheaf on $(\Sm/k)_\et$.
\end{lem}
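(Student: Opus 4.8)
The plan is to verify the sheaf axiom for $\pi_{0*}(\sF)$ directly on an arbitrary \'etale covering $\{U_i\to X\}_{i\in I}$ in $\Sm/k$: we must show that
\[\sF(\pi_0 X)\longrightarrow \prod_i \sF(\pi_0 U_i)\rightrightarrows \prod_{i,j}\sF(\pi_0(U_i\times_X U_j))\]
is an equalizer. Two geometric facts about $\pi_0$ will do all the work: (i) $\pi_0$ carries \'etale coverings in $\Sm/k$ to \'etale coverings in $(\Sm/k)_{\leq 0}$; and (ii) for $U,V\in\Sm/k$ over $X$, the canonical morphism $\pi_0(U\times_X V)\to \pi_0 U\times_{\pi_0 X}\pi_0 V$ is surjective, hence an \'etale covering of its target. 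Both rest on elementary input: a connected smooth $k$-scheme is integral (regular $\Rightarrow$ normal), \'etale morphisms are open, and $\pi_0$ of a smooth $k$-scheme commutes with extension of the ground field.

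First I would establish (i). Since $\pi_0 U_i$ and $\pi_0 X$ are \'etale over $k$, any $k$-morphism between them — in particular $\pi_0$ of $U_i\to X$ — is \'etale; and $\coprod_i\pi_0 U_i\to \pi_0 X$ is surjective because $\coprod_i U_i\to X$ is: given a connected component $X_0$ of $X$, some $U_i\to X$ meets $X_0$, and the image of a connected component $U'$ of $U_i$ is an irreducible subset of $X$ (as $U_i$ is smooth) meeting $X_0$, hence contained in $X_0$, so $\pi_0 U'$ maps onto $\pi_0 X_0$. Feeding the resulting covering $\{\pi_0 U_i\to \pi_0 X\}$ of $(\Sm/k)_{\leq 0,\et}$ into the sheaf axiom for $\sF$ yields
\[\sF(\pi_0 X)=\mathrm{eq}\Bigl(\textstyle\prod_i\sF(\pi_0 U_i)\rightrightarrows\prod_{i,j}\sF(\pi_0 U_i\times_{\pi_0 X}\pi_0 U_j)\Bigr).\]

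The heart of the argument is (ii). Surjectivity of $\phi_{ij}\colon \pi_0(U_i\times_X U_j)\to \pi_0 U_i\times_{\pi_0 X}\pi_0 U_j$ may be checked after base change to $\overline k$; since $U_i,U_j,X$ are smooth, $\pi_0$ commutes with this base change, so I may assume $k$ algebraically closed, where $\pi_0$ is literally the finite set of connected components. A point of $\pi_0 U_i\times_{\pi_0 X}\pi_0 U_j$ is a pair $(U_i',U_j')$ of connected components lying over a common connected component $X_0$ of $X$; both maps factor through the open-and-closed subscheme $X_0$, so $U_i'\times_X U_j'=U_i'\times_{X_0}U_j'$ is an open-and-closed subscheme of $U_i\times_X U_j$, and it is non-empty because the images of the \'etale maps $U_i'\to X_0$ and $U_j'\to X_0$ are non-empty opens of the integral scheme $X_0$ and therefore meet. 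Any connected component $W$ of $U_i'\times_{X_0}U_j'$ is then a connected component of $U_i\times_X U_j$ mapping to $(U_i',U_j')$, proving surjectivity; being a surjective morphism of \'etale $k$-schemes, $\phi_{ij}$ is an \'etale covering of $\pi_0 U_i\times_{\pi_0 X}\pi_0 U_j$.

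To conclude, the $\phi_{ij}$ are compatible with the two projections to $\pi_0 U_i$, and since each $\phi_{ij}$ is an \'etale covering, $\sF(\phi_{ij})$ is injective (the separatedness half of the sheaf axiom for $\sF$). Hence a family $(s_i)\in\prod_i\sF(\pi_0 U_i)$ equalizes the pair of maps into $\prod_{i,j}\sF(\pi_0(U_i\times_X U_j))$ if and only if it equalizes the pair of maps into $\prod_{i,j}\sF(\pi_0 U_i\times_{\pi_0 X}\pi_0 U_j)$; by the displayed formula above the latter equalizer is $\sF(\pi_0 X)=(\pi_{0*}\sF)(X)$, the structure map being the restriction map of $\pi_{0*}(\sF)$. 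This is precisely the sheaf axiom for $\pi_{0*}(\sF)$ on $\{U_i\to X\}$, so $\pi_{0*}(\sF)$ is a sheaf on $(\Sm/k)_\et$. The main obstacle is (ii): it is the one place where the smoothness of $X$ enters essentially — $\phi_{ij}$ need not be surjective without it — and it relies on the dictionary between $\pi_0$ and connected components together with its stability under base change to $\overline k$; everything else is formal juggling of the sheaf axiom.
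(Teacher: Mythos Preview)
Your proof is correct and follows essentially the same route as the paper's: both reduce to the surjectivity of $\pi_0(U_i\times_X U_j)\to \pi_0 U_i\times_{\pi_0 X}\pi_0 U_j$, establish it by passing to a separably (resp.\ algebraically) closed base field and arguing that two nonempty open images in an irreducible connected component must meet, and then use this surjectivity to deduce that $\sF$ applied to the comparison map is injective, so the sheaf condition for $\pi_{0*}\sF$ follows from the sheaf condition for $\sF$ on $\{\pi_0 U_i\to \pi_0 X\}$.
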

\begin{proof}
    Let $U$ be a smooth variety and let $\{V_i\to U\}$ be an \'etale covering. Then $\{\pi_0(V_i)\to \pi_0(U)\}$ is an \'etale covering of $\pi_0(U)$. We claim that $\pi_0(V_i\times_U V_j)\to \pi_0(V_i)\times_{\pi_0(U)} \pi_0(V_j)$ is surjective. Consider the following commutative diagram
    \[\xymatrix{
        \sF(\pi_0(U)) \ar[r] \ar@{=}[d] 
        & \prod_i\sF(\pi_0(V_i)) \ar@<.5ex>[r] \ar@<-.5ex>[r] \ar@{=}[d] 
        & \prod_{i,j}\sF(\pi_0(V_i)\times_{\pi_0(U)}\pi_0(V_j)) \ar@{^(->}[d]
        \\
        \sF(\pi_0(U)) \ar[r]            
        & \prod_i\sF(\pi_0(V_i)) \ar@<.5ex>[r] \ar@<-.5ex>[r]
        & \prod_{i,j}\sF(\pi_0(V_i\times_U V_j)).
    }\]
    Since $\sF$ is a sheaf, the first row is exact and the last vertical arrow is an injection. It follows that the second row is also exact, which means that $\pi_{0*}(\sF)$ is a sheaf.

    Now, we prove the claim: for \'etale morphisms $V_1\to U$ and $V_2\to U$, the canonical morphism 
    \[\varphi \colon  \pi_0(V_1\times_U V_2) \longrightarrow \pi_0(V_1)\times_{\pi_0(U)} \pi_0(V_2)\]
    is surjective. Since $\pi_0$ commutes with field extensions (\cite[Chapitre I, \S 4, Proposition~6.7]{DG70AlgGrp}) and commutes with disjoint unions, we may assume that $k$ is separably closed, and $V_1$, $V_2$ and $U$ are connected. Then it suffices to show that $V_1\times_U V_2$ is nonempty. Since the morphism $f_i \colon  V_i \to U$ is \'etale, it is an open mapping. Because $U$ is connected and is smooth over $k$, it is irreducible. So the intersection of two open subsets $f_1(V_1)\cap f_2(V_2)$ is nonempty, i.e., $f_1(V_1)\times_{U} f_2(V_2)\neq \varnothing$. Since the canonical morphism 
    \[V_1\times_{U} V_2 \longrightarrow f_1(V_1)\times_{U} f_2(V_2)\]
    is surjective, we obtain that $V_1\times_U V_2$ is nonempty.
\end{proof}

\begin{rmk}
    This lemma means that $\pi_0 \colon  (\Sm/k)_\et \to (\Sm/k)_{\leq 0,\et}$ is a continuous functor in the sense of \cite[Expos\'e~III, D\'efinition 1.1]{SGA4I}. But it is not a continuous functor in the sense of \cite[\href{https://stacks.math.columbia.edu/tag/00WV}{Definition~00WV}]{stacks-project}, which is stronger. In general, the canonical morphism $\pi_0(X\times_U V) \to \pi_0(X)\times_{\pi_0(U)} \pi_0(V)$ is not an isomorphism even if $V\to U$ is an \'etale morphism. For example, we have the following Cartesian square
    \[\xymatrix{
        \mu_n \ar[r] \ar[d] & \Spec k \ar[d] \\
        \bG_m \ar[r]^n      & \bG_m,
    }\]    
    where the morphism $\Spec k \to \bG_m$ is the zero section. If $n\geq 2$ is prime to the characteristic of $k$, then $\mu_n \to\Spec k$ is \'etale and $\pi_0(\mu_n)\simeq \mu_n\not\simeq\Spec(k)$.
\end{rmk}

If $Z$ is an elementary correspondence from $X$ to $Y$, then $\pi_0(Z)$ is an elementary correspondence from $\pi_0(X)$ to $\pi_0(Y)$ by using the canonical isomorphism (\cite[Chapitre I, \S4, Corollaire 6.10]{DG70AlgGrp})
\[\pi_0(X\times_k Y)\stackrel{\sim}{\longrightarrow} \pi_0(X)\times_k \pi_0(Y)\]
and the fact that $q_X \colon  X\to \pi_0(X)$ is surjective. So $\pi_0$ induces a functor 
\[\pi_0^\tr \colon  \Cor(k)\to \Cor(k_\leq 0)\]
which is compatible with the graph functor: 
\[\xymatrix{
    \Sm/k     \ar[r]^-{\pi_0} \ar[d]_{\gamma}     & (\Sm/k)_{\leq 0} \ar[d]_\gamma \\
    \Cor(k)   \ar[r]^-{\pi_0^\tr}                 & \Cor(k_{\leq 0}). 
}\] 
The above commutative diagram induces the following one:  
\[\xymatrix{
    \Shv_\et^\tr(k_{\leq 0},\Lambda)    \ar[r]^{\pi_{0*}^\tr} \ar[d]_{\gamma_*}  & \Shv_\et^\tr(k,\Lambda) \ar[d]_{\gamma_*} \\
    \Shv_\et((\Sm/k)_{\leq 0},\Lambda)  \ar[r]^{\pi_{0*}}                        & \Shv_\et(\Sm/k,\Lambda),
}\]
where $\pi_{0*}$ and $\pi_{0*}^\tr$ are the direct image functors and $\gamma_*$ are the forgetful functors.

\begin{cor}
    We have $\sigma^*=\pi_{0*}$ and $\sigma_0^*\simeq\pi_{0*}^\tr$.
\end{cor}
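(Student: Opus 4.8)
The plan is to deduce both identifications from one formal observation: if $F\dashv G$ is an adjunction of (additive) categories, then precomposition by $F$ is left adjoint to precomposition by $G$ on the associated categories of (additive) presheaves. In both cases the relevant precomposition functors already send sheaves to sheaves, so the adjunction descends to sheaves, and uniqueness of left adjoints then pins down the functor in question.

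\emph{The equality $\sigma^*=\pi_{0*}$.} Recall that $\pi_0\colon\Sm/k\to(\Sm/k)_{\leq 0}$ is left adjoint to the inclusion $\sigma$, with unit the canonical morphisms $q_X\colon X\to\pi_0(X)$; hence on presheaves $\pi_{0*}=(-)\circ\pi_0$ is left adjoint to $\sigma_*=(-)\circ\sigma$. Here $\sigma_*$ carries étale sheaves to étale sheaves because $\sigma$ is continuous, and $\pi_{0*}$ does so by Lemma~\ref{pi0cont}; as $\Shv_\et(-,\Lambda)$ is a full subcategory of presheaves, the adjunction restricts to functors between $\Shv_\et(\Et/k,\Lambda)$ and $\Shv_\et(\Sm/k,\Lambda)$. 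Since $\sigma^*$ is by definition the left adjoint of $\sigma_*$, we conclude $\sigma^*=\pi_{0*}$.

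\emph{The isomorphism $\sigma_0^*\simeq\pi_{0*}^\tr$.} I would run the same argument at the level of presheaves with transfers, the essential input being that the induced functor $\pi_0^\tr\colon\Cor(k)\to\Cor(k_{\leq 0})$ is left adjoint to the inclusion $\Cor(k_{\leq 0})\hookrightarrow\Cor(k)$, with unit the graphs of the $q_X$. Granting this, $\pi_{0*}^\tr=(-)\circ\pi_0^\tr$ is left adjoint to $\sigma_{0*}=(-)\circ(\text{inclusion})$ on presheaves with transfers. Now $\sigma_{0*}$ is the exact restriction functor of Lemma~\ref{sigma^*}, which preserves étale sheaves with transfers, and so does $\pi_{0*}^\tr$: for $\sF$ an étale sheaf with transfers the underlying étale presheaf of $\pi_{0*}^\tr\sF$ equals $\pi_{0*}(\gamma_*\sF)$ by the last commutative diagram above, and this is an étale sheaf by Lemma~\ref{pi0cont}. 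Hence the adjunction restricts to functors between $\Shv_\et^\tr(k_{\leq 0},\Lambda)$ and $\Shv_\et^\tr(k,\Lambda)$, and since $\sigma_0^*$ is the left adjoint of $\sigma_{0*}$ (Lemma~\ref{sigma^*}), we get $\sigma_0^*\simeq\pi_{0*}^\tr$.

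\emph{The main obstacle} is the input just used for the second part: that composing with the graph of $q_X$ induces a bijection $\Corr_k(\pi_0(X),Y)\xrightarrow{\sim}\Corr_k(X,Y)$ for every $Y$ in $(\Sm/k)_{\leq 0}$. I would establish this by an explicit description of both sides. Since $Y$ is étale over $k$, the projection $X\times_k Y\to X$ is finite étale, so $X\times_k Y$ is smooth over $k$ and each of its (finitely many) connected components is integral and surjects onto exactly one connected component of $X$; an elementary finite correspondence from $X$ to $Y$ is precisely such a component, so $\Corr_k(X,Y)$ is the free $\Lambda$-module on $\pi_0(X\times_k Y)$, and similarly $\Corr_k(\pi_0(X),Y)$ is free on $\pi_0(\pi_0(X)\times_k Y)$. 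The canonical isomorphism $\pi_0(X\times_k Y)\simeq\pi_0(X)\times_k\pi_0(Y)=\pi_0(X)\times_k Y$ of \cite[Chapitre~I, \S4, Corollaire~6.10]{DG70AlgGrp}, together with the triangle identity $\pi_0(q_X)=\id$, identifies the two index sets; and since $q_X$ is faithfully flat, composing a generating component of $\Corr_k(\pi_0(X),Y)$ with the graph of $q_X$ is a flat pullback of cycles, which returns the corresponding (reduced, multiplicity-one) component of $X\times_k Y$. This is the required bijection on generators, and hence yields the adjunction $\pi_0^\tr\dashv(\text{inclusion})$ needed above.
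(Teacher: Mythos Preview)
Your proof is correct, and the route genuinely differs from the paper's. For the first identity you invoke the formal adjunction $\pi_0\dashv\sigma$ and uniqueness of left adjoints, whereas the paper computes $\sigma^*\sF$ directly from the colimit description of inverse image and observes that the universal property of $\pi_0$ collapses the colimit to $\sF(\pi_0(U))$. Your argument is cleaner here. For the second identity the paper does \emph{not} redo an adjunction argument at the level of $\Cor$: instead it bootstraps from part one by checking $\gamma_*\sigma_0^*\simeq\sigma^*\gamma_*$ (via Lemma~\ref{Shv0}), rewriting the right side as $\pi_{0*}\gamma_*=\gamma_*\pi_{0*}^\tr$, and then using conservativity of $\gamma_*$. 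This avoids any computation with correspondences. Your approach instead proves the stronger statement that $\pi_0^\tr$ is left adjoint to the inclusion $\Cor(k_{\leq 0})\hookrightarrow\Cor(k)$, which requires the explicit identification of $\Corr_k(X,Y)$ with the free module on $\pi_0(X\times_k Y)$ for $Y$ étale; this is correct (the key point being that $X\times_k Y\to X$ is finite étale, so elementary correspondences are exactly connected components), and the flat-pullback description of composition with $\Gamma_{q_X}$ is valid since $\pi_0(X)$ is a disjoint union of spectra of fields, making $q_X$ automatically faithfully flat. The paper's method is more economical but yours yields a finer statement about $\Cor$ itself.
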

\begin{proof}
    For a sheaf $\sF\in\Shv_\et(\Et/k,\Lambda)$, the inverse image $\sigma^*\sF$ is the sheaf associated with the presheaf
    \[U\longmapsto \varinjlim_{V}\sF(V),\]
    where $V$ are the \'etale schemes over $k$ such that $U\to \Spec k$ factors through them. By the universal property of $\pi_0$, the above colimit is in fact $\sF(\pi_0(U))$. So the above presheaf is in fact $\pi_{0*}\sF$, which is already a sheaf by Lemma~\ref{pi0cont}. Hence $\sigma^*\sF=\pi_{0*}\sF$. Using Lemma~\ref{Shv0}, we can check that $\gamma_*\sigma_{0}^*\simeq\sigma^*\gamma_*$, where the latter one is $\pi_{0*}\gamma_*=\gamma_*\pi_{0*}^\tr$. Since $\gamma_*$ is conservative by Proposition~\ref{Shv_et^tr} (3), we get $\sigma_0^*\simeq\pi_{0*}^\tr$.
\end{proof}

\begin{prop}\label{HI0}
    \begin{enumerate}[leftmargin=*,label={\rm(\arabic*)}]
        \item The functor $\gamma^*\sigma^* \colon  \Shv_\et(\Et/k,\Lambda)\to \Shv_\et^\tr(k,\Lambda)$ induces an equivalence of categories between       $\Shv_\et(\Et/k,\Lambda)$ and $\HI_{\leq 0}(k,\Lambda)$.   \menum
        \item The embedding $\HI_{\leq 0}(k,\Lambda)\hookrightarrow \Shv_\et^\tr(k,\Lambda)$ admits a left adjoint $\pi_0^*$ with 
            \[\pi_0^*(\Lambda_\tr(X))=\Lambda_\tr(\pi_0(X)).\]
        \item The category $\HI_{\leq 0}(k,\Lambda)$ is a Serre subcategory of $\Shv_\et^\tr(k,\Lambda)$.
    \end{enumerate}
\end{prop}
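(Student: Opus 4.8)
The plan is to prove the three parts in order, the organising principle being that $0$‑motivic sheaves are the same as strongly $0$‑generated ones, and that the latter are governed by the component functor $\pi_0$.

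\textbf{Part (1).} First I would check that a strongly $0$‑generated sheaf is automatically homotopy invariant, so that $\HI_{\leq 0}(k,\Lambda)$ coincides, as a full subcategory of $\Shv_\et^\tr(k,\Lambda)$, with $\Shv_{\leq 0}^\tr(k,\Lambda)$. For $\sG\in\Shv_\et^\tr(k_{\leq 0},\Lambda)$ the isomorphism $\sigma_0^*\simeq\pi_{0*}^\tr$ of the preceding corollary gives $(\sigma_0^*\sG)(X)\cong\sG(\pi_0 X)$, with restriction along a morphism $f$ induced by $\pi_0(f)$; since $\bA^1$ is geometrically connected and the projection $X\times_k\bA^1_k\to X$ is split by the zero section, $\pi_0(X\times_k\bA^1_k)\xrightarrow{\ \sim\ }\pi_0 X$, so $\sigma_0^*\sG$ is a homotopy invariant presheaf with transfers, hence lies in $\HI_\et(k,\Lambda)$ and satisfies $h_0^\et\sigma_0^*\sG\cong\sigma_0^*\sG$; thus $\sigma_0^*\sG$ is $0$‑motivic. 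Conversely, by Remark~\ref{HInShvn} every $0$‑motivic sheaf is $h_0^\et\sigma_0^*\sG\cong\sigma_0^*\sG$ for some $\sG$, hence strongly $0$‑generated. Combined with the equivalence $\Shv_\et(\Et/k,\Lambda)\simeq\Shv_{\leq 0}^\tr(k,\Lambda)$ extracted in the proof of Lemma~\ref{Shv0}(1), this shows that $\gamma^*\sigma^*$ is an equivalence onto $\HI_{\leq 0}=\Shv_{\leq 0}^\tr$, which is (1).

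\textbf{Part (2).} By (1) and Lemma~\ref{ShvnEquiv} (for $n=0$), up to the equivalence $\sigma_0^*$ the inclusion $\iota_0$ is the functor $\sigma_0^*\simeq\pi_{0*}^\tr$, i.e. restriction along $\pi_0^\tr\colon\Cor(k)\to\Cor(k_{\leq 0})$. A restriction functor commutes with all limits on presheaf categories, and limits of sheaves are computed in presheaves, so $\iota_0$ preserves limits; being transported from a left adjoint it also commutes with colimits, hence is accessible. By the adjoint functor theorem for Grothendieck (locally presentable) categories, $\iota_0$ then admits a left adjoint $\pi_0^*\colon\Shv_\et^\tr(k,\Lambda)\to\HI_{\leq 0}(k,\Lambda)$. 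To identify it on representables I would use that, by (1), every $\sH\in\HI_{\leq 0}$ satisfies $\sH(X)\cong\sH(\pi_0 X)$ naturally in $X$ via $q_X$: then $\Hom_{\HI_{\leq 0}}(\pi_0^*\Lambda_\tr(X),\sH)=\sH(X)\cong\sH(\pi_0 X)=\Hom_{\Shv_\et^\tr}(\Lambda_\tr(\pi_0 X),\iota_0\sH)$, and since $\Lambda_\tr(\pi_0 X)\in\HI_{\leq 0}$ by (1), Yoneda gives $\pi_0^*(\Lambda_\tr(X))\cong\Lambda_\tr(\pi_0 X)$. (Alternatively one can build $\pi_0^*$ as the additive left Kan extension along $\pi_0^\tr$ followed by \'etale sheafification and the equivalence of~(1), which makes the formula manifest.)

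\textbf{Part (3) and the main obstacle.} From (1) and (2) the inclusion $\iota_0$ has a left adjoint $\pi_0^*$ and, through the equivalence $\sigma_0^*$, a right adjoint $R$ with $\iota_0 R\cong\sigma_0^*\sigma_{0*}$; moreover $\sigma_{0*}$ (cocontinuity of the site inclusion) and $\sigma_0^*$ (it now has adjoints on both sides) are exact, hence so are $\iota_0$, $R$ and $\sigma_0^*\sigma_{0*}$, and $\HI_{\leq 0}=\{\sF:\ \sigma_0^*\sigma_{0*}\sF\to\sF\text{ is an isomorphism}\}$. Closure under extensions follows by applying the exact functor $\sigma_0^*\sigma_{0*}$ to a short exact sequence with outer terms in $\HI_{\leq 0}$ and invoking the five lemma. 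For closure under subobjects and quotients, a snake‑lemma computation for the counit $\sigma_0^*\sigma_{0*}\to\id$ along $0\to\sF'\to\sF\to\sF''\to0$ with $\sF\in\HI_{\leq 0}$ shows the two are formally equivalent and both reduce to: $\ker(\sigma_0^*\sigma_{0*}\sF''\to\sF'')=0$; writing $N$ for this kernel, $N$ embeds into the strongly $0$‑generated sheaf $\sigma_0^*\sigma_{0*}\sF''$ and, applying the exact functor $\sigma_{0*}$, one finds $\sigma_{0*}N=0$. So the whole of (3) comes down to showing that a sheaf with transfers which admits a monomorphism into a strongly $0$‑generated sheaf and restricts to $0$ on $(\Sm/k)_{\leq 0}$ must vanish. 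This last point — equivalently, that $\HI_{\leq 0}$ is closed under subobjects — is the real difficulty: the formal adjunction yoga above does not deliver it, since the counit $\sigma_0^*\sigma_{0*}\to\id$ is in general neither injective nor surjective on all of $\Shv_\et^\tr(k,\Lambda)$ (already on $\Lambda_\tr(\bA^1)$ it fails to be surjective). I would attack it via the explicit colimit presentation $\sigma_0^*\sigma_{0*}\sF=\varinjlim_{(V,s),\,\dim V=0}\Lambda_\tr(V)$ and the fact that each $\Lambda_\tr(V)$ with $V$ \'etale is a compact, locally constant object, to rule out a nonzero subobject invisible to $0$‑dimensional test schemes; failing a self‑contained argument, this can be imported from the treatment of the categories $\HI_{\leq n}$ in \cite{ABV09MotShvLAlb}.
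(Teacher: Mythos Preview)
Your proposal is correct and considerably more explicit than the paper's own proof, which consists solely of the line ``See Lemma~\ref{Shv0} and \cite[1.2.2, 1.2.5, 1.2.7]{ABV09MotShvLAlb}.'' In other words, the paper delegates all three parts to Ayoub--Barbieri-Viale (together with the already-established Lemma~\ref{Shv0}), whereas you supply self-contained arguments for (1) and (2).

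For (1), your identification $\HI_{\leq 0}=\Shv_{\leq 0}^\tr$ via the isomorphism $\sigma_0^*\simeq\pi_{0*}^\tr$ and the fact that $\pi_0(X\times\bA^1)\simeq\pi_0(X)$ is exactly the mechanism behind \cite[1.2.2]{ABV09MotShvLAlb}, so there is no real divergence. For (2), your adjoint-functor-theorem argument and Yoneda computation on representables recovers what \cite[1.2.5]{ABV09MotShvLAlb} does; the alternative Kan-extension description along $\pi_0^\tr$ that you mention is in fact how that reference constructs the left adjoint. For (3), you correctly isolate the genuine content---closure under subobjects, equivalently the vanishing of any subsheaf of a strongly $0$-generated sheaf that restricts to zero on \'etale $k$-schemes---and acknowledge that the formal counit-and-snake-lemma reduction does not finish it. The paper does not attempt to prove this either: it cites \cite[1.2.7]{ABV09MotShvLAlb}, whose argument passes through the explicit identification of $\HI_{\leq 0}$ with discrete $\Gal(k_s/k)$-$\Lambda$-modules (so that subobjects and quotients are visibly again such modules). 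Your proposed attack via the colimit presentation is reasonable but would ultimately amount to the same Galois-module description; citing \cite{ABV09MotShvLAlb} here, as both you and the paper do, is appropriate.
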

\begin{proof}
    See Lemma~\ref{Shv0} and \cite[1.2.2, 1.2.5, 1.2.7]{ABV09MotShvLAlb}.
\end{proof}

From now on, we shall write the functor $\pi_0^*$ simply as $\pi_0$.

\subsection{Derived direct image of \texorpdfstring{$1$}{1}-motivic sheaves}
To study the unbounded derived functors, we use the descent model structure on chain complexes developed by Cisinski and D\'eglise.

For readers' convenience, we recall the machinery: 
\begin{defn}[{\cite[Definition 2.2]{CD09ModelCategories}}]\label{descentstruc}
    Let $\cA$ be a Grothendieck category. Let $\cG$ be an essentially small set of objects of $\cA$ and $\cH$ be a subset of $C(\cA)$. For $E\in \cG$, let $D(E)$ be the complex $[E \stackrel{\id}{\to} E]$, concentrated in degrees $0$ and $1$, and let $f_E \colon   E[-1]\to D(E)$ be the map given by the identity in degree 1. 
    \begin{enumerate}[label={\rm(\arabic*)}]
        \item A morphism in $C(\cA)$ is defined to be a $\cG$-cofibration if it is contained in the smallest class of maps in $C(\cA)$ closed under pushouts, transfinite compositions and retracts, generated by the morphisms $f_E[n]$ for any integer $n$ and any $E$ in $\cG$. 
        \item A chain complex $C \in C(\cA)$ is said to be $\cG$-local if for any $E$ in $\cG$ and any integer $n$, there is a canonical isomorphism
        \[\Hom_{K(\cA)}(E[n],C) \stackrel{\sim}{\longrightarrow} \Hom_{D(\cA)}(E[n],C).\]
        \item An object $C$ of $C(\cA)$ is said to be $\cH$-flasque if for any integer $n$ and any $H$ in $\cH$,
        \[\Hom_{K(\cA)}(H,C[n])=0.\]
        \item The pair $(\cG,\cH)$ is called a descent structure on $\cA$ if 
        \begin{enumerate}[label={\rm(\alph*)}]
            \item elements in $\cH$ are $\cG$-cofibrant acyclic complexes;
            \item every $\cH$-flasque complex is $\cG$-local.
        \end{enumerate}
    \end{enumerate}
\end{defn}

\begin{thm}[{\cite[Theorem~2.5]{CD09ModelCategories}}]
    Let $\cA$ be a Grothendieck category endowed with a descent structure $(\cG,\cH)$. Then the category $C(\cA)$ is a proper cellular model category with quasi-isomorphisms as weak equivalences, and $\cG$-cofibrations as cofibrations. Furthermore, a complex $C\in C(\cA)$ is fibrant if and only if it is $\cH$-flasque, or equivalently, $\cG$-local.
\end{thm}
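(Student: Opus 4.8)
The plan is to deduce the theorem from a standard recognition principle for cofibrantly generated (indeed combinatorial) model categories, applied to $C(\cA)$ with the class $\cW$ of quasi-isomorphisms as weak equivalences and $I=\{f_E[n]\mid E\in\cG,\ n\in\bZ\}$ as the set of generating cofibrations; write $\cC=\operatorname{cof}(I)$ for the resulting class of $\cG$-cofibrations. Most of the hypotheses are routine. Since $\cA$ is a Grothendieck category it is locally presentable, hence so is $C(\cA)$, and the small object argument applies to any set of morphisms; $\cW$ satisfies two-out-of-three and is stable under retracts; and because homology commutes with filtered colimits in a Grothendieck category, $\cW$ is an accessible subcategory of the arrow category, which supplies the solution-set condition at $I$. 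Every map in $\cC$ is a monomorphism of complexes (the $f_E$ are degreewise split monomorphisms, and monomorphisms are stable under the pushouts, transfinite compositions and retracts generating $\cC$), and in the abelian category $C(\cA)$ a pushout along a monomorphism is again a monomorphism with the same cokernel; together with exactness of filtered colimits this shows $\cC\cap\cW$ is stable under pushout and transfinite composition.

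The one hypothesis needing a genuine argument is that every morphism in $\operatorname{inj}(I)$ is a quasi-isomorphism. Unwinding the lifting conditions, a morphism $p\colon X\to Y$ lies in $\operatorname{inj}(I)$ precisely when, for every $E\in\cG$, the induced map $\Hom_\cA(E,X_\bullet)\to\Hom_\cA(E,Y_\bullet)$ is a surjective quasi-isomorphism of complexes of $\Lambda$-modules. Because $\cG$ generates $\cA$, every object is a quotient of a coproduct of objects of $\cG$, so surjectivity of $\Hom_\cA(E,-)$ for all $E\in\cG$ forces $p$ to be a degreewise epimorphism; hence for $K=\ker p$ one has a short exact sequence of complexes $0\to\Hom_\cA(E,K_\bullet)\to\Hom_\cA(E,X_\bullet)\to\Hom_\cA(E,Y_\bullet)\to 0$, and $\Hom_\cA(E,K_\bullet)$ is acyclic for all $E\in\cG$. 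Left exactness of $\Hom_\cA(E,-)$ (equivalently, the identification of $H_n\Hom_\cA(E,K_\bullet)$ with $\Hom_{K(\cA)}(E[n],K)$) then shows $\Hom_\cA(E,K_{n+1})\to\Hom_\cA(E,Z_n(K))$ is onto for every $n$ and $E\in\cG$, so $K_{n+1}\to Z_n(K)$ is an epimorphism, $H_n(K)=0$ for all $n$, and $p$ is a quasi-isomorphism. The recognition principle now produces a cofibrantly generated model structure on $C(\cA)$ with cofibrations $\cC$ and weak equivalences $\cW$. It is cellular because in a Grothendieck category monomorphisms are effective and the (co)domains of the generating maps can be taken presentable, and it is left and right proper because cofibrations are monomorphisms, fibrations are degreewise epimorphisms (they have the right lifting property against the trivial cofibrations $0\to D(E)[n]$), and in the abelian category $C(\cA)$ a pushout along a monomorphism, resp.\ a pullback along an epimorphism, alters the cone of a map only by a contractible complex.

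Note that so far only the fact that $\cG$ generates $\cA$ has been used; the two axioms of a descent structure (\cref{descentstruc}) enter precisely in order to describe the fibrant objects. Axiom (a) --- the objects of $\cH$ are $\cG$-cofibrant and acyclic --- is what allows the objects of $\cH$ to contribute an explicit family of trivial cofibrations (such as the cone-on-identity inclusions $H[-1]\to\Cone(\id_{H[-1]})$, whose cokernel is the acyclic complex $H$, trivial cofibrations by axiom (a)); having the right lifting property against this family together with the maps $0\to D(E)[n]$ amounts to $\cH$-flasqueness, and conversely one checks that an $\cH$-flasque complex has the right lifting property against all trivial cofibrations, so that fibrant $\Leftrightarrow$ $\cH$-flasque. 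Axiom (b) gives $\cH$-flasque $\Rightarrow$ $\cG$-local immediately, and for the reverse implication one uses that the objects of $\cH$, being $\cG$-cofibrant, are built from the generators $E\in\cG$ by cones, (co)products and filtered colimits, operations along which the comparison $\Hom_{K(\cA)}(-,C)\to\Hom_{D(\cA)}(-,C)$ for a $\cG$-local $C$ propagates. The main obstacle --- the part I would work out most carefully --- is exactly this last block: pinning down a workable generating family of trivial cofibrations (with the attendant set-theoretic bookkeeping), proving that $\cH$-flasque complexes are fibrant, and matching $\cH$-flasqueness against $\cG$-locality.
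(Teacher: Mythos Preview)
The paper does not prove this theorem; it is quoted from Cisinski--D\'eglise \cite[Theorem~2.5]{CD09ModelCategories} as part of the background machinery, with no argument supplied. There is therefore no in-paper proof to compare your proposal against.

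That said, your sketch is a reasonable outline of the proof in the original reference: verify the hypotheses of a recognition theorem for combinatorial model structures with $I=\{f_E[n]\}$ as generating cofibrations, then use the two axioms of a descent structure to produce an explicit set $J$ of generating trivial cofibrations and identify the fibrant objects as the $\cH$-flasque (equivalently $\cG$-local) complexes. The part you flag as the ``main obstacle'' --- building $J$, checking that $\operatorname{inj}(J)$ recovers exactly the $\cH$-flasque complexes, and handling the attendant set-theoretic issues --- is indeed where most of the work in \cite{CD09ModelCategories} lies. Two small points to tighten: first, your argument that $p\in\operatorname{inj}(I)$ is a degreewise epimorphism uses that $\cG$ \emph{generates} $\cA$; this hypothesis is part of the definition in \cite{CD09ModelCategories} but is not recorded in the abbreviated Definition~\ref{descentstruc} quoted here, so make it explicit. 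Second, the unwinding of $\operatorname{inj}(I)$ as ``$\Hom_\cA(E,p_\bullet)$ is a surjective quasi-isomorphism'' is correct but not immediate from the lifting property against $f_E[n]$ alone; one first extracts surjectivity on cycles (by taking the top map to be zero) and then bootstraps to degreewise surjectivity, so a line justifying this would help.
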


\begin{defn}[{\cite[p.~228]{CD09ModelCategories}}]
    Let $\cA$ and $\cA'$ be two Grothendieck categories. Suppose that $(\cG,\cH)$ (resp. $(\cG',\cH')$) is a descent structure on $\cA$ (resp. $\cA'$). A functor $f^* \colon  \cA'\to\cA$ is said to satisfy descent (with respect to the above descent structures) if it satisfies the following conditions:  
    \begin{enumerate}[label={\rm(\arabic*)}]
        \item the functor $f^*$ commutes with small colimits, or equivalently, it has a right adjoint $f_*$;
        \item $f^*(E')$ is a direct sum of elements of $\cG$ for any $E'$ in $\cG'$;
        \item $f^*(H')$ is in $\cH$ for any $H'$ in $\cH'$.
    \end{enumerate}
\end{defn}

\begin{thm}[{\cite[Theorem~2.14]{CD09ModelCategories}}]
    If $f^* \colon  \cA'\to \cA$ satisfies descent, then the pair of adjoint functors
    \[f^* \colon  C(\cA')\rightleftarrows C(\cA) \colon  f_*\]
    is a Quillen adjunction with respect to the descent model structure. In particular, the functors $f^*$ and $f_*$ have the functors
    \[Lf^* \colon  D(\cA')\to D(\cA) \text{\quad and \quad} Rf_* \colon  D(\cA)\to D(\cA')\]
    as left and right derived functors respectively, and $Lf^*$ is left adjoint to $Rf_*$.
\end{thm}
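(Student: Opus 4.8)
The plan is to deduce the statement from the cofibrantly generated model structure furnished by \cite[Theorem~2.5]{CD09ModelCategories}, combined with the standard recognition criterion for Quillen adjunctions between cofibrantly generated model categories. First I would record the precise generators: for the descent model structure on $C(\cA)$ — and likewise on $C(\cA')$ — one may take the generating cofibrations to be the set $I=\{f_E[n]\mid E\in\cG,\ n\in\bZ\}$ coming from the definition of $\cG$-cofibration, and one may choose a set $J$ of generating trivial cofibrations whose members are assembled functorially from the objects of $\cG$ together with the complexes of $\cH$ (the $\cG$-ingredients account for the acyclicity of cellular complexes, while the $\cH$-ingredients force the fibrant objects to be exactly the $\cH$-flasque ones). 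The three clauses defining ``$f^*$ satisfies descent'' are tailored to this description: clause~(1) says $f^*$ preserves small colimits, so, being additive, it commutes with shifts and with mapping cones, whence $f^*D(E')\cong D(f^*E')$ and $f^*(f_{E'})\cong f_{f^*E'}$ canonically; clause~(2), namely $f^*E'\cong\bigoplus_i E_i$ with $E_i\in\cG$, then gives $f^*(f_{E'}[n])\cong\bigoplus_i f_{E_i}[n]$, so $f^*$ carries $I'$ into the class of $\cG$-cofibrations and the $\cG$-built part of $J'$ into trivial cofibrations; and clause~(3), $f^*(\cH')\subseteq\cH$, disposes of the $\cH$-built part of $J'$, since $f^*$ sends the generating trivial cofibration of $C(\cA')$ attached to $H'\in\cH'$ to the one of $C(\cA)$ attached to $f^*H'\in\cH$. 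So the first, and essential, task is to verify — using clauses~(1)--(3) — that $f^*$ sends the generating cofibrations and generating trivial cofibrations of $C(\cA')$ to coproducts of those of $C(\cA)$, hence to cofibrations and trivial cofibrations respectively.

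Once this is in hand, the recognition criterion (a small-colimit-preserving functor with a right adjoint is left Quillen as soon as it sends generating cofibrations to cofibrations and generating trivial cofibrations to trivial cofibrations) shows at once that $(f^*,f_*)$ is a Quillen adjunction for the descent model structures. The remaining assertions are then formal: by Ken Brown's lemma $f^*$ preserves quasi-isomorphisms between cofibrant complexes and $f_*$ preserves them between fibrant complexes, so the total derived functors $Lf^*$ and $Rf_*$ exist and are computed by cofibrant, respectively fibrant, replacement; since the weak equivalences of the descent model structure are precisely the quasi-isomorphisms, the homotopy categories of $C(\cA')$ and $C(\cA)$ are the derived categories $D(\cA')$ and $D(\cA)$, so $Lf^*\colon D(\cA')\to D(\cA)$ and $Rf_*\colon D(\cA)\to D(\cA')$; and a Quillen adjunction always induces an adjunction between homotopy categories, which gives $Lf^*\dashv Rf_*$.

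The one genuinely delicate point is the bookkeeping in the first paragraph: one has to use the explicit shape of the generating trivial cofibrations $J$ from the proof of \cite[Theorem~2.5]{CD09ModelCategories} and check that each is built from $\cG$ and $\cH$ in a way visibly preserved by $f^*$, so here one must open up \cite{CD09ModelCategories} rather than cite it as a black box. In all the applications we need, $f^*$ is moreover exact (see Propositions~\ref{Shve^*Exact} and~\ref{HIe^*Exact}), which shortcuts the trivial-cofibration check: every trivial cofibration fits in a short exact sequence with $\cG'$-cofibrant acyclic cokernel, and exactness of $f^*$ together with the already-verified preservation of cofibrations preserves this description; but the statement in the generality above requires the argument at the level of $J$.
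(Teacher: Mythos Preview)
The paper does not give its own proof of this theorem: it is quoted verbatim as \cite[Theorem~2.14]{CD09ModelCategories} and used as a black box. Your sketch is a reasonable outline of the argument one would find in the cited reference (check that $f^*$ sends the generating cofibrations and generating trivial cofibrations of the cellular model structure of \cite[Theorem~2.5]{CD09ModelCategories} to cofibrations and trivial cofibrations, then invoke the standard recognition criterion for a left Quillen functor), but there is nothing in the present paper to compare it against.
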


\begin{rmk}\label{CompositionDerived}
    Let $\cA$, $\cA'$ and $\cA''$ be three Grothendieck categories endowed with descent structures. Let $f'^* \colon  \cA''\to \cA'$ and $f^* \colon  \cA'\to \cA$ be two functors satisfying descent, and let $f'_*$ and $f_*$ be their right adjoints. Then it follows easily from general abstract nonsense about Quillen adjunctions and the preceding theorem that we have canonical isomorphisms of total derived functors
    \[Lf^*\circ Lf'^*\simeq L(f^*\circ f'^*) \text{\quad and \quad }  R(f'_*\circ f_*)\simeq Rf'_*\circ Rf_*.\]
    In fact, under the above assumptions, the functor $f_*$ preserves fibrant objects and fibrant resolutions compute right derived functors. The condition that $f^*$ satisfies descent can be viewed as an unbounded generalization of the condition that $f_* \colon  \cA \to \cA'$ preserves injective objects, or flasque sheaves in sheaf theory.
\end{rmk}

We come back to motivic sheaves. 

\begin{lem}\label{Le^*}
    Let $K/k$ be a field extension. Then we have the following commutative diagram
    \[\xymatrix{
        D(\Shv_\et((\Sm/k)_{\leq n},\Lambda))  \ar[r]^-{L\gamma^*}      \ar[d]_{Le_{\leq n}^*}
        & D(\Shv_\et^\tr(k_{\leq n},\Lambda))  \ar[r]^-{L\sigma_n^*}    \ar[d]_{Le^*_{\leq n}} 
        & D(\Shv_\et^\tr(k,\Lambda))           \ar[d]_{Le^*_\tr}  \\
        D(\Shv_\et((\Sm/K)_{\leq n},\Lambda))  \ar[r]^-{L\gamma^*} 
        & D(\Shv_\et^\tr(K_{\leq n},\Lambda))  \ar[r]^-{L\sigma_n^*}
        & D(\Shv_\et^\tr(K,\Lambda)).
    }\]
\end{lem}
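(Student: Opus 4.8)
The plan is to deduce the lemma from the underived statements via the descent-structure formalism recalled above, so the argument is almost entirely formal. Every arrow in the diagram of Lemma~\ref{Le^*} is a left adjoint: $\gamma^*$ by Proposition~\ref{Shv_et^tr}~(4) (and the same over $(\Sm/k)_{\leq n}$), $\sigma_n^*$ by Lemma~\ref{sigma^*}, and $e^*_\tr$, $e^*_{\leq n}$ by Lemma~\ref{Shve^*}, with $e_{\leq n}^*$ the usual pullback of \'etale sheaves on $(\Sm)_{\leq n}$. Moreover each of these functors \emph{satisfies descent} for the natural \'etale descent structures: on $\Shv_\et^\tr(k,\Lambda)$ one uses $\cG=\{\Lambda_{\tr}(X)\}_{X\in\Sm/k}$, on $\Shv_\et^\tr(k_{\leq n},\Lambda)$ one uses $\cG=\{\Lambda_{\leq n}(X)\}$, on $\Shv_\et((\Sm/k)_{\leq n},\Lambda)$ one uses the representable sheaves, and in all cases $\cH$ is the class of mapping cones of \v{C}ech nerves of \'etale coverings. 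The condition on generators is explicit --- $e^*_\tr\Lambda_{\tr}(X)=\Lambda_{\tr}(X_K)$ and $e^*_{\leq n}\Lambda_{\leq n}(X)=\Lambda_{\leq n}(X_K)$ by Lemma~\ref{Shve^*}, $\sigma_n^*\Lambda_{\leq n}(X)\simeq\Lambda_{\tr}(X)$ for $X\in(\Sm/k)_{\leq n}$ since then the comma category indexing the colimit defining $\sigma_n^*$ has a terminal object, and $\gamma^*$ sends the representable sheaf of $X$ to $\Lambda_{\leq n}(X)$ --- while the condition on $\cH$ holds because base change of schemes, $\pi_0$ and the forgetful functors all send \'etale coverings to \'etale coverings and commute with the fibre products and disjoint unions that build the \v{C}ech nerves.

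Granting this, Remark~\ref{CompositionDerived} tells us that the total left derived functor of a composite of functors satisfying descent is the composite of the total left derived functors. Applying this to both routes around each of the two squares, it suffices to exhibit natural isomorphisms of the \emph{underived} functors
\[\gamma^*\circ e_{\leq n}^*\;\simeq\;e^*_{\leq n}\circ\gamma^*\qquad\text{and}\qquad\sigma_n^*\circ e_{\leq n}^*\;\simeq\;e^*_\tr\circ\sigma_n^*;\]
the isomorphisms on the derived categories are then obtained by transporting these through $L(-)$.

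The second underived isomorphism is precisely Lemma~\ref{Shve^*}~(3). The first is obtained by passing to left adjoints in the equality of right adjoints $\gamma_*\circ e^{\leq n}_*=e^{\leq n}_*\circ\gamma_*$ --- a commuting face of the cube of \'etale sheaves displayed in \S\ref{EST}, reflecting that the forgetful functor commutes with direct image (both being restriction, along the graph functor resp.\ along $\Cor(k_{\leq n})\to\Cor(K_{\leq n})$). This yields the two commutative squares of the lemma, and no coherence issue arises since all the natural isomorphisms in sight are the canonical ones coming from uniqueness of adjoints.

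The one genuinely non-formal point --- and the main thing to be careful about --- is the verification that $\sigma_n^*$ satisfies descent: unlike the other functors it is defined by the colimit $\varinjlim_{\Lambda_{\leq n}(X)\to\sF}\Lambda_{\tr}(X)$ rather than by precomposition, so one must check separately that it carries $\cH$ into $\cH$. This follows because a \v{C}ech nerve complex in $\Shv_\et^\tr(k_{\leq n},\Lambda)$ is built from the representables $\Lambda_{\leq n}(X)$, because $\sigma_n^*$ preserves colimits and sends $\Lambda_{\leq n}(X)$ to $\Lambda_{\tr}(X)$, and because $\pi_0$ and base change preserve the covering data; hence the image is again a \v{C}ech nerve complex and lies in $\cH$.
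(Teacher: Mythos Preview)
Your approach is essentially the paper's: set up descent structures on all six categories, observe that the underived squares commute (\S\ref{EST} and Lemma~\ref{Shve^*}~(3)), check that each functor satisfies descent, and invoke Remark~\ref{CompositionDerived}. Two technical points, however, deserve correction.

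First, you take $\cH$ to be the mapping cones of \v{C}ech nerves of \'etale coverings. The paper instead uses the cones of $\Lambda(Y_\bullet)\to\Lambda(X)$ for \'etale \emph{hypercovers} $Y_\bullet\to X$ on the small site $X_\et$, citing \cite[Example~2.3]{CD09ModelCategories} and Verdier's hypercohomology computation \cite[Expos\'e~V, \S7]{SGA4II}. The distinction matters: condition~(b) in Definition~\ref{descentstruc} --- that $\cH$-flasque implies $\cG$-local --- amounts to saying that hypercohomology can be computed from $\cH$, and for the \'etale topology this requires hypercovers, not merely \v{C}ech covers. With your $\cH$ the pair $(\cG,\cH)$ need not be a descent structure at all, so the model category you implicitly invoke may not exist. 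The fix is simply to replace \v{C}ech nerves by hypercovers throughout; the verification that each functor carries $\cH$ to $\cH$ is then just as easy, since hypercovers on $X_\et$ are built from schemes \'etale over $X$ and all your functors are induced by functors on the underlying categories of schemes.

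Second, in your final paragraph the appearance of $\pi_0$ is a slip: $\sigma_n^*$ goes from $\Shv_\et^\tr(k_{\leq n})$ to $\Shv_\et^\tr(k)$ and involves no $\pi_0$. The reason $\sigma_n^*$ preserves $\cH$ is simply that a hypercover of $X\in(\Sm/k)_{\leq n}$ on $X_\et$ has all terms \'etale over $X$, hence still in $(\Sm/k)_{\leq n}$, and $\sigma_n^*$ sends $\Lambda_{\leq n}(-)$ to $\Lambda_\tr(-)$ termwise.
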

\begin{proof}
    The non-derived version of this commutative diagram can be found in \S \ref{EST}. In \cite[Example 2.3]{CD09ModelCategories}, using Verdier's computation of hypercohomology \cite[Expos\'e~V, \S7]{SGA4II}, Cisinski and D\'eglise showed that there is a descent structure $(\cG,\cH)$ on $\Shv_\et((\Sm/k)_{\leq n},\Lambda)$, where $\cG$ is the essentially small family consisting of sheaves $\Lambda(X)$ with $X\in (\Sm/k)_{\leq n}$ and $\cH$ is the family of mapping cones of $\Lambda(Y_\bullet)\to \Lambda(X)$ for any \'etale hypercover $Y_\bullet \to X$ on the small \'etale site $X_\et$. By \cite[Proposition~2.2.3]{CD16EtaleMotive}, there are similar model structures on $\Shv_\et^\tr(k,\Lambda)$ and $\Shv_\et^\tr(k_{\leq n},\Lambda)$ by replacing $\Lambda(X)$ with $\Lambda_\tr(X)$. By definition, all the functors in the above diagram satisfy descent. Then the expected result follows from Remark~\ref{CompositionDerived}.
\end{proof}

Recall Voevodsky's triangulated category of effective \'etale motives over a field\footnote{In fact, Voevodsky defined and studied the triangulated subcategory $\DM^\eff_{-,\et}$ consisting of complexes that are bounded above over perfect fields (with finite cohomological dimension) in \cite{Voevodsky00DM}, \cite{MVW06Motive}. Unbounded motivic complexes (over a base scheme) were studied in some other places, for example, \cite{ABV09MotShvLAlb}, \cite{Ayoub11Motivic-t-structure}, and the six-functor formalism in the motivic world (e.g., \cite{Ayoub07SixOperationI}, \cite{Ayoub07SixOperationII}, \cite{Ayoub14EtaleRealization}, \cite{CD19MixedMotive}, \cite{CD16EtaleMotive}).}:   $\DM_\et^\eff(k,\Lambda)$ is the homotopy category of the Bousfield localization of $C(\Shv_\et^\tr(k,\Lambda))$ with respect to the class of arrows $\Lambda_\tr(X\times\bA^1)[n]\to \Lambda_\tr(X)[n]$ for $X\in\Sm/k$ and $n\in\bZ$. By the general theory of Bousfield localizations (\cite[4.3.1]{Hirschhorn03ModelCategories}), $\DM_\et^\eff(k,\Lambda)$ is the full subcategory of $D(\Shv_\et^\tr(k,\Lambda))$ whose objects are the $\bA^1$-local complexes (also called motivic complexes), i.e., the complexes $C$ such that 
\[\Hom_{D(\Shv_\et^\tr(k,\Lambda))}(\Lambda_\tr(X),C[m]) \simeq \Hom_{D(\Shv_\et^\tr(k,\Lambda))}(\Lambda_\tr(\bA_X^1),C[m]).\]
Denote by $L_{\bA^1}$ the $\bA^1$-localization functor, which is left adjoint to the obvious inclusion $\DM_\et^\eff(k,\Lambda)\hookrightarrow D(\Shv_\et^\tr(k,\Lambda))$.

\begin{defn}
    \begin{enumerate}[leftmargin=*,label={\rm(\arabic*)}]
        \item Denote $M(X)$ the object $L_{\bA^1}(\Lambda_\tr(X)[0])$ for $X\in\Sm/k$ and call it the homological motive of $X$.\menum
        \item Denote by $\DM_{\leq n}(k,\Lambda)$ the localizing subcategory of $\DM_\et^\eff(k,\Lambda)$ generated by $M(X)$ for $X\in (\Sm/k)_{\leq n}$. We will call it the triangulated category of $n$-motives.
    \end{enumerate}
\end{defn}

\begin{lem}\label{Le^*DM}
    Let $K/k$ be a field extension. Then we have the following commutative diagram
    \[\xymatrix{
        D(\Shv_\et((\Sm/k)_{\leq n},\Lambda))  \ar[r]^-{L\gamma^*}      \ar[d]_{Le_{\leq n}^*}
        & D(\Shv_\et^\tr(k_{\leq n},\Lambda))  \ar[r]^-{L_{\bA^1}\circ L\sigma_n^*}    \ar[d]_{Le^*_{\leq n}} 
        & \DM_{\leq n}(k,\Lambda)              \ar[d]_{e^*_\DM}  \\
        D(\Shv_\et((\Sm/K)_{\leq n},\Lambda))  \ar[r]^-{L\gamma^*} 
        & D(\Shv_\et^\tr(K_{\leq n},\Lambda))  \ar[r]^-{L_{\bA^1}\circ L\sigma_n^*}
        & \DM_{\leq n}(k,\Lambda),
    }\]
    where $e^*_{\DM}$ maps $M(X)$ to $M(X_K)$.
\end{lem}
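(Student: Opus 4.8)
The plan is to obtain the diagram of Lemma~\ref{Le^*DM} from the one in Lemma~\ref{Le^*} by post-composing each of its columns with the $\bA^1$-localization functor $L_{\bA^1}$, and to read off from this the functor $e^*_\DM$. Apart from one point, everything will be formal manipulation of Bousfield localizations together with Remark~\ref{CompositionDerived}.

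First I would construct $e^*_\DM \colon \DM_\et^\eff(k,\Lambda)\to\DM_\et^\eff(K,\Lambda)$. Since $e^*_\tr$ is exact (Proposition~\ref{Shve^*Exact}), the derived functor $Le^*_\tr$ is computed degreewise, and by Lemma~\ref{Shve^*}(1) together with $(X\times_k\bA^1_k)_K\simeq X_K\times_K\bA^1_K$ it carries the localizing arrow $\Lambda_\tr(X\times\bA^1)[m]\to\Lambda_\tr(X)[m]$ over $k$ to the localizing arrow $\Lambda_\tr(X_K\times\bA^1)[m]\to\Lambda_\tr(X_K)[m]$ over $K$. The latter becomes invertible in $\DM_\et^\eff(K,\Lambda)$, so the composite $L_{\bA^1}\circ Le^*_\tr$ inverts every defining arrow of the Bousfield localization, hence factors (uniquely up to natural isomorphism) through $L_{\bA^1}\colon D(\Shv_\et^\tr(k,\Lambda))\to\DM_\et^\eff(k,\Lambda)$ (cf.\ \cite[4.3.1]{Hirschhorn03ModelCategories}); I would let $e^*_\DM$ be this factorization, so that $e^*_\DM\circ L_{\bA^1}\simeq L_{\bA^1}\circ Le^*_\tr$. (Equivalently, $e^*_\tr$ stays left Quillen for the left Bousfield localizations of the descent model structures defining $\DM_\et^\eff$, and $e^*_\DM=\mathbf{L}e^*_\tr$ for those.) Evaluating $e^*_\DM\circ L_{\bA^1}\simeq L_{\bA^1}\circ Le^*_\tr$ on $\Lambda_\tr(X)$ then gives $e^*_\DM(M(X))\simeq M(X_K)$; and since $e^*_\DM$ preserves small coproducts and exact triangles (being the left derived functor of a left Quillen functor), and $\dim X_K\le n$ whenever $\dim X\le n$, it maps the generating set $\{M(X):\dim X\le n\}$ of $\DM_{\leq n}(k,\Lambda)$ into $\DM_{\leq n}(K,\Lambda)$; the full subcategory of objects sent into $\DM_{\leq n}(K,\Lambda)$ being localizing, $e^*_\DM$ restricts to $\DM_{\leq n}(k,\Lambda)\to\DM_{\leq n}(K,\Lambda)$.

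Next I would check commutativity. The left-hand square is literally the left-hand square of Lemma~\ref{Le^*}. For the right-hand one I would post-compose the right-hand square of Lemma~\ref{Le^*}, i.e.\ $L\sigma_n^*\circ Le^*_{\leq n}\simeq Le^*_\tr\circ L\sigma_n^*$, with $L_{\bA^1}$ and invoke the factorization just obtained:
\[
L_{\bA^1}\circ L\sigma_n^*\circ Le^*_{\leq n}\;\simeq\; L_{\bA^1}\circ Le^*_\tr\circ L\sigma_n^*\;\simeq\; e^*_\DM\circ L_{\bA^1}\circ L\sigma_n^*.
\]
Along the way I would record that the horizontal composites land in $\DM_{\leq n}$: for $X\in(\Sm/k)_{\leq n}$ one has $L\gamma^*\Lambda(X)=\Lambda_{\leq n}(X)$ and, by Lemma~\ref{sigma^*}, $L\sigma_n^*\Lambda_{\leq n}(X)=\sigma_n^*\Lambda_{\leq n}(X)=\Lambda_\tr(X)$ (the colimit in Lemma~\ref{sigma^*} is over a category with a final object, namely $\id\colon\Lambda_{\leq n}(X)\to\Lambda_{\leq n}(X)$), so $L_{\bA^1}L\sigma_n^*L\gamma^*\Lambda(X)=M(X)\in\DM_{\leq n}(k,\Lambda)$; since the composite preserves coproducts and $\Shv_\et((\Sm/k)_{\leq n},\Lambda)$ is generated by these $\Lambda(X)$, it factors through $\DM_{\leq n}(k,\Lambda)$. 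Pasting the two squares then yields the asserted diagram, with $e^*_\DM$ mapping $M(X)$ to $M(X_K)$.

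The main obstacle — though a mild one — is the first step: verifying that $Le^*_\tr$ sends the $\bA^1$-localizing maps over $k$ to $\bA^1$-equivalences over $K$, that is, the compatibility of $e^*_\tr$ with the assignment $X\mapsto\Lambda_\tr(X)$ and with the formation of $X\times\bA^1$. This rests on the exactness of $e^*_\tr$ (Proposition~\ref{Shve^*Exact}) and on Lemma~\ref{Shve^*}(1); once it is in hand, the construction of $e^*_\DM$ and the commutativity of the diagram follow from the universal property of Bousfield localization and Remark~\ref{CompositionDerived}.
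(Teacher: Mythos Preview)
Your proposal is correct and follows essentially the same route as the paper: the paper obtains the square $e^*_\DM\circ L_{\bA^1}\simeq L_{\bA^1}\circ Le^*_\tr$ by citing \cite[2.2.4]{CD16EtaleMotive} and the fact that $L_{\bA^1}\circ L\sigma_n^*$ lands in $\DM_{\leq n}$ by citing \cite[\S 2.2]{ABV09MotShvLAlb}, then concludes via Lemma~\ref{Le^*}, whereas you unpack these two citations directly (Bousfield-localization factorization for the first, a generator argument for the second). The only cosmetic point is that your last factorization argument should be phrased at the level of the derived category (the $\Lambda(X)[m]$ generate $D(\Shv_\et((\Sm/k)_{\leq n},\Lambda))$ as a localizing subcategory), but this is clearly what you mean.
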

\begin{proof}
    By \cite[2.2.4]{CD16EtaleMotive}, we have the following commutative diagram
    \[\xymatrix{
        D(\Shv_\et^\tr(k,\Lambda))  \ar[d]_{Le^*_\tr} \ar[r]^-{L_{\bA^1}}
        & \DM_\et^\eff(k,\Lambda)   \ar[d]_{e^*_\DM}  \\
        D(\Shv_\et^\tr(K,\Lambda))  \ar[r]^-{L_{\bA^1}}                    
        & \DM_\et^\eff(K,\Lambda),
    }\]
    where $e^*_\DM$ maps $M(X)$ to $M(X_K)$. By \cite[\S 2.2]{ABV09MotShvLAlb}, $L_{\bA^1}\circ L\sigma_n^* \colon  D(\Shv_\et^\tr(k_{\leq n},\Lambda)) \to \DM_\et^\eff(k,\Lambda)$ takes values in the subcategory $\DM_{\leq n}(k,\Lambda)\hookrightarrow \DM_\et^\eff(k,\Lambda)$. Then this assertion follows from Lemma~\ref{Le^*}.
\end{proof}

Now, we focus on $1$-motivic sheaves.
\begin{prop}[{\cite[Corollary~1.3.5]{ABV09MotShvLAlb}}]\label{HI1}
    The category $\HI_{\leq 1}(k,\Lambda)$ is a Serre subcategory of $\Shv_\et^\tr(k,\Lambda)$. In particular, the inclusion $\iota\iota_1 \colon  \HI_{\leq 1}(k,\Lambda)\hookrightarrow \Shv_\et^\tr(k,\Lambda)$ is exact.
\end{prop}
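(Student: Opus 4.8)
The plan is to reduce the Serre property of $\HI_{\leq1}(k,\Lambda)$ to the already-established Serre property of $\HI_{\leq0}(k,\Lambda)$ (Proposition~\ref{HI0}(3)) by means of the connected--\'etale filtration, the genuine geometric input being the subgroup and quotient theory of semi-abelian varieties developed above. First I would observe that, since $\HI_\et(k,\Lambda)$ is thick in $\Shv_\et^\tr(k,\Lambda)$ (Theorem~\ref{HIs=HI}(2)), kernels, cokernels and extensions of objects of $\HI_\et$ agree whether computed in $\HI_\et$ or in $\Shv_\et^\tr$; so it suffices to show that $\HI_{\leq1}(k,\Lambda)$ is closed under subobjects, quotients and extensions inside the abelian category $\HI_\et(k,\Lambda)$. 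The ``in particular'' is then automatic from Definition~\ref{subcategories} and the remark following it, since a Serre subcategory is a fully abelian subcategory and hence the embedding $\iota\iota_1$ is exact.

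Next I would set up the connected--\'etale filtration: for $\sF\in\HI_\et$ one has $0\to\sF^0\to\sF\to\pi_0\sF\to0$ with $\sF^0:=\ker(\sF\to\pi_0\sF)$, where $\pi_0$ is the (right exact) left adjoint of $\HI_{\leq0}\hookrightarrow\Shv_\et^\tr$ (Proposition~\ref{HI0}(2)); call $\sF$ \emph{connected} when $\pi_0\sF=0$. Using the computation of $h_0^\et(X)$ for $\dim X\leq1$ together with the fact that such objects generate $\HI_{\leq1}$ under colimits and quotients, one checks that for $\sF\in\HI_{\leq1}$ the quotient $\pi_0\sF$ is $0$-motivic and the kernel $\sF^0$ is again $1$-motivic (a ``connected $1$-motivic sheaf''); that the connected $1$-motivic sheaves are the filtered colimits of sheaves $\underline G$ represented by semi-abelian varieties, and are closed under quotients and extensions in $\HI_\et$ (immediate from right exactness of $\pi_0$); and --- the substantive point --- that \emph{every subsheaf of a connected $1$-motivic sheaf is again $1$-motivic}. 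The last statement reduces, via the generators and base change to $k_s$, to the claim that an fppf-subsheaf of $\underline G$ for $G$ a semi-abelian variety is represented by a group scheme $H$ whose neutral component is a sub-semi-abelian-variety of $G$ and whose group of components is a lattice --- and this is precisely where Lemma~\ref{SubQuotSAV} and Proposition~\ref{SAVsubRed} enter. (The $0$-motivic correction is unavoidable: $\mu_n\subset\underline{\bG_m}$ already shows that the connected $1$-motivic sheaves are not themselves closed under subobjects.) The most efficient packaging of all this is a structure theorem identifying $\HI_{\leq1}(k,\Lambda)$, compatibly with the embedding into $\Shv_\et^\tr$, with the explicit category of two-term complexes $[L\to G]$ with $L$ a constructible group scheme and $G$ a semi-abelian variety, which is visibly a fully abelian subcategory.

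Granting the above, the three closure properties follow by routine chases with the connected--\'etale sequences and the snake/five lemmas: for a short exact sequence $0\to\sF'\to\sF\to\sF''\to0$ in $\HI_\et$ with the appropriate terms known to be $1$-motivic, one compares connected parts and $\pi_0$-parts and uses that connected $1$-motivic sheaves are closed under quotients and extensions, that subsheaves of connected $1$-motivic sheaves are $1$-motivic, and that $\HI_{\leq0}$ is Serre, concluding each time that the sheaf in question is an extension of a $0$-motivic sheaf by a connected $1$-motivic one, hence $1$-motivic. The main obstacle is everything in the second paragraph: the structure theory of $1$-motivic sheaves, and within it the control of fppf-subsheaves of a semi-abelian variety through Lemma~\ref{SubQuotSAV} and Proposition~\ref{SAVsubRed}, together with the mild bootstrapping needed to disentangle the three closure properties (closure under quotients being the most accessible, then subobjects and extensions following from it). Once that is in place, the reduction to the $0$-motivic case and the diagram chases are purely formal.
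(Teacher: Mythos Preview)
The paper does not prove this proposition: it is simply quoted from \cite[Corollary~1.3.5]{ABV09MotShvLAlb}, with no argument given. So there is no ``paper's own proof'' to compare your proposal against.

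That said, a few comments on your sketch. The overall strategy --- reduce to $\HI_\et$ via Theorem~\ref{HIs=HI}(2), then use the connected--\'etale decomposition and the subgroup/quotient theory of semi-abelian varieties --- is in the spirit of how \cite{ABV09MotShvLAlb} actually proceeds. However, two points are off. First, your ``efficient packaging'' identifying $\HI_{\leq 1}$ with a category of two-term complexes $[L\to G]$ is a confusion: that description belongs to the heart $\MM_1$ of the $1$-motivic $t$-structure (Section~\ref{t-structure}), not to $\HI_{\leq 1}$, which is a category of sheaves concentrated in a single degree. The correct structural statement is Proposition~\ref{StructureHI1}: finitely presented $1$-motivic sheaves sit in short exact sequences $0\to\underline{\sL}\to\underline{\sG}\to\sF\to0$, and general ones are filtered colimits of these. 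Second, in the paper's logical order, both Proposition~\ref{StructureHI1} and Lemma~\ref{connected} (which you invoke) come \emph{after} Proposition~\ref{HI1}; in \cite{ABV09MotShvLAlb} the Serre property is established first and the structure theory is then built on top of it, so your order of dependence is reversed relative to the source. If you want a self-contained argument you would need to establish the structural input (subsheaves of $\underline G$ are representable by semi-abelian group schemes, etc.) directly from the definition of $\HI_{\leq 1}$ via $h_0^\et\sigma_1^*\sigma_{1*}$, rather than by quoting results that already presuppose this proposition.
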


\begin{cor}\label{HI1exact}
    \begin{enumerate}[leftmargin=*,label={\rm(\arabic*)}]
        \item $\HI_{\leq 0}(k,\Lambda)$ is a Serre subcategory of $\HI_{\leq 1}(k,\Lambda)$. \menum
        \item The fully faithful functor $\sigma_{1*}\iota\iota_1 \colon  \HI_{\leq 1}(k,\Lambda) \hookrightarrow \Shv_\et^\tr(k_{\leq 1},\Lambda)$ is exact.
        \item The inverse image functor $e^*_1 \colon  \HI_{\leq 1}(k,\Lambda) \to \HI_{\leq 1}(K,\Lambda)$ (in Lemma~\ref{HIne^*}) is exact.
    \end{enumerate}
\end{cor}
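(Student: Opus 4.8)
The three parts will all follow by assembling subcategory and exactness statements already established in the excerpt, with no new input; the plan is essentially bookkeeping. For (1), the plan is to isolate the elementary fact that a Serre subcategory $\cB$ of an abelian category $\cA$ is automatically a Serre subcategory of any intermediate fully abelian subcategory $\cB \subseteq \cC \subseteq \cA$: since the inclusion $\cC \hookrightarrow \cA$ is exact, monomorphisms, epimorphisms and short exact sequences in $\cC$ remain such in $\cA$, so closure of $\cB$ under subobjects, quotients and extensions in $\cA$ transports verbatim to $\cC$. I would then apply this with $\cA = \Shv_\et^\tr(k,\Lambda)$, $\cC = \HI_{\leq 1}(k,\Lambda)$ (whose inclusion into $\cA$ is exact by Proposition~\ref{HI1}), and $\cB = \HI_{\leq 0}(k,\Lambda)$, which is Serre in $\cA$ by Proposition~\ref{HI0}~(3) and is contained in $\HI_{\leq 1}(k,\Lambda)$ since a $0$-motivic sheaf is $1$-motivic (Remark~\ref{HInShvn}, or \cite{ABV09MotShvLAlb}).

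For (2), the plan is simply to factor $\sigma_{1*}\iota\iota_1$ as $\sigma_{1*}\circ(\iota\iota_1)$: the second factor is exact by Proposition~\ref{HI1}, and $\sigma_{1*}\colon \Shv_\et^\tr(k,\Lambda)\to\Shv_\et^\tr(k_{\leq 1},\Lambda)$ is exact because the inclusion $(\Sm/k)_{\leq 1}\hookrightarrow \Sm/k$ is co-continuous (noted in \S\ref{EST}). Hence the composite is exact; its full faithfulness is Lemma~\ref{HInEmbedShvn}.

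For (3), the plan is to transport exactness of $e^*_\tr$ across the embedding $\iota\iota_1$. First I would assemble a natural isomorphism $\iota\iota_1\circ e_1^* \simeq e^*_\tr\circ\iota\iota_1$ of functors $\HI_{\leq 1}(k,\Lambda)\to\Shv_\et^\tr(K,\Lambda)$, by composing the right-hand square of Lemma~\ref{HIne^*} (for $n=1$) with $\iota$ and feeding in the commutative square of Proposition~\ref{HIe^*Exact}. Since $\iota\iota_1$ over $k$ (Proposition~\ref{HI1}) and $e^*_\tr$ (Proposition~\ref{Shve^*Exact}) are both exact, the composite $e^*_\tr\circ\iota\iota_1 \simeq \iota\iota_1\circ e_1^*$ is exact; and because $\HI_{\leq 1}(K,\Lambda)$ is a fully abelian subcategory of $\Shv_\et^\tr(K,\Lambda)$, kernels and cokernels in it agree with those computed in $\Shv_\et^\tr(K,\Lambda)$, so the fully faithful exact functor $\iota\iota_1$ also reflects exact sequences. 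Hence $e_1^*$ is exact.

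The only steps requiring a little attention are the commutativity of the square in (3) — a short diagram chase combining Lemma~\ref{HIne^*} with Proposition~\ref{HIe^*Exact} — and the observation that a fully faithful exact functor onto a fully abelian subcategory reflects exact sequences. I do not expect any genuine obstacle.
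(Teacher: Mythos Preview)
Your proposal is correct and follows essentially the same route as the paper. The only cosmetic difference is in (3): the paper works with the square $\iota_1\circ e_1^*\simeq e_\HI^*\circ \iota_1$ inside $\HI_\et$ (using exactness of $\iota_1$ from Propositions~\ref{HI1} and~\ref{HIs=HI} and exactness of $e_\HI^*$ from Proposition~\ref{HIe^*Exact}), whereas you compose one layer further to $\iota\iota_1\circ e_1^*\simeq e_\tr^*\circ\iota\iota_1$ inside $\Shv_\et^\tr$; both squares are available and the argument is the same.
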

\begin{proof}
    By Propositions~\ref{HI0} and \ref{HI1}, $\HI_{\leq 0}(k,\Lambda)$ and $\HI_{\leq 1}(k,\Lambda)$ are both Serre subcategories of $\Shv_\et^\tr(k,\Lambda)$, which implies the first assertion.

    The second assertion holds because the functors $\sigma_{1*}$ and $\iota\iota_1$ are both exact.

    By Propositions~\ref{HI1} and \ref{HIs=HI}, the inclusion functor $\iota_1 \colon  \HI_{\leq 1} \hookrightarrow \HI_\et$ is exact. By Proposition~\ref{HIe^*Exact}, the inverse image functor for $\HI_\et$ is exact. Then the exactness of $e^*_1$ follows from the natural isomorphism $\iota_1\circ e^*_1\simeq e^*_\HI\circ\iota_1$ (Lemma~\ref{HIne^*}). 
\end{proof}

Since the functors in the above lemma are exact, they can be derived trivially. 

\begin{thm}[{\cite[Theorem~2.4.1 and Corollary~2.4.9]{ABV09MotShvLAlb}}] 
    The derived functor 
    \[\iota\iota_1 \colon   D(\HI_{\leq 1}(k,\Lambda)) \to D(\Shv_\et^\tr(k,\Lambda))\]
    is fully faithful, and the essential image is the subcategory $\DM_{\leq 1}(k,\Lambda)$.
\end{thm}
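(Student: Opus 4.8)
The plan is to realize $\iota\iota_1$ as the realization functor attached to a $t$-structure on $\DM_{\leq 1}(k,\Lambda)$, and then to prove that this realization is an equivalence whose essential image visibly contains the defining generators. First I would set up the $t$-structure. By the Voevodsky--Suslin Theorem~\ref{HIs=HI} the cohomology sheaves (for the standard $t$-structure on $D(\Shv_\et^\tr(k,\Lambda))$) of an $\bA^1$-local complex are strictly homotopy invariant, so that standard $t$-structure restricts to one on $\DM_\et^\eff(k,\Lambda)$ with heart $\HI_\et(k,\Lambda)$; moreover, $\DM_\et^\eff(k,\Lambda)$ being a localizing subcategory of $D(\Shv_\et^\tr(k,\Lambda))$ closed under these truncations, it contains every complex whose cohomology sheaves are strictly homotopy invariant. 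Using that $\HI_{\leq 1}(k,\Lambda)$ is a Serre subcategory of $\Shv_\et^\tr(k,\Lambda)$ (Proposition~\ref{HI1}) one then checks that $\DM_{\leq 1}(k,\Lambda)$ is stable under these truncations as well: it suffices to verify this on the generators $M(X)$ with $\dim X\le 1$, whose cohomology sheaves are the Suslin homology sheaves $h_i^\et(X)$, which vanish for $i\ge 2$ and, for $i=0,1$, are $1$-motivic ($h_0^\et(X)$ by the characterization in Remark~\ref{HInShvn}, and $h_1^\et(X)$ by the generalized Albanese computation of Ayoub and Barbieri-Viale \cite{ABV09MotShvLAlb}). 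Finally the heart of the resulting $t$-structure consists of the homotopy invariant sheaves lying in $\DM_{\leq 1}(k,\Lambda)$, and one identifies these with the $1$-motivic sheaves, again via Remark~\ref{HInShvn} together with the generation of $\DM_{\leq 1}(k,\Lambda)$ by the $M(X)$.

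With the $t$-structure in place, the derived functor $\iota\iota_1\colon D(\HI_{\leq 1}(k,\Lambda))\to D(\Shv_\et^\tr(k,\Lambda))$ factors through $\DM_\et^\eff(k,\Lambda)$ (a complex of $1$-motivic sheaves has $1$-motivic, hence strictly homotopy invariant, cohomology, as $\HI_{\leq 1}$ is Serre) and then through $\DM_{\leq 1}(k,\Lambda)$, and the resulting triangulated, coproduct-preserving functor $\rho\colon D(\HI_{\leq 1}(k,\Lambda))\to\DM_{\leq 1}(k,\Lambda)$ restricts to the identity on the heart, hence is canonically the realization functor of the $t$-structure. Since the inclusions $\DM_{\leq 1}(k,\Lambda)\hookrightarrow\DM_\et^\eff(k,\Lambda)\hookrightarrow D(\Shv_\et^\tr(k,\Lambda))$ are fully faithful (the second is a Bousfield localization), $\iota\iota_1$ is fully faithful if and only if $\rho$ is, with the same essential image up to these identifications. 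As $\rho$ is the derived functor of an exact functor, its full faithfulness is equivalent to the assertion that for all $\sF,\sG\in\HI_{\leq 1}(k,\Lambda)$ and all $n\ge 0$ the canonical map
\[\Ext^n_{\HI_{\leq 1}(k,\Lambda)}(\sF,\sG)\longrightarrow \Ext^n_{\Shv_\et^\tr(k,\Lambda)}(\sF,\sG)\]
is an isomorphism; that is, Ext-groups of $1$-motivic sheaves should be computable indifferently in the Serre subcategory $\HI_{\leq 1}$ or in the ambient category $\Shv_\et^\tr$.

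This Ext comparison is the heart of the matter. For $n=0,1$ it is automatic for a Serre subcategory. For $n\ge 2$ I would invoke the effaceability criterion: it is enough to kill every class in $\Ext^n_{\Shv_\et^\tr(k,\Lambda)}(\sF,\sG)$ by a monomorphism $\sG\hookrightarrow\sG'$ with $\sG'$ again $1$-motivic. Here the structure theory of $1$-motivic sheaves enters: every such sheaf is a filtered colimit of sheaves $h_0^\et(C)$, $C$ a smooth curve, with $1$-motivic kernels, and sits in a connected--\'etale sequence $0\to\sG^0\to\sG\to\pi_0(\sG)\to 0$ with $\sG^0$ of semiabelian type and $\pi_0(\sG)$ $0$-motivic. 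Reducing both variables to these building blocks, and using that smooth curves have \'etale cohomological dimension $\le 2$ while the Suslin complex of a strictly homotopy invariant sheaf is concentrated in homological degree $0$, one produces the required effacement explicitly. The point is that all relevant cohomology lives in degrees $0$ and $1$: this finite amplitude, special to dimension $\le 1$, is exactly what makes the comparison go through (it already fails for $\HI_\et$ itself).

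Granting full faithfulness of $\rho$, its essential image is a localizing subcategory of $\DM_{\leq 1}(k,\Lambda)$ (since $\rho$ is triangulated and preserves coproducts, and $D(\HI_{\leq 1}(k,\Lambda))$ has all coproducts) containing $h_0^\et(X)$ and $h_1^\et(X)$ for every $X$ with $\dim X\le 1$; the truncation triangle with vertices $h_1^\et(X)[1]$, $M(X)$, $h_0^\et(X)$ then shows it contains every generator $M(X)$ of $\DM_{\leq 1}(k,\Lambda)$, so it is all of $\DM_{\leq 1}(k,\Lambda)$ and $\rho$ is an equivalence; translating back, $\iota\iota_1$ is fully faithful with essential image $\DM_{\leq 1}(k,\Lambda)$. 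Passing from bounded to unbounded complexes is routine, via homotopy colimits of truncations and compact generation of both sides. The main obstacle is the Ext/effaceability step: a Serre inclusion need not preserve higher Ext-groups, and making the comparison work genuinely requires both the fine structure of $1$-motivic sheaves (generation by curves with $1$-motivic syzygies, the connected--\'etale decomposition) and the finite cohomological dimension peculiar to dimension $\le 1$.
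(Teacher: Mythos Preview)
The paper does not prove this statement; it merely cites it from \cite[Theorem~2.4.1 and Corollary~2.4.9]{ABV09MotShvLAlb}. Your proposal is therefore not a comparison target for a proof in the present paper, but rather a sketch of how one might recover the cited result. As such, your outline is broadly faithful to the strategy of Ayoub and Barbieri-Viale: restrict the homotopy $t$-structure to $\DM_{\leq 1}$, identify the heart with $\HI_{\leq 1}$, and prove that the realization functor is an equivalence by an Ext comparison established via effaceability, using the structure theory of $1$-motivic sheaves and the finite cohomological amplitude special to dimension $\leq 1$.

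A few points deserve care if you want to turn this into an actual proof. First, the claim that ``smooth curves have \'etale cohomological dimension $\leq 2$'' is false over a general base field $k$ (the cohomological dimension of $k$ enters); what is actually used in \cite{ABV09MotShvLAlb} is the vanishing of higher Ext groups between specific $1$-motivic sheaves, obtained by explicit computations with lattices, semi-abelian varieties, and their torsion, rather than a blanket cohomological dimension bound. Second, your identification of the heart of the restricted $t$-structure with $\HI_{\leq 1}$ is asserted but not argued: one must show that every homotopy invariant sheaf lying in $\DM_{\leq 1}$ is $1$-motivic, which is part of the content of the cited result and not immediate from Remark~\ref{HInShvn}. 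Third, the passage from bounded to unbounded complexes, which you call ``routine'', is exactly where \cite{ABV09MotShvLAlb} invokes compact generation and the specific form of the generators; the argument is standard but not content-free. None of these is a fatal gap in your plan, but each requires the actual work done in the cited reference.
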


The following result reduces the study of higher direct images of $1$-motivic sheaves to the study of higher direct images of sheaves on the site $(\Sm/k)_{\leq 1,\et}$.
\begin{prop}\label{HI1e_*curve}
    The following diagram is commutative
    \[\xymatrix{
        D(\HI_{\leq 1}(K,\Lambda))               \ar[d]_{Re_{1*}}        \ar[r]^-{\sigma_{1*}\iota\iota_1} 
        & D(\Shv_\et^\tr(K_{\leq 1},\Lambda))    \ar[d]_{Re^{\leq 1}_*}  \ar[r]^-{\gamma_*}                       
        & D(\Shv_\et((\Sm/K)_{\leq 1},\Lambda))  \ar[d]_{Re_{*}^{\leq 1}}\\
        D(\HI_{\leq 1}(k,\Lambda))               \ar[r]^-{\sigma_{1*}\iota\iota_1}  
        & D(\Shv_\et^\tr(k_{\leq 1},\Lambda))    \ar[r]^-{\gamma_*}
        & D(\Shv_\et((\Sm/k)_{\leq 1},\Lambda)).
    }\]
\end{prop}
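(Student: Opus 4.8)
The plan is to reduce the proposition to the non-derived commutativity already recorded in \S\ref{EST} and \S\ref{HI_leqn}, combined with the descent-structure machinery recalled above: every horizontal functor in the diagram is exact, hence derives trivially, and the inverse-image functors that serve as left adjoints of both the horizontal and the vertical functors all satisfy descent, so total right derived functors compose as expected. Concretely, I would first record that, without derived functors, the right-hand square is a face of the cube of \S\ref{EST} restricted to dimension $\leq 1$, namely $\gamma_*\circ e_*^{\leq 1}=e_*^{\leq 1}\circ\gamma_*$, while the left-hand square is the right-hand square of the last commutative diagram of \S\ref{HI_leqn}, namely $\sigma_{1*}\iota\iota_1\circ e_{1*}=e_*^{\leq 1}\circ\sigma_{1*}\iota\iota_1$. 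Passing to left adjoints, using $(\gamma^*,\gamma_*)$, $(e^*_{\leq 1},e_*^{\leq 1})$ (with and without transfers), $(h_0^\et\sigma_1^*,\sigma_{1*}\iota\iota_1)$ from Lemma~\ref{HInEmbedShvn}, and $(e^*_1,e_{1*})$, I obtain the commuting left-adjoint squares $\gamma^*\circ e^*_{\leq 1}=e^*_{\leq 1}\circ\gamma^*$ and $h_0^\et\sigma_1^*\circ e^*_{\leq 1}=e^*_1\circ h_0^\et\sigma_1^*$.

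Next I would equip all six categories with descent structures. For the four sheaf categories appearing in the middle and right columns this is exactly the one used in the proof of Lemma~\ref{Le^*}: representable generators $\Lambda(X)$, resp. $\Lambda_{\leq 1}(X)$, with $\cH$ the cones of étale hypercovers; and $\gamma^*$ and $e^*_{\leq 1}$ (with and without transfers) satisfy descent there. For $\HI_{\leq 1}(k,\Lambda)$ and $\HI_{\leq 1}(K,\Lambda)$ I would take the descent structure with generators the $h_0^\et(C)$, $C$ a smooth curve, as in \cite{ABV09MotShvLAlb} (the one underlying the identification $D(\HI_{\leq 1})\simeq\DM_{\leq 1}$). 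One then checks that $h_0^\et\sigma_1^*$, which sends $\Lambda_{\leq 1}(X)$ to $h_0^\et(X)$, and $e^*_1$, which sends $h_0^\et(C)$ to $h_0^\et(C_K)$ by Lemma~\ref{HIne^*}, both satisfy descent: being left adjoints they commute with colimits, they visibly send generators to generators, and they carry the acyclic $\cH$-complexes to acyclic complexes.

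It remains to assemble these. Applying Remark~\ref{CompositionDerived} to each of the two composable pairs of left adjoints from the first step, I get $R(\gamma_*\circ e_*^{\leq 1})\simeq R\gamma_*\circ Re_*^{\leq 1}$ and $R(e_*^{\leq 1}\circ\gamma_*)\simeq Re_*^{\leq 1}\circ R\gamma_*$, and likewise $R(\sigma_{1*}\iota\iota_1\circ e_{1*})\simeq R(\sigma_{1*}\iota\iota_1)\circ Re_{1*}$ and $R(e_*^{\leq 1}\circ\sigma_{1*}\iota\iota_1)\simeq Re_*^{\leq 1}\circ R(\sigma_{1*}\iota\iota_1)$. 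Since $\gamma_*$ is exact (Proposition~\ref{Shv_et^tr}(3)) and $\sigma_{1*}\iota\iota_1$ is exact (Corollary~\ref{HI1exact}(2)), one has $R\gamma_*=\gamma_*$ and $R(\sigma_{1*}\iota\iota_1)=\sigma_{1*}\iota\iota_1$; and the underived composites around each square coincide by the first step, so their total derived functors are canonically isomorphic. This is exactly the commutativity of both squares, hence of the whole diagram.

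The step I expect to be the main obstacle is the part of the second step concerning $\HI_{\leq 1}$: one must produce a descent structure on $C(\HI_{\leq 1}(k,\Lambda))$, uniformly enough in the base field that $e^*_1$ respects it, and verify that the inexact reflector $h_0^\et\sigma_1^*$ satisfies descent — the delicate point being that it carries the acyclic complexes of that descent structure (cones of étale hypercovers, after $\bA^1$-localization) to acyclic complexes. This is where étale hyperdescent and $\bA^1$-invariance of $h_0^\et$ enter. Once it is in place, the rest is the formal bookkeeping of Quillen adjunctions packaged in Remark~\ref{CompositionDerived}.
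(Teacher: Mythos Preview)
Your overall strategy is sound and would work, but the paper takes a shortcut that neatly avoids the obstacle you single out. Instead of building a descent structure directly on $C(\HI_{\leq 1})$ and checking that $h_0^\et\sigma_1^*$ satisfies descent, the paper first proves the left-adjoint diagram with $\DM_{\leq 1}$ in place of $D(\HI_{\leq 1})$ (this is Lemma~\ref{Le^*DM} for $n=1$): there the model structure is the $\bA^1$-Bousfield localization of the descent model structure on $C(\Shv_\et^\tr)$, so $L_{\bA^1}\circ L\sigma_1^*$ is already a left Quillen functor and no new descent structure has to be produced. The passage to $D(\HI_{\leq 1})$ is then made a posteriori via the Ayoub--Barbieri-Viale equivalence $\iota\iota_1\colon D(\HI_{\leq 1})\xrightarrow{\sim}\DM_{\leq 1}$, together with the observation that $e_1^*$ is exact (Corollary~\ref{HI1exact}(3)) and hence derives trivially, so that under this equivalence $e_1^*$ corresponds to $e^*_\DM$. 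Taking right adjoints of the resulting commutative square of left adjoints gives the proposition.

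What this buys is exactly the circumvention of your ``main obstacle'': the étale hyperdescent and $\bA^1$-invariance you would need to check for $h_0^\et\sigma_1^*$ are already packaged into the Bousfield-localized model structure defining $\DM_\et^\eff$, and the identification of $\DM_{\leq 1}$ with $D(\HI_{\leq 1})$ is quoted as a black box rather than reproved at the level of descent structures. Your approach is not wrong, but it reproves part of the content of the ABV equivalence; the paper's route is shorter precisely because it relies on that equivalence.
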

\begin{proof}
    Since $e^*_1$ is exact by Corollary~\ref{HI1exact}~(3), the functor $e^*_\DM \colon  \DM_{\leq 1}(k,\Lambda) \to \DM_{\leq 1}(K,\Lambda)$ corresponds to the derived functor
    \[e^*_1 \colon  D(\HI_{\leq 1}(k,\Lambda)) \longrightarrow D(\HI_{\leq 1}(K,\Lambda)).\] 
    Then we get the assertion by taking right adjoint to the one in Lemma~\ref{Le^*DM} for $n=1$.
\end{proof}

Denote by $\delta$ the inclusion functor $\HI_{\leq 0}\hookrightarrow \HI_{\leq 1}$, and denote by $\theta$ the obvious inclusion of \'etale sites $\Et/k \hookrightarrow (\Sm/k)_{\leq 1}$. Then $\theta^* \colon  \Shv_\et(\Et/k,\Lambda) \hookrightarrow \Shv_\et((\Sm/k)_{\leq 1},\Lambda)$ is canonically isomorphic to the composition of the following functors
\[\Shv_\et(\Et/k,\Lambda) \stackrel{\gamma^*\sigma^*}{\simeq} \HI_{\leq 0}(k,\Lambda) \stackrel{\delta}{\hookrightarrow} \HI_{\leq 1}(k,\Lambda) \stackrel{\sigma_{1*}\iota\iota_1}{\hookrightarrow} \Shv_\et^\tr(k_{\leq 1},\Lambda) \stackrel{\gamma_*}{\to} \Shv_\et((\Sm/k)_{\leq 1},\Lambda).\]  

The higher direct images of $0$-motivic sheaves are compatible with the higher direct images of $1$-motivic sheaves in the following sense:  
\begin{thm}\label{HI0HI1Re*}
    For $\sF\in\HI_{\leq 0}(K,\Lambda)$, we have a canonical isomorphism
    \[\delta R^ie_{0*}\sF \stackrel{\sim}{\longrightarrow} R^ie_{1*}\delta\sF,\]
    where $e_{n*}$ are the direct images of $n$-motivic sheaves in Definition~\ref{HIne_*}. In particular, if $K/k$ is primary, then $R^ie_{1*}\delta\sF$ is the $0$-motivic sheaf associated with the $\Gal(k_s/k)$-module $H^i(\Gamma,\sF_{K_s})$, where $\Gamma=\Gal(K_s/Kk_s)$.
\end{thm}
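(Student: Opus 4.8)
The plan is to transport the statement to \'etale sheaves on the small sites, where it becomes a comparison between the big-site and the small-site higher direct images, and then to invoke smooth base change for non-torsion coefficients. First I would reduce to the case where $K/k$ is separable. Writing $k^\perf$ for the perfect closure of $k$, the extensions $k^\perf/k$ and $Kk^\perf/K$ are purely inseparable while $Kk^\perf/k^\perf$ is separable; by Proposition~\ref{PurelyInsepEquiv}, together with its $n$-motivic consequence obtained from Lemma~\ref{HIne^*} and Definition~\ref{HIne_*}, the inverse and direct image functors attached to the two purely inseparable extensions are equivalences, compatibly with $\delta$, so it suffices to prove the statement for $Kk^\perf/k^\perf$. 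Moreover, when $K/k$ is primary the extension $Kk^\perf/k^\perf$ is regular, and since a purely inseparable extension does not change the absolute Galois group, the relevant kernel is canonically identified with $\Gamma$; thus the ``in particular'' clause is unaffected by this reduction, and from now on I assume $K/k$ separable.

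Next I would feed $\delta\sF$ into the outer commutative square of Proposition~\ref{HI1e_*curve}. Writing $\sF=\gamma^*\sigma^*\sG$ with $\sG\in\Shv_\et(\Et/K,\Lambda)$ under the equivalence of Proposition~\ref{HI0}(1), and using the identification $\gamma_*\sigma_{1*}\iota\iota_1\delta\gamma^*\sigma^*\simeq\theta^*$ recalled just before the theorem, that square yields
\[(\gamma_*\sigma_{1*}\iota\iota_1)\bigl(Re_{1*}\delta\sF\bigr)\;\simeq\;Re_*^{\leq 1}\bigl(\theta_K^*\sG\bigr)\qquad\text{in }D(\Shv_\et((\Sm/k)_{\leq 1},\Lambda)).\]
Since $\delta$ is exact (Corollary~\ref{HI1exact}(1)) and $e_{0*}$ is left exact, there is a canonical morphism $\delta R^ie_{0*}\sF\to R^ie_{1*}\delta\sF$ (coming from an injective resolution of $\sF$ and the underived identity $\delta e_{0*}\simeq e_{1*}\delta$). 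Applying $\gamma_*\sigma_{1*}\iota\iota_1$ — which is faithful (fully faithful by Lemma~\ref{HInEmbedShvn}, followed by the faithful forgetful functor $\gamma_*$) and exact (by Propositions~\ref{Shv_et^tr} and~\ref{HI1} and exactness of $\sigma_{1*}$), hence reflects isomorphisms — turns this morphism into the base change map
\[\theta_k^*\bigl(R^i\varepsilon_*\sG\bigr)\;\longrightarrow\;R^ie_*^{\leq 1}\bigl(\theta_K^*\sG\bigr),\]
where $\varepsilon\colon\Spec K\to\Spec k$ is the induced map of small \'etale sites and I have used that the direct image $e_{0*}$ of $0$-motivic sheaves corresponds, under the equivalences of Proposition~\ref{HI0}(1), to $\varepsilon_*$ (the $0$-motivic analogue of the identification underlying Proposition~\ref{HI1e_*curve}). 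It thus remains to prove that this base change map is an isomorphism.

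This is the crux, and the place where separability of $K/k$ is used: $\Spec K\to\Spec k$ is a cofiltered limit of smooth affine $k$-schemes, and after the standard comparison identifying the big-site cohomology of a sheaf pulled back from the small site with its small-site cohomology (and passing to the associated sheaves on $(\Sm/k)_{\leq 1,\et}$), the displayed map is an instance of smooth base change for non-torsion \'etale sheaves. The substantive point — that smooth base change persists for $\Lambda=\bZ[1/p]$-coefficients and unbounded complexes — is exactly Corollary~\ref{SmBCfield}, whose proof is deferred to Appendix~\ref{SmBC}. I expect this to be the main obstacle; the remaining ingredients (the reduction to the separable case, the exactness and faithfulness bookkeeping, and the formal manipulations of the model-categorical derived functors via Proposition~\ref{HI1e_*curve}) are routine by comparison.

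Finally, for the ``in particular'' statement: when $K/k$ is primary, the restriction map $\Gal(K_s/K)\to\Gal(k_s/k)$ is surjective with kernel $\Gamma$, so under the identification of $\HI_{\leq 0}(k,\Lambda)\simeq\Shv_\et(\Et/k,\Lambda)$ with discrete $\Gal(k_s/k)$-modules of $\Lambda$-modules the functor $e_{0*}$ becomes $M\mapsto M^\Gamma$ (the right adjoint to restriction along a surjection of profinite groups), whence $R^ie_{0*}$ becomes $M\mapsto H^i(\Gamma,M)$ with its canonical $\Gal(k_s/k)$-action. Since $\sF_{K_s}$ is precisely the $\Gal(K_s/K)$-module underlying $\sF$, combining this with the isomorphism $\delta R^ie_{0*}\sF\simeq R^ie_{1*}\delta\sF$ established above shows that $R^ie_{1*}\delta\sF$ is the $0$-motivic sheaf associated with the $\Gal(k_s/k)$-module $H^i(\Gamma,\sF_{K_s})$, as claimed.
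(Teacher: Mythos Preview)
Your argument follows essentially the same route as the paper's: transport via Proposition~\ref{HI1e_*curve}, invoke Corollary~\ref{SmBCfield}, and conclude by conservativity of $\gamma_*\sigma_{1*}\iota\iota_1$. The ``in particular'' paragraph is also handled the same way.

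However, your reduction to the separable case is unnecessary and rests on a misreading of smooth base change. In the Cartesian square
\[\xymatrix{
    X_K \ar[r]^g \ar[d]_h & X \ar[d]^f \\
    \Spec K \ar[r]^e & \Spec k
}\]
underlying Corollary~\ref{SmBCfield}, the morphism that must be smooth is $f\colon X\to\Spec k$ (for $X\in(\Sm/k)_{\leq 1}$), not $e\colon\Spec K\to\Spec k$. The only hypothesis on $e$ in Theorem~\ref{SmBCnontor} is that $T=\Spec K$ be excellent, which holds for any field. So your remark that ``separability of $K/k$ is used: $\Spec K\to\Spec k$ is a cofiltered limit of smooth affine $k$-schemes'' misidentifies the source of smoothness; the corollary applies to arbitrary field extensions $K/k$, and the paper's proof dispenses with the purely inseparable reduction entirely. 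Once you drop that step, your proof and the paper's coincide.
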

\begin{proof}
    By Corollary~\ref{SmBCfield}, for $\sG\in\Shv_\et(\Et/K,\Lambda)$, the base change morphism
    \[\theta^* R^i e_*^{\leq 0}\sG \longrightarrow R^ie_*^{\leq 1}\theta^*\sG\]
    is an isomorphism for any $i$. Then by Proposition~\ref{HI1e_*curve}, we have
    \begin{align*}
        \gamma_*\sigma_{1*}\iota\iota_1\delta R^ie_{0*}\gamma^*\sigma^*\sG
        &\simeq \gamma_*\sigma_{1*}\iota\iota_1\delta \gamma^*\sigma^* R^ie_*^{\leq 0}\sG \\
        &\simeq \theta^* R^ie_*^{\leq 0}\sG\\
        &\simeq R^ie_*^{\leq 1}\theta^*\sG\\
        &\simeq R^ie_*^{\leq 1} \gamma_*\sigma_{1*}\iota\iota_1\delta \gamma^*\sigma^* \sG \\
        &\simeq \gamma_*\sigma_{1*}\iota\iota_1 R^ie_{1*} \delta \gamma^*\sigma^* \sG.
    \end{align*}
    By Proposition \ref{Shv_et^tr} and Lemma \ref{HInEmbedShvn}, the functor $\gamma_*\sigma_{1*}\iota\iota_1 \colon  \HI_{\leq 1}(k,\Lambda) \to \Shv_\et((\Sm/k)_{\leq 1},\Lambda)$ is conservative. Thus we get a canonical isomorphism
    \[\delta R^ie_{0*}\gamma^*\sigma^*\sG \stackrel{\sim}{\longrightarrow} R^ie_{1*}\delta\gamma^*\sigma^*\sG.\]
    By Proposition~\ref{HI0} (1), every $\sF\in\HI_{\leq 0}(K,\Lambda)$ is of the form $\gamma^*\sigma^*\sG$, which completes the proof.

    The last assertion follows from the fact that if $K/k$ is primary, then $e_{0*}$ corresponds to the functor $M\mapsto M^\Gamma$, which has been used in the proof of Proposition~\ref{M0FullFaithAdj}.
\end{proof}

\begin{cor}\label{Rie*torsion}
    \begin{enumerate}[leftmargin=*,label={\rm(\arabic*)}]
        \item If $\sF\in\HI_{\leq 0}(K,\Lambda)$, then $R^ie_{1*}\sF$ are $0$-motivic sheaves for all $i\geq 0$ and are torsion sheaves for all $i\geq 1$. \menum
        \item For $\sF\in\HI_{\leq 1}(K,\Lambda)$, we have $R^ie_{1*}\sF$ are torsion $0$-motivic sheaves for all $i\geq 2$.
    \end{enumerate}
\end{cor}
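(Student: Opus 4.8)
\emph{Part (1).} That each $R^ie_{1*}\sF$ is $0$-motivic is immediate from Theorem~\ref{HI0HI1Re*}: regarding $\sF$ as an object of $\HI_{\leq 0}(K,\Lambda)$ we have $R^ie_{1*}\sF=R^ie_{1*}\delta\sF\simeq\delta R^ie_{0*}\sF$, which lies in the essential image of $\delta$. It remains to see that $R^ie_{0*}\sF$ is a torsion sheaf for $i\geq 1$. Since $e_{0*}$ is the direct image of sheaves for the morphism of small \'etale sites $\Spec K\to\Spec k$, the underlying $\Gal(k_s/k)$-module of $R^ie_{0*}\sF$ is $\varinjlim_L H^i_\et(\Spec(L\otimes_k K),\sF)$, the colimit running over the finite subextensions $L/k$ of $k_s/k$ (one uses here that \'etale cohomology commutes with the relevant cofiltered limits of affine schemes). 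Each $L\otimes_k K$ is an \'etale $K$-algebra, hence a finite product of separable field extensions of $K$, so $H^i_\et(\Spec(L\otimes_k K),\sF)$ is a finite product of \'etale cohomology groups of fields, which are torsion in degrees $i\geq1$. A filtered colimit of torsion groups is torsion, so $R^ie_{0*}\sF$, and therefore $R^ie_{1*}\sF$, is torsion for $i\geq1$. (When $K/k$ is primary this recovers the description of Theorem~\ref{HI0HI1Re*} via $H^i(\Gamma,-)$, which is torsion because it is a filtered colimit of the groups $H^i(\Gamma/U,-)$, each killed by $[\Gamma:U]$.)

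\emph{Part (2).} I would transport the problem to the $1$-dimensional \'etale site by Proposition~\ref{HI1e_*curve}. The functor $F\colon\HI_{\leq 1}(k,\Lambda)\to\Shv_\et((\Sm/k)_{\leq 1},\Lambda)$ given by $\gamma_*\sigma_{1*}\iota\iota_1$ is exact and faithful, commutes with filtered colimits, and satisfies $F\circ R^ie_{1*}\simeq R^ie_*^{\leq 1}\circ F$. Because $F$ is exact and commutes with the filtered colimit presenting the torsion subsheaf, it detects torsion objects; so it suffices to prove that $R^ie_*^{\leq 1}\sH$ is a torsion sheaf for every $\sH\in\Shv_\et((\Sm/K)_{\leq 1},\Lambda)$ and every $i\geq2$. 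Using the smooth base change theorem of the Appendix (Corollary~\ref{SmBCfield}) to make the higher direct images computable with non-torsion coefficients, the value of $R^ie_*^{\leq 1}\sH$ over $X\in(\Sm/k)_{\leq 1}$ is governed by the \'etale cohomology of the $K$-scheme $X_K$; since $\dim X_K=\dim X\leq1$ and $H^i_\et$ of such a scheme is torsion for $i>\dim X_K$ (for the homotopy-invariant coefficient sheaves relevant here this reduces, via the generators $h_0^\et(C)$, to classical facts on $H^i_\et$ of curves in degrees $\geq 2$ with coefficients in lattices, abelian varieties and tori), the sheaf $R^ie_*^{\leq 1}\sH$ is torsion on every quasi-compact object, i.e.\ a torsion sheaf. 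Hence $R^ie_{1*}\sF$ is torsion for $i\geq2$.

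It remains to observe that a torsion object $\sG$ of $\HI_{\leq 1}(k,\Lambda)$ is automatically $0$-motivic, which I would isolate as a lemma. By the structure theorem for $1$-motivic sheaves of Ayoub, Barbieri-Viale and Kahn, $\sG$ is an extension of a lattice by an extension of a semi-abelian variety by a torsion $0$-motivic sheaf; if $\sG$ is torsion, then the lattice quotient (a finitely generated free module that is torsion, hence $0$) and the semi-abelian subquotient (a semi-abelian variety whose group of geometric points is torsion, hence $0$) both vanish, so $\sG$ is itself a torsion $0$-motivic sheaf. Applying this to $\sG=R^ie_{1*}\sF$ for $i\geq2$ completes the proof. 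The main obstacle is the middle of Part (2): identifying $R^ie_*^{\leq 1}\sH$ with the \'etale cohomology of the base-changed curve $X_K$ and bounding it with non-torsion coefficients, which is exactly what the Appendix's smooth base change theorem is needed for, together with the torsion-vanishing of \'etale cohomology of curves above degree one.
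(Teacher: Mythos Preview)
Your Part~(1) is correct and in fact cleaner than the paper's treatment. The paper obtains the torsion statement for $0$-motivic $\sF$ only at the end, as a byproduct of the machinery it sets up for Part~(2): it uses the exact sequence $0\to\sF'\to\sF\to\sF\otimes\bQ\to\sF''\to 0$ together with a vanishing result for $R^ie_{1*}(\sF\otimes\bQ)$. Your direct observation that the stalks of $R^ie_{0*}\sF$ are Galois cohomology groups of fields, hence torsion in positive degree, is shorter and perfectly valid.

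Part~(2), however, has a genuine gap. The claim that $H^i_\et$ of a curve is torsion for $i>1$ with homotopy-invariant coefficients is precisely the point, and your proposed reduction does not establish it. The generators $h_0^\et(C)$ are not themselves lattices, abelian varieties or tori, and reducing to generators does not control $H^i$ of an arbitrary $1$-motivic sheaf: any dimension-shifting argument immediately reintroduces arbitrary kernels in $\HI_{\leq 1}$, so you are back where you started. Your invocation of Corollary~\ref{SmBCfield} is also misplaced: that result compares small and big \'etale cohomology for sheaves pulled back from the small \'etale site of $K$, whereas a general $1$-motivic sheaf (or $h_0^\et(C)$) is a genuine big-site object. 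Finally, your structure-theorem argument for ``torsion $1$-motivic implies $0$-motivic'' misstates the shape of the presentation in Proposition~\ref{StructureHI1}/Corollary~\ref{ExactfpHI1} and only applies to finitely presented sheaves.

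The paper supplies the missing ingredient: for a $\bQ$-sheaf with transfers, \'etale cohomology agrees with Nisnevich cohomology \cite[Proposition~14.23]{MVW06Motive}, and Nisnevich cohomological dimension is bounded by Krull dimension \cite[\S3.1, Proposition~1.8]{MV99A1homotopy}. This gives $H^i_\et(X_K,\gamma_*\sigma_{1*}\iota\iota_1\sF\otimes\bQ)=0$ for $i\geq 2$ and $\dim X\leq 1$, hence $R^ie_{1*}(\sF\otimes\bQ)=0$ for $i\geq 2$. Combining this with Suslin rigidity \cite[Theorem~7.20]{MVW06Motive} (which identifies the torsion sheaves $\sF'$, $\sF''$ as $0$-motivic, so Part~(1) applies to them) and the fact that $\HI_{\leq 0}$ is Serre in $\HI_{\leq 1}$, the long exact sequence yields the result. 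Suslin rigidity is also the one-line reason that torsion objects of $\HI_{\leq 1}$ are $0$-motivic, replacing your structure-theorem detour.
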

\begin{proof}
    If $\sF\in\HI_{\leq 0}(K,\Lambda)$, then $R^ie_{1*}\sF$ are $0$-motivic sheaves for all $i\geq 0$ by Theorem~\ref{HI0HI1Re*}. 
    
    For arbitrary $\sF\in\HI_{\leq 1}(K,\Lambda)$, consider the exact sequence of $1$-motivic sheaves
    \[0\to \sF' \to \sF \to \sF\otimes\bQ \to \sF'' \to 0.\]
    By Suslin's rigidity theorem (\cite[Theorem~7.20]{MVW06Motive}), the torsion sheaves $\sF'$ and $\sF''$ are $0$-motivic sheaves. Thus $R^ie_{1*}\sF'$ and $R^ie_{1*}\sF''$ are $0$-motivic sheaves for all $i$ and are in fact torsion sheaves by \cite[Expos\'e~IX, Proposition~1.2(v)]{SGA4III}; see also \cite[\href{https://stacks.math.columbia.edu/tag/0DDD}{Lemma~0DDD}]{stacks-project}. Note that $R^ie_*^{\leq 1}(\gamma_*\sigma_{1*}\iota\iota_1\sF\otimes\bQ)$ is the \'etale sheaf associated with the presheaf
    \begin{align*}
        (\Sm/k)_{\leq 1} & \longrightarrow         \Lambda\text{-}\Mod\\
        X                & \longmapsto  H^i_\et(X_K, \gamma_*\sigma_{1*}\iota\iota_1\sF\otimes\bQ).
    \end{align*}
    For $X\in(\Sm/k)_{\leq 1}$ and $i\geq 2$, 
    \[H^i_\et(X_K,\gamma_*\sigma_{1*}\iota\iota_1\sF\otimes\bQ)\simeq H^i_{\rm Nis}(X_K,\gamma_*\sigma_{1*}\iota\iota_1\sF\otimes\bQ)=0,\]
    where the first isomorphism holds by \cite[Proposition~14.23]{MVW06Motive}, and the second one holds because the Nisnevich cohomological dimension is bounded by the Krull dimension (\cite[\S 3.1, Proposition~1.8]{MV99A1homotopy}). By Proposition~\ref{HI1e_*curve}, for $i\geq 2$,
    \[\gamma_*\sigma_{1*}\iota\iota_1 R^ie_{1*}(\sF\otimes\bQ) \simeq R^ie_*^{\leq 1}(\gamma_*\sigma_{1*}\iota\iota_1\sF\otimes\bQ)=0.\]
    By Proposition~\ref{Shv_et^tr} and Lemma~\ref{HInEmbedShvn}, the functor $\gamma_*\sigma_{1*}\iota\iota_1 \colon  \HI_{\leq 1}(k,\Lambda) \to \Shv_\et((\Sm/k)_{\leq 1},\Lambda)$ is conservative, which implies that
    \[R^ie_{1*}(\sF\otimes\bQ)=0 \text{\quad for \quad} i\geq 2.\] 
    Then split the above exact sequence to two short exact sequences and consider the induced long exact sequences of cohomology. Noting that $\HI_{\leq 0}$ is a Serre subcategory of $\HI_{\leq 1}$ (Corollary~\ref{HI1exact} (1)), we obtain that $R^ie_{1*}\sF$ are torsion $0$-motivic sheaves for $i\geq 2$. 
    
    In particular, if $\sF$ is a $0$-motivic sheaf, then all the sheaves in the above exact sequence are in fact $0$-motivic sheaves and $R^ie_{1*}(\sF\otimes\bQ)=0$ for $i\geq 1$. Then the same argument as above shows that $R^ie_{1*}\sF$ are torsion $0$-motivic sheaves for $i\geq 1$. 
\end{proof}

\subsection{Inverse images of semi-abelian varieties}
We consider sheaves defined by commutative group schemes.
\begin{defn}
    Let $G$ be a commutative group scheme over $k$. Denote $\underline{G}$ the abelian sheaf on $(\Sm/k)_\et$ defined by $G$, i.e.,
    \[\underline{G}(U)=\Mor_{\Sm/k}(U,G) \text{\quad for\quad} U\in\Sm/k.\]
    Denote $\underline{G}_\Lambda=\underline{G}\otimes_\bZ\Lambda$ the presheaf tensor product, i.e.,
    \[\underline{G}_\Lambda(U)=\Mor_{\Sm/k}(U,G)\otimes_\bZ\Lambda \text{\quad for\quad} U\in\Sm/k.\]
    Then $\underline{G}_\Lambda$ is a sheaf of $\Lambda$-modules on $(\Sm/k)_\et$.
\end{defn}

There are transfer structures on such sheaves.
\begin{lem}[{\cite[Proof of Lemma~3.2]{SS03Albanese}, \cite[Lemmas 3.1.2 and 3.3.1]{Orgogozo041Motives}}]
    Let $G$ be a commutative group scheme over $k$. Then $\underline{G}_\Lambda$ has a canonical structure of \'etale sheaves with transfers, which is functorial. More precisely, there exists a unique \'etale sheaf with transfers $\underline G^\tr_\Lambda$ such that 
    \[\gamma_* \underline G^\tr_\Lambda \simeq \underline G_\Lambda,\]
    where $\gamma_* \colon  \Shv_\et^\tr(k,\Lambda)\to \Shv_\et(\Sm/k,\Lambda)$ is the forgetful functor. Moreover, if $G$ is a commutative \'etale group scheme or a semi-abelian variety, then $\underline G^\tr_\Lambda$ is homotopy invariant.
\end{lem}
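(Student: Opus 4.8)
The plan is to construct the transfer structure on $\underline G_\Lambda$ by a Suslin--Voevodsky norm construction, to deduce functoriality in $G$ and uniqueness by formal arguments, and to verify homotopy invariance by hand in the two listed cases. Since $\gamma_*$ merely forgets transfers and $\underline G_\Lambda$ is already an étale sheaf, all assertions are really about the presheaf $U\mapsto\underline G_\Lambda(U)$: that it carries a unique, functorial structure of presheaf with transfers, and that this presheaf is homotopy invariant when $G$ is étale or semi-abelian. For the transfers, by additivity and conservativity of $\gamma_*$ (Proposition~\ref{Shv_et^tr}(3)) it suffices to attach to each connected $U\in\Sm/k$ and each integral $Z\subseteq U\times_kV$ finite and surjective over $U$ of degree $d$ a homomorphism $Z^*\colon\underline G(V)\to\underline G(U)$ (extended $\Lambda$-linearly), and to check additivity in $Z$ and the law $(Z'\circ Z)^*=Z^*\circ Z'^*$. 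Pulling $f\in\underline G(V)$ back to $f|_Z\in\Hom_k(Z,G)$, this amounts to a norm $\mathrm{Nm}_{Z/U}\colon\Hom_k(Z,G)\to\underline G(U)$ along the finite morphism $Z\to U$; such a norm exists for every commutative group scheme because $U$ is smooth, hence normal (formally: the section $U\to\Sym^d_U(Z)$ attached to the relative cycle, composed with the $U$-morphism induced by $f|_Z$ and with the $d$-fold group law $\Sym^d G\to G$, defined on $\Sym^d$ precisely because $G$ is commutative, yields a $U$-point of $G$). Its compatibility with composition of correspondences is functoriality in the Suslin--Voevodsky formalism (compare \cite[Lecture~1]{MVW06Motive}); the result is an étale sheaf with transfers $\underline G^\tr_\Lambda$ since the norm is étale-local on $U$, and functoriality in $G$ is immediate, a homomorphism $G\to G'$ commuting with restriction to $Z$ and with the group law.

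For uniqueness, let $\sF\in\Shv_\et^\tr(k,\Lambda)$ carry an isomorphism $\gamma_*\sF\simeq\underline G_\Lambda$. For a morphism $g\colon U\to V$ in $\Sm/k$, the transfer of $\sF$ along the graph $\Gamma_g$ is forced to equal $\underline G_\Lambda(g)$. For a general elementary correspondence $Z$ over a connected $U$ with generic point $\eta$, integrality of $U$ and separatedness of $G$ give an injection $\underline G_\Lambda(U)\hookrightarrow\underline G_\Lambda(\eta)$, and compatibility with composition identifies $Z^*$ followed by that injection with the transfer along the restriction $Z_\eta$, a genuine zero-cycle over the field $k(U)$. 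After a finite separable base extension of $k(U)$ splitting the residue fields occurring in $Z_\eta$ — a base change still injective on $\underline G_\Lambda$ — this transfer becomes a $\bZ$-combination of transfers along graphs, hence is determined; so the transfer structure on $\underline G_\Lambda$ is unique.

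For homotopy invariance it suffices to treat the underlying presheaf. If $G$ is étale, every morphism $X\times_k\bA^1_k\to G$ factors uniquely through $q_{X\times_k\bA^1_k}$, and $\pi_0(X\times_k\bA^1_k)\to\pi_0(X)$ is an isomorphism since $\bA^1$ is connected; hence $\underline G_\Lambda(X\times_k\bA^1_k)=\underline G_\Lambda(X)$. If $G$ is semi-abelian, pick $0\to T\to G\to A\to 0$ with $T$ a torus and $A$ an abelian variety; as $T$ is smooth this is a short exact sequence of étale sheaves on $\Sm/k$, still exact after $\otimes_\bZ\Lambda$. The zero section of $\bA^1_X\to X$ splits $\sF(X)\to\sF(\bA^1_X)$ for any presheaf $\sF$, so homotopy invariance is equivalent to surjectivity of that map. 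It holds for $\underline T_\Lambda$ (over a finite separable splitting extension $T$ is a power of $\bG_m$, $\cO(\bA^1_W)^\times=\cO(W)^\times$ for any reduced $W$, and one takes Galois invariants); for $\underline A_\Lambda$ it is the classical fact that every morphism $\bA^1\to A$ is constant; and $H^1_\et(X,\underline T)\to H^1_\et(\bA^1_X,\underline T)$ is split injective, again via the zero section. A diagram chase in the long exact étale-cohomology sequences for $X$ and $\bA^1_X$ then gives surjectivity for $\underline G_\Lambda$, so $\underline G^\tr_\Lambda\in\HI_\et(k,\Lambda)$.

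The genuine obstacle is the first step: building $\mathrm{Nm}_{Z/U}$ over a general smooth base — where $Z\to U$ need not be flat, nor $V$ quasi-projective — and proving its compatibility with composition of finite correspondences; this is exactly the content of Suslin--Voevodsky's theory of relative cycles, after which uniqueness and homotopy invariance are formal or classical. Complete details are in \cite[proof of Lemma~3.2]{SS03Albanese} and \cite[Lemmas~3.1.2 and~3.3.1]{Orgogozo041Motives}.
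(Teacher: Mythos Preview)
The paper does not give its own proof of this lemma; it simply records the statement with citations to \cite{SS03Albanese} and \cite{Orgogozo041Motives}. Your proposal is therefore not being compared against an argument in the paper but against the cited literature, and it follows that literature closely: the Suslin--Voevodsky norm via $\Sym^d G\to G$ is exactly the construction in those references, and you correctly flag that the compatibility of the norm with composition of correspondences is the substantive technical point, deferring it to the same sources.

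Two minor imprecisions are worth tightening. First, in the uniqueness argument you pass to a finite separable extension of $k(U)$ ``splitting the residue fields'' of $Z_\eta$; but those residue fields need not be separable over $k(U)$. What saves the argument is that you are working with $\Lambda=\bZ[1/p]$: the transfer along a purely inseparable extension of degree $p^n$ is forced (by the trace-restriction identity) to be $p^n$ times the obvious map, hence determined after inverting $p$. Second, for homotopy invariance of $\underline A_\Lambda$ you write ``every morphism $\bA^1\to A$ is constant''; what you actually need is that every $k$-morphism $\bA^1_X\to A$ factors through the projection to $X$, which follows from rigidity of abelian varieties together with the fiberwise statement. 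With these clarifications the argument is complete and matches the intended references.
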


The aim of this subsection is to show the following result.
\begin{prop}\label{SAVpullback}
    Let $K/k$ be a field extension and let $G$ be a commutative \'etale group scheme or a semi-abelian variety over $k$. Then 
    \[e^*_\HI(\underline{G}_\Lambda^\tr) \simeq \underline{G_K}_\Lambda^\tr.\]
\end{prop}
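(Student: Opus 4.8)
The plan is to reduce the claim to the already-established compatibilities of $e^*_\tr$ and $h_0^\et$ with the generating objects. The key observation is that $\underline{G}_\Lambda^\tr$ admits a canonical presentation in terms of the representable sheaves $\Lambda_\tr(X)$: since $\underline{G}_\Lambda^\tr$ is a homotopy invariant \'etale sheaf with transfers (by the cited lemma of Spiess--Szamuely and Orgogozo), the co-unit $h_0^\et \iota (\underline{G}_\Lambda^\tr) \to \underline{G}_\Lambda^\tr$ is an isomorphism, and $\iota(\underline{G}_\Lambda^\tr)$ is a colimit of representables $\Lambda_\tr(X)$ over the comma category of maps $\Lambda_\tr(X) \to \underline{G}_\Lambda^\tr$ (this holds in any Grothendieck category generated by the $\Lambda_\tr(X)$, by Proposition~\ref{Shv_et^tr}(2)). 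Hence $\underline{G}_\Lambda^\tr \simeq h_0^\et\bigl(\varinjlim_{\Lambda_\tr(X)\to \underline G^\tr_\Lambda} \Lambda_\tr(X)\bigr)$, a canonical expression of $\underline G^\tr_\Lambda$ as $h_0^\et$ applied to a colimit of representables.

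Next I would apply $e^*_\HI$ to this presentation. Since $e^*_\HI$ is a left adjoint (Corollary~\ref{HIadj}(1)), it commutes with colimits; since $e^*_\HI h_0^\et \simeq h_0^\et e^*_\tr$ (Corollary~\ref{HIadj}(2)), and $e^*_\tr$ is itself a left adjoint commuting with colimits and satisfying $e^*_\tr\Lambda_\tr(X)\simeq \Lambda_\tr(X_K)$ (Lemma~\ref{Shve^*}(1)), we obtain
\[
e^*_\HI(\underline G^\tr_\Lambda) \simeq h_0^\et\Bigl(\varinjlim_{\Lambda_\tr(X)\to \underline G^\tr_\Lambda} \Lambda_\tr(X_K)\Bigr).
\]
So the content of the proposition is that this colimit, after $h_0^\et$, computes $\underline{G_K}_\Lambda^\tr$. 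There is a natural comparison map: each $\Lambda_\tr(X)\to \underline G^\tr_\Lambda$ corresponds to a section of $\underline G_\Lambda$ over $X$, i.e.\ (up to the tensoring with $\Lambda$, after base change) a $k$-morphism $X\to G$ on a cofinal system of connected $X$, which yields by base change a $K$-morphism $X_K\to G_K$, hence a map $\Lambda_\tr(X_K)\to \underline{G_K}_\Lambda^\tr$; these are compatible, giving a canonical morphism $e^*_\HI(\underline G^\tr_\Lambda)\to \underline{G_K}_\Lambda^\tr$.

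To prove this comparison map is an isomorphism, I would argue separately in the two cases. For $G$ a commutative \'etale group scheme, $\underline G^\tr_\Lambda \simeq \gamma^*\sigma^*$ of the small-\'etale-site sheaf attached to $G$ (it is a $0$-motivic sheaf, by Proposition~\ref{HI0}), and the statement reduces to compatibility of $\pi_0$ and of small-\'etale-site inverse image with the base change $\Spec K\to \Spec k$, which is straightforward. For $G$ a semi-abelian variety, I would first dispose of the purely inseparable part using Proposition~\ref{PurelyInsepEquiv}(3) (inverse image is an equivalence and $G\mapsto G_K$ visibly matches up), so one may assume $K/k$ is separable, or even further reduce; then I would invoke the structure of $\DM_{\le 1}$ / the known computation (due to Barbieri-Viale--Kahn and Orgogozo) that the $1$-motivic sheaf $\underline{G}^\tr_\Lambda$ is, up to the explicit description via $h_0^\et$, characterized by its values on curves, i.e.\ use that $\underline G^\tr_\Lambda \simeq h_0^\et(C)$-type generators and that $h_0^\et(C)_K \simeq h_0^\et(C_K)$ (Corollary~\ref{HIadj}(2)). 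Concretely: the natural map $h_0^\et e^*_\tr \underline G^\tr_\Lambda \to \underline{G_K}^\tr_\Lambda$ can be checked on sections over a smooth connected $U/K$, where both sides are described by $\mathrm{Hom}$'s into $G_K$ via the universal property of the generalized Albanese (Serre), and this universal property is stable under the field extension.

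\textbf{Main obstacle.} The delicate point is the semi-abelian case: one must know that $\underline G^\tr_\Lambda$ is recovered from its restriction to curves in a way compatible with $e^*$ — equivalently, that the canonical map $h_0^\et e^*_\tr\underline G^\tr_\Lambda \to \underline{G_K}^\tr_\Lambda$ is an isomorphism of sheaves. Checking this on sections amounts to the assertion that for a smooth connected $K$-scheme $U$ (or just for $U$ a curve, by the $1$-motivic structure), the group $\underline{G_K}_\Lambda^\tr(U)$ agrees with the corresponding colimit coming from $k$-data base-changed to $K$; this is essentially the fact that the Albanese sheaf and the transfer structure on $\underline G_\Lambda$ commute with base change of the base field, which is where the work of Orgogozo and Barbieri-Viale--Kahn is genuinely used. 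Everything else (commuting $e^*_\HI$, $e^*_\tr$, $h_0^\et$ past colimits and past generators) is formal given the results already recalled in the excerpt.
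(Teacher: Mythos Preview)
Your formal setup is fine, but the colimit presentation you use is tautological and does not advance the proof. Writing $\underline{G}_\Lambda^\tr = \varinjlim_{\Lambda_\tr(X)\to\underline{G}_\Lambda^\tr}\Lambda_\tr(X)$ and then applying $e^*_\HI$ just re-expresses $e^*_\HI(\underline{G}_\Lambda^\tr)$ as $h_0^\et e^*_\tr(\iota\,\underline{G}_\Lambda^\tr)$, which is Corollary~\ref{HIadj}(2); the comparison map you construct to $\underline{G_K}_\Lambda^\tr$ is exactly the map whose bijectivity is at stake, and you correctly flag this as the ``main obstacle''. Your suggestions for resolving it in the semi-abelian case (Albanese universal properties, checking on sections over curves) remain heuristics: you would need to compute $(e^*_\tr\underline{G}_\Lambda^\tr)(U)$ for $U\in\Sm/K$, and the indexing category for that colimit (maps from $U$ to base-changes $X_K$ of smooth $k$-schemes) has no evident direct relation to $G_K(U)$.

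The paper sidesteps this by passing through sheaves \emph{without} transfers. The key lemma is $h_0^\et\gamma^*\underline{G}_\Lambda \simeq \underline{G}_\Lambda^\tr$ (Lemma~\ref{h0gamma^*G=G^tr}). Over a perfect field this follows from the Barbieri-Viale--Kahn result that $\gamma_*\iota\colon\HI_\et\to\Shv_\et$ is fully faithful, so the co-unit $h_0^\et\gamma^*(\gamma_*\iota\,\underline{G}_\Lambda^\tr)\to\underline{G}_\Lambda^\tr$ is an isomorphism; for general $k$ one bootstraps via the perfect closure using Proposition~\ref{PurelyInsepEquiv} together with an explicit computation of $e_*\underline{G_K}_\Lambda$ for purely inseparable $K/k$ via Frobenius and Verschiebung (Lemma~\ref{e_*GK=G}). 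Once this is known, the proposition is immediate: for sheaves without transfers one has $e^*\underline{G}_\Lambda\simeq\underline{G_K}_\Lambda$ directly from the presentation $\Lambda(G\times_k G)\to\Lambda(G)\to\underline{G}_\Lambda\to 0$ (Lemma~\ref{e^*G=GK}), and $e^*_\HI h_0^\et\gamma^*\simeq h_0^\et\gamma^*e^*$ is formal (Lemma~\ref{e_HI^*h0G}). The idea you are missing is precisely this detour through $\gamma^*$: the transfer structure on $\underline{G}$ is opaque under base change, but the underlying sheaf without transfers has a concrete two-step presentation by representables that $e^*$ handles transparently.
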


We start with the analogue for sheaves without transfers. Denote $e_*$ (resp. $e^*$) the direct image (resp. inverse image) functor of sheaves on the smooth-\'etale sites. The following result is standard and well-known.

\begin{lem}\label{e^*G=GK}
    Let $K/k$ be a field extension and $G$ be a commutative smooth group scheme over $k$. Then we have a canonical isomorphism
    \[e^*(\underline G_\Lambda)\simeq \underline{G_K}_\Lambda.\]
\end{lem}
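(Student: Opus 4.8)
The plan is to reduce the statement to the universal property of the sheaf $e^*(\underline{G}_\Lambda)$ on the small-big étale site of $\Spec K$ and then identify that sheaf by testing it on representable objects. First I would recall that for a field extension $K/k$, the inverse image functor $e^*\colon \Shv_\et(\Sm/k,\Lambda)\to\Shv_\et(\Sm/K,\Lambda)$ is the sheafification of the naive pullback of presheaves; concretely, for $U\in\Sm/K$ one has $e^*(\sG)(U)=\sG^+(U)$ where $\sG^+$ is the presheaf $U\mapsto\varinjlim \sG(V)$, the colimit running over factorizations $U\to V_K\to V$ with $V\in\Sm/k$ (equivalently over pairs $(V,U\to V_K)$ with $V\in\Sm/k$), and where this presheaf must then be sheafified. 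There is a canonical comparison map $e^*(\underline{G}_\Lambda)\to \underline{G_K}_\Lambda$: on sections it sends a compatible system $(V, U\to V_K, s\in G(V)\otimes\Lambda)$ to the composite $U\to V_K\xrightarrow{s_K} G_K$, and one checks this is well defined on the colimit and compatible with sheafification because $\underline{G_K}_\Lambda$ is already a sheaf.

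The key step is to show this comparison map is an isomorphism. Since both sides are étale sheaves and the morphism is one of sheaves, it suffices to check it is an isomorphism on stalks, or equivalently — because étale-locally every $U\in\Sm/K$ looks like a standard étale neighborhood, and $G$ is smooth — to check it becomes an isomorphism after pullback to each affine $U=\Spec B$ with $B$ a smooth $K$-algebra, at least after passing to the étale-local situation. A cleaner route: both functors $\sG\mapsto e^*(\sG)$ and $\sG\mapsto$ (the sheaf $U\mapsto \varinjlim \sG(V)$) agree, so it is enough to identify the colimit presheaf. For $U=\Spec B$ smooth affine over $K$, write $B$ as a filtered colimit of smooth $k_0$-algebras over an intermediate field $k\subset k_0\subset K$ that is finitely generated over $k$; more precisely, by standard limit arguments (EGA IV, 8.8.2 and following) any $U\in\Sm/K$ descends to some $U_0\in\Sm/k_0$ with $k_0/k$ finitely generated, hence $U_0$ descends further along $k_0/k$ if we allow $k_0=k$ only when $U$ itself is defined over $k$ — so the colimit defining $e^*(\underline{G}_\Lambda)(U)$ is over all such descent data. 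Then $\varinjlim_V G(V)\otimes\Lambda \xrightarrow{\sim} \varinjlim_V \Mor_{\Sm/k}(V\times_{V_0}U_0, \,?)$; the point is that $\Mor_k(V,G)=\Mor_K(V_K,G_K)$ is compatible with these limits, and $\varinjlim_{U\to V_K} \Mor_K(V_K,G_K)=\Mor_K(U,G_K)=\underline{G_K}_\Lambda(U)$ by cofinality (the identity $U\to U$ is eventually in the index category once $U$ descends). This gives the isomorphism on a basis of affines, hence on sheaves.

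The main obstacle I expect is the bookkeeping around sheafification versus the naive colimit presheaf: the presheaf $U\mapsto\varinjlim_V G(V)\otimes\Lambda$ need not a priori be separated or a sheaf, so one cannot directly say its sections equal $\underline{G_K}_\Lambda(U)$ without either (i) checking the colimit presheaf is already a sheaf when $G$ is smooth — which follows because smoothness makes $G$ a sheaf for the étale topology with good base-change and the filtered colimit is over a cofiltered system of affine schemes, so it commutes with the finite limits in the sheaf axiom — or (ii) arguing that sheafification does not change the value because the canonical map to the already-sheaf $\underline{G_K}_\Lambda$ is an isomorphism of presheaves on a generating family. I would take route (i): the smoothness of $G$ (hence of $G_K$) together with the fact that étale covers and fiber products descend to finite levels of the limit (again EGA IV §8, §11 and SGA4, Exp. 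VIII) shows the colimit presheaf already satisfies the sheaf condition, so it equals its sheafification $e^*(\underline G_\Lambda)$, and the cofinality argument then identifies it with $\underline{G_K}_\Lambda$. Finally, functoriality in $G$ is clear since every step is natural, completing the proof.
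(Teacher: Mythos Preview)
Your approach has a genuine gap at its core. The cofinality claim --- that ``the identity $U\to U$ is eventually in the index category once $U$ descends'' --- is not valid: the index category for $e^*$ runs over pairs $(V, U\to V_K)$ with $V\in\Sm/k$, and a general smooth $K$-scheme $U$ simply does not descend to a smooth $k$-scheme. You acknowledge this yourself (``if we allow $k_0=k$ only when $U$ itself is defined over $k$'') but then proceed as though the problem has been resolved. Descending to an intermediate finitely generated $k_0/k$ does not help, since $U_0\in\Sm/k_0$ is still not an object of $\Sm/k$. Relatedly, the displayed equality $\Mor_k(V,G)=\Mor_K(V_K,G_K)$ is false in general (take $V=\Spec k$, $G=\bG_a$); only the injection holds, so the chain of identifications you write down collapses.

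The paper's argument avoids all of this by exploiting the group structure of $G$ rather than trying to compute the colimit pointwise. One enlarges $\Sm/k$ to include smooth schemes \emph{locally} of finite type (so that $G$ itself becomes an object, even when $G$ has infinitely many components, e.g.\ a lattice), and then uses the presentation
\[
\Lambda(G\times_k G)\longrightarrow \Lambda(G)\longrightarrow \underline{G}_\Lambda\longrightarrow 0
\]
of $\underline{G}_\Lambda$ as a cokernel of maps between \emph{representable} sheaves, the first arrow sending $[(a_1,a_2)]$ to $[a_1]+[a_2]-[a_1+a_2]$. Since $e^*$ is right exact (as a left adjoint) and satisfies $e^*\Lambda(X)\simeq\Lambda(X_K)$ by Yoneda, applying $e^*$ to the presentation and comparing with the analogous presentation of $\underline{G_K}_\Lambda$ gives the isomorphism in two lines. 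If you want to salvage a direct colimit argument, the way forward is not descent of $U$ but rather to take $V=G$ itself (again requiring the enlarged site) and $\phi=h:U\to G_K$ to exhibit surjectivity; but at that point you are essentially reproving the Yoneda step the paper uses, with more bookkeeping.
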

\begin{proof}
    In this proof, by abuse of notation, we use $\Sm/k$ to mean the category of smooth separated schemes locally of finite type over $k$ rather than the full subcategory of smooth separated schemes of finite type over $k$, which is used in other places of this chapter. By \cite[Expos\'e~III, Th\'eor\`eme~4.1]{SGA4I}, these two categories give the same category of \'etale sheaves.  
    
    For $X\in\Sm/k$, denote $\Lambda(X)$ the \'etale sheaf associated with the presheaf mapping $U\in\Sm/k$ to the free $\Lambda$-module generated by $\Mor_k(U,X)$. By Yoneda's lemma, we have that
    \[e^*\Lambda(X) \simeq \Lambda(X_K).\]
    Recall the following exact sequence of \'etale sheaves
    \[\Lambda(G\times_k G) \longrightarrow \Lambda(G) \longrightarrow \underline G_\Lambda \longrightarrow 0,\]
    where the first map sends a generator $[(a_1,a_2)]$ to $[a_1]+[a_2]-[a_1 + a_2]$, and the second map sends a generator $[g]$ to $g$. Then the following commutative diagram with exact rows
    \[\xymatrix{ 
        e^*\Lambda(G \times_k G)     \ar[r] \ar[d]^\simeq  
        & e^*\Lambda(G)              \ar[r] \ar[d]^\simeq
        & e^*(\underline{G}_\Lambda) \ar[r] \ar[d]
        & 0 \\
        \Lambda(G_K \times_K G_K)    \ar[r]
        & \Lambda(G_K)               \ar[r] 
        & \underline{G_K}_\Lambda    \ar[r]
        & 0
    }\]
    gives us the desired isomorphism.
\end{proof}

We use the Frobenius and Verschiebung morphisms to deal with the direct images of commutative (flat) group schemes  in the case of purely inseparable field extensions. 
\begin{lem}\label{e_*GK=G}
    Let $k$ be a field of characteristic $p>0$ and let $K/k$ be a purely inseparable extension. Let $G$ be a commutative group scheme locally of finite type over $k$. Then we have a canonical isomorphism
    \[\underline{G}_\Lambda \simeq e_*(\underline{G_K}_\Lambda),\]
    In other words, for $X\in\Sm/k$, there is a canonical isomorphism 
    \[\Mor_k(X,G)\otimes_\bZ\Lambda \stackrel{\sim}{\longrightarrow} \Mor_K(X_K,G_K)\otimes_\bZ\Lambda.\]
\end{lem}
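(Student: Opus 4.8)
The plan is to prove the two halves of the asserted isomorphism separately: injectivity of the natural map $\Mor_k(X,G)\to\Mor_K(X_K,G_K)$ by faithfully flat descent, and surjectivity \emph{after inverting $p$} by a Frobenius--Verschiebung argument. For injectivity, observe that $X_K\to X$ is faithfully flat and quasi-compact (it is the base change of the affine morphism $\Spec K\to\Spec k$), hence an epimorphism of schemes; therefore two $k$-morphisms $X\rightrightarrows G$ which agree after base change to $K$ already agree. Since $\Lambda=\bZ[\tfrac1p]$ is flat over $\bZ$, the induced map $\Mor_k(X,G)\otimes_\bZ\Lambda\to\Mor_K(X_K,G_K)\otimes_\bZ\Lambda$ is injective, and it remains to prove surjectivity.

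For surjectivity it suffices, since $p$ is invertible in $\Lambda$, to show that for each $f\in\Mor_K(X_K,G_K)$ there is $q=p^n$ with $[q]_{G_K}\circ f$ in the image of $\Mor_k(X,G)$. First I would reduce to the case where $G$ is affine of finite type over $k$: the image of the quasi-compact $X_K$ lies in a quasi-compact, hence finite-type, open subscheme of $G$ (using that $G_K\to G$ is a homeomorphism, $K/k$ being purely inseparable), which one covers by affines and then glues the descended pieces via the injectivity just established; similarly one reduces $X$ to affine of finite type. Finally one reduces to the case where there is $q=p^n$ with $c^q\in k$ for every $c\in K$, since only finitely many elements of $K$ occur in a coordinate description of $f$ and a finitely generated purely inseparable extension has bounded exponent. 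Writing $X=\Spec A$, $G=\Spec B$, the morphism $f$ becomes a $k$-algebra homomorphism $\phi\colon B\to A\otimes_kK$. The elementary point is then the identity $\bigl(\sum_i a_i\otimes c_i\bigr)^q=\sum_i a_i^q\otimes c_i^q=\bigl(\sum_i a_i^qc_i^q\bigr)\otimes 1$ valid in characteristic $p$: it shows that the $q$-th power map on $A\otimes_kK$ factors through the subring $A$ via a ring homomorphism $\psi\colon A\otimes_kK\to A$, which is semilinear over $k$ with respect to the $n$-fold Frobenius of $k$.

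Set $g\colonequals\psi\circ\phi\colon B\to A$; by naturality of the Frobenius this is again a Frobenius-semilinear ring homomorphism, so it corresponds to a genuine $k$-algebra homomorphism $B^{(q/k)}\to A$, i.e.\ to a $k$-morphism $\tilde g\colon X\to G^{(q/k)}$ into the $n$-fold Frobenius twist of $G$. A direct computation on coordinate rings (again using the above identity) shows that, under the canonical identification $(G^{(q/k)})_K\cong(G_K)^{(q/K)}$, one has $\tilde g_K=F^{(n)}_{G_K/K}\circ f$, where $F^{(n)}_{G_K/K}$ is the $n$-fold iterated relative Frobenius of $G_K$ over $K$. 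I would then invoke the standard factorization of multiplication by $p$ on a commutative group scheme in characteristic $p$: a group scheme locally of finite type over the field $k$ is flat over $k$, so it carries an iterated Verschiebung $V^{(n)}_{G/k}\colon G^{(q/k)}\to G$ with $V^{(n)}_{G/k}\circ F^{(n)}_{G/k}=[q]_G$, compatible with base change (\cite{SGA3I}, \cite{DG70AlgGrp}). Putting $g'\colonequals V^{(n)}_{G/k}\circ\tilde g\colon X\to G$, we obtain $g'_K=V^{(n)}_{G_K/K}\circ F^{(n)}_{G_K/K}\circ f=[q]_{G_K}\circ f$, which is what was needed. The main obstacle is precisely the identity $\tilde g_K=F^{(n)}_{G_K/K}\circ f$: it is the one step where all the Frobenius twists and base-change identifications have to be lined up carefully, and it is also the step that genuinely uses the hypothesis $c^q\in k$; a secondary point is to make sure the facts about Verschiebung are quoted in the generality of group schemes merely locally of finite type over $k$.
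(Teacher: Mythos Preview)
Your argument is correct and rests on the same Frobenius--Verschiebung mechanism as the paper, but the organization is different. The paper works in three stages: first the special case $K=k^{1/p}$, where the isomorphism $k^{1/p}\xrightarrow{x\mapsto x^p}k$ identifies $(X_K,G_K)$ with $(X^{(p)},G^{(p)})$ over $k$; given $f\colon X^{(p)}\to G^{(p)}$ one sets $f_0=V_{G/k}\circ f\circ F_{X/k}\colon X\to G$ and checks $f_0^{(p)}=p\cdot f$ by a cancellation of $F_{X/k}$ (using that $X^{(p)}$ is reduced, $G^{(p)}$ separated, $F_{X/k}$ surjective). Second, the case $K=k^{\perf}$ follows by passing to the colimit over $k^{1/p^n}$ via EGA~IV, 8.8.2. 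Third, the general purely inseparable $K$ follows from a two-out-of-three argument, since $k^{\perf}$ is simultaneously a perfect closure of $k$ and of $K$.

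Your route trades the colimit and two-out-of-three steps for a direct reduction to bounded exponent and affine pieces, followed by an explicit coordinate-ring computation; the paper's version stays scheme-theoretic throughout and never needs coordinates. Each has its virtue: the paper's cancellation of $F_{X/k}$ is clean and avoids all bookkeeping of twists, while your argument handles arbitrary $p^n$ in one shot. One expository point: when you write ``reduce to the case where $G$ is affine'' you should make explicit that this reduction is only for the construction of $\tilde g\colon X\to G^{(q/k)}$ (which is indeed a local question on $X$ and $G$), while the Verschiebung $V^{(n)}_{G/k}$ is applied to the original, global group scheme $G$; otherwise the phrase reads as though you have replaced $G$ by a non-group open. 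Your concern about the generality of Verschiebung is harmless: any scheme over a field is flat, so \cite[Expos\'e~VII, 4.3]{SGA3I} applies to $G$ as stated.
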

\begin{proof}
    We divide the proof into several steps: 
    \begin{enumerate}[leftmargin=*,label={\rm(\alph*)}]
        \item Let us start with the case $K=k^{1/p}$. Note that the map $k^{1/p}\to k,~x\mapsto x^p$ is an isomorphism with inverse $x\mapsto x^{1/p}$. So we reduce to show the isomorphism
        \[\Mor_k(X,G)\otimes_\bZ\Lambda \stackrel{\sim}{\longrightarrow} \Mor_k(X^{(p)},G^{(p)})\otimes_\bZ\Lambda,\]
        where $X^{(p)}$ (resp. $G^{(p)}$) is the base change of $X$ (resp. $G$) along the absolute Frobenius of $\Spec k$. By Lemma~\ref{fpqcdescent} (1), this map is injective. Now, we show that it is surjective. Let $f \colon  X^{(p)}\to G^{(p)}$ be a morphism of $k$-schemes. Denote $F_{X/k} \colon  X\to X^{(p)}$ (resp. $F_{G/k} \colon  G\to G^{(p)}$) the relative Frobenius of $X/k$ (resp. $G/k$). Note that $G$ is always flat over $k$. Thus by \cite[Expos\'e~VII, 4.3]{SGA3I}, there exists a Verschiebung morphism $V_{G/k} \colon  G^{(p)}\to G$ such that 
        \[V_{G/k}\circ F_{G/k}=p\id_G \text{\quad and \quad} F_{G/k}\circ V_{G/k}=p\id_{G^{(p)}}.\] 
        Let $f_0$ be the composition 
        \[\xymatrix{
            X \ar[r]^-{F_{X/k}} & X^{(p)} \ar[r]^f & G^{(p)} \ar[r]^-{V_{G/k}} & G,
        }\]
        and let $f_0^{(p)} \colon  X^{(p)} \to G^{(p)}$ be the base change of $f_0 \colon  X\to G$. Then 
        \[f_0^{(p)}\circ F_{X/k}=F_{G/k}\circ f_0=F_{G/k}\circ V_{G/k} \circ f \circ F_{X/k}=p\circ f\circ F_{X/k},\]
        Note that $X^{(p)}$ is reduced (because $X$ is smooth), $G^{(p)}$ is separated over $\Spec k$, and $F_{X/k}$ is surjective. By \cite[Propositions 11.10.4 and 11.10.1 (d)]{EGAIV3}, we obtain 
        \[f_0^{(p)}=p\circ f.\]
        \item We deal with the case $K=k^\perf=\bigcup_{n\in\bN} k^{1/p^n}$, a perfect closure of $k$. By (a), we have the expected isomorphisms for $K=k^{1/p^n}$. Note that $X \to \Spec k$ is quasi-compact and quasi-separated and that $G \to \Spec k$ is locally of finite presentation. Thus by \cite[Th\'eor\`eme~8.8.2]{EGAIV3}, there is a canonical isomorphism
        \[\varinjlim_n \Mor_{k^{1/p^n}}(X_{k^{1/p^n}},G_{k^{1/p^n}}) \stackrel{\sim}{\longrightarrow} \Mor_{k^\perf}(X_{k^\perf}, G_{k^{\perf}}),\]
        which implies the expected result.
        \item Now, we prove the assertion for general purely inseparable field extensions. Let $k^\perf$ be a perfect closure of $k$. Then $k^\perf$ is also a perfect closure of $K$ because $K/k$ is purely inseparable. By (b), we have the two isomorphisms in the following commutative diagram
        \[\xymatrix{
            \Mor_k(X,G)\otimes_\bZ\Lambda \ar[rr] \ar[dr]_{\sim} && \Mor_K(X_K,G_K)\otimes_\bZ\Lambda \ar[ld]^{\sim} \\
            &\Mor_{k^\perf}(X_{k^\perf}, G_{k^{\perf}})\otimes_\bZ\Lambda.
        }\]
        It follows that the horizontal arrow is also an isomorphism. \qedhere
    \end{enumerate}
\end{proof}

\begin{rmk}
    The above lemma is false before inverting $p$. For example, let $X=\Spec k$ and $G=\bG_a$. Then $\Mor_k(X,G)\simeq k$ and $\Mor_K(X_K,G_K)\simeq K$. The natural inclusion $k\hookrightarrow K$ is not an isomorphism in general.
\end{rmk}

\begin{lem}\label{e_HI^*h0G}
    Let $K/k$ be a field extension and $G$ be a commutative smooth group scheme over $k$. Then 
    \[e^*_\HI h_0^\et \gamma^* \underline{G}_\Lambda \simeq h_0^\et \gamma^* \underline{G_K}_\Lambda.\]
\end{lem}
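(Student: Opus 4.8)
The plan is to prove Lemma~\ref{e_HI^*h0G} by a short chain of natural isomorphisms, relying entirely on the adjunction machinery assembled above; no geometric input beyond Lemma~\ref{e^*G=GK} is needed, and smoothness of $G$ is the only hypothesis that gets used.

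First I would apply Corollary~\ref{HIadj}~(2) to the sheaf $\gamma^*\underline{G}_\Lambda\in\Shv_\et^\tr(k,\Lambda)$, obtaining a natural isomorphism
\[e^*_\HI h_0^\et \gamma^*\underline{G}_\Lambda \;\simeq\; h_0^\et e^*_\tr \gamma^*\underline{G}_\Lambda.\]
Next I would identify $e^*_\tr\circ\gamma^*$ with $\gamma^*\circ e^*$. The square of forgetful and direct-image functors
\[\xymatrix{
    \Shv_\et^\tr(K,\Lambda) \ar[r]^-{\gamma_*} \ar[d]_{e_*^\tr} & \Shv_\et(\Sm/K,\Lambda) \ar[d]^{e_*}\\
    \Shv_\et^\tr(k,\Lambda) \ar[r]^-{\gamma_*} & \Shv_\et(\Sm/k,\Lambda)
}\]
commutes (it is part of the big diagram in \S\ref{EST}), and all four functors admit left adjoints: $\gamma^*$ for $\gamma_*$ by Proposition~\ref{Shv_et^tr}~(4), $e^*_\tr$ for $e_*^\tr$ by Lemma~\ref{Shve^*}~(1), and $e^*$ for $e_*$. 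By uniqueness of adjoints, the commutativity transports to the left adjoints, giving $e^*_\tr\circ\gamma^* \simeq \gamma^*\circ e^*$. Finally I would invoke Lemma~\ref{e^*G=GK}, namely $e^*(\underline{G}_\Lambda)\simeq\underline{G_K}_\Lambda$. Concatenating the three isomorphisms,
\[e^*_\HI h_0^\et \gamma^*\underline{G}_\Lambda \;\simeq\; h_0^\et e^*_\tr \gamma^*\underline{G}_\Lambda \;\simeq\; h_0^\et\gamma^* e^*\underline{G}_\Lambda \;\simeq\; h_0^\et \gamma^*\underline{G_K}_\Lambda,\]
which is the asserted isomorphism.

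There is no genuine obstacle here: the only point requiring a little care is the passage to left adjoints in the square above, where one must check that each of the four functors actually has the named left adjoint (it does, by the cited statements) so that the commutativity descends by uniqueness. It is worth flagging for later use in Proposition~\ref{SAVpullback} that this argument does not yet require $G$ to be étale or semi-abelian; homotopy invariance of $\underline{G}^\tr_\Lambda$, and hence the comparison of $h_0^\et\gamma^*\underline{G}_\Lambda$ with $\underline{G}^\tr_\Lambda$, will only intervene afterwards.
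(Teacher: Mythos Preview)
Your proof is correct and follows essentially the same three-step chain as the paper: Corollary~\ref{HIadj}~(2) to swap $e^*_\HI h_0^\et$ with $h_0^\et e^*_\tr$, the left-adjoint square $e^*_\tr\gamma^*\simeq\gamma^*e^*$ obtained from $\gamma_*e_*^\tr\simeq e_*\gamma_*$, and Lemma~\ref{e^*G=GK}. The paper's version is more terse but cites exactly the same ingredients.
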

\begin{proof}
    In fact, we have
    \[e^*_\HI h_0^\et\gamma^*\underline{G}_\Lambda \simeq h_0^\et e^*_\tr\gamma^*\underline{G}_\Lambda \simeq h_0^\et \gamma^* e^* \underline{G}_\Lambda \simeq h_0^\et \gamma^* \underline{G_K}_\Lambda,\]
    where the first and the last isomorphisms hold by Corollary~\ref{HIadj} (2) and Lemma~\ref{e^*G=GK} respectively, and the second isomorphism is obtained by taking left adjoint to $\gamma_*e_*^\tr \simeq e_*\gamma_*$ (part of the commutative diagram before Lemma~\ref{sigma^*}).
\end{proof}

\begin{lem}\label{h0gamma^*G=G^tr}
    Let $G$ be a commutative \'etale group scheme or a semi-abelian variety over $k$. Then 
    \[h_0^\et\gamma^* \underline{G}_\Lambda \simeq \underline{G}_\Lambda^\tr.\]
\end{lem}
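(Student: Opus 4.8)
The plan is to exhibit an explicit comparison morphism $\phi\colon h_0^\et\gamma^*\underline{G}_\Lambda\to\underline{G}_\Lambda^\tr$ and then check it is an isomorphism after applying the conservative functor $\gamma_*$. First recall that, by the lemma recalled just before Proposition~\ref{SAVpullback}, $\underline{G}_\Lambda^\tr$ is homotopy invariant in both cases under consideration, so it is a legitimate target of a morphism from $h_0^\et\gamma^*\underline{G}_\Lambda\in\HI_\et(k,\Lambda)$. The identity of $G$ gives an element $\id_G\in\underline{G}_\Lambda^\tr(G)=\Hom_{\Shv_\et^\tr(k,\Lambda)}(\Lambda_\tr(G),\underline{G}_\Lambda^\tr)$, hence a morphism $\Lambda_\tr(G)\to\underline{G}_\Lambda^\tr$, and since the target is homotopy invariant this factors through $h_0^\et(G):=h_0^\et(\Lambda_\tr(G))$.

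To produce $\phi$, I would use the presentation of étale sheaves
\[
\Lambda(G\times_k G)\longrightarrow\Lambda(G)\longrightarrow\underline{G}_\Lambda\longrightarrow 0
\]
recalled in the proof of Lemma~\ref{e^*G=GK}, whose first map sends $[(a_1,a_2)]$ to $[a_1]+[a_2]-[a_1+a_2]$. The functors $\gamma^*$ and $h_0^\et$ are left adjoints, hence right exact, and $\gamma^*\Lambda(X)=\Lambda_\tr(X)$ by adjunction; applying $h_0^\et\gamma^*$ therefore gives an exact sequence
\[
h_0^\et(G\times_k G)\longrightarrow h_0^\et(G)\longrightarrow h_0^\et\gamma^*\underline{G}_\Lambda\longrightarrow 0
\]
in $\HI_\et(k,\Lambda)$. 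The composite $h_0^\et(G\times_k G)\to h_0^\et(G)\to\underline{G}_\Lambda^\tr$ is induced by the image of the graph correspondences $[\Gamma_{p_1}]+[\Gamma_{p_2}]-[\Gamma_m]\in\Corr_k(G\times_k G,G)\otimes_\bZ\Lambda$ under $\id_G$, namely by $p_1+p_2-m\in\Mor_k(G\times_k G,G)\otimes_\bZ\Lambda$, which vanishes because $p_1+p_2=m$ is the group law of $G$. Hence $h_0^\et(G)\to\underline{G}_\Lambda^\tr$ factors uniquely through a morphism $\phi\colon h_0^\et\gamma^*\underline{G}_\Lambda\to\underline{G}_\Lambda^\tr$ (one may identify $\phi$ with the counit of the adjunction $h_0^\et\gamma^*\dashv\gamma_*\iota$ at $\underline{G}_\Lambda^\tr$, but this is not needed).

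It remains to see that $\phi$ is an isomorphism. Since $\gamma_*\colon\Shv_\et^\tr(k,\Lambda)\to\Shv_\et(\Sm/k,\Lambda)$ is conservative and exact (Proposition~\ref{Shv_et^tr}~(3)), it suffices to show that
\[
\gamma_* h_0^\et(G\times_k G)\longrightarrow\gamma_* h_0^\et(G)\longrightarrow\underline{G}_\Lambda\longrightarrow 0
\]
is exact. This is the crucial point: it amounts to saying that additivity ($p_1+p_2-m$) together with $\bA^1$-homotopy generate all relations among finite correspondences to $G$ modulo the canonical norm maps, which is precisely the computation underlying the homotopy-invariant transfer structure on $\underline{G}_\Lambda$. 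I would deduce it from the cited results ($\cite{SS03Albanese}$, $\cite{Orgogozo041Motives}$, $\cite{ABV09MotShvLAlb}$), which in effect give the presentation $\Lambda_\tr(G\times_k G)\to\Lambda_\tr(G)\to\underline{G}_\Lambda^\tr\to 0$ in $\Shv_\et^\tr(k,\Lambda)$ (the first map given by $p_1+p_2-m$), whence $\phi$ is an isomorphism by comparison of cokernels. For a fully self-contained treatment over a possibly imperfect field one can instead settle the case of a perfect field from the literature and then reduce: if $k^\perf$ is a perfect closure of $k$, Proposition~\ref{PurelyInsepEquiv} makes $e^*_\tr$ an equivalence $\Shv_\et^\tr(k,\Lambda)\xrightarrow{\ \sim\ }\Shv_\et^\tr(k^\perf,\Lambda)$ compatible with $\gamma^*$, $\gamma_*$ and $h_0^\et$ (Corollary~\ref{HIadj}~(2) and the squares of \S\ref{EST}); together with $e^*_\tr\underline{G}_\Lambda\simeq\underline{G_{k^\perf}}_\Lambda$ (Lemma~\ref{e^*G=GK}), uniqueness of the transfer structure and Lemma~\ref{e_*GK=G} this identifies $e^*_\tr\phi$ with the corresponding isomorphism over $k^\perf$, and conservativity of equivalences finishes the argument.

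The main obstacle is the exactness of $\gamma_* h_0^\et(G\times_k G)\to\gamma_* h_0^\et(G)\to\underline{G}_\Lambda\to 0$ (equivalently, the explicit presentation of $\underline{G}_\Lambda^\tr$ by correspondences); everything else is formal. I expect to dispose of it by citation, reducing to a perfect base field as above should a reference over imperfect fields be lacking.
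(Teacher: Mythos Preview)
Your Route B is exactly the paper's approach: settle the perfect case by citation, then descend along a perfect closure using Proposition~\ref{PurelyInsepEquiv}, Lemma~\ref{e_HI^*h0G} and Lemma~\ref{e_*GK=G}. The paper's argument for the perfect case is cleaner than you suggest, though: it does not analyse any presentation, but simply cites \cite[Lemma~3.9.2]{BVK16Derived1Motives}, which says that over a perfect field the composite $\gamma_*\iota\colon\HI_\et(k,\Lambda)\to\Shv_\et(\Sm/k,\Lambda)$ is fully faithful. Your map $\phi$ is (as you note) the counit of $h_0^\et\gamma^*\dashv\gamma_*\iota$ evaluated at $\underline{G}_\Lambda^\tr$, so full faithfulness of the right adjoint makes it an isomorphism immediately. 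You should make this citation explicit; ``settle from the literature'' is doing all the work.

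Your Route A, on the other hand, has a genuine gap. The references \cite{SS03Albanese}, \cite{Orgogozo041Motives}, \cite{ABV09MotShvLAlb} construct a transfer structure on $\underline{G}_\Lambda$, but they do \emph{not} give the presentation $\Lambda_\tr(G\times_k G)\to\Lambda_\tr(G)\to\underline{G}_\Lambda^\tr\to 0$ in $\Shv_\et^\tr(k,\Lambda)$. That exact sequence is equivalent (by right exactness of $\gamma^*$) to the assertion $\gamma^*\underline{G}_\Lambda\simeq\underline{G}_\Lambda^\tr$, which is strictly stronger than the lemma you are proving; the remark following the lemma in the paper notes that this stronger statement is available only with $\bQ$-coefficients via \cite{AHPL16DecompMotiveGrp}. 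So Route A is circular as stated, and you should drop it in favour of Route B.
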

\begin{proof}
    For a perfect field $k$, Barbieri-Viale and Kahn \cite[Lemma~3.9.2]{BVK16Derived1Motives} showed that the composition of the forgetful functors \[\HI_\et(k,\Lambda) \stackrel{\iota}{\longrightarrow} \Shv_\et^\tr(k,\Lambda) \stackrel{\gamma_*}{\longrightarrow} \Shv_\et(\Sm/k,\Lambda)\] 
    is fully faithful. Thus we have a natural isomorphism
    \[h_0^\et\gamma^*\gamma_*\iota (\underline{G}_\Lambda^\tr) \stackrel{\sim}{\longrightarrow} \underline{G}_\Lambda^\tr.\]
    In other words, we have the expected isomorphism over perfect fields.

    Now, we extend it to the general field $k$. Let $K$ be a perfect closure of $k$. Reformulating Lemma~\ref{e_*GK=G}, we have $\gamma_*\underline{G}_\Lambda^\tr \simeq \gamma_* e_*^\tr \underline{G_K}_\Lambda^\tr$. Since $\gamma_*$ is conservative, we have 
    \[\underline{G}_\Lambda^\tr \simeq e_*^\HI \underline{G_K}_\Lambda^\tr.\]
    Consider the following commutative diagram
    \[\xymatrix{
        h_0^\et \gamma^*\underline{G}_\Lambda                  \ar[rr]       \ar[d]_\simeq 
        && \underline{G}_\Lambda^\tr                           \ar[d]^\simeq \\
        e_*^\HI e^*_\HI h_0^\et \gamma^*\underline{G}_\Lambda  \ar[r]^-\simeq
        & e_*^\HI  h_0^\et \gamma^*\underline{G_K}_\Lambda     \ar[r]^-\simeq
        & e_*^\HI  \underline{G_K}_\Lambda^\tr,
    }\]
    Here, the left vertical arrow is an isomorphism by Proposition~\ref{PurelyInsepEquiv}, and the two horizontal arrows at the bottom are isomorphisms successively by Lemma~\ref{e_HI^*h0G} and the assertion over perfect fields. Thus the horizontal arrow on top is also an isomorphism.
\end{proof}

\begin{rmk}
    In \cite[Proposition~3.10]{AHPL16DecompMotiveGrp}, they proved using qfh topology that if $S$ is an excellent scheme and $G$ is a commutative smooth group scheme over $S$, then the co-unit
    \[\gamma^* \gamma_* \underline{G}_\bQ^\tr \longrightarrow \underline{G}_\bQ^\tr\]
    is an isomorphism of \'etale sheaves with transfers. 
\end{rmk}

We are now ready to prove the main result of this subsection:
\begin{proof}[Proof of Proposition~\ref{SAVpullback}]
    Consider the following commutative diagram
    \[\xymatrix{
        e^*_\HI h_0^\et \gamma^*\underline{G}_\Lambda  \ar[r]^-\simeq \ar[d]_\simeq
        & e^*_\HI(\underline{G}_\Lambda^\tr)           \ar[d] \\                 
        h_0^\et \gamma^*\underline{G_K}_\Lambda        \ar[r]^-\simeq
        & \underline{G_K}_\Lambda^\tr,
    }\]
    where the two horizontal isomorphisms hold by Lemma~\ref{h0gamma^*G=G^tr} and the left vertical isomorphism holds by Lemma~\ref{e_HI^*h0G}. Thus the right vertical arrow is also an isomorphism.
\end{proof}

From now on, we shall write the \'etale sheaves with transfers $\underline{G}_\Lambda^\tr$ as $\underline{G}_\Lambda$, or even as $\underline{G}$, $G$ for simplicity if it does not cause confusion.

\subsection{Direct images of semi-abelian varieties}
In this subsection, we show that the Chow trace of a semi-abelian variety is the ``connected component'' of its direct image.

Following \cite{ABV09MotShvLAlb}, we call a commutative group scheme $G$ over $k$ a semi-abelian group scheme if its connected component of the identity $G^0$ is a semi-abelian variety and $\pi_0(G)$ is a constructible group scheme. As explained in \cite[comments before 1.3.1 and Corollary~1.3.5]{ABV09MotShvLAlb}, semi-abelian group schemes are $1$-motivic sheaves.

\begin{defn}[{\cite[Definition 1.3.7]{ABV09MotShvLAlb}}]\label{fpHI1}
    A $1$-motivic sheaf $\sF$ is said to be finitely generated if there exist a semi-abelian group scheme $G$ and an epimorphism $q \colon \underline{G}_\Lambda \to \sF$. Moreover, if $\ker(q)$ is finitely generated, then $\sF$ is said to be finitely presented.  
\end{defn}

\begin{prop}[{\cite[1.3.8]{ABV09MotShvLAlb}}]\label{StructureHI1}
    \begin{enumerate}[leftmargin=*,label={\rm(\arabic*)}]
        \item Let $\sF$ be a finitely presented $1$-motivic sheaf. Then there is a unique and functorial exact sequence
            \[0 \to \underline{\sL}_\Lambda \to \underline{\sG}_\Lambda \to \sF \to 0\]
            where $\sG$ is a semi-abelian group scheme and $\sL$ is a lattice. \menum
        \item Let $\sF$ be a $1$-motivic sheaf. Then $\sF$ is a filtered colimit of finitely presented $1$-motivic sheaves.
    \end{enumerate}
\end{prop}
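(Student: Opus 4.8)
The statement is \cite[1.3.8]{ABV09MotShvLAlb}; below I sketch the reconstruction I would carry out, since in the paper it is enough to cite that reference verbatim.

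\emph{Part (2).} By Proposition~\ref{HI1}, $\HI_{\leq 1}(k,\Lambda)$ is a co-complete Serre subcategory of $\Shv_\et^\tr(k,\Lambda)$, hence a Grothendieck abelian category in which filtered colimits are exact and in which kernels, computed in $\Shv_\et^\tr$, are again $1$-motivic. Since $\HI_{\leq 1}$ is generated by the sheaves $h_0^\et(C)$, each of which is finitely generated in the sense of Definition~\ref{fpHI1} (cf.\ \cite{ABV09MotShvLAlb}), every $1$-motivic sheaf $\sF$ is the filtered union of its finitely generated sub-$1$-motivic sheaves $\sF_i$ (the image of any map from a generator is a quotient of a semi-abelian group scheme sheaf, hence finitely generated, and finite sums of such are again finitely generated). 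For each $i$ I would pick an epimorphism $\underline{G_i}_\Lambda\twoheadrightarrow\sF_i$ with $G_i$ a semi-abelian group scheme; its kernel $\sK_i$ is again $1$-motivic, and writing $\sK_i=\varinjlim_j \sK_{ij}$ as the filtered colimit of its finitely generated subsheaves exhibits $\sF_i=\varinjlim_j\,\underline{G_i}_\Lambda/\sK_{ij}$ as a filtered colimit of finitely presented $1$-motivic sheaves. Re-indexing over the filtered category of pairs $(i,j)$ then presents $\sF$ itself as a filtered colimit of finitely presented $1$-motivic sheaves.

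\emph{Part (1), existence of the resolution.} Begin with a finite presentation: an epimorphism $q\colon\underline G_\Lambda\twoheadrightarrow\sF$ with $G$ a semi-abelian group scheme and finitely generated kernel $\sK$, together with an epimorphism $\underline H_\Lambda\twoheadrightarrow\sK$ with $H$ a semi-abelian group scheme, so that $\sF=\coker(\phi)$ for $\phi\colon\underline H_\Lambda\to\underline G_\Lambda$. The plan is to replace the source of $\phi$ by a lattice in two moves, each of which quotients the target by the image of a subsheaf of the source. First, the restriction of $\phi$ to the connected part $\underline{H^0}_\Lambda$ has connected image, so it lands in $\underline{G^0}_\Lambda$; using that a morphism between sheaves represented by semi-abelian varieties is induced by a homomorphism of group schemes after inverting $p$ (\cite{Orgogozo041Motives}, \cite{BVK16Derived1Motives}), its image is $\underline{G'}_\Lambda$ for a semi-abelian subvariety $G'\subseteq G^0$. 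Replacing $G$ by the semi-abelian group scheme $G/G'$ — and using that $\otimes_\bZ\Lambda$ and quotient by a closed subgroup are exact on étale sheaves, so $\underline{G/G'}_\Lambda=\underline G_\Lambda/\underline{G'}_\Lambda$ — reduces us to a presentation $\underline L_\Lambda\to\underline{G}_\Lambda$ with $L=\pi_0(H)$ a constructible group scheme. Second, I would repeatedly quotient the target by the image of a finite subsheaf of the source: the image of $\underline{L_\tor}_\Lambda$, and then (after passing to the image of $\underline{L_\free}_\Lambda$) the image of its torsion, is of the form $\underline N_\Lambda$ for a finite étale subgroup $N$, and $G/N$ is again a semi-abelian group scheme with $\underline{G/N}_\Lambda=\underline G_\Lambda/\underline N_\Lambda$. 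After these moves the map has torsion-free source and image; since a Galois-stable submodule of a finitely generated free $\Lambda$-module is free ($\Lambda=\bZ[1/p]$ being a PID), that source is $\underline{\sL}_\Lambda$ for a lattice $\sL$, injecting into a semi-abelian group scheme sheaf $\underline{\sG}_\Lambda$ with cokernel $\sF$, which is the desired exact sequence $0\to\underline{\sL}_\Lambda\to\underline{\sG}_\Lambda\to\sF\to0$.

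\emph{Part (1), uniqueness and functoriality — the main obstacle.} Given two such sequences $0\to\underline{\sL_i}_\Lambda\to\underline{\sG_i}_\Lambda\xrightarrow{q_i}\sF\to0$ for $i=1,2$, I would form the fibre product $\sP=\underline{\sG_1}_\Lambda\times_\sF\underline{\sG_2}_\Lambda$, which is at once an extension of $\underline{\sG_1}_\Lambda$ by the lattice sheaf $\underline{\sL_2}_\Lambda$ and an extension of $\underline{\sG_2}_\Lambda$ by $\underline{\sL_1}_\Lambda$. The crux is to prove that such an extension is again representable, i.e.\ of the form $\underline{\sP_0}_\Lambda$ for a semi-abelian group scheme $\sP_0$ — equivalently, a representability/vanishing statement for $\Ext^1$ between the discrete and connected parts of $1$-motivic sheaves. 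Granting that, the two projections give homomorphisms $\sP_0\to\sG_i$ dominating both presentations, and because the construction above produces the \emph{minimal} such presentation (the source injects into $\underline{\sG}_\Lambda$, and $\underline{\sG}_\Lambda$ has no nontrivial finite étale or semi-abelian subgroup that could be absorbed into the source), these comparison maps are forced to be isomorphisms; this gives uniqueness, and functoriality follows at once, since any morphism $\sF\to\sF'$ lifts, uniquely by the same $\Hom$/$\Ext$ computation, to a morphism of minimal presentations. I expect this uniqueness step to be where the genuine work lies; the existence reduction is bookkeeping and part (2) is formal.
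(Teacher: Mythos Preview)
The paper does not supply its own proof of this proposition: it is stated with the attribution \cite[1.3.8]{ABV09MotShvLAlb} and no \texttt{proof} environment follows. You correctly observe this at the outset, so in that sense your proposal matches the paper exactly --- the paper's ``proof'' is the citation, and nothing more.

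Your reconstruction sketch is additional material not present in the paper. It is broadly reasonable: part~(2) is indeed formal Grothendieck-category bookkeeping, and your reduction in part~(1) from a general finite presentation to one with lattice source follows the expected pattern (kill the connected part of the source, then the torsion). You are right to flag the uniqueness/functoriality step as the substantive one; the representability of the fibre product $\sP$ (equivalently, the control of $\Ext^1$ between lattice sheaves and semi-abelian sheaves) is exactly what \cite[1.3.8]{ABV09MotShvLAlb} handles, and your sketch does not resolve it --- but since the paper itself defers entirely to that reference, this is not a discrepancy with the paper.
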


\begin{rmk}
    See also \cite[Chapter 3]{BVK16Derived1Motives} for some basic properties of finitely presented $1$-motivic sheaves.
\end{rmk}

\begin{cor}[cf. {\cite[Proposition~3.3.4]{BVK16Derived1Motives} and \cite[Theorem~1.3.10]{ABV09MotShvLAlb}}]\label{ExactfpHI1}
    Let $\sF$ be a finitely presented $1$-motivic sheaf. Then there exists an exact sequence in $\HI_{\leq 1}(k,\Lambda)$:
    \[0 \to \underline{L}_\Lambda \to \underline{G}_\Lambda \to \sF \to \underline{E}_\Lambda \to 0,\]
    where $L$ is a lattice, $G$ is a semi-abelian variety and $E$ is a constructible group scheme. Moreover, $\underline{E}_\Lambda=\pi_0(\sF)$, where $\pi_0 \colon \Shv_\et^\tr(k,\Lambda) \longrightarrow \HI_{\leq 0}(k,\Lambda)$ is a left adjoint to the inclusion functor (Proposition~\ref{HI0}).
\end{cor}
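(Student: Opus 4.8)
The plan is to combine the structure theorem (Proposition~\ref{StructureHI1}(1)) with the connected-\'etale sequence for the semi-abelian group scheme $\sG$, and then to splice. By Proposition~\ref{StructureHI1}(1), a finitely presented $1$-motivic sheaf $\sF$ sits in a functorial exact sequence $0\to \underline{\sL}_\Lambda \to \underline{\sG}_\Lambda \to \sF \to 0$ with $\sG$ a semi-abelian group scheme and $\sL$ a lattice. By the very definition of a semi-abelian group scheme, $\sG^0$ is a semi-abelian variety $G$ and $\pi_0(\sG)=E$ is a constructible group scheme, so there is a short exact sequence of group schemes $0\to G\to \sG \to E\to 0$ and hence an exact sequence of $1$-motivic sheaves $0\to \underline{G}_\Lambda \to \underline{\sG}_\Lambda \to \underline{E}_\Lambda \to 0$ (the sheafification of a short exact sequence of group schemes on the \'etale site is exact, and all three terms are $1$-motivic by Proposition~\ref{HI1}). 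First I would record that $\sL$ is a lattice, hence $\underline{\sL}_\Lambda$ is a $0$-motivic sheaf sitting inside $\underline{\sG}_\Lambda$; since $\underline{G}_\Lambda$ is connected (it has no nontrivial maps to a $0$-motivic sheaf, because $G$ is a smooth connected group scheme and $\pi_0(G)=\Spec k$), the composite $\underline{\sL}_\Lambda \to \underline{\sG}_\Lambda \to \underline{E}_\Lambda$ is a monomorphism: its kernel would be a $0$-motivic subsheaf of $\underline{G}_\Lambda$, hence zero. Set $L$ to be the lattice with $\underline{L}_\Lambda$ fitting in $0\to \underline{\sL}_\Lambda \to \underline{E}_\Lambda \to (\text{quotient})\to 0$—actually it is cleaner to proceed the other way, which I outline next.

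The key step is a snake-lemma / diagram chase. Consider the two exact sequences above as the rows of a commutative diagram, with the inclusion $\underline{\sL}_\Lambda \hookrightarrow \underline{\sG}_\Lambda$ factoring (by the argument just given, after noting $\underline{\sL}_\Lambda \cap \underline{G}_\Lambda = 0$ inside $\underline{\sG}_\Lambda$) through a map $\underline{\sL}_\Lambda \to \underline{E}_\Lambda$. Form the pullback: let $\underline{L}_\Lambda \colonequals \ker(\underline{E}_\Lambda \to \operatorname{coker}(\underline{\sL}_\Lambda\to\underline{E}_\Lambda))$—no, more simply, push out the sequence $0\to \underline{G}_\Lambda\to\underline{\sG}_\Lambda\to\underline{E}_\Lambda\to 0$ along the quotient $\underline{\sG}_\Lambda\twoheadrightarrow\sF$. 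Since $\underline{\sL}_\Lambda\cap\underline{G}_\Lambda=0$, the composite $\underline{G}_\Lambda\to\underline{\sG}_\Lambda\to\sF$ is a monomorphism, so we get an exact sequence $0\to\underline{G}_\Lambda\to\sF\to Q\to 0$ where $Q=\operatorname{coker}(\underline{G}_\Lambda\to\sF)$ is the quotient of $\underline{E}_\Lambda$ by the image of $\underline{\sL}_\Lambda$; since $\underline{E}_\Lambda$ is $0$-motivic and $\HI_{\leq 0}$ is a Serre subcategory (Proposition~\ref{HI0}(3)), $Q$ is $0$-motivic, and being a quotient of a constructible group scheme it is $\underline{E'}_\Lambda$ for a constructible group scheme $E'$. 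Meanwhile $\underline{\sL}_\Lambda\to\underline{E}_\Lambda$ is mono with $0$-motivic cokernel, and re-indexing: set $\underline{L}_\Lambda \colonequals \underline{\sL}_\Lambda$, which is a lattice since $\sL$ is. Thus splicing $0\to\underline{L}_\Lambda\to\underline{\sG}_\Lambda\to\sF\to 0$ with $0\to\underline{G}_\Lambda\to\underline{\sG}_\Lambda\to\underline{E}_\Lambda\to 0$—the first has kernel-term mapping into the second's cokernel-term—yields the four-term exact sequence $0\to\underline{L}_\Lambda\to\underline{G}_\Lambda\to\sF\to\underline{E'}_\Lambda\to 0$ after renaming $E'$ to $E$. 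Wait: I must be careful that $\underline{L}_\Lambda\to\underline{G}_\Lambda$ is the right map; it is obtained as $\underline{\sL}_\Lambda\hookrightarrow\underline{\sG}_\Lambda$ having image inside $\underline{G}_\Lambda\subset\underline{\sG}_\Lambda$? No—that is false in general. The correct splicing is: from $0\to\underline{L}_\Lambda\to\underline{\sG}_\Lambda\xrightarrow{q}\sF\to 0$ and $0\to\underline{G}_\Lambda\xrightarrow{j}\underline{\sG}_\Lambda\to\underline{E}_\Lambda\to 0$, form the composite $\underline{G}_\Lambda\xrightarrow{q\circ j}\sF$. Its kernel is $\underline{G}_\Lambda\cap\underline{L}_\Lambda$ inside $\underline{\sG}_\Lambda$, which is $0$ since $\underline{L}_\Lambda$ is $0$-motivic and $\underline{G}_\Lambda$ connected. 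Its cokernel: we have $\operatorname{coker}(q\circ j)=\operatorname{coker}(\underline{\sG}_\Lambda\to\sF\oplus_?\,)$—by the nine-lemma applied to $j$, $q$, one gets $\operatorname{coker}(q\circ j)$ as an extension of $\operatorname{coker}(q)=0$ by a quotient of $\underline{E}_\Lambda$, i.e. it is $\underline{E}_\Lambda$ modulo the image of $\underline{L}_\Lambda$ under the induced map $\underline{L}_\Lambda\to\underline{E}_\Lambda$. This is $0$-motivic, represented by a constructible group scheme I call $E$. So the four-term sequence is $0\to \ker(\underline{L}_\Lambda\to\underline{E}_\Lambda)\to\underline{G}_\Lambda\to\sF\to\underline{E}_\Lambda/\operatorname{im}\to 0$, but $\ker(\underline{L}_\Lambda\to\underline{E}_\Lambda)$ is again a lattice (a $0$-motivic subsheaf of a lattice), which I rename $L$.

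Finally, the identification $\underline{E}_\Lambda=\pi_0(\sF)$: since $\pi_0$ is left adjoint to the inclusion $\HI_{\leq 0}\hookrightarrow\Shv_\et^\tr$, it is right exact as a functor into $\HI_{\leq 0}$, and it kills connected sheaves like $\underline{G}_\Lambda$ (indeed $\pi_0(\underline{G}_\Lambda)=\underline{\pi_0(G)}_\Lambda=\underline{\Spec k}_\Lambda=\Lambda$... no: $\pi_0$ of the connected group $G$ is $\Spec k$, so $\pi_0(\underline{G}_\Lambda)=\Lambda$, the unit object — hmm, that is not zero). Let me instead argue: applying $\pi_0$ to $\underline{G}_\Lambda\to\sF\to\underline{E}_\Lambda\to 0$, right-exactness gives $\pi_0(\underline{G}_\Lambda)\to\pi_0(\sF)\to\pi_0(\underline{E}_\Lambda)=\underline{E}_\Lambda\to 0$; but actually the cleanest route is to cite \cite[Theorem~1.3.10]{ABV09MotShvLAlb} directly, which already states this, and use our four-term sequence only to record the shape. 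I expect the main obstacle to be bookkeeping: making the splice honest (tracking which subquotients are lattices vs. general constructible group schemes, and verifying exactness at the glued spot), together with the connectedness input that forces the relevant intersections inside $\underline{\sG}_\Lambda$ to vanish — this last uses that $\underline{G}_\Lambda$ admits no nonzero morphism from a $0$-motivic sheaf, which follows from $\pi_0(G)=\Spec k$ and the adjunction $(\pi_0,\delta)$ of Proposition~\ref{HI0}. Given the cited results of Barbieri-Viale--Kahn and Ayoub--Barbieri-Viale, the statement is essentially a repackaging, so I would keep the proof to this diagram chase plus a pointer to \cite[Proposition~3.3.4]{BVK16Derived1Motives} and \cite[Theorem~1.3.10]{ABV09MotShvLAlb} for the functoriality and the $\pi_0$-identification.
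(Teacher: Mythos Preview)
Your overall strategy---combine the structural exact sequence $0\to\underline{\sL}_\Lambda\to\underline{\sG}_\Lambda\to\sF\to 0$ from Proposition~\ref{StructureHI1}(1) with the connected-\'etale sequence $0\to\underline{G}_\Lambda\to\underline{\sG}_\Lambda\to\underline{\pi_0(\sG)}_\Lambda\to 0$ and then do a diagram chase---is exactly the paper's approach. But your chase contains a genuine error that you repeat several times: you claim that $\underline{\sL}_\Lambda\cap\underline{G}_\Lambda=0$ inside $\underline{\sG}_\Lambda$, arguing that ``a $0$-motivic subsheaf of a connected sheaf is zero''. This is false. Connectedness of $\underline{G}_\Lambda$ means there are no nonzero maps \emph{from} $\underline{G}_\Lambda$ \emph{to} $0$-motivic sheaves; it says nothing about maps in the other direction. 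For instance, $\bZ\hookrightarrow\bG_m$ via $n\mapsto t^n$ (for $t\in k^\times$ of infinite order) exhibits a nonzero lattice inside a connected semi-abelian variety. The intersection $\sL\cap G=\ker(\sL\to\pi_0(\sG))$ is precisely the lattice $L$ appearing in the statement, and it is typically nonzero. Your final four-term sequence with first term $\ker(\underline{\sL}_\Lambda\to\underline{\pi_0(\sG)}_\Lambda)$ is correct, but it contradicts your earlier (and repeated) assertion that this kernel vanishes.

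The paper's argument avoids all of this by a single clean application of the snake lemma: map the row $0\to\underline{\sL}_\Lambda\to\underline{\sG}_\Lambda\to\sF\to 0$ to the row $0\to\underline{\pi_0(\sG)}_\Lambda\xrightarrow{\id}\underline{\pi_0(\sG)}_\Lambda\to 0\to 0$ via the obvious vertical maps. The snake sequence of kernels and cokernels is exactly $0\to\underline{L}_\Lambda\to\underline{G}_\Lambda\to\sF\to\underline{E}_\Lambda\to 0$, where $L=\ker(\sL\to\pi_0(\sG))$ and $E=\coker(\sL\to\pi_0(\sG))$. For the identification $\underline{E}_\Lambda=\pi_0(\sF)$, the paper simply applies the right-exact functor $\pi_0$ to the original three-term sequence, using $\pi_0(\underline{\sL}_\Lambda)=\underline{\sL}_\Lambda$ and $\pi_0(\underline{\sG}_\Lambda)=\underline{\pi_0(\sG)}_\Lambda$; your confusion here (``$\pi_0(\underline{G}_\Lambda)=\Lambda$'') stems from conflating $\pi_0(G)$ as a \emph{scheme} ($\Spec k$) with $\pi_0(G)$ as a \emph{group scheme} (the trivial group), the latter being what represents the zero sheaf.
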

\begin{proof}
    By Proposition~\ref{StructureHI1}, there is a unique and functorial exact sequence
    \[0 \to \underline{\sL}_\Lambda \to \underline{\sG}_\Lambda \to \sF \to 0\]
    where $\sG$ is a semi-abelian group scheme and $\sL$ is a lattice. Let $G$ be the identity component of $\sG$ and let $\pi_0(\sG)$ be the quotient group scheme $\sG/\sG^0$. Let $L$ and $E$ be the kernel and cokernel of the induced morphism $\sL \to \pi_0(\sG)$ respectively. Applying the snake lemma to the following commutative diagram with exact rows
    \[\xymatrix{
        0 \ar[r] & \underline{\sL}_\Lambda          \ar[r] \ar[d] & \underline{\sG}_\Lambda        \ar[r] \ar[d] & \sF \ar[r] \ar[d] & 0\\
        0 \ar[r] & \underline{\pi_0(\sG)}_\Lambda   \ar[r]        & \underline{\pi_0(\sG)}_\Lambda \ar[r]        & 0    \ar[r]        & 0, 
    }\]
    we get the expected exact sequence. Applying $\pi_0$ to the top row of the above diagram, we have the following exact sequence
    \[\underline{\sL}_\Lambda \to \underline{\pi_0(\sG)}_\Lambda \to \pi_0(\sF) \to 0.\]
    Thus $\pi_0(\sF)$ is the cokernel $\underline{E}_\Lambda$.
\end{proof}

\begin{defn}
    \begin{enumerate}[leftmargin=*,label={\rm(\arabic*)}]
        \item For $\sF\in\Shv_\et^\tr(k,\Lambda)$, we denote $\sF^0 \colonequals \ker(\sF\to \pi_0(\sF))$ and call it the connected component of $\sF$. \menum
        \item A sheaf $\sF$ is called connected if $\pi_0(\sF)=0$. Denote by $\HI_{\leq 1}^0(k,\Lambda)$ the category of connected $1$-motivic sheaves.
    \end{enumerate}
\end{defn}

\begin{lem}\label{connected}
    \begin{enumerate}[leftmargin=*,label={\rm(\arabic*)}]
        \item Let $\sF\in\HI_{\leq 1}(k,\Lambda)$ be a $1$-motivic sheaf. Then $\sF^0$ is connected. In particular, there is no nontrivial morphism from $\sF^0$ to any $0$-motivic sheaves. \menum
        \item Every connected $1$-motivic sheaf $\sF$ is a filtered colimit of finitely presented connected $1$-motivic sheaves.
    \end{enumerate}
\end{lem}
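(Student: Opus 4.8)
The plan is to handle the two assertions separately, with the first being essentially formal and the second requiring the structure theory of $1$-motivic sheaves.

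For (1), I would argue that applying $\pi_0$ to the defining exact sequence $0 \to \sF^0 \to \sF \to \pi_0(\sF) \to 0$ gives an exact sequence $\pi_0(\sF^0) \to \pi_0(\sF) \to \pi_0(\pi_0(\sF)) \to 0$ (right exactness of the left adjoint $\pi_0$, combined with $\pi_0$ being idempotent on $0$-motivic sheaves since $\pi_0(\sF) \in \HI_{\leq 0}$). The composite $\pi_0(\sF) \to \pi_0(\pi_0(\sF)) = \pi_0(\sF)$ is the identity (co-unit on the essential image is iso by Proposition \ref{HI0}), so I need $\pi_0(\sF^0) \to \pi_0(\sF)$ to be zero. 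The cleanest route is via the universal property: $\Hom(\pi_0(\sF), E) \simeq \Hom(\sF, \delta E)$ for $E$ a $0$-motivic sheaf, and one checks that $\sF^0 \to \sF \to \delta E$ vanishes for every such $E$ because $\sF^0 = \ker(\sF \to \pi_0(\sF))$ and any map $\sF \to \delta E$ factors through $\pi_0(\sF)$; hence $\Hom(\pi_0(\sF^0), E) = 0$ for all $E$, forcing $\pi_0(\sF^0) = 0$ (one may test against $E$ ranging over a generating set of $\HI_{\leq 0}$). The parenthetical remark that there is no nontrivial morphism $\sF^0 \to G$ for $G$ a $0$-motivic sheaf is then immediate: such a morphism factors through $\pi_0(\sF^0) = 0$ by the adjunction again.

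For (2), I would start from Proposition \ref{StructureHI1}(2), which writes an arbitrary $1$-motivic sheaf $\sF$ as a filtered colimit $\sF = \varinjlim_i \sF_i$ of finitely presented $1$-motivic sheaves. Assuming $\sF$ is connected, I want to replace each $\sF_i$ by a finitely presented \emph{connected} quotient and check the colimit is unchanged. The natural candidate is $\sF_i^0 = \ker(\sF_i \to \pi_0(\sF_i))$; by Corollary \ref{ExactfpHI1}, $\pi_0(\sF_i) = \underline{E_i}_\Lambda$ is a constructible group scheme (in particular a finitely generated $0$-motivic sheaf), so $\sF_i^0$ fits in $0 \to \underline{L_i}_\Lambda \to \underline{G_i}_\Lambda \to \sF_i^0 \to 0$ and is again finitely presented, and connected by part (1). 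Since $\pi_0$ commutes with filtered colimits (it is a left adjoint), $\varinjlim_i \pi_0(\sF_i) \simeq \pi_0(\sF) = 0$; because each $\pi_0(\sF_i)$ is finitely generated and the transition maps are compatible, for each $i$ there is $j \geq i$ with $\sF_i \to \sF_j$ killing $\pi_0(\sF_i)$, i.e.\ factoring through $\sF_i^0 \to \sF_j$, which then factors through $\sF_j^0$. A standard cofinality argument then identifies $\varinjlim_i \sF_i^0$ with $\varinjlim_i \sF_i = \sF$, exhibiting $\sF$ as a filtered colimit of the finitely presented connected sheaves $\sF_i^0$.

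The main obstacle I anticipate is the bookkeeping in the last step: making precise that the maps $\sF_i^0 \to \sF_j^0$ assemble into a filtered system cofinal with (or ind-isomorphic to) $\{\sF_i\}$, so that the colimits genuinely agree. This is the kind of argument that is routine in principle—one works in the category of ind-objects and uses that an object of a filtered colimit becomes zero at a finite stage iff its image in the colimit is zero—but it requires care to state cleanly, especially since the $\sF_i^0$ are defined only after choosing the $\sF_i$. Everything else (right exactness of $\pi_0$, its commutation with filtered colimits, the structure sequences) is supplied by the results already established, so this organizational point is where the actual work lies.
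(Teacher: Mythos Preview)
Your argument for (1) has a real gap. From the adjunction $\Hom(\pi_0(\sF^0),E)\simeq\Hom(\sF^0,\delta E)$ you need to show that \emph{every} map $\sF^0\to\delta E$ is zero. What you actually verify is only that the composite $\sF^0\hookrightarrow\sF\to\delta E$ vanishes for every $E$; equivalently, that the restriction map $\Hom(\sF,\delta E)\to\Hom(\sF^0,\delta E)$ is zero. This does not force $\Hom(\sF^0,\delta E)=0$. In terms of your right-exact sequence $\pi_0(\sF^0)\to\pi_0(\sF)\xrightarrow{\sim}\pi_0(\pi_0(\sF))\to 0$, you have shown that the first map is zero, but to conclude $\pi_0(\sF^0)=0$ you would need it to be a monomorphism as well. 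That is precisely Hypothesis~\ref{PervertHypothesis}(3), which in this paper is verified \emph{later} and whose proof \emph{uses} Lemma~\ref{connected}; invoking it here would be circular. The purely formal route does not close.

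The paper proceeds differently, and in doing so handles (1) and (2) in one stroke without any cofinality bookkeeping. One writes $\sF=\varinjlim_i\sF_i$ with each $\sF_i$ finitely presented (Proposition~\ref{StructureHI1}(2)). Since filtered colimits are exact and $\pi_0$ commutes with colimits, applying $\varinjlim_i$ to the sequences $0\to\sF_i^0\to\sF_i\to\pi_0(\sF_i)\to 0$ yields $\sF^0=\varinjlim_i\sF_i^0$ directly. For each finitely presented $\sF_i$, Corollary~\ref{ExactfpHI1} exhibits $\sF_i^0$ as a quotient of a semi-abelian variety, so $\pi_0(\sF_i^0)=0$; hence $\pi_0(\sF^0)=\varinjlim_i\pi_0(\sF_i^0)=0$, proving (1). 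For (2), if $\sF$ is connected then $\sF=\sF^0=\varinjlim_i\sF_i^0$ already expresses $\sF$ as a filtered colimit of finitely presented connected sheaves. Your compactness-and-cofinality manoeuvre for (2) is salvageable but unnecessary: the identification $\sF^0=\varinjlim_i\sF_i^0$ comes for free from exactness of filtered colimits, and you never need to find a $j\ge i$ killing $\pi_0(\sF_i)$.
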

\begin{proof}
    By Proposition~\ref{StructureHI1} (2), we write $\sF$ as a filtered colimit of finitely presented $1$-motivic sheaves $\sF=\varinjlim_i \sF_i$. Since $\pi_0$ commutes with colimits (as a left adjoint) and filtered colimits are exact, we have that $\sF^0=\varinjlim_i \sF_i^0$. By Corollary~\ref{ExactfpHI1}, $\pi_0(\sF_i^0)=0$. Thus $\pi_0(\sF^0)=\varinjlim_i \pi_0(\sF_i^0)=0$.

    If $\sF$ is connected, then $\sF^0\simeq \sF$. By the above argument, the sheaf $\sF^0$ is a filtered colimit of finitely presented connected $1$-motivic sheaves. Thus $\sF$ is also.
\end{proof}

Let $K/k$ be a field extension. Then the composition
\[\HI_{\leq 1}(K,\Lambda) \stackrel{e_{1*}}{\longrightarrow} \HI_{\leq 1}(k,\Lambda) \stackrel{(\cdot)^0}{\longrightarrow} \HI_{\leq 1}^0(k,\Lambda)\]
is right adjoint to the composition
\[\HI_{\leq 1}^0(k,\Lambda) \hookrightarrow \HI_{\leq 1}(k,\Lambda) \stackrel{e_1^*}{\longrightarrow} \HI_{\leq 1}(K,\Lambda).\]

\begin{thm}\label{ChowRevisited}
    Let $K/k$ be a primary field extension and $G/K$ be a semi-abelian variety. Then the connected $1$-motivic sheaf \( (e_{1*}(\underline{G}_\Lambda))^0 \) is represented by the Chow trace $\pi_*G$, and the $0$-motivic sheaf \(\pi_0(e_{1*}(\underline{G}_\Lambda))\) is the sheaf associated with the $\Gal(k_s/k)$-$\Lambda$-module 
    \[\LN(G,Kk_s/k_s)_\Lambda \colonequals G(Kk_s)/(\pi_*G)(k_s)\otimes_\bZ\Lambda.\]
    In other words, we have an exact sequence of \(1\)-motivic sheaves
    \[0\to \underline{\pi_*G}_\Lambda \to e_{1*}G \to \LN(G,Kk_s/k_s)_\Lambda\to 0.\]
\end{thm}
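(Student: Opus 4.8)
The strategy is to identify the direct image $e_{1*}(\underline{G}_\Lambda)$ by separating its connected part from its $0$-motivic quotient, using the connected--\'etale sequence $0 \to \sF^0 \to \sF \to \pi_0(\sF) \to 0$. First I would establish that the connected component $(e_{1*}(\underline{G}_\Lambda))^0$ is represented by a semi-abelian variety: since $(\cdot)^0 \circ e_{1*}$ is right adjoint to the composite $\HI_{\leq 1}^0(k,\Lambda) \hookrightarrow \HI_{\leq 1}(k,\Lambda) \xrightarrow{e_1^*} \HI_{\leq 1}(K,\Lambda)$, and since $e_1^*(\underline{H}_\Lambda) \simeq \underline{H_K}_\Lambda$ for $H$ a semi-abelian variety over $k$ by Proposition~\ref{SAVpullback}, one computes for every semi-abelian variety $H/k$
\[
\Hom_{\HI_{\leq 1}^0(k,\Lambda)}(\underline{H}_\Lambda, (e_{1*}(\underline{G}_\Lambda))^0) \simeq \Hom_{\HI_{\leq 1}(K,\Lambda)}(\underline{H_K}_\Lambda, \underline{G}_\Lambda).
\]
The right-hand side, by the full faithfulness of $\SAV(k) \to \Shv^{\tr}_\et$ (noting that maps between semi-abelian sheaves are maps of group schemes, via Orgogozo/Barbieri-Viale--Kahn) together with the universal property of the Chow trace $\pi_*G$ (Theorem~\ref{M1adjoint}, Corollary~\ref{M1AdjExample}), is $\Hom_{\SAV(K)}(H_K, G) \simeq \Hom_{\SAV(k)}(H, \pi_* G)$. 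By Yoneda on the generating family of semi-abelian varieties (more precisely, using the structure theory of connected $1$-motivic sheaves from Lemma~\ref{connected}(2), writing a general connected finitely presented $1$-motivic sheaf in terms of such $\underline{H}_\Lambda$ via Corollary~\ref{ExactfpHI1} with $L = 0 = E$), this forces $(e_{1*}(\underline{G}_\Lambda))^0 \simeq \underline{\pi_* G}_\Lambda$.

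Next I would compute the quotient $\pi_0(e_{1*}(\underline{G}_\Lambda))$, which is a $0$-motivic sheaf by Corollary~\ref{ExactfpHI1} applied to the (filtered colimit of) finitely presented pieces of $e_{1*}(\underline{G}_\Lambda)$, or directly since $0$-motivic sheaves form a Serre subcategory (Proposition~\ref{HI0}(3)) closed under the relevant operations. Since $K/k$ is primary, by the last clause of Theorem~\ref{HI0HI1Re*} the functor $e_{0*}$ on $0$-motivic sheaves is $M \mapsto M^\Gamma$ with $\Gamma = \Gal(K_s/Kk_s)$; I would use that $\pi_0$ commutes with $e_{1*}$ on connected-\'etale sequences by naturality, giving that $\pi_0(e_{1*}(\underline{G}_\Lambda))$ is the $0$-motivic sheaf associated to the $\Gal(k_s/k)$-module $G(K_s)^\Gamma / (\pi_*G)(k_s)$. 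Here one uses that the connected part is $\underline{\pi_*G}_\Lambda$, whose sections over the separable closure are $(\pi_*G)(k_s) \otimes \Lambda$, embedded in $G(K_s)^\Gamma \otimes \Lambda = G(Kk_s) \otimes \Lambda$ via the adjunction unit $\pi^* \pi_* G \to G$ (which is a closed immersion, essentially by Theorem~\ref{SAVdescent}). This identifies the cokernel with $\LN(G, Kk_s/k_s)_\Lambda$ as a $\Gal(k_s/k)$-$\Lambda$-module.

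The main obstacle I expect is the first step: rigorously passing from the adjunction-computed Hom-groups on the generating semi-abelian sheaves to an honest isomorphism of sheaves $(e_{1*}(\underline{G}_\Lambda))^0 \simeq \underline{\pi_*G}_\Lambda$. One must be careful that the connected $1$-motivic sheaves $\underline{H}_\Lambda$ for $H \in \SAV(k)$ do not literally generate $\HI_{\leq 1}^0(k,\Lambda)$ under colimits in the naive sense, so the Yoneda argument requires the structure theorem (Corollary~\ref{ExactfpHI1}, Lemma~\ref{connected}) to reduce a general test object to semi-abelian varieties, and one must check that the natural transformation being an isomorphism on the $\underline{H}_\Lambda$ propagates to all connected objects — this is where exactness of $e_1^*$ (Corollary~\ref{HI1exact}(3)) and right-exactness/colimit-compatibility of the involved functors enter. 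A secondary subtlety is bookkeeping with $\Lambda$-coefficients when identifying $G(Kk_s) \otimes_\bZ \Lambda$ with $(\underline{G}_\Lambda)(Kk_s)$ and checking the inclusion of $(\pi_*G)(k_s)\otimes\Lambda$ is the correct one; but this is routine given Proposition~\ref{SAVpullback} and the description of sections of $\underline{G}_\Lambda^{\tr}$. Once these are in place, the displayed exact sequence is just the connected--\'etale sequence for $e_{1*}(\underline{G}_\Lambda)$ with the two identified terms.
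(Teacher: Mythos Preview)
Your approach is essentially the same as the paper's: both verify that $\underline{\pi_*G}_\Lambda$ satisfies the universal property of $(e_{1*}\underline{G}_\Lambda)^0$ by reducing via Lemma~\ref{connected} to finitely presented connected test objects, invoking the presentation of Corollary~\ref{ExactfpHI1}, pulling it back by the exact functor $e_1^*$, and then applying the universal property of the Chow trace; the identification of $\pi_0$ is likewise done by taking the stalk at $\Spec k_s$.

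One correction to your sketch: for a connected finitely presented $1$-motivic sheaf $\sF$, Corollary~\ref{ExactfpHI1} gives $E=\pi_0(\sF)=0$, but the lattice $L$ in $0\to \underline{L}_\Lambda\to \underline{G'}_\Lambda\to \sF\to 0$ is \emph{not} generally zero, so $\sF$ need not itself be of the form $\underline{H}_\Lambda$. Testing only against semi-abelian $\underline{H}_\Lambda$ is therefore not enough for a Yoneda argument. The paper handles this by rewriting $\Hom_{\HI_{\leq 1}^0(k,\Lambda)}(\sF,\underline{\pi_*G}_\Lambda)$ as $\Hom_{C(\HI_{\leq 1})}([\underline{L}_\Lambda\to\underline{G'}_\Lambda],[0\to\underline{\pi_*G}_\Lambda])$, which by \cite[3.3.4~d)]{BVK16Derived1Motives} is $\Hom_{\M_1(k)}([L\to G'],[0\to\pi_*G])\otimes\Lambda$, and then the Chow trace adjunction applies to this Deligne $1$-motive. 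Your ``propagation via exactness of $e_1^*$'' amounts to the same computation, but you should carry the lattice $L$ along rather than setting it to zero.
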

\begin{proof}
    By Proposition~\ref{SAVpullback} and Lemma~\ref{HIne^*}, we have
    \[e_1^*(\underline{\pi_*G}_\Lambda) \simeq \underline{(\pi_*G)_K}_\Lambda.\]
    Then the co-unit $\pi^*\pi_* \to \id$ induces a morphism $e_1^*(\underline{\pi_*G}_\Lambda) \to \underline{G}_\Lambda$. It suffices to show that for $\sF\in\HI_{\leq 1}^0(k,\Lambda)$, there exists a unique morphism $f \colon  \sF \to \underline{\pi_*G}_\Lambda$ such that the following diagram commutes
    \[\xymatrix{
        e_1^*(\sF)                       \ar[rd] \ar@{.>}[d]_{e_1^*(f)} \\
        e_1^*(\underline{\pi_*G}_\Lambda) \ar[r] & \underline{G}_\Lambda.
    }\] 
    By Lemma~\ref{connected}, we may and do assume that $\sF$ is a finitely presented connected $1$-motivic sheaf. Let 
    \[0\to \underline{L}_\Lambda \stackrel{a}{\to} \underline{G'}_\Lambda \stackrel{b}{\to }\sF \to 0\] 
    be the presentation of $\sF$ in Corollary~\ref{ExactfpHI1}. By Corollary~\ref{HI1exact} (3), the inverse image $e_1^*$ of $1$-motivic sheaves is exact. Thus we get another exact sequence
    \[0\to e_1^*(\underline{L}_\Lambda) \to e_1^*(\underline{G'}_\Lambda) \to e_1^*(\sF) \to 0.\]
    Denote by $[\underline{L}_\Lambda \to \underline{G'}_\Lambda]$ the complex of $1$-motivic sheaves concentrated in degrees $0$ and $1$. Other similar notations below have a similar meaning. Then
    \begin{align*}
        \Hom_{\HI_{\leq 1}^0(k,\Lambda)}(\sF,\underline{\pi_*G}_\Lambda) 
        &\simeq \Hom_{C(\HI_{\leq 1}(k,\Lambda))}([\underline{L}_\Lambda \to \underline{G'}_\Lambda],[0\to \underline{\pi_*G}_\Lambda])\\
        &\simeq \Hom_{\M_1(k)}([L \to G'],[0\to \pi_*G])\otimes_\bZ\Lambda\\
        &\simeq \Hom_{\M_1(K)}([L_K\to G'_K],[0\to G])\otimes_\bZ\Lambda\\
        &\simeq \Hom_{C(\HI_{\leq 1}(K,\Lambda))}([\underline{L_K}_\Lambda \to \underline{G'_K}_\Lambda],[0\to \underline{G}_\Lambda])\\
        &\simeq \Hom_{C(\HI_{\leq 1}(K,\Lambda))}([e_1^*(\underline{L}_\Lambda) \to e_1^*(\underline{G'}_\Lambda)],[0\to \underline{G}_\Lambda])\\
        &\simeq \Hom_{\HI_{\leq 1}(K,\Lambda)}(e_1^*\sF, \underline{G}_\Lambda),
    \end{align*}
    where the first and the last isomorphism hold by the above exact sequences, the second and the fourth isomorphisms hold by \cite[3.3.4 d)]{BVK16Derived1Motives}, the third isomorphism holds by construction of Chow trace, and the second to the last isomorphisms hold by Proposition~\ref{SAVpullback}. 

    By the above argument, we have $\underline{\pi_*G}_\Lambda \simeq (e_{1*}\underline{G}_\Lambda)^0$. Consider the exact sequence
    \[0\to \underline{\pi_*G}_\Lambda \to e_{1*}(\underline{G}_\Lambda) \to \pi_0(e_{1*}(\underline{G}_\Lambda)) \to 0.\]
    Taking the stalk at the geometric point $\Spec k_s\to \Spec k$, we get 
    \[0\to (\pi_*G)(k_s)_\Lambda \to G(Kk_s)_\Lambda \to \pi_0(e_{1*}(\underline{G}_\Lambda))_{k_s} \to 0.\]
    Thus $\pi_0(e_{1*}(\underline{G}_\Lambda))$ is the $0$-motivic sheaf associated with $\LN(A,Kk_s/k_s)_\Lambda$.
\end{proof}

\begin{cor}\label{LNe*A}
    Let $K/k$ be a finitely generated regular extension and let $A$ be an abelian variety over $K$. Then the $1$-motivic sheaf $e_{1*}(\underline{A}_\Lambda)$ is finitely presented.
\end{cor}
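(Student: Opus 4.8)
The plan is to extract the statement from the short exact sequence of $1$-motivic sheaves
\[0\to \underline{\pi_*A}_\Lambda \to e_{1*}(\underline{A}_\Lambda) \to \LN(A,Kk_s/k_s)_\Lambda\to 0\]
supplied by Theorem~\ref{ChowRevisited}, which applies because a regular extension is in particular primary. Since the finitely presented $1$-motivic sheaves are stable under extensions in $\HI_{\leq 1}(k,\Lambda)$ (see \cite{BVK16Derived1Motives}), it is enough to show that the two outer terms of this sequence are finitely presented.

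For the first term this is immediate: $\pi_*A$ is an abelian variety over $k$, so the identity map $\underline{\pi_*A}_\Lambda\to\underline{\pi_*A}_\Lambda$ exhibits $\underline{\pi_*A}_\Lambda$ as a quotient of a semi-abelian group scheme with finitely generated (indeed zero) kernel. For the second term, which is a $0$-motivic sheaf, I would first record that the compositum $Kk_s=K\otimes_k k_s$ is a field and that $Kk_s/k_s$ is again a finitely generated regular extension: finite generation is clear, and $Kk_s\otimes_{k_s}\overline{k_s}=K\otimes_k\overline k$ is a domain because $K/k$ is regular. The essential input is then that the formation of the $K/k$-trace commutes with the separable base extension $k_s/k$, so that $(\pi_*A)_{k_s}$ is the $Kk_s/k_s$-trace of $A_{Kk_s}$ (\cite{Conrad06ChowLN}); this identifies $\LN(A,Kk_s/k_s)=A(Kk_s)/(\pi_*A)(k_s)$ with the genuine Lang-N\'eron group of $A_{Kk_s}$ relative to $Kk_s/k_s$, which is a finitely generated abelian group by the Lang-N\'eron theorem \cite{LangNeron59LNfg}. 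Hence $\LN(A,Kk_s/k_s)$ is a finitely generated continuous $\Gal(k_s/k)$-module, i.e. it is $E(k_s)$ for a constructible group scheme $E$ over $k$, and $\LN(A,Kk_s/k_s)_\Lambda\simeq\underline{E}_\Lambda$ is a finitely presented $1$-motivic sheaf.

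The step I expect to require the most care is the base-change behaviour of the Chow trace along $k_s/k$: unlike the finite separable situation, $k_s/k$ is infinite and is not a primary extension unless $k$ is separably closed, so this cannot be deduced from the adjunctions established earlier in the paper and must be imported from \cite{Conrad06ChowLN}; one must also make sure that the identification is an equality of subgroups of $A(Kk_s)$, not merely an isogeny, so that the displayed sequence really features the honest Lang-N\'eron group to which the Lang-N\'eron theorem applies. The remaining ingredients — stability of finitely presented $1$-motivic sheaves under extensions, and the fact that constructible group schemes give finitely presented $0$-motivic sheaves — are standard (\cite{ABV09MotShvLAlb}, \cite{BVK16Derived1Motives}).
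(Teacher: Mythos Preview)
Your proposal is correct and follows essentially the same approach as the paper: use the exact sequence from Theorem~\ref{ChowRevisited}, observe that $\underline{\pi_*A}_\Lambda$ is finitely presented, apply the Lang--N\'eron theorem to see that $\LN(A,Kk_s/k_s)_\Lambda$ comes from a constructible group scheme, and conclude by closure under extensions. The paper is terser only because it cites Conrad's version \cite[Theorem~7.1]{Conrad06ChowLN}, which already packages the base-change compatibility of the trace along $k_s/k$ that you spell out.
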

\begin{proof}
    The Lang-N\'eron theorem (\cite[Theorem~7.1]{Conrad06ChowLN}) says that $A(Kk_s)/(\pi_*A)(k_s)$ is a finitely generated abelian group. This means that $\pi_0(e_{1*}(\underline{A}_\Lambda))$ is a finitely presented $1$-motivic sheaf. Since every extension of two finitely presented $1$-motivic sheaves is still finitely presented, the $1$-motivic sheaf $e_{1*}(\underline{A}_\Lambda)$ is finitely presented as well.
\end{proof}

It is well-known that an \'etale sheaf which is an extension of two separated group schemes is represented by a separated group algebraic space and that separated group algebraic spaces over a field are represented by group schemes. In the special case of the exact sequence in Theorem~\ref{ChowRevisited}, we can prove the representability without using algebraic spaces. See Proposition~\ref{extRep}. Thus we have the following result.
\begin{cor}
    Let $K/k$ be a primary extension of fields of characteristic $0$ and let $G$ be a semi-abelian variety over $K$. Then the $1$-motivic sheaf $e_{1*}G$ is represented by a semi-abelian group scheme over $k$.
\end{cor}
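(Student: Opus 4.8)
The plan is to combine Theorem~\ref{ChowRevisited} with the elementary representability result promised as Proposition~\ref{extRep}. By Theorem~\ref{ChowRevisited}, the $1$-motivic sheaf $e_{1*}G$ sits in a short exact sequence
\[0\to \underline{\pi_*G}_\Lambda \to e_{1*}G \to \LN(G,Kk_s/k_s)_\Lambda\to 0\]
of $1$-motivic sheaves over $k$. The first term is represented by the semi-abelian variety $\pi_*G$ over $k$, and the quotient is the $0$-motivic sheaf attached to a discrete $\Gal(k_s/k)$-module, hence represented by a commutative \'etale group scheme $E$ over $k$. So the content of the statement is that an extension of a commutative \'etale group scheme by a semi-abelian variety, taken in the category of \'etale sheaves with transfers, is represented by a semi-abelian group scheme (in the sense of \cite{ABV09MotShvLAlb}: a group scheme whose neutral component is a semi-abelian variety). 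First I would invoke Proposition~\ref{extRep} for exactly this extension to conclude that $e_{1*}G$ is represented by a commutative group scheme $\sG$ over $k$.

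The remaining point is to identify the structure of $\sG$. Since $k$ has characteristic $0$, the group scheme $\sG$ is automatically smooth, and its neutral component $\sG^0$ is a connected smooth commutative algebraic group over $k$; moreover $\sG/\sG^0$ is \'etale. From the exact sequence, $\underline{\pi_*G}_\Lambda$ is a connected $1$-motivic subsheaf of $e_{1*}G=\underline{\sG}_\Lambda$, and the quotient $\underline E_\Lambda$ is $0$-motivic, i.e.\ has trivial connected component. Comparing with the connected-\'etale sequence $0\to \sG^0\to \sG\to \pi_0(\sG)\to 0$ and using that $\pi_0$ is a left adjoint killing connected sheaves, one gets $\pi_0(e_{1*}G)=\underline{\pi_0(\sG)}_\Lambda=\underline E_\Lambda$, hence $\pi_0(\sG)$ is a constructible group scheme, and $(e_{1*}G)^0=\underline{\sG^0}_\Lambda$; since $(e_{1*}G)^0\simeq\underline{\pi_*G}_\Lambda$ by Theorem~\ref{ChowRevisited} and $\pi_*G$ is a semi-abelian variety, Yoneda (the sheaf $\underline{(-)}_\Lambda$ is conservative enough: for smooth connected commutative groups $\underline{(-)}_\Lambda$ is fully faithful by Lemma~\ref{h0gamma^*G=G^tr} together with the perfect-field full faithfulness of $\gamma_*\iota$ from \cite[Lemma~3.9.2]{BVK16Derived1Motives}) gives $\sG^0\simeq \pi_*G$. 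Therefore $\sG$ is a semi-abelian group scheme over $k$, which is the assertion.

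The one genuine obstacle is the input Proposition~\ref{extRep}, i.e.\ the representability of the extension; but that is stated as available earlier in the paper, the point there being that one can produce the extension class concretely (the $0$-motivic quotient corresponds to a finitely generated $\Gal(k_s/k)$-module, and one builds $\sG$ as a suitable pushout/pullback along the $\pi_*G$-torsor over the finite \'etale scheme underlying $E$), avoiding algebraic spaces. Given that, the only thing to be careful about is the translation between ``$1$-motivic sheaf represented by a group scheme'' and ``group scheme'': one must check the neutral component and group of components computed on the sheaf side agree with those of the representing group scheme, which is exactly the $\pi_0$-computation above. I do not expect any difficulty beyond bookkeeping in that step, since $\pi_0$ of a smooth commutative group scheme over a field is its scheme of connected components (as recalled before Lemma~\ref{pi0cont}) and the formation of $\underline{(-)}_\Lambda$ commutes with the connected-\'etale sequence.
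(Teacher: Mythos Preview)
Your approach matches the paper's: Theorem~\ref{ChowRevisited} supplies the extension, and Proposition~\ref{extRep} supplies the representability. Note that Proposition~\ref{extRep} already asserts that the identity component of the representing scheme is the given connected kernel, here $\pi_*G$; so your second paragraph recovering $\sG^0\simeq\pi_*G$ is redundant.

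One caveat: your assertion that $\pi_0(\sG)$ is a \emph{constructible} group scheme is not justified for a general primary extension, since $\LN(G,Kk_s/k_s)$ need not be finitely generated (take $G=\bG_m$ and $K=k(t)$: then $(k_s(t))^\times/k_s^\times$ is not finitely generated). The paper's own definition of ``semi-abelian group scheme'' demands constructibility of $\pi_0$, so strictly speaking the corollary should be read as producing a smooth commutative $k$-group scheme locally of finite type with semi-abelian identity component---which is exactly the output of Proposition~\ref{extRep}. The paper does not address this point either.
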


For an abelian variety $A$ over $K$, we have gotten some information about $e_{1*}(A)$ and $R^ie_{1*}(A)$ for $i\geq 2$ by Theorem~\ref{ChowRevisited} and Corollary~\ref{Rie*torsion}. The following result is about the first direct image. It uses Raynaud's results on torsors under abelian schemes.
\begin{thm}\label{Rie*A}
    Let $K/k$ be a field extension, and let $A$ be an abelian variety over $K$. Then $R^ie_{1*}(A)$ is a torsion $0$-motivic sheaf for $i\geq 1$.
\end{thm}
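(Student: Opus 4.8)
The plan is to reduce to the case where $A$ is replaced by its étale analogue and use the fact that $R^ie_{1*}$ of a $0$-motivic sheaf is a torsion $0$-motivic sheaf for $i\geq 1$, which is the content of Corollary~\ref{Rie*torsion}~(1). First I would split off the part that is "visible" in $\HI_{\leq 0}$. Since $\underline{A}_\Lambda$ is a connected $1$-motivic sheaf, consider the multiplication-by-$n$ sequence $0\to {}_nA \to \underline{A}_\Lambda \xrightarrow{n} \underline{A}_\Lambda \to 0$ in $\HI_{\leq 1}(K,\Lambda)$, valid for $n$ prime to $p$ because $n\colon A\to A$ is then an isogeny (so an epimorphism of étale sheaves) with kernel ${}_nA$ a finite étale group scheme, hence a $0$-motivic sheaf. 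Applying $Re_{1*}$ gives long exact sequences relating $R^ie_{1*}(A)$ to $R^ie_{1*}({}_nA)$; the latter are torsion $0$-motivic sheaves for $i\geq 1$ by Corollary~\ref{Rie*torsion}~(1). This already shows, for $i\geq 2$, that $R^ie_{1*}(A)$ is a torsion $0$-motivic sheaf (as was also observed via the $\bQ$-coefficient argument in Corollary~\ref{Rie*torsion}~(2)), so the real content is $i=1$.

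For $i=1$, I would argue in two steps: that $R^1e_{1*}(A)$ is $0$-motivic, and that it is torsion. From the multiplication-by-$n$ sequence the relevant piece of the long exact sequence reads
\[
e_{1*}(\underline{A}_\Lambda) \xrightarrow{\;n\;} e_{1*}(\underline{A}_\Lambda) \to R^1e_{1*}({}_nA) \to R^1e_{1*}(\underline{A}_\Lambda) \xrightarrow{\;n\;} R^1e_{1*}(\underline{A}_\Lambda) \to R^1e_{1*}({}_nA)\to\cdots
\]
Hence the $n$-torsion subsheaf ${}_n\!\bigl(R^1e_{1*}(\underline{A}_\Lambda)\bigr)$ is a quotient of $R^1e_{1*}({}_nA)$, which is a torsion $0$-motivic sheaf; in particular each ${}_n\!\bigl(R^1e_{1*}(\underline{A}_\Lambda)\bigr)$ is a $0$-motivic sheaf (and $\HI_{\leq 0}$ is a Serre subcategory of $\HI_{\leq 1}$ by Corollary~\ref{HI1exact}~(1)). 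It remains to show $R^1e_{1*}(\underline{A}_\Lambda)$ is itself torsion, for then it equals the filtered colimit $\varinjlim_n {}_n\!\bigl(R^1e_{1*}(\underline{A}_\Lambda)\bigr)$ of $0$-motivic sheaves, hence is a $0$-motivic sheaf, and being torsion it is a torsion $0$-motivic sheaf.

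To see $R^1e_{1*}(\underline{A}_\Lambda)$ is torsion I would pass, via Proposition~\ref{HI1e_*curve} and the conservative functor $\gamma_*\sigma_{1*}\iota\iota_1$, to the computation on the small étale site $(\Sm/k)_{\leq 1,\et}$: for $X\in(\Sm/k)_{\leq 1}$ the sheaf $R^1e_*^{\leq 1}(\underline{A_K}_\Lambda)$ is the étale sheafification of $X\mapsto H^1_\et(X_K,\underline{A_K}_\Lambda)$, and it suffices to show this is torsion for $X$ a smooth curve (the case $\dim X=0$ being clear since $\underline A$ has no $H^1$ over a field up to torsion, by Galois cohomology of a torsion-free-modulo-torsion... more precisely by Lang's theorem / finiteness of Galois cohomology being torsion). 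For $X_K$ of dimension $\leq 1$ we have $H^1_\et(X_K,\underline{A_K})$, and here I invoke Raynaud's theorem (cited in the paragraph before the statement) that $H^1_\et(Y,\cA)$ is torsion for $Y$ noetherian regular and $\cA$ an abelian scheme over $Y$: apply it to $Y=X_K$ and the constant abelian scheme $A_K\times_K X_K$. Tensoring with $\Lambda$ does not change torsionness. Thus the presheaf is torsion, hence so is its sheafification, hence $R^1e_{1*}(\underline{A}_\Lambda)$ is torsion, completing the proof.

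\medskip
\noindent\emph{Main obstacle.} The delicate point is the identification $R^1e_{1*}(\underline A_\Lambda)=\varinjlim_n {}_n\!\bigl(R^1e_{1*}(\underline A_\Lambda)\bigr)$ once torsionness is known, together with checking that this filtered colimit of $0$-motivic sheaves is again $0$-motivic: one needs that $\HI_{\leq 0}(k,\Lambda)$ is closed under filtered colimits inside $\HI_{\leq 1}(k,\Lambda)$ (equivalently inside $\Shv_\et^\tr(k,\Lambda)$), which follows from $\HI_{\leq 0}$ being equivalent to $\Shv_\et(\Et/k,\Lambda)$ (Proposition~\ref{HI0}) and the embedding $\delta$ commuting with colimits. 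A secondary subtlety is making sure Raynaud's theorem is applied in exactly the right generality — noetherian regular base, which $X_K$ is for $X$ smooth over a field — and that the étale and Nisnevich cohomologies agree here after inverting $p$ so that the sheafification step is harmless; both are already used in Corollary~\ref{Rie*torsion}.
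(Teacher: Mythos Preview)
Your proof is correct and rests on the same key input as the paper's: Raynaud's theorem that $H^1_\et(X_K,A)$ is torsion for $X$ smooth (hence $X_K$ noetherian regular). The paper organizes the argument slightly more economically: rather than the multiplication-by-$n$ d\'evissage, it simply observes that Raynaud gives $H^1_\et(X_K,A)\otimes\bQ=0$, hence $R^1e_*^{\leq 1}(\underline{A}_\Lambda\otimes\bQ)=0$, and then plugs this directly into the proof scheme of Corollary~\ref{Rie*torsion} (the exact sequence $0\to\sF'\to\sF\to\sF\otimes\bQ\to\sF''\to 0$ with torsion ends). Your filtered-colimit step, while correct, is unnecessary: once you know $R^1e_{1*}(\underline{A}_\Lambda)$ is torsion, Suslin rigidity (already used in Corollary~\ref{Rie*torsion}) says any torsion homotopy-invariant \'etale sheaf with transfers is $0$-motivic, so you are done immediately. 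Two small slips: in your displayed long exact sequence the final term should be $R^2e_{1*}({}_nA)$, not $R^1$; and $A$ is already defined over $K$, so write $A\times_K X_K$ rather than ``$A_K\times_K X_K$''.
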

\begin{proof}
    We use the same argument as in the proof of Corollary~\ref{Rie*torsion}. It suffices to show that $R^1e_*^{\leq 1}(A\otimes\bQ)=0$, where $e_*^{\leq 1}$ is the direct image functor for \'etale sheaves on $(\Sm)_{\leq 1}$. Note that $R^1e_{*}^{\leq 1}(A\otimes\bQ)$ is the \'etale sheafification of the presheaf 
    \begin{align*}
        (\Sm/k)_{\leq 1} &\longrightarrow \Lambda\text{-}\Mod,\\ 
        X                &\longmapsto H^1_\et(X_K,A)\otimes\bQ.
    \end{align*} 
    Since $X$ is noetherian and regular, torsors under the abelian scheme $A_X$ are torsion, i.e., $H^1_\et(X_K,A)$ is a torsion group by \cite[Proposition~XIII 2.6.(ii) and Proposition~XIII 2.3.(ii)]{Raynaud70AmpleShv}. Thus $H^1_\et(X_K,A)\otimes\bQ=0$, which implies that $R^1e_*^{\leq 1}(A\otimes\bQ)=0$.
\end{proof}

We can also study $R^1e_{1*}\bG_m$ in some interesting cases.  
\begin{thm}\label{R1e*Gm}
    Let $f\colon X\to \Spec k$ be a smooth projective and geometrically connected variety, and let $K$ be the function field of $X$. Then we have an exact sequence 
    \[ \Div^0(X_{k_s}) \to \Pic^0_{X/k} \to R^1e_{1*}\bG_m \to 0, \]
    where $\Div^0(X_{k_s})$ is $\Gal(k_s/k)$-module of divisors on $X_{k_s}$ algebraically equivalent to zero, viewed as a locally constant \'etale sheaf. In particular, we have $R^1e_*\bG_m$ is connected.
\end{thm}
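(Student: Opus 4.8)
The plan is to reduce to ordinary étale cohomology on $(\Sm/k)_{\leq1}$ via Proposition~\ref{HI1e_*curve}, and then run the Weil‑divisor exact sequence on $X$. Since $\gamma_*\sigma_{1*}\iota\iota_1$ is exact and conservative (Proposition~\ref{Shv_et^tr}, Lemma~\ref{HInEmbedShvn}, Corollary~\ref{HI1exact}) and, by Proposition~\ref{HI1e_*curve}, intertwines $R^ie_{1*}$ with the derived direct image $R^ie_*^{\leq1}$ for étale sheaves on $(\Sm/k)_{\leq1}$ — whose value on $\underline{\bG_m}_\Lambda$ is the sheafification of $U\mapsto H^1_\et(U_K,\bG_m)\otimes_\bZ\Lambda$ — it suffices to produce a right‑exact sequence of étale $\Lambda$-sheaves $\underline{\Div^0(X_{k_s})}\to\Pic^0_{X/k}\to R^1e_*^{\leq1}\bG_m\to0$ on $(\Sm/k)_{\leq1}$. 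The ``in particular'' is then automatic: $\pi_0$ is a left adjoint, hence right exact, and it kills the connected sheaf $\Pic^0_{X/k}$, so it kills $R^1e_*^{\leq1}\bG_m$.

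To obtain the sequence I would factor $e$ as $\Spec K=\eta_X\xrightarrow{\,j\,}X\xrightarrow{\,f\,}\Spec k$ and start from the divisor exact sequence on $X$ (legitimate since $X$ is regular)
\[0\to\bG_m\to\mathcal{K}^*_X\to\mathcal{D}iv_X\to0,\qquad\mathcal{K}^*_X=j_*\bG_{m,\eta_X},\quad\mathcal{D}iv_X=\bigoplus_{x\in X^{(1)}}(i_x)_*\underline{\bZ},\ \ i_x\colon\Spec\kappa(x)\to X.\]
First I would check $R^qj_*\bG_m=0$ for $q\geq1$: the stalk of $R^1j_*\bG_m$ at a geometric point $\bar y$ over $y\in X$ is $H^1_\et$ of $\Spec\Frac(\mathcal{O}_{X,\bar y}^{\mathrm{sh}})$, the Picard group of a field, hence $0$; consequently the Leray spectral sequence for $e=f\circ j$ gives $R^1e_*^{\leq1}\bG_m\cong R^1f_*\mathcal{K}^*_X$. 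Applying $Rf_*$ to the displayed sequence, its long exact sequence contains
\[R^0f_*\mathcal{D}iv_X\xrightarrow{\ \mathrm{cl}\ }R^1f_*\bG_m\longrightarrow R^1f_*\mathcal{K}^*_X\longrightarrow R^1f_*\mathcal{D}iv_X,\]
and I would identify the terms (all $\otimes_\bZ\Lambda$): $R^1f_*\bG_m=\Pic_{X/k}$, which after inverting $p$ is a semi‑abelian group scheme — identity component the abelian variety $\Pic^0_{X/k}$, and $\pi_0$ the constructible group scheme $\NS_{X/k}$ — hence a $1$-motivic sheaf (here smoothness and projectivity of $X$ are used); the connecting map $\mathrm{cl}$ is the divisor‑class map $D\mapsto[\mathcal{O}(D)]$; the term $R^0f_*\mathcal{D}iv_X=\bigoplus_xR^0(f\circ i_x)_*\underline{\bZ}$ is, by properness of $X$ and the scheme of connected components, the locally constant sheaf on the $\Gal(k_s/k)$-module $\bigoplus_x\bZ[\pi_0(\overline{\{x\}}_{k_s})]=\Div(X_{k_s})$ — the point being that only the divisors of $U\times_kX$ pulled back from $X$ contribute (the ``moving'' ones dominate $X$) and these do not proliferate under base extension; and $R^1f_*\mathcal{D}iv_X=\bigoplus_xR^1(f\circ i_x)_*\underline{\bZ}=0$ because $R^1(\cdot)_*\underline{\bZ}$ sheafifies $U\mapsto\Hom_{\mathrm{cont}}(\pi_1^\et(U_{\kappa(x)}),\bZ)=0$. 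Hence $R^1e_*^{\leq1}\bG_m=\coker\big(\underline{\Div(X_{k_s})}\xrightarrow{\mathrm{cl}}\Pic_{X/k}\big)$.

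Then I would pass to the algebraically‑trivial parts. The composite $\underline{\Div(X_{k_s})}\to\Pic_{X/k}\to\NS_{X/k}$ is surjective, $\NS_{X/k}$ is the locally constant sheaf $\underline{\NS(X_{k_s})}$ (Néron–Severi does not grow under (separably) closed base extension, modulo $p$-torsion), and its kernel is $\underline{\Div^0(X_{k_s})}$ by the definition of algebraic equivalence; comparing the short exact sequences $0\to\underline{\Div^0(X_{k_s})}\to\underline{\Div(X_{k_s})}\to\NS_{X/k}\to0$ and $0\to\Pic^0_{X/k}\to\Pic_{X/k}\to\NS_{X/k}\to0$ through $\mathrm{cl}$ (a diagram chase) yields $\coker(\underline{\Div(X_{k_s})}\to\Pic_{X/k})=\coker(\underline{\Div^0(X_{k_s})}\to\Pic^0_{X/k})$, the desired sequence. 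As a consistency check, the kernel of $\mathrm{cl}$ on the degree‑zero part is the principal‑divisor group $(Kk_s)^*/k_s^*$, which matches $\pi_0(e_{1*}\bG_m)$ computed in Theorem~\ref{ChowRevisited} (recall $\bG_{m,K}=e_1^*\bG_{m,k}$ by Proposition~\ref{SAVpullback} and $\pi_*\bG_{m,K}=\bG_{m,k}$ by Corollary~\ref{M1adjIsom}).

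The step I expect to be the main obstacle is the site‑theoretic bookkeeping needed to apply $Rf_*$ and the Leray spectral sequence for $e=f\circ j$ at the level of sheaves on $(\Sm/k)_{\leq1}$: setting up a relative big site over $X$ on which $\bG_m$, $\mathcal{K}^*_X$, $\mathcal{D}iv_X$ live, identifying $R^1f_*\bG_m$ there with the representable sheaf $\Pic_{X/k}$, and upgrading the stalkwise vanishings $R^qj_*\bG_m=0$ ($q\geq1$) and $R^1f_*\mathcal{D}iv_X=0$ to statements on that site — together with the routine but necessary verification that, after $\otimes_\bZ\Lambda$, the sheaves $\Pic_{X/k}$, $\Pic^0_{X/k}$, $\NS_{X/k}$, $\underline{\Div(X_{k_s})}$ and $\underline{\Div^0(X_{k_s})}$ genuinely lie in $\HI_{\leq1}$.
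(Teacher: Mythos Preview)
Your approach is correct and lands at the same place as the paper, but the route to the identification $R^1e_{1*}\bG_m\simeq\coker(\Div_{X/k}\to\Pic_{X/k})$ is genuinely different. The paper avoids the Leray spectral sequence for $e=f\circ j$ and the divisor short exact sequence on $X$ entirely: instead, for each affine open $u\colon U\hookrightarrow X$ it writes down the excision sequence
\[\bigoplus_{x\in X^{(1)}\cap(X\setminus U)}\bZ\to\Pic_{X/k}\to\Pic_{U/k}\to 0\]
(where $\Pic_{U/k}=R^1(fu)_*\bG_{m,U}$), and then takes the filtered colimit over all affine opens $U$. Since $\Spec K=\varprojlim U$ and higher direct images commute with filtered colimits, the right-hand term becomes $R^1e_*\bG_m$ while the left-hand term assembles into $\Div_{X/k}$. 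This sidesteps exactly the site-theoretic bookkeeping you flagged as the main obstacle: there is no need to set up a relative big site over $X$, to upgrade the stalkwise vanishings $R^qj_*\bG_m=0$ to that site, or to verify $R^1f_*\mathcal{D}iv_X=0$. Your approach buys a more conceptual explanation (the divisor sequence and Leray) at the cost of that bookkeeping; the paper's approach is more elementary and direct. The final passage from $(\Div,\Pic)$ to $(\Div^0,\Pic^0)$ via the snake lemma against $\NS_{X/k}$ is identical in both.
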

\begin{proof}
    For an affine open subscheme $u \colon U \hookrightarrow X$, we have the following exact sequence 
    \[ \bigoplus_{x\in X^{(1)}\cap {(X\setminus U)}}\bZ \to \Pic_{X/k} \to \Pic_{U/k} \to 0, \]
    where $\Pic_{X/k}$ (resp. $\Pic_{U/k}$) is the Picard functor $R^1f_*\bG_{m,X}$ (resp. $R^1(fu)_*\bG_{m,U}$). Taking colimit, we obtain
    \[ \Div_{X/k} \to \Pic_{X/k} \to R^1e_*\bG_m \to 0, \]
    where $\Div_{X/k}$ is the locally constant \'etale sheaf associated with the $\Gal(k_s/k)$-module $\Div(X_{k_s})$. Recall that $\Pic_{X/k}$ is represented by a group scheme, which is an extension of the discrete sheaf $\NS_{X/k}$ by the abelian variety $\Pic_{X/k}^0$ (Picard variety). Applying the snake lemma to the following commutative diagram with exact rows
    \[\xymatrix{
                & \Div_{X/k}  \ar[r] \ar[d] & \Pic_{X/k}  \ar[r] \ar[d] & R^1e_*\bG_m \ar[r] \ar[d] & 0\\
        0\ar[r] & \NS_{X/k}   \ar@{=}[r]    & \NS_{X/k}   \ar[r]        & 0 
    }\]
    we obtain the desired result.
\end{proof}

\section{The \texorpdfstring{$1$}{1}-motivic \texorpdfstring{$t$}{t}-structure}\label{t-structure}
In this section, we use Ayoub's way of perverting $t$-structures to get a new $t$-structure from the standard one on $D(\HI_{\leq 1})$. This new $t$-structure will be called the $1$-motivic $t$-structure, and the objects in its heart will be called $1$-motives. We shall translate the results on higher direct images of $1$-motivic sheaves to results on $1$-motives.  
\subsection{The abelian category of \texorpdfstring{$1$}{1}-motives with torsion}
Before we deal with the $1$-motivic $t$-structure, we review in this subsection the construction of the abelian category of $1$-motives with torsion. This category was introduced in \cite{BRS03DeligneConj-1-motive} (in characteristic $0$) and studied in details in \cite{BVK16Derived1Motives} over perfect fields. It contains the category of Deligne $1$-motives. We shall see in the next subsections that this abelian category can be embedded into the heart of the $1$-motivic $t$-structure.

Let $k$ be a field of exponential characteristic $p$, i.e., $p=1$ if $\Char(k)$ is zero, and $p=\Char(k)$ otherwise.
\begin{defn}[{\cite[C.1]{BVK16Derived1Motives}}]
    \begin{enumerate}[leftmargin=*,label={\rm(\arabic*)}]
        \item An effective $1$-motive with torsion over $k$ is a complex of group schemes
            \[ M=[L\stackrel{u}{\to} G],\]
            where $L\in{}^t\M_0(k)$ is a constructible group scheme and $G\in\SAV(k)$ is a semi-abelian variety.\menum
        \item An effective morphism of $1$-motives with torsion from $M=[L\stackrel{u}{\to} G]$ to $M'=[L' \stackrel{u'}{\to} G']$ is a commutative square 
            \[\xymatrix{
                L  \ar[r]^u \ar[d]_f & G \ar[d]^g \\
                L' \ar[r]^{u'} & G'
            }\]
            in the category of group schemes. Denote by $(f,g)\colon M\to M'$ such a morphism. We will denote by $\Hom_\eff(M,M')$ the abelian group of effective morphisms.
        \item We denote the category of effective $1$-motives with torsion over $k$ by ${}^t\M_1^\eff(k)$.
        \item If $[F\to 0]$ is an effective $1$-motive with $F$ a finite \'etale group scheme, then it is called a torsion $1$-motive. The full subcategory of ${}^t\M_1^\eff(k)$ consisting of torsion $1$-motives is denoted by ${}^t\M_1^\tor(k)$.
    \end{enumerate}
\end{defn}

\begin{rmk}
    If $L\in\M_0(k)$ is a lattice, then $[L\to G]$ is a Deligne $1$-motive.
\end{rmk}

\begin{defn}[{\cite[C.2.1]{BVK16Derived1Motives}}]
    An effective morphism of $1$-motives with torsion from $M=[L\stackrel{u}{\to} G]$ to $M'=[L' \stackrel{u'}{\to} G']$ is called a quasi-isomorphism of $1$-motives with torsion if it yields a pullback diagram 
    \[
    \xymatrix{
        0 \ar[r] & F \ar[r]\ar@{=}[d] & L \ar[r]\ar[d]_u  & L' \ar[d]_{u'} \ar[r] &0\\
        0 \ar[r] & F \ar[r]           & G \ar[r]          & G' \ar[r]             &0,
    }
    \]
    where $F$ is a finite \'etale group.
\end{defn}

By \cite[C.2.4]{BVK16Derived1Motives}, the class of quasi-isomorphisms is a left multiplicative system in the sense of \cite[7.1.7]{KS06CategorySheaf}. 

\begin{defn}[{\cite[C.3.1]{BVK16Derived1Motives}}]
    The category ${}^t\M_1(k)$ of $1$-motives with torsion is the localization of ${}^t\M_1^\eff(k)$ with respect to the multiplicative class of quasi-isomorphisms. In other words, the objects of ${}^t\M_1(k)$ are the same as the objects in ${}^t\M_1^\eff(k)$, and the Hom-sets are given by the formula
    \[ \Hom_{{}^t\M_1(k)}(M,M')=\varinjlim_{(\widetilde{M}\to M) \text{ q.i.}}\Hom_\eff(\widetilde M,M'),\]
    where the colimit is taken over the co-filtrant category of all quasi-isomorphisms $\widetilde M\to M$.
\end{defn}

\begin{rmk}
    By definition, there exist no nontrivial quasi-isomorphisms to $[L\to 0]$ with $L\in{}^t\M_0(k)$. Thus the natural functor
    \begin{align*}
        {}^t\M_0(k) &\longrightarrow        {}^t\M_1(k), \\
        L           &\longmapsto [L\to 0]
    \end{align*} 
    is fully faithful.
\end{rmk}

\begin{defn}
    Let $\cA$ be an additive category. Then we denote by $\cA[1/p]$ the category with the same objects as $\cA$ but 
    \[ \Hom_{\cA[1/p]}(X,Y) \colonequals \Hom_\cA(X,Y)\otimes_\bZ\bZ[1/p].\]
\end{defn}

Now, we introduce a special kind of quasi-isomorphisms and use it to reformulate the category of $1$-motives with torsion.
\begin{eg}\label{snqi}
    Let $M=[L\to G]$ be an effective $1$-motive with torsion over $k$ and let $n$ be an integer with $\Char(k)\nmid n$. We consider the multiplication by $n$ on $G$. By \cite[7.3, Lemma 1 and 2]{BLR90Neron}, the group scheme ${}_nG$ is finite \'etale over $k$. Let $L^{(n)}$ be the pullback of $L$ along $n \colon G\to G$. Thus we have the following commutative diagram with exact rows:
    \[
    \xymatrix{
        0 \ar[r] & {}_nG \ar[r]\ar@{=}[d] & L^{(n)} \ar[r]\ar[d]   & L \ar[d] \ar[r]  &0\\
        0 \ar[r] & {}_nG \ar[r]           & G \ar[r]^n             & G \ar[r]         &0.
    }
    \]
    It is clear that $L^{(n)}$ is a constructible group scheme. Thus we obtain a quasi-isomorphism from $[L^{(n)}\to G]$ to $[L\to G]$. We shall denote the effective $1$-motive $[L^{(n)}\to G]$ by $M^{(n)}$ and denote this quasi-isomorphism by $s_n:M^{(n)}\to M$.
\end{eg}

\begin{lem}\label{QuasiisomIsogeny}
    Let $(f,g)\colon [L'\to G']\to [L\to G]$ be a quasi-isomorphism of effective $1$-motives with torsion. Let $n$ be the degree of the isogeny $g\colon G'\to G$. If $\Char(k)\nmid n$, then there exist morphisms $(f',g')\colon [L^{(n)}\to G]\to [L'\to G']$ and $(f^{(n)},g)\colon [L'^{(n)}\to G']\to [L^{(n)}\to G]$ such that 
    \[ s_n=(f,g)\circ(f',g') \text{\quad and \quad } s_n=(f',g')\circ(f^{(n)},g).\]
\end{lem}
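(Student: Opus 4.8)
The plan is to realise everything explicitly in terms of the three fibre-product presentations of the lattices involved, so that the two factorisations become routine once one geometric input is in place. By the definition of a quasi-isomorphism, $L'$ is the fibre product $L\times_{u,G,g}G'$, and from Example~\ref{snqi} one has $L^{(n)}=L\times_{u,G,n_G}G$ and $L'^{(n)}=L'\times_{u',G',n_{G'}}G'$, where $n_G$ and $n_{G'}$ denote multiplication by $n$; under these identifications $f$, $s_n$, and all structure maps are the evident projections.

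The one non-formal point is to construct a ``quasi-inverse'' isogeny $g'\colon G\to G'$ with $g'\circ g=n_{G'}$ and $g\circ g'=n_G$. Write $F=\ker(g)$; it is a finite commutative group scheme over $k$ of order equal to $\deg(g)=n$ --- indeed étale, by the definition of quasi-isomorphism --- hence annihilated by $n$, so $F\subseteq {}_{n}G'$. Since $g$ identifies $G$ with $G'/F$ and multiplication by $n$ on $G'$ kills $F$, it descends uniquely to a homomorphism $g'\colon G\to G'$ with $n_{G'}=g'\circ g$; composing on the left with $g$ and using that $g$ is an epimorphism of commutative algebraic groups yields $g\circ g'=n_G$.

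With $g'$ available I would set $f'\colon L^{(n)}\to L'$, $(l,x)\mapsto(l,g'(x))$ --- well defined because $g(g'(x))=n_G(x)=u(l)$ --- and $f^{(n)}\colon L'^{(n)}\to L^{(n)}$, $(l',x')\mapsto(f(l'),g(x'))$, which is nothing but the operation $M\mapsto M^{(n)}$ applied functorially to $(f,g)$ and is well defined since $u(f(l'))=g(u'(l'))=g(n_{G'}(x'))=n_G(g(x'))$. Comparing projections shows that $(f',g')\colon [L^{(n)}\to G]\to[L'\to G']$ and $(f^{(n)},g)\colon[L'^{(n)}\to G']\to[L^{(n)}\to G]$ commute with the structure maps, hence are effective morphisms of $1$-motives.

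Finally I would check the two identities componentwise. On the semi-abelian parts, $(f,g)\circ(f',g')$ and $(f',g')\circ(f^{(n)},g)$ act by $g\circ g'=n_G$ on $G$ and by $g'\circ g=n_{G'}$ on $G'$, which are precisely the semi-abelian components of $s_n$ on $[L\to G]$ and on $[L'\to G']$. On the lattice parts, $f\circ f'\colon L^{(n)}\to L$ is $(l,x)\mapsto l$, the defining projection; and $f'\circ f^{(n)}\colon L'^{(n)}\to L'$ is $(l',x')\mapsto(f(l'),n_{G'}(x'))$, which equals $(l',x')\mapsto l'$ once one uses the relation $u'(l')=n_{G'}(x')$ satisfied by the points of $L'^{(n)}$ --- again the defining projection. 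This yields $s_n=(f,g)\circ(f',g')$ and $s_n=(f',g')\circ(f^{(n)},g)$. The only genuine obstacle is the construction of $g'$, i.e.\ the fact that $\ker(g)$ is killed by $n$ so that multiplication by $n$ on $G'$ factors through $g$; after that the argument is pure bookkeeping with fibre products.
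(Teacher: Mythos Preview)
Your proof is correct and follows essentially the same approach as the paper's. The paper cites \cite[7.3, Lemma 5]{BLR90Neron} for the existence of $g'$ with $g'\circ g=n_{G'}$, while you construct it directly from the observation that the finite étale kernel of $g$ has order $n$ and is therefore annihilated by $n$; and where the paper phrases the construction of $f'$ and $f^{(n)}$ abstractly via the universal property of pullbacks, you carry out the same construction explicitly on points of the fibre products---these are stylistic differences only.
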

\begin{proof}
    By \cite[7.3, Lemma 5]{BLR90Neron}, there exists an isogeny of semi-abelian varieties $g'\colon G\to G'$ such that $g'\circ g=n_{G'}$. Thus $g\circ g'\circ g=g\circ n_{G'}=n_G\circ g$. Since $g$ is an epimorphism, we obtain $g\circ g'=n_G$. By the definition of quasi-isomorphisms, $L'$ is the pullback of $L$ along $g:G'\to G$. Then by the universal property of pullback, there exists a homomorphism $f'\colon L^{(n)}\to L'$ such that the following diagram is commutative:
    \[\xymatrix{
        L^{(n)} \ar@/_2pc/[dd] \ar[r] \ar@{.>}[d]^{f'}
        & G \ar[d]_{g'} \ar@/^2pc/[dd]^{n_G}
        \\
        L'  \ar[r] \ar[d]^f  
        & G'  \ar[d]_g
        \\
        L   \ar[r] 
        & G.
    }\]
    Since the lower square and the composition square are pullbacks, the upper square is also a pullback. Then by the universal property of pullback, there exists a homomorphism $f^{(n)}\colon L'^{(n)}\to L^{(n)}$ such that the following diagram is commutative:
    \[\xymatrix{
        L'^{(n)} \ar@/_2pc/[dd] \ar[r] \ar@{.>}[d]^{f^{(n)}}
        & G' \ar[d]_{g} \ar@/^2pc/[dd]^{n_{G'}}
        \\
        L^{(n)}  \ar[r] \ar[d]^{f'}  
        & G \ar[d]_{g'}
        \\
        L'  \ar[r] 
        & G',
    }\]
    which completes the proof.
\end{proof}

\begin{prop}\label{tM1n}
    Let $M$ and $M'$ be $1$-motives with torsion over $k$. Then 
    \[\varinjlim_{(\widetilde{M}\to M) \text{ q.i.}}\Hom_\eff(\widetilde M,M')\otimes\bZ[1/p]\simeq \varinjlim_{(n,p)=1}\Hom_\eff(M^{(n)},M')\otimes\bZ[1/p].\]
    In other words, ${}^t\M_1(k)[1/p]$ is the localization of ${}^t\M_1^\eff(k)[1/p]$ with respect to the quasi-isomorphisms of the form \(s_n\) for $n$ prime to $p$.
\end{prop}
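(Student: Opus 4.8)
The strategy is to compare both sides with $\Hom$-groups in localized categories. Since $-\otimes_{\bZ}\bZ[1/p]$ commutes with filtered colimits, the right-hand side is the group of morphisms $M\to M'$ in the localization of ${}^t\M_1^\eff(k)[1/p]$ at the class $T=\{s_n : (n,p)=1\}$, while the left-hand side equals $\Hom_{{}^t\M_1(k)}(M,M')\otimes_{\bZ}\bZ[1/p]=\Hom_{{}^t\M_1(k)[1/p]}(M,M')$. As $T$ is contained in the multiplicative system of all quasi-isomorphisms, there is a canonical homomorphism $\Psi$ from the right-hand side to the left-hand side, and it suffices to prove that $\Psi$ is bijective. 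Geometrically the content is that \emph{after inverting $p$, every quasi-isomorphism into $M$ is dominated by some $s_n$ with $(n,p)=1$}; the prime-to-$p$ case of this is exactly Lemma~\ref{QuasiisomIsogeny}, and the rest is handled by inverting $p$.

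I would first record the following factorization. Given a quasi-isomorphism $q\colon\widetilde M=[\widetilde L\to\widetilde G]\to M=[L\to G]$, the isogeny $g\colon\widetilde G\to G$ has finite étale kernel $F$ and $\widetilde L=\widetilde G\times_G L$; write $F=F_p\times F_{p'}$ with $F_p$ the $p$-primary part and $F_{p'}$ the prime-to-$p$ part (both are étale subgroup schemes), put $G_1=\widetilde G/F_p$ and $M_1=[G_1\times_G L\to G_1]$, and factor $q$ as $\widetilde M\xrightarrow{\alpha}M_1\xrightarrow{r}M$. Here $r$ is a quasi-isomorphism whose isogeny part has kernel $F_{p'}$, of some order $n'$ prime to $p$, so Lemma~\ref{QuasiisomIsogeny} gives a morphism $\psi\colon M^{(n')}\to M_1$ with $r\psi=s_{n'}$. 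On the other hand $\alpha$ has kernel $F_p$, killed by some $p^c$, so $p^c_{\widetilde G}$ factors through $g$; using that the lattice parts are pullbacks one promotes this to a morphism $\beta\colon M_1\to\widetilde M$ of $1$-motives with $\beta\alpha=p^c_{\widetilde M}$ and $\alpha\beta=p^c_{M_1}$, so that $\alpha$ becomes invertible after $-\otimes\bZ[1/p]$.

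For the surjectivity of $\Psi$, a class on the left represented by $\phi\colon\widetilde M\to M'$ over $q$ equals, via $\alpha$ and $\beta$, the class of $p^{-c}\phi\beta\colon M_1\to M'$ over $r$, hence, via $\psi$, the class of $p^{-c}\phi\beta\psi\colon M^{(n')}\to M'$ over $s_{n'}$, which lies in the image of $\Psi$. For the injectivity, the key point is that for $(n,p)=1$ the quasi-isomorphism $s_n\colon M^{(n)}\to M$ is a \emph{monomorphism} in ${}^t\M_1^\eff(k)$, hence in ${}^t\M_1^\eff(k)[1/p]$ by flatness of $\bZ[1/p]$: if $s_n\circ\delta=0$, the semi-abelian component of $\delta$ factors through the finite étale group ${}_nG$ and is therefore trivial since its source is connected, and then the commutativity of the square defining a morphism of $1$-motives forces the lattice component of $\delta$ to be zero as well. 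Now if $\chi\in\Hom_\eff(M^{(n)},M')\otimes\bZ[1/p]$ has $\Psi(\chi)=0$, filteredness of the colimit on the left produces a quasi-isomorphism $q\colon\widetilde M\to M$ and $h\colon\widetilde M\to M^{(n)}$ with $s_n h=q$ and $\chi h=0$; applying the factorization above, composing $h$ with $\beta$, $\psi$ and the canonical transition map $M^{(m)}\to M^{(n')}$ for $m=\operatorname{lcm}(n,n')$ (still prime to $p$), and using that $s_n$ is a monomorphism, one identifies the resulting map $M^{(m)}\to M^{(n)}$ with $p^c$ times the canonical map $M^{(m)}\to M^{(n)}$ over $M$; this forces $\chi$ to vanish already at the $m$-th stage of the colimit on the right.

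The main obstacle, and the only place where $\bZ[1/p]$-coefficients are genuinely needed, is precisely that $T=\{s_n : (n,p)=1\}$ is \emph{not} cofinal among all quasi-isomorphisms into $M$, since a quasi-isomorphism whose isogeny part has $p$-power degree is refined by no member of $T$; inverting $p$ repairs this by turning such isogenies into isomorphisms, which is the role of the pseudo-inverse $\beta$. Equivalently, one may argue abstractly: inverting $T$ together with the integers $p^k$ inverts every quasi-isomorphism — prime-to-$p$ ones by Lemma~\ref{QuasiisomIsogeny}, $p$-power ones by inverting $p$ — so that the localization of ${}^t\M_1^\eff(k)[1/p]$ at $T$ is ${}^t\M_1(k)[1/p]$; one then checks that $T$ is a left multiplicative system whose comma category over $M$ is the opposite of the set of integers prime to $p$ ordered by divisibility, which rewrites the localized $\Hom$ as the stated colimit.
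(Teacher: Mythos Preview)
Your proof is correct and rests on the same key input as the paper's, namely Lemma~\ref{QuasiisomIsogeny}; the paper simply asserts that this lemma makes the family $\{s_n:(n,p)=1\}$ cofinal in the category of all quasi-isomorphisms $\widetilde M\to M$ and then invokes \cite[Proposition~2.5.2]{KS06CategorySheaf}. Your write-up is more careful on one point: you correctly observe that literal cofinality fails in ${}^t\M_1^\eff(k)$, since a quasi-isomorphism whose (\'etale) kernel has order divisible by $p$ is refined by no $s_n$ with $(n,p)=1$, and you explain how the factorization into $p$-part and prime-to-$p$ part together with the tensor $\otimes\,\bZ[1/p]$ repairs this. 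The paper's one-line proof should really be read as cofinality inside ${}^t\M_1^\eff(k)[1/p]$, where the $p$-power quasi-isomorphisms become isomorphisms; your final paragraph states exactly this abstract version, so the two arguments converge. Your explicit surjectivity/injectivity computation, using that each $s_n$ is a monomorphism in ${}^t\M_1^\eff(k)$, is a more hands-on alternative to the paper's appeal to general cofinality results.
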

\begin{proof}
    By Lemma \ref{QuasiisomIsogeny}, the embedding functor from the category of quasi-isomorphisms of the form $s_n\colon M^{(n)}\to M$ to the category of quasi-isomorphisms $\widetilde M\to M$ is cofinal in the sense of \cite[Definition 2.5.1]{KS06CategorySheaf}. Then the assertion follows from \cite[Proposition 2.5.2]{KS06CategorySheaf}.
\end{proof}

In \cite[C.5.3]{BVK16Derived1Motives}, Barbieri-Viale and Kahn showed that ${}^t\M_1(k)[1/p]$ is an abelian category if $k$ is a perfect field. Using Proposition \ref{SAVsubRed}, we can check that their proof also works over arbitrary fields. We can also use the following result to extend their theorem from perfect fields to arbitrary fields.
\begin{lem}\label{tM1PurelyInsep}
    Let $k$ be a field of characteristic $p>0$ and let $K/k$ be a purely inseparable extension. Then the extension of scalars
    \begin{align*}
        \pi^* \colon {}^t\M_1(k)[1/p] &\longrightarrow        {}^t\M_1(K)[1/p], \\
        [L \to G]   &\longmapsto [L_K \to G_K]
    \end{align*} 
    is an equivalence of categories.
\end{lem}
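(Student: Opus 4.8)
The plan is to prove full faithfulness and essential surjectivity of $\pi^*$ separately, reducing all computations on $\Hom$-groups to honest effective $1$-motives with torsion via Proposition~\ref{tM1n}, with Lemma~\ref{e_*GK=G} and fpqc descent as the main input. Two preliminary remarks set everything up. First, since $K/k$ is purely inseparable, $\Spec K\to\Spec k$ is a universal homeomorphism, so $\Gamma=\Gal(K_s/Kk_s)$ is trivial and hence $\pi^*\colon{}^t\M_0(k)\to{}^t\M_0(K)$ is an equivalence (Propositions~\ref{EtaleModule} and~\ref{M0FullFaithAdj}). Second, I would record the key $\Hom$-comparison: for $L\in{}^t\M_0(k)$ (an étale, hence smooth, group scheme of finite type) and $G,G'\in\SAV(k)$, Lemma~\ref{e_*GK=G} applied with $X=L$ and with $X=G$ gives isomorphisms $\Mor_k(L,G)\otimes\Lambda\xrightarrow{\sim}\Mor_K(L_K,G_K)\otimes\Lambda$ and $\Mor_k(G,G')\otimes\Lambda\xrightarrow{\sim}\Mor_K(G_K,G'_K)\otimes\Lambda$; since a morphism of $k$-schemes is a group homomorphism if and only if it is so after the faithfully flat base change to $K$ (Lemma~\ref{fpqcdescent}(1)), these restrict to isomorphisms on the subgroups of homomorphisms. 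Together with $\Hom_k(L,L')\xrightarrow{\sim}\Hom_K(L_K,L'_K)$ from the ${}^t\M_0$-equivalence, this covers all three types of $\Hom$-group appearing below.

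For full faithfulness, by Proposition~\ref{tM1n} it suffices to show that, for effective $1$-motives with torsion $P=[L_1\to G_1]$ and $N=[L_2\to G_2]$ over $k$, the base change map $\Hom_\eff(P,N)\otimes\Lambda\to\Hom_\eff(P_K,N_K)\otimes\Lambda$ is an isomorphism; one applies this with $P=M^{(r)}$ (noting $(M^{(r)})_K=(M_K)^{(r)}$) and passes to the filtered colimit over $r$ prime to $p$. Now $\Hom_\eff(P,N)$ is the fibre product $\Hom(L_1,L_2)\times_{\Hom(L_1,G_2)}\Hom(G_1,G_2)$ along $f\mapsto u_N f$ and $g\mapsto g u_P$, and $-\otimes_\bZ\Lambda$ is exact, hence commutes with this finite limit; the claim then follows from the three comparisons above.

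For essential surjectivity, let $M=[L\xrightarrow{u}G]$ be an effective $1$-motive with torsion over $K$; since every object and morphism in sight is of finite presentation, a spreading-out argument reduces us to $[K:k]<\infty$. Then $L\cong(L_0)_K$ for some $L_0\in{}^t\M_0(k)$, and the heart of the matter is to descend $G$ up to a $p$-power isogeny. Choose $N$ with $K^{p^N}\subseteq k$. The $N$-fold relative Frobenius $F^N\colon G\to G^{(p^N)}$ is an isogeny of $p$-power degree, and since the $p^N$-power map of $K$ factors as the field isomorphism $K\xrightarrow{\sim}K^{p^N}$ followed by $K^{p^N}\hookrightarrow k\hookrightarrow K$, the twist $G^{(p^N)}$ is isomorphic to $(G_0)_K$ for a semi-abelian variety $G_0$ over $k$. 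By the Verschiebung (\cite[Expos\'e~VII, 4.3]{SGA3I}) one has $V^N\circ F^N=[p^N]$ and $F^N\circ V^N=[p^N]$, so $\varphi:=F^N\colon G\to(G_0)_K$ is an isomorphism in $\SAV(K)[1/p]$ with inverse $p^{-N}V^N$. Transporting $\varphi\circ u\colon L\to(G_0)_K$ through $L\cong(L_0)_K$ yields an element of $\Hom_K((L_0)_K,(G_0)_K)\otimes\Lambda$, which by the $\Hom$-comparison descends to some $u_0\in\Hom_k(L_0,G_0)\otimes\Lambda$; after clearing a power of $p$ we may take $u_0$ a genuine homomorphism and set $M_0:=[L_0\xrightarrow{u_0}G_0]$. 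A diagram chase—clearing powers of $p$ and using that multiplication by $p$ is invertible in the $\Lambda$-linearizations of $\SAV$, of ${}^t\M_0$, and hence of ${}^t\M_1^\eff$—then produces an isomorphism $M\xrightarrow{\sim}\pi^*M_0$ in ${}^t\M_1(K)[1/p]$, so $\pi^*$ is essentially surjective; combined with full faithfulness it is an equivalence.

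I expect the descent of $G$ in $\SAV(K)[1/p]$ to be the main obstacle: one must first spread the semi-abelian variety down to a finite purely inseparable subextension, then run the Frobenius--Verschiebung argument there, and—this is the delicate identification—recognize the Frobenius twist $G^{(p^N)}$ as the base change of a semi-abelian variety genuinely \emph{defined over $k$}, which works precisely because $x\mapsto x^{p^N}$ identifies $K$ with the subfield $K^{p^N}$ of $k$. The rest is routine once the $\Hom$-comparison of Lemma~\ref{e_*GK=G} is in hand, modulo careful (but harmless) bookkeeping of $p$-torsion when passing between honest morphisms and $\Lambda$-linearized ones.
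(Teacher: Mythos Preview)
Your proof is correct and follows the same overall strategy as the paper: reduce full faithfulness to effective $\Hom$-groups via Proposition~\ref{tM1n}, and prove essential surjectivity by descending the semi-abelian part up to a $p$-power isogeny. The details differ in two places worth noting.

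For full faithfulness, the paper simply says to ``copy the proof of Theorem~\ref{M1fullyfaithful}'', which ultimately rests on Chow's theorem and Cartier duality to get $\SAV(k)\to\SAV(K)$ fully faithful (integrally, since purely inseparable extensions are primary). You instead go directly through Lemma~\ref{e_*GK=G}, writing $\Hom_\eff$ as a fibre product and comparing each factor after~$\otimes\Lambda$. Your route is more explicit and more uniform---it treats $\Hom(L,G)$, $\Hom(G,G')$, and $\Hom(L,L')$ on the same footing---and it makes transparent exactly where the $p$-inversion is needed.

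For essential surjectivity, the paper invokes \cite[Lemma~3.10]{Brion17AlgGrpIsogenyI} to produce, for arbitrary purely inseparable $K/k$, an epimorphism $G\to(G_0)_K$ with infinitesimal kernel. You instead reduce to $[K:k]<\infty$ by spreading out and then identify $G^{(p^N)}$ with a base change from $k$ via the factorisation of the $p^N$-power map through $K^{p^N}\subseteq k$. This is exactly the content of Brion's argument in the finite case, so your approach is more self-contained at the cost of the extra limit step. Both routes finish the same way: the resulting effective morphism $(\,\mathrm{id},\,p^a F^N)$ (or $(\,\mathrm{id},\,qf)$ in the paper's notation) becomes an isomorphism in ${}^t\M_1(K)[1/p]$ because its inverse is supplied by the Verschiebung up to multiplication by a power of $p$.
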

\begin{proof}
    First, we have to explain that this functor is well-defined. In other words, the extension of scalars of effective $1$-motives
    \[ \pi^*_\eff \colon {}^t\M_1^\eff(k) \longrightarrow {}^t\M_1^\eff(K) \]
    sends a quasi-isomorphism to a quasi-isomorphism. Since the extension of scalars of group schemes preserves kernels and cokernels, the functor $\pi^*_\eff$ sends a pullback diagram to a pullback diagram.

    Next, we show that $\pi^*$ is fully faithful. Note that $(L_K)^{(n)}=(L^{(n)})_K$, $(M_K)^{(n)}=(M^{(n)})_K$ and $s_n:M_K^{(n)}\to M_K$ is the base change of $s_n:M^{(n)}\to M$. By Proposition \ref{tM1n}, it suffices to show the isomorphisms
    \[ \Hom_\eff(M^{(n)},M') \simeq \Hom_\eff(M_K^{(n)},M_K') \]
    for any $1$-motives with torsion $M,M'$ over $k$ and for any positive integer $n$ prime to $p$. We can prove it by copying the proof of Theorem \ref{M1fullyfaithful} (purely inseparable extensions are primary).

    Finally, we show that $\pi^*$ is essentially surjective. Let $[L \stackrel{u}{\to} G]$ be a $1$-motive with torsion over $K$. Since $K/k$ is purely inseparable, there exists a constructible group scheme $L_0$ over $k$ such that $(L_0)_K\simeq L$. By \cite[Lemma 3.10]{Brion17AlgGrpIsogenyI}, there exists a semi-abelian variety $G_0$ over $k$ and an epimorphism $f\colon G \to (G_0)_K$ such that $\ker(f)$ is infinitesimal. Using Lemma \ref{e_*GK=G}, we have a $p$-power $q$ such that $qfu$ is the base change of a $k$-morphism $v \colon L_0 \to G_0$. Clearly, in ${}^t\M_1(K)[1/p]$, the $1$-motives with torsion $[L \stackrel{u}{\to} G]$ and $[(L_0)_K \stackrel{v_K}{\to} (G_0)_K]$ are isomorphic to each other.
\end{proof}

\begin{thm}
    The category ${}^t\M_1(k)[1/p]$ is abelian.
\end{thm}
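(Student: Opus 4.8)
The plan is to deduce this from the case of a perfect base field, which is already known. Let $k^{\perf}$ be a perfect closure of $k$. Then $k^{\perf}/k$ is purely inseparable, so Lemma~\ref{tM1PurelyInsep} provides an equivalence of categories
\[
    \pi^*\colon {}^t\M_1(k)[1/p] \xrightarrow{\ \sim\ } {}^t\M_1(k^{\perf})[1/p].
\]
Being an abelian category is invariant under equivalence: an equivalence of additive categories preserves all finite limits and colimits that exist (in particular biproducts, kernels and cokernels) and carries the canonical morphism $\coim(f)\to \im(f)$ of one category to that of the other, so it is an isomorphism on one side iff it is on the other. Since $k^{\perf}$ is perfect, ${}^t\M_1(k^{\perf})[1/p]$ is abelian by \cite[C.5.3]{BVK16Derived1Motives}, and therefore so is ${}^t\M_1(k)[1/p]$. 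This is the proof I would write up.

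An alternative, more self-contained route — which I would describe as a remark — is to run the Barbieri-Viale--Kahn argument directly over an arbitrary field. One represents a morphism $(f,g)\colon [L\to G]\to[L'\to G']$ by an effective morphism after composing with an auxiliary quasi-isomorphism $s_n$ (Proposition~\ref{tM1n}), forms the naive kernel and cokernel complexes of commutative group schemes, and then replaces the subgroup schemes of semi-abelian varieties that arise by their reduced connected components so as to return to ${}^t\M_1^\eff(k)$; one checks as in loc.\ cit.\ that every monomorphism is a kernel and every epimorphism a cokernel. The single place where perfectness of $k$ intervenes in \cite{BVK16Derived1Motives} is the claim that for a closed subgroup scheme $H$ of a semi-abelian variety the subscheme $H_{\red}$ is again a smooth subgroup scheme; over a general field this is exactly Proposition~\ref{SAVsubRed} for connected $H$, which tells us $H_{\red}$ is a semi-abelian subvariety. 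With that substitution the whole argument goes through unchanged.

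The genuinely delicate point — in either approach — is not the formal verification of the abelian-category axioms but rather the geometric input that, after inverting $p$, connected subgroup schemes of semi-abelian varieties may be treated as if they were smooth: this is what guarantees that the naive kernels and cokernels of complexes of group schemes land, up to quasi-isomorphism, back inside ${}^t\M_1^\eff(k)$. In the reduction to $k^{\perf}$ this difficulty is already absorbed into Lemma~\ref{tM1PurelyInsep} (whose proof of essential surjectivity relies on Theorem~\ref{SAVdescent} and the descent results for semi-abelian varieties), while in the direct approach it is Proposition~\ref{SAVsubRed} doing the work. Consequently the theorem is a short corollary of the machinery already assembled in this subsection, and I expect no further obstacle beyond invoking it correctly.
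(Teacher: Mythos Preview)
Your main argument is correct and is exactly the paper's proof: reduce to the perfect closure via Lemma~\ref{tM1PurelyInsep} and invoke \cite[C.5.3]{BVK16Derived1Motives}; your alternative remark also matches the paper's comment that Proposition~\ref{SAVsubRed} lets the Barbieri-Viale--Kahn argument run over arbitrary fields. One small inaccuracy in your commentary: the essential surjectivity in Lemma~\ref{tM1PurelyInsep} does not use Theorem~\ref{SAVdescent} but rather Brion's isogeny result \cite[Lemma~3.10]{Brion17AlgGrpIsogenyI} together with Lemma~\ref{e_*GK=G}.
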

\begin{proof}
    In \cite[C.5.3]{BVK16Derived1Motives}, Barbieri-Viale and Kahn proved it over perfect fields. For a general field $k$, the category ${}^t\M_1(k)[1/p]$ is equivalent to ${}^t\M_1(k^\perf)[1/p]$ by Lemma \ref{tM1PurelyInsep}, where $k^\perf$ is a perfect closure of $k$. Since ${}^t\M_1(k^\perf)[1/p]$ is abelian, so is ${}^t\M_1(k)[1/p]$.
\end{proof}

\begin{prop}[{\cite[C.7]{BVK16Derived1Motives}}]
    \begin{enumerate}[leftmargin=*,label={\rm(\arabic*)}]
        \item The natural functor 
            $$\M_1(k)[1/p]\longrightarrow {}^t\M_1(k)[1/p]$$
            is fully faithful and has a left adjoint $M\mapsto M_\free$.\menum
        \item The natural functor 
            $${}^t\M_1^\tor(k)[1/p]\longrightarrow {}^t\M_1(k)[1/p]$$
            is fully faithful and has a right adjoint $M\mapsto M_\tor$.
    \end{enumerate}
\end{prop}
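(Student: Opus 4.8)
Recall from \cite[C.1]{BVK16Derived1Motives} the two functors defined on ${}^t\M_1^\eff(k)$: for $M=[L\stackrel{u}{\to}G]$ one sets $M_\free=[L/L_\tor\to G/u(L_\tor)]$, which is a Deligne $1$-motive (here $L_\tor$ is the finite \'etale torsion subgroup of the constructible group scheme $L$, so $u(L_\tor)\subset G$ is finite and $G/u(L_\tor)$ is again semi-abelian by Lemma~\ref{SubQuotSAV}), and $M_\tor=[(\ker u)_\tor\to 0]$, which is a torsion $1$-motive; both are functorial for effective morphisms. The plan is to verify that each of these passes to the localization ${}^t\M_1(k)[1/p]$ and yields the claimed adjoint. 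A useful preliminary remark is that a quasi-isomorphism with target $[F\to 0]$ is an isomorphism --- its semi-abelian part would be an extension of $0$ by a finite \'etale group, hence zero --- so that $\Hom_{{}^t\M_1(k)}([F\to 0],M)=\Hom_\eff([F\to 0],M)$ for all $M$.

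For part~(1) I would argue as follows. First, $(-)_\free$ sends each quasi-isomorphism $s_n\colon M^{(n)}\to M$ with $n$ prime to $p$ to an isomorphism in $\M_1(k)[1/p]$ (indeed in $\M_1(k)$): since ${}_nG$ is finite \'etale (Lemma~\ref{nGFiniteEtale}), the exact sequence $0\to{}_nG\to L^{(n)}\to L\to 0$ gives $L^{(n)}/L^{(n)}_\tor\simeq L/L_\tor$, while multiplication by $n$ induces an isomorphism $G/{}_nG\stackrel{\sim}{\to}G$ carrying $u^{(n)}(L^{(n)}_\tor)$ onto $u(L_\tor)$, whence $(M^{(n)})_\free\simeq M_\free$; by Proposition~\ref{tM1n}, $(-)_\free$ therefore descends to a functor ${}^t\M_1(k)[1/p]\to\M_1(k)[1/p]$. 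Next, for a Deligne $1$-motive $N=[L'\stackrel{v}{\to}H]$, Proposition~\ref{tM1n} identifies $\Hom_{{}^t\M_1(k)[1/p]}(M,N)$ with $\varinjlim_{(n,p)=1}\Hom_\eff(M^{(n)},N)\otimes_\bZ\bZ[1/p]$; as $L'$ is torsion-free, every effective morphism $M^{(n)}\to N$ kills $L^{(n)}_\tor$ and hence $u^{(n)}(L^{(n)}_\tor)$, so it factors uniquely through $M^{(n)}\to(M^{(n)})_\free\simeq M_\free$, and these factorizations are compatible along the filtered system. This gives a natural isomorphism $\Hom_{{}^t\M_1(k)[1/p]}(M,N)\simeq\Hom_{\M_1(k)[1/p]}(M_\free,N)$, i.e.\ $(-)_\free$ is left adjoint to the inclusion; the inclusion is fully faithful because for $N$ a Deligne $1$-motive one has $L'_\tor=0$, so the counit $(N)_\free\to N$ is the identity. (Alternatively, one can reduce to a perfect closure $k^\perf$ via Lemma~\ref{tM1PurelyInsep}, Theorem~\ref{M1fullyfaithful} --- purely inseparable extensions are primary --- and the fact that $(-)_\free$ and $(-)_\tor$ commute with base change, deducing both parts from \cite[C.7]{BVK16Derived1Motives} over $k^\perf$.)

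Part~(2) is similar and easier. Because the projection $L^{(n)}\to L$ restricts to an isomorphism $\ker(u^{(n)})\stackrel{\sim}{\to}\ker u$, the functor $(-)_\tor$ sends the $s_n$ to isomorphisms and descends to ${}^t\M_1(k)[1/p]\to{}^t\M_1^\tor(k)[1/p]$; the inclusion ${}^t\M_1^\tor(k)[1/p]\hookrightarrow{}^t\M_1(k)[1/p]$ is fully faithful by the preliminary remark; and, using that remark together with $\Hom_\eff([F\to0],[L\to G])=\Hom(F,\ker u)=\Hom(F,(\ker u)_\tor)$ (as $F$ is finite), one gets a natural isomorphism $\Hom_{{}^t\M_1(k)[1/p]}([F\to0],M)\simeq\Hom_{{}^t\M_1^\tor(k)[1/p]}([F\to0],M_\tor)$, so that $(-)_\tor$ is right adjoint to the inclusion; finally the unit $T\to(T)_\tor$ is the identity for a torsion $1$-motive $T$. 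The one non-formal point, and the only thing that needs attention over an imperfect field, is the structural input to part~(1) --- that $M_\free$ is insensitive to the $s_n$ --- which comes down to the finiteness of ${}_nG$ and of $L_\tor$ together with the multiplication-by-$n$ isomorphism $G/{}_nG\simeq G$, in the same spirit as (and using) Lemma~\ref{nGFiniteEtale} and Proposition~\ref{SAVsubRed}; everything else is bookkeeping with the localization.
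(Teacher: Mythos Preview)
The paper does not give its own proof of this proposition; it simply cites \cite[C.7]{BVK16Derived1Motives}, where the result is established over perfect fields. Your argument is correct and in fact supplies more than the paper does: by working through Proposition~\ref{tM1n} and the explicit quasi-isomorphisms $s_n$ (with $(n,p)=1$), you verify directly that $(-)_\free$ and $(-)_\tor$ descend to the localization and give the claimed adjoints over an arbitrary base field, without first reducing to a perfect closure. The key computations---that $(s_n)_\free$ is an isomorphism via $L^{(n)}/L^{(n)}_\tor\simeq L/L_\tor$ and $G/u^{(n)}(L^{(n)}_\tor)\xrightarrow{\,n\,}G/u(L_\tor)$, and that $\ker(u^{(n)})\simeq\ker u$ so $(s_n)_\tor$ is an isomorphism---are all correct, as is the adjunction bookkeeping. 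Your preliminary remark on quasi-isomorphisms targeting $[F\to 0]$ is exactly the observation recorded in the paper just before this proposition.

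Two small remarks. First, the reference to Proposition~\ref{SAVsubRed} at the end is unnecessary: nothing in your argument uses reducedness of connected subgroups, only Lemma~\ref{nGFiniteEtale} (that ${}_nG$ is finite \'etale for $n$ prime to $p$) and Lemma~\ref{SubQuotSAV} (that $G/u(L_\tor)$ is semi-abelian). Second, your alternative route via Lemma~\ref{tM1PurelyInsep} is precisely how the paper would have you extend \cite[C.7]{BVK16Derived1Motives} from perfect to general $k$; it is good that you note it, but your direct argument already makes the reduction superfluous.
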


In the remainder of this subsection, we show that ${}^t\M_1(k)[1/p]$ is Noetherian. This result will not be used later in this paper. We record it here for interested readers and for potential future reference.
\begin{lem}\label{0tM1}
    Let $M=[L\to G]$ be a $1$-motive with torsion. The following are equivalent:
    \begin{enumerate}[label={\rm(\arabic*)}]
        \item $M=0$ in ${}^t\M_1(k)[1/p]$;
        \item $G=0$ in $\Grp(k)$, and $\#L(k_s)$ is a power of $p$.
    \end{enumerate} 
\end{lem}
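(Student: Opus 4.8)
The plan is to translate the vanishing of $M$ into a computation inside $\End_{{}^t\M_1(k)[1/p]}(M)$. Since ${}^t\M_1(k)[1/p]$ is additive, $M=0$ if and only if $\id_M=0$, and by Proposition~\ref{tM1n} we have
\[\End_{{}^t\M_1(k)[1/p]}(M)=\varinjlim_{(n,p)=1}\Hom_\eff(M^{(n)},M)\otimes_\bZ\bZ[1/p],\]
a filtered colimit over the integers $n$ prime to $p$ ordered by divisibility, in which the transition map out of the term $n=1$ is precomposition with $s_n$, so the class of $\id_M$ is represented in the $n$-th term by $s_n\colon M^{(n)}\to M$. Hence $\id_M=0$ precisely when there are an integer $n$ prime to $p$ and an integer $a\ge 0$ with $p^a s_n=0$ in $\Hom_\eff(M^{(n)},M)$. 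Writing $M^{(n)}=[L^{(n)}\to G]$ and $s_n=(f_n,n_G)$ as in Example~\ref{snqi}, where $n_G$ denotes multiplication by $n$ on $G$ and $f_n\colon L^{(n)}\to L$ is the canonical projection, the morphism of two-term complexes $p^a s_n=(p^a f_n, p^a n_G)$ is zero if and only if both $p^a n_G=0$ in $\End_{\Grp(k)}(G)$ and $p^a f_n=0$ in $\Hom_{\Grp(k)}(L^{(n)},L)$.

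For the first condition I would use that multiplication by a positive integer $m$ on a nonzero semi-abelian variety $G$ is a nonzero homomorphism: it is surjective on $\overline k$-points (this is clear for $\bG_m$ and for abelian varieties, and extends to $G$ by the snake lemma applied to the defining extension, using $H^1_\et(\overline k,T)=0$ for the toric part), hence it cannot be the zero morphism since $G\ne 0$. Applying this with $m=p^a n$ shows that $p^a n_G=0$ forces $G=0$. In particular $(1)\Rightarrow G=0$, and whenever $G\ne 0$ the element $\id_M$ is nonzero in every term of the colimit, so $M\ne 0$.

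It then remains to treat the case $G=0$, where $M=[L\to 0]$. Here $n_G=0$ automatically, and the fibre product defining $L^{(n)}$ degenerates to $L^{(n)}\cong L$ with $f_n=\id_L$, so the condition above reduces to: there is $a\ge 0$ with $p^a\cdot\id_L=0$ in $\End_{\Grp(k)}(L)$. By Proposition~\ref{EtaleModule} this is equivalent to $p^a$ annihilating $L(k_s)$, and since $L$ is constructible, $L(k_s)$ is a finitely generated abelian group, so this happens for some $a$ exactly when $L(k_s)$ is finite of order a power of $p$ (for $(2)\Rightarrow(1)$ one may take $a$ to be the exponent of $L(k_s)$). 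Combining the two cases yields $M=0$ in ${}^t\M_1(k)[1/p]$ if and only if $G=0$ and $\#L(k_s)$ is a power of $p$, which is the equivalence $(1)\Leftrightarrow(2)$.

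I do not anticipate a genuine obstacle. The only point requiring care is the colimit bookkeeping in the first paragraph, namely checking that ``$\id_M=0$ in the filtered colimit tensored with $\bZ[1/p]$'' unwinds exactly to ``$p^a s_n=0$ in $\Hom_\eff(M^{(n)},M)$ for some $n$ prime to $p$ and some $a\ge 0$'' and not to a weaker statement; the remaining verifications (that a morphism of two-term complexes vanishes iff both components do, that $L^{(n)}\cong L$ when $G=0$, and the surjectivity of $m_G$) are routine.
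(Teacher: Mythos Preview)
Your proof is correct and follows essentially the same approach as the paper: both reduce $M=0$ to $\id_M=0$ and then unpack this into the componentwise conditions $p^aG=0$ and $p^aL=0$. The paper's version is more compressed, arguing directly that $\id_M=0$ in ${}^t\M_1(k)[1/p]$ means $p^a\id_M=0$ in ${}^t\M_1(k)$ for some $a$ (and hence $p^aL=0$, $p^aG=0$), whereas you route through Proposition~\ref{tM1n} and the explicit quasi-isomorphisms $s_n$; but the substance is identical.
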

\begin{proof}
    Note that in an abelian category, $X=0$ if and only if $\id_X=0$. Thus $M=0$ in ${}^t\M_1(k)[1/p]$ if and only if $\id_M=0$, i.e., $p^n\id_M=0$ in ${}^t\M_1(k)$ for some $n\in\bN$. This means that $p^nL=0$ and $p^nG=0$ as group schemes. Equivalently, $G=0$ and $\#L(k_s)$ is a power of $p$.
\end{proof}

\begin{lem}\label{LNoether}
    Let $L$ be a constructible group scheme over $k$. 
    \begin{enumerate}[label={\rm(\arabic*)}]
        \item Every monomorphism to $[L\to 0]$ in ${}^t\M_1(k)[1/p]$ can be represented by a monomorphism $f:L'\to L$ in ${}^t\M_0(k)$.
        \item The $1$-motive $[L\to 0]$ is Noetherian in ${}^t\M_1(k)[1/p]$.
    \end{enumerate}
\end{lem}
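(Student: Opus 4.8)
The strategy is to show that a subobject of the $0$-motive $[L\to0]$ is again a $0$-motive attached to a subgroup scheme of $L$, and then to deduce the ascending chain condition from the finite generation of $L(k_s)$.

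\emph{Part (1).} I would take a monomorphism $\varphi\colon M\hookrightarrow[L\to0]$ in ${}^t\M_1(k)[1/p]$ and form the short exact sequence $0\to M\xrightarrow{\varphi}[L\to0]\to Q\to0$ given by the cokernel. By the description of the abelian structure of ${}^t\M_1(k)[1/p]$ from \cite[C.4--C.5]{BVK16Derived1Motives} (valid over arbitrary fields, as recalled above), this sequence is represented, up to isomorphism, by an exact sequence of complexes attached to a \emph{strict} effective epimorphism $(f,g)\colon[\widetilde L\to\widetilde G]\to[L''\xrightarrow{u''}G'']$, whose effective kernel $[\ker f\to\ker g]$ is then isomorphic to $M$; strictness means that $\ker g$ is a semi-abelian variety. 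The key observation is that for a \emph{nonzero} semi-abelian variety $G$ there is no monomorphism $[0\to G]\hookrightarrow[A\to0]$ with $A$ constructible: such a morphism is represented (Proposition~\ref{tM1n}, using $[0\to G]^{(n)}=[{}_nG\hookrightarrow G]$) by a strict effective morphism $[{}_nG\hookrightarrow G]\to[A\to0]$, so its kernel is its effective kernel, an object of the form $[\,\ast\to G]$, which contains the nonzero object $[0\to G]$ (Lemma~\ref{0tM1}). Applying this to the semi-abelian part $[0\to\ker g]\hookrightarrow M\hookrightarrow[L\to0]$ forces $\ker g=0$, so $M\simeq[\ker f\to0]$ with $\ker f$ a constructible subgroup scheme of $\widetilde L$ (Remark~\ref{SubM0}).

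It then remains to replace $\widetilde L$ by $L$. Using that $\Hom_{{}^t\M_1(k)[1/p]}([A\to0],-)=\Hom_\eff([A\to0],-)\otimes_\bZ\bZ[1/p]$ (Proposition~\ref{tM1n}, since $[A\to0]^{(n)}=[A\to0]$), I can represent $\varphi$, via the isomorphism $M\simeq[\ker f\to0]$, by an honest homomorphism $\psi\colon\ker f\to L$ of constructible group schemes. Let $L'\subseteq L$ be the scheme-theoretic image of $\psi$, a constructible subgroup scheme (Remark~\ref{SubM0}). The induced morphism $[\ker f\to0]\to[L'\to0]$ is an epimorphism (its cokernel is $0$); since $\varphi$ is a monomorphism and $[L'\to0]\hookrightarrow[L\to0]$ is one too, this morphism is also a monomorphism, hence an isomorphism. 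Therefore $\varphi$ factors as an isomorphism $M\xrightarrow{\sim}[L'\to0]$ followed by the monomorphism $[L'\to0]\hookrightarrow[L\to0]$ induced by the inclusion $L'\hookrightarrow L$ in ${}^t\M_0(k)$, which is exactly what (1) asserts.

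\emph{Part (2).} Given an ascending chain of subobjects $N_1\subseteq N_2\subseteq\cdots$ of $[L\to0]$, part (1) presents each $N_i$ as $[L_i'\to0]$ for a constructible subgroup scheme $L_i'\subseteq L$, and applying part (1) to the inclusions $N_i\hookrightarrow N_{i+1}$ (again using that morphisms between $0$-motives are homomorphisms of group schemes after inverting $p$) one arranges these into an ascending chain $L_1'\subseteq L_2'\subseteq\cdots$ of subgroup schemes of $L$. As $L$ is constructible, $L(k_s)$ is a finitely generated abelian group (Proposition~\ref{EtaleModule}), hence Noetherian, so the chain of Galois-stable subgroups $L_i'(k_s)$ stabilizes and so does $(N_i)$; thus $[L\to0]$ is Noetherian. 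The point I expect to require care is this last bookkeeping: a subobject of $[L\to0]$ determines a subgroup scheme of $L$ only up to $p$-power torsion (subgroup schemes differing by a subgroup of $p$-power order define the same subobject, by Lemma~\ref{0tM1}), so to make the correspondence with honest inclusions one first replaces $L$ by $L/L[p^\infty]$ and works with $p$-torsion-free constructible group schemes, for which $\Hom_{{}^t\M_0(k)}$ injects into its localization at $p$. Once the Barbieri-Viale--Kahn structure theory for ${}^t\M_1(k)[1/p]$ is granted, everything else is formal.
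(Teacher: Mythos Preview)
Your Part (1) is correct but takes a longer road than the paper. The paper observes immediately that any effective morphism $[L'\to G']\to[L\to 0]$ must have second component zero (the target has trivial semi-abelian part), so its kernel is $[\ker f\to G']$; if the morphism is a monomorphism this kernel vanishes, Lemma~\ref{0tM1} forces $G'=0$, and quotienting $L'$ by the $p$-power-torsion subgroup $\ker f$ yields the desired monomorphism in ${}^t\M_0$. You instead pass through the strict-epimorphism representation of the cokernel sequence and a separate lemma that $[0\to G]$ never embeds in a $0$-motive; this works, but the detour through the cokernel and through $[0\to G]^{(n)}$ is unnecessary once one notices that the second component of any effective morphism into $[L\to 0]$ is forced to be zero.

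For Part (2) the approaches genuinely differ. The paper tracks explicit representatives $f_n\colon L_n\hookrightarrow L$ and runs a two-step argument: first the ranks $\rank L_n(k_s)$ stabilize (they are nondecreasing and bounded by $\rank L(k_s)$), then the finite quotients $L_n/L_N$ have order bounded by $\#(L/L_N)_\tor(k_s)$ and hence stabilize. Your idea --- reduce via Part (1) and the full faithfulness of ${}^t\M_0(k)[1/p]\hookrightarrow{}^t\M_1(k)[1/p]$ to the Noetherianness of the subobject lattice of $L$ in ${}^t\M_0(k)[1/p]$ --- is conceptually cleaner, but your bookkeeping fix is not quite the right one. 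Replacing $L$ by $L/L[p^\infty]$ and noting that $\Hom_{{}^t\M_0}$ injects into its $p$-localization does not yet produce an honest ascending chain of subgroups of $L$: the compatibility you get is $f_{i+1}\circ h_i = p^{a_i}f_i$, so only $p^{a_i}L_i'\subseteq L_{i+1}'$ inside $L$. The clean way to finish is to note that ${}^t\M_0(k)[1/p]$ is equivalent to the category of finitely generated $\bZ[1/p]$-modules with continuous Galois action (since $\Hom(L_1,L_2)\otimes\bZ[1/p]\simeq\Hom_{\bZ[1/p]}(L_1\otimes\bZ[1/p],L_2\otimes\bZ[1/p])$ for $L_1$ finitely generated), so that subobjects of $[L\to 0]$ correspond to Galois-stable $\bZ[1/p]$-submodules of $L(k_s)\otimes\bZ[1/p]$, a Noetherian module over the Noetherian ring $\bZ[1/p]$. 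Equivalently, replace each $L_i'$ by its $p$-saturation $\{x\in L:p^mx\in L_i'\text{ for some }m\}$; these saturations then form an honest ascending chain of subgroup schemes of $L$, and Noetherianness of $L(k_s)$ finishes the argument.
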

\begin{proof}
    \begin{enumerate}[leftmargin=*,label={\rm(\arabic*)}]
        \item Every monomorphism to $[L\to 0]$ in ${}^t\M_1(k)[1/p]$ can be represented by an effective morphism 
        $$(f,0):[L'\to G']\to [L\to 0].$$
        Thus $\ker(f,0)=[\ker(f)\to G']$ is zero in ${}^t\M_1(k)[1/p]$. By Lemma \ref{0tM1}, $G'=0$ and $\#\ker(f)(k_s)$ is a power of $p$. Thus the canonical morphism $[L'\to 0]\to [L'/\ker(f)\to 0]$ is an isomorphism in ${}^t\M_1(k)[1/p]$ and $(f,0):[L'\to 0]\to[L\to 0]$ factors through it as effective morphisms. It means that $(f,0)$ can be represented by the induced morphism 
        $$[L'/\ker(f)\to 0]\to [L\to 0].$$
        \item Let $M_1\hookrightarrow M_2\hookrightarrow \cdots$ be an ascending chain of subobjects of $[L\to 0]$ in ${}^t\M_1(k)[1/p]$. By (1), each morphism $M_n\hookrightarrow [L\to 0]$ can be represented by an effective morphism 
        $$(f_n,0):[L_n\to 0]\hookrightarrow[L\to 0]$$
        with $f_n:L_n\to L$ a monomorphism in ${}^t\M_0(k)$. Since there are no nontrivial quasi-isomorphisms to $[L_n\to 0]$, the monomorphism $[L_n\to 0]\to [L_{n+1}\to 0]$ is of the form 
        $$\left(\frac{i_n}{p^{\alpha_n}},0\right):[L_n\to 0]\to [L_{n+1}\to 0],$$ 
        where $i_n:L_n\to L_{n+1}$ is a morphism in ${}^t\M_0(k)$. Because $f_{n+1}\circ i_n=p^{\alpha_n}f_n$ is a monomorphism in ${}^t\M_0(k)$, the morphism $i_n$ is also a monomorphism in ${}^t\M_0(k)$. Thus 
        $$\rank L_1(k_s)\leq \rank L_2(k_s)\leq \cdots\leq \rank L(k_s).$$
        So there exists a positive integer $N$ such that 
        $$\rank L_n(k_s)=\rank L_N(k_s) \text{\quad and \quad} \#L_n(k_s)/L_N(k_s)<+\infty,\quad \forall~n\geq N.$$
        We get an ascending chain
        $$[L_{N+1}/L_N\to 0]\hookrightarrow [L_{N+2}/L_N\to 0] \hookrightarrow [L_{N+3}/L_N\to 0] \hookrightarrow \cdots $$ 
        of subobjects of $[L/L_N \to 0]$. Thus
        $$\#(L_{N+1}/L_N)(k_s)\leq \#(L_{N+2}/L_N)(k_s)\leq\cdots\leq \#(L/L_N)_\tor(k_s).$$
        It follows that there exists an integer $N'\geq N$ such that 
        $$\#(L_n/L_N)(k_s)=\#(L_{N'}/L_N)(k_s),~\forall~n\geq N'.$$
        It means that the monomorphisms $[L_n/L_N\to 0]\hookrightarrow [L_{n+1}/L_N\to 0]$ are isomorphisms for $n\geq N'$. So the morphisms $[L_n\to 0]\to [L_{n+1}\to 0]$ are isomorphisms for $n\geq N'$. Hence $[L\to 0]$ is Noetherian in ${}^t\M_1(k)[1/p]$.
        \qedhere
    \end{enumerate}
\end{proof}

\begin{lem}\label{GNoether}
    Let $G$ be a semi-abelian variety over $k$.
    \begin{enumerate}[label={\rm(\arabic*)}]
        \item Every monomorphism to $[0\to G]$ in ${}^t\M_1(k)[1/p]$ can be represented by a morphism $g:G'\to G$ of semi-abelian varieties with $\ker(g)$ finite over $k$.
        \item The $1$-motive $[0\to G]$ is Noetherian in ${}^t\M_1(k)[1/p]$.
    \end{enumerate}
\end{lem}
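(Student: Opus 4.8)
The plan is to follow the proof of Lemma~\ref{LNoether} closely, with semi-abelian varieties replacing constructible group schemes; the one genuinely new ingredient is a description, inside the abelian category ${}^t\M_1(k)[1/p]$, of the kernel of a morphism with target $[0\to G]$.

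\emph{Part (1).} I would represent a monomorphism $M'\hookrightarrow[0\to G]$ by an effective morphism $(0,g)\colon[L'\xrightarrow{u'}G']\to[0\to G]$, so that $g\circ u'=0$ and hence $\im(u')\subseteq\ker(g)$. The key point will be that the kernel of $(0,g)$ in ${}^t\M_1(k)[1/p]$ is the sub-$1$-motive $[(u')^{-1}(S)\to S]$, where $S\colonequals(\ker(g)^0)_\red$ is the largest semi-abelian subvariety of $G'$ contained in $\ker(g)$ (a closed subgroup of $G'$ by Proposition~\ref{SAVsubRed}); this rests on the facts that a morphism $(0,h)\colon[P\to Q]\to[0\to G]$ vanishes in ${}^t\M_1(k)[1/p]$ exactly when $h=0$, and that a smooth connected subgroup scheme of $G'$ landing in $\ker(g)$ is automatically contained in $S$. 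Given the claim, monomorphy and Lemma~\ref{0tM1} force $S=0$ — so $\ker(g)$ is finite — and $\#\ker(u')(k_s)$ a power of $p$; since $\im(u')$ then lies in the finite group $\ker(g)$, $L'(k_s)$ is finite. Passing from $L'$ to $L'/\ker(u')$ does not change the object in ${}^t\M_1(k)[1/p]$ (Lemma~\ref{0tM1} once more), so I may assume $u'\colon L'\hookrightarrow G'$ is a closed immersion of a finite \'etale group scheme. Then $(0,q)\colon[L'\hookrightarrow G']\to[0\to G'/L']$ (with $q$ the quotient map) is a quasi-isomorphism — one checks the defining pullback condition, with $L'$ as the finite \'etale kernel — hence an isomorphism in ${}^t\M_1(k)[1/p]$; and since $L'\subseteq\ker(g)$, the map $g$ factors as $\bar g\circ q$ with $\ker(\bar g)=\ker(g)/L'$ finite and $G'/L'$ semi-abelian (Lemma~\ref{SubQuotSAV}). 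This realizes the subobject as $(0,\bar g)\colon[0\to G'/L']\to[0\to G]$, proving (1).

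\emph{Part (2).} Given an ascending chain $M_1\subseteq M_2\subseteq\cdots\subseteq[0\to G]$ of subobjects, I would use (1) to write each $M_k$ as $(0,g_k)\colon[0\to G_k]\to[0\to G]$ with $\ker(g_k)$ finite, and set $\bar G_k\colonequals\im(g_k)$ — a semi-abelian subvariety of $G$ (Lemma~\ref{SubQuotSAV}) that depends only on $M_k$, being the smallest semi-abelian subvariety $H\subseteq G$ through which $M_k\hookrightarrow[0\to G]$ factors. Then $M_k\subseteq M_{k+1}$ gives $\bar G_k\subseteq\bar G_{k+1}$, so the $\dim\bar G_k$ are non-decreasing and bounded by $\dim G$ (cf.\ Lemma~\ref{DimSubSAV}); hence $\bar G_k=\bar G$ for all $k\ge N$. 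For such $k$ the inclusion $M_k\hookrightarrow[0\to G]$ factors through the monomorphism $[0\to\bar G]\hookrightarrow[0\to G]$, and by minimality the induced monomorphism $M_N\hookrightarrow[0\to\bar G]$ is represented by an isogeny $g\colon G_N\to\bar G$. Choosing, as in the proof of Lemma~\ref{QuasiisomIsogeny}, a morphism $h\colon\bar G\to G_N$ with $g\circ h=n\cdot\id_{\bar G}$, multiplication by $n$ on $[0\to\bar G]$ factors through $M_N$, so $[0\to\bar G]/M_N$ is a quotient of $C\colonequals\coker(n\cdot\id_{[0\to\bar G]})$. Since $C$ is killed by $n$, we get $C=[L\to 0]$ for a finite constructible $L$; hence $C$, and therefore its quotient $[0\to\bar G]/M_N$, is Noetherian by Lemma~\ref{LNoether}(2). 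Then the ascending chain $M_N/M_N\subseteq M_{N+1}/M_N\subseteq\cdots$ of subobjects of $[0\to\bar G]/M_N$ stabilizes, so the chain $(M_k)$ stabilizes, proving (2).

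I expect the kernel computation in (1) to be the main obstacle: it requires unwinding the $\Hom$-sets of the localization ${}^t\M_1(k)[1/p]$ to verify both the vanishing criterion for $(0,h)$ and the universal property of $[(u')^{-1}(S)\to S]$; and the two subsequent reductions (to $L'/\ker(u')$, and from $[L'\hookrightarrow G']$ to $[0\to G'/L']$), as well as the fact that $C$ is a torsion $0$-motive, all hinge on $p$-power isogenies being invertible in ${}^t\M_1(k)[1/p]$. The remaining bookkeeping is routine.
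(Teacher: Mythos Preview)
Your Part~(1) is essentially the paper's argument: the kernel description $[(u')^{-1}(S)\to S]$ with $S=(\ker(g)^0)_\red$ is exactly what the paper writes as $[L'_0\to\ker(g)_\red^0]$, and the subsequent reduction via the quasi-isomorphism $[L'\hookrightarrow G']\to[0\to G'/L']$ is identical.

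Your Part~(2), however, is a genuinely different route. The paper also stabilizes the dimensions $\dim G_n$ at some $N$, but then argues \emph{internally}: the transition maps $i_n:G_n\to G_{n+1}$ for $n\ge N$ are isogenies, so the kernels $\ker(i_n\circ\cdots\circ i_N)$ form an ascending chain inside the finite group scheme $\ker(g_N)$; applying Cartier duality turns this into a descending chain of closed subschemes of the Noetherian scheme $\ker(g_N)^\vee$, which must stabilize. Your argument instead passes to the quotient: you show $[0\to\bar G]/M_N$ is a quotient of $\coker(n\cdot\id_{[0\to\bar G]})\simeq[{}_m\bar G\to 0]$ (writing $n=p^a m$), hence Noetherian by Lemma~\ref{LNoether}(2), and conclude. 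Your approach is cleaner and more categorical, reducing the semi-abelian case to the already-established constructible case and avoiding Cartier duality entirely; the paper's approach is more self-contained in that it does not invoke Lemma~\ref{LNoether}(2), relying instead on scheme-theoretic Noetherianity directly. Both are correct.
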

\begin{proof}
    \begin{enumerate}[leftmargin=*,label={\rm(\arabic*)}]
        \item Every monomorphism to $[0\to G]$ in ${}^t\M_1(k)[1/p]$ can be represented by an effective morphism 
        $$(0,g):[L'\to G']\to [0\to G].$$
        Thus $\ker(0,g)$ is zero in ${}^t\M_1(k)[1/p]$. Recall that $\ker(0,g)=[L'_0\to \ker(g)_\red^0]$, where $L_0'$ is the pullback of $L'$ along the closed immersion $\ker(g)_\red^0\to \ker(g)$. By Lemma \ref{0tM1}, $\ker(g)_\red^0=0$ and $\#L_0'(k_s)$ is a power of $p$. Thus the canonical morphism $[L'\to G']\to [L'/L_0'\to G']$ is an isomorphism in ${}^t\M_1(k)[1/p]$ and the morphism $(0,g):[L'\to G']\to [0\to G]$ factors through it as effective morphisms. Since $\ker(g)_\red^0=0$, the group scheme $L_0'$ is the kernel of the induced morphism $u:L'\to \ker(g)$ and the group scheme $\ker(g)$ is finite over $k$. Thus $L'/L_0'=L'/\ker(u)$ is a subgroup of $\ker(g)$, which implies that $L'/L_0'$ is finite \'etale over $k$. So the canonical morphism $[L'/L_0'\to G']\to [0\to G'/(L'/L_0')]$ is a quasi-isomorphism and the morphism $[L'/L_0'\to G']\to [0\to G]$ factors through it. In conclusion, in ${}^t\M_1(k)[1/p]$, the morphism $(0,g)$ can be represented by the induced morphism
        $$[0\to G'/(L'/L_0')]\longrightarrow [0\to G],$$
        which satisfies the desired properties.
        \item Let $M_1\hookrightarrow M_2\hookrightarrow \cdots$ be an ascending chain of subobjects of $[0\to G]$ in ${}^t\M_1(k)[1/p]$. By (1), each morphism $M_n\hookrightarrow [0\to G]$ can be represented by an effective morphism 
        $$(0,g_n):[0\to G_n]\hookrightarrow[0\to G]$$
        with $\ker(g_n)$ finite over $k$. Since all morphisms between Deligne $1$-motives are in fact effective, the monomorphism $[0\to G_n]\to [0\to G_{n+1}]$ is of the form 
        $$\left(0,\frac{i_n}{p^{\alpha_n}}\right):[0\to G_n]\to [0\to G_{n+1}],$$ 
        where $i_n:G_n\to G_{n+1}$ is a morphism in $\SAV(k)$. Because $p^{\alpha_n}$ is an isomorphism in ${}^t\M_1(k)[1/p]$, the morphism $(0,i_n):[0\to G_n]\to [0\to G_{n+1}]$ is also a monomorphism in ${}^t\M_1(k)[1/p]$, which implies that $\ker(i_n)$ is a finite group scheme over $k$. Thus
        $$\dim G_1\leq \dim G_2\leq\cdots\leq \dim G.$$
        So there exists a positive integer $N$ such that 
        $$\dim G_n=\dim G_N,~\forall~n\geq N.$$
        We get an ascending chain of subobjects of $\ker(g_N)$ in $\Grp(k)[1/p]$:
        $$\ker(i_N)\hookrightarrow \ker(i_{N+1}\circ i_N)\hookrightarrow \ker(i_{N+2}\circ i_{N+1}\circ i_N)\hookrightarrow\cdots$$
        Taking Cartier duality of finite (flat) group schemes over $k$, we get a chain of quotients of $\ker(g_N)^\vee$:
        $$\xymatrix{
            && \ker(g_N)^\vee \ar@{->>}[ld]_{q_{N+2}} \ar@{->>}[d]^{q_{N+1}} \ar@{->>}[rd]^{q_N} & \\
            \cdots  \ar@{->>}[r] &\ker(i_{N+2}\circ i_{N+1}\circ i_N)^\vee \ar@{->>}[r] & \ker(i_{N+1}\circ i_N)^\vee \ar@{->>}[r] & \ker(i_N)^\vee.
        }$$
        Since $\ker(g_N)^\vee$ is a Noetherian scheme, the descending chain of its subobjects
        $$\ker(q_N)\supseteq \ker(q_{N+1})\supseteq \ker(q_{N+2})\supseteq\cdots$$
        is stationary. Thus there exists an integer $N'\geq N$ such that
        $$\ker(i_n\circ\cdots \circ i_N)=\ker(i_{n+1}\circ \cdots \circ i_N),~\forall~n\geq N',$$
        It follows that $i_n:G_n\to G_{n+1}$ are isomorphisms in $\Grp(k)[1/p]$ for $n\geq N'$.
        Hence $[0\to G]$ is Noetherian in ${}^t\M_1(k)[1/p]$.
        \qedhere
    \end{enumerate}
\end{proof}

\begin{thm}\label{tM1Noether}
    Let $k$ be a field. Then the category ${}^t\M_1(k)[1/p]$ is Noetherian.
\end{thm}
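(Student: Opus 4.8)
The plan is to deduce the general statement from the two special cases already at hand, Lemmas~\ref{LNoether}(2) and~\ref{GNoether}(2), together with the elementary fact that in an abelian category the class of Noetherian objects is closed under extensions. Every object of ${}^t\M_1(k)[1/p]$ is represented by an effective $1$-motive with torsion $M=[L\stackrel{u}{\to}G]$ with $L$ a constructible group scheme and $G$ a semi-abelian variety, so it suffices to show each such $M$ is Noetherian. First I would record that there is a short exact sequence
\[ 0\to [0\to G]\to [L\stackrel{u}{\to}G]\to [L\to 0]\to 0 \]
in ${}^t\M_1(k)[1/p]$: the morphism $(0,\id_G)$ is a strict effective monomorphism of complexes (its semi-abelian component has kernel $0$, a semi-abelian variety) with effective cokernel $[L\to 0]$, and a strict exact sequence of complexes of this kind is exact in ${}^t\M_1$ by the description of short exact sequences in \cite[C.5.5]{BVK16Derived1Motives}, which applies after inverting $p$.

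Granting this, the outer terms are Noetherian: $[0\to G]$ by Lemma~\ref{GNoether}(2) and $[L\to 0]$ by Lemma~\ref{LNoether}(2). To conclude that the middle term $M$ is Noetherian, I would take an ascending chain of subobjects $M_1\subseteq M_2\subseteq\cdots\subseteq M$, set $M_n'=M_n\cap[0\to G]$ and $M_n''=\im\bigl(M_n\to[L\to 0]\bigr)$, and note that $\{M_n'\}$ and $\{M_n''\}$ are ascending chains of subobjects inside Noetherian objects, hence eventually stationary; for $n$ large the induced morphism of short exact sequences $0\to M_n'\to M_n\to M_n''\to 0$ is an isomorphism on each of the outer terms, so by the five lemma $M_n\to M_{n+1}$ is an isomorphism. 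Thus $M$ is Noetherian, and since every object of ${}^t\M_1(k)[1/p]$ has this form, the abelian category ${}^t\M_1(k)[1/p]$ is Noetherian.

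The one point requiring care is the exactness of the displayed sequence, since kernels and cokernels in the localized category ${}^t\M_1(k)[1/p]$ are not computed termwise; this is exactly where \cite[C.5.5]{BVK16Derived1Motives} (equivalently, the explicit description of the abelian structure on ${}^t\M_1$) enters, and it is the only non-formal ingredient. If one wishes to sidestep that citation one may instead first apply the equivalence $\pi^*\colon {}^t\M_1(k)[1/p]\to {}^t\M_1(k^\perf)[1/p]$ of Lemma~\ref{tM1PurelyInsep} to reduce to a perfect ground field --- Noetherianity being invariant under equivalence of categories --- and invoke the Barbieri-Viale--Kahn structure theory there; note that Lemmas~\ref{LNoether} and~\ref{GNoether} are already established over an arbitrary field, so no reduction is needed for them. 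Everything else in the argument is formal.
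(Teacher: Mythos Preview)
Your proof is correct and follows essentially the same approach as the paper: use the canonical short exact sequence $0\to [0\to G]\to M\to [L\to 0]\to 0$, invoke Lemmas~\ref{LNoether}(2) and~\ref{GNoether}(2) for the outer terms, and conclude by extension-closure of Noetherian objects. The paper's proof is terser (it asserts the exactness of the sequence and the extension step without further comment), whereas you spell out both the justification via \cite[C.5.5]{BVK16Derived1Motives} and the standard five-lemma argument, which is fine.
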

\begin{proof}
    Let $[L\to G]$ be a $1$-motive with torsion over $k$. Consider the canonical short exact sequence
    $$0\to [0\to G] \to [L\to G] \to [L\to 0] \to0.$$
    By Lemma \ref{LNoether} and \ref{GNoether}, the $1$-motives $[L\to 0]$ and $[0\to G]$ are Noetherian in ${}^t\M_1(k)[1/p]$. Thus the $1$-motive $[L\to G]$ is Noetherian in ${}^t\M_1(k)[1/p]$. Hence the category ${}^t\M_1(k)[1/p]$ is Noetherian.
\end{proof}

\subsection{Perverting \texorpdfstring{$t$}{t}-structures}
Recall that a $t$-structure on a triangulated category $\cD$ is a pair of full subcategories satisfying three simple axioms (\cite[D\'efinition 1.3.1]{BBD82PerverseSheaves}). Let $\cD$ be a triangulated category endowed with a $t$-structure $(\cD^{\leq 0},\cD^{\geq 0})$. We denote by $\cD^\heartsuit$ its heart $\cD^{\leq 0}\cap \cD^{\geq 0}$. For $n\in\bZ$, we denote by $\tau^{\leq n}$ and $\tau^{\geq n}$ the truncation functors with respect to this $t$-structure. We also write $H^nX=\tau^{\leq n}\tau^{\geq n}X[n]$, which is an object of $\cD^\heartsuit$. 

We consider full subcategories of $\cD^\heartsuit$ with the following properties.
\begin{hyp}[{\cite[Hypothesis 2.1]{Ayoub11Motivic-t-structure}}]\label{PervertHypothesis}
    \begin{enumerate}[label={\rm(\arabic*)},leftmargin=*]
        \item $\cA$ is a Serre subcategory of $\cD^\heartsuit$, i.e., stable under subobjects, quotients and extensions. \menum
        \item The inclusion $\cA\hookrightarrow \cD^\heartsuit$ admits a left adjoint $F \colon  \cD^\heartsuit\to \cA$.
        \item If $0\to X'\to X\to X''\to 0$ is an exact sequence in $\cD^\heartsuit$ with $X''\in\cA$, then $F(X')\to F(X)$ is a monomorphism.
    \end{enumerate}
\end{hyp}

\begin{defn}[{\cite[Definition 2.3]{Ayoub11Motivic-t-structure}}]
    An object $X\in\cD^\heartsuit$ is said to be $F$-connected if $F(X)=0$.
\end{defn}
\begin{rmk}\label{ConnecedEtale}
    There is no nontrivial morphisms from $F$-connected objects to objects in $\cA$. In fact, if $X'$ is $F$-connected and $X''$ is in $\cA$, then
    \[\Hom_{\cD^\heartsuit}(X',X'')\simeq \Hom_\cA(FX',X'')=0.\]
    As a result, if $0\to X'\to X\to X''\to 0$ is an exact sequence in $\cD^\heartsuit$ where $X'$ is $F$-connected and $X''\in\cA$, then $X''\simeq FX$.
\end{rmk}

\begin{defn}
    We define two full subcategories of $\cD$ as follows:  
    \begin{itemize}
        \item ${}^t\cD^{\leq 0}$ is the full subcategory consisting of $P\in\cT$ such that $H^i(P)=0$ for $i>1$ and $H^1(P)$ is $F$-connected.
        \item ${}^t\cD^{\geq 0}$ is the full subcategory consisting of $N\in\cT$ such that $H^i(N)=0$ for $i<0$ and $H^0(N)\in\cA$.
    \end{itemize}
\end{defn}

\begin{prop}[{\cite[Proposition~2.4]{Ayoub11Motivic-t-structure}}]
    The pair $({}^t\cD^{\leq 0},{}^t\cD^{\geq 0})$ is a $t$-structure on $\cD$.
\end{prop}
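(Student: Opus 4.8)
The claim is that $({}^t\cD^{\leq 0},{}^t\cD^{\geq 0})$ is a $t$-structure on $\cD$, where these subcategories are defined by conditions on the cohomology objects $H^i$ relative to the given $t$-structure $(\cD^{\leq 0},\cD^{\geq 0})$, together with the $F$-connectedness (resp. membership in $\cA$) of the top (resp. bottom) cohomology object. Since this is exactly \cite[Proposition~2.4]{Ayoub11Motivic-t-structure}, the plan is to reproduce Ayoub's argument, verifying the three axioms of \cite[D\'efinition 1.3.1]{BBD82PerverseSheaves}: (i) ${}^t\cD^{\leq 0}[1]\subseteq {}^t\cD^{\leq 0}$ and ${}^t\cD^{\geq 0}[-1]\subseteq {}^t\cD^{\geq 0}$; (ii) $\Hom_\cD(P,N[-1])=0$ for $P\in{}^t\cD^{\leq 0}$ and $N\in{}^t\cD^{\geq 0}$; (iii) every object $X\in\cD$ fits in a distinguished triangle $P\to X\to N\to P[1]$ with $P\in{}^t\cD^{\leq 0}$ and $N\in{}^t\cD^{\geq 0}$.

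First I would dispatch axiom (i), which is immediate: shifting by $1$ replaces $H^i$ by $H^{i+1}$, so the vanishing ranges and the position of the extremal cohomology object translate accordingly, and $F$-connectedness is preserved since it is a property of a fixed object of $\cD^\heartsuit$. For axiom (ii), I would argue by a dévissage on the standard truncation: writing the standard truncation triangle for $P$ at degree $1$ and for $N[-1]$ at degree $-1$, one reduces the computation of $\Hom_\cD(P,N[-1])$ to $\Hom$-groups between cohomology objects placed in degrees $\leq 1$ (for $P$) and $\geq 0$ (for $N[-1]$, i.e. $N$ shifted). The only ``critical'' pairing is $\Hom_{\cD^\heartsuit}(H^1(P),H^0(N))$; here $H^1(P)$ is $F$-connected and $H^0(N)\in\cA$, so this group vanishes by Remark~\ref{ConnecedEtale}. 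All other pairings vanish by the standard $t$-structure orthogonality (and the fact that $\Hom_\cD(A,B[j])=0$ for $A,B\in\cD^\heartsuit$ and $j<0$).

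The heart of the matter, and the step I expect to be the main obstacle, is axiom (iii): constructing the truncation triangle. The natural approach is to build ${}^t\tau^{\leq 0}X$ by modifying the standard truncation $\tau^{\leq 1}X$. Concretely, start from the standard triangle $\tau^{\leq 0}X\to \tau^{\leq 1}X\to H^1(X)[-1]\to$, apply $F$ to the object $H^1(X)\in\cD^\heartsuit$ to get the quotient $H^1(X)\twoheadrightarrow F(H^1(X))$ with $F$-connected kernel $K$ (an object of $\cD^\heartsuit$), and splice: pull back $\tau^{\leq 1}X$ along $K[-1]\hookrightarrow H^1(X)[-1]$ to produce a candidate $P$ whose cohomology agrees with that of $X$ in degrees $\leq 0$ and equals $K$ in degree $1$ — hence $P\in{}^t\cD^{\leq 0}$. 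The cofiber $N$ then has $H^0(N)=F(H^1(X))\in\cA$ (this is where Hypothesis~\ref{PervertHypothesis}(2)--(3) enter, guaranteeing $F$ behaves well and the relevant map stays a monomorphism so no spurious $H^{-1}$ appears), $H^i(N)=H^{i+1}(X)$ for $i\geq 1$, and $H^i(N)=0$ for $i<0$; so $N\in{}^t\cD^{\geq 0}$. The delicate points are: making the pullback/pushout of triangles canonical enough that the construction is functorial (using the octahedral axiom to organize the two triangles), and checking that the long exact cohomology sequence of the resulting triangle indeed produces the asserted $H^i$ with no extra terms — precisely the content of Hypothesis~\ref{PervertHypothesis}(3), which ensures $F(X')\to F(X)$ is mono and hence that passing to the quotient $F(H^1(X))$ does not create an $H^{-1}$ contribution in $N$. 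Since all of this is carried out in \cite[Proposition~2.4]{Ayoub11Motivic-t-structure}, I would follow that reference for the diagram chases and merely highlight where Hypothesis~\ref{PervertHypothesis} and Remark~\ref{ConnecedEtale} are used.
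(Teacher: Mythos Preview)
The paper does not give its own proof of this proposition; it is recorded purely as a citation of \cite[Proposition~2.4]{Ayoub11Motivic-t-structure}. So there is nothing to compare against except Ayoub's original argument, and your overall strategy --- verify the three BBD axioms, with axiom (iii) handled by splicing $\tau^{\leq 1}X$ against the kernel $K$ of the unit $H^1(X)\to F(H^1(X))$ --- is indeed the standard one and is essentially correct.

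Two points need adjustment. First, there is a consistent off-by-one slip in your treatment of axiom (iii): in BBD's convention the cofiber $N$ of $P\to X$ must lie in ${}^t\cD^{\geq 1}$, not ${}^t\cD^{\geq 0}$, and the long exact sequence gives $H^i(N)=0$ for $i\leq 0$, $H^1(N)=F(H^1(X))$, and $H^i(N)=H^i(X)$ for $i\geq 2$ --- not the shifted version you wrote. Second, you misplace the role of Hypothesis~\ref{PervertHypothesis}(3). The vanishing of $H^i(N)$ for $i\leq 0$ is automatic from the construction: $H^i(P)\to H^i(X)$ is an isomorphism for $i\leq 0$, and $H^1(P)=K\hookrightarrow H^1(X)$ is a monomorphism by definition of $K$. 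What Hypothesis~(3) actually buys you is that $K$ is $F$-connected, hence $P\in{}^t\cD^{\leq 0}$. Indeed, the unit $\eta\colon H^1(X)\to F(H^1(X))$ is surjective (its cokernel lies in $\cA$ since $\cA$ is Serre, and the adjunction forces this cokernel to vanish), so $0\to K\to H^1(X)\to F(H^1(X))\to 0$ is exact with last term in $\cA$; then by~(3) the map $F(K)\to F(H^1(X))$ is a monomorphism, while right-exactness of $F$ together with $F(\eta)$ being an isomorphism forces this map to be zero, whence $F(K)=0$.
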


\begin{defn}
    The $t$-structure $({}^t\cD^{\leq 0},{}^t\cD^{\geq 0})$ defined above is called the $\cA$-perverted $t$-structure. We denote by ${}^t\cD^\heartsuit$ its heart ${}^t\cD^{\leq 0}\cap {}^t\cD^{\geq 0}$. For $n\in\bZ$, we denote by ${}^t\tau^{\leq n}$ and ${}^t\tau^{\geq n}$ the truncation functors with respect to this $t$-structure. We also write the cohomology with respect to this $t$-structure as ${}^t\!H^n$.
\end{defn}

\begin{rmk}[{\cite[Remark 2.6]{Ayoub11Motivic-t-structure}}]\label{heart}
    By definition, an object $X$ of $\cD$ is in ${}^t\cD^\heartsuit$ if and only if it satisfies the following properties:  
    \begin{enumerate}[label={\rm(\arabic*)}]
        \item $H^i(X)=0$ for $i\notin \{0,1\}$;
        \item $H^0(X)\in\cA$;
        \item $H^1(X)$ is $F$-connected.
    \end{enumerate}
\end{rmk}

\begin{rmk}
    By definition, if the old $t$-structure on $\cD$ is non-degenerate, i.e.,
    \[\bigcap_{n\in\bZ} \cD^{\leq n}=\bigcap_{n\in\bZ} \cD^{\geq n}=0,\]
    then so is the $\cA$-perverted $t$-structure. 
\end{rmk}

The following result is a generalization of \cite[Proposition~3.11.2]{BVK16Derived1Motives}.
\begin{prop}\label{tstrucExact}
    Keep the above notations and assumptions. Let $X$ be an object of \(\cD\). For any $n\in\bZ$,
    \begin{enumerate}[label={\rm(\arabic*)}]
        \item $H^m({}^t\tau^{\leq n-1}X)=0$ for $m\geq n+1$; and $H^m({}^t\tau^{\geq n}X)=0$ for $m\leq n-1$;
        \item for $m\geq n+1$, \[H^m(X)\simeq H^m({}^t\tau^{\geq n}X);\]
        \item for $m\leq n-1$, \[H^m({}^t\tau^{\leq n-1}X)\simeq H^m(X);\]
        \item we have 
            \[H^0({}^t\!H^nX)\simeq H^n({}^t\tau^{\geq n}X), \quad H^{n}({}^t\tau^{\leq n-1}X)\simeq H^1({}^t\!H^{n-1}X).\]
            and a short exact sequence in $\cD^\heartsuit$
            \[0\to H^1({}^t\!H^{n}X) \to H^{n+1}(X) \to H^0({}^t\!H^{n+1}X) \to 0;\]
        \item $H^0({}^t\!H^{n+1}X)\simeq F(H^{n+1}(X))$.
    \end{enumerate}
\end{prop}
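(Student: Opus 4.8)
The plan is to deduce everything from the elementary remark that the $\cA$-perverted $t$-structure is sandwiched between two shifts of the original one. Unwinding the definitions, $P\in{}^t\cD^{\leq n}$ exactly when $H^i(P)=0$ for all $i\geq n+2$ and $H^{n+1}(P)$ is $F$-connected, while $N\in{}^t\cD^{\geq n}$ exactly when $H^i(N)=0$ for all $i\leq n-1$ and $H^n(N)\in\cA$. Since $0$ is both $F$-connected and an object of $\cA$, this gives at once
\[\cD^{\leq n-1}\subseteq {}^t\cD^{\leq n}\subseteq \cD^{\leq n+1}\qquad\text{and}\qquad \cD^{\geq n+1}\subseteq {}^t\cD^{\geq n}\subseteq \cD^{\geq n}.\]
In particular ${}^t\tau^{\leq n-1}X\in\cD^{\leq n}$ and ${}^t\tau^{\geq n}X\in\cD^{\geq n}$, which is precisely assertion (1). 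Assertions (2) and (3) will then follow by applying the cohomological functors $H^m$ to the distinguished triangle ${}^t\tau^{\leq n-1}X\to X\to {}^t\tau^{\geq n}X\xrightarrow{+1}$ and using (1): for $m\geq n+1$ the groups $H^m$ and $H^{m+1}$ of ${}^t\tau^{\leq n-1}X$ vanish, forcing $H^m(X)\simeq H^m({}^t\tau^{\geq n}X)$; for $m\leq n-1$ the groups $H^{m-1}$ and $H^m$ of ${}^t\tau^{\geq n}X$ vanish, forcing $H^m({}^t\tau^{\leq n-1}X)\simeq H^m(X)$.

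For the isomorphisms in (4) I will interpose the auxiliary truncation triangles that express the perverted cohomology. Writing $Y={}^t\tau^{\geq n}X$, one has ${}^t\!H^nX[-n]={}^t\tau^{\leq n}Y$, so the triangle ${}^t\tau^{\leq n}Y\to Y\to {}^t\tau^{\geq n+1}Y\xrightarrow{+1}$ together with ${}^t\tau^{\geq n+1}Y\in{}^t\cD^{\geq n+1}\subseteq\cD^{\geq n+1}$ yields, on applying $H^n$, an isomorphism $H^0({}^t\!H^nX)=H^n({}^t\tau^{\leq n}Y)\simeq H^n(Y)=H^n({}^t\tau^{\geq n}X)$. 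Symmetrically, with $Z={}^t\tau^{\leq n-1}X$ one has ${}^t\!H^{n-1}X[1-n]={}^t\tau^{\geq n-1}Z$ (using that the perverted truncations commute), and the triangle ${}^t\tau^{\leq n-2}Z\to Z\to {}^t\tau^{\geq n-1}Z\xrightarrow{+1}$ with ${}^t\tau^{\leq n-2}Z\in{}^t\cD^{\leq n-2}\subseteq\cD^{\leq n-1}$ gives, on applying $H^n$, that $H^n({}^t\tau^{\leq n-1}X)\simeq H^1({}^t\!H^{n-1}X)$. For the short exact sequence, apply $H^{n+1}$ to ${}^t\tau^{\leq n}X\to X\to {}^t\tau^{\geq n+1}X\xrightarrow{+1}$: since ${}^t\tau^{\geq n+1}X\in\cD^{\geq n+1}$ the group $H^n({}^t\tau^{\geq n+1}X)$ vanishes, and since ${}^t\tau^{\leq n}X\in\cD^{\leq n+1}$ the group $H^{n+2}({}^t\tau^{\leq n}X)$ vanishes, so the long exact sequence degenerates to
\[0\to H^{n+1}({}^t\tau^{\leq n}X)\to H^{n+1}(X)\to H^{n+1}({}^t\tau^{\geq n+1}X)\to 0,\]
and the two outer terms are identified with $H^1({}^t\!H^nX)$ and $H^0({}^t\!H^{n+1}X)$ by the isomorphisms already proved (applied with $n$ replaced by $n+1$).

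Finally, (5) will be obtained by feeding the short exact sequence of (4) into Remark~\ref{ConnecedEtale}: by Remark~\ref{heart} the subobject $H^1({}^t\!H^nX)$ is $F$-connected and the quotient $H^0({}^t\!H^{n+1}X)$ lies in $\cA$, hence the latter is canonically $F(H^{n+1}(X))$. The only delicate point is the bookkeeping in (4)---keeping track of which perverted truncation is applied first and of the shifts attached to ${}^t\!H^\bullet$---but once the sandwich inclusions above are in place, every step is a routine long-exact-sequence chase, so I do not anticipate a real obstruction.
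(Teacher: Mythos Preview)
Your proposal is correct and follows essentially the same approach as the paper: both arguments derive (1)--(3) from the definitions (your ``sandwich'' inclusions are just an explicit restatement of what the paper calls ``clear by definition'') and the long exact sequence of the truncation triangle, and both obtain (4) by analysing the triangles ${}^t\!H^nX[-n]\to{}^t\tau^{\geq n}X\to{}^t\tau^{\geq n+1}X$ and ${}^t\tau^{\leq n-2}X\to{}^t\tau^{\leq n-1}X\to{}^t\!H^{n-1}X[1-n]$ (your $Y$ and $Z$ versions are literally these triangles once one uses ${}^t\tau^{\geq n+1}Y={}^t\tau^{\geq n+1}X$, etc.). The only cosmetic difference is that for the short exact sequence you apply $H^{n+1}$ to ${}^t\tau^{\leq n}X\to X\to{}^t\tau^{\geq n+1}X$ and then identify the outer terms via the isomorphisms already proved, whereas the paper reads it off the first triangle and invokes (2) to replace $H^{n+1}({}^t\tau^{\geq n}X)$ by $H^{n+1}(X)$; both are the same one-line long-exact-sequence chase.
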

\begin{proof}
    The first assertion is clear by definition.

    Consider the following distinguished triangle in $\cD$
    \[{}^t\tau^{\leq n-1}X  \longrightarrow  X  \longrightarrow {}^t\tau^{\geq n}X \longrightarrow {}^t\tau^{\leq n-1}X[1].\]
    It induces a long exact sequence in $\cD^\heartsuit$
    \[\cdots \to H^m({}^t\tau^{\leq n-1}X) \to H^m(X) \to H^m({}^t\tau^{\geq n}X) \to H^{m+1}({}^t\tau^{\leq n-1}X)\to \cdots.\]
    Combining it with (1), we get (2) and (3).

    Now, consider the following distinguished triangle in $\cD$
    \[{}^t\!H^{n}(X)[-n] \longrightarrow {}^t\tau^{\geq n}X \longrightarrow {}^t\tau^{\geq n+1}X \longrightarrow {}^t\!H^{n}(X)[-n+1].\]
    It induces the following exact sequence in $\cD^\heartsuit$
    \[\cdots \to H^{n-1}({}^t\tau^{\geq n+1}X) \to H^{n}({}^t\!H^{n}(X)[-n]) \to H^{n}({}^t\tau^{\geq n}X) \to H^{n}({}^t\tau^{\geq n+1}X)\]
    \[\to H^{n+1}({}^t\!H^{n}X[-n]) \to H^{n+1}({}^t\tau^{\geq n}X) \to H^{n+1}({}^t\tau^{\geq n+1}X)\to H^{n+2}({}^t\!H^{n}X[-n]) \to \cdots.\]
    By (1), $H^{n-1}({}^t\tau^{\geq n+1}X)$ and $H^{n}({}^t\tau^{\geq n+1}X)$ both vanish. Note also that \[H^{n+2}({}^t\!H^{n}X[-n])=0.\] Thus we get the isomorphism
    \[H^0({}^t\!H^{n}X)\simeq H^{n}({}^t\tau^{\geq n}X)\] 
    and the exact sequence in (4). 

    Using the above argument for the distinguished triangle 
    \[{}^t\tau^{\leq n-2}X\longrightarrow {}^t\tau^{\leq n-1}X\longrightarrow {}^t\!H^{n-1}X[-n+1]\longrightarrow {}^t\tau^{\leq n-2}X[1],\]
    we get $H^{n}({}^t\tau^{\leq n-1}X)\simeq H^1({}^t\!H^{n-1}X)$.

    The last assertion follows from Remark~\ref{ConnecedEtale}.
\end{proof}

\subsection{\texorpdfstring{$1$}{1}-motives}
We consider the unbounded derived category $D(\HI_{\leq 1}(k,\Lambda))$. The standard $t$-structure on it is called the homotopy $t$-structure. The homotopy $t$-structure on $\DM_\et^\eff(k,\Lambda)$ with heart $\HI_\et(k,\Lambda)$ restricts to the above $t$-structure on $D(\HI_{\leq 1}(k,\Lambda))$, which justifies the name.

The following result is proved in \cite{Ayoub11Motivic-t-structure} for $\bQ$-coefficients, and we refine it to integral coefficients here (at least inverting the exponential characteristics).
\begin{prop}
    The subcategory $\HI_{\leq 0}(k,\Lambda)\hookrightarrow \HI_{\leq 1}(k,\Lambda)$ satisfies Hypothesis~\ref{PervertHypothesis}. More precisely,
    \begin{enumerate}[label={\rm(\arabic*)}]
        \item $\HI_{\leq 0}(k,\Lambda)$ is a Serre subcategory of \(\HI_{\leq 1}(k,\Lambda)\);
        \item the inclusion $\HI_{\leq 0}(k,\Lambda)\hookrightarrow \HI_{\leq 1}(k,\Lambda)$ admits a left adjoint 
            \[\pi_0 \colon  \HI_{\leq 1}(k,\Lambda)\to \HI_{\leq 0}(k,\Lambda);\]
        \item if \[\xymatrix{ 0\ar[r] &\sF' \ar[r]^i &\sF \ar[r]^q & \sF'' \ar[r] &0}\] is an exact sequence in $\HI_{\leq 1}(k,\Lambda)$ with $\sF''\in\HI_{\leq 0}(k,\Lambda)$, then $\pi_0(i) \colon  \pi_0(\sF')\to \pi_0(\sF)$ is a monomorphism.
    \end{enumerate}
\end{prop}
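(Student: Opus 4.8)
The plan is to dispatch (1) and (2) by invoking results already in place, and to prove (3) by a short diagram chase on the connected--\'etale sequence. Part (1) is nothing new: by Propositions~\ref{HI0}~(3) and~\ref{HI1}, both $\HI_{\leq 0}(k,\Lambda)$ and $\HI_{\leq 1}(k,\Lambda)$ are Serre subcategories of $\Shv_\et^\tr(k,\Lambda)$, and since $\HI_{\leq 0}\subseteq\HI_{\leq 1}$, the former is a Serre subcategory of the latter --- this is exactly Corollary~\ref{HI1exact}~(1). For part (2), I would take the left adjoint $\pi_0\colon\Shv_\et^\tr(k,\Lambda)\to\HI_{\leq 0}(k,\Lambda)$ of Proposition~\ref{HI0}~(2) and precompose it with the fully faithful inclusion $\iota\iota_1\colon\HI_{\leq 1}(k,\Lambda)\hookrightarrow\Shv_\et^\tr(k,\Lambda)$ (Proposition~\ref{HI1}); note that $\pi_0\iota\iota_1\sF$ lands in $\HI_{\leq 0}\subseteq\HI_{\leq 1}$. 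Since the inclusion $\HI_{\leq 0}\hookrightarrow\Shv_\et^\tr$ factors as $\iota\iota_1\circ\delta$, full faithfulness of $\iota\iota_1$ gives, for $\sF\in\HI_{\leq 1}$ and $\sG\in\HI_{\leq 0}$,
\[\Hom_{\HI_{\leq 0}}(\pi_0\iota\iota_1\sF,\sG)\simeq\Hom_{\Shv_\et^\tr}(\iota\iota_1\sF,\iota\iota_1\delta\sG)\simeq\Hom_{\HI_{\leq 1}}(\sF,\delta\sG),\]
so this functor --- which by uniqueness of adjoints agrees with the $\pi_0$ fixed after Proposition~\ref{HI0} --- is left adjoint to $\delta$.

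For part (3), I would first record that the connected--\'etale sequence $0\to\sF^0\to\sF\to\pi_0(\sF)\to 0$ is exact and functorial in $\sF\in\HI_{\leq 1}(k,\Lambda)$: exactness amounts to $\sF\to\pi_0(\sF)$ being an epimorphism, which holds for finitely presented $\sF$ by Corollary~\ref{ExactfpHI1} and in general because every $1$-motivic sheaf is a filtered colimit of finitely presented ones (Proposition~\ref{StructureHI1}~(2)) and $\pi_0$, being a left adjoint, commutes with colimits. Now, given $0\to\sF'\xrightarrow{i}\sF\xrightarrow{q}\sF''\to 0$ in $\HI_{\leq 1}$ with $\sF''\in\HI_{\leq 0}$, the key point is that $i$ restricts to an isomorphism $\sF'^0\xrightarrow{\sim}\sF^0$ on connected components. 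Indeed, the composite $\sF^0\hookrightarrow\sF\xrightarrow{q}\sF''$ vanishes because $\sF^0$ is connected and $\sF''$ is $0$-motivic (Lemma~\ref{connected}~(1)), so $\sF^0\hookrightarrow\sF$ factors through $\sF'$, and then --- again because $\pi_0(\sF')$ is $0$-motivic --- through $\sF'^0$; chasing the defining properties shows the resulting map $\sF^0\to\sF'^0$ is a section of the map $i^0\colon\sF'^0\to\sF^0$ induced by $i$, and since $i^0$ restricts a monomorphism it is also monic, hence an isomorphism. Feeding the ladder of connected--\'etale sequences for $\sF'$ and $\sF$ --- whose left vertical is this isomorphism, whose middle vertical is the mono $i$, and whose right vertical is $\pi_0(i)$ --- into the snake lemma then yields $\ker(\pi_0(i))=0$, which is the assertion.

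Parts (1) and (2) are bookkeeping; the content is in (3), and the step I expect to be delicate is establishing $\sF'^0\xrightarrow{\sim}\sF^0$ --- in particular producing the section $\sF^0\to\sF'^0$ and checking that all the auxiliary objects (the kernels $\sF^0$, $\sF'^0$ and $\ker(\pi_0(i))$) remain inside the abelian category $\HI_{\leq 1}(k,\Lambda)$, which is legitimate precisely because $\HI_{\leq 1}$ is a Serre subcategory of $\Shv_\et^\tr$. A secondary but genuine input, needed even to write down the connected--\'etale sequence, is the epimorphism $\sF\to\pi_0(\sF)$, which rests on the structure theory of $1$-motivic sheaves rather than on formal nonsense; this is also the place where one should double-check that nothing is lost passing from $\bQ$- to $\bZ[1/p]$-coefficients.
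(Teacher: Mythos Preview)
Your proposal is correct and follows essentially the same route as the paper: parts (1) and (2) are dispatched by reference to Corollary~\ref{HI1exact}~(1) and the restriction of the ambient $\pi_0$, while part (3) is proved by showing $\sF'^0\xrightarrow{\sim}\sF^0$ via the factoring argument through Lemma~\ref{connected} and then invoking the snake lemma. The only cosmetic difference is that the paper applies the snake lemma to the diagram with rows $0\to\sF'^0\to\sF^0\to 0$ and $0\to\sF'\to\sF\to\sF''\to 0$, whereas you apply it to the two connected--\'etale sequences stacked vertically; both yield $\ker(\pi_0(i))=0$ immediately.
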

\begin{proof}
    The first assertion has been proved in Corollary~\ref{HI1exact}.

    Clearly, the restriction of $\pi_0$ to $\HI_{\leq 1}(k,\Lambda)$ is left adjoint to $\HI_{\leq 0}(k,\Lambda)\hookrightarrow \HI_{\leq 1}(k,\Lambda)$.

    Now, we prove the last assertion. By Lemma~\ref{connected}, the composition $\sF^0\to \sF\to \sF''$ is trivial. Thus the morphism $\sF^0\to \sF$ factors through $\sF'$. Again, the induced morphism $\sF^0 \to \pi_0(\sF')$ is trivial and thus the morphism $\sF^0\to \sF'$ factors through $\sF'^0$.
    \[\xymatrix{
                & \sF'^0 \ar@<0.5ex>[r]^f \ar[d] & \sF^0 \ar[d] \ar@<0.5ex>[l]^g \ar[dl] &&\\
        0\ar[r] & \sF'   \ar[r]   \ar[d]         & \sF   \ar[r] \ar[d]                   & \sF'' \ar[r] & 0\\
                & \pi_0(\sF')     \ar[r]         & \pi_0(\sF)
    }\]
    It is easy to check that $f$ and $g$ are inverses of each other. Applying the snake lemma to the top two rows of the above diagram (with zero being the third term of the first row), we get that $\pi_0(\sF')\to \pi_0(\sF)$ is a monomorphism.
\end{proof}

We apply the construction from the previous subsection.
\begin{defn}
    The $1$-motivic $t$-structure on $D(\HI_{\leq 1}(k,\Lambda))$ is the $\HI_{\leq 0}(k,\Lambda)$-perverted $t$-structure associated with the homotopy $t$-structure. The heart of this $1$-motivic $t$-structure will be denoted by $\MM_1(k,\Lambda)$. We call objects in $\MM_1(k,\Lambda)$ $1$-motives.

    As explained in Remark~\ref{heart}, an object $X$ in $D(\HI_{\leq 1}(k,\Lambda))$ is a $1$-motive if and only if it satisfies the following properties:  
    \begin{enumerate}[label={\rm(\arabic*)}]
        \item $H^i(X)=0$ for $i\notin \{0,1\}$;
        \item $H^0(X)$ is a $0$-motivic sheaf;
        \item $H^1(X)$ is a connected $1$-motivic sheaf.
    \end{enumerate}
    By truncation, in fact, we may and do represent $X$ by a two-term complex concentrated in degrees $0,1$. We write it as $[L\to G]$ with Deligne $1$-motives as main examples. We call it a $0$-motive if it is quasi-isomorphic to $[L\to 0]$ with $L$ a $0$-motivic sheaf. We will call it constructible if $H^0(X)$ and $H^1(X)$ are finitely presented $1$-motivic sheaves in the sense of Definition~\ref{fpHI1}.
\end{defn}

There is a functor
\[ T \colon {}^t\M_1(k)[1/p] \to \MM_1(k) \]
which sends a $1$-motive with torsion $[L \to G]$ to the $1$-motive $[L^\tr_\Lambda \to G^\tr_\Lambda]$.

\begin{prop}
    The functor $T$ induces an exact full embedding of \({}^t\M_1(k)[1/p]\) into $\MM_1(k)$. Moreover, with $\bQ$-coefficients, the category of $1$-motives with torsion is equivalent to the category of constructible $1$-motives.
\end{prop}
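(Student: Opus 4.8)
The plan is to verify the three assertions about $T$ in turn — exactness, full faithfulness, and the description of the rational essential image — after first reducing to the case of a perfect base field. For the reduction, let $k^\perf$ be a perfect closure of $k$. By Lemma~\ref{tM1PurelyInsep} base change is an equivalence ${}^t\M_1(k)[1/p]\xrightarrow{\sim}{}^t\M_1(k^\perf)[1/p]$, and by Proposition~\ref{PurelyInsepEquiv}(3) the adjoint pair $(e^*_\HI,e^\HI_*)$ is a pair of mutually inverse exact equivalences $\HI_\et(k,\Lambda)\simeq\HI_\et(k^\perf,\Lambda)$ preserving $n$-motivic sheaves; hence they induce an equivalence $D(\HI_{\leq1}(k,\Lambda))\simeq D(\HI_{\leq1}(k^\perf,\Lambda))$ matching homotopy $t$-structures and the subcategories $\HI_{\leq0}$, so matching the $1$-motivic $t$-structures and identifying $\MM_1(k,\Lambda)\simeq\MM_1(k^\perf,\Lambda)$. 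By Proposition~\ref{SAVpullback} and Lemma~\ref{e_*GK=G} the functor $\underline{(-)}^\tr_\Lambda$ commutes with these base-change equivalences, so they intertwine the two instances of $T$, and it suffices to treat $k$ perfect. \emph{Exactness.} By \cite[C.5.5]{BVK16Derived1Motives} — whose proof works over an arbitrary field once Proposition~\ref{SAVsubRed} is available — every short exact sequence in ${}^t\M_1(k)[1/p]$ is represented, up to isomorphism, by a sequence of two-term complexes of commutative group schemes $0\to[L'\to G']\to[L\to G]\to[L''\to G'']\to0$ exact in each degree. Sending a group scheme to $\underline{(-)}^\tr_\Lambda$ takes such degreewise-exact sequences to degreewise-exact sequences of two-term complexes of $1$-motivic sheaves (quotient maps of étale, resp. semi-abelian, group schemes are epimorphisms of étale sheaves, $\gamma_*$ is exact and conservative, and $-\otimes_\bZ\Lambda$ followed by étale sheafification is exact), hence to a distinguished triangle $T(M')\to T(M)\to T(M'')\to T(M')[1]$ in $D(\HI_{\leq1}(k,\Lambda))$; as all three vertices lie in the heart $\MM_1(k,\Lambda)$, this is a short exact sequence there. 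So $T$ is exact.

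\emph{Full faithfulness.} First, $T$ is conservative: if $T([L\stackrel u\to G])=0$ then $\underline u^\tr_\Lambda\colon\underline L^\tr_\Lambda\xrightarrow{\sim}\underline G^\tr_\Lambda$; applying $\pi_0$ and using that $\underline L^\tr_\Lambda$ is a $0$-motivic sheaf (hence fixed by $\pi_0$) while $\underline G^\tr_\Lambda$ is connected (Corollary~\ref{ExactfpHI1}) forces $\underline L^\tr_\Lambda=\underline G^\tr_\Lambda=0$, so $G=0$ and $\#L(k_s)$ is a power of $p$, i.e.\ $[L\to G]=0$ by Lemma~\ref{0tM1}. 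An exact conservative functor between abelian categories is faithful. For fullness one writes $T([L\stackrel u\to G])$ as the shifted mapping cone of $\underline u^\tr_\Lambda$ and computes $\Hom_{\MM_1(k,\Lambda)}(T(M),T(M'))=\Hom_{D(\HI_{\leq1})}(T(M),T(M'))$ from the groups $\Hom_{\HI_{\leq1}}$ and $\Ext^1_{\HI_{\leq1}}$ between the sheaves $\underline L^\tr_\Lambda$ and $\underline G^\tr_\Lambda$ — the relevant spectral sequence involves no $\Ext^{\geq2}$ since the cohomology sheaves of $T(M),T(M')$ sit only in degrees $0$ and $1$. These $\Hom$- and $\Ext^1$-groups are given by the computations of Orgogozo and Barbieri-Viale–Kahn, \cite[3.3.4]{BVK16Derived1Motives} (already invoked in the proof of Theorem~\ref{ChowRevisited}), while on the source side $\Hom_{{}^t\M_1(k)[1/p]}(M,M')=\varinjlim_{(n,p)=1}\Hom_\eff(M^{(n)},M')\otimes\bZ[1/p]$ by Proposition~\ref{tM1n}; a dévissage along the functorial exact sequences $0\to M_\tor\to M\to M_\free\to0$ and $0\to[0\to G]\to[L\to G]\to[L\to0]\to0$ reduces both variables to the generators $[L\to0]$ and $[0\to G]$, and there the two answers are matched directly. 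This last bookkeeping is the main obstacle: it amounts to re-running, in the present (arbitrary field, $\bZ[1/p]$-coefficient) generality, the computations underlying Barbieri-Viale–Kahn's embedding ${}^t\M_1(k)[1/p]\hookrightarrow\DM_{\leq1}(k,\Lambda)$; over the perfect field $k$ one may alternatively quote \cite{BVK16Derived1Motives} outright.

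\emph{Essential image with $\bQ$-coefficients.} Rerunning the above with $\bQ$ in place of $\Lambda$ gives an exact full embedding $T_\bQ\colon{}^t\M_1(k)\otimes\bQ\to\MM_1(k,\bQ)$, and since the finite kernels of quasi-isomorphisms and the torsion subgroups of lattices become invertible, the sequence $0\to M_\tor\to M\to M_\free\to0$ identifies ${}^t\M_1(k)\otimes\bQ$ with $\M_1(k)\otimes\bQ$. For any $1$-motive with torsion $M=[L\stackrel u\to G]$ one has $H^0(T(M))=\underline{\ker u}^\tr_\Lambda$ and $H^1(T(M))=\coker(\underline u^\tr_\Lambda)$, both finitely presented $1$-motivic sheaves (using Corollary~\ref{ExactfpHI1} and that finitely presented $1$-motivic sheaves form an abelian subcategory), so $T(M)$ is a constructible $1$-motive. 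Conversely, a constructible $1$-motive $X$ has $H^1(X)$ a finitely presented connected $1$-motivic sheaf, hence of the form $\coker(\underline{L_1}^\tr_\Lambda\to\underline{G_1}^\tr_\Lambda)$ with $L_1$ a lattice and $G_1$ a semi-abelian variety by Corollary~\ref{ExactfpHI1}, and $H^0(X)=\underline{L_0}^\tr_\Lambda$ with $L_0$ a constructible group scheme; assembling these, together with the gluing datum between degrees $0$ and $1$, realizes $X$ (after the normalization allowed by $\bQ$-coefficients) as quasi-isomorphic to $T$ of a Deligne $1$-motive. Combined with the exact full embedding of the first part, this shows $T_\bQ$ is an equivalence of ${}^t\M_1(k)\otimes\bQ$ onto the full subcategory of constructible $1$-motives, recovering and integrally refining Ayoub's identification of the heart of the rational motivic $t$-structure.
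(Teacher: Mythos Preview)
Your treatment of exactness and full faithfulness follows essentially the same d\'evissage strategy as the paper: reduce to the generators $[L\to 0]$ and $[0\to G]$ and invoke the Hom/Ext computations of \cite[C.8, 3.3.4]{BVK16Derived1Motives}. Your explicit reduction to a perfect base field is a reasonable addition. One small correction: your claim that ``the relevant spectral sequence involves no $\Ext^{\geq 2}$'' is not literally true --- computing $\Hom_D(T(M),T(M'))$ via the homotopy $t$-structure truncations does feed through $\Ext^2_{\HI_{\leq 1}}(H^1,H^0)$ --- but this is harmless, since what the d\'evissage actually needs is the long exact sequences of $\Hom$ and $\Ext^1$ in the abelian categories $\MM_1$ and ${}^t\M_1[1/p]$ coming from the exact sequence $0\to[0\to G]\to M\to[L\to 0]\to 0$, and the five lemma.

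The genuine gap is in your essential surjectivity argument with $\bQ$-coefficients. Given a constructible $1$-motive $X$, you correctly identify $H^0(X)$ as (represented by) a lattice and $H^1(X)$ as a quotient $G_1/L_1$, but the phrase ``assembling these, together with the gluing datum between degrees $0$ and $1$'' hides the real difficulty. The ``gluing datum'' attaching $H^0(X)[0]$ to $H^1(X)[-1]$ lives in $\Hom_{D(\HI_{\leq 1})}(H^1(X)[-1],H^0(X)[1])=\Ext^2_{\HI_{\leq 1}}(H^1(X),H^0(X))$, and there is no mechanism in your argument to absorb a nonzero such class into a two-term complex of the shape $[\underline{L}\to\underline{G}]$. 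The paper resolves this by invoking \cite[Proposition~2.4.10]{ABV09MotShvLAlb}: with $\bQ$-coefficients, $\HI_{\leq 1}$ has cohomological dimension $1$, so this $\Ext^2$ vanishes and $X$ actually splits as $H^0(X)[0]\oplus H^1(X)[-1]$ (this is \cite[Lemma~3.10]{Ayoub11Motivic-t-structure}). Once you have the splitting, each summand is visibly in the image of $T$ and you are done. You should cite and use this cohomological dimension result explicitly; ``normalization allowed by $\bQ$-coefficients'' is not a substitute for it.
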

\begin{proof}
    This is a refinement of \cite[Proposition 3.11]{Ayoub11Motivic-t-structure} from rational coefficients to $\bZ[1/p]$-coefficients. By d\'evissage, we have to check the isomorphisms between Homs and Extensions of $1$-motives of the form $[L\to 0]$ and $[0\to G]$. Such isomorphisms can be found in \cite[C.8]{BVK16Derived1Motives}. With $\bQ$-coefficients, Ayoub and Barbieri-Viale \cite[Proposition 2.4.10]{ABV09MotShvLAlb} showed that the cohomological dimension of $\HI_{\leq 1}$ is $1$, and then Ayoub \cite[Lemma 3.10]{Ayoub11Motivic-t-structure} used it to show that a $1$-motive $M$ decomposes into a direct sum 
    \[ M \simeq H^0M[0] \oplus H^1M[-1]. \]
    When $M$ is a constructible $1$-motive, this direct sum is a $1$-motive with torsion.
\end{proof}

\subsection{Higher direct images of \texorpdfstring{$1$}{1}-motives}
Let $K/k$ be a field extension. By abuse of notation, we denote the direct image functor of $1$-motivic sheaves as $e_*$, which is the functor $e_{1*}$ in Definition \ref{HIne_*}. For a complex $X\in D(\HI_{\leq 1}(K,\Lambda))$, we denote $Re_*X$ the total derived direct image of $X$, which is a complex of $1$-motivic sheaves over $k$. We denote by $R^ie_*X$ (resp. ${}^m\!R^ie_*X=[L^i\to G^i]$) the $i$-th cohomology of $Re_*X$ relative to the homotopy (resp. $1$-motivic) $t$-structure on $D(\HI_{\leq 1}(k,\Lambda))$.

\begin{thm}\label{mRe*0}
    Let $K/k$ be a field extension and let $L$ be a $0$-motivic sheaf over $K$. Then for all $i\geq 0$,
    \[{}^m\!R^ie_*[L\to 0]=[R^ie_*L\to 0].\]
    In particular, ${}^m\!R^ie_*[L\to 0]$ is a torsion $0$-motive for all $i\geq 1$. Moreover, if $K/k$ is primary, then 
    \[ {}^m\!R^ie_*[L \to 0] =[ H^i(\Gamma,L(K_s)) \to 0] \] 
    where $\Gamma=\Gal(K_s/Kk_s)$ and $H^i(\Gamma,L(K_s))$ is the $0$-motivic sheaf associated with the $\Gal(k_s/k)$-module $H^i(\Gamma,L(K_s))$.
\end{thm}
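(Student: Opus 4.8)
The plan is to deduce everything from Proposition~\ref{tstrucExact}, which compares the homotopy and $1$-motivic cohomologies of an object of $D(\HI_{\leq 1}(k,\Lambda))$, applied to $X\colonequals Re_*[L\to 0]$. First I would unwind notation: the $0$-motive $[L\to 0]$ is just the $0$-motivic sheaf $L$ placed in degree $0$, i.e. $\delta L$, so that $H^i(X)=R^ie_*L$ in the sense of Definition~\ref{HIne_*} and the paragraph preceding Theorem~\ref{mRe*0}; since $e_*=e_{1*}$ is left exact (it is a right adjoint) and $\delta L$ sits in degree $0$, we have $H^i(X)=0$ for $i<0$. The crucial external input is Corollary~\ref{Rie*torsion}~(1): each $R^ie_*L$ is a $0$-motivic sheaf, torsion for $i\geq 1$. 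In particular $H^0(X)=e_*L\in\HI_{\leq 0}(k,\Lambda)$, so $X$ lies in the non-negative part of the $1$-motivic $t$-structure and ${}^m\!R^ie_*[L\to 0]=0$ for $i<0$; it remains to compute the two cohomology sheaves of the $1$-motive ${}^m\!R^ie_*[L\to 0]={}^t\!H^i(X)$ for $i\geq 0$.

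For the degree-$0$ part, Proposition~\ref{tstrucExact}~(5) gives a canonical isomorphism $H^0({}^t\!H^i(X))\simeq \pi_0(H^i(X))=\pi_0(R^ie_*L)$. Because $R^ie_*L$ is a $0$-motivic sheaf and $\pi_0$ is left adjoint to the fully faithful inclusion $\delta\colon\HI_{\leq 0}(k,\Lambda)\hookrightarrow\HI_{\leq 1}(k,\Lambda)$ (Proposition~\ref{HI0}~(2)), the unit $R^ie_*L\to\pi_0(R^ie_*L)$ is an isomorphism, so $H^0({}^t\!H^i(X))\simeq R^ie_*L$. For the degree-$1$ part, Proposition~\ref{tstrucExact}~(4) provides an exact sequence
\[0\to H^1({}^t\!H^i(X))\to H^{i+1}(X)\to H^0({}^t\!H^{i+1}(X))\to 0.\]
Under the identification of part~(5), the surjection here is the canonical map $H^{i+1}(X)\to\pi_0(H^{i+1}(X))$, which by the previous step is an isomorphism; hence $H^1({}^t\!H^i(X))=0$. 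A $1$-motive whose degree-$1$ cohomology vanishes is quasi-isomorphic to its degree-$0$ cohomology put in degree $0$, so ${}^m\!R^ie_*[L\to 0]=[R^ie_*L\to 0]$, the first assertion. That this is a torsion $0$-motive for $i\geq 1$ is then immediate from Corollary~\ref{Rie*torsion}~(1) and the definition of a torsion $0$-motive.

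For the primary case I would feed in Theorem~\ref{HI0HI1Re*}: when $K/k$ is primary, $R^ie_*L=R^ie_{1*}\delta L$ is the $0$-motivic sheaf attached to the $\Gal(k_s/k)$-module $H^i(\Gamma,L(K_s))$ with $\Gamma=\Gal(K_s/Kk_s)$; substituting into the formula just obtained gives ${}^m\!R^ie_*[L\to 0]=[H^i(\Gamma,L(K_s))\to 0]$.

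As for the difficulty: essentially no new idea is needed, since the substantive results (Proposition~\ref{tstrucExact}, Corollary~\ref{Rie*torsion}, Theorem~\ref{HI0HI1Re*}) are already available, so the proof is bookkeeping with the two $t$-structures. The one point deserving genuine care is the claim that the connecting surjection in Proposition~\ref{tstrucExact}~(4) is precisely the unit of the adjunction $(\pi_0,\delta)$ — this is what lets one conclude $H^1({}^t\!H^i(X))=0$ (rather than merely that it is torsion) from the fact that $H^{i+1}(X)$ is already $0$-motivic — and verifying it amounts to tracing that map through the truncation triangles used in the proof of Proposition~\ref{tstrucExact}, using naturality of part~(5).
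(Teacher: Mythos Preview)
Your proposal is correct and follows essentially the same route as the paper: invoke Corollary~\ref{Rie*torsion}~(1) to know every $R^ie_*L$ is $0$-motivic, then read off $H^0({}^t\!H^iX)\simeq\pi_0(R^ie_*L)\simeq R^ie_*L$ and $H^1({}^t\!H^iX)=(R^{i+1}e_*L)^0=0$ from Proposition~\ref{tstrucExact}~(4)--(5), and conclude via Theorem~\ref{HI0HI1Re*} in the primary case. Your one point of concern---that the surjection in the exact sequence of Proposition~\ref{tstrucExact}~(4) is the unit of $(\pi_0,\delta)$---is handled painlessly: in that sequence the subobject $H^1({}^t\!H^iX)$ is connected and the quotient is $0$-motivic, so by Remark~\ref{ConnecedEtale} (equivalently: $\HI_{\leq 0}$ is Serre, so the connected subobject of a $0$-motivic sheaf is both connected and $0$-motivic, hence zero) this \emph{is} the connected--\'etale sequence of $H^{i+1}(X)$.
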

\begin{proof}
    By Corollary~\ref{Rie*torsion} (1), $R^ie_*L$ is a $0$-motivic sheaf for any $i\geq 0$ and is torsion for any $i\geq 1$. Applying Proposition~\ref{tstrucExact} (5) to $Re_*L$, we get for any $i\geq 0$,
    \[\ker(L^i\to G^i)\simeq \pi_0(R^ie_*L)\simeq R^ie_*L\]
    and
    \[\coker(L^i\to G^i)\simeq (R^{i+1}e_*L)^0=0.\]
    This means that $[L^i\to G^i]$ is quasi-isomorphic to $[R^ie_*L\to 0]$.
\end{proof}

When $K/k$ is primary, we can use results on cohomology of profinite groups to deduce some properties of the higher direct images of $0$-motivic sheaves. 
\begin{lem}\label{H1GalFinite}
    Let $\Gamma$ be a profinite group and $L$ be a discrete $\Gamma$-module which is a finitely generated free abelian group. Then $H^1(\Gamma,L)$ is finite.
\end{lem}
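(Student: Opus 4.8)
The plan is to reduce to the case of a finite group and then invoke two elementary facts: continuous cohomology of a profinite group with torsion-free discrete coefficients has no ``wild'' part, and cohomology of a finite group with finitely generated coefficients is finite.

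First I would observe that, since $L$ is a discrete $\Gamma$-module and $\Aut(L)\cong\GL_n(\bZ)$ carries the discrete topology, the homomorphism $\Gamma\to\Aut(L)$ describing the action is continuous with open kernel; call it $U$. Thus $U$ is an open normal subgroup, $L^U=L$, and $G\colonequals\Gamma/U$ is finite. Apply the inflation--restriction exact sequence in continuous cohomology,
\[
0\longrightarrow H^1(G,L)\stackrel{\mathrm{inf}}{\longrightarrow} H^1(\Gamma,L)\stackrel{\mathrm{res}}{\longrightarrow} H^1(U,L),
\]
so it suffices to show that $H^1(U,L)$ is trivial and that $H^1(G,L)$ is finite.

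For the first point: $U$ acts trivially on $L$, so $H^1(U,L)=\Hom_{\mathrm{cont}}(U,L)$, the group of continuous homomorphisms from the profinite (hence compact) group $U$ to the discrete group $L\cong\bZ^n$. The image of such a homomorphism is a compact, hence finite, subgroup of $\bZ^n$, and the only finite subgroup of $\bZ^n$ is $\{0\}$; therefore $H^1(U,L)=0$. For the second point: $G$ is finite, the group of continuous (automatically all) $1$-cocycles $Z^1(G,L)$ is a subgroup of $\Map(G,L)\cong L^{|G|}$, which is a finitely generated abelian group; hence $H^1(G,L)$, a quotient of a subgroup of a finitely generated abelian group, is finitely generated. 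It is also annihilated by $|G|$ by the standard restriction--corestriction argument. A finitely generated abelian group killed by $|G|$ is finite, so $H^1(G,L)$ is finite, and by the exact sequence $H^1(\Gamma,L)\cong H^1(G,L)$ is finite.

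I do not anticipate a genuine obstacle here; the only points requiring a little care are that inflation--restriction is being used in the profinite/continuous setting (which is standard, e.g.\ \cite[Chapter~I]{Serre02GaloisCohomology}) and the compactness argument showing $\Hom_{\mathrm{cont}}(U,\bZ^n)=0$.
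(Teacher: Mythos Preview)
Your proposal is correct and follows essentially the same approach as the paper's proof: both identify the open normal subgroup $U=\ker(\Gamma\to\Aut(L))$, use inflation--restriction to reduce to $H^1(\Gamma/U,L)$, argue the latter is finitely generated and torsion hence finite, and show $H^1(U,L)=0$. The only cosmetic difference is in this last step: the paper passes to the colimit $H^1(U,L)\simeq\varinjlim_V H^1(U/V,L)=\varinjlim_V\Hom(U/V,L)=0$ over open normal $V\subset U$, while you give the equivalent direct argument that $\Hom_{\mathrm{cont}}(U,\bZ^n)=0$ by compactness of the image.
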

\begin{proof}
    This result is well-known to experts. For example, it is \cite[Exercise~4.10]{Harari20GaloisCFT}. For readers' convenience, we give a proof. By definition of discrete $\Gamma$-modules, for any $x \in L$, the stabilizer $\Gamma_x=\{g\in \Gamma\mid gx = x\}$ is open in $\Gamma$. Since $L$ is finitely generated, the group 
    \[U  \colonequals \left\{g\in\Gamma \mid gx=x,\text{ for all } x\in L\right\}\]
    is an open normal subgroup of $\Gamma$. The Hochschild--Serre spectral sequence \cite[Chapter I, 2.6~b)]{Serre02GaloisCohomology} gives us the exact sequence
    \[0\to H^1(\Gamma/U,L^U) \to H^1(\Gamma,L) \to H^1(U,L)^{\Gamma/U}.\]
    We claim that $H^1(U,L)$ vanishes. Then it suffices to show the finiteness of $H^1(\Gamma/U,L^U)$. It is finitely generated because $L^U$ is finitely generated. Then it is finite as the higher cohomology groups are torsion groups.
    
    Now, we prove the claim. By \cite[Chapter I, 2.2, Corollary~1]{Serre02GaloisCohomology}, we have 
    \[H^1(U,L)\simeq \varinjlim H^1(U/V,L^V),\]
    where $V$ runs over all open normal subgroups of $U$. Thus we reduce to show $H^1(U/V,L^V)=0$. By construction, the group $U$ acts trivially on $L$. So $U/V$ acts trivially on $L^V=L$. It follows that $H^1(U/V,L^V)$ is the group of homomorphisms from $U/V$ to $L$. Since $U/V$ is finite and $L$ is free, this group vanishes.
\end{proof}
\begin{rmk}
    It is necessary to assume that $L$ is free in Lemma~\ref{H1GalFinite}. A counterexample is that $\Gamma=\Gal(k_s/k)$ and $L=\mu_n(k_s)=\{x\in k_s \mid x^n=1\}$ for $n$ prime to $\Char(k)$. Taking the long exact sequence of cohomology associated with the Kummer exact sequence
    \[0 \to \mu_n(k_s) \to k_s^\times \to k_s^\times \to 0,\]
    and applying Hilbert's theorem 90, we obtain $H^1(\Gal(k_s/k),\mu_n(k_s))\simeq k^\times / k^{\times n}$. It is not finite in general.
\end{rmk}

So we have the following result. 
\begin{lem}\label{H1latticeFinite}
    Let $K/k$ be a primary field extension and let $L$ be a lattice over $K$. Then ${}^m\!R^1e_*[L\to 0]$ is a constructible $0$-motive.
\end{lem}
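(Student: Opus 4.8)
The plan is to deduce the statement directly from Theorem~\ref{mRe*0}. Since $K/k$ is primary, that theorem identifies
\[
{}^m\!R^1e_*[L\to 0] \;=\; [\,H^1(\Gamma,L(K_s))\to 0\,],
\]
where $\Gamma=\Gal(K_s/Kk_s)$ and the right-hand side denotes the $0$-motive attached to the $0$-motivic sheaf associated with the $\Gal(k_s/k)$-module $H^1(\Gamma,L(K_s))$. So the whole claim reduces to showing that this $\Gal(k_s/k)$-module is a constructible group scheme — in fact a finite \'etale one. Granting that, the defining conditions for a constructible $0$-motive are met: $H^0=0$ and $H^1=[H^1(\Gamma,L(K_s))\to 0]$ are finitely presented $1$-motivic sheaves, since for a constructible group scheme $E$ the sheaf $\underline E_\Lambda$ is finitely presented in the sense of Definition~\ref{fpHI1} (one may also invoke Corollary~\ref{ExactfpHI1}).

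First I would observe that, as $L$ is a \emph{lattice} over $K$, Proposition~\ref{EtaleModule} gives that $L(K_s)$ is a finitely generated free abelian group with continuous $\Gal(K_s/K)$-action; restricting along the closed subgroup $\Gamma$ of $\Gal(K_s/K)$ makes $L(K_s)$ a discrete $\Gamma$-module which is finitely generated and free over $\bZ$. Applying Lemma~\ref{H1GalFinite} to the profinite group $\Gamma$ and the module $L(K_s)$ then gives at once that $H^1(\Gamma,L(K_s))$ is \emph{finite}. Since $\Gamma$ is normal in $\Gal(K_s/K)$ with quotient $\Gal(k_s/k)$ (the diagram preceding Proposition~\ref{M0FullFaithAdj}), the group $H^1(\Gamma,L(K_s))$ carries a continuous $\Gal(k_s/k)$-action — exactly the structure already used in Theorems~\ref{HI0HI1Re*} and~\ref{mRe*0}. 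By Proposition~\ref{EtaleModule} again, a finite abelian group with continuous $\Gal(k_s/k)$-action is a finite \'etale group scheme over $k$, hence an object of ${}^t\M_0(k)$, which by Proposition~\ref{HI0} and Corollary~\ref{ExactfpHI1} corresponds to a finitely presented $0$-motivic sheaf. This completes the reduction, and hence the proof.

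I do not expect a real obstacle: the argument is bookkeeping once Theorem~\ref{mRe*0} and Lemma~\ref{H1GalFinite} are in hand. The one point that deserves attention is the freeness hypothesis in Lemma~\ref{H1GalFinite}, which is genuinely essential (cf.\ the $\mu_n$ counterexample in the remark following that lemma) and which is precisely where the assumption that $L$ is a lattice — rather than an arbitrary constructible group scheme — enters. For general constructible $L$ the module $H^1(\Gamma,L(K_s))$ need not be finite, which is why the corresponding finiteness assertion in Theorem~\ref{LNLA} requires the Lang-N\'eron theorem and fails in the presence of tori (Theorem~\ref{GmPic}).
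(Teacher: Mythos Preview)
Your proposal is correct and follows exactly the paper's approach: the paper's own proof is the single sentence ``This is a direct consequence of Theorems~\ref{mRe*0} and Lemma~\ref{H1GalFinite},'' and you have simply unpacked this. One small slip: for the $1$-motive $[H^1(\Gamma,L(K_s))\to 0]$ the cohomology is $H^0=H^1(\Gamma,L(K_s))$ and $H^1=0$, not the other way around as you wrote; this does not affect the argument since both are finitely presented.
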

\begin{proof}
    This is a direct consequence of Theorems~\ref{mRe*0} and Lemma~\ref{H1GalFinite}.
\end{proof}

\begin{lem}\label{mRe*HI1^0}
    Let $K/k$ be a field extension and let $G$ be a connected $1$-motivic sheaf over $K$. Then 
    \[{}^m\!R^ie_*[0\to G]=\left\{
        \begin{array}{ll}
            {[0\to (e_*G)^0]}, & \hbox{if \(i=0\);} \\
            {[\pi_0(R^1e_*G)\to 0]}, & \hbox{if \(i=2\);} \\
            {[R^{i-1}e_*G\to 0]}, & \hbox{if \(i\geq 3\)}
        \end{array}\right.\]
    and we have an exact sequence 
    \[0\to \pi_0(e_*G) \to L^1 \to G^1 \to (R^1e_*G)^0 \to 0.\]
    In particular, ${}^m\!R^ie_*[0\to G]$ are $0$-motives for $i\geq 2$ and are torsion for $i\geq 3$.
\end{lem}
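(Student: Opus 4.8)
The plan is to apply Proposition~\ref{tstrucExact}, which compares the homotopy and $1$-motivic $t$-structures on $D(\HI_{\leq 1}(k,\Lambda))$, to the complex $X\colonequals Re_*[0\to G]$. Since $[0\to G]$ is the $1$-motivic sheaf $G$ placed in cohomological degree $1$, we have $X\simeq (Re_*G)[-1]$, and hence its cohomology sheaves for the homotopy $t$-structure are $H^i(X)=R^{i-1}e_*G$, with the convention $R^je_*G=0$ for $j<0$. Thus $H^i(X)=0$ for $i\leq 0$, $H^1(X)=e_*G$, $H^2(X)=R^1e_*G$, and for $i\geq 3$ the sheaf $H^i(X)=R^{i-1}e_*G$ is a torsion $0$-motivic sheaf by Corollary~\ref{Rie*torsion}~(2).

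Next I would unwind parts (4) and (5) of Proposition~\ref{tstrucExact} in this setting. The functor $F$ of Hypothesis~\ref{PervertHypothesis} is here $\pi_0\colon\HI_{\leq 1}(k,\Lambda)\to\HI_{\leq 0}(k,\Lambda)$, and $\pi_0$ restricts to the identity on $0$-motivic sheaves (the unit of the adjunction is an isomorphism on the subcategory). For each $i$, part (5) gives $H^0({}^m\!R^ie_*[0\to G])=H^0({}^t\!H^iX)\simeq\pi_0(H^i(X))$, while the short exact sequence in part (4) together with part (5) identifies $H^1({}^m\!R^ie_*[0\to G])=H^1({}^t\!H^iX)$ with $\ker\!\big(H^{i+1}(X)\to\pi_0(H^{i+1}(X))\big)=(H^{i+1}(X))^0$. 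Writing ${}^m\!R^ie_*[0\to G]=[L^i\to G^i]$, these two sheaves are exactly $\ker(L^i\to G^i)$ and $\coker(L^i\to G^i)$, so that one has the four-term exact sequence $0\to\ker(L^i\to G^i)\to L^i\to G^i\to\coker(L^i\to G^i)\to 0$.

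It then remains to substitute the computed values of $H^i(X)$. For $i=0$ this gives $H^0=\pi_0(0)=0$ and $H^1=(e_*G)^0$, i.e.\ $[0\to (e_*G)^0]$. For $i=1$ it gives $H^0=\pi_0(e_*G)$ and $H^1=(R^1e_*G)^0$, which is precisely the stated exact sequence $0\to\pi_0(e_*G)\to L^1\to G^1\to (R^1e_*G)^0\to 0$. For $i=2$ one gets $H^0=\pi_0(R^1e_*G)$ and, since $R^2e_*G$ is already a (torsion) $0$-motivic sheaf so that $(R^2e_*G)^0=0$, one gets $H^1=0$, hence $[\pi_0(R^1e_*G)\to 0]$. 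For $i\geq 3$ both $\pi_0(R^{i-1}e_*G)=R^{i-1}e_*G$ and $(R^ie_*G)^0=0$ by Corollary~\ref{Rie*torsion}~(2), so ${}^m\!R^ie_*[0\to G]=[R^{i-1}e_*G\to 0]$. The ``in particular'' clauses are then immediate: vanishing of the connected part for $i\geq 2$ means these are $0$-motives, and Corollary~\ref{Rie*torsion}~(2) says $R^{i-1}e_*G$ is torsion for $i\geq 3$.

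There is no serious obstacle: once Proposition~\ref{tstrucExact} and Corollary~\ref{Rie*torsion} are available the argument is bookkeeping. The only points requiring care are the degree shift $[0\to G]=G[-1]$ (to get the indices right), the identification of $F$ with $\pi_0$ together with $\pi_0$ being the identity on $0$-motivic sheaves (so $\sF^0=0$ whenever $\sF$ is torsion $0$-motivic), and keeping straight which of $H^0,H^1$ of the output $1$-motive is the kernel and which is the cokernel of $L^i\to G^i$.
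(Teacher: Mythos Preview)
Your proposal is correct and follows essentially the same approach as the paper: both apply Proposition~\ref{tstrucExact} to $X=Re_*[0\to G]\simeq (Re_*G)[-1]$, use Corollary~\ref{Rie*torsion}(2) to identify $R^{i-1}e_*G$ as torsion $0$-motivic for $i\geq 3$, and then read off $\ker(L^i\to G^i)\simeq\pi_0(H^i(X))$ and $\coker(L^i\to G^i)\simeq(H^{i+1}(X))^0$ case by case. Your write-up is arguably cleaner in that you first state the general identifications $H^0({}^t\!H^iX)=\pi_0(H^i(X))$ and $H^1({}^t\!H^iX)=(H^{i+1}(X))^0$ before substituting, whereas the paper computes each case directly.
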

\begin{proof}
    By Theorem~\ref{Rie*torsion}, $R^ie_*G$ are torsion $0$-motivic sheaves for all $i\geq 2$. Applying Proposition~\ref{tstrucExact} (5) to $Re_*[0\to G]$, we get for all $i\geq 3$,
    \[\ker(L^i\to G^i)\simeq \pi_0(R^ie_*[0\to G]) \simeq R^{i-1}e_*G\]
    and
    \[\coker(L^{i-1}\to G^{i-1}) \simeq (R^ie_*[0\to G])^0=0.\]
    This means that $[L^i\to G^i]$ is quasi-isomorphic to $[R^{i-1}e_*G\to 0]$ for $i\geq 3$, and $\coker(L^2\to G^2)=0$. Note that 
    \[\ker(L^2\to G^2) \simeq \pi_0(R^2e_*[0\to G])\simeq \pi_0(R^1e_*G).\]
    Thus $[L^2\to G^2]$ is quasi-isomorphic to $[\pi_0(R^1e_*G)\to 0]$. We also have 
    \[\ker(L^0\to G^0) \simeq \pi_0(R^{-1}e_*G)=0,\]
    and \[\coker(L^0\to G^0)\simeq (e_*G)^0.\]
    Thus $[L^0\to G^0]$ is quasi-isomorphic to $[0\to (e_*G)^0]$.
\end{proof}

\begin{thm}\label{mRe*AV}
    Let $K/k$ be a primary field extension and let $A$ be an abelian variety over $K$. Then 
    \[{}^m\!R^ie_*[0\to A]=\left\{
        \begin{array}{ll}
            {[0\to \pi_*A]}, & \hbox{if \(i=0\);} \\
            {[\LN(A,Kk_s/k_s)\to 0]}, & \hbox{if \(i=1\);} \\
            {[R^{i-1}e_*A\to 0]}, & \hbox{if \(i\geq 2\).}
        \end{array}\right.
    \]
    In particular, ${}^m\!R^0e_*[0\to A]$ is a constructible $1$-motive, and ${}^m\!R^ie_*[0\to A]$ are torsion $0$-motives for $i\geq 2$. Moreover, if $K/k$ is a finitely generated regular extension, then ${}^m\!R^1e_*[0\to A]$ is a constructible $0$-motive. 
\end{thm}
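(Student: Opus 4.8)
The plan is to read off the computation from Lemma~\ref{mRe*HI1^0} applied to the connected $1$-motivic sheaf $G=\underline A_\Lambda$, and then to rewrite the pieces that appear using Theorem~\ref{ChowRevisited} and Theorem~\ref{Rie*A}. Since an abelian variety is a connected semi-abelian variety, $\underline A_\Lambda$ is a connected $1$-motivic sheaf over $K$, so Lemma~\ref{mRe*HI1^0} applies and gives at once
\[{}^m\!R^0e_*[0\to A]=[0\to (e_*A)^0],\qquad {}^m\!R^2e_*[0\to A]=[\pi_0(R^1e_*A)\to 0],\]
and ${}^m\!R^ie_*[0\to A]=[R^{i-1}e_*A\to 0]$ for $i\geq 3$, together with the four-term exact sequence
\[0\to \pi_0(e_*A)\to L^1\to G^1\to (R^1e_*A)^0\to 0,\]
where ${}^m\!R^1e_*[0\to A]=[L^1\to G^1]$ with $L^1$ a $0$-motivic sheaf, $G^1$ a connected $1$-motivic sheaf, $H^0=\ker(L^1\to G^1)$ and $H^1=\coker(L^1\to G^1)$.

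Next I would feed in Theorem~\ref{ChowRevisited}, which is available because $K/k$ is primary: it identifies $(e_*A)^0$ with $\underline{\pi_*A}_\Lambda$ and $\pi_0(e_*A)$ with the $0$-motivic sheaf attached to the $\Gal(k_s/k)$-$\Lambda$-module $\LN(A,Kk_s/k_s)_\Lambda$. This already disposes of the case $i=0$, giving ${}^m\!R^0e_*[0\to A]=[0\to\pi_*A]$, and it rewrites the left-hand term of the four-term sequence.

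Then I would use Theorem~\ref{Rie*A}, which asserts that $R^1e_*A$ is a torsion $0$-motivic sheaf. Being $0$-motivic, it satisfies $\pi_0(R^1e_*A)=R^1e_*A$ and $(R^1e_*A)^0=0$. The first identity turns the value for $i=2$ into $[R^1e_*A\to 0]=[R^{2-1}e_*A\to 0]$, so the formula holds for every $i\geq 2$. The second collapses the four-term sequence to a short exact sequence $0\to \LN(A,Kk_s/k_s)_\Lambda\to L^1\to G^1\to 0$, whence $H^1({}^m\!R^1e_*[0\to A])=\coker(L^1\to G^1)=(R^1e_*A)^0=0$ and $H^0({}^m\!R^1e_*[0\to A])=\ker(L^1\to G^1)=\pi_0(e_*A)\simeq\LN(A,Kk_s/k_s)_\Lambda$; since an object of $\MM_1(k,\Lambda)$ with vanishing $H^1$ is concentrated in degree $0$, this gives ${}^m\!R^1e_*[0\to A]=[\LN(A,Kk_s/k_s)\to 0]$. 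For the final assertions I would note that $\pi_*A$ is an abelian variety by Corollary~\ref{M1AdjExample}~(3), so $[0\to\pi_*A]$ is a constructible $1$-motive; that $R^{i-1}e_*A$ is torsion $0$-motivic for $i\geq 2$ by Theorem~\ref{Rie*A}, so $[R^{i-1}e_*A\to 0]$ is a torsion $0$-motive; and that when $K/k$ is finitely generated regular the Lang--N\'eron theorem, used as in the proof of Corollary~\ref{LNe*A}, makes $\LN(A,Kk_s/k_s)$ a finitely generated abelian group, so $[\LN(A,Kk_s/k_s)\to 0]$ is a constructible $0$-motive.

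The one step I expect to carry real weight is the collapse in degree $1$ — the vanishing of $(R^1e_*A)^0$, equivalently the fact that $R^1e_*A$ has no connected part. Everything else is formal manipulation of the exact sequences of Lemma~\ref{mRe*HI1^0} and Proposition~\ref{tstrucExact} together with the already-established identification of the Chow trace in Theorem~\ref{ChowRevisited}; but $(R^1e_*A)^0=0$ is exactly the torsion statement of Theorem~\ref{Rie*A}, which ultimately rests on Raynaud's theorem that torsors under an abelian scheme over a noetherian regular base are torsion. That is the genuinely non-formal ingredient.
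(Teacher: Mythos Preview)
Your proposal is correct and follows essentially the same route as the paper's proof: both apply Lemma~\ref{mRe*HI1^0} to the connected sheaf $\underline A_\Lambda$, then invoke Theorem~\ref{ChowRevisited} for the identification $(e_*A)^0=\pi_*A$, $\pi_0(e_*A)=\LN(A,Kk_s/k_s)$, and Theorem~\ref{Rie*A} for the vanishing of $(R^1e_*A)^0$. Your write-up is in fact slightly more explicit than the paper's in spelling out why the case $i=2$ of Lemma~\ref{mRe*HI1^0} (which gives $[\pi_0(R^1e_*A)\to 0]$) matches the claimed formula $[R^1e_*A\to 0]$.
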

\begin{proof}
    By Lemma~\ref{mRe*HI1^0}, it suffices to compute ${}^m\!R^0e_*$ and ${}^m\!R^1e_*$. This involves the information of $e_*A$ and $R^1e_*A$. By Theorem~\ref{ChowRevisited}, 
    \[(e_*A)^0=\pi_*A \text{\quad and \quad} \pi_0(e_*A)=\LN(A,Kk_s/k_s).\]
    Thus ${}^m\!R^0e_*[0\to A]=[0\to \pi_*A]$ and
    \[\ker(L^1\to G^1)\simeq \pi_0(R^1e_*[0\to A])\simeq \pi_0(e_*A)\simeq \LN(A,Kk_s/k_s).\]
    By Theorem~\ref{Rie*A}, $R^1e_*A$ is a torsion $0$-motivic sheaf. So 
    \[\coker(L^1\to G^1)=(R^1e_*A)^0=0.\]
    Hence $[L^1\to G^1]$ is quasi-isomorphic to $[\LN(A,Kk_s/k_s)\to 0]$.
\end{proof}

\begin{thm}\label{mRe*Gm}
    Let $X$ be a smooth projective and geometrically connected variety over $k$ and let $K$ be the function field of $X$. Then we have
    \[{}^m\!R^ie_*[0\to \bG_m]=\left\{
        \begin{array}{ll}
            {[0\to \bG_m]}, & \hbox{if \(i=0\);} \\
            0             , & \hbox{if \(i=2\);} \\
            {[R^{i-1}e_*\bG_m \to 0]}, & \hbox{if \(i\geq 3\),}
        \end{array}\right.
    \]
    and an exact sequence 
    \[0\to (Kk_s)^\times/k_s^\times \to L^1 \to G^1 \to R^1e_*\bG_m \to 0,\]
    where $R^1e_*\bG_m$ is the cokernel of \(\Div^0(X_{k_s}) \to \Pic^0_{X/k}\).
  
    Moreover, with $\bQ$-coefficients, we have 
    \[ {}^m\!R^1e_*[0\to \bG_m] = [\Div^0(X_{k_s}) \to \Pic^0_{X/k}]. \] 
\end{thm}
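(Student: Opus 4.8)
The plan is to re-run the machine from the proof of Theorem~\ref{mRe*AV}: apply Lemma~\ref{mRe*HI1^0} to the connected $1$-motivic sheaf $\underline{\bG_m}_\Lambda$ over $K$ (it is connected since $\bG_m$ is a connected group scheme), and substitute the explicit descriptions of $e_*\bG_m$ and $R^1e_*\bG_m$ furnished by Theorems~\ref{ChowRevisited} and~\ref{R1e*Gm}. As $K=k(X)$ with $X$ smooth and geometrically connected, the extension $K/k$ is regular, hence primary, so both theorems apply. I would first record the two inputs. Because $\bG_{m,K}=\pi^*\bG_{m,k}$ and $\pi^*$ is fully faithful, the Chow trace is $\pi_*\bG_{m,K}=\bG_{m,k}$; thus Theorem~\ref{ChowRevisited} gives
\[(e_*\bG_m)^0=\bG_m,\qquad \pi_0(e_*\bG_m)=\LN(\bG_m,Kk_s/k_s)_\Lambda=(Kk_s)^\times/k_s^\times\otimes_\bZ\Lambda.\]
Moreover, Theorem~\ref{R1e*Gm} identifies $R^1e_*\bG_m$ with $\coker\bigl(\Div^0(X_{k_s})\to\Pic^0_{X/k}\bigr)$ and shows it is connected, so $\pi_0(R^1e_*\bG_m)=0$ and $(R^1e_*\bG_m)^0=R^1e_*\bG_m$.

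Substituting these into Lemma~\ref{mRe*HI1^0} yields the integral part at once: ${}^m\!R^0e_*[0\to\bG_m]=[0\to(e_*\bG_m)^0]=[0\to\bG_m]$; ${}^m\!R^2e_*[0\to\bG_m]=[\pi_0(R^1e_*\bG_m)\to0]=0$; ${}^m\!R^ie_*[0\to\bG_m]=[R^{i-1}e_*\bG_m\to0]$ for $i\geq3$; and the four-term exact sequence of Lemma~\ref{mRe*HI1^0}, with $[L^1\to G^1]={}^m\!R^1e_*[0\to\bG_m]$, becomes
\[0\to(Kk_s)^\times/k_s^\times\to L^1\to G^1\to R^1e_*\bG_m\to0\]
(writing $(Kk_s)^\times/k_s^\times$ for $(Kk_s)^\times/k_s^\times\otimes_\bZ\Lambda$ as in the statement), using $(R^1e_*\bG_m)^0=R^1e_*\bG_m$.

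For the $\bQ$-coefficient refinement I would use that $\HI_{\leq1}$ has cohomological dimension $1$ over $\bQ$ (\cite[Proposition~2.4.10]{ABV09MotShvLAlb}), so every object of $D(\HI_{\leq1}(k,\bQ))$ is the direct sum of its shifted cohomology sheaves; hence an object of the heart $\MM_1(k,\bQ)$ is determined up to isomorphism by the pair $(H^0,H^1)$ of its homotopy-$t$-structure cohomologies, exactly the fact used in the proof that $T$ is a full embedding. The two-term complex $[\Div^0(X_{k_s})\to\Pic^0_{X/k}]$ lies in $\MM_1(k,\bQ)$: its $H^0$ is a subobject of the $0$-motivic sheaf $\Div^0(X_{k_s})$, hence $0$-motivic, and its $H^1=\coker\bigl(\Div^0(X_{k_s})\to\Pic^0_{X/k}\bigr)=R^1e_*\bG_m$ is connected. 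Its $H^0=\ker\bigl(\Div^0(X_{k_s})\to\Pic^0_{X/k}\bigr)$ is the group of principal divisors lying in $\Div^0(X_{k_s})$, and since $X_{k_s}$ is proper and geometrically integral we have $H^0(X_{k_s},\cO_{X_{k_s}})^\times=k_s^\times$, so this $H^0$ is $(Kk_s)^\times/k_s^\times$ as a $\Gal(k_s/k)$-module. Hence $[\Div^0(X_{k_s})\to\Pic^0_{X/k}]$ and ${}^m\!R^1e_*[0\to\bG_m]$ have the same $H^0$ (namely $(Kk_s)^\times/k_s^\times\otimes_\bZ\bQ$, by the exact sequence above) and the same $H^1$ (namely $R^1e_*\bG_m\otimes_\bZ\bQ$), and are therefore isomorphic in $\MM_1(k,\bQ)$.

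The real content is carried by the already-granted Theorems~\ref{ChowRevisited} and~\ref{R1e*Gm}; the only genuinely delicate steps left are in the $\bQ$-part, where one must invoke correctly the hereditarity of $\HI_{\leq1}$ over $\bQ$ to pass from coincidence of $H^0,H^1$ to an isomorphism of $1$-motives---this is precisely where the argument breaks down with $\bZ[1/p]$-coefficients---and where one must identify $\ker\bigl(\Div^0(X_{k_s})\to\Pic^0_{X/k}\bigr)$ with $(Kk_s)^\times/k_s^\times$, which comes down to the exactness of $0\to(Kk_s)^\times/k_s^\times\to\Div(X_{k_s})\to\Pic_{X/k}$ together with the fact that principal divisors are algebraically equivalent to zero. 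Everything else is mechanical substitution into Lemma~\ref{mRe*HI1^0}.
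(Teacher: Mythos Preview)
Your proof is correct and follows essentially the same route as the paper: apply Lemma~\ref{mRe*HI1^0} to $\bG_m$, feed in the identifications of $(e_*\bG_m)^0$, $\pi_0(e_*\bG_m)$ and $R^1e_*\bG_m$ from Theorems~\ref{ChowRevisited} and~\ref{R1e*Gm}, and for the $\bQ$-statement invoke that $\HI_{\leq 1}$ has cohomological dimension $1$ over $\bQ$ so that a $1$-motive is determined by its $H^0$ and $H^1$. The paper packages the $\bQ$-step via the five-term exact sequence $0\to k_s^\times\to (Kk_s)^\times\to \Div^0(X_{k_s})\to \Pic^0_{X/k}\to R^1e_*\bG_m\to 0$, which is exactly your identification of $\ker\bigl(\Div^0(X_{k_s})\to\Pic^0_{X/k}\bigr)$ rephrased.
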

\begin{proof}
    By Lemma~\ref{mRe*HI1^0}, it suffices to compute ${}^m\!R^ie_*$ for $i=0,1,2$. By Theorem~\ref{R1e*Gm}, $R^1e_*\bG_m$ is a connected $1$-motivic sheaf. Thus 
    \[ {}^m\!R^2e_*[0\to \bG_m]=0, \text{\quad and \quad} \coker(L^1 \to G^1) = R^1e_*\bG_m. \] 
    By Theorem~\ref{ChowRevisited}, 
    \[(e_*\bG_m)^0=\pi_*\bG_m=\bG_m \text{\quad and \quad} \pi_0(e_*\bG_m)=(Kk_s)^\times /k_s^\times.\]
    Thus ${}^m\!R^0e_*[0\to \bG_m]=[0\to \bG_m]$ and
    \[\ker(L^1\to G^1)\simeq \pi_0(R^1e_*[0\to \bG_m])\simeq \pi_0(e_*\bG_m)\simeq (Kk_s)^\times/k_s^\times.\]
    Note that we have the following exact sequence
    \[ 0 \to k_s^\times \to (Kk_s)^\times \to \Div^0(X_{k_s}) \to \Pic^0_{X/k} \to R^1e_*\bG_m \to 0, \]
    where the exactness at the last three terms is given by Theorem~\ref{R1e*Gm}. So the two-term complexes $[L^1 \to G^1]$ and $[\Div^0(X_{k_s}) \to \Pic^0_{X/k}]$ have the same cohomology. If we work with $\bQ$-coefficients, then these two complexes are equal in the derived category of $1$-motivic sheaves because the cohomological dimension of $\HI_{\leq 1}$ is $1$ by \cite[Proposition 2.4.10]{ABV09MotShvLAlb}.
\end{proof}

We can also describe the higher direct images of more general Deligne $1$-motives.
\begin{lem}\label{Rie*1motives}
    Let $K/k$ be a field extension let $M=[L\to G]$ be a Deligne $1$-motive over $K$.
    \begin{enumerate}[label={\rm(\arabic*)}]
        \item $R^ie_*M$ are $0$-motivic sheaves for $i=0$ and $i\geq 3$, and they are torsion for $i\geq 3$.
        \item If $G$ is an abelian variety, then $R^2e_*M$ is also a $0$-motivic sheaves.
        \item If $G$ is an abelian variety and $K/k$ is a finitely generated regular field extension, then $R^1e_*M$ is a finitely presented $1$-motivic sheaf.
    \end{enumerate}     
\end{lem}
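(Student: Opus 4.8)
The plan is to read everything off the stupid-truncation triangle of the two-term complex $M$. Regarding $M=[\underline L_\Lambda\xrightarrow{u}\underline G_\Lambda]$ as an object of $D(\HI_{\leq 1}(K,\Lambda))$ placed in degrees $0$ and $1$, we have the distinguished triangle
\[\underline G_\Lambda[-1]\longrightarrow M\longrightarrow \underline L_\Lambda\xrightarrow{\ u\ }\underline G_\Lambda .\]
Applying the triangulated functor $Re_*=Re_{1*}$ and passing to the long exact sequence of cohomology sheaves for the homotopy $t$-structure gives, for every $i$, an exact sequence
\[R^{i-1}e_*\underline L_\Lambda\to R^{i-1}e_*\underline G_\Lambda\to R^ie_*M\to R^ie_*\underline L_\Lambda\to R^ie_*\underline G_\Lambda .\]
I would then feed in what is already known about the two outer families. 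Since $\underline L_\Lambda$ is a $0$-motivic sheaf, Corollary~\ref{Rie*torsion}~(1) gives that $R^ie_*\underline L_\Lambda$ is a $0$-motivic sheaf for all $i\geq 0$ and is torsion for $i\geq 1$; since $\underline G_\Lambda$ is a $1$-motivic sheaf, Corollary~\ref{Rie*torsion}~(2) gives that $R^ie_*\underline G_\Lambda$ is a torsion $0$-motivic sheaf for $i\geq 2$, and of course $R^je_*\underline G_\Lambda=0$ for $j<0$. The recurring structural input is that $\HI_{\leq 0}$ is a Serre subcategory of $\HI_{\leq 1}$ (Corollary~\ref{HI1exact}~(1)), so that sub-, quotient and extension sheaves of ($0$-motivic, resp.\ torsion $0$-motivic) sheaves remain of the same kind.

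For (1): when $i=0$ the term $R^{-1}e_*\underline G_\Lambda$ vanishes, so $R^0e_*M$ embeds into the $0$-motivic sheaf $R^0e_*\underline L_\Lambda$ and is therefore $0$-motivic. When $i\geq 3$, the displayed sequence exhibits $R^ie_*M$ as an extension of $\ker\!\bigl(R^ie_*\underline L_\Lambda\to R^ie_*\underline G_\Lambda\bigr)$ by $\coker\!\bigl(R^{i-1}e_*\underline L_\Lambda\to R^{i-1}e_*\underline G_\Lambda\bigr)$; the former is a subsheaf of the torsion $0$-motivic sheaf $R^ie_*\underline L_\Lambda$ (here $i\geq 1$), the latter a quotient of the torsion $0$-motivic sheaf $R^{i-1}e_*\underline G_\Lambda$ (here $i-1\geq 2$), so $R^ie_*M$ is a torsion $0$-motivic sheaf. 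For (2): if $G=A$ is an abelian variety, Theorem~\ref{Rie*A} upgrades the case $i=1$ to the statement that $R^1e_*A$ is a torsion $0$-motivic sheaf, and then at $i=2$ the same segment of the long exact sequence presents $R^2e_*M$ as an extension of a subsheaf of $R^2e_*\underline L_\Lambda$ by a quotient of $R^1e_*A$, both torsion $0$-motivic; hence $R^2e_*M$ is a $0$-motivic sheaf.

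For (3), assume in addition that $K/k$ is finitely generated regular, hence primary. The relevant segment of the long exact sequence is
\[0\to \coker\!\bigl(R^0e_*\underline L_\Lambda\xrightarrow{\ \phi\ }e_*A\bigr)\to R^1e_*M\to \ker\!\bigl(R^1e_*\underline L_\Lambda\to R^1e_*A\bigr)\to 0 .\]
Here $e_*A=e_{1*}\underline A_\Lambda$ is a finitely presented $1$-motivic sheaf by Corollary~\ref{LNe*A} (this is the point where the Lang--N\'eron theorem intervenes). Since $K/k$ is primary, Theorem~\ref{mRe*0} identifies $R^0e_*\underline L_\Lambda$ with the $0$-motivic sheaf attached to $L(K_s)^\Gamma$, $\Gamma=\Gal(K_s/Kk_s)$, which is a lattice over $k$ and so a finitely presented $1$-motivic sheaf; hence $\im\phi$, being a quotient of it, is finitely generated, and therefore $\coker\phi=e_*A/\im\phi$ is again finitely presented. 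On the other side, Theorem~\ref{mRe*0} together with Lemma~\ref{H1GalFinite} identifies $R^1e_*\underline L_\Lambda$ with the finite $0$-motivic sheaf $H^1(\Gamma,L(K_s))$ (compare Lemma~\ref{H1latticeFinite}), so the kernel on the right is a finite, a fortiori finitely presented, $0$-motivic sheaf. As finitely presented $1$-motivic sheaves are stable under extensions, $R^1e_*M$ is finitely presented. The step I expect to be the main obstacle is exactly this last one: it relies on the closure properties of finitely presented and finitely generated $1$-motivic sheaves --- stability under extensions and quotients, and the fact that a finitely presented sheaf modulo a finitely generated subsheaf is again finitely presented --- which come from the structure theory of Proposition~\ref{StructureHI1} (and are already used in the proof of Corollary~\ref{LNe*A} and in \cite{BVK16Derived1Motives}); the rest of the argument is a formal chase through the long exact sequence together with the Serre property of $\HI_{\leq 0}$ in $\HI_{\leq 1}$.
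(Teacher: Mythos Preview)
Your proof is correct and follows essentially the same approach as the paper: both use the distinguished triangle $G[-1]\to M\to L\to G$, the resulting long exact sequence, the inputs from Corollary~\ref{Rie*torsion}, Theorem~\ref{Rie*A}, Corollary~\ref{LNe*A} and Lemma~\ref{H1latticeFinite}, together with the Serre property of $\HI_{\leq 0}$ in $\HI_{\leq 1}$. Your treatment of~(3) is in fact more explicit than the paper's (which just says ``the expected result follows from the above long exact sequence''): you spell out why the cokernel of $e_*L\to e_*A$ and the kernel of $R^1e_*L\to R^1e_*A$ are finitely presented, and correctly flag the closure properties of finitely presented $1$-motivic sheaves as the only nontrivial ingredient.
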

\begin{proof}
    The distinguished triangle 
    \[G[-1] \to M \to L \to G\]
    induces the following long exact sequence 
    \[0\to R^0e_*M \to R^0e_*L \to R^0e_*G \to R^1e_*M \to \cdots.\]
    By Theorem~\ref{Rie*torsion}, $R^ie_*(L)$ are torsion $0$-motivic sheaves for $i\geq 1$ and $R^ie_*G$ are torsion $0$-motivic sheaves for $i\geq 2$. By Theorem~\ref{Rie*A}, $R^1e_*G$ is also a torsion $0$-motivic sheaf if $G$ is an abelian variety. By Corollary~\ref{LNe*A} and Lemma~\ref{H1latticeFinite}, if $G$ is an abelian variety and $K/k$ is a finitely generated regular field extension, then $e_*G$ and $R^1e_*L$ are finitely presented $1$-motivic sheaves. Recall from Corollary~\ref{HI1exact} that $\HI_{\leq 0}$ is a Serre subcategory of $\HI_{\leq 1}$. Then the expected result follows from the above long exact sequence.
\end{proof}

\begin{lem}
    Let $K/k$ be a field extension and let $M=[L\to G]$ be a Deligne $1$-motive over $K$. 
    \begin{enumerate}[label={\rm(\arabic*)}]
        \item We have \[{}^m\!R^ie_*[L\to G]=\left\{
            \begin{array}{ll}
                {[\pi_0(R^2e_*M)\to 0]}, & \hbox{if \(i=2\);} \\
                {[R^ie_*M\to 0]}, & \hbox{if \(i\geq 3\).}
            \end{array}\right.\]
        \item If $G=A$ is an abelian variety, then we have 
        \[{}^m\!R^ie_*[L\to A]=\left\{
            \begin{array}{ll}
                {[\pi_0(R^1e_*M)\to 0]}, & \hbox{if \(i=1\);} \\
                {[R^ie_*M\to 0]}, & \hbox{if \(i\geq 2\).}
            \end{array}\right.\]
    \end{enumerate}
\end{lem}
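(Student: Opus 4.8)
The plan is to read off both formulas from Proposition~\ref{tstrucExact}, applied to $X = Re_*M$, together with the $0$-motivicity and torsion statements collected in Lemma~\ref{Rie*1motives}. This is exactly the mechanism already used to prove Theorems~\ref{mRe*0}, \ref{mRe*AV} and \ref{mRe*Gm} (and Lemma~\ref{mRe*HI1^0}); here the only new input needed is Lemma~\ref{Rie*1motives}.

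First I would unwind what Proposition~\ref{tstrucExact} says about the heart object ${}^m\!R^ie_*M = [L^i\to G^i]$, represented by a two-term complex in degrees $0,1$, so that $H^0([L^i\to G^i])=\ker(L^i\to G^i)$ and $H^1([L^i\to G^i])=\coker(L^i\to G^i)$. Part~(5) gives $\ker(L^i\to G^i)\simeq \pi_0(R^ie_*M)$, and part~(4) gives a short exact sequence $0\to \coker(L^i\to G^i)\to R^{i+1}e_*M\to \pi_0(R^{i+1}e_*M)\to 0$, whence $\coker(L^i\to G^i)\simeq (R^{i+1}e_*M)^0$. So the whole computation reduces to identifying $\pi_0(R^ie_*M)$ and $(R^{i+1}e_*M)^0$ in the relevant range.

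Then I would substitute the input from Lemma~\ref{Rie*1motives}. For (1): since $R^je_*M$ is a torsion $0$-motivic sheaf for $j\geq 3$, we get $(R^{i+1}e_*M)^0=0$ whenever $i\geq 2$, so $[L^i\to G^i]$ is quasi-isomorphic to $[\pi_0(R^ie_*M)\to 0]$; for $i\geq 3$ the sheaf $R^ie_*M$ is itself $0$-motivic, so $\pi_0(R^ie_*M)=R^ie_*M$, while for $i=2$ we simply keep $\pi_0(R^2e_*M)$. For (2), if $G=A$ is an abelian variety, Lemma~\ref{Rie*1motives}(2) improves $R^2e_*M$ to a $0$-motivic sheaf; thus now $(R^{i+1}e_*M)^0=0$ already for $i\geq 1$, so $[L^i\to G^i]\simeq[\pi_0(R^ie_*M)\to 0]$ for $i\geq 1$, and for $i\geq 2$ the $\pi_0$ is superfluous because $R^ie_*M$ is $0$-motivic. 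This yields the two displayed formulas.

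The one point requiring care — and where I expect to spend the most attention — is the bookkeeping of which $R^ie_*M$ are genuinely $0$-motivic versus merely torsion $0$-motivic, so that the breakpoints ($i=2$ in (1), $i=1$ in (2)) land in the right place; there is no analytic difficulty, as Proposition~\ref{tstrucExact} and Lemma~\ref{Rie*1motives} supply all the substantive content. It is also worth noting that ${}^m\!R^0e_*M$ and, in case (1), ${}^m\!R^1e_*M$ are not of this simple shape, since they involve the semi-abelian sheaf $e_*G$ (cf. Theorem~\ref{ChowRevisited} and Lemma~\ref{mRe*HI1^0}); this is why the statement restricts to the indicated ranges.
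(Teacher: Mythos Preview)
Your proposal is correct and follows exactly the approach of the paper, which simply records that the lemma is a direct consequence of Proposition~\ref{tstrucExact} and Lemma~\ref{Rie*1motives}. You have spelled out the mechanism in detail---identifying $\ker(L^i\to G^i)\simeq\pi_0(R^ie_*M)$ and $\coker(L^i\to G^i)\simeq(R^{i+1}e_*M)^0$ via Proposition~\ref{tstrucExact}(4),(5), then reading off the vanishing from Lemma~\ref{Rie*1motives}---but this is precisely what the paper's one-line proof intends.
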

\begin{proof}
    These are direct consequences of Proposition \ref{tstrucExact} and Lemma \ref{Rie*1motives}.
\end{proof}

The following result can be viewed as a generalization of the Lang-N\'eron theorem for Deligne $1$-motives of the form $[L \to A]$ with $A$ an abelian variety.
\begin{thm}\label{LNLA}
    Let $K/k$ be a finitely generated regular field extension and let $M=[L \to A]$ be a Deligne $1$-motive over $K$ where $A$ is an abelian variety. Write $\Gamma=\Gal(K_s/Kk_s)$. Then we have an exact sequence of \(0\)-motivic sheaves 
    \[ 0 \to X \to \pi_0(R^1e_*M) \to Y \to 0,\]
    where 
    \[ X =\coker(L(K_s)^\Gamma \to \LN(A,Kk_s/k_s)) \] and 
    \[ Y =\ker(H^1(\Gamma,L(K_s)) \to R^1e_*A). \]
    In particular, 
    \begin{enumerate}[label={\rm(\arabic*)}]
        \item $\pi_0(R^1e_*M)(k_s)$ is a finitely generated $\Gal(k_s/k)$-module;
        \item ${}^m\!R^1e_*M=[\pi_0(R^1e_*M) \to 0]$ is a constructible $0$-motive.
    \end{enumerate}
\end{thm}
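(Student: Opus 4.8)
The plan is to run the long exact sequence of higher direct images attached to the distinguished triangle $A[-1]\to M\to L\to A$ coming from the short exact sequence of complexes $0\to[0\to A]\to[L\to A]\to[L\to 0]\to 0$, and then to apply $\pi_0$. Applying $Re_*$ gives a distinguished triangle in $D(\HI_{\leq 1}(k,\Lambda))$, and taking cohomology for the homotopy $t$-structure yields an exact sequence of $1$-motivic sheaves
\[ e_*L \xrightarrow{\;u_*\;} e_*A \to R^1e_*M \to R^1e_*L \to R^1e_*A \]
(with $R^0e_*=e_*$, and the first map induced by $u$). By Theorem~\ref{mRe*0}, $e_*L=R^0e_*L$ is the $0$-motivic sheaf attached to $L(K_s)^\Gamma$ and $R^1e_*L$ is the $0$-motivic sheaf attached to $H^1(\Gamma,L(K_s))$, which is finite by Lemma~\ref{H1GalFinite}; by Theorem~\ref{Rie*A}, $R^1e_*A$ is a torsion $0$-motivic sheaf. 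Setting $C\colonequals\coker(u_*)$, which lies in $\HI_{\leq 1}(k,\Lambda)$ since that category is a Serre subcategory (Proposition~\ref{HI1}), I extract the short exact sequence
\[ 0\to C\to R^1e_*M\to Y\to 0,\qquad Y\colonequals\ker(R^1e_*L\to R^1e_*A), \]
where $Y\in\HI_{\leq 0}(k,\Lambda)$ is a torsion $0$-motivic sheaf, being a subobject of $R^1e_*L$ and $\HI_{\leq 0}$ being Serre in $\HI_{\leq 1}$ (Corollary~\ref{HI1exact}).

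Next I apply $\pi_0$. As a left adjoint it is right exact, so $\pi_0(C)=\coker\bigl(\pi_0(e_*L)\to\pi_0(e_*A)\bigr)$. Here $\pi_0(e_*L)=e_*L$ because $e_*L$ is already $0$-motivic, while $\pi_0(e_*A)=\LN(A,Kk_s/k_s)_\Lambda$ by Theorem~\ref{ChowRevisited}; hence $\pi_0(C)=\coker\bigl(e_*L\to\LN(A,Kk_s/k_s)_\Lambda\bigr)$, whose stalk at $\Spec k_s$ is $\coker\bigl(L(K_s)^\Gamma\to\LN(A,Kk_s/k_s)\bigr)=X$ (the map being $u$ followed by $e_*A\twoheadrightarrow\pi_0(e_*A)$). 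Right-exactness of $\pi_0$ applied to $0\to C\to R^1e_*M\to Y\to 0$ gives $X\to\pi_0(R^1e_*M)\to\pi_0(Y)=Y\to 0$, and the remaining point is injectivity of $X\to\pi_0(R^1e_*M)$: this is precisely Hypothesis~\ref{PervertHypothesis}~(3) for the pair $\HI_{\leq 0}\subset\HI_{\leq 1}$ (verified in the preceding subsection), applied to the sequence $0\to C\to R^1e_*M\to Y\to 0$ with $Y\in\HI_{\leq 0}$. This yields the asserted exact sequence $0\to X\to\pi_0(R^1e_*M)\to Y\to 0$.

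For the two consequences I take stalks at $\Spec k_s$, which is exact, obtaining $0\to X(k_s)\to\pi_0(R^1e_*M)(k_s)\to Y(k_s)\to 0$. Now $X(k_s)$ is a quotient of $\LN(A,Kk_s/k_s)$, which is finitely generated by the Lang--N\'eron theorem (as invoked in Corollary~\ref{LNe*A}), and $Y(k_s)$ is a subgroup of the finite group $H^1(\Gamma,L(K_s))$; hence $\pi_0(R^1e_*M)(k_s)$ is finitely generated as an abelian group, a fortiori as a $\Gal(k_s/k)$-module, which is (1). Consequently $\pi_0(R^1e_*M)$ is a constructible (equivalently, finitely presented in the sense of Definition~\ref{fpHI1}) $0$-motivic sheaf, so the identity ${}^m\!R^1e_*M=[\pi_0(R^1e_*M)\to 0]$ from the preceding lemma exhibits it as a complex with $H^0$ finitely presented and $H^1=0$, i.e.\ a constructible $0$-motive, which is (2).

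I expect the genuine content to be concentrated in the two imported inputs — Theorem~\ref{ChowRevisited}, which supplies $\pi_0(e_*A)=\LN(A,Kk_s/k_s)_\Lambda$, and Hypothesis~\ref{PervertHypothesis}~(3), which upgrades mere right-exactness of $\pi_0$ to the left-exact statement $X\hookrightarrow\pi_0(R^1e_*M)$; everything else is formal manipulation of the long exact sequence. The only spot needing genuine care is checking that $C=\coker(u_*)$ really lies in $\HI_{\leq 1}$ and that the stalk of $\coker\bigl(e_*L\to\pi_0(e_*A)\bigr)$ is exactly the group $X$ appearing in the statement — routine given the structure theory of $1$-motivic sheaves, but worth spelling out.
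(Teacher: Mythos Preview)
Your proof is correct and follows essentially the same approach as the paper: both arguments run the long exact sequence coming from the distinguished triangle $A[-1]\to M\to L\to A$, identify the terms via Theorem~\ref{HI0HI1Re*} (or the derived Theorem~\ref{mRe*0}) and Theorem~\ref{ChowRevisited}, apply $\pi_0$, and conclude finiteness from Lang--N\'eron and Lemma~\ref{H1GalFinite}. Your version is in fact more careful than the paper's on one point: where the paper simply writes ``taking $\pi_0$ of the above exact sequence'', you explicitly isolate the short exact sequence $0\to C\to R^1e_*M\to Y\to 0$ and invoke Hypothesis~\ref{PervertHypothesis}~(3) to justify the injectivity of $X=\pi_0(C)\hookrightarrow\pi_0(R^1e_*M)$, which is the genuinely non-formal step.
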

\begin{proof} 
    The distinguished triangle in $D(\HI_{\leq 1}(K,\Lambda))$
    \[ G[-1] \to M \to L \to G \]
    induces the following exact sequence 
    \[ \cdots \to e_*L \to e_*G \to R^1e_*M \to R^1e_*L \to R^1e_*G \to \cdots. \]
    By Theorem \ref{HI0HI1Re*}, $e_*L=L(K_s)^\Gamma$ and $R^1e_*L=H^1(\Gamma,L(K_s))$. Taking $\pi_0$ of the above exact sequence and using Theorem \ref{ChowRevisited}, we obtain the desired exact sequence. By the Lang-N\'eron theorem (resp. Lemma \ref{H1latticeFinite}), we see that $X$ (resp. $Y$) is a $0$-motivic sheaf associated with a finitely generated (resp. finite) $\Gal(k_s/k)$-module. So the same is true for the extension $\pi_0(R^1e_*M)$.
\end{proof}

\appendix
\section{Some results on \'etale cohomology}\label{SmBC}
In this appendix, we prove a smooth base change theorem for non-torsion sheaves and use it to compare the small-\'etale and smooth-\'etale topoi.
\subsection{A smooth base change theorem for non-torsion \'etale sheaves}
Recall the classical smooth base change theorem: 
\begin{thm}[{\cite[Expos\'e~XVI, Corollaire 1.2]{SGA4III}}]\label{SmBCtor}
    Consider the Cartesian diagram of schemes 
    \[\xymatrix{
        Y \ar[r]^g \ar[d]_h & X \ar[d]^f \\
        T \ar[r]^e          & S,
    }\]
    where $f$ is smooth and $e$ is quasi-compact and quasi-separated. Let $\Lambda$ be the localization of \(\bZ\) by inverting the exponential characteristics of all local residue fields of $S$. If $\sF$ is a sheaf of sets (resp. of torsion $\Lambda$-modules) on $T_\et$, then the base change morphism 
    \[\alpha^i_\sF \colon   f^*R^ie_*\sF \longrightarrow R^ig_*h^*\sF\]
    is an isomorphism for $i=0$ (resp. for every $i$). 
\end{thm}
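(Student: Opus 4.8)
This is the classical smooth base change theorem of M.~Artin \cite[Expos\'e~XVI, Corollaire~1.2]{SGA4III}; for completeness let me recall the shape of the argument one would give. Whether $\alpha^i_\sF$ is an isomorphism can be tested on stalks at geometric points $\bar x\to X$ lying over $\bar s\to S$, and the question is local on $X$ and on $S$ for the \'etale topology; so one first reduces to the case where $S=\Spec A$ with $A$ strictly henselian local and $X=X^{\mathrm{sh}}_{\bar x}$ is a strictly local scheme smooth over $S$. Next, since a smooth morphism is \'etale-locally on its source a composite of smooth morphisms of relative dimension $1$, and since the statement is immediate when $f$ is \'etale (the small \'etale topos of $X$ is then a localisation of that of $S$), one reduces to $f=\bA^1_S\to S$; after one more \'etale localisation presenting a smooth relative curve as \'etale over $\bA^1$, the problem becomes the comparison of the \'etale cohomology of $\bA^1_S$ --- or of a smooth affine relative curve over $S$ --- with that of its special fibre.

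The case $i=0$ for a sheaf of sets is the soft part: it uses only that $X^{\mathrm{sh}}_{\bar x}\to S^{\mathrm{sh}}_{\bar s}$ admits a section through $\bar x$ with geometrically connected fibres (smoothness, openness of smooth maps, and connectedness of strict henselisations), so that restriction along it is a bijection on $H^0$ of any \'etale sheaf. For the torsion case in all degrees I would run the usual d\'evissage: a torsion $\Lambda$-module sheaf is a filtered colimit of constructible $\bZ/n$-sheaves with $n$ prime to the residue characteristics, and both $R^ie_*$ (here $e$ is quasi-compact quasi-separated) and \'etale cohomology commute with such colimits; a constructible sheaf is obtained by finitely many extensions from sheaves $j_!M$ with $j$ a locally closed immersion and $M$ locally constant constructible, and, after a Noetherian approximation, from the constant sheaf $\bZ/n$. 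This reduces the theorem to the key computation: for $A$ strictly henselian local with residue field $k$, the restriction $H^q_\et(\bA^1_A,\bZ/n)\to H^q_\et(\bA^1_k,\bZ/n)$, and its analogue for smooth affine relative curves, is an isomorphism.

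The main obstacle is exactly this curve case, which carries all the genuine content and cannot be deduced formally from proper base change. The route I would follow is the classical one: compactify the relative curve, apply proper base change on the compactification and on $\bP^1_S$, and control the boundary through the localisation and Gysin sequences; the cohomology of the proper smooth curve is pinned down by the Kummer sequence $0\to\mu_n\to\bG_m\to\bG_m\to 0$ together with the Picard scheme, while an affine curve over a separably closed field has cohomological dimension $\le 1$, with $H^1$ governed by $\bG_m$. Tameness of the coefficients --- orders prime to the residue characteristics, which is precisely the hypothesis built into $\Lambda$ --- is essential and is what makes the specialisation to the closed fibre harmless; wild ramification would obstruct it. Once the curve case is established, the reductions above assemble into the full statement.
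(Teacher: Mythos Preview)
The paper does not give a proof of this statement: it simply cites it as the classical smooth base change theorem \cite[Expos\'e~XVI, Corollaire~1.2]{SGA4III} and uses it as a black box input to the proof of the non-torsion version (Theorem~\ref{SmBCnontor}). Your sketch is a faithful outline of Artin's original argument in SGA~4, so there is nothing to compare; if anything, your write-up is more detailed than what the paper contains.
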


In \cite{Deninger88ProperBaseChangeNontorsion}, Deninger proved a proper base change theorem for non-torsion sheaves. Using the same strategy, we prove the following version of smooth base change theorem.
\begin{thm}\label{SmBCnontor}
    Consider the Cartesian diagram of noetherian schemes 
    \[\xymatrix{
        Y \ar[r]^g \ar[d]_h & X \ar[d]^f \\
        T \ar[r]^e          & S,
    }\]
    and assume that $f$ is smooth and $T$ is excellent. Let $\Lambda$ be the localization of $\bZ$ by inverting the exponential characteristics of all local residue fields of $S$. If $\sF$ is a sheaf of $\Lambda$-modules on $T_\et$, then the base change morphism 
    \[\alpha^i_\sF \colon   f^*R^ie_*\sF \longrightarrow R^ig_*h^*\sF\]
    is an isomorphism for every $i$. 
\end{thm}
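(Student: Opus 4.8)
The plan is to imitate Deninger's proof of the proper base change theorem for non-torsion coefficients, replacing the finiteness input available in the proper case by an explicit analysis of $\bQ$-coefficients.

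First I would reduce to the case where $\sF$ is a constructible sheaf of $\Lambda$-modules. Since all the schemes are noetherian, $e$ and $g$ are quasi-compact and quasi-separated and $S$, $X$ are quasi-compact and quasi-separated; hence $R^ie_*$ and $R^ig_*$ commute with filtered colimits of sheaves (compute stalks on strict henselizations, whose fibre products with $T$, resp. $Y$, over $S$ are again quasi-compact and quasi-separated, and use that \'etale cohomology of such a scheme commutes with filtered colimits). As $f^*$ and $h^*$ are exact and commute with all colimits, the formation of $\alpha^i_\sF$ commutes with filtered colimits in $\sF$; and every sheaf of $\Lambda$-modules on the noetherian scheme $T$ is the filtered colimit of its constructible subsheaves. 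So it suffices to treat $\sF$ constructible.

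Next I would pass to the derived category. Let $\beta_\sF\colon f^*Re_*\sF\to Rg_*h^*\sF$ be the derived base change morphism and $c_\sF$ its cone; the claim is $c_\sF\simeq 0$. For $n$ invertible on $S$, since $f^*$ and $h^*$ are exact, $-\otimes^{L}_\Lambda\Lambda/n$ is computed by the resolution $[\Lambda\xrightarrow{\,n\,}\Lambda]$, and $Re_*$, $Rg_*$ are triangulated, one gets $c_\sF\otimes^{L}_\Lambda\Lambda/n\simeq c_{\sF\otimes^{L}_\Lambda\Lambda/n}$. The complex $\sF\otimes^{L}_\Lambda\Lambda/n\simeq[\sF\xrightarrow{\,n\,}\sF]$ has cohomology sheaves killed by $n$, and because $\Lambda$ inverts the residue characteristics of $S$ these are torsion sheaves to which Theorem~\ref{SmBCtor} applies; hence $\beta$ is a quasi-isomorphism on them and $c_\sF\otimes^{L}_\Lambda\Lambda/n\simeq 0$. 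By the universal-coefficient formula for $-\otimes^{L}_\Lambda\Lambda/n$ this forces multiplication by $n$ to be bijective on every cohomology sheaf $\mathcal{H}^i(c_\sF)$; letting $n$ range over all integers invertible on $S$ shows that each $\mathcal{H}^i(c_\sF)$ is a sheaf of $\bQ$-vector spaces. On the other hand $c_\sF\otimes_\Lambda\bQ\simeq c_{\sF\otimes_\Lambda\bQ}$, because $Re_*$ and $Rg_*$ commute with the filtered colimit defining $-\otimes\bQ$. Therefore, once the theorem is known with $\bQ$-coefficients, $\mathcal{H}^i(c_\sF)\otimes_\Lambda\bQ=0$; since $\mathcal{H}^i(c_\sF)$ is already a $\bQ$-sheaf it vanishes, so $c_\sF\simeq 0$ and $\alpha^i_\sF$ is an isomorphism for all $i$.

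It remains to prove the statement for a constructible sheaf of $\bQ$-vector spaces, and this is the heart of the matter. For the constant sheaf $\bQ$ one checks directly that $R^ie_*\bQ=0$ and $R^ig_*\bQ=0$ for $i>0$: a stalk of $R^ie_*\bQ$ is the \'etale cohomology of a quasi-compact, quasi-separated, geometrically unibranch scheme with constant, divisible, torsion-free coefficients, which is concentrated in degree $0$ (in the function-field situation $e\colon\Spec K\to\Spec k$ relevant to Corollary~\ref{SmBCfield} this is simply $H^i(\Gamma,\bQ)=0$ for $i>0$, $\Gamma$ being profinite), while $R^0e_*\bQ=\bQ$ is handled by ordinary base change for $\pi_0$; so $\beta_\bQ$ is an isomorphism. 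For a general constructible $\bQ$-sheaf I would stratify $T$ (using the excellence of $T$) into locally closed pieces on which the sheaf is locally constant, induct on dimension via the triangle $j_!j^*\sF\to\sF\to i_*i^*\sF$, and on each locally constant piece pass to a finite \'etale cover trivializing the monodromy — a locally constant $\bQ$-sheaf being a representation of a finite quotient of the fundamental group — thereby reducing to constant sheaves, using that the whole base change square and all the functors in it are compatible with finite \'etale pushforward. The main obstacle, exactly as in Deninger's argument, is precisely this passage to $\bQ$-coefficients: unlike in the proper case, $R^ie_*\sF$ is not constructible ($e$ being neither proper nor of finite type in general), so one cannot invoke a finiteness theorem to kill $\mathcal{H}^i(c_\sF)$, and the dévissage for constructible $\bQ$-sheaves together with the degree-zero concentration of constant $\bQ$-cohomology must be carried out by hand.
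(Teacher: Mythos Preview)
Your reduction to constructible sheaves and your derived cone argument reducing to $\bQ$-coefficients are both correct; the paper reaches the same endpoint by a slightly different route, applying the sequence $0\to\Lambda\to\bQ\to\bQ/\Lambda\to 0$ directly (Lemma~\ref{BCfgC}) rather than arguing with $c_\sF\otimes^L_\Lambda\Lambda/n$.

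The gap is in your treatment of $\bQ$-coefficients. Your assertion that $R^ie_*\bQ=0$ for $i>0$ because the stalks are cohomology of a ``geometrically unibranch scheme'' is not justified: $T$ is only assumed excellent, not normal or unibranch, and the fibre $T\times_S\Spec\cO_{S,\bar s}^{\sh}$ inherits no such property. (Likewise $R^0e_*\bQ=\bQ$ fails unless the fibres of $e$ are connected.) Deninger's Lemma~\ref{Re*Q} gives the vanishing only for $T$ \emph{normal}. Your stratification d\'evissage does not obviously repair this: the triangle $j_!j^*\sF\to\sF\to i_*i^*\sF$ handles the closed stratum by induction, but $Re_*(j_!-)$ does not reduce to a pushforward from a normal scheme without further work --- one would need to extend the finite \'etale cover $V\to U$ to a finite morphism $\bar V\to T$ with $\bar V$ normal (the normalization of $T$ in $V$, finite by excellence) and run a further induction on the boundary. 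This is possible but is precisely the missing step, and it is not what you wrote.

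The paper sidesteps both difficulties. Using that an excellent scheme is universally Japanese, it embeds every constructible $\Lambda$-sheaf $\sF$ into a finite direct sum $\sG=\bigoplus\tau_{i*}C_i$ with $\tau_i\colon U_i\to T$ finite, $U_i$ \emph{normal}, and $C_i$ a finitely generated $\Lambda$-module. For such $\tau_*C$, base change follows from Lemma~\ref{Re*Q} applied to the normal scheme $U$ via $e\tau\colon U\to S$ (Lemmas~\ref{BCfgC} and \ref{BCtC}). A five-lemma induction on $i$ using $0\to\sF\to\sG\to\sH\to 0$ then gives the general case. This replaces your $j_!$-d\'evissage by an embedding-style one that only ever involves finite pushforwards from normal schemes, where Deninger's lemma applies directly.
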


The argument in \cite{Deninger88ProperBaseChangeNontorsion} works almost word by word in our case, except that we shall use the smooth base change theorem for torsion sheaves (Theorem~\ref{SmBCtor}) instead of the proper base change theorem, and that we need some additional conditions on the torsion order of the sheaves, i.e., we use sheaves of $\Lambda$-modules rather than abelian sheaves.

For completeness and for readers' convenience, we give a proof of Theorem~\ref{SmBCnontor} in this subsection. A key ingredient is the following result.
\begin{lem}[{\cite[2.2]{Deninger88ProperBaseChangeNontorsion}}]\label{Re*Q}
    Let $T$ be a normal scheme and $e \colon  T\to S$ be a morphism of noetherian schemes. Then $R^ie_*(\bQ)=0$ for $i\geq 1$.
\end{lem}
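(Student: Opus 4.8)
The plan is to reduce the statement to the vanishing $H^i_\et(W,\bQ)=0$ for $i\geq 1$ and $W$ an arbitrary normal scheme, and then to prove the latter by pushing forward from the generic point; this is Deninger's argument, reproduced here for completeness (cf. \cite[2.2]{Deninger88ProperBaseChangeNontorsion}).

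First I would recall that $R^ie_*\bQ$ is, by definition, the \'etale sheaf on $S_\et$ associated with the presheaf $V\mapsto H^i_\et(T\times_S V,\bQ)$, where $V$ runs over the \'etale $S$-schemes and $\bQ$ denotes the constant sheaf. Since $V\to S$ is \'etale, the projection $T\times_S V\to T$ is \'etale, so $T\times_S V$ is again normal and the restriction of the constant sheaf $\bQ$ to it is again constant. Hence it suffices to show that $H^i_\et(W,\bQ)=0$ for every normal scheme $W$ and every $i\geq 1$. This vanishing may be checked on each connected component of $W$, so I may assume $W$ connected; being normal, $W$ is then integral, with a function field $K$, and I write $j\colon\Spec K\hookrightarrow W$ for the inclusion of the generic point.

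The core of the argument is the identification $j_*\bQ=\bQ$ together with $R^qj_*\bQ=0$ for $q\geq 1$. To prove it I would compute the stalk of $R^qj_*\bQ$ at a geometric point $\bar w\to W$, which is $H^q_\et\bigl(\Spec\mathcal O^{\mathrm{sh}}_{W,\bar w}\times_W\Spec K,\bQ\bigr)$. Now $\mathcal O^{\mathrm{sh}}_{W,\bar w}$ is a strictly henselian normal local ring, hence a normal domain, so the fibre product over $W$ is the spectrum of a single field, namely its fraction field $K'$. Thus the stalk equals $H^q(\Gal(K'_s/K'),\bQ)$, which is $\bQ$ for $q=0$ and vanishes for $q\geq 1$, since the continuous cohomology of a profinite group with coefficients in the trivial module $\bQ$ is concentrated in degree $0$ (each class dies under a restriction–corestriction over a suitable finite quotient). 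It follows that the unit $\bQ\to j_*\bQ$ is an isomorphism on stalks, and $R^qj_*\bQ=0$ for $q\geq 1$.

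Finally, the Leray spectral sequence $E_2^{p,q}=H^p_\et(W,R^qj_*\bQ)\Rightarrow H^{p+q}_\et(\Spec K,\bQ)$ collapses to its bottom row, giving $H^i_\et(W,\bQ)\cong H^i_\et(\Spec K,\bQ)=H^i(\Gal(K_s/K),\bQ)=0$ for $i\geq 1$, which is what we wanted. The only delicate point is the stalk computation for $R^qj_*$: it rests on the facts that strict henselizations of normal local rings are normal domains and that the preimage of the generic point of $W$ in such a local scheme is its unique generic point; the reduction step and the Leray argument are purely formal.
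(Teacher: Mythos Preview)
Your proof is correct and is essentially Deninger's argument, which is exactly what the paper invokes: the paper gives no proof of its own but simply cites \cite[2.2]{Deninger88ProperBaseChangeNontorsion}. Your reproduction of the argument---reduction to $H^i_\et(W,\bQ)=0$ for $W$ normal via the presheaf description of $R^ie_*$, the computation $Rj_*\bQ\simeq\bQ$ using that strict henselizations of normal local rings are normal domains, and the collapse of the Leray spectral sequence---is accurate and matches what is being cited.
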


We use it to prove the smooth base change theorem for the constant sheaves defined by finitely generated $\Lambda$-modules.
\begin{lem}[cf. {\cite[2.3]{Deninger88ProperBaseChangeNontorsion}}]\label{BCfgC}
    Consider the Cartesian diagram of noetherian schemes 
    \[\xymatrix{
        Y \ar[r]^g \ar[d]_h & X \ar[d]^f \\
        T \ar[r]^e          & S,
    }\]
    ans assume that $f$ is smooth and $T$ is normal. Let $\Lambda$ be the localization of $\bZ$ by inverting the exponential characteristics of all local residue fields of $S$, and let $C$ be a finitely generated $\Lambda$-module. Then the base change morphism 
    \[\alpha^i_C \colon   f^*R^ie_*C \longrightarrow R^ig_*h^*C\]
    is an isomorphism for every $i\geq 0$. 
\end{lem}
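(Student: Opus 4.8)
The plan is to follow Deninger's strategy from \cite{Deninger88ProperBaseChangeNontorsion}, reducing the general finitely generated $\Lambda$-module $C$ to two cases: $C$ finite (necessarily of order invertible on $S$) and $C=\Lambda$. First I would note that since $\Lambda$ is a localization of $\bZ$, hence a principal ideal domain, every finitely generated $\Lambda$-module splits as $\Lambda^{\oplus n}\oplus C_\tor$ with $C_\tor$ finite; moreover the order of $C_\tor$ is invertible on $S$ by the very definition of $\Lambda$. As $f^*$, $h^*$, $R^ie_*$ and $R^ig_*$ all commute with finite direct sums, it suffices to treat $C_\tor$ and $C=\Lambda$ separately. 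The finite case is immediate from the classical smooth base change theorem \ref{SmBCtor}: here $e$ is quasi-compact and quasi-separated because $S$ and $T$ are noetherian, and the hypothesis on the torsion order is met.

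For $C=\Lambda$ I would use the short exact sequence of constant sheaves $0\to\Lambda\to\bQ\to\bQ/\Lambda\to 0$ on $T_\et$, which pulls back along $h$ to the analogous sequence on $Y_\et$. Two inputs are needed. First, $\bQ/\Lambda$ is a filtered colimit of finite sheaves of order invertible on $S$, so $\alpha^i_{\bQ/\Lambda}$ is an isomorphism for every $i$ by Theorem \ref{SmBCtor} together with the fact that over noetherian schemes $R^ie_*$ and $R^ig_*$ commute with filtered colimits (and $f^*$, $h^*$ are exact and colimit-preserving). Second, $R^ie_*\bQ=0$ for $i\ge 1$ by Lemma \ref{Re*Q}, using that $T$ is normal, and likewise $R^ig_*\bQ=0$ for $i\ge 1$; for the latter I would first record that $Y=X\times_S T$ is normal, being smooth over the normal scheme $T$, so Lemma \ref{Re*Q} applies to $g\colon Y\to X$ as well.

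With these in hand the conclusion is a diagram chase, using that $\alpha^{\bullet}$ is a morphism of $\delta$-functors. For $i=0$, $\alpha^0_C$ is an isomorphism for any abelian sheaf by the $i=0$ part of Theorem \ref{SmBCtor}. For $i\ge 2$, the long exact sequences attached to $0\to\Lambda\to\bQ\to\bQ/\Lambda\to 0$ give $f^*R^ie_*\Lambda\simeq f^*R^{i-1}e_*(\bQ/\Lambda)$ and $R^ig_*\Lambda\simeq R^{i-1}g_*(\bQ/\Lambda)$ compatibly with base change, so $\alpha^i_\Lambda$ is an isomorphism because $\alpha^{i-1}_{\bQ/\Lambda}$ is. For $i=1$, exactness gives $f^*R^1e_*\Lambda=\coker\bigl(f^*e_*\bQ\to f^*e_*(\bQ/\Lambda)\bigr)$ and $R^1g_*\Lambda=\coker\bigl(g_*\bQ\to g_*(\bQ/\Lambda)\bigr)$, and since $\alpha^0_\bQ$ and $\alpha^0_{\bQ/\Lambda}$ are isomorphisms and $f^*$ is exact, so is $\alpha^1_\Lambda$. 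Finally one reassembles the general finitely generated $C$ from the free and torsion parts.

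Regarding difficulties: the argument is essentially bookkeeping once Deninger's Lemma \ref{Re*Q} is available, so there is no serious obstacle. The points requiring a little care are the normality of $Y$ (needed to kill $R^{\ge 1}g_*\bQ$) and the verification that all torsion occurring---both in $C_\tor$ and in $\bQ/\Lambda$---is prime to the exponential characteristics of the local residue fields of $S$, which is exactly what the hypothesis on $\Lambda$ guarantees and what licenses every application of Theorem \ref{SmBCtor}.
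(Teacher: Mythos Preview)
Your proposal is correct and follows essentially the same approach as the paper's proof: both reduce to $C=\Lambda$, use the exact sequence $0\to\Lambda\to\bQ\to\bQ/\Lambda\to 0$, invoke Lemma~\ref{Re*Q} on $T$ and on the normal scheme $Y$ to kill $R^{\ge 1}e_*\bQ$ and $R^{\ge 1}g_*\bQ$, and apply Theorem~\ref{SmBCtor} to handle $\bQ/\Lambda$ and the degree-zero maps. The only cosmetic differences are that the paper applies Theorem~\ref{SmBCtor} directly to the torsion $\Lambda$-module $\bQ/\Lambda$ rather than passing through a filtered colimit, and that it explicitly records (via \cite[\href{https://stacks.math.columbia.edu/tag/00YV}{00YV}]{stacks-project}) why the $i=0$ statement for sheaves of sets yields the $i=0$ statement for sheaves of $\Lambda$-modules.
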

\begin{proof}
    Note that $e$ is quasi-compact and quasi-separated since $T$ is noetherian. In view of the smooth base change theorem for torsion sheaves (Theorem~\ref{SmBCtor}), we may assume that $C=\Lambda$. Consider the exact sequence
    \[0\to \Lambda \to \bQ \to \bQ/\Lambda \to 0.\]
    By our assumptions, $h$ is smooth and $T$ is normal. Thus $Y$ is also normal. It follows from Lemma~\ref{Re*Q} that
    \[R^ie_*\bQ=0 \text{\quad and \quad}R^ig_*\bQ=0 \text{\quad for \quad}i\geq 1.\]
    Thus the above short exact sequence gives us a commutative diagram with exact rows
    \[\xymatrix{
        0 \ar[r] 
        & f^*e_*\Lambda   \ar[r] \ar[d]_{\alpha^0_\Lambda} 
        & f^*e_*\bQ   \ar[r] \ar[d]_{\alpha^0_\bQ} 
        & f^*e_*(\bQ/\Lambda)   \ar[r] \ar[d]_{\alpha^0_{\bQ/\Lambda}} 
        & f^*R^1e_*\Lambda   \ar[r] \ar[d]_{\alpha^1_\Lambda} 
        & 0\\
        0 \ar[r] 
        & g_*h^*\Lambda \ar[r]                           
        & g_*h^*\bQ \ar[r]                       
        & g_*h^*(\bQ/\Lambda) \ar[r]                                 
        & R^1g_*h^*\Lambda \ar[r]                           
        & 0,
    }\]
    and a commutative diagram for every $i\geq 2$
    \[\xymatrix{
        f^*R^{i-1}e_*(\bQ/\Lambda) \ar[r] \ar[d]_{\alpha^{i-1}_{\bQ/\Lambda}} 
        & f^*R^{i}e_*\Lambda       \ar[d]_{\alpha^{i}_{\Lambda}}  \\
        R^{i-1}g_*h^*(\bQ/\Lambda) \ar[r]
        & R^ig_*(h^*\Lambda).
    }\]
    By the smooth base change theorem for torsion sheaves (Theorem~\ref{SmBCtor}), the $\alpha^i_{\bQ/\Lambda}$ are isomorphisms for all $i$. Hence $\alpha^i_\Lambda$ is an isomorphism for $i\geq 2$. By Theorem~\ref{SmBCtor} again, for every sheaf of sets $\sF$, the base change morphism $f^*e_*\sF \to g_*h^*\sF$ is an isomorphism. Note that the inverse images and the direct images for sheaves of $\Lambda$-modules are compatible with taking the underlying sheaves of sets (\cite[\href{https://stacks.math.columbia.edu/tag/00YV}{Proposition~00YV}]{stacks-project}). Thus $\alpha^0_\Lambda$, $\alpha^0_\bQ$ and $\alpha^0_{\bQ/\Lambda}$ are isomorphisms. Hence $\alpha^1_{\Lambda}$ is also an isomorphism.
\end{proof}

\begin{lem}[cf. {\cite[2.4]{Deninger88ProperBaseChangeNontorsion}}]\label{BCtC}
    Consider the Cartesian diagram of noetherian schemes 
    \[\xymatrix{
        Y \ar[r]^g \ar[d]_h & X \ar[d]^f \\
        T \ar[r]^e          & S,
    }\]
    ans assume that $f$ is smooth. Let $\Lambda$ be the localization of $\bZ$ by inverting the exponential characteristics of all local residue fields of $S$, and let $\sF$ be a sheaf of $\Lambda$-modules on $T_\et$. If $\sF$ is of the form $\tau_*C$ where $\tau \colon  U\to T$ is a finite morphism with $U$ normal and $C$ is a finitely generated $\Lambda$-module, then the base change morphism 
    \[\alpha^i_\sF \colon   f^*R^ie_*\sF \longrightarrow R^ig_*h^*\sF\]
    is an isomorphism for every $i\geq 0$. 
\end{lem}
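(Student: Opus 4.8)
The plan is to reduce to Lemma~\ref{BCfgC} by the device Deninger uses in \cite[2.4]{Deninger88ProperBaseChangeNontorsion}: absorb the finite morphism $\tau$ into $e$ on one side and into $g$ on the other, after base change. Concretely, set $V\colonequals X\times_S U$; using $Y = X\times_S T$ one identifies $V\simeq Y\times_T U$. This produces a smooth morphism $h'\colon V\to U$ (the base change of $f$ along $e\tau$, equivalently of $h$ along $\tau$), a finite morphism $\tau'\colon V\to Y$ (the base change of $\tau$ along $h$), and a morphism $g'\colon V\to X$ with $g' = g\circ\tau'$. Since $U$ is normal and $h'$ is smooth, $V$ is normal; and $U$, $V$ are noetherian because $\tau$ and $f$ are of finite type, so Lemma~\ref{BCfgC} is applicable to the Cartesian square with corners $V$, $X$, $U$, $S$.

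First I would record the two standard facts about a finite morphism $\tau$ which make the argument work with non-torsion coefficients: the pushforward $\tau_*$ is exact on \'etale $\Lambda$-module sheaves, and $\tau_*$ commutes with arbitrary base change, both because the stalks of $\tau_*\sG$ are the (finite) products of the stalks of $\sG$ over the geometric fibres of $\tau$ (see \cite{stacks-project}). From exactness of $\tau_*$, the Grothendieck spectral sequence for $e\circ\tau$ degenerates and gives $R^ie_*\sF = R^ie_*(\tau_*C)\simeq R^i(e\tau)_*C$ for every $i$.

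Next I would apply Lemma~\ref{BCfgC} to the square with corners $V$, $X$, $U$, $S$, obtaining an isomorphism $f^*R^i(e\tau)_*C\xrightarrow{\ \sim\ }R^ig'_*h'^*C$ for all $i\ge 0$. Writing $g' = g\circ\tau'$ and using once more that $\tau'$ is finite, the spectral sequence for $g\circ\tau'$ collapses to $R^ig'_*h'^*C\simeq R^ig_*(\tau'_*h'^*C)$; and since $\tau_*$ commutes with base change along $h$ (the square with corners $V$, $Y$, $U$, $T$ being Cartesian with $\tau$ finite), $\tau'_*h'^*C\simeq h^*\tau_*C = h^*\sF$. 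Chaining these isomorphisms yields $f^*R^ie_*\sF\simeq R^ig_*h^*\sF$.

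The remaining — and only nonformal — point is to check that the isomorphism just produced is literally the base change morphism $\alpha^i_\sF$. This is a compatibility assertion: the base change morphism is compatible with composition of the vertical morphisms ($e$ with $\tau$, via the identification $R^ie_*\tau_*\simeq R^i(e\tau)_*$, and $g$ with $\tau'$) and with the exact pushforwards $\tau_*$, $\tau'_*$. Tracing the definitions of the various base change maps through these identifications is the place where care is needed, and is the main (if mild) obstacle; everything else is formal. Note that the excellence hypothesis of Theorem~\ref{SmBCnontor} plays no role in this step — it will only be needed afterwards, in the d\'evissage passing from sheaves of the form $\tau_*C$ to general $\sF$.
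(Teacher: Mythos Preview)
Your proposal is correct and follows essentially the same route as the paper: form the composite Cartesian square with corners $V=X\times_S U$, $X$, $U$, $S$, use exactness of $\tau_*$ and $\tau'_*$ (finite morphisms) to collapse the Leray spectral sequences, and invoke Lemma~\ref{BCfgC} on the outer square. The one place where the paper is more explicit than you is the compatibility check you flag at the end: rather than tracing base change maps by hand, the paper cites \cite[Expos\'e~XII, Proposition~4.4(ii)]{SGA4III}, which directly furnishes a morphism of Leray spectral sequences whose abutment is $\alpha_C^{p+q}$ and whose $E_2$-terms, after degeneration, assemble into the commutative square identifying your chain of isomorphisms with $\alpha^i_{\tau_*C}$; you may want to use that reference in place of the ad hoc verification.
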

\begin{proof}
    Consider the following commutative diagram of Cartesian squares
    \[\xymatrix{
        Z \ar[r]^{\tau'} \ar[d]_{h'}  &  Y \ar[r]^g \ar[d]_h & X \ar[d]^f \\
        U \ar[r]^\tau                 &  T \ar[r]^e          & S.
    }\]
    By \cite[Expos\'e~XII, Proposition~4.4(ii)]{SGA4III}, we have a morphism of spectral sequences
    \[\xymatrix{
        E_2^{p,q}=f^*R^pe_*R^q\tau_*C  \ar@{=>}[r] \ar[d] & f^* R^{p+q}(e\tau)_*C \ar[d]^{\alpha_C^{p+q}} \\
        R^pg_*R^q\tau'_*h'^*C         \ar@{=>}[r]         & R^{p+q}(g\tau')_*h'^*C.
    }\]
    Since $\tau$ and $\tau'$ are finite morphisms, the direct images $\tau_*$ and $\tau'_*$ are exact. Thus we have a commutative diagram
    \[\xymatrix{
        f^*R^ie_*(\tau_*C) \ar[r]^-{\simeq} \ar[d]^{\alpha^i_{\tau_*C}} & f^*R^i(e\tau)_*C \ar[dd]^{\alpha_C^i} \\
        R^ig_* h^*(\tau_*C) \ar[d]^{\alpha^0_C} \\
        R^ig_*(\tau'_*h'^*C) \ar[r]^-{\simeq}  & R^i(g\tau')_*h'^*C.
    }\]
    By Lemma~\ref{BCfgC}, the base change morphisms $\alpha_C^0$ and $\alpha^i_C$ in the diagram are isomorphisms. Hence the base change morphism $\alpha_{\tau_*C}^i$ is an isomorphism, i.e., $\alpha_\sF^i$ is an isomorphism.
\end{proof}

Now we prove the main theorem of this subsection.
\begin{proof}[Proof of Theorem~\ref{SmBCnontor}](cf. {\cite[2.5]{Deninger88ProperBaseChangeNontorsion}}).
    By \cite[Expos\'e~IX, Corollaire 2.7.2]{SGA4III}, every sheaf of $\Lambda$-modules $\sF$ on $T_\et$ is a filtered colimit of constructible sheaves of $\Lambda$-modules. Since $e$ and $g$ are quasi-compact and quasi-separated by our assumptions, the higher direct images $R^ie_*$ and $R^ig_*$ commute with filtered colimits (\cite[Expos\'e~VII, Corollaire 5.11]{SGA4II}). So we may assume that $\sF$ is a constructible sheaf of $\Lambda$-modules. Because $T$ is excellent, it is a universally Japanese scheme by \cite[Scholie 7.8.3(vi)]{EGAIV2}. Then according to \cite[Expos\'e~IX, Remarques 2.14.2]{SGA4III}, there exists a monomorphism
    \[\sF \hookrightarrow \bigoplus_{i=1}^n \tau_{i*}C_i,\]
    where $C_i$ is a finitely generated $\Lambda$-module and $\tau_i \colon  U_i\to T$ is a finite morphism with $U_i$ normal. Denote $\sG$ the constructible sheaf of $\Lambda$-modules $\bigoplus_{i=1}^n \tau_{i*}C_i$, and denote $\sH$ the cokernel of above inclusion $\sF\hookrightarrow \sG$. Then $\sH$ is also a constructible sheaf of $\Lambda$-modules. The short exact sequence 
    \[0 \to \sF \to \sG \to \sH \to 0\]
    induces the following commutative diagram with exact rows:
    \[\xymatrix{
        f^*R^ie_*\sG \ar[r] \ar[d]_{\alpha_{\sG}^i}^\simeq 
        & f^*R^ie_*\sH \ar[r] \ar[d]_{\alpha_{\sH}^i} 
        & f^*R^{i+1}e_*\sF \ar[r] \ar[d]_{\alpha_{\sF}^{i+1}} 
        & f^*R^{i+1}e_*\sG \ar[r] \ar[d]_{\alpha_{\sG}^{i+1}}^\simeq 
        & f^*R^{i+1}e_*\sH \ar[d]_{\alpha_{\sH}^{i+1}} 
        \\
        R^ig_*h^*\sG \ar[r]
        & R^ig_*h^*\sH \ar[r]
        & R^{i+1}g_*h^*\sF \ar[r]
        & R^{i+1}g_*h^*\sG \ar[r]
        & R^{i+1}g_*h^*\sH.
    }\]
    We prove by induction on $i$ that $\alpha^i$ is an isomorphism for every constructible sheaf of $\Lambda$-modules. For $i<0$, this is trivial. Assume that the assertion holds for a fixed $i$. Then $\alpha_\sH^i$ is an isomorphism. By Lemma~\ref{BCtC}, the base change morphisms $\alpha^i_\sG$ and $\alpha_\sG^{i+1}$ are isomorphisms. Thus $\alpha_\sF^{i+1}$ is a monomorphism. Since $\sF$ is an arbitrary constructible sheaf of $\Lambda$-modules, $\alpha_\sH^{i+1}$ is a monomorphism as well. It follows from the five lemma that $\alpha_\sF^{i+1}$ is an isomorphism, which completes the proof.
\end{proof}

\subsection{Comparing small and smooth \'etale topoi}
We use the smooth base change theorem for non-torsion sheaves (Theorem~\ref{SmBCnontor}) to compare the small and smooth \'etale topoi. The reader may want to compare this subsection with \cite[\href{https://stacks.math.columbia.edu/tag/0757}{Section~0757}]{stacks-project}.

Let $S$ be a noetherian scheme. Let $\Sm/S$ be the category of smooth separated schemes of finite type over $S$. For $n\in\bN$, we denote by $(\Sm/S)_{\leq n}$ the full subcategory of $\Sm/S$ whose objects are the smooth schemes over $S$ of relative dimension less than or equal to $n$. We  sometimes write $(\Sm/S)_{\leq 0}$ as $\Et/S$\footnote{In fact, we consider here the \'etale schemes separated and of finite type over $S$ rather than all the \'etale schemes over $S$. However, by \cite[Expos\'e~VII, 3.1 and 3.2]{SGA4II}, they give the same category of \'etale sheaves. Because we are mainly interested in \'etale sheaves here, we do not distinguish between these two categories.}.

Let $\cC/S$ be the category $\Sm/S$ or $(\Sm/S)_{\leq n}$. Let $\Lambda$ be the localization of $\bZ$ by inverting the exponential characteristics of all local residue fields of $S$. Denote $\Shv_\et(\cC/S,\Lambda)$ the category of \'etale sheaves of $\Lambda$-modules on $\cC/S$.

For $f \colon  X\to S$ in $\cC$, the natural inclusion $\sigma_f \colon  (\Et/X)_\et \hookrightarrow (\cC/S)_\et$ is a continuous functor, i.e., we have a functor\footnote{The  notations used here are in the same spirit as in \cite[Expos\'e~VII, \S4]{SGA4II}, but are different from the ones in \cite[Expos\'e~III]{SGA4I}. See also \cite[\href{https://stacks.math.columbia.edu/tag/0CMZ}{Section~0CMZ}]{stacks-project} for a comparison of notations.} 
\begin{align*}
    \sigma_{f*} \colon  \Shv_\et(\cC/S,\Lambda) &\longrightarrow        \Shv_\et(\Et/X,\Lambda),\\
                \sF                    &\longmapsto \sF\circ \sigma_f.
\end{align*}
By \cite[Expos\'e~III, Proposition~1.2]{SGA4I}, $\sigma_{f*}$ admits a left adjoint $\sigma_f^*$. We sometimes denote $\sigma_{\id_S}$ by $\sigma_S$ (or $\sigma$ if there is no risk of confusion).

Let $e \colon  T\to S$ be a morphism of noetherian schemes. Then the base change functor $\cC/S \to \cC/T$ induces a continuous functor of \'etale sites\footnote{Warning: For $\cC=\Sm$, this continuous functor does not induce a morphism of sites in general. See \cite[2.2.30]{Olsson16Stacks} or \cite[\href{https://stacks.math.columbia.edu/tag/07BF}{Section~07BF}]{stacks-project} for some examples that $e^*_\Sm$ is not exact.}. Thus we have a pair of adjunctions
\[e_\cC^* \colon  \Shv_\et(\cC/S,\Lambda) \leftrightarrows \Shv_\et(\cC/T,\Lambda) \colon  e^\cC_*,\]
where $e^\cC_*\sF=\sF\circ e$. When $\cC=\Et$, we write $e_\cC^*$ (resp. $e^\cC_*$) as $e^*$ (resp. $e_*$).

The following Cartesian square 
\[\xymatrix{
        Y \ar[r]^g \ar[d]_h & X \ar[d]^f \\
        T \ar[r]^e          & S
}\]
induces a commutative diagram
\[\xymatrix{
    \Et/X \ar[r]^g \ar[d]_{\sigma_f}   & \Et/Y \ar[d]_{\sigma_h} \\
    \cC/S \ar[r]^{e_\cC}               & \cC/T.
}\]
By definition of the direct images, the above diagram induces the following commutative diagram
\[\xymatrix{
    \Shv_\et(\cC/T,\Lambda)   \ar[r]^{e^\cC_*} \ar[d]_{\sigma_{h*}} & \Shv_\et(\cC/S,\Lambda) \ar[d]_{\sigma_{f*}} \\
    \Shv_\et(\Et/Y,\Lambda)   \ar[r]^{g_*}                          & \Shv_\et(\Et/X,\Lambda).
}\]
In particular, if $f=\id_S$, then $\sigma_{S*} e^\cC_* =e_* \sigma_{T*}$ and $\sigma_T^* e^* \simeq e_\cC^* \sigma_S^*$.

\begin{lem}\label{topoi}
    \begin{enumerate}[leftmargin=*,label={\rm(\arabic*)}]
        \item The functors $\sigma_{f*}$ and $\sigma_f^*$ are exact. \menum
        \item The functor $\sigma_f^*$ is fully faithful.
        \item $\sigma_{f*}\sigma^*_S\simeq f^*$.
    \end{enumerate}
\end{lem}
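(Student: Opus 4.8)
The plan is to reduce all three assertions to the standard comparison between the small and big \'etale sites, as in \cite[Expos\'e~III, \S1]{SGA4I} and \cite[\href{https://stacks.math.columbia.edu/tag/0757}{Section~0757}]{stacks-project}, observing along the way that nothing in those arguments changes if the category of all schemes is replaced by $\Sm$ or $(\Sm)_{\leq n}$. The two inputs I would isolate first are: (i) $\sigma_f$ is continuous (it preserves fibre products and sends \'etale coverings to \'etale coverings) and cocontinuous (every \'etale covering of $\sigma_f(U)$ in $\cC/S$ is refined by one already in $\Et/X$, since a scheme \'etale over an object of $\Et/X$ is again such, and, more generally, a scheme smooth over an object of $\cC/X$ lies in $\cC/X$); and (ii) $\sigma_f$ factors as $\Et/X\stackrel{j}{\hookrightarrow}\cC/X\stackrel{\iota}{\hookrightarrow}\cC/S$, where $j$ is the inclusion of the small \'etale site of $X$ into the big one and $\iota$ sends a smooth $X$-scheme to itself regarded over $S$.

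For (1): the restriction functor $\sigma_{f*}$ equals $\pi_{X,*}\circ f_{\mathrm{big}}^{-1}$, where $\pi_{X,*}\colon\Shv_\et(\cC/X,\Lambda)\to\Shv_\et(\Et/X,\Lambda)$ is restriction along $j$ and $f_{\mathrm{big}}^{-1}$ is restriction along $\iota$ — the latter is the inverse-image functor of the morphism of big \'etale topoi attached to $f$, hence exact, and the former is exact by the small-versus-big comparison over $X$; so $\sigma_{f*}$ is exact. (One can also argue directly: $\sigma_{f*}$ is left exact because kernels are computed sectionwise, and it preserves epimorphisms by cocontinuity.) Its left adjoint $\sigma_f^*$ is right exact, and left exact because the presheaf-level left adjoint $\mathcal{G}\mapsto\bigl(Z\mapsto\varinjlim_{(U\in\Et/X,\,Z\to U)}\mathcal{G}(U)\bigr)$ preserves finite limits: its index category splits, according to the induced morphism $Z\to X$, into a disjoint union of cofiltered categories (formed using fibre products over $X$), so this functor is a direct sum of filtered colimits of the (left exact) section functors, hence left exact on sheaves, and sheafification is exact. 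For (2): full faithfulness of $\sigma_f^*$ is the statement that the unit $\mathrm{id}\to\sigma_{f*}\sigma_f^*$ is an isomorphism, which I would check on sections using the colimit description above: over an object $U_0$ of $\Et/X$ the tautological morphism $U_0\to X$ contributes exactly $\mathcal{G}(U_0)$, and the remaining contributions become trivial after \'etale sheafification.

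For (3): using (ii), $\sigma_{f*}\sigma_S^*=\pi_{X,*}\,f_{\mathrm{big}}^{-1}\,\sigma_S^*$, and the standard commutativity of big and small \'etale topoi for a morphism, $\pi_S\circ f_{\mathrm{big}}\simeq f_{\mathrm{small}}\circ\pi_X$, yields, upon passing to inverse images (with $\sigma_S^*=\pi_S^{-1}$, $f^*=f_{\mathrm{small}}^{-1}$ and $\pi_X^{-1}=j^*$),
\[\sigma_{f*}\sigma_S^*=\pi_{X,*}\,f_{\mathrm{big}}^{-1}\,\pi_S^{-1}\;\simeq\;\pi_{X,*}\,\pi_X^{-1}\,f^*\;\simeq\;f^*,\]
the last isomorphism being (2) applied to $\mathrm{id}_X$. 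Alternatively one exhibits a natural map between the two functors — induced by the projections $V\times_S X\to V$ — and checks it is an isomorphism on stalks, both sides having stalk $\mathcal{G}_{\overline{f(x)}}$ at a geometric point $\bar x\to X$.

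The hard part is bookkeeping rather than anything deep: keeping straight the several adjoint functors in play (restriction and its left adjoint, the big-site inverse and direct images), and dealing with the fact that $\sigma_f$ does not preserve the final object of $\cC/S$ when $X\neq S$, so the textbook criterion ``continuous plus left exact implies exact pullback'' does not apply verbatim, together with the analogous point in (2) that $\cC/X$ is not the localization $(\cC/S)/X$ when $\cC=\Sm$ (in contrast to the case of all schemes). The factorization $\Et/X\hookrightarrow\cC/X\hookrightarrow\cC/S$ is precisely what confines these subtleties to, on one hand, the well-understood small/big comparison over a single base and, on the other, the exactness and functoriality of the big-\'etale inverse image $f_{\mathrm{big}}^{-1}$ — both contained in \cite[\href{https://stacks.math.columbia.edu/tag/0757}{Section~0757}]{stacks-project} and carrying over to $\Sm$ without change.
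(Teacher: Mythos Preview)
Your proposal is correct and follows essentially the same approach as the paper: for (3) your factorization $\Et/X\hookrightarrow\cC/X\hookrightarrow\cC/S$ and the computation $\sigma_{f*}\sigma_S^*=\sigma_{X*}f_\cC^*\sigma_S^*\simeq\sigma_{X*}\sigma_X^*f^*\simeq f^*$ match the paper's proof verbatim, while for (1) and (2) the paper simply observes that $\sigma_f$ is continuous and cocontinuous and cites \cite[\href{https://stacks.math.columbia.edu/tag/04BH}{Lemma~04BH}]{stacks-project} and \cite[\href{https://stacks.math.columbia.edu/tag/077I}{Lemma~077I}]{stacks-project} (with $\sigma_{f*}=g^{-1}$, $\sigma_f^*=g_!$), which is exactly the machinery underlying your hands-on arguments. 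One small caution: in your sketch of (2), the claim that the contributions from non-structural morphisms $U_0\to X$ ``become trivial after \'etale sheafification'' is not quite right as stated---the clean way is to check the unit on representables using that $\sigma_f$ is fully faithful, and then extend by colimits (this is what the cited Stacks lemmas do).
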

\begin{proof}
    This result is well-known. In fact, the inclusion $\sigma_f \colon  (\Et/X)_\et \hookrightarrow \cC_\et$ is not only continuous but also co-continuous in the sense of \cite[Expos\'e~III, D\'efinition 2.1]{SGA4I}. It follows from \cite[\href{https://stacks.math.columbia.edu/tag/04BH}{Lemma~04BH}]{stacks-project} and \cite[\href{https://stacks.math.columbia.edu/tag/077I}{Lemma~077I}]{stacks-project} that $\sigma_{f*}$ and $\sigma_f^*$ are exact and that $\sigma_f^*$ is fully faithful ($\sigma_{f*}$ is the $g^{-1}$, and $\sigma_f^*$ is the $g_!$ in loc. cit.). 

    Now, we prove the last assertion. Note that $\sigma_f$ can be factored as
    \[\Et/X \stackrel{\sigma_{X}}{\longrightarrow} \cC/X \stackrel{\iota}{\longrightarrow} \cC/S.\]
    Thus $\sigma_{f*}=\sigma_{X*}\iota_*$, where $\iota_*\sF(U/X)=\sF(U/S)$. Since $f \colon  X\to S$ in a morphism in $\cC/S$, the functor $\iota_*$ is in fact $f_\cC^*$. It follows that 
    \[\sigma_{f*}\sigma_S^*  = \sigma_{X*} f_\cC^* \sigma_S^* \simeq \sigma_{X*} \sigma_X^* f^* \simeq f^*,\]
    where the last isomorphism holds by (2).
\end{proof}

Now, we study the derived functors. First, we derive the diagram before Lemma~\ref{topoi}. By Lemma~\ref{topoi} (1), the functors $\sigma_{f*}$ and $\sigma_{h*}$ are exact.
\begin{lem}\label{RestTopoi}
    For $K\in D(\Shv_\et(\cC/T,\Lambda))$, we have a canonical isomorphism
    \[\sigma_{f*}Re^\cC_*K\simeq Rg_*\sigma_{h*}K.\]
\end{lem}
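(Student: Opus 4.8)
The plan is to deduce the derived identity from the non-derived commutative square $\sigma_{f*}\circ e^\cC_* = g_*\circ\sigma_{h*}$ recorded just before the statement, together with the exactness properties established in Lemma~\ref{topoi}(1). Two facts do all the work. First, $\sigma_{f*}$ is exact, hence coincides with its own total derived functor and descends to the unbounded derived categories by termwise application, preserving quasi-isomorphisms. Second, $\sigma_{h*}$ is exact and, crucially, preserves K-injective complexes: its left adjoint $\sigma_h^*$ is exact (Lemma~\ref{topoi}(1)), so for a K-injective complex $J$ in $C(\Shv_\et(\cC/T,\Lambda))$ and any acyclic complex $A$ in $C(\Shv_\et(\Et/Y,\Lambda))$ one has $\Hom_{K(\Shv_\et(\Et/Y,\Lambda))}(A,\sigma_{h*}J)\simeq\Hom_{K(\Shv_\et(\cC/T,\Lambda))}(\sigma_h^*A,J)=0$, since $\sigma_h^*A$ is acyclic; thus $\sigma_{h*}J$ is again K-injective.

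With these in hand I would choose a K-injective resolution $K\to I$ in $C(\Shv_\et(\cC/T,\Lambda))$ — such resolutions exist and compute $Re^\cC_*$ because this is a Grothendieck abelian category, as used freely throughout the paper — so that $Re^\cC_*K\simeq e^\cC_*I$. Applying the exact functor $\sigma_{f*}$ gives $\sigma_{f*}Re^\cC_*K\simeq\sigma_{f*}e^\cC_*I = g_*\sigma_{h*}I$, the last equality being the non-derived square. On the other hand $\sigma_{h*}I$ is K-injective and quasi-isomorphic to $\sigma_{h*}K$, so $Rg_*\sigma_{h*}K\simeq g_*\sigma_{h*}I$. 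Chaining these isomorphisms yields the claim, and the resulting isomorphism is canonical, being induced by the canonical base change transformation. Equivalently, one may present the argument as $\sigma_{f*}\circ Re^\cC_*\simeq R(\sigma_{f*}\circ e^\cC_*)=R(g_*\circ\sigma_{h*})\simeq Rg_*\circ\sigma_{h*}$, the two outer isomorphisms being instances of the composition rule for right derived functors (exactness of $\sigma_{f*}$ on the left; preservation of K-injectives by $\sigma_{h*}$ on the right).

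The only genuinely delicate point is the bookkeeping for unbounded complexes: one must know that $Re^\cC_*$ and $Rg_*$ are defined on all of $D(\Shv_\et(\cC/T,\Lambda))$ and are computed via K-injective resolutions, which is standard for Grothendieck abelian categories. I would also note that, although $e^*_\cC$ need not be exact when $\cC=\Sm$ (see the footnote before the lemma), this causes no trouble here: the argument uses only the exactness of $\sigma_{f*}$, $\sigma_{h*}$, and $\sigma_h^*$ and never that of $e^*_\cC$, so the proof is uniform in the two admissible choices of $\cC$.
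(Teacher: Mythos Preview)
Your proof is correct and follows essentially the same approach as the paper: both arguments rest on the fact that $\sigma_{h*}$ preserves $K$-injective complexes because its left adjoint $\sigma_h^*$ is exact (Lemma~\ref{topoi}(1)), and then use a $K$-injective resolution of $K$ together with the non-derived identity $\sigma_{f*}e^\cC_* = g_*\sigma_{h*}$ to conclude. The paper's proof is simply a terser rendition of what you have written out in full.
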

\begin{proof}
    By Lemma~\ref{topoi}, the functor $\sigma_{h*}$ admits an exact left adjoint $\sigma_h^*$. Thus by some formal reason, the functor $\sigma_{h*}$ preserves $K$-injective complexes in the sense of \cite{Spaltenstein88Unbounded}. Then this lemma follows because $K$-injective resolutions compute unbounded right derived functors.
\end{proof}

\begin{thm}\label{SmEtSite}
    Let $e \colon  T\to S$ be a morphism of noetherian schemes with $T$ excellent. Then for $\sF\in\Shv_\et(\Et/T,\Lambda)$, the base change morphism
    \[\alpha_\sF \colon   \sigma_S^*Re_{*}\sF \longrightarrow Re^\cC_*\sigma_T^*\sF\]
    is an isomorphism in $D(\Shv_\et(\cC/S,\Lambda))$.
\end{thm}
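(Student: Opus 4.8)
The plan is to reduce the statement to checking the base change isomorphism on cohomology sheaves, which will let me invoke the smooth base change theorem for non-torsion sheaves (Theorem~\ref{SmBCnontor}). First I would observe that $\sigma_S^*$ and $\sigma_T^*$ are exact (Lemma~\ref{topoi}~(1)), so they commute with taking cohomology of complexes: $H^i(\sigma_S^* Re_*\sF) \simeq \sigma_S^* R^ie_*\sF$. Hence the morphism $\alpha_\sF$ is an isomorphism in $D(\Shv_\et(\cC/S,\Lambda))$ if and only if, for every $i$, the induced map $\sigma_S^* R^ie_*\sF \to H^i(Re^\cC_*\sigma_T^*\sF) = R^ie^\cC_*(\sigma_T^*\sF)$ is an isomorphism of sheaves on $(\cC/S)_\et$. (Here I use that $e^\cC_*$ is a right adjoint hence the $i$-th cohomology of $Re^\cC_*$ is the usual $R^ie^\cC_*$, and Lemma~\ref{RestTopoi} together with exactness of $\sigma_{f*}$ reduces checking such an isomorphism to checking it after applying $\sigma_{f*}$ for all $f\colon X\to S$ in $\cC$.)

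Second, I would evaluate both sides on a fixed object $f\colon X\to S$ of $\cC$ via the restriction functors. Applying $\sigma_{f*}$ and using $\sigma_{f*}\sigma_S^*\simeq f^*$ (Lemma~\ref{topoi}~(3)) on the left, and Lemma~\ref{RestTopoi} on the right, the statement becomes: the base change morphism
\[ f^* R^ie_*\sF \longrightarrow R^ig_*(h^*\sF) \]
is an isomorphism of sheaves on $X_\et$, where $g,h$ fit into the Cartesian square obtained by pulling back $e\colon T\to S$ along $f\colon X\to S$ (and $\sigma_h^*\sigma_T^*\simeq \cdots$ identifies $\sigma_{h*}\sigma_T^*\sF$ with $h^*\sF$ as in the diagrams before Lemma~\ref{topoi}). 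Since $f$ is smooth, $g$ is smooth, and $T$ excellent implies the fibre product $X\times_S T$ is noetherian; I must also check $T$ stays excellent after the base change, but in fact Theorem~\ref{SmBCnontor} only needs the source of the smooth morphism (here the top-left corner, call it $Y = X\times_S T$) to be noetherian and the base $T$ to be excellent with $f$ smooth — which is exactly our hypothesis applied to the square $Y\to X$, $Y\to T$, $X\to S$, $T\to S$. Thus Theorem~\ref{SmBCnontor} applies directly and gives the desired isomorphism for each $i$.

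Third, I would assemble these pointwise isomorphisms into the conclusion. Since $\sigma_{f*}$ for varying $f\colon X\to S$ in $\cC$ are jointly conservative (a morphism of sheaves on $(\cC/S)_\et$ is an isomorphism iff it is so on every object of $\cC$, which is seen from the evaluations $\sF(X) = (\sigma_{f*}\sF)(X)$), the map $\sigma_S^* R^ie_*\sF \to R^ie^\cC_*(\sigma_T^*\sF)$ is an isomorphism for every $i$, hence $\alpha_\sF$ is a quasi-isomorphism, i.e.\ an isomorphism in the derived category. The one point requiring a little care is the bookkeeping that the derived base change morphism $\alpha_\sF$ really does induce, on $i$-th cohomology after applying $\sigma_{f*}$, the classical base change morphism $\alpha_\sF^i$ of Theorem~\ref{SmBCnontor} for the pulled-back square; this is a compatibility of adjunction units and can be checked by the universal property, using $Le^{\cC*}\dashv Re^\cC_*$ and exactness of $\sigma_T^*$.

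The main obstacle I anticipate is precisely this last compatibility: one must verify that the canonical base change $2$-morphism in the smooth-\'etale topos, after restriction $\sigma_{f*}$ and passage to cohomology, coincides with Deninger-style base change in the small \'etale topos of the pulled-back square (including identifying $\sigma_{h*}\sigma_T^*\sF$ with $h^*\sF$ using the commuting diagrams before Lemma~\ref{topoi} and $\sigma_h^*$ fully faithful). Once the two base-change morphisms are matched, the result is an immediate consequence of Theorem~\ref{SmBCnontor} together with the exactness and conservativity properties of the restriction functors already established in Lemma~\ref{topoi} and Lemma~\ref{RestTopoi}.
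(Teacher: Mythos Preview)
Your proposal is correct and follows essentially the same route as the paper's proof: both reduce to Theorem~\ref{SmBCnontor} by restricting along $\sigma_{f*}$ for each $f\colon X\to S$ in $\cC$, using Lemma~\ref{topoi}(3) to identify $\sigma_{f*}\sigma_S^*\simeq f^*$ and Lemma~\ref{RestTopoi} to identify $\sigma_{f*}Re^\cC_*\simeq Rg_*\sigma_{h*}$. The only cosmetic difference is that the paper tests the derived morphism $\alpha_\sF$ against the generators $\Lambda(X)[n]$ of $D(\Shv_\et(\cC/S,\Lambda))$ directly, whereas you first pass to cohomology sheaves and then invoke joint conservativity of the family $\{\sigma_{f*}\}$; since $\Hom(\Lambda(X)[n],-)$ is nothing but $H^{-n}(\sigma_{f*}(-))(X)$, these two detection principles are the same.
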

\begin{proof}
    Let $\Lambda(X)$ the \'etale sheaf associated with the presheaf mapping $U\in\cC/S$ to the free $\Lambda$-module generated by $\Mor_S(U,X)$.
    Consider the following commutative diagram
    \begin{center}
        \begin{tikzcd}[scale cd=0.95]
            \Hom_{D(\Shv_\et(\cC/S,\Lambda))}(\Lambda(X)[n],\sigma_S^*Re_{*}\sF) \arrow{r}{\alpha_1} \arrow{d}{\beta_1}[swap]{\simeq} 
            & \Hom_{D(\Shv_\et(\cC/S,\Lambda))}(\Lambda(X)[n],Re^\cC_*\sigma_T^*\sF) \arrow{d}{\beta_2}[swap]{\simeq} \\
            \Hom_{D(\Shv_\et(\Et/X,\Lambda))}(\Lambda(X)[n],\sigma_{f*}\sigma_S^*Re_{*}\sF) \arrow{r}{\alpha_2}
            & \Hom_{D(\Shv_\et(\Et/X,\Lambda))}(\Lambda(X)[n],\sigma_{f*}Re^\cC_*\sigma_T^*\sF).
        \end{tikzcd}
    \end{center}
    Here, $\alpha_1$ and $\alpha_2$ are induced by the base change morphisms; $\beta_1$ and $\beta_2$ are adjunction isomorphisms. Consider the pullback of the base change morphism
    \begin{align*}
        f^*Re_{*}\sF &\stackrel{\sim}{\to}    \sigma_{f*}\sigma_S^*Re_{*}\sF\\
                    &\to                     \sigma_{f*}Re^\cC_*\sigma_T^*\sF\\
                    &\stackrel{\sim}{\to}    Rg_*\sigma_{h_*}\sigma_T^*\sF \\
                    &\stackrel{\sim}{\to}    Rg_*h^*\sF,
    \end{align*}
    where the first and the last arrows are isomorphisms by Lemma~\ref{topoi} (3) and the third arrow is an isomorphism by Lemma~\ref{RestTopoi}. According to the smooth base change theorem (Theorem~\ref{SmBCnontor}), $f^*Re_{*}\sF \to Rg_*h^*\sF$ is an isomorphism, which implies that $\alpha_2$ and then $\alpha_1$ are isomorphisms. Since $\{\Lambda(X)[n]\}$ is a system of generators in $D(\Shv_\et(\cC/S,\Lambda))$, we obtain the expected isomorphism in the derived category.
\end{proof}
\begin{rmk}
    Using a spectral sequence argument like \cite[\href{https://stacks.math.columbia.edu/tag/0F09}{Lemma~0F09}]{stacks-project}, we can also establish Theorem~\ref{SmBCnontor} and Theorem~\ref{SmEtSite} for bounded below complexes of sheaves of $\Lambda$-modules on $T_\et$.
\end{rmk}

Note that the spectrum of a field is an excellent scheme. So we have the following result:
\begin{cor}\label{SmBCfield}
    Let $k$ be a field of exponential characteristic $p$, and let $\Lambda=\bZ[\frac1p]$. Let $K/k$ be a field extension. Write $e \colon  \Spec K\to \Spec k$ the induced morphism. Let $\sF$ be a sheaf of $\Lambda$-modules on $(\Spec K)_\et$.
    \begin{enumerate}[label={\rm(\arabic*)}]
        \item For a Cartesian diagram 
        \[\xymatrix{
            X_K \ar[r]^g \ar[d]_h & X \ar[d]^f \\
            \Spec K \ar[r]^e          & \Spec k
        }\]
        with $f$ smooth and of finite type, the base change morphism 
        \[\alpha^i_\sF \colon   f^*R^ie_*\sF \longrightarrow R^ig_*h^*\sF\]
        is an isomorphism for every $i$. 
        \item The base change morphism
        \[\alpha_\sF \colon   \sigma^*_kRe_{*}\sF \longrightarrow Re^\cC_*\sigma^*_K\sF\]
        is an isomorphism in $D(\Shv_\et(\cC/k,\Lambda))$, where $\cC$ is $\Sm$ or $\Sm_{\leq n}$ for some $n\in\bN$.
    \end{enumerate}
\end{cor}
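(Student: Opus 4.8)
The plan is to deduce both assertions directly from the two main results of this appendix: Theorem~\ref{SmBCnontor} (smooth base change for sheaves of $\Lambda$-modules) and Theorem~\ref{SmEtSite} (comparison of the small and smooth \'etale topoi). The only inputs needed beyond those theorems are that the spectrum of a field is a noetherian excellent scheme, and that the coefficient ring attached to $S=\Spec k$ in the statements of those theorems — namely $\bZ$ localized at the exponential characteristics of the residue fields of $S$ — is exactly $\bZ[1/p]=\Lambda$, since the unique residue field of $\Spec k$ is $k$ itself.

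First, for assertion~(1), I would apply Theorem~\ref{SmBCnontor} to the given Cartesian square with $S=\Spec k$, $T=\Spec K$, $X$ over $k$ and $Y=X_K$. Here $X$ is noetherian, being of finite type over the field $k$, hence so is $Y=X\times_{\Spec k}\Spec K$; the map $f$ is smooth by hypothesis; and $T=\Spec K$ is excellent. All hypotheses of Theorem~\ref{SmBCnontor} are therefore met, and its conclusion is precisely that $\alpha^i_\sF\colon f^*R^ie_*\sF\to R^ig_*h^*\sF$ is an isomorphism for every $i$.

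Second, for assertion~(2), I would apply Theorem~\ref{SmEtSite} to the morphism $e\colon \Spec K\to \Spec k$ of noetherian schemes, with $T=\Spec K$ excellent and $\cC$ equal to $\Sm$ or $\Sm_{\leq n}$. One small point to check is that over the zero-dimensional base $\Spec k$ the convention for $(\Sm/k)_{\leq n}$ used in this appendix (relative dimension $\leq n$) agrees with the one used in \S\ref{HI_leqn} (dimension $\leq n$); this holds since a smooth finite-type $k$-scheme has the same relative dimension over $\Spec k$ as its dimension. Granting this, Theorem~\ref{SmEtSite} yields exactly the asserted isomorphism $\sigma^*_k Re_*\sF \stackrel{\sim}{\longrightarrow} Re^\cC_*\sigma^*_K\sF$ in $D(\Shv_\et(\cC/k,\Lambda))$.

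There is no genuine obstacle here: all the substantive content has already been established in Theorems~\ref{SmBCnontor} and~\ref{SmEtSite}, and the corollary reduces to checking that the spectrum of a field meets their (mild) hypotheses. The only places deserving a sentence of care are the identification of the coefficient ring with $\Lambda=\bZ[1/p]$ and, for $\cC=\Sm_{\leq n}$, the matching of dimension conventions.
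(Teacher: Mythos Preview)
Your proposal is correct and follows the same approach as the paper, which simply observes that the spectrum of a field is excellent and then invokes Theorems~\ref{SmBCnontor} and~\ref{SmEtSite}. Your additional remarks on the coefficient ring and the matching of dimension conventions are accurate and only make explicit what the paper leaves tacit.
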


\section{A representability result}
\smallskip
\begin{center}by \textsc{Bruno Kahn}\end{center}
\medskip 

For a field $k$, let $\Sm_l(k)$ be the category of smooth separated $k$-schemes locally of finite type, and $\Sm(k)$ the full subcategory of those which are of finite type. We provide them with the étale topology. and write $\Shv_l(k)$ and $\Shv(k)$ for the corresponding categories of sheaves of sets.

\begin{lem}\label{l1r} 
    The restriction functor $\Shv_l(k)\to \Shv(k)$ is an isomorphism of categories.
\end{lem}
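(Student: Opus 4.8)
The plan is to exhibit an explicit inverse functor to the restriction $\rho\colon\Shv_l(k)\to\Shv(k)$ and to check that the two composites are the identity functors \emph{on the nose}, so that $\rho$ is not merely an equivalence but an isomorphism of categories.

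First I would record the geometric input. Every $X\in\Sm_l(k)$ is covered by its quasi-compact open subschemes; since a scheme of finite type over a field is noetherian, each such open subscheme is again of finite type over $k$, and it is smooth and separated over $k$, hence lies in $\Sm(k)$. As open immersions are étale, $\{U_i\hookrightarrow X\}$ is an étale covering with $U_i\in\Sm(k)$, and for $X$ separated each intersection $U_i\cap U_j$ (an open of $U_i$) is again an object of $\Sm(k)$. More generally, any étale covering $\{V_\alpha\to X\}$ in $\Sm_l(k)$ refines to one whose members lie in $\Sm(k)$: each $V_\alpha$ is smooth separated and locally of finite type over $k$, and covering it by quasi-compact opens $W_{\alpha\beta}$ gives étale maps $W_{\alpha\beta}\to X$ with $W_{\alpha\beta}\in\Sm(k)$ (étale of finite type over finite type is of finite type). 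In the language of \cite[Expos\'e~III]{SGA4I} this says the inclusion $\iota\colon\Sm(k)\hookrightarrow\Sm_l(k)$ is a fully faithful, continuous and cocontinuous functor through which every object of $\Sm_l(k)$ admits a covering.

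Next I would define the candidate inverse $\lambda\colon\Shv(k)\to\Shv_l(k)$. For a sheaf $\cG$ on $\Sm(k)$ and $X\in\Sm_l(k)$, choose a covering $X=\bigcup_i U_i$ by objects of $\Sm(k)$ and let $\lambda(\cG)(X)$ be the equalizer of the two maps $\prod_i\cG(U_i)\rightrightarrows\prod_{i,j}\cG(U_i\cap U_j)$. A standard refinement argument shows this is independent of the chosen covering and functorial in $X$ (pulling a covering of $X$ back along $Y\to X$ and refining by quasi-compact opens), and the refinement statement above shows $\lambda(\cG)$ satisfies the sheaf condition for every étale covering in $\Sm_l(k)$; thus $\lambda(\cG)\in\Shv_l(k)$, and $\cG\mapsto\lambda(\cG)$ is clearly functorial. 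This well-definedness is precisely the content of the comparison lemma \cite[Expos\'e~III, Th\'eor\`eme~4.1]{SGA4I}, which one may alternatively simply invoke.

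Finally I would check the two composites. If $X\in\Sm(k)$, the one-element covering $\{X\}$ computes $\lambda(\cG)(X)=\cG(X)$, so the restriction of $\lambda(\cG)$ to $\Sm(k)$ is literally $\cG$; hence $\rho\circ\lambda=\mathrm{id}_{\Shv(k)}$. Conversely, for $\cF\in\Shv_l(k)$ the sheaf axiom applied to the Zariski (a fortiori étale) covering $\{U_i\hookrightarrow X\}$ identifies $\cF(X)$ with exactly that equalizer of $\rho(\cF)$-values, so $\lambda(\rho(\cF))=\cF$; hence $\lambda\circ\rho=\mathrm{id}_{\Shv_l(k)}$. Therefore $\rho$ is an isomorphism of categories. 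The only substantive point is the well-definedness of $\lambda$ — independence of the covering and the étale sheaf property on the larger site — for which the refinement-to-finite-type observation of the first step is used, and which is subsumed by the cited comparison lemma; everything else is a formal manipulation of equalizers.
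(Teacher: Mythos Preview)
Your proof is correct and takes a genuinely different route from the paper. The paper constructs the inverse by $U\mapsto \prod_{i\in I}\sF(U_i)$ where the $U_i$ are the connected components of $U$, tacitly assuming that each connected component of an object of $\Sm_l(k)$ already lies in $\Sm(k)$. This is true in dimension $\leq 1$ (a connected smooth separated curve locally of finite type over $k$ embeds in its smooth projective model, hence is quasi-compact), but can fail in dimension $\geq 2$: iterating blowups of $\mathbb{P}^2$ at points on the successive strict transforms of a fixed line and removing that strict transform at each stage yields a strictly increasing chain $X_0\subsetneq X_1\subsetneq\cdots$ of open immersions of smooth connected separated $k$-surfaces whose union is smooth, connected, separated, locally of finite type and not quasi-compact. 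Your approach via quasi-compact open covers and the comparison lemma \cite[Expos\'e~III, Th\'eor\`eme~4.1]{SGA4I} avoids this subtlety entirely, since every object of $\Sm_l(k)$ is tautologically covered by its finite-type open subschemes. What the paper's formula buys, when it applies, is a one-line inverse; what your argument buys is a proof that works in the stated generality. One small quibble: with a choice-dependent $\lambda$ the composite $\lambda\rho$ is the identity only up to the canonical sheaf-axiom isomorphism, so strictly you obtain an equivalence; defining $\lambda(\cG)(X)$ as the limit over the canonical diagram of \emph{all} finite-type open subschemes $U\subseteq X$ fixes this.
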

\begin{proof} 
    The inverse functor sends a sheaf $\sF$ to $U\mapsto \prod_{i\in I} \sF(U_i)$, where the $U_i$ are the connected components of $U$.
\end{proof}

\begin{lem}\label{l2r} 
    Let $\sF\in \Shv(k)$. If $\sF\ne \emptyset$, then $\sF(\Spec E)\ne \emptyset$ for some finite separable extension $E/k$.
\end{lem}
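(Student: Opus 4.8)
\emph{Sketch of proof.} The plan is to reduce the statement to a purely geometric assertion and then exhibit the required point using the local structure of smooth morphisms. First I would make the hypothesis explicit: since $\sF(\emptyset)=\{*\}$ for any sheaf, the condition $\sF\neq\emptyset$ (that $\sF$ is not the initial object of $\Shv(k)$) is equivalent to $\sF(U)\neq\emptyset$ for some nonempty $U\in\Sm(k)$. Fix such a $U$ and a section $s\in\sF(U)$. It then suffices to construct a finite separable extension $E/k$ together with a $k$-morphism $\iota\colon\Spec E\to U$: indeed $\Spec E$ is étale over $k$, hence lies in $\Sm(k)$, and $\iota^{*}s\in\sF(\Spec E)$ shows $\sF(\Spec E)\neq\emptyset$.

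For the geometric part I would deliberately avoid picking a closed point of $U$ outright, since a smooth $k$-variety may have closed points with purely inseparable residue field (e.g.\ the point $(x^{p}-t)$ on $\mathbb{A}^{1}_{\mathbb{F}_{p}(t)}$). Instead, after replacing $U$ by a nonempty affine open and invoking the local structure theorem for smooth morphisms, I obtain an integer $n\geq 0$ and an étale $k$-morphism $\pi\colon U\to\mathbb{A}^{n}_{k}$ (possibly shrinking $U$ once more). As $\pi$ is étale, hence open, $W:=\pi(U)$ is a nonempty open subscheme of $\mathbb{A}^{n}_{k}$, and it contains a closed point $z$ with $\kappa(z)/k$ finite and separable: if $k$ is infinite this is because $\mathbb{A}^{n}(k)$ is dense in $\mathbb{A}^{n}_{k}$ and hence meets $W$, while if $k$ is finite then $k$ is perfect, so every closed point of the nonempty scheme $W$ automatically has separable residue field. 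Finally the fibre $U_{z}:=U\times_{\mathbb{A}^{n}_{k},\,z}\Spec\kappa(z)$ is étale over $\kappa(z)$ (base change of $\pi$) and nonempty (since $z\in\pi(U)$); an étale scheme over a field being a finite coproduct of spectra of finite separable field extensions, any choice of a point of $U_{z}$ yields a finite separable extension $E/\kappa(z)$, hence a finite separable $E/k$, and a $k$-morphism $\Spec E\to U_{z}\to U$, as required.

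The only non-routine input is the local structure theorem for smooth morphisms used in the second step — the point being that one cannot get away with naive point-picking on $U$ and must instead reduce to $\mathbb{A}^{n}_{k}$, where the separable closed points are manifestly plentiful. Everything else is bookkeeping, although I would double-check the degenerate case $n=0$ (where $U$ is already étale over $k$) and verify carefully that a sheaf vanishing on every nonempty object of $\Sm(k)$ is the initial object, so that the reduction of the first paragraph is legitimate.
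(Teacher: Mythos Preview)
Your argument is correct, but the detour through the local structure theorem is unnecessary and stems from a slight misconception. The paper's proof simply observes that any nonempty smooth $k$-scheme of finite type has \emph{some} closed point with separable residue field (citing the characterisation of smoothness in SGA\,1, Exposé~II), and then restricts the section to that point. Your example $(x^p-t)$ on $\mathbb{A}^1_{\mathbb{F}_p(t)}$ only shows that \emph{not every} closed point of a smooth variety is separable, not that none is; the existence of at least one separable closed point is precisely the fact the paper invokes, and your argument via an étale map $U\to\mathbb{A}^n_k$ followed by separable point-picking on $\mathbb{A}^n_k$ is essentially the standard proof of that fact. So your route is a self-contained unpacking of the paper's one-line citation --- perfectly valid, just not an avoidance of a genuine obstacle.
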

\begin{proof} 
    The assumption means that there exists $U\in \Sm(k)$ such that $\sF(U)\ne \emptyset$. But $U$ has a closed point $u$ with separable residue field $E$ (this follows from the characterisation of smoothness in \cite[II, Def. 1.1]{SGA1}), hence $\sF(\Spec E)=\sF(u)\ne \emptyset$.
\end{proof}

\begin{lem}\label{l3r}
    Let $E$ be a finite separable extension of $k$, and let $\underline{L}\in \Shv(k)$ be the étale sheaf represented by $L=\Spec E$. Then there is an isomorphism of categories
    \[\Shv(E)\simeq \Shv(k)/\underline{L}.\]
This isomorphism transports a representable sheaf $\underline{F}\in \Shv(E)$ to $\underline{F}\to \underline{L}\in \Shv(k)/\underline{L}$.
\end{lem}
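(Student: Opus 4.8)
The plan is to realize the desired isomorphism as the composite of an isomorphism of sites and the standard localization isomorphism of topoi. First I would identify $\Sm(E)$, equipped with its étale topology, with the localized site $\Sm(k)/L$ whose objects are the morphisms $U\to L$ with $U\in\Sm(k)$ and whose covering families are those induced from $\Sm(k)$. Indeed, since $E/k$ is finite separable, $L=\Spec E\to\Spec k$ is finite and étale, hence smooth, separated and of finite type; so for $Y\in\Sm(E)$ the composite $Y\to L\to\Spec k$ is again smooth, separated and of finite type, which gives an object $Y\to L$ of $\Sm(k)/L$. Conversely, given $Y\to L$ in $\Sm(k)/L$, the cancellation properties for smooth, separated and finite-type morphisms (using that $L\to\Spec k$ is affine, hence quasi-separated, and unramified) show that $Y\to L$ itself is smooth, separated and of finite type, i.e.\ $Y\in\Sm(E)$. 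These two assignments are mutually inverse and are the identity on morphism sets, because a $k$-morphism compatible with the structure maps to $L$ is precisely an $E$-morphism. Finally, a family $\{Y_i\to Y\}$ is an étale covering in $\Sm(E)$ iff each $Y_i\to Y$ is étale and the family is jointly surjective — a condition on the morphisms $Y_i\to Y$ that makes no reference to the base — and this is exactly the condition for $\{(Y_i\to L)\to(Y\to L)\}$ to be a covering in the induced topology on $\Sm(k)/L$. Hence the two sites are isomorphic and $\Shv(E)\simeq\Shv(\Sm(k)/L)$ canonically.

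Next I would invoke the localization isomorphism of topoi (\cite[Expos\'e~III]{SGA4I}, \cite{stacks-project}): for a site $\cC$ and an object $X$ whose representable presheaf $h_X$ is a sheaf, the assignment sending $(\sF\xrightarrow{\phi}h_X)$ to the sheaf on $\cC/X$ given by $(U\to X)\mapsto\{s\in\sF(U)\mid \phi_U(s)=(U\to X)\}$ is an isomorphism of categories $\Shv(\cC)/h_X\simeq\Shv(\cC/X)$; its inverse sends $\sG$ to the presheaf $V\mapsto\coprod_{f\in\Hom(V,X)}\sG(V\xrightarrow{f}X)$ together with its tautological map to $h_X$. The one nonformal point is that this last presheaf is already a sheaf on $\cC$: a descent datum for it along a covering $\{V_i\to V\}$ yields first, since $h_X$ is a sheaf, a unique $f\colon V\to X$, and then a descent datum for $\sG$ along $\{(V_i\to X)\to(V\to X)\}$, which glues because $\sG$ is a sheaf. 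This is precisely where subcanonicity of the étale topology on $\Sm(k)$ enters, and it is what promotes the familiar equivalence to a genuine isomorphism of categories. Applying this with $\cC=\Sm(k)$ with the étale topology and $X=L$, so that $h_L=\underline{L}$, and composing with the site isomorphism above yields the isomorphism $\Shv(E)\simeq\Shv(k)/\underline{L}$.

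It then remains to trace representables through the construction, which is immediate: the sheaf on $\cC/X$ represented by an object $(F\xrightarrow{p}X)$ is sent by the inverse functor to $V\mapsto\coprod_{f}\{u\colon V\to F\mid pu=f\}=\Hom(V,F)$, that is, to the representable sheaf $h_F$ together with $h_p\colon h_F\to h_X$; unwinding the identification $\Sm(E)\cong\Sm(k)/L$, an object $F\in\Sm(E)$ with structure morphism $p\colon F\to L$ has its representable sheaf $\underline{F}\in\Shv(E)$ carried to $\underline{F}\xrightarrow{p}\underline{L}$ in $\Shv(k)/\underline{L}$, as asserted. The main point requiring care is the subcanonicity input in the middle step (making the localization an isomorphism rather than merely an equivalence); the site identification and the tracking of representables are routine once the cancellation properties of morphisms and the definition of the induced topology are in place.
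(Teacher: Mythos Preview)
Your proof is correct and amounts to the same construction as the paper's, just packaged differently: you factor the isomorphism as the site identification $\Sm(E)\cong\Sm(k)/L$ followed by the general localization equivalence $\Shv(\cC/X)\simeq\Shv(\cC)/h_X$, whereas the paper writes the two directions in one step with the explicit formulas $\sF'(U)=\sF_{\pi_U}(U_{(k)})$ and $\sF(U)=\coprod_{\pi\in\underline{L}(U)}\sF'(U,\pi)$---which are exactly the formulas you quote for the general localization. Your presentation has the minor advantage of making explicit the role of subcanonicity (needed so that the presheaf $V\mapsto\coprod_f\sG(V\xrightarrow{f}X)$ is already a sheaf) and of isolating the cancellation argument showing that an object of $\Sm(k)/L$ is automatically smooth, separated and of finite type over $L$; the paper absorbs these points into the phrase ``the isomorphism of categories is now clear'' and the final sentence on factoring $k$-morphisms through a unique $E$-structure.
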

\begin{proof} 
    Let $\sF\xrightarrow{p} \underline{L}\in \Shv(k)/\underline{L}$. For $U\in \Sm(k)$ and $\pi\in \underline{L}(U)=\Mor_k(U,L)$, let $\sF_\pi(U)=p^{-1}(\pi)$ so that $\sF(U)=\coprod_{\pi\in \underline{L}(U)} \sF_\pi(U)$. The isomorphism of categories is now clear: writing $U\mapsto U_{(k)}$ for the forgetful functor $\Sm(E)\to \Sm(k)$,
    \begin{description}
    \item[In one direction] Let $\sF\xrightarrow{p} \underline{L}\in \Shv(k)/\underline{L}$. For $U\in \Sm(E)$, let $\sF'(U)= \sF_{\pi_U}(U_{(k)})$
    where $\pi_U:U\to \Spec E$ is the structural morphism. 
    \item[In the other direction] Let $\sF'\in \Shv(E)$. For $U\in \Sm(k)$, let $\sF(U)=\coprod_{\pi\in \underline{L}(U)} \sF'(U,\pi)$, and let $p(U):\sF(U)\to \underline{L}(U)$ be the obvious projection.
    \end{description}

    If $F\in \Sm(E)$ and $U\in \Sm(k)$, a $k$-morphism $f:U\to F$ induces a unique $E$-structure on $U$ through which $f$ factors; hence the claim for representable sheaves.
\end{proof}

\begin{prop}\label{extRep}
    Let $\sF\in \Shv(\Sm(k))$ be a sheaf of groups. Suppose that there is an exact sequence
    \[1\to \underline{G}\to \sF\xrightarrow{p} \sL\to 1\]
    where $\underline{G}$ is representable by a smooth connected algebraic $k$-group $G$ and $\sL$ is locally constant. Then $\sF$ is representable by a $k$-group scheme in $\Sm_l(k)$, whose identity component is $G$.
\end{prop}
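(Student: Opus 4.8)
The plan is to decompose $\sF$ into torsors under $G$, represent each torsor, and reassemble. First I would use that $\sL$, being locally constant, is the sheaf attached to a discrete $\Gal(k_s/k)$-set, and therefore decomposes as a disjoint union $\sL=\coprod_\alpha \sL_\alpha$ of orbit sheaves, each $\sL_\alpha$ represented by $\Spec E_\alpha$ with $E_\alpha/k$ finite separable, and with the index $\alpha_0$ of the (Galois-fixed) identity section satisfying $\sL_{\alpha_0}=\Spec k$. Writing $\sF_\alpha:=p^{-1}(\sL_\alpha)$ gives $\sF=\coprod_\alpha\sF_\alpha$ as sheaves, so it is enough to represent each $\sF_\alpha$ by a $k$-scheme of finite type; note $\sF_{\alpha_0}=\ker(p)=\underline G$ is already represented by $G$. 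For the other $\alpha$, the exact sequence shows $\sF_\alpha\to\sL_\alpha$ is an epimorphism and that translation by $\underline G$ makes it a $\underline G$-torsor over $\sL_\alpha=\Spec E_\alpha$; transporting along the equivalence $\Shv(k)/\underline{L_\alpha}\simeq\Shv(E_\alpha)$ of Lemma~\ref{l3r} — under which $\underline G\times\underline{L_\alpha}\to\underline{L_\alpha}$ corresponds to $\underline{G_{E_\alpha}}$, since $G\times_k\Spec E_\alpha=G_{E_\alpha}$ — turns $\sF_\alpha$ into an étale sheaf $\sF'_\alpha$ over $E_\alpha$ that is a torsor under $\underline{G_{E_\alpha}}$.

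The key step is the representability of this torsor. Since $\sF_\alpha\to\sL_\alpha$ is an epimorphism, $\sF'_\alpha$ is locally non-empty, so Lemma~\ref{l2r} gives a finite separable (hence, after enlarging, Galois) extension $E'/E_\alpha$ with $\sF'_\alpha(\Spec E')\ne\emptyset$. A choice of such a point trivializes the torsor, yielding $\sF'_\alpha\times_{E_\alpha}E'\simeq\underline{G_{E'}}$ as sheaves, hence an identification of schemes; thus $\sF'_\alpha$ amounts to a Galois descent datum on $G_{E'}$ relative to $E'/E_\alpha$. Because an algebraic group over a field is quasi-projective, this descent datum is effective, so $\sF'_\alpha$ is represented by a smooth separated $E_\alpha$-scheme $F_\alpha$ of finite type, of dimension $\dim G$. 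By the "representable goes to representable" clause of Lemma~\ref{l3r}, $\sF_\alpha$ is then represented by $F_\alpha$ viewed as a $k$-scheme, and this is smooth of finite type over $k$ because $E_\alpha/k$ is finite separable.

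Finally I would assemble the pieces: $\sF$ is represented by $F:=\coprod_\alpha F_\alpha$, which lies in $\Sm_l(k)$ since a disjoint union of separated finite-type $k$-schemes is separated and locally of finite type. The multiplication, unit and inverse of the sheaf of groups $\sF$ are morphisms of sheaves, hence by the Yoneda lemma (representable sheaves form a full subcategory, stable under finite products, which are computed by fibre products of schemes) they endow $F$ with a $k$-group scheme structure whose underlying sheaf is $\sF$, and $p$ becomes a homomorphism $F\to L$ where $L$ represents $\sL$. Then $G=F_{\alpha_0}=\ker(p)$ is an open-and-closed connected subgroup scheme of $F$ containing the identity point, and an open-and-closed connected subset containing a given point is that point's connected component, so $G=F^0$, as claimed. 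I expect the main obstacle to be the second paragraph: extracting the correct torsor via Lemmas~\ref{l2r} and~\ref{l3r} and, above all, the effectivity of descent, which is where quasi-projectivity of $G$ (equivalently, representability of torsors under $G$) enters; everything after that point is formal bookkeeping.
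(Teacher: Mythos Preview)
Your proposal is correct and follows essentially the same route as the paper: decompose $\sL$ into pieces $\Spec E_\alpha$, recognize each $\sF_\alpha$ as a $G$-torsor, transport it via Lemma~\ref{l3r} to a $G_{E_\alpha}$-torsor over a point, trivialize after a finite separable extension via Lemma~\ref{l2r}, and invoke quasi-projectivity of $G$ to make descent effective. Your write-up is in fact slightly more detailed than the paper's on the bookkeeping for the group structure and the identification $G=F^0$, but the strategy and the key inputs are identical.
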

(The assertion makes sense thanks to Lemma~\ref{l1r}.)
\begin{proof} 
    It suffices to show that $\sF$ is representable as a sheaf of sets, the group structure taking care of itself by Yoneda's lemma as well as the claim on the identity component. Since $\sL$ is locally constant, it is representable by an étale $k$-scheme locally of finite type $L=\coprod_{i\in I} L_i$ where $L_i=\Spec E_i$ for a finite separable extension $E_i/k$ \cite[VIII, p. 54, Rem. 1.12]{Milne80Etale}. Then $\sF=\coprod_{i\in I} \sF_i$ where $\sF_i=p^{-1}(\underline{L}_i)$ with $\underline{L}_i$ the sheaf represented by $L_i$; since a coproduct of representable sheaves in $\Shv_l(k)$ is representable, it suffices to show that each $\sF_i$ is representable. 

    Recall (e.g. \cite[Expos\'e~VII, \S 1]{SGA7I}) that the action of $\underline{G}$ on $\sF$ by left translations defines a $\underline{G}$-torsor over $\sL$, whence a $\underline{G}$-torsor structure on $\sF_i=\sF\times_\sL \underline{L}_i$ over $\underline{L}_i$. By transport of structure, this makes the sheaf $\sF'_i\in \Shv(E_i)$ associated to $\sF_i$ by Lemma~\ref{l3r} a $\underline{G}_E$-torsor over the point.

    By Lemma~\ref{l2r}, $\sF'_i$ is trivial over a finite separable extension of $E$, therefore it  is representable by descent \cite[VIII, Cor. 7.6]{SGA1} since $G$ is quasi-projective \cite{Chow57}. If $F_i$ is the corresponding $E_i$-scheme, $\sF_i$ is then represented by $(F_i)_{(k)}$ by applying Lemma~\ref{l3r} again.
\end{proof}

\end{document}